\documentclass{amsart}
\usepackage{amsmath,amscd,xypic,amssymb,combelow,color,enumitem,graphicx,float,comment,tikz,listings,array,caption,enumitem}
\usepackage[T1]{fontenc}
\makeatletter
\def\squiggly{\bgroup \markoverwith{\textcolor{black}{\lower3.5\p@\hbox{\sixly \char58}}}\ULon}

\usepackage{xr-hyper}
\usepackage[
pdftex,
bookmarks=false,
colorlinks=true,
debug=true,
pdfnewwindow=true]{hyperref}

\definecolor{codegreen}{rgb}{0,0.6,0}
\definecolor{codegray}{rgb}{0.5,0.5,0.5}
\definecolor{codepurple}{rgb}{0.58,0,0.82}
\definecolor{backcolour}{rgb}{0.95,0.95,0.92}

\lstdefinestyle{mystyle}{
  backgroundcolor=\color{backcolour}, commentstyle=\color{codegreen},
  keywordstyle=\color{magenta},
  numberstyle=\tiny\color{codegray},
  stringstyle=\color{codepurple},
  basicstyle=\ttfamily\footnotesize,
  breakatwhitespace=false,
  breaklines=true,
  captionpos=b,
  keepspaces=true,
  numbers=left,
  numbersep=5pt,
  showspaces=false,
  showstringspaces=false,
  showtabs=false,
  tabsize=2
}

\makeatletter
  \@addtoreset{equation}{section}
\makeatother

\newtheorem{theorem}[subsection]{Theorem}

\newtheorem{proposition}[subsection]{Proposition}
\newtheorem{lemma}[subsection]{Lemma}
\newtheorem{corollary}[subsection]{Corollary}
\newtheorem{conjecture}[subsection]{Conjecture}
\newtheorem{definition}[subsection]{Definition}

\theoremstyle{remark}
\newtheorem{claim}[subsection]{Claim}
\newtheorem{example}[subsection]{Example}
\newtheorem{remark}[subsection]{Remark}

\def\fa{{\mathfrak{a}}}
\def\fg{{\mathfrak{g}}}
\def\fh{{\mathfrak{h}}}
\def\fn{{\mathfrak{n}}}

\def\BC{{\mathbb{C}}}

\def\BZ{{\mathbb{Z}}}

\def\slaws{\text{standard Lyndon words}}
\def\aslaw{\text{affine standard Lyndon word}}
\def\aslaws{\text{affine standard Lyndon words}}
\def\irrchain{\text{irreducible chain}}

\def\irrchains{\text{irreducible chains}}
\def\SL{\mathrm{SL}}

\def\rk{\mathrm{rank}}
\def\wI{\widehat{I}}
\def\wQ{\widehat{Q}}
\def\wDelta{\widehat{\Delta}}
\def\imx{\wDelta^{+,\mathrm{imx}}}
\def\sb{\mathsf{b}}
\def\spanset{\mathcal{S}}

\newcommand{\ub}[2]{\underbrace{\SL_{#1} (\delta)}_{#2 \text{ times}}}

\def\hgt{\text{ht}}

\newcommand\iso{\,\vphantom{j^{X^2}}\smash{\overset{\sim}{\vphantom{\rule{0pt}{0.20em}}\smash{\longrightarrow}}}\,}

\DeclareMathOperator{\spn}{span}
\DeclareMathOperator{\chain}{ch}
\DeclareMathOperator{\rank}{rank}
\DeclareMathOperator{\pr}{Pr}

\def\LL{{\mathrm{L}}}

\def\sb{{\mathsf{b}}}

\def\re{\mathrm{re}}
\def\im{\mathrm{im}}
\def\ext{\mathrm{ext}}

\begin{document}

\title[Chains of Affine standard Lyndon words]
      {\Large{\textbf{Chains of Affine standard Lyndon words}}}
	
\author[Corbet Elkins and Alexander Tsymbaliuk]{Corbet Elkins and Alexander Tsymbaliuk}

\address{C.E.: Purdue University, Department of Mathematics, West Lafayette, IN, USA}
\email{cdelkins@purdue.edu}

\address{A.T.: Purdue University, Department of Mathematics, West Lafayette, IN, USA}
\email{sashikts@gmail.com}

\begin{abstract}
In this note, we establish the periodicity of chains of affine standard Lyndon words in all types and determine
tight bounds on that periodicity, greatly generalizing the $A$-type results of~\cite{AT}. Our approach crucially
utilizes the convexity and monotonicity of~\cite{ET} together with the new idea to consider the polarization of
the root system given by increasing and decreasing chains.
\end{abstract}

\maketitle


\section{Introduction}


\subsection{Summary}\label{ssec:summary}
\

The free Lie algebras generated by a finite set $\{e_i\}_{i\in I}$ are known to have bases parametrized by
Lyndon words in $I$. This was generalized to finitely generated Lie algebras $\fa$ in~\cite{LR}, whereas the basis
is parametrized by standard Lyndon words. The key application of~\cite{LR} was to the positive subalgebras $\fn^+$
of simple finite dimensional $\fg$. In that context, one easily derives a natural bijection (\cite{LR})
\begin{equation}\label{eqn:1-to-1 intro}
  \ell \colon \Delta^+ \iso \{\text{standard Lyndon words}\}.
\end{equation}
A decade later, \cite{L} established an iterative Leclerc algorithm for~\eqref{eqn:1-to-1 intro}.
Moreover, the induced total order on $\Delta^+$ is convex due to~\cite{R}. This played the key role in~\cite{L},
where it was shown how to construct a basis of the corresponding positive half $U_q(\fn^+)$ of Drinfeld-Jimbo
quantum group through the word's combinatorics.

In the recent work of Avdieiev and the second author~\cite{AT}, the generalization to (untwisted) affine Lie algebras was initiated.
Let $\widehat{\fg}$ be the affinization of $\fg$, whose Dynkin diagram is obtained by extending the Dynkin diagram of $\fg$ with
a vertex~$0$. Thus, on the combinatorial side, we consider the alphabet $\wI=I\sqcup \{0\}$. The corresponding positive
subalgebra $\widehat{\fn}^+\subset \widehat{\fg}$ still admits the root space decomposition
$\widehat{\fn}^+=\bigoplus_{\alpha\in \wDelta^+} \widehat{\fn}^+_{\alpha}$ with $\wDelta^+=\{\mathrm{positive\ affine\ roots}\}$.
The key difference is that:
\begin{equation*}
  \dim \widehat{\fn}^+_{\alpha}=1 \quad \forall\, \alpha\in \wDelta^{+,\re} \,, \qquad
  \dim \widehat{\fn}^+_{\alpha}=|I| \quad \forall\, \alpha\in \wDelta^{+,\im}.
\end{equation*}
Here, $\wDelta=\wDelta^{+,\re}\sqcup \wDelta^{+,\im}$ is the decomposition into real and imaginary affine roots, with
$\wDelta^{+,\im}=\{k\delta | k\geq 1\}$. It is therefore natural to consider an extended set $\widehat{\Delta}^{+,\ext}$
of~\eqref{eq:extended-affine-roots}. Then, the degree reasoning as in~\cite{LR} provides a natural analogue of~\eqref{eqn:1-to-1 intro}:
\begin{equation}\label{eqn:affine 1-to-1 intro}
  \SL \colon \wDelta^{+,\ext} \iso \{ \text{affine standard Lyndon words} \}.
\end{equation}
In~\cite{AT}, we established a generalized Leclerc algorithm describing this bijection and used it to compute affine SL-words
in type $A$ with any order on $\wI$. The resulting explicit formulas illustrated a stunning \emph{periodicity} for $\aslaws$
(expressing all via $\{\SL(\alpha)\colon |\alpha|<|\delta|\}$) as well as \emph{pre-convexity} and \emph{monotonicity}:
\begin{align}
  & \alpha<\alpha+\beta<\beta \quad \mathrm{or} \quad \beta<\alpha+\beta<\alpha
  \qquad \forall\ \alpha,\beta,\alpha+\beta\in \wDelta^{+,\re},
  \label{eq:pre-convex}\\
  & \alpha<\alpha+\delta<\alpha+2\delta<\cdots \quad \mathrm{or} \quad \alpha>\alpha+\delta>\alpha+2\delta>\cdots
  \qquad \forall\ \alpha\in \wDelta^{+,\re}.
  \label{eq:monot}
\end{align}

Our work is devoted to the further study of affine SL-words in all types, where explicit formulas are deemed
inefficient (especially for exceptional types, like $E_8$). In our earlier work~\cite{ET}, we established~\eqref{eq:monot}
and generalization of~\eqref{eq:pre-convex} in a conceptual way without having explicit formulas for affine SL-words.
One of the key insights underlying~\cite{ET} was the consideration of the complete flags~\eqref{eq:flag}.

In the present work, we reverse the logic of~\cite{AT} by using the results of~\cite{ET} to establish the
periodicity of chains of real affine SL-words. The new idea that is key to our present analysis is to consider
the polarization of the finite root system induced by the sets of increasing and decreasing chains.
For decreasing chains, the second option of~\eqref{eq:monot}, the periodicity is established in
Theorem~\ref{thm:dec.modulo.s}, with the tight bounds on the resulting periodicity function obtained
in Corollary~\ref{cor:max.periodicities} and Proposition~\ref{lem:precise-bound}.
For increasing chains, the first option of~\eqref{eq:monot}, the periodicity is established in
Theorem~\ref{thm:incr.modulo.c}, with the tight bounds on the resulting periodicity function obtained
in Corollary~\ref{cor:max.period-increasing} and Proposition~\ref{prop:c.bounds}.
In contrast to~\cite{AT}, these chain periodicities do not always start from the very first elements in the chains.
We note that these periodicity patterns allow us to derive all affine SL-words from a
finite subset which can be easily computed through our code in the Appendix.

We also note that while the usage of standard and costandard factorizations of SL-words for finite setup was quite
interchangeable, we crucially use the standard factorization for decreasing chains and costandard factorization for increasing chains,
see Examples~\ref{ex:costand-decr-fails} and~\ref{ex:stand-incr-fails} highlighting the importance of this choice. At the same time,
one can also ``break down'' any sufficiently long real affine SL-words into ``simpler blocks'' through the costandard factorization,
see Remarks~\ref{rem:cost.fact.structure} and~\ref{rem:cost.fact.structure-incr}.


\subsection{Outline}\label{ssec:outline}
\

\noindent
The structure of the present paper is the following:

\smallskip
\noindent
$\bullet$
In Section~\ref{sec:setup}, we recall the basic results about affine standard Lyndon words.
This exposition, apart from Lemmas~\ref{lemma:lyndon.subword}--\ref{lemma:stronger.no.splitting},
is mostly based on works~\cite{AT,ET}.

\smallskip
\noindent
$\bullet$
In Section~\ref{sec:earlier-results}, we summarize the key Convexity and Monotonicity properties of~\cite{ET},
along with some immediate corollaries of those that will be needed later.

\smallskip
\noindent
$\bullet$
In Section~\ref{sec:chains}, we introduce the polarization of the corresponding finite root system induced by
increasing and decreasing chains, the key new idea of the present note. We establish some basic properties of
increasing and decreasing chains. We also introduce the Connectivity property, crucial for the study of increasing chains.

\smallskip
\noindent
$\bullet$
In Section~\ref{sec:dec}, we thoroughly study decreasing chains, establishing the periodicity pattern for those and obtaining
tight bounds on the periodicity function.

\smallskip
\noindent
$\bullet$
In Section~\ref{sec:incr}, we present a similar (but more tedious) analysis of increasing chains,
establishing the periodicity patterns and obtaining tight bounds on their periodicity.

\smallskip
\noindent
$\bullet$
In Appendix~\ref{sec:app_code}, we present the computer code that we heavily used to find the correct patterns
for affine standard Lyndon words, verify Conjecture~\ref{conj:connectivity} for all exceptional types, and derive most
of the examples presented through the note.


\subsection{Acknowledgment}\label{ssec:acknowl}
\

A.T.\ is deeply indebted to Andrei Negu\c{t} for numerous inspiring  discussions over the years;
to IHES for the hospitality and wonderful working conditions in the summer 2025, where key ideas
for this project were developed. The work of both authors was partially supported by NSF Grant DMS-$2302661$.
The work of C.E.\ was supported by the Purdue Joel Spira Undergraduate Summer Research Award.


\section{Setup, Notations, and Basic Properties}\label{sec:setup}

In this section, we recall the basics of standard Lyndon words~\cite{LR,L}, with the main emphasize placed on
affine Lie algebras. While this section mostly follows~\cite{AT} and~\cite[\S2]{ET}, it also contains new
Lemmas~\ref{lemma:lyndon.subword}--\ref{lemma:stronger.no.splitting} that will be used later.


\subsection{Lyndon words}\label{ssec:L-words}
\

Let $I$ be a finite ordered alphabet, and let $I^*$ be the set of all finite length words in the alphabet $I$.
For $u = [i_1\ldots i_k] \in I^*$, we define its length by $|u| = k$. Moreover, we consider the lexicographical
order on $I^*$ defined as follows:
$$
    [i_1\ldots i_k] < [j_1\ldots j_l]\quad \text{if } \begin{cases}
        i_1 = j_1,\ldots,i_a=j_a,i_{a+1}<j_{a+1} \text{ for some }a \geq 0\\
          \ \  \text{or}\\
        i_1=j_1,\ldots,i_k=j_k \text{ and } k < l
    \end{cases}.
$$

\begin{definition}\label{def:lyndon}
A word $\ell=[i_1\dots i_k]$ is called \textbf{Lyndon} if it is smaller than all of its cyclic permutations:
$[i_1 \dots i_{a-1} i_a \dots i_k] < [i_a \dots i_k i_1 \dots i_{a-1}]$ for all $a \in \{2,\dots,k\}$.
\end{definition}

For a word $w = [i_1 \dots i_k]\in I^*$, the subwords:
\begin{equation*}
  w_{a|} =  [i_1 \dots i_a] \qquad \text{and} \qquad w_{|a} = [i_{k-a+1} \dots i_k]
\end{equation*}
with $0\leq a\leq k$ are usually called a \textbf{prefix} and a \textbf{suffix} of $w$, respectively.
We call such a prefix or a suffix proper if $0<a<k$. It is well-known that Definition~\ref{def:lyndon}
is equivalent to the following one:

\begin{definition}\label{def:lyndon-2}
A word $w$ is Lyndon if it is smaller than all of its proper suffixes:
$w < w_{| a}$ for all $0 < a < |w|$.
\end{definition}

As a corollary, we record the following basic property:

\begin{lemma}\label{lemma:lyndon}
If $\ell_1 < \ell_2$ are Lyndon, then $\ell_1\ell_2$ is also Lyndon, and so $\ell_1\ell_2 < \ell_2\ell_1$.
\end{lemma}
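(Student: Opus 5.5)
We verify Definition~\ref{def:lyndon-2} directly for $w=\ell_1\ell_2$: every proper suffix of $w$ must be strictly larger than $w$. Proper suffixes split into two kinds. Those of length $<|\ell_2|$ are proper suffixes of $\ell_2$; the suffix of length exactly $|\ell_2|$ is $\ell_2$ itself; and those of length $>|\ell_2|$ have the form $s\ell_2$ with $s$ a proper suffix of $\ell_1$. The second claim then follows at once: $\ell_2\ell_1$ is a cyclic permutation of $\ell_1\ell_2$, so Definition~\ref{def:lyndon} gives $\ell_1\ell_2<\ell_2\ell_1$.

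Suffixes $s\ell_2$ with $s$ a proper suffix of $\ell_1$. Since $\ell_1$ is Lyndon, $s>\ell_1$, and $s$ is shorter than $\ell_1$, so $s$ cannot be an extension of $\ell_1$. There are two cases.
\begin{enumerate}
\item If $s$ is not a prefix of $\ell_1$, then $s$ and $\ell_1$ differ at some position within the length of $s$, with a larger letter in $s$. Hence $s\ell_2>\ell_1\ell_2$.
\item If $s$ is a proper prefix of $\ell_1$, then $s<\ell_1$, contradicting $s>\ell_1$. So this case cannot occur.
\end{enumerate}

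Suffix $\ell_2$. Since $\ell_1<\ell_2$, either they first differ at some position with a smaller letter in $\ell_1$, which immediately gives $\ell_1\ell_2<\ell_2$, or $\ell_1$ is a proper prefix of $\ell_2$. This second case is the main obstacle. Write $\ell_2=\ell_1 t$ with $t$ nonempty; we need $\ell_1\ell_2<\ell_2$, i.e.\ $\ell_1\ell_1t<\ell_1t$, which reduces to $\ell_1 t<t$, i.e.\ $\ell_2<t$. This holds because $t$ is a proper suffix of the Lyndon word $\ell_2$.

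Suffixes $v$ that are proper suffixes of $\ell_2$. Here $v>\ell_2$ because $\ell_2$ is Lyndon, and we have just shown $\ell_2>\ell_1\ell_2$. By transitivity, $v>\ell_1\ell_2$.
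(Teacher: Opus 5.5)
Your proof is correct and follows the route the paper has in mind: the paper gives no separate argument, recording the lemma simply as a corollary of the suffix characterization in Definition~\ref{def:lyndon-2}, and your case analysis over proper suffixes (including the key case $\ell_2=\ell_1t$, settled by $t>\ell_2$) is exactly that verification. As a minor streamlining, the second claim also follows directly from $\ell_1\ell_2<\ell_2<\ell_2\ell_1$, without appealing to the equivalence with Definition~\ref{def:lyndon}.
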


Let us now recall several basic facts from the theory of Lyndon words (cf.~\cite{Lo}).

\begin{proposition}\label{prop:cost.factor}
Any Lyndon word $\ell$ with $|\ell|>1$ has a \textbf{costandard factorization} $\ell = \ell^l \ell^r$ defined by the property
that $\ell^r$ is the longest proper suffix of $\ell$ which is also a Lyndon word. Then, $\ell^l$ is also a Lyndon word.
\end{proposition}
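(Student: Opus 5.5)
The plan is to prove that $\ell^l$ is Lyndon directly from Definition~\ref{def:lyndon-2}, arguing by contradiction. First we note that the costandard factorization exists. Since $|\ell|>1$, the last letter of $\ell$ is a proper suffix, and every single letter is trivially Lyndon. So the set of proper Lyndon suffixes is nonempty, and $\ell^r$, the longest of them, is well defined with $\ell^l$ nonempty.

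Next, suppose $\ell^l$ is not Lyndon. Then it has a proper suffix $v$ with $v\le \ell^l$. We will use the standard fact that any word factors uniquely as a nonincreasing product of Lyndon words (Chen--Fox--Lyndon). Taking the last factor of the Lyndon factorization of $\ell^l$, we get a Lyndon suffix $m$ of $\ell^l$. This $m$ is a proper suffix of $\ell^l$, since $\ell^l$ itself is not Lyndon.

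The key step is to compare $m$ with $\ell^r$.
\begin{itemize}
\item If $m<\ell^r$, then Lemma~\ref{lemma:lyndon} shows that $m\ell^r$ is Lyndon. But $m\ell^r$ is a proper suffix of $\ell$ (because $m$ is a proper suffix of $\ell^l$) and it is longer than $\ell^r$. This contradicts the maximality of $\ell^r$.
\item If $m\ge \ell^r$, we exploit that $\ell$ is Lyndon. Write $\ell^l=um$ with $u$ nonempty. Then $m\ell^r$ is a proper suffix of $\ell$, so $\ell<m\ell^r$. Also $\ell\le \ell^r\le m$, using that $\ell$ is smaller than its proper suffix $\ell^r$.
\end{itemize}

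The main obstacle is turning the second case into a contradiction. The cleanest fix is to choose $m$ more carefully: take it to be the \emph{minimal} Lyndon factor. In the Lyndon factorization $\ell^l=m_1\cdots m_s$ with $m_1\ge\dots\ge m_s$ and $s\ge2$, set $m=m_s$.

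We then argue that $m_s\ell^r$ factors as a product of Lyndon words. If $m_s\ge \ell^r$, the Lyndon factorization of $\ell$ becomes $m_1\cdots m_s\cdot(\text{factorization of }\ell^r)$. This is a nonincreasing product with at least two factors. By uniqueness of the factorization, this contradicts $\ell$ being Lyndon, since the factorization of a Lyndon word consists of the word itself. So $m_s<\ell^r$, and the first case applies.

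Thus in all cases we obtain a contradiction, and $\ell^l$ is Lyndon. The only external input is the existence and uniqueness of the Lyndon factorization. If we prefer to avoid it, we can instead take $m$ to be the shortest suffix of $\ell^l$ that is minimal among its suffixes, check by hand that it is Lyndon, and redo the second case with the inequality $\ell<m\ell^r$.
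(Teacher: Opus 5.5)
Your proof is correct. The paper does not prove this statement itself: it is one of the basic facts recalled from \cite{Lo} without argument, so there is no proof in the text to compare against. Your derivation from Lemma~\ref{lemma:lyndon} and the canonical factorization of Proposition~\ref{prop:canon.factor} is a valid self-contained proof. It is not circular, because existence and uniqueness of the Lyndon factorization are established without reference to costandard factorizations.

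One simplification is worth recording: your second case does not need uniqueness of the factorization. The word $m_1$ is a prefix of $\ell^l$, which is a nonempty proper prefix of $\ell$, and $\ell^r$ is a proper suffix of the Lyndon word $\ell$. Hence $m_s\le m_1\le \ell^l<\ell<\ell^r$, so $m_s\geq \ell^r$ is impossible outright. The inequality $m\le \ell^l$ is exactly what your first attempt, with an arbitrary Lyndon suffix $m$, was missing.

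The same chain also completes your closing alternative. Take $m$ to be the lexicographically smallest nonempty suffix of $\ell^l$. Then $m$ is Lyndon, and it is a proper suffix when $\ell^l$ is not Lyndon. Also $m\le \ell^l$, because $\ell^l$ is a suffix of itself. Hence $m<\ell^r$, and your first case applies. This version uses nothing beyond Lemma~\ref{lemma:lyndon} and Definition~\ref{def:lyndon-2}.
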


We have an analogous factorization with the longest proper Lyndon prefix:

\begin{proposition}\label{prop:stand.factor}
Any Lyndon word $\ell$ with $|\ell|>1$ has a \textbf{standard factorization} $\ell = \ell^{ls} \ell^{rs}$ defined
by the property that $\ell^{ls}$ is the longest proper prefix of $\ell$ which is also a Lyndon word.
Then, $\ell^{rs}$ is also a Lyndon word.
\end{proposition}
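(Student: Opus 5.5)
We need to prove that if $\ell^{ls}$ is the longest proper Lyndon prefix of a Lyndon word $\ell$ with $|\ell|>1$, then the remaining suffix $\ell^{rs}$ is also Lyndon.

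The prefix exists because the first letter $\ell_{1|}$ is a proper Lyndon prefix. So $\ell^{ls}$ is well defined, and we write $\ell=uv$ with $u=\ell^{ls}$ and $v=\ell^{rs}\neq\emptyset$. The tool will be the standard Lyndon (Chen--Fox--Lyndon) factorization of $v$ into a nonincreasing product of Lyndon words. Assuming $v$ is not Lyndon, we will combine a segment of $v$ with $u$ to build a Lyndon prefix longer than $u$, contradicting the maximality of $u$.

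\textbf{Step 1.} Write $v=v_1v_2\cdots v_m$ with each $v_j$ Lyndon and $v_1\geq v_2\geq\cdots\geq v_m$. This is standard and follows from Definition~\ref{def:lyndon-2}: take $v_1$ to be the longest Lyndon prefix of $v$ and induct. If $m=1$ we are done, so assume $m\geq 2$.

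\textbf{Step 2.} Compare $u$ with $v_1$. The word $v_1v_2\cdots v_m$ is a proper suffix of the Lyndon word $\ell$, so Definition~\ref{def:lyndon-2} gives $\ell<v$. We claim this forces $u<v_1$.
\begin{itemize}
\item If $u$ and $v_1$ differ at some position within the shorter of the two, then $\ell$ and $v$ differ there too, and $\ell<v$ gives $u<v_1$.
\item Suppose instead $v_1$ is a prefix of $u$ with $v_1\neq u$. Then $v_1$ is a proper prefix of the Lyndon word $u$. Since $v_1\geq v_j$ for all $j$, the whole of $v$ is lexicographically bounded by the powers $v_1v_1\cdots$, which produces a contradiction with $\ell=uv<v$.
\item The remaining configurations, namely $v_1=u$ or $u$ a prefix of $v_1$, also give $u<v_1$ or $u=v_1$.
\end{itemize}
If $u\leq v_1$, then Lemma~\ref{lemma:lyndon} (together with the fact that $uu$-type concatenations $u\,u^k$ with $u$ Lyndon handle the equality case) shows that $uv_1$ is Lyndon. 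If $u=v_1$, however, $uu$ is not Lyndon.

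\textbf{Step 3 (expected main obstacle).} The plan is to make Step 2 airtight, and in particular to exclude both the equality case and the prefix case. Instead of $uv_1$, I would use the sharper observation that $uv_1\cdots v_j$ for suitable $j<m$ is a proper prefix of $\ell$. Concretely, I would let $v_1$ be the longest Lyndon prefix of $v$ and show directly that $uv_1$ is Lyndon whenever $v_1\neq v$:
\begin{itemize}
\item Any proper suffix of $uv_1$ either starts inside $u$, or is a suffix $s$ of $v_1$.
\item For a suffix starting inside $u$ (say $s'v_1$ with $s'$ a proper suffix of $u$), we use $u<s'$ from $u$ Lyndon. If $s'$ is not a prefix of $u$, this immediately gives $uv_1<s'v_1$. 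If $s'$ is a prefix of $u$, we transfer the comparison to $\ell<s'v$, which holds since $\ell$ is Lyndon.
\item For a suffix $s$ of $v_1$, use $\ell<sv_2\cdots v_m$.
\end{itemize}
In each case, truncating the comparison of $\ell$ against its suffixes to length $|uv_1|$ yields $uv_1\leq$ the suffix. Strictness comes from the fact that a suffix that is also a prefix (a border) of $uv_1$ would contradict the Lyndon property of $\ell$.

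Since $uv_1$ is a proper prefix of $\ell$ (because $m\geq2$) and is longer than $u$, this contradicts the maximality of $u=\ell^{ls}$. Hence $m=1$, and $v=\ell^{rs}$ is Lyndon. The delicate point is the border/strictness handling in these truncated comparisons. For that I would fall back on the classical argument in~\cite{Lo}.
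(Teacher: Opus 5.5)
The paper gives no proof of this proposition; it is recalled from~\cite{Lo}. Your overall strategy is the standard one: factor $v=\ell^{rs}$ canonically as $v_1\cdots v_m$, show $uv_1\in\LL$, and contradict the maximality of $u=\ell^{ls}$ when $m\ge 2$. There is, however, a genuine gap. The one step that carries all the content, namely $uv_1\in\LL$ (equivalently $u<v_1$), is never established.

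In Step~2 you dismiss the case where $v_1$ is a proper prefix of $u$ with an unproved bound (``$v$ is bounded by $v_1v_1\cdots$''). You also leave the case $u=v_1$ open, and note yourself that $uu$ is not Lyndon. In Step~3 the only nontrivial case is a suffix $s$ of $v_1$ that is also a prefix of $uv_1$. You claim such a border ``would contradict the Lyndon property of $\ell$'', but this does not work as stated: $s$ is a border of $uv_1$, not of $\ell$. Writing $uv_1=sr$ with $r$ nonempty, the inequality $\ell<sv_2\cdots v_m$ only yields $rv_2\cdots v_m<v_2\cdots v_m$. This is not visibly contradictory, since a word can be smaller than its own proper suffix. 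You then defer the point to~\cite{Lo}, so the argument stops exactly at its key step. By contrast, your sub-case ``$s'$ is a prefix of $u$'' is vacuous, because $u<s'$ with $|s'|<|u|$ already rules it out; that half of Step~3 is fine.

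The missing idea is the \emph{uniqueness} in Proposition~\ref{prop:canon.factor}, which handles all of these cases at once. Suppose $u\ge v_1$. Then $\ell=u\cdot v_1\cdots v_m$ with $u\ge v_1\ge\cdots\ge v_m$ all in $\LL$ is a canonical factorization of $\ell$ with $m+1\ge 2$ factors. But $\ell\in\LL$ also has the one-factor canonical factorization $\ell$, contradicting uniqueness. Hence $u<v_1$, and $uv_1\in\LL$ by Lemma~\ref{lemma:lyndon}. If $m\ge2$, then $uv_1$ is a proper Lyndon prefix of $\ell$ longer than $u$, a contradiction. So $m=1$ and $\ell^{rs}=v_1\in\LL$. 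No border analysis is needed. Note also that your Step~1 is just Proposition~\ref{prop:canon.factor}, not a consequence of Definition~\ref{def:lyndon-2} alone.
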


Let $\LL$ be the set of all Lyndon words. Any word can be canonically built from~$\LL$:

\begin{proposition}\label{prop:canon.factor}
Any word $w\in I^*$ has a unique  \textbf{canonical factorization}:
\begin{equation*}
  w = \ell_1 \dots \ell_k  \quad \text{with} \quad \ell_1 \geq \dots \geq \ell_k \in \LL.
\end{equation*}
\end{proposition}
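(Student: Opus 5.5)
The plan is to prove existence and uniqueness separately. Both rest on the characterization of Lyndon words in Definition~\ref{def:lyndon-2} and on Lemma~\ref{lemma:lyndon}. Since single letters are Lyndon, every word $w$ admits at least one factorization into Lyndon words, namely into its letters.

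For \textbf{existence}, I would take a factorization $w=\ell_1\dots\ell_k$ into Lyndon words with $k$ minimal; the letter factorization shows one exists. If some $\ell_a<\ell_{a+1}$, then Lemma~\ref{lemma:lyndon} makes $\ell_a\ell_{a+1}$ Lyndon. Merging these two factors gives a factorization with $k-1$ Lyndon factors, contradicting minimality. Hence $\ell_1\geq\dots\geq\ell_k$. Alternatively, one can start from the letter factorization and keep merging adjacent increasing pairs; this terminates because the number of factors strictly decreases.

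For \textbf{uniqueness}, the key claim is that in any such factorization $\ell_k$ is the lexicographically smallest nonempty suffix of $w$. Granting this, $\ell_k$ is determined by $w$ alone, and induction on $|w|$ applied to $\ell_1\dots\ell_{k-1}$ finishes the argument. The suffixes of $w$ fall into two types.
\begin{itemize}
\item A suffix of the form $s\,\ell_{b+1}\dots\ell_k$, where $s$ is a nonempty suffix of $\ell_b$ (possibly $\ell_b$ itself) and $b<k$.
\item A nonempty suffix of $\ell_k$.
\end{itemize}

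In the second case, Definition~\ref{def:lyndon-2} gives $\ell_k\leq$ any suffix of $\ell_k$. In the first case, we have $s\geq\ell_b\geq\ell_k$. This uses the standard fact that a Lyndon word is no larger than each of its suffixes, together with the monotonicity of the factorization. We then need to pass from $s\geq\ell_k$ to $s\,\ell_{b+1}\dots\ell_k\geq\ell_k$. If $\ell_k$ is not a prefix of $s$, the comparison $s>\ell_k$ is decided at a differing letter, and appending letters to $s$ does not change it. If $\ell_k$ is a prefix of $s$, then $\ell_k$ is a prefix of the longer word $s\ell_{b+1}\dots\ell_k$, so it is smaller. Either way $\ell_k$ is the minimal suffix. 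The minimality is strict only up to equal words, but that is harmless: we need $\ell_k$ to be the \emph{shortest} minimal suffix. Any suffix equal to $\ell_k$ as a word has the same length, so the last factor is pinned down by its length.

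The main obstacle I expect is this suffix comparison. The lexicographic order is not compatible with right concatenation in general; for example, $u<v$ with $u$ a prefix of $v$ can flip after appending letters. So the prefix-versus-mismatch case split above must be carried out carefully, and it must correctly handle the equality case $\ell_b=\ell_k$.
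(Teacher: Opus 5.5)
Your proof is correct. The paper gives no proof of this proposition: it is recalled as a standard fact about Lyndon words with a pointer to \cite{Lo}. The natural comparison is therefore with the classical textbook argument.

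\textbf{Existence.} Your argument is exactly the standard one: take a factorization into Lyndon words with the fewest factors, and merge any ascent $\ell_a<\ell_{a+1}$ using Lemma~\ref{lemma:lyndon}.

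\textbf{Uniqueness.} The usual textbook route works with the \emph{first} factor. Suppose $|\ell_1|>|\ell'_1|$. Write $\ell_1=\ell'_1\cdots\ell'_j u$ with $u$ a nonempty prefix of $\ell'_{j+1}$, and obtain the contradiction $\ell_1<u\le\ell'_{j+1}\le\ell'_1\le\ell_1$. In effect this identifies $\ell_1$ as the longest Lyndon prefix of $w$.

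You instead identify the \emph{last} factor intrinsically, as the lexicographically smallest nonempty suffix of $w$, and peel it off by induction on $|w|$. Your case analysis is sound:
\begin{itemize}
\item If $\ell_k$ is a prefix of $s$, it is a proper prefix of the longer word $s\ell_{b+1}\cdots\ell_k$. The equality case $s=\ell_b=\ell_k$ falls here.
\item Otherwise, $s\ge\ell_k$ rules out $s$ being a proper prefix of $\ell_k$, so the comparison is decided at a letter and survives appending.
\end{itemize}
Both cases give strict inequalities, and proper suffixes of $\ell_k$ are strictly larger by Definition~\ref{def:lyndon-2}. So $\ell_k$ is the strict minimum.

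Your closing remark about needing the ``shortest minimal suffix'' is unnecessary. Distinct suffixes of $w$ have distinct lengths, hence are distinct words, so the minimum of this finite totally ordered set is attained at a unique position automatically.

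The two routes are dual. Yours describes $\ell_k$ through minimal suffixes, in the same spirit as Lemma~\ref{lemma:right.costfac.minimal} and the costandard factorization. The first-factor route describes $\ell_1$ through longest Lyndon prefixes, in the spirit of the standard factorization.
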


The following result of~\cite{M} is often used when comparing words:

\begin{lemma}\label{lem:Melancon}
For two canonical factorizations $w = \ell_1 \dots \ell_k$ and $w' = \ell'_1 \dots \ell'_{k'}$ such that
$w$ is not a prefix of $w'$:
  $$w<w' \iff \exists 1\leq a\leq \min\{k,k'\} \colon \ell_{1}=\ell'_1,\ldots, \ell_{a-1}=\ell'_{a-1}, \ \ \mathrm{and} \ \ \ell_a<\ell'_a.$$
\end{lemma}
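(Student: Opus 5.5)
The idea is to reduce both directions to the pairwise comparison of Lyndon words, using the standard fact that a concatenation $u\ell$, with $\ell$ Lyndon, has canonical factorization obtained by merging $\ell$ into the tail of the factorization of $u$. We only need the elementary facts recorded above: Definition~\ref{def:lyndon-2}, Lemma~\ref{lemma:lyndon}, and the uniqueness in Proposition~\ref{prop:canon.factor}.

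\emph{Direction ($\Leftarrow$).} Suppose $\ell_1=\ell'_1,\dots,\ell_{a-1}=\ell'_{a-1}$ and $\ell_a<\ell'_a$. After removing the common prefix $\ell_1\cdots\ell_{a-1}$, it suffices to show
\[
\ell_a\ell_{a+1}\cdots\ell_k<\ell'_a\cdots\ell'_{k'}.
\]
\begin{enumerate}[label=\arabic*.]
\item If $\ell_a$ is not a prefix of $\ell'_a$, the inequality $\ell_a<\ell'_a$ is decided at a differing letter. That letter lies inside both words, so the inequality propagates to the full words at once.
\item If $\ell_a$ is a proper prefix of $\ell'_a$, write $\ell'_a=\ell_a v$. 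We must compare $\ell_{a+1}\cdots\ell_k$ with $v\,\ell'_{a+1}\cdots$. Since $\ell'_a$ is Lyndon, its proper suffix satisfies $v>\ell'_a>\ell_a\geq\ell_{a+1}\geq\cdots$.
\item The remaining claim is this: if $m\ge\ell_{a+1}\ge\dots\ge\ell_k$ are Lyndon and $v>m$ with $m$ not a prefix of $v$ (here $m=\ell_a$, which is a prefix of $\ell'_a$ but possibly also of $v$), then $v\cdots>\ell_{a+1}\cdots\ell_k$. I handle this by induction on total length, peeling off factors. The standard lemma used is: a nonincreasing product of Lyndon words, each at most $m$, is at most $m^N$ lexicographically, up to the prefix issue.
\end{enumerate}
The hypothesis that $w$ is not a prefix of $w'$ is what rules out the degenerate case in which the whole of $w$ is exhausted inside $w'$.

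\emph{Direction ($\Rightarrow$).} Conversely, suppose $w<w'$. Let $a$ be the first index with $\ell_a\neq\ell'_a$; such an index exists because the factorizations are unique and $w\ne w'$. If instead one factorization ran out first, one word would be a prefix of the other. That is excluded when the shorter one is $w$, and contradicts $w<w'$ when the shorter one is $w'$. If $\ell'_a<\ell_a$, then direction ($\Leftarrow$) with the roles swapped gives $w'<w$, a contradiction, provided $w'$ is not a prefix of $w$; but that case would also force $w'<w$. Hence $\ell_a<\ell'_a$.

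\emph{Main obstacle.} The main difficulty is the prefix case in step 2 of ($\Leftarrow$). There one must show that a word beginning with a proper Lyndon suffix $v$ of $\ell'_a$ dominates any nonincreasing Lyndon product bounded by $\ell_a$. I would first prove an auxiliary lemma: if $\ell$ is Lyndon and $u=\ell_{1}\cdots\ell_r$ is canonical with all $\ell_i\le\ell$, then every word starting with some $x>\ell$ that does not have $\ell$ as a prefix exceeds $u$. I would prove it by induction on $r$, using Definition~\ref{def:lyndon-2} repeatedly.
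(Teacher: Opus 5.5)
The paper gives no proof of this lemma; it is quoted from Melan\c{c}on~\cite{M}, so your argument has to stand on its own. Your reductions are fine: ($\Rightarrow$) follows from ($\Leftarrow$) as you say, and case 1 of ($\Leftarrow$) is immediate. Case 2, $\ell'_a=\ell_a v$, is the whole content of the lemma, and there you have only a plan whose pieces do not fit together.

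\begin{itemize}
\item Your claim in step 3 assumes that $\ell_a$ is not a prefix of $v$. As you note yourself, this can fail: for letters $a<b$ take $\ell_a=a$, $\ell'_a=aab$, $v=ab$.
\item Your final auxiliary lemma carries the same restriction. When $v$ begins with $\ell_a$, no prefix $x$ of $v\,\ell'_{a+1}\cdots\ell'_{k'}$ satisfies $x>\ell_a$ without having $\ell_a$ as a prefix. So the lemma says nothing in exactly the hard case.
\item Its proposed proof, by induction on $r$ using only Definition~\ref{def:lyndon-2}, meets the same obstacle as soon as the first factor of $u$ is a proper prefix of $\ell$. The phrase ``at most $m^N$, up to the prefix issue'' is precisely the statement that needs proof.
\end{itemize}

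To make your route work you would need two facts, and neither is stated or proved. First, $\ell_a\ell_{a+1}\cdots\ell_k$ is a prefix of, or smaller at the first difference than, the infinite word $\ell_a\ell_a\cdots$. Second, a Lyndon word with proper prefix $\ell_a$ exceeds that infinite word at a position inside it.

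A direct way to close the gap uses two facts.
\begin{itemize}
\item[(i)] $\ell_a$ is the longest Lyndon prefix of $U=\ell_a\ell_{a+1}\cdots\ell_k$. Indeed, if $L=\ell_a\cdots\ell_{j-1}s$ were a longer one, with $s$ a nonempty prefix of $\ell_j$, then $L<s\leq\ell_j\leq\ell_a<L$.
\item[(ii)] If $pc$ is a prefix of a Lyndon word and $d>c$ is a letter, then $pd$ is Lyndon. To see this, take a proper suffix $sd$ of $pd$. Comparing the Lyndon word with its own suffix beginning with $sc$ shows that the prefix of $p$ of length $|s|+1$ is $\leq sc<sd$, hence $pd<sd$.
\end{itemize}

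Now let $U'=\ell_{a+1}\cdots\ell_k$ and consider three cases.
\begin{itemize}
\item If $v$ were a prefix of $U'$, then $\ell'_a$ would be a Lyndon prefix of $U$ longer than $\ell_a$, contradicting (i).
\item If $U'$ is a proper prefix of $v$, then $w$ is a proper prefix of $w'$, so $w<w'$.
\item Otherwise $U'$ and $v$ first differ at some position $q\leq\min\{|U'|,|v|\}$. Suppose the letter of $U'$ there were larger. Then (ii), applied to the prefix of $\ell'_a$ of length $|\ell_a|+q$, would make the prefix of $U$ of length $|\ell_a|+q$ Lyndon, again contradicting (i). Hence that letter is smaller, and $w<w'$.
\end{itemize}
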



\subsection{Bracketing and standard Lyndon words}\label{ssec:SL-words}
\

Let $\fa$ be a Lie algebra generated by a finite set $\{e_i\}_{i \in I}$ labeled by the alphabet~$I$.

\begin{definition}\label{bracketing}
The \textbf{bracketing} of $\ell\in \LL$ is given inductively by:
\begin{itemize}[leftmargin=0.7cm]

\item[$\bullet$]
$\sb[i] = e_i \in \fa$ for $i \in I$,

\item[$\bullet$]
$\sb[\ell] = [\sb[\ell^l],\sb[\ell^r]] \in \fa$ if $|\ell|>1$.

\end{itemize}
\end{definition}

The major importance of this definition is due to the following result of Lyndon:

\begin{theorem}\label{thm:Lyndon.theorem}(\cite[Theorem 5.3.1]{Lo})
If $\fa$ is a free Lie algebra in the generators $\{e_i\}_{i\in I}$, then the set
$\big\{\sb[\ell] \,|\, \ell\in \LL \big\}$ provides a basis of $\fa$.
\end{theorem}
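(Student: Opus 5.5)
The plan is to prove the statement in two halves, spanning and linear independence, with the Poincaré--Birkhoff--Witt theorem as the bridge. Let $\fa$ be free on $\{e_i\}_{i\in I}$. Its universal enveloping algebra $U(\fa)$ is then the free associative algebra $\BC\langle I\rangle$, whose basis is the set of all words $I^*$. Everything is graded by multidegree, so I work in one fixed multidegree at a time, where all spaces are finite dimensional.

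\textbf{Spanning.} I will prove by induction on $|w|$, for any word $w$, that every iterated bracket of the generators $e_{i_1},\dots,e_{i_k}$ (in some arrangement) lies in the span of $\sb[\ell]$ with $\ell\in\LL$ of the same multidegree. For this I need two rewriting facts.

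\begin{itemize}[leftmargin=0.7cm]
\item[$\bullet$] For Lyndon words $\ell_1<\ell_2$, the bracket $[\sb[\ell_1],\sb[\ell_2]]$ is a combination of $\sb[\ell]$ with $\ell\geq \ell_1\ell_2$ of the same multidegree. I will show this by induction, using the Jacobi identity whenever $\ell_2$ is not the costandard right factor of $\ell_1\ell_2$, together with Proposition~\ref{prop:cost.factor} and Lemma~\ref{lemma:lyndon}.
\item[$\bullet$] Antisymmetry handles the case $\ell_1>\ell_2$, and $[\sb[\ell],\sb[\ell]]=0$ handles equality.
\end{itemize}

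Since the Lie algebra is spanned by iterated brackets of generators, and each such bracket reduces by these rules, the elements $\sb[\ell]$ span $\fa$.

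\textbf{Independence.} The key triangularity claim is that in $\BC\langle I\rangle$,
\[
  \sb[\ell]=\ell+\sum_{w>\ell,\ |w|=|\ell|} c_w\, w .
\]
I will prove this by induction using the costandard factorization $\ell=\ell^l\ell^r$. We have $\sb[\ell]=\sb[\ell^l]\sb[\ell^r]-\sb[\ell^r]\sb[\ell^l]$. By induction, the leading term of the first product is $\ell^l\ell^r=\ell$. Every word in the second product is at least $\ell^r\ell^l$, which is greater than $\ell$ by Lemma~\ref{lemma:lyndon}. The cross terms are handled by a lexicographic argument for words of equal length.

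For a decreasing sequence $\ell_1\geq\dots\geq\ell_k$ of Lyndon words, the PBW monomial $\sb[\ell_1]\cdots\sb[\ell_k]$ then has leading word $\ell_1\cdots\ell_k$. By Proposition~\ref{prop:canon.factor}, these leading words run bijectively over all of $I^*$. A unitriangular matrix is invertible, so these PBW monomials form a basis of $U(\fa)$.

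By PBW, any ordered basis of $\fa$ yields a basis of $U(\fa)$. Since the $\sb[\ell]$ span $\fa$, counting dimensions in each multidegree forces them to be a basis. Alternatively, linear independence of the PBW monomials directly implies linear independence of the $\sb[\ell]$ themselves.

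\textbf{Main obstacle.} The hardest step is the triangularity claim, specifically showing that all words other than $\ell$ in the product $\sb[\ell^l]\sb[\ell^r]$ are strictly greater than $\ell$. Lexicographic order is not compatible with concatenation when one word is a prefix of another. I would try first to strengthen the induction hypothesis to a statement about all words of the same multidegree, and to exploit the fact that $\ell$ is smaller than all of its proper suffixes (Definition~\ref{def:lyndon-2}). Only if that fails would I fall back on Lemma~\ref{lem:Melancon} to compare canonical factorizations.
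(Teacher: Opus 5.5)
The paper does not prove this statement; it only quotes \cite[Theorem 5.3.1]{Lo}. Your independence half is the standard argument and is correct.

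The step you flag as the main obstacle is not actually an obstacle. The elements $\sb[\ell^l]$ and $\sb[\ell^r]$ are homogeneous, so every word in $\sb[\ell^l]\sb[\ell^r]$ has the form $uv$ with $|u|=|\ell^l|$, $|v|=|\ell^r|$, $u\geq\ell^l$ and $v\geq\ell^r$. If $(u,v)\neq(\ell^l,\ell^r)$, the first letter where $uv$ and $\ell^l\ell^r$ differ lies inside a block of equal length, so $uv>\ell$. Prefix phenomena never occur. The same remark gives the leading word $\ell_1\cdots\ell_k$ of $\sb[\ell_1]\cdots\sb[\ell_k]$.

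The genuinely delicate point is your rewriting lemma for spanning. The Jacobi identity produces new brackets of the \emph{same} total length, and you have not named a quantity that decreases. With the invariant you state, $\ell\geq\ell_1\ell_2$, the natural candidates fail. For the alphabet $1<2$ we have $[\sb[112],\sb[122]]=[e_1,[\sb[12],\sb[122]]]+[\sb[1122],\sb[12]]$. In the second bracket the smaller factor $[1122]$ is both longer and lexicographically larger than $[112]$, while the larger factor drops from $[122]$ to $[12]$.

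The invariant that closes the induction is an upper bound instead. For Lyndon $\ell_1<\ell_2$, the bracket $[\sb[\ell_1],\sb[\ell_2]]$ is a combination of $\sb[\ell]$ with $\ell<\ell_2$ of the same multidegree. Prove this by induction on $|\ell_1\ell_2|$, and then on $\ell_2$.
\begin{itemize}
\item If $\ell_1$ is a letter or $\ell_1^r\geq\ell_2$, the bracket equals $\sb[\ell_1\ell_2]$ by Corollary~\ref{cor:left.cost.right}, and $\ell_1\ell_2<\ell_2$.
\item Otherwise write $\ell_1=xy$ (costandard, with $y<\ell_2$). Jacobi and the outer induction reduce everything to brackets $[\sb[x],\sb[w]]$ and $[\sb[z],\sb[y]]$ with $w,z<\ell_2$. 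Their larger factor is $<\ell_2$, since also $x<\ell_1<\ell_2$, so the inner induction applies.
\end{itemize}
Your lower bound $\ell\geq\ell_1\ell_2$ then follows a posteriori from triangularity.

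More economically, you can drop rewriting altogether; this is the standard textbook route. Your final step spends PBW on independence, which is free, rather than on spanning, which is where PBW is needed.
\begin{itemize}
\item Independence needs no PBW: a linear relation among the $\sb[\ell]$ in $\fa$ maps to one in $\BC\langle I\rangle$, contradicting triangularity.
\item Triangularity plus Proposition~\ref{prop:canon.factor} shows that the products $\sb[\ell_1]\cdots\sb[\ell_k]$ with $\ell_1\geq\cdots\geq\ell_k$ form a basis of $\BC\langle I\rangle$.
\item Suppose some homogeneous $x\in\fa$ were not in the span of the $\sb[\ell]$. Complete $\{\sb[\ell]\}\cup\{x\}$ to a basis of $\fa$, ordered so as to extend the order on Lyndon words. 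The resulting PBW basis of $U(\fa)$ would be linearly independent yet strictly contain a spanning set, which is impossible.
\end{itemize}
So PBW yields spanning directly, and the Jacobi induction becomes unnecessary.
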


A generalization of Theorem~\ref{thm:Lyndon.theorem} to Lie algebras $\fa$ generated by $\{e_i\}_{i\in I}$
was provided in~\cite{LR}. To state the result, define ${_we}\in U(\fa)$ for any word $w\in I^*$:
\begin{equation}\label{eqn:word}
  _{[i_1 \dots i_k]}e = e_{i_1} \dots e_{i_k} \in U(\fa).
\end{equation}
Consider the following new order on $I^*$:
\begin{equation*}
    v\succeq w \iff |v|<|w|  \ \  \mathrm{or} \ \ |v|=|w| \ \mathrm{and} \ v \geq w.
\end{equation*}
The following definition is due to \cite{LR}:

\begin{definition}\label{def:standard}
(a) A word $w$ is called \textbf{standard} if $_we\in U(\fa)$ cannot be expressed as a linear combination of
$_ve$ for various $v \succ w$, with $_we$ as in~\eqref{eqn:word}.

\medskip
\noindent
(b) A Lyndon word $\ell$ is called \textbf{standard Lyndon} if $\sb[\ell]\in \fa$ cannot be expressed as
a linear combination of $\sb[v]$ for various Lyndon words $v \succ \ell$.
\end{definition}

The following result is nontrivial and justifies the above terminology:

\begin{proposition}\label{prop:standard}(\cite{LR})
A Lyndon word is standard iff it is standard Lyndon.
\end{proposition}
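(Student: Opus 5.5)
The plan is to compare the two filtrations, by words $_we$ and by bracketings $\sb[\ell]$, through a triangularity property. The first step is the classical triangularity lemma in the free algebra $T=U(\text{free Lie algebra})$, cf.~\cite{Lo}. For every Lyndon word $\ell$ one has
\begin{equation*}
  \sb[\ell]=\,_{\ell}e+\sum_{v>\ell,\ |v|=|\ell|} c_v\, {_ve}.
\end{equation*}
This is proved by induction on $|\ell|$ using the costandard factorization $\ell=\ell^l\ell^r$ of Proposition~\ref{prop:cost.factor}. Expanding $[\sb[\ell^l],\sb[\ell^r]]$, the leading term $_{\ell^l\ell^r}e$ comes from the first product. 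All words arising from the second product $\sb[\ell^r]\sb[\ell^l]$ are $\geq \ell^r\ell^l>\ell^l\ell^r$ by Lemma~\ref{lemma:lyndon}. The words arising from the remaining terms exceed $\ell^l\ell^r$ by the inductive hypothesis. Since the identity holds in $T$, it also holds after projecting to $U(\fa)$.

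For a word with canonical factorization $w=\ell_1\cdots\ell_k$ (Proposition~\ref{prop:canon.factor}), set $P_w=\sb[\ell_1]\cdots\sb[\ell_k]$. Multiplying out gives $P_w={}_we+\sum_{v>w,|v|=|w|}c_v\,{_ve}$; here we use that lexicographic order on equal-length words is compatible with concatenation.

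\textbf{``Standard $\Rightarrow$ standard Lyndon''.} Suppose $\ell$ is standard but $\sb[\ell]=\sum_{v\succ \ell} c_v\sb[v]$ with $v$ Lyndon. By homogeneity only $v$ of the same degree, hence the same length, survive, so all such $v$ satisfy $v>\ell$. Substituting the triangular expansions on both sides, the right-hand side lies in the span of $_ue$ with $u\geq v>\ell$. The left-hand side is $_\ell e$ plus terms $_ue$ with $u>\ell$. Hence $_\ell e$ is a combination of $_ue$ with $u\succ \ell$, contradicting the standardness of $\ell$.

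\textbf{``Standard Lyndon $\Rightarrow$ standard''.} This direction needs the PBW theorem.
\begin{enumerate}
\item The elements $\sb[\ell]$ for $\ell$ standard Lyndon form a basis of $\fa$. They span because all Lyndon brackets span $\fa$ as a quotient of the free Lie algebra (Theorem~\ref{thm:Lyndon.theorem}). Any non-standard Lyndon bracket can then be rewritten through $\succ$-larger ones, and the process terminates since each graded piece is finite. Independence follows from the definition by looking at the $\succ$-minimal term of a putative relation.
\item By PBW, the products $P_w$ with $w=\ell_1\cdots\ell_k$, $\ell_1\geq\dots\geq\ell_k$ all standard Lyndon, form a basis of $U(\fa)$.
\item The key claim is that every $_ve$ lies in the span of basis elements $P_u$ with $u\geq v$, $|u|=|v|$.
\end{enumerate}
Granting the claim, suppose $\ell$ is standard Lyndon and $_\ell e=\sum_{v\succ\ell}c_v\,{_ve}$. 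Then $\sb[\ell]=P_\ell$ equals $_\ell e$ plus terms $_ue$ with $u>\ell$. By the claim this places $P_\ell$ in the span of basis vectors $P_u$ with $u>\ell$, which contradicts linear independence.

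The main obstacle is the key claim. I would first invert the triangularity in $T$, writing $_ve=\sum_{u\geq v}d_u P_u$ over \emph{all} non-increasing Lyndon factorizations. I would then show that projecting to $U(\fa)$ and straightening preserves the condition $u\geq v$, by a double induction: descending on the word $u$ in the finite set of words of the given length, and on the number of inversions. Two kinds of moves occur.
\begin{itemize}
\item Replacing a non-standard Lyndon factor $\sb[\ell_i]$ by $\succ$-larger standard ones only increases the word. This uses Lemma~\ref{lem:Melancon} to compare canonical factorizations.
\item Commuting two factors into non-increasing order produces the reordered product together with commutators $[\sb[\ell_i],\sb[\ell_j]]$. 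The latter expand into brackets of Lyndon words $\geq \ell_i\ell_j$, again by triangularity.
\end{itemize}
Verifying carefully that every such rewriting stays in the range $\geq v$ is where I expect the real work to lie.
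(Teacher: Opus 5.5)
The paper gives no proof of this statement; it is quoted from \cite{LR}. Your outline is a correct self-contained proof. The one step you leave open, the key claim, does go through with exactly the two moves you list; only the bookkeeping is missing.

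State the claim for \emph{arbitrary} products $Q=\sb[m_1]\cdots\sb[m_k]$ of Lyndon brackets, in any order, with word $w(Q)=m_1\cdots m_k$ (the concatenation). The claim is that $Q$ lies in the span of the PBW vectors $P_u$ with $u\succeq w(Q)$. Prove it by descending induction on $w(Q)$ (only finitely many words are $\succeq$ a given one) and, for fixed $w(Q)$, by induction on $k$.
\begin{enumerate}
\item If some $m_i$ is not standard Lyndon, rewrite $\sb[m_i]$ through brackets $\sb[m']$ with $m'\succ m_i$. This strictly raises $w(Q)$ simply because one block of the word is replaced by a $\succ$-larger block. Lemma~\ref{lem:Melancon} is not the relevant tool here, since the new factor sequence is in general not a canonical factorization.
\item If all $m_i$ are standard and $m_i<m_{i+1}$ for some $i$, write $Q=Q'+Q''$ with $Q'$ the swapped product. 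Its word is strictly larger because $m_{i+1}m_i>m_im_{i+1}$ (Lemma~\ref{lemma:lyndon}). In $Q''$ the commutator $[\sb[m_i],\sb[m_{i+1}]]$ can be computed in the free Lie algebra. There your triangularity plus a leading-term comparison show it is a combination of $\sb[m'']$ with $m''$ Lyndon, $|m''|=|m_i|+|m_{i+1}|$ and $m''\geq m_im_{i+1}$. So the word weakly increases while $k$ drops by one.
\item If all $m_i$ are standard and non-increasing, $Q$ is itself a PBW basis vector.
\end{enumerate}
Your ``number of inversions'' also works as the secondary parameter. However, you would then have to check that the only equal-word commutator term, $\sb[m_im_{i+1}]$, lowers the inversion count; the number of factors is simpler. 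Applying this to the projection of ${}_ve=\sum_{u\geq v}d_uP_u$ gives the key claim.

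Two further remarks.

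First, Section~\ref{ssec:SL-words} states the proposition for an arbitrary finitely generated $\fa$, whose relations need not be homogeneous. So ``by homogeneity'' is not available in general. It is also not needed: $\succ$ is compatible with concatenation, and the set of words $\succeq$ a given word is finite. Hence every ``same length and $\geq$'' in your argument can be replaced by ``$\succeq$''; shorter words arise only when rewriting non-standard brackets, and they are $\succ$-larger anyway.

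Second, in the graded situation relevant for $\widehat{\fn}^+$ there is a shorter finish that avoids the key claim altogether.
\begin{itemize}
\item The ${}_we$ with $w$ standard form a basis of each graded piece $U(\fa)_\alpha$.
\item Every factor of a standard word is standard, again because $\succ$ is compatible with concatenation. So by your first direction, the canonical factors of a standard word are standard Lyndon.
\item Thus the standard words of degree $\alpha$ form a subset of the non-increasing products of standard Lyndon words of degree $\alpha$. By PBW applied to your basis from step (1), the latter set also has cardinality $\dim U(\fa)_\alpha$.
\end{itemize}
Hence the two sets coincide, and in particular every standard Lyndon word is standard. Your straightening route is longer but constructive: it rewrites each ${}_ve$ explicitly in the PBW basis, a triangularity that the counting argument only yields a posteriori.
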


We shall use $\SL$ to denote the set of all standard Lyndon words. The major importance of this definition
is due to the following result of Lalonde-Ram:

\begin{theorem}\label{thm:standard Lyndon theorem}(\cite[Theorem 2.1]{LR})
For any Lie algebra $\fa$ generated by a finite collection $\{e_i\}_{i\in I}$, the set
$\big\{\sb[\ell] \,|\, \ell\in \SL \big\}$ provides a basis of $\fa$.
\end{theorem}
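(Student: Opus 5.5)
The plan is to follow the Lalonde–Ram strategy: first show that the bracketings of standard Lyndon words span $\fa$, then show they are linearly independent. Let $\pi\colon \mathcal{F}\to\fa$ be the surjection from the free Lie algebra $\mathcal{F}$ on $\{e_i\}_{i\in I}$. Since $\pi$ is compatible with $\sb$, Theorem~\ref{thm:Lyndon.theorem} gives that $\{\sb[\ell]\}_{\ell\in\LL}$ spans $\fa$.

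\emph{Spanning.} Everything is $\BZ_{\ge0}^I$-graded, and each graded piece involves only finitely many words. So we may run a descending induction with respect to $\succeq$ inside a fixed degree. Take a Lyndon word $\ell$ that is not standard Lyndon. By Definition~\ref{def:standard}(b), $\sb[\ell]$ is a linear combination of $\sb[v]$ with $v\succ\ell$ Lyndon; these $v$ may be taken of the same multidegree, since we can project onto the graded component. Iterating, and using finiteness, every $\sb[\ell]$ becomes a combination of $\sb[v]$ with $v\in\SL$.

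\emph{Linear independence.} Suppose there is a nontrivial relation $\sum_{\ell\in S} c_\ell\,\sb[\ell]=0$ with $S\subset\SL$. Let $\ell_0$ be the $\succeq$-smallest element of $S$ with $c_{\ell_0}\ne0$, that is, the minimal one for the order $\succ$. Then
\[
\sb[\ell_0]=-c_{\ell_0}^{-1}\sum_{\ell\ne\ell_0}c_\ell\,\sb[\ell],
\]
and every $\ell$ on the right satisfies $\ell\succ\ell_0$. This contradicts Definition~\ref{def:standard}(b) for $\ell_0$. The only care needed is that $\succeq$ is a total order, which holds because it refines length by the lexicographic order. Hence independence is essentially formal.

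\emph{Main obstacle.} Independence and spanning are both almost tautological; the genuinely nontrivial input is hidden in Proposition~\ref{prop:standard}, which I take as given. If one instead wanted to phrase the proof via standard words in $U(\fa)$, the hard step would be the triangularity $\sb[\ell]={}_{\ell}e+\sum_{v\succ\ell}c_v\,{}_ve$ in the enveloping algebra. This follows from the classical fact that the expansion of $\sb[\ell]$ in $U(\mathcal{F})$ has leading word $\ell$, where all other words are lexicographically larger and of the same length. Combined with PBW, this would identify standard words with products of standard Lyndon words.
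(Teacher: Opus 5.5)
Your argument is correct, and it is the standard argument behind \cite[Theorem 2.1]{LR}; the paper itself only cites that result and gives no proof. Once standard Lyndon words are defined by Definition~\ref{def:standard}(b), the statement is exactly the greedy extraction of a basis from the spanning set $\{\sb[\ell]\}_{\ell\in\LL}$ of Theorem~\ref{thm:Lyndon.theorem}. Your independence step, taking the $\succ$-minimal term of a relation, is precisely right.

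One slip needs fixing. An arbitrary $\fa$ generated by $\{e_i\}$ need not be $\BZ_{\geq 0}^I$-graded. For example, take $\fa=\BC x$ with $e_1=e_2=x$: then $\sb[1]=\sb[2]$ relates words of different multidegrees. So you cannot project onto graded components. You do not need to, however. The set $\{v\in I^* : v\succ\ell\}$ is finite for every $\ell$, because $\succeq$ ranks shorter words higher and $I$ is finite. That finiteness alone makes your descending induction well-founded.

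Also, Proposition~\ref{prop:standard} is never used in your argument. The only input you need beyond Definition~\ref{def:standard}(b) is Theorem~\ref{thm:Lyndon.theorem}.
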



\subsection{Affine Lie algebras}\label{ssec:affineLie}
\

This note is devoted to the study of the above concepts in the context of (untwisted) affine Lie algebras,
whose definition is reminded below. Let $\fg$ be a simple finite dimensional Lie algebra,
$\{\alpha_i\}_{i\in I}$ be the simple roots, and $\theta\in \Delta^+$ be the highest root. We define
$\wI = I \sqcup \{0\}$. Consider the affine root lattice $\wQ=Q \times \BZ$ with the generators
$\{(\alpha_i,0)\}_{i\in I}$ and $\alpha_0:=(-\theta,1)$. We endow $\wQ$ with the symmetric pairing
\begin{equation*}
  \big((\alpha,n),(\beta,m)\big)=(\alpha,\beta) \qquad \forall\ \alpha,\beta\in Q \,,\, n,m \in \BZ.
\end{equation*}
This leads to the affine Cartan matrix $(a_{ij})_{i,j\in \wI}$ and the \textbf{affine Lie algebra} $\widehat{\fg}$,
generated by $\{e_i,f_i,h_i\}_{i\in \wI}$ subject to Chevalley-Serre defining relations.
The associated affine root system $\wDelta=\wDelta^+ \sqcup \wDelta^-$ has the following explicit description:
\begin{equation}\label{eq:affine-roots}
  \wDelta^+ = \big\{ \Delta^+ \times \BZ_{\geq 0} \big\}
  \sqcup \big\{ 0 \times \BZ_{>0} \big\}
  \sqcup \big\{ \Delta^- \times \BZ_{>0} \big\}.
\end{equation}
Here, $\delta=\alpha_0+\theta=(0,1) \in Q \times \BZ$ is the \textbf{minimal imaginary root} of the
affine root system $\wDelta$. With this notation, we have the following root space decomposition:
\begin{equation}\label{eq:aff.root.decomp}
  \widehat{\fg}=\widehat{\fh} \oplus \bigoplus_{\alpha \in \wDelta} \widehat{\fg}_{\alpha} ,
  \qquad \widehat{\fh}\subset \widehat{\fg} -\mathrm{Cartan\ subalgebra}.
\end{equation}

Let us now recall another realization of $\widehat{\fg}$. To this end, consider the Lie algebra
\begin{equation*}
\begin{split}
  & \widetilde{\fg}=\fg\otimes \BC[t,t^{-1}]\oplus \BC\cdot \mathsf{c}
    \quad \mathrm{with\ a\ Lie\ bracket\ given\ by}\\
  & [x\otimes t^n,y\otimes t^m]=[x,y]\otimes t^{n+m}+n\delta_{n,-m}(x,y)\cdot \mathsf{c}
    \quad \mathrm{and} \quad [\mathsf{c},x\otimes t^n]=0,
\end{split}
\end{equation*}
where $x,y\in \fg$, $m,n\in \BZ$, and $(\cdot,\cdot)\colon \fg\times\fg\to \BC$ is
a non-degenerate invariant pairing.

The rich theory of affine Lie algebras is mainly based on the following key result:

\begin{claim}
There exists a Lie algebra isomorphism:
\begin{equation*}
  \widehat{\fg}\ \iso\ \widetilde{\fg}
\end{equation*}
determined on the generators by the following formulas:
\begin{align*}
  & e_i \mapsto e_i \otimes t^0 & & f_i \mapsto f_i \otimes t^0 & & h_i \mapsto h_i \otimes t^0 \qquad \forall\, i\in I \\
  & e_0 \mapsto e_{-\theta} \otimes t^1 & & f_0 \mapsto e_\theta \otimes t^{-1} &
  & h_0 \mapsto [e_{-\theta},e_\theta]\otimes t^0 + (e_{-\theta},e_{\theta}) \mathsf{c}.
\end{align*}
\end{claim}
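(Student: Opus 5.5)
Since $\widehat{\fg}$ is defined by generators and Chevalley--Serre relations, the plan is the standard Kac argument. First we define $\varphi$ on generators by the stated formulas and check that the images satisfy the defining relations, so that $\varphi\colon \widehat{\fg}\to\widetilde{\fg}$ is a Lie algebra homomorphism. Then we prove surjectivity. Finally we prove injectivity using the structure of $\widehat{\fg}$ as a Kac--Moody algebra of affine type.

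\emph{Relations.} For $i,j\in I$ all relations hold, since they hold in $\fg\otimes t^0$: the cocycle vanishes when both degrees are $0$. The relations involving the index $0$ are handled as follows.
\begin{itemize}[leftmargin=0.7cm]
\item[$\bullet$] Compute $[e_0,f_0]\mapsto[e_{-\theta}\otimes t,e_\theta\otimes t^{-1}]=[e_{-\theta},e_\theta]\otimes 1+(e_{-\theta},e_\theta)\mathsf{c}$, which is exactly the image of $h_0$.
\item[$\bullet$] For $j\in I$ we have $[e_0,f_j]\mapsto[e_{-\theta},f_j]\otimes t=0$, because $-\theta-\alpha_j$ is not a root. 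Similarly $[e_j,f_0]\mapsto 0$. The cocycle does not contribute here since the degrees do not cancel.
\item[$\bullet$] The relations $[h_i,e_j]=a_{ij}e_j$, $[h_i,f_j]=-a_{ij}f_j$ for all $i,j\in\wI$ follow because $\mathsf{c}$ is central. We use $\alpha_0=-\theta$ on the finite part and $\langle h_0,\alpha_j\rangle=-\langle\theta^\vee,\alpha_j\rangle$, matching the affine Cartan matrix. Normalize $(e_\theta,e_{-\theta})$ so that $[e_{-\theta},e_\theta]=-\theta^\vee$.
\item[$\bullet$] The Serre relations $\mathrm{ad}(e_0)^{1-a_{0j}}e_j=0$ reduce to $\mathrm{ad}(e_{-\theta})^{1-a_{0j}}e_j=0$ in $\fg$, times a power of $t$. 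This holds by $\alpha_j$-string lengths, since $-\theta$ is the lowest root. The cocycle never appears in these relations because all the degrees have the same sign.
\end{itemize}

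\emph{Surjectivity.} The image contains $\fg\otimes 1$, because $\fg$ is generated by its Chevalley generators. It also contains $e_{-\theta}\otimes t$. Since $\fg$ is simple, $\mathrm{ad}(\fg\otimes 1)$ applied to $e_{-\theta}\otimes t$ produces all of $\fg\otimes t$; symmetrically, $f_0$ produces $\fg\otimes t^{-1}$. Bracketing $x\otimes t^n$ with $\fg\otimes t^{\pm1}$ and using $[\fg,\fg]=\fg$ gives all $\fg\otimes t^n$ by induction on $|n|$. Finally, $\mathsf{c}$ is obtained as $h_0-[e_{-\theta},e_\theta]\otimes 1$.

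\emph{Injectivity.} This is the main obstacle. We use that the affine Cartan matrix is symmetrizable, indecomposable and of affine type, so by the Gabber--Kac theorem $\widehat{\fg}$ defined by Serre relations equals the Kac--Moody algebra $\mathfrak g(A)$ modulo its center-free quotient conventions. The paper's $\widehat{\fg}$ has no derivation, so it is the derived algebra. $\mathfrak g(A)$ has no nonzero ideals intersecting $\widehat{\fh}$ trivially. The map $\varphi$ is graded by $\wQ$, with $\alpha_0\mapsto(-\theta,1)$, and is injective on $\widehat{\fh}=\spn\{h_i\}_{i\in\wI}$: the images $h_i\otimes 1$ for $i\in I$ and $\mathsf{c}$ are linearly independent. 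Hence $\ker\varphi$ is a graded ideal meeting $\widehat{\fh}$ trivially, and therefore $\ker\varphi=0$.

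Alternatively, one can avoid Gabber--Kac and compare root-space dimensions. Each graded piece of $\widetilde{\fg}$ has dimension $1$ for real roots and $|I|$ for $k\delta$. These dimensions bound the corresponding spaces in $\widehat{\fg}$ from below. Kac's character/denominator argument gives the matching upper bound, and surjectivity then forces equality of dimensions.
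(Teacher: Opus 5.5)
The paper gives no proof of this claim. It quotes it as the classical loop realization of the affine algebra without the derivation, which rests on Kac's realization theorem \cite[Ch.~7]{K} combined with the Gabber--Kac theorem that the Chevalley--Serre relations present $\mathfrak g(A)$ for symmetrizable $A$. Your outline is exactly this standard argument, and it is correct: relations, then surjectivity, then injectivity via Gabber--Kac. You also rightly note that the statement implicitly requires normalizing $e_{\pm\theta}$ so that $[e_{-\theta},e_\theta]=-\theta^\vee$; otherwise $[h_0,e_0]=2e_0$ fails.

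Two points should be tightened.

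First, the phrase about ``center-free quotient conventions'' is misleading. What Gabber--Kac gives is $\widehat{\fg}\cong\mathfrak g'(A)=[\mathfrak g(A),\mathfrak g(A)]$, after adjoining the degree derivation $d$ with $[d,e_0]=e_0$ and $[d,f_0]=-f_0$. The property that no nonzero ideal meets the Cartan subalgebra trivially is a property of ideals of $\mathfrak g(A)$ with its full Cartan $\widehat{\fh}\oplus\BC d$. A priori, $\ker\varphi$ is only an ideal of $\widehat{\fg}=\mathfrak g'(A)$. You therefore need the one-line observation that $\ker\varphi$, being $\wQ$-graded, is $\mathrm{ad}(d)$-stable and hence an ideal of $\mathfrak g(A)$. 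It meets $\widehat{\fh}\oplus\BC d$ only inside $\widehat{\fh}$, where $\varphi$ is injective.

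Second, your ``alternative'' dimension count does not avoid Gabber--Kac. The lower bound coming from surjectivity is fine. However, the denominator identity computes root multiplicities of $\mathfrak g(A)$, not of the Serre-presented algebra. Using those multiplicities as upper bounds for $\dim\widehat{\fg}_\alpha$ is precisely the content of Gabber--Kac, so that route reintroduces the step it was meant to bypass.
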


In view of this result, we can explicitly describe the root subspaces from~\eqref{eq:aff.root.decomp}:
\begin{align*}
  & \widehat{\fg}_{(\alpha,k)}=\fg_\alpha\otimes t^k \quad \forall\
    (\alpha,k)\in \wDelta^{+,\re}:=\big\{\Delta^+ \times \BZ_{\geq 0} \big\} \sqcup \big\{ \Delta^- \times \BZ_{>0} \big\},\\
  & \widehat{\fg}_{k\delta}=\fh \otimes t^k \quad \mathrm{for} \
    k\delta\in \wDelta^{+,\im}:=\big\{ 0 \times \BZ_{>0} \big\},
\end{align*}
where $\fh$ is a Cartan subalgebra of $\fg$ and $\fg=\fh\oplus \bigoplus_{\alpha\in \Delta} \fg_\alpha$ is the standard
root space decomposition. As $\dim(\fg_\alpha)=1$ for $\alpha\in \Delta$, $\dim(\fh)=\rk(\fg)=|I|$, we get:
\begin{equation}\label{eq:aff-dim}
  \dim(\widehat{\fg}_\alpha)=1 \quad \forall\ \alpha\in \wDelta^{+,\re} \,, \qquad
  \dim(\widehat{\fg}_\alpha)=|I| \quad \forall\ \alpha\in \wDelta^{+,\im}.
\end{equation}

In what follows, we shall always write $xt^n$ instead of $x\otimes t^n$.


\subsection{Affine standard Lyndon words}\label{ssec:aslaws}
\

While the theory of standard Lyndon words has been well developed in the context of simple $\fg$,
we shall now recall some similarities and differences in the affine setup. To this end, we consider the positive subalgebra
$\widehat{\fn}^+=\bigoplus_{\alpha\in \wDelta^+} \widehat{\fg}_{\alpha}$, generated by $\{e_i\}_{i\in \wI}$ subject to
the Serre relations for $i\ne j\in \wI$. Endowing $\wI$ with any order allows us to introduce Lyndon and
standard Lyndon words. Henceforth, we shall often use the term \textbf{$\aslaws$} in the present setup.

One of the key results of~\cite[\S3]{LR} for finite $\fg$ was the \textbf{Lalonde-Ram bijection}:
\begin{equation}\label{eqn:associated word}
  \ell \colon \Delta^+ \,\iso\, \big\{\slaws \big\}
  \quad \mathrm{with} \quad \deg \ell(\alpha) = \alpha,
\end{equation}
described explicitly in~\cite[Proposition 25]{L} by the so-called \textbf{Leclerc algorithm}. The key difference of
the affine setup is that imaginary root subspaces are higher dimensional by~\eqref{eq:aff-dim}, which also affects
all real roots if applying Leclerc algorithm.  Thus, we no longer have a bijection~\eqref{eqn:associated word} on the nose.
However, a degree reasoning similar to the one of~\cite{LR} still implies that there is a unique $\aslaw$ in each real degree
$\alpha\in \wDelta^{+,\re}$, denoted by $\SL(\alpha)$, and $\aslaws$ $\SL_1(\alpha)>\dots>\SL_{|I|}(\alpha)$ in each imaginary
degree $\alpha\in \wDelta^{+,\im}$. Following~\cite[(5.1)]{AT}, let us consider the following upgrade of~\eqref{eq:affine-roots}:
\begin{equation}\label{eq:extended-affine-roots}
  \wDelta^{+,\ext} = \wDelta^{+,\re} \cup \imx  \quad \mathrm{with} \quad
  \imx=\big\{(k\delta,r) \,|\, k \geq 1, 1 \leq r \leq |I| \big\},
\end{equation}
counting imaginary roots with multiplicities. We can thus naturally generalize~\eqref{eqn:associated word}:
\begin{equation}\label{eqn:associated word affine}
  \SL \colon \wDelta^{+,\ext} \,\iso\, \big\{\aslaws \big\},
  \quad \SL((k\delta,r)) = \SL_r(k\delta).
\end{equation}
We note that this bijection gives rise to the induced lexicographical order on $\wDelta^{+,\ext}$:
\begin{equation}\label{eq:order.ext}
    \alpha < \beta \iff \SL(\alpha) < \SL(\beta) \qquad \forall\, \alpha,\beta \in \wDelta^{+,\ext}.
\end{equation}

Furthermore, the bijection~\eqref{eqn:associated word affine} can be computed through~\cite[Proposition 3.4]{AT}:

\begin{proposition}\label{prop:generalized.Leclerc.algo} (\textbf{generalized Leclerc algorithm})
The affine standard Lyndon words (with respect to $\widehat{\fn}^+$) are determined inductively by the following rules:

\medskip
\noindent
(a) For simple roots, we have $\SL(\alpha_i)=[i]$. For other real $\alpha\in \wDelta^{+,\re}$, we have:
\begin{equation}\label{eq:generalized Leclerc}
  \SL(\alpha) =
  \max\left\{\SL_*(\gamma_1)\SL_*(\gamma_2) \,\Big|\,
   \substack{\alpha=\gamma_1+\gamma_2,\, \gamma_1,\gamma_2\in \wDelta^+\\ \SL_*(\gamma_1)<\SL_*(\gamma_2)\\
             [\sb[\SL_*(\gamma_1)],\sb[\SL_*(\gamma_2)]]\neq 0} \right\},
\end{equation}
where $\SL_*(\gamma)$ denotes $\SL(\gamma)$ for $\gamma\in \wDelta^{+,\re}$
and any of $\{\SL_k(\gamma)\}_{k=1}^{|I|}$ for $\gamma\in \wDelta^{+,\im}$.

\medskip
\noindent
(b) For imaginary $\alpha\in \wDelta^{+,\im}$, the corresponding $|I|$ $\aslaws$ $\{\SL_k(\alpha)\}_{k=1}^{|I|}$ are
the $|I|$ lexicographically largest words from the list as in the right-hand side of~\eqref{eq:generalized Leclerc}
whose standard bracketings are linearly independent.
\end{proposition}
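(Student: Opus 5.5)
Following \cite[Proposition 3.4]{AT}, the plan is to run the argument of Lalonde--Ram and Leclerc \cite{LR,L} degree by degree. The only new ingredient is that imaginary degrees carry $|I|$-dimensional root spaces. We induct on the height $|\alpha|$ of $\alpha\in\wDelta^+$; the base case is simple roots, where $[i]$ is clearly the unique $\aslaw$ of degree $\alpha_i$. Two general facts will be used throughout. First, by Theorem~\ref{thm:standard Lyndon theorem}, the bracketings $\{\sb[\ell]\}_{\ell\in\SL}$ of a fixed degree $\alpha$ form a basis of $\widehat{\fn}^+_\alpha$, so by~\eqref{eq:aff-dim} there are exactly $\dim\widehat{\fn}^+_\alpha$ $\aslaws$ of degree $\alpha$. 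Second, by Proposition~\ref{prop:standard}, any factor of a standard Lyndon word is standard. Since $\ell^l,\ell^r$ are Lyndon by Proposition~\ref{prop:cost.factor}, both factors of the costandard factorization of an $\aslaw$ are again $\aslaws$ of smaller degree. Hence, by the inductive hypothesis, they are of the form $\SL_*(\gamma_1)$ and $\SL_*(\gamma_2)$ with $\gamma_1+\gamma_2=\alpha$. Moreover $\SL_*(\gamma_1)<\SL_*(\gamma_2)$ by the Lyndon property, and $\sb[\ell]=[\sb[\SL_*(\gamma_1)],\sb[\SL_*(\gamma_2)]]\neq 0$ since standard bracketings are basis vectors. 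Consequently every $\aslaw$ of degree $\alpha$ lies in the set $\mathcal{C}_\alpha$ of words appearing on the right-hand side of~\eqref{eq:generalized Leclerc}.

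The main step is the reverse comparison. Take any $w=\SL_*(\gamma_1)\SL_*(\gamma_2)\in\mathcal{C}_\alpha$. By Lemma~\ref{lemma:lyndon} $w$ is Lyndon, and $\sb[w]=[\sb[\SL_*(\gamma_1)],\sb[\SL_*(\gamma_2)]]$ is a nonzero element of $\widehat{\fn}^+_\alpha$. I will show that $\sb[w]$ lies in the span of $\{\sb[v]\colon v\in\SL,\ \deg v=\alpha,\ v\ge w\}$. To do this, expand $\sb[w]$ in $U(\widehat{\fn}^+)$ as $_we$ plus a combination of $_ue$ with $u>w$ of the same length. This triangularity of the bracketing, $\sb[\ell]\in {_\ell e}+\sum_{u>\ell}\BC\,{_ue}$, is standard for Lyndon words; I would cite it from \cite{LR,Lo}. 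Each $_ue$ is then rewritten in terms of standard words $_ve$ with $v\succeq u$, which is possible by Definition~\ref{def:standard}(a). Finally, comparing leading terms against the triangular expansions of the basis $\{\sb[\ell]\}_{\ell\in\SL}$ expresses $\sb[w]$ using only standard Lyndon $\ell\ge w$. I expect this to be the main obstacle: making the triangularity bookkeeping precise, in particular the fact that passing from $\succeq$ on same-length words to $\geq$ preserves the bound. This is exactly the mechanism of \cite[Lemma~15/Proposition~25]{L}, and I would adapt it verbatim, since it uses only that $\widehat{\fn}^+$ is generated by the $e_i$.

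The two parts now combine. For real $\alpha$ the space $\widehat{\fn}^+_\alpha$ is one-dimensional, so the unique $\aslaw$ $\SL(\alpha)$ lies in $\mathcal{C}_\alpha$. Since $\sb[w]\ne0$ for every $w\in\mathcal{C}_\alpha$, the main step forces $\SL(\alpha)\ge w$ for all such $w$. Hence $\SL(\alpha)=\max\mathcal{C}_\alpha$, which proves (a). For imaginary $\alpha$, list $\mathcal{C}_\alpha$ in decreasing lexicographic order and keep each word whose bracketing is independent of those already kept. The main step shows that each $\sb[w]$ lies in the span of the $\aslaws$ that are $\ge w$. Comparing this with the fact that the $\aslaws$ are themselves in $\mathcal{C}_\alpha$ and have linearly independent bracketings, an induction along the list shows that the kept words are exactly $\SL_1(\alpha)>\dots>\SL_{|I|}(\alpha)$. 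This proves (b).
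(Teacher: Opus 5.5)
The paper gives no proof of its own here; it quotes \cite{AT} (Proposition~3.4 there), and your outline is the Lalonde--Ram/Leclerc triangularity argument on which that result rests. The strategy is sound, but one step is false as written. For an arbitrary $w=\SL_*(\gamma_1)\SL_*(\gamma_2)\in\mathcal{C}_\alpha$ you identify $\sb[w]$ with $[\sb[\SL_*(\gamma_1)],\sb[\SL_*(\gamma_2)]]$. However, $\sb[w]$ is built from the costandard factorization of $w$. By Corollary~\ref{cor:left.cost.right}, that factorization equals $\SL_*(\gamma_1)\,|\,\SL_*(\gamma_2)$ only when $\SL_*(\gamma_1)^r\geq \SL_*(\gamma_2)$. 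For instance, with letters $a<b<c$ the pair $ab<c$ gives $w=abc$, whose costandard factorization is $a\,|\,bc$. So $\sb[w]=[e_a,[e_b,e_c]]$, whereas the commutator in~\eqref{eq:generalized Leclerc} is $[[e_a,e_b],e_c]$. Moreover, one word $w$ may come from several splittings. Thus the nonvanishing condition in~\eqref{eq:generalized Leclerc} does not give $\sb[w]\neq 0$, and that is precisely what your derivation of (a) relies on.

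The repair is to apply your main step directly to $X=[\sb[\ell_1],\sb[\ell_2]]$, where $\ell_1=\SL_*(\gamma_1)<\ell_2=\SL_*(\gamma_2)$. The product $\sb[\ell_1]\sb[\ell_2]$ is ${}_we$ plus terms ${}_ue$ with $u>w$; these $u$ are concatenations $u_1u_2$ with $u_1\geq\ell_1$, $u_2\geq\ell_2$ of the same lengths. Every word occurring in $\sb[\ell_2]\sb[\ell_1]$ is $\geq\ell_2\ell_1>\ell_1\ell_2$ by Lemma~\ref{lemma:lyndon}. Hence $X\in{}_we+\sum_{u>w}\BC\,{}_ue$, and your leading-term comparison gives $0\neq X\in\spn\{\sb[\ell]\colon \ell\in\SL,\ \deg\ell=\alpha,\ \ell\geq w\}$. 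In the real case this forces $\SL(\alpha)\geq w$. Your greedy argument for (b) then goes through whether ``standard bracketing'' means $\sb[w]$ or $X$, because for $w\notin\SL$ the span of the affine standard Lyndon words $\geq w$ equals the span of those $>w$. Finally, the ``main obstacle'' you anticipate is not one. $U(\widehat{\fn}^+)$ is $\wQ$-graded with ${}_ue$ of degree $\deg u$, and all words of a given degree have the same length, so within a graded component $\succeq$ is simply $\geq$. Likewise, the linear independence of standard monomials (needed to compare leading terms) and the fact that factors of standard words are standard both follow directly from Definition~\ref{def:standard}(a), not from Proposition~\ref{prop:standard}.
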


We shall call $u=\SL_{*}(\gamma)$ real (resp.\ imaginary) if $\gamma\in \wDelta^{+,\re}$ (resp.\ $\gamma\in \wDelta^{+,\im}$).


\subsection{Properties of factorizations}\label{ssec:factorizations}
\

In this subsection, we recall some results about various factorizations.
We start with the following very useful result that will be often used in the rest of the paper:

\begin{lemma}\label{lemma:equiv.to.standard.fac}(\cite[Lemma 4.2]{ET})
Let $\ell$ be a Lyndon word and $uv$ be the standard factorization of a Lyndon word, with $|\ell| < |uv|$.
Then $\ell > u \Leftrightarrow \ell > uv$.
\end{lemma}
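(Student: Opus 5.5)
We argue purely combinatorially: only Definition~\ref{def:lyndon-2}, Lemma~\ref{lemma:lyndon}, and the defining maximality of the standard factorization in Proposition~\ref{prop:stand.factor} are needed. Write $w=uv$, with $u$ the longest proper Lyndon prefix of $w$ and $v$ Lyndon. Then $u<w$, since $u$ is a proper prefix of $w$. Hence $\ell>w$ immediately gives $\ell>u$. The content is therefore the implication $\ell>u\Rightarrow\ell>uv$.

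Assume $\ell>u$ but $\ell\le uv$; since $|\ell|<|uv|$ we have $\ell\neq uv$, so $u<\ell<uv$. Split into two cases according to whether $u$ is a prefix of $\ell$.

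\emph{Case 1: $u$ is not a prefix of $\ell$.} Then $u<\ell$ is witnessed by a first differing letter inside $u$, say at position $a\le|u|$, where $\ell$ has the larger letter. But $uv$ agrees with $u$ in positions $1,\dots,a$. Hence $\ell>uv$ at that same position, a contradiction.

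\emph{Case 2: $u$ is a prefix of $\ell$.} Write $\ell=ux$ with $x$ nonempty and $|x|<|v|$. The inequality $\ell<uv$ gives $x<v$. There are two subcases.

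\emph{Subcase 2a: $x$ is a proper prefix of $v$.} Then $ux$ is a Lyndon proper prefix of $uv$ that is strictly longer than $u$. This contradicts the choice of $u$ as the longest proper Lyndon prefix.

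\emph{Subcase 2b: $x$ and $v$ first differ at some position $b\le|x|$, with $x_b<v_b$.} Here we use that $\ell$ is Lyndon. Since $x$ is a proper suffix of $\ell$, we get $x>\ell=ux$, and in particular $x>u$ because $u$ is a prefix of $ux$. Let $y=v_{b|}$, the prefix of $v$ of length $b$. Then $y>x_{b|}$ and $y$ begins with $x_{b-1|}$, so $y$ exceeds the comparison that forced $x>u$. Note that $u$ is not a prefix of $x_{b|}$ in a way that would matter, since the first difference between $x$ and $u$ occurs either within $u$ or beyond it.

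We then aim to show that $u\,y$, or a suitable prefix $u\,v_{c|}$ with $c<|v|$, is Lyndon. Any proper suffix of $uy$ is either a suffix $sy$ with $s$ a suffix of $u$, where $s>u$ strictly at a letter inside, or a suffix of $y$. A suffix of $y$ agrees with the corresponding suffix of $x$ except that it is larger at the last letter. Those suffixes of $x$ already exceed $ux$ because $\ell$ is Lyndon, so the suffixes of $y$ exceed $uy$ as well, provided the comparison is decided before the last letter. When the comparison would instead reach the last letter, increasing that letter still yields a larger word.

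If $b<|v|$, this produces a proper Lyndon prefix $uy$ of $uv$ longer than $u$, again contradicting maximality. If $b=|v|$, we would need $|x|\ge|v|$, which contradicts $|x|<|v|$.

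The main obstacle is Subcase~2b: the claim that raising the last letter of a prefix of a Lyndon word keeps it Lyndon, namely that $ux_{b-1|}v_b$ is Lyndon whenever $ux$ is Lyndon and $v_b>x_b$. I would prove this as a separate lemma via Definition~\ref{def:lyndon-2}. Every suffix of $u x_{b|}$ compares with $u x_{b|}$ either strictly before its final letter, and that comparison is inherited from the Lyndon property of $ux$, or it is a prefix of $ux_{b|}$ of the same shape. In the latter situation, bumping the last letter makes the suffix strictly larger.
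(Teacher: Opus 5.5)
Your proof is correct. The paper gives no argument for this lemma: it is imported from \cite[Lemma 4.2]{ET} without proof, so there is nothing in the text to compare against, and what you supply is a self-contained combinatorial proof. The structure is as follows. The direction $\ell>uv\Rightarrow\ell>u$ is immediate from $u<uv$. For the converse, $u<\ell<uv$ forces $u$ to be a prefix of $\ell=ux$, with $x<v$ and $|x|<|v|$. You then exhibit a proper Lyndon prefix of $uv$ strictly longer than $u$: it is $ux$ when $x$ is a proper prefix of $v$, and $u\,v_{b|}$ when $x$ and $v$ first differ at a position $b\le|x|$. This contradicts the maximality that defines the standard factorization. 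The argument uses only Definition~\ref{def:lyndon-2} and that maximality, with no structural input such as Lemma~\ref{lemma:leclerc.14}; that independence is what it gains over simply citing.

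Subcase~2b should be tightened.
\begin{itemize}
\item The sentences about $y$ ``exceeding the comparison that forced $x>u$'' and about $u$ not being a prefix of $x_{b|}$ ``in a way that would matter'' play no role and can be deleted.
\item The hedge ``or a suitable prefix $u\,v_{c|}$'' is unnecessary, since $c=b$ always works.
\item The subcase is exactly the classical fact about prefixes of Lyndon words: if $pc$ is a prefix of a Lyndon word, with $c$ a letter, and $d>c$ is a letter, then $pd$ is Lyndon. Apply it with $pc=u\,x_{b|}$ and $d=v_b$, and note that $|u|+b\le|\ell|<|uv|$, so $u\,v_{b|}$ is a proper prefix of $uv$.
\end{itemize}

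When you prove that fact, compare the suffix of $pd$ starting at position $t\ge 2$ with $pd$ by looking at the suffix of $\ell$ starting at $t$ versus $\ell$, rather than ``the corresponding suffix of $x$''. Their first difference occurs in one of three places:
\begin{itemize}
\item before the last letter of the truncated suffix;
\item exactly at that last letter;
\item not at all within it, in which case the truncated suffix of $pc$ is a prefix of $\ell$.
\end{itemize}
In each case the suffix of $pd$ is strictly larger than $pd$. The reason is that the decisive position is at most $|p|$, where $pd$, $pc$ and $\ell$ all carry the same letter, while $d>c$. Suffixes starting inside $u$ can, as you do, be handled directly from $u$ being Lyndon.
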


We note the following description of the costandard suffix (\cite[Lemma 3.1]{ET}):

\begin{lemma}\label{lemma:right.costfac.minimal}
The smallest proper suffix of a Lyndon word $\ell$ is $\ell^r$.
\end{lemma}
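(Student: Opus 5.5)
We show that $\ell^r$ is at most every proper suffix of $\ell$, and strictly smaller than every other one. Write $\ell=\ell^l\ell^r$ for the costandard factorization of Proposition~\ref{prop:cost.factor}, and let $s$ be an arbitrary proper suffix of $\ell$ with $s\neq \ell^r$. There are two cases, depending on whether $s$ is longer or shorter than $\ell^r$.

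\emph{Case 1: $|s|<|\ell^r|$.} Then $s$ is a proper suffix of $\ell^r$. Since $\ell^r$ is Lyndon, Definition~\ref{def:lyndon-2} gives $\ell^r<s$.

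\emph{Case 2: $|s|>|\ell^r|$.} Then $s$ is not Lyndon, since $\ell^r$ is the longest proper Lyndon suffix. Take the canonical factorization $s=m_1\cdots m_k$ of Proposition~\ref{prop:canon.factor}, with $m_1\ge\dots\ge m_k$ Lyndon and $k\ge 2$ because $s$ is not Lyndon.
\begin{itemize}
\item The last factor $m_k$ is a Lyndon suffix of $\ell$, hence $|m_k|\le|\ell^r|$.
\item It is a standard fact that the last factor of the canonical factorization is the longest Lyndon suffix of the word. The longest Lyndon suffix of $s$ is at least as long as $\ell^r$, since $\ell^r$ is itself a Lyndon suffix of $s$. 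Combined with the previous point, $m_k=\ell^r$.
\end{itemize}
So $s=m_1\cdots m_{k-1}\ell^r$ with $m_1\ge\ell^r$.

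Now compare $s$ with $\ell^r$.
\begin{itemize}
\item If $m_1>\ell^r$ and $\ell^r$ is not a prefix of $m_1$, then $s>\ell^r$ directly, or via Lemma~\ref{lem:Melancon} applied to the single-factor factorization of $\ell^r$.
\item If $\ell^r$ is a prefix of $s$, then $s>\ell^r$ as well, since $\ell^r$ is a proper prefix of $s$.
\end{itemize}
The only potential issue is the degenerate situation where $s$ is a prefix of $\ell^r$; this cannot happen because $|s|>|\ell^r|$. Hence in all cases $s>\ell^r$.

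An alternative route for Case 2 avoids canonical factorizations. Since $\ell$ is Lyndon, every proper suffix $s$ satisfies $s>\ell$. Suppose $s<\ell^r$. Because $\ell$ is Lyndon and $\ell^l\ne\emptyset$, we have $\ell<\ell^r$. One then argues that the smallest proper suffix of $\ell$ is Lyndon: if $t$ is the minimal proper suffix, any proper suffix $t'$ of $t$ is also a proper suffix of $\ell$, so $t\le t'$, with equality impossible by length. Hence $t$ is Lyndon by Definition~\ref{def:lyndon-2}. It remains to check that the minimal suffix is the longest Lyndon suffix. Any Lyndon suffix $u$ longer than $t$ has $t$ as a proper suffix, so $u<t$, contradicting the minimality of $t$. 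Therefore $t=\ell^r$.

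This alternative argument is cleaner and is the plan I would actually write. The main (mild) subtlety is the last step: ruling out ties, which rests on proper suffixes of distinct lengths being distinct words, and confirming that no Lyndon suffix exceeds $t$ in length.
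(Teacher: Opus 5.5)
Your proposal is correct. The paper does not prove this statement itself; it recalls it from \cite[Lemma 3.1]{ET} without proof, so there is no in-paper argument to compare against.

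\textbf{The route you plan to write.} This is the standard self-contained argument, and it needs nothing beyond Definition~\ref{def:lyndon-2}:
\begin{enumerate}
\item Let $t$ be the minimal proper suffix of $\ell$. It is unique because proper suffixes of different lengths are different words.
\item Every proper suffix of $t$ is also a proper suffix of $\ell$, of different length, hence strictly larger than $t$. So $t$ is Lyndon.
\item Suppose $u$ were a Lyndon proper suffix of $\ell$ longer than $t$. Then $t$ is a proper suffix of $u$, so $u<t$, which contradicts the minimality of $t$.
\item Hence $t$ is the longest proper Lyndon suffix, i.e.\ $t=\ell^r$.
\end{enumerate}

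\textbf{Your first route.} It also works, but it is heavier, because it relies on two extra ingredients:
\begin{itemize}
\item the canonical factorization of $s$;
\item the fact, which you call standard, that its last factor is the longest Lyndon suffix.
\end{itemize}
Within this paper the second ingredient follows from Lemma~\ref{lemma:lyndon.subword}: a Lyndon suffix of $s$ must lie inside a single canonical factor, hence inside $m_k$. That lemma's proof does not use the statement being proved, so there is no circularity. Your final case split ($m_1>\ell^r$ with $\ell^r$ not a prefix of $m_1$, versus $\ell^r$ a prefix of $s$) is exhaustive, since $m_1\ge\ell^r$.

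\textbf{Write-up fix.} When you write the second route, delete the opening sentences ``Suppose $s<\ell^r$ \ldots\ we have $\ell<\ell^r$''. They are never used, and they make the argument read like an unfinished proof by contradiction.
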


As a simple corollary of~\cite[Lemma 3.5]{ET}, we recall the following result:

\begin{corollary}\label{cor:left.cost.right}(\cite[Corollary~3.8]{ET})
For $\ell = \ell_1\ell_2$ with $\ell,\ell_1,\ell_2\in \LL$, we have
\begin{equation*}
   \ell_2 = \ell^r \iff \ell_1^{r} \geq \ell_2.
\end{equation*}
\end{corollary}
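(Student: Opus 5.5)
The plan is to use the characterization of the costandard suffix from Lemma~\ref{lemma:right.costfac.minimal}: $\ell^r$ is the (unique) smallest proper suffix of $\ell$. The definition in Proposition~\ref{prop:cost.factor}, as the longest proper Lyndon suffix, will also be used. We first dispose of the degenerate case $|\ell_1|=1$. There $\ell_1^r$ is undefined, and one adopts the convention that the inequality holds. Since $\ell_2$ is then the longest proper suffix of $\ell$ and is Lyndon, $\ell_2=\ell^r$ trivially. So assume $|\ell_1|>1$.

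\emph{Direction $\Leftarrow$.} Assume $\ell_1^r\geq \ell_2$. We show that $\ell_2$ is the smallest proper suffix of $\ell$; Lemma~\ref{lemma:right.costfac.minimal} then gives $\ell_2=\ell^r$. The proper suffixes of $\ell=\ell_1\ell_2$ are of three kinds:
\begin{itemize}
\item[(i)] proper suffixes of $\ell_2$;
\item[(ii)] $\ell_2$ itself;
\item[(iii)] words $s\ell_2$ with $s$ a nonempty proper suffix of $\ell_1$.
\end{itemize}
Those of type (i) are $>\ell_2$ by Definition~\ref{def:lyndon-2}, since $\ell_2$ is Lyndon. For type (iii), Lemma~\ref{lemma:right.costfac.minimal} applied to $\ell_1$ gives $s\geq \ell_1^r\geq \ell_2$. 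We then compare $s\ell_2$ with $\ell_2$ by cases:
\begin{itemize}
\item if $s$ were a proper prefix of $\ell_2$, we would have $s<\ell_2$, a contradiction;
\item if $\ell_2$ is a prefix of $s$ (including $s=\ell_2$), then $s\ell_2$ strictly extends $\ell_2$, so $s\ell_2>\ell_2$;
\item otherwise $s$ and $\ell_2$ first differ at some position where $s$ is larger, and this persists in $s\ell_2$.
\end{itemize}
In every case $s\ell_2>\ell_2$, so $\ell_2$ is the smallest proper suffix, as required.

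\emph{Direction $\Rightarrow$.} We argue by contrapositive: suppose $\ell_1^r<\ell_2$. Both $\ell_1^r$ and $\ell_2$ are Lyndon, the first by Proposition~\ref{prop:cost.factor}. By Lemma~\ref{lemma:lyndon}, the word $\ell_1^r\ell_2$ is then Lyndon. It is a proper suffix of $\ell$, since $\ell_1^r$ is a proper suffix of $\ell_1$, and it is strictly longer than $\ell_2$. Hence $\ell_2$ is not the longest proper Lyndon suffix of $\ell$, i.e.\ $\ell_2\neq \ell^r$.

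The only delicate point is the word comparison in the $\Leftarrow$ direction. The issue is that $s\geq \ell_2$ does not by itself give $s\ell_2>\ell_2$ when one word is a prefix of the other, which is why that case split is needed. The other ingredient to keep straight is that Lemma~\ref{lemma:right.costfac.minimal} is used twice: once for $\ell_1$, to bound every suffix $s$ from below by $\ell_1^r$, and once for $\ell$, to identify $\ell^r$ as the minimal suffix.
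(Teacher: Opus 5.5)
Your proof is correct. The paper does not prove this statement itself. It imports it from \cite[Corollary~3.8]{ET}, noting only that it is a simple consequence of \cite[Lemma~3.5]{ET}, so there is no in-paper argument to compare step by step.

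Your route has the advantage of being self-contained, using only facts the paper has already recalled. The $\Rightarrow$ direction is the classical observation that $\ell_1^r<\ell_2$ gives, by Lemma~\ref{lemma:lyndon}, a strictly longer Lyndon proper suffix $\ell_1^r\ell_2$ of $\ell$. The $\Leftarrow$ direction applies Lemma~\ref{lemma:right.costfac.minimal} twice: to $\ell_1$, to get $s\geq \ell_1^r\geq \ell_2$, and to $\ell$, to identify $\ell^r$ as the minimal proper suffix. Your prefix case split correctly secures the strict inequality $s\ell_2>\ell_2$.

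Your treatment of $|\ell_1|=1$ is also the right reading of the statement. One must either exclude this case or declare the right-hand side true there. Interpreting $\ell_1^r$ as the empty word would make $\ell_1^r\geq \ell_2$ false, even though $\ell_2=\ell^r$ holds automatically in that case.
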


We shall also need the following two results (\cite[Lemmas 3.12--3.13]{ET}):

\begin{lemma}\label{lemma:seq.right.word.Lyndon}
For $u \in \LL$, consider any splitting $u=vw$ with $v\in \LL$, and let $w = w_1w_2 \ldots w_N$ be the canonical factorization.
Then $vw_1$,$vw_1w_2, \ldots ,vw_1\ldots w_N\in \LL$.
\end{lemma}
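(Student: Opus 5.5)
We argue by induction on $k$. For $k=1$ the claim is that $vw_1\in \LL$, and the inductive step is the same statement applied to the Lyndon word $vw_1\dots w_{k-1}$ in place of $v$. So the key claim is: if $x\in\LL$ is a prefix of $u$ of the form $x=vw_1\dots w_{k-1}$, then $xw_k\in\LL$.

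To prove the key claim, we use the standard criterion that a concatenation $ab$ of Lyndon words with $a<b$ is Lyndon (Lemma~\ref{lemma:lyndon}). Both $x$ and $w_k$ are Lyndon, so it suffices to show $x<w_k$.

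We compare through $u$. Since $u$ is Lyndon, $u$ is smaller than each of its proper suffixes (Definition~\ref{def:lyndon-2}). In particular $u<w_kw_{k+1}\dots w_N$. Because $x$ is a prefix of $u$, we get $x\le u<w_k\dots w_N$.

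Now we pass from $w_k\dots w_N$ to $w_k$ itself. Since $w_k\ge w_{k+1}\ge\dots\ge w_N$ is a canonical factorization, the canonical factorization of $w_k\dots w_N$ begins with $w_k$. The inequality $x<w_k\dots w_N$ can hold in two ways.

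\emph{First case: $x$ is a prefix of $w_k\dots w_N$.} If $|x|<|w_k|$ in this case, then $x$ is a proper prefix of $w_k$, so $x<w_k$. Otherwise $w_k$ would be a proper prefix of $x$. But then $w_k$ is simultaneously a prefix of the Lyndon word $x$ and a proper suffix of $u$, which forces $w_k\ge u\ge x$. Combined with $w_k$ being a prefix of $x$, this forces $|w_k|\ge |x|$, a contradiction.

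\emph{Second case: $x$ and $w_k\dots w_N$ differ at some position $j$ with $x_j<(w_k\dots w_N)_j$.} If $j\le |w_k|$, this is exactly a witness that $x<w_k$. If $j>|w_k|$, then $w_k$ is a proper prefix of $x$, and we are in the situation just ruled out.

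\emph{Remaining subtlety.} The delicate point is showing that $w_k$ cannot be a proper prefix of $x$. Here we use that $w_k$ is a suffix of $u$ and hence $w_k\ge u$ in the Lyndon sense, i.e.\ $w_k>u$ unless $w_k=u$, which is impossible since $w_k$ is proper. Also $u\ge x$ with $x$ a prefix of $u$. If $w_k$ were a proper prefix of $x$, then $w_k$ would be a proper prefix of $u$, so $w_k<u$, contradicting $w_k>u$.

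Hence $x<w_k$, so $xw_k\in\LL$, and the induction closes.
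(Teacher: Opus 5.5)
Your reduction to the single inequality $x<w_k$ (with $x=vw_1\cdots w_{k-1}$ Lyndon by induction, then Lemma~\ref{lemma:lyndon}) is fine. The justification of $x<w_k$, however, has a genuine gap.

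\textbf{The false step.} In both your ``first case'' and your ``remaining subtlety'' you assert that $w_k$ is a proper suffix of $u$, and hence $w_k>u$. This holds only when $k=N$. In general the suffix of $u$ beginning at $w_k$ is $w_kw_{k+1}\cdots w_N$. The inequality $u<w_k\cdots w_N$ by itself does not stop $w_k$ from being a prefix of $x$, and hence of $u$. You also omit the case $|x|=|w_k|$, i.e.\ $x=w_k$: ``otherwise'' does not imply that $w_k$ is a \emph{proper} prefix of $x$.

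\textbf{Why this is more than a slip.} Your argument never uses the ordering $w_1\geq\cdots\geq w_N$; the remark that the canonical factorization of $w_k\cdots w_N$ begins with $w_k$ is never invoked. The lemma is false without that ordering. For letters $a<b$, take $u=aab$, $v=a$, and the non-canonical Lyndon factorization $w=a\cdot b$. Every step you wrote goes through, except the claim that $w_1=a$ is a suffix of $u$; this is exactly the case $x=w_1$. Yet $vw_1=aa\notin\LL$.

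\textbf{The fix.} The missing ingredient is precisely the canonical ordering, and once you use it the case analysis becomes unnecessary.
\begin{itemize}
\item $w_N$ is a proper suffix of $u$, since $v$ is nonempty, so $u<w_N$ by Definition~\ref{def:lyndon-2}.
\item $x$ is a proper prefix of $u$, since $k\leq N$, so $x<u$.
\item Therefore $x<u<w_N\leq w_k$, which gives $x<w_k$ directly.
\end{itemize}
Then $xw_k\in\LL$ by Lemma~\ref{lemma:lyndon}, and the induction closes.
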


\begin{lemma}\label{lemma:seq.left.word.Lyndon}
For $u\in \LL$, consider any splitting $u=vw$ with $w\in \LL$ and the canonical factorization $v = v_1v_2\ldots v_N$.
Then $v_Nw,v_{N-1}v_Nw, \ldots ,v_1\ldots v_Nw\in \LL$.
\end{lemma}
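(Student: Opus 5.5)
The statement to prove is Lemma~\ref{lemma:seq.left.word.Lyndon}: for $u=vw\in\LL$ with $w\in\LL$ and $v=v_1\ldots v_N$ canonical, every word $v_k\ldots v_Nw$ is Lyndon. The plan is to prove this directly from Definition~\ref{def:lyndon-2} by descending induction on $k$, using Lemma~\ref{lemma:lyndon} as the engine. The base word is $w$ itself, which is Lyndon by hypothesis. The inductive step goes from $x_{k+1}:=v_{k+1}\ldots v_Nw$ to $x_k=v_kx_{k+1}$.

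The key input comes from Lyndonness of $u$. Since $x_k$ is a proper suffix of $u$ for $k\ge 2$ (and $x_1=u$), Definition~\ref{def:lyndon-2} gives $u<x_{k+1}$. We want to upgrade this to $v_k<x_{k+1}$; then Lemma~\ref{lemma:lyndon}, applied to the Lyndon words $v_k<x_{k+1}$, yields $x_k\in\LL$.

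To obtain $v_k<x_{k+1}$, we compare with $v_1$. By the canonical factorization, $v_k\le v_1$. Also $u=v_1v_2\ldots v_Nw$ has $v_1$ as a prefix, so $v_1\le u<x_{k+1}$. Hence $v_k\le v_1<x_{k+1}$, as required.

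There is one subtlety: strictness of $v_1\le u<x_{k+1}$ needs to be checked when $v_1$ is a prefix of $x_{k+1}$. In that case $v_1\le u$ could be an equality only if $u=v_1$, which is impossible since $w$ is nonempty. So the chain of inequalities holds in every case. I don't expect any genuine obstacle. The only care needed is to note that Lemma~\ref{lemma:lyndon} requires only $\ell_1<\ell_2$ in lexicographic order, prefix case included, which is exactly how it is stated.
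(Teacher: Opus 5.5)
Your argument is correct and complete. The chain $v_k\le v_1\le u<x_{k+1}$ rests on three facts: the ordering of the canonical factorization, the fact that $v_1$ is a prefix of $u$, and Definition~\ref{def:lyndon-2} applied to the proper suffix $x_{k+1}$. Together with Lemma~\ref{lemma:lyndon}, this gives the descending induction directly.

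The paper gives no proof of its own here; it only imports the statement from \cite[Lemmas 3.12--3.13]{ET}. So your proof serves as a self-contained justification rather than a variant of one in the text.

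One small remark: your ``subtlety'' paragraph is superfluous. Strictness already comes from $u<x_{k+1}$, so the non-strict inequality $v_1\le u$ is all you need.
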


For the standard factorizations, one has the following result (\cite[Lemma 14]{L}):

\begin{lemma}\label{lemma:leclerc.14}
Any standard Lyndon word $\ell$ can be written as
\begin{equation*}
  \ell = (\ell^{ls})^{k+1}fx,
\end{equation*}
where $k$ is a non-negative integer number, $f$ is a possibly empty proper prefix of $\ell^{ls}$, and $x$ is a single letter.
\end{lemma}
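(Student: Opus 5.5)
The plan is to prove the statement for an arbitrary Lyndon word $\ell$ with $|\ell|>1$; standardness will not be needed. The structure comes from the combinatorics of prefixes of Lyndon words (Duval's theory), after which we check that the Lyndon period we find is the standard left factor $\ell^{ls}$.

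\emph{Step 1.} Write $\ell=\ell'x$, where $x$ is the last letter. Then $\ell'$ is a nonempty proper prefix of a Lyndon word. I will use the classical fact that every nonempty prefix of a Lyndon word is \emph{preprime}, that is, of the form
\begin{equation*}
  \ell' = p^m f, \qquad p\in \LL,\ m\geq 1,\ f \text{ a possibly empty proper prefix of } p .
\end{equation*}
The natural proof is an induction on the length of the prefix, as in Duval's algorithm. Suppose $w=p^m f$ is preprime and $y$ is the next letter of $\ell$. Let $z$ be the letter of $p$ at position $|f|+1$. If $y=z$, then $wy$ is again of the form $p^{m}f'$ or $p^{m+1}$. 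If $y>z$, then $wy$ is itself Lyndon (the standard Duval step, proved via Definition~\ref{def:lyndon-2}), so we restart with $p:=wy$. The case $y<z$ cannot occur inside a Lyndon word: it would make the suffix starting at the last copy of $p$ strictly smaller than $\ell$, contradicting Definition~\ref{def:lyndon-2}. Applying the same observation to the final letter gives $x>z$, where $z$ is the letter of $p$ following $f$.

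\emph{Step 2.} Show that $p=\ell^{ls}$. We must identify the Lyndon proper prefixes of $\ell$; all of them are prefixes of $\ell'=p^mf$. A prefix of the form $p^jg$ with $j\geq1$ and $g$ a nonempty proper prefix of $p$ is not Lyndon, because its proper suffix $g$ is also a prefix of it and hence smaller, violating Definition~\ref{def:lyndon-2}. Likewise $p^j$ with $j\geq2$ is not Lyndon. It follows that every Lyndon proper prefix of $\ell$ has length at most $|p|$, and $p$ itself is Lyndon. Hence $\ell^{ls}=p$ by Proposition~\ref{prop:stand.factor}.

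Combining the two steps gives $\ell=(\ell^{ls})^{m}fx$ with $m\geq1$. Setting $k=m-1\geq0$ yields the claimed form $(\ell^{ls})^{k+1}fx$.

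The main obstacle is Step 1: the restart case ($y>z$ forces $wy\in\LL$) and the exclusion of $y<z$ need a careful comparison of suffixes. I would handle it by comparing each suffix $s$ of $p^mfy$ with the whole word via the periodicity of $p^mf$. That comparison reduces to comparing $fy$ with the prefix of $p$ of length $|f|+1$, which is decided by $y$ versus $z$. Alternatively, this step can be quoted from the standard literature (\cite{Lo}), and it is exactly what~\cite[Lemma 14]{L} uses.
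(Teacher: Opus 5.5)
Your argument is correct, and it proves slightly more than stated: the factorization $\ell=(\ell^{ls})^{k+1}fx$ holds for every Lyndon word with $|\ell|>1$, and standardness plays no role. The paper gives no proof of its own and simply imports the statement from \cite[Lemma 14]{L}. Your route supplies a self-contained derivation. You first use Duval's description of nonempty prefixes of Lyndon words, $\ell'=p^mf$ with $p\in\LL$, $m\geq 1$ and $f$ a proper prefix of $p$. You then observe that every longer prefix $p^jg$ (with $g\neq\emptyset$, or $j\geq 2$) has a nonempty border, which forces $p=\ell^{ls}$. This makes clear that the lemma is pure word combinatorics.

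One small fix is needed in Step 1. When you exclude $y<z$, the suffix ``starting at the last copy of $p$'' is all of $\ell$ when $m=1$, so it is not a proper suffix. Use instead the suffix beginning with $fy$, immediately after $p^m$. It is proper for every $m\geq 1$, and it is smaller than $\ell$ because $\ell$ begins with $p$, hence with $fz$.

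Finally, the inequality $x>z$ is not needed for the statement. As written, your argument only rules out $x<z$. The case $x=z$ would be excluded separately, because then $\ell$ would either equal $p^{m+1}$ or have the nonempty border $fz$.
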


We conclude with two results that did not appear in~\cite{ET}, but will be used later.

\begin{lemma}\label{lemma:lyndon.subword}
If $w = w_1\ldots w_n$ is the canonical factorization and a Lyndon word $\ell$ is a subword of $w$,
then $\ell$ must be fully contained in one of the $w_i$'s.
\end{lemma}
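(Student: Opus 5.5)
We argue by contradiction. Suppose $\ell$ occupies a contiguous segment of $w=w_1\ldots w_n$ that is not contained in a single factor. Then there are indices $i<j$ such that $\ell$ begins inside $w_i$ and ends inside $w_j$. So we can write
\begin{equation*}
  \ell = s\, w_{i+1}\cdots w_{j-1}\, p,
\end{equation*}
where $s$ is a nonempty suffix of $w_i$ (possibly all of $w_i$) and $p$ is a nonempty prefix of $w_j$ (possibly all of $w_j$). The idea is to compare $\ell$ with its proper suffix $p$, using Definition~\ref{def:lyndon-2}, and to derive $p\le \ell$. Since $p$ is a proper suffix of the Lyndon word $\ell$, this contradicts $\ell<p$.

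The key steps run as follows.

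First, since $w_i$ is Lyndon and $s$ is a nonempty suffix of $w_i$, we have $s\geq w_i$, with equality exactly when $s=w_i$.

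Second, by the canonical factorization, $w_i\geq w_{i+1}\geq\dots\geq w_j$.

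Third, $p$ is a prefix of $w_j$, so $p\leq w_j$.

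Combining these gives $p\le w_j\le w_i\le s$. If $p$ is not a prefix of $s$, then $p<s$ holds strictly at some differing letter within the first $\min(|p|,|s|)$ positions. Since $s$ is a prefix of $\ell$, this forces $p<\ell$, contradicting $\ell<p$.

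The main obstacle is the prefix case, where comparisons of the form $u\le v$ hold only because $u$ is a prefix of $v$. I would handle it by working with the first letters and a careful length argument. From $\ell<p$ and $p$ being a suffix of $\ell$, the first letter of $\ell$ is at most the first letter of $p$. That first letter of $\ell$ is the first letter of $s$, which is at least the first letter of $w_i$, because a Lyndon word's first letter is its minimal letter. In turn, the first letter of $w_i$ is at least the first letter of $w_j$, which equals the first letter of $p$. So all these first letters coincide.

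To finish, I would pass to a cleaner formulation. Let $\ell$ start at position $a$ inside $w_i$, and note that the suffix $w_j' := p$ of $\ell$ satisfies $\ell<p$. Also $\ell$ is a prefix of $s w_{i+1}\cdots w_j\cdots$, and
\begin{equation*}
  s w_{i+1}\cdots w_n \;\geq\; w_i w_{i+1}\cdots w_n \;\geq\; w_j\cdots w_n \;\geq\; p\cdots,
\end{equation*}
where these are comparisons of words that are suffixes of $w$. I would use the standard fact that for the canonical factorization, the suffix $w_j\cdots w_n$ is the lexicographically smallest among suffixes starting at positions up to that of $w_j$. This fact follows from Lemma~\ref{lem:Melancon} applied to the canonical factorizations of suffixes. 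A suffix of $w$ beginning inside $w_i$ has canonical factorization starting with the Lyndon factors of $s$, each of which is $\ge w_i$. By Lemma~\ref{lem:Melancon}, this gives $s w_{i+1}\cdots \ge w_j\cdots$ in the non-prefix case.

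Truncating to the length $|\ell|$, the word $\ell$ is a prefix of a word $\ge$ a word having $p$ as a prefix. This yields $\ell\geq p$ unless $\ell$ is a prefix of $p$, which is impossible since $|p|<|\ell|$. In either case we obtain $\ell\geq p$, contradicting the Lyndon property. I expect the most care to be needed in applying Lemma~\ref{lem:Melancon} when one factorization is a prefix of the other, handling the equality cases.
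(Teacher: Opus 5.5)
Your proof is correct, and its core is the paper's argument, but the ``main obstacle'' you identify does not exist. The lexicographic order on $I^*$, including the rule that a proper prefix is smaller, is a total order and hence transitive. Moreover, $s$ is a \emph{proper} prefix of $\ell$ (since $p\neq\emptyset$), so $s<\ell$.

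Your chain therefore already reads $p\le w_j\le w_i\le s<\ell<p$, which is a contradiction. This is exactly the paper's one-line proof, written there as $w_i\le u<\ell<v\le w_j$ against $w_i\ge w_j$, with $u=s$ and $v=p$. The case where $p$ is a prefix of $s$ needs no separate treatment: then $p$ is a proper prefix of $\ell$, so $p<\ell$ directly.

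The first-letter paragraph leads nowhere. The Melan\c{c}on-based finish is valid but buys nothing beyond that chain. If you keep it, you need to justify that every Lyndon factor $s_k$ of $s=s_1\cdots s_m$ is $\ge w_i$:
\begin{itemize}
\item $s_k\ge s_m$, and $s_m$ is a nonempty suffix of the Lyndon word $w_i$, so $s_m\ge w_i$.
\item Hence $s_1,\dots,s_m,w_{i+1},\dots,w_n$ is the canonical factorization of $sw_{i+1}\cdots w_n$.
\item It dominates $w_j,\dots,w_n$ factor by factor.
\item The prefix exception in Lemma~\ref{lem:Melancon} is excluded because $sw_{i+1}\cdots w_n$ is longer than $w_j\cdots w_n$.
\end{itemize}
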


\begin{proof}
Assuming the contrary, some proper prefix $u$ of $\ell$ is a suffix of $w_i$ and some proper suffix $v$ of $\ell$ is
a prefix of $w_j$ for some $i < j$. But then $w_i \leq u < \ell < v \leq w_j$ by properties of Lyndon words, which
contradicts to $w_i \geq w_{i+1} \geq \cdots \geq w_j$.
\end{proof}

\begin{lemma}\label{lemma:stronger.no.splitting}
Let a Lyndon word $u$ be contained as a subword in a Lyndon word~$\ell$.

\smallskip
\noindent
(1) If $u$ is not a suffix of $\ell$, then the standard factorization of $\ell$ does not split $u$.

\smallskip
\noindent
(2) If $u$ is not a prefix of $\ell$, then the costandard factorization of $\ell$ does not split $u$.
\end{lemma}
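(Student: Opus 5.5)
We argue by contradiction in both parts, using only the suffix characterization of Lyndon words (Definition~\ref{def:lyndon-2}) and the extremal descriptions of the two factorizations: $\ell^{ls}$ is the longest proper Lyndon prefix, and $\ell^r$ is the longest proper Lyndon suffix, equivalently the smallest proper suffix by Lemma~\ref{lemma:right.costfac.minimal}. Write $\ell = xuy$, where $u$ occupies a fixed position in $\ell$. "Splitting" means the cut point of the factorization falls strictly inside that occurrence of $u$.

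For (1), let $\ell = \ell^{ls}\ell^{rs}$. Assume the cut splits $u$, so $u = u_1u_2$ with $u_1,u_2$ nonempty, $\ell^{ls} = xu_1$ and $\ell^{rs} = u_2y$. Since $u$ is not a suffix of $\ell$, $y$ is nonempty. The natural candidate is the longer prefix $xu$. It is a proper prefix because $y\neq\emptyset$, and it is strictly longer than $\ell^{ls}$. It therefore suffices to show $xu\in\LL$, which contradicts the maximality of $\ell^{ls}$.

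To show $xu\in\LL$, we check that every proper suffix $s$ of $xu$ satisfies $s > xu$, splitting into two cases.
\begin{itemize}
\item If $s$ starts inside $x$ (or at the start of $u$), then $sy$ is a proper suffix of $\ell$, so $sy > \ell = xuy$. Since $s$ and $xu$ have the same offset relative to $y$, a comparison argument should give $s > xu$. The delicate point is when $s$ is a prefix of $xu$. I would handle this with the standard fact that a word which is a prefix of a Lyndon word and also a proper suffix of itself cannot occur in a nontrivial way (border reasoning).
\item If $s$ starts strictly inside $u$, then $s$ is a proper suffix of $u$, so $s > u$. If additionally $s$ starts inside $u_2$, we can also compare with $\ell^{rs}$.
\end{itemize}

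A cleaner route would be to show $xu_1$, $u_2$-type gluing via Lemma~\ref{lemma:lyndon}. Concretely: $xu$ is Lyndon if $\ell^{ls}=xu_1 < u_2$ and $u_2$ is Lyndon, but $u_2$ need not be Lyndon. I therefore expect the direct suffix verification, with care in the border case, to be the main work.

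For (2), the argument is dual. Write $\ell=\ell^l\ell^r$ with the cut inside $u$, so $\ell^r = u_2y$, $u = u_1u_2$, and $x$ is nonempty because $u$ is not a prefix. The candidate is the suffix $uy$. It is proper because $x\neq\emptyset$, and it is longer than $\ell^r$. We show $uy\in\LL$, contradicting maximality of $\ell^r$ in Proposition~\ref{prop:cost.factor}.

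Here the Lyndon check is easier:
\begin{itemize}
\item A proper suffix of $uy$ starting after $u$, or inside $u_2$, is a suffix of $\ell^r$. Hence it is at least $\ell^r$, which is the smallest proper suffix by Lemma~\ref{lemma:right.costfac.minimal}.
\item A suffix $s'y$ with $s'$ a proper suffix of $u$ satisfies $s' > u$. If $s'$ is not a prefix of $u$, then $s'y > uy$ immediately.
\end{itemize}
The remaining prefix case again needs the border argument: a Lyndon word $u$ has no nonempty proper suffix that is also a prefix, since such an $s'$ would satisfy $s'\le u$ as a prefix and $s'>u$ as a suffix. This kills that case outright. The same border fact resolves the delicate case in (1), since $u$ is unbordered and $\ell$ is unbordered.

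The main obstacle is organizing the case analysis in (1) where $s$ begins inside $x$. I would first try reducing it to $sy>xuy$ plus unborderedness of $\ell$.
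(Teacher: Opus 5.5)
\paragraph{The gap is in part (1).} Your check that $xu\in\LL$ never uses that $\ell^{ls}=xu_1$ is itself Lyndon. The tools you use in its place cannot do the job.

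The ``border fact'' you invoke for suffixes $s$ starting inside $x$ is false: prefixes of Lyndon words can be bordered. Take $\ell=aabaabb$ (Lyndon for $a<b$), with $x=aaba$, $u=ab$, $y=b$. The suffix $s=aab$ of $xu=aabaab$ starts inside $x$, and $sy=aabb>\ell$. Yet $s$ is a proper prefix of $xu$, so $s<xu$ and $xu\notin\LL$. Unborderedness of $\ell$ and of $u$ does not help, because $s$ is a border of $xu$, not of $\ell$ or $u$. What fails in this example is exactly that $xu_1=aabaa$ is not Lyndon.

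Your second bullet has the same problem. From $s>u$ you cannot conclude $s>xu$ unless you know $u\geq xu$, and nothing you cite gives this.

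\paragraph{How to close it.} Since $A:=\ell^{ls}=xu_1\in\LL$, every nonempty proper suffix $t$ of $A$ satisfies $t>A$ and is not a prefix of $A$. So $t$ and $A$ differ within the first $|t|$ letters, and $tu_2>Au_2=xu$. This handles every proper suffix of $xu$ that starts in $x$ or in $u_1$. Taking $t=u_1$ (when $x\neq\emptyset$) gives $u>xu$. Then each nonempty suffix $s$ of $u_2$, being a proper suffix of $u$, satisfies $s>u>xu$. Hence $xu\in\LL$ (trivially so if $x=\emptyset$), which contradicts the maximality of $\ell^{ls}$.

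Repaired this way, your route is a valid and more elementary alternative to the paper's. The paper glues onto $\ell^{ls}$ only the first canonical factor $z_1$ of $u_2$: it gets $\ell^{ls}<z_1$ from Lemma~\ref{lem:Melancon}, and then $\ell^{ls}z_1\in\LL$ by Lemma~\ref{lemma:lyndon}. That is precisely the gluing you abandoned because $u_2$ need not be Lyndon.

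\paragraph{Part (2).} This is right in substance, but your first bullet is a non sequitur. Being $\geq\ell^r$ says nothing about being $>uy$; in fact minimality of $\ell^r$ gives $\ell^r<uy$. The link you need is your second bullet with $s'=u_2$, which gives $\ell^r=u_2y>uy$. That inequality alone already contradicts Lemma~\ref{lemma:right.costfac.minimal}, because $uy$ is a proper suffix of $\ell$. So you do not need to check that $uy$ is Lyndon at all.
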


\begin{proof}
For (1), suppose the contrary, i.e.\ the standard factorization of $\ell$ splits $u$: $\ell^{ls} = vw,\ell^{rs} = xy$,
where $w,x,y \neq \emptyset$ and $wx=u$. Let $w = w_1\ldots w_n$ and $x = x_1\ldots x_m$ be the canonical factorizations.
Since $u=wx$ is Lyndon, we have $w < x$ which implies that $w_n \leq x_1$ by Lemma~\ref{lem:Melancon}.
Then we have $\ell^{ls}=vw < w_n\leq x_1$, hence $\ell^{ls}x_1 = vwx_1$ is Lyndon by Lemma~\ref{lemma:lyndon},
contradicting $\ell^{ls}$ being the longest Lyndon proper prefix of $\ell$.

For (2), suppose the contrary, i.e.\ the costandard factorization of $\ell$ splits $u$ that is $\ell^{l} = vw,\ell^{r} = xy$
where $v,w,x \neq \emptyset$ and $wx=u$. Let $w = w_1\ldots w_n$ and $x = x_1\ldots x_m$ be the canonical factorizations.
Since $u=wx$ is Lyndon, we again have $w<x$ which implies $w_n < x < xy = \ell^r$ but then $w_nxy=w_n\ell^r$ is Lyndon by
Lemma~\ref{lemma:lyndon}, contradicting $\ell^r$ being the longest Lyndon proper suffix of $\ell$.
\end{proof}


\section{Earlier Results}\label{sec:earlier-results}

In this section, we recall some key results of~\cite[\S4--6]{ET} that will play the crucial role in the present note.
We omit the proofs, referring the interested reader to~\cite{ET}.


\subsection{Key constructions}\label{ssec:constructions}
\

As mentioned in Subsection~\ref{ssec:aslaws}, the major difficulty of the affine setup lies in the treatment of imaginary SL-words.
To this end, one of the key invariants introduced in~\cite[(4.1)]{ET} is the complete flag of the corresponding
standard bracketings:
\begin{equation}\label{eq:flag}
\begin{split}
  & 0=\spanset_0^k \subset \spanset_1^k \subset \cdots \subset\spanset_{|I|}^k = \fh t^k \qquad \forall\, k\in \BZ_{>0},\\
  & \mathrm{with}\quad  \spanset_i^k := \spn\big\{\sb[\SL_1(k\delta)],\ldots,\sb[\SL_i(k\delta)]\big\}.
\end{split}
\end{equation}
We shall define two imaginary roots $m_k(\alpha), M_k(\alpha)$ for any $\alpha \in \wDelta^{+,\re}$ and $k\in \BZ_{>0}$
(these are equivalent to~\cite[Definition 4.14]{ET} according to~\cite[Lemmas 4.15, 4.17]{ET}).

\begin{definition}\label{def:mk}
For any $\alpha \in \wDelta^{+,\re}$ and $k\in \BZ_{>0}$, we define:
\begin{equation*}
  m_k(\alpha) = (k\delta,i)\quad \mathrm{with} \quad  h_\alpha t^k \in \spanset_{i}^k \backslash\spanset_{i-1}^k.
\end{equation*}
\end{definition}

\begin{definition}\label{def:Mk}
For any $\alpha \in \wDelta^{+,\re}$ and $k\in \BZ_{>0}$ we define:
\begin{equation*}
  M_k(\alpha) = \max \big\{\beta \in \imx \,\big|\, |\beta| = |k\delta|,\, [\sb[\SL(\beta)],\sb[\SL(\alpha)]] \neq 0 \big\}.
\end{equation*}
\end{definition}

The later definition clearly satisfies the following property (\cite[Corollary~4.16]{ET}):

\begin{corollary}\label{cor:im.span.vanish}
If $M_k(\alpha) = (k\delta,i)$ for $\alpha \in \wDelta^{+,\re}$, then $[h,\SL(\alpha)] = 0\ \forall\, h \in \spanset_{i-1}^k$.
\end{corollary}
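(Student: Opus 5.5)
The plan is to argue directly from Definition~\ref{def:Mk} together with the ordering of the imaginary SL-words within a fixed degree, and then to pass to the span by linearity. Here $[h,\SL(\alpha)]$ is read as $[h,\sb[\SL(\alpha)]]$, the bracket in $\widehat{\fn}^+$ with the standard bracketing of $\SL(\alpha)$. The expected obstacle is only the bookkeeping of the order convention, so I do not expect any real difficulty.

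First, I would recall from Subsection~\ref{ssec:aslaws} that the imaginary words in degree $k\delta$ are labeled so that
\begin{equation*}
  \SL_1(k\delta)>\SL_2(k\delta)>\cdots>\SL_{|I|}(k\delta).
\end{equation*}
Via~\eqref{eq:order.ext} this gives $(k\delta,1)>(k\delta,2)>\cdots>(k\delta,|I|)$ in $\imx$. In particular, $(k\delta,j)>(k\delta,i)$ for every $1\leq j<i$.

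Second, I would use that $M_k(\alpha)=(k\delta,i)$ is, by Definition~\ref{def:Mk}, the \emph{maximal} $\beta\in\imx$ with $|\beta|=|k\delta|$ and $[\sb[\SL(\beta)],\sb[\SL(\alpha)]]\neq 0$. Take any $j<i$ and set $\beta=(k\delta,j)$. Then $\beta$ has the right length and is strictly larger than $M_k(\alpha)$, so it cannot belong to the set over which the maximum is taken. Hence
\begin{equation*}
  [\sb[\SL_j(k\delta)],\sb[\SL(\alpha)]]=0 \qquad \forall\, 1\leq j<i.
\end{equation*}

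Finally, by~\eqref{eq:flag}, $\spanset_{i-1}^k$ is the linear span of $\sb[\SL_1(k\delta)],\ldots,\sb[\SL_{i-1}(k\delta)]$. Bilinearity of the Lie bracket then gives $[h,\sb[\SL(\alpha)]]=0$ for every $h\in\spanset_{i-1}^k$. The degenerate case $i=1$ is trivial, since then $\spanset_0^k=0$.
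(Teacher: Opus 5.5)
Your proof is correct and is exactly the argument the paper has in mind when it says Definition~\ref{def:Mk} ``clearly'' gives this property. Each $(k\delta,j)$ with $j<i$ is strictly larger than $M_k(\alpha)$, so $[\sb[\SL_j(k\delta)],\sb[\SL(\alpha)]]=0$, and bilinearity of the bracket extends this to all of $\spanset_{i-1}^k$.
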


The dependence of above constructions on the parameter $k$ is actually inessential as follows
from~\cite[Proposition 6.2, Corollary 6.4]{ET}, which are summarized below.

\begin{proposition}\label{prop:spanset.equiv}
For any $i \in \{0,1,\ldots,|I|\}$ and $k \in \BZ_{>0}$, we have $\spanset_{i}^{k+1} = \spanset_i^kt$.
\end{proposition}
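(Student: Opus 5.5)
The plan is to argue by induction on $k$, comparing the greedy selection of Proposition~\ref{prop:generalized.Leclerc.algo}(b) in degrees $k\delta$ and $(k+1)\delta$. The flag~\eqref{eq:flag} in degree $k\delta$ is obtained by running through the candidate words $\SL_*(\gamma_1)\SL_*(\gamma_2)$ with $\gamma_1+\gamma_2=k\delta$ in decreasing lexicographic order. At each step we keep a word exactly when its bracketing is not in the span of the bracketings already kept. Hence $\spanset_i^k$ is the span of the bracketings of the candidates lying above a certain threshold word. It therefore suffices to show two things:
\begin{enumerate}[leftmargin=0.7cm]
\item[(i)] the bracketings of the candidates, grouped by lexicographic position, produce the same increasing family of subspaces up to the factor $t$;
\item[(ii)] the ordering of the candidates is compatible between the two degrees.
\end{enumerate}

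First, I would reduce to real--real candidates. For $\gamma_1,\gamma_2$ both imaginary, the bracketing $[\sb[\SL_*(\gamma_1)],\sb[\SL_*(\gamma_2)]]$ vanishes, since $[\fh t^a,\fh t^b]=0$ for $a+b\neq 0$. Such words are therefore never candidates. For $\gamma_1=(\beta,a)$ and $\gamma_2=(-\beta,b)$ real, the bracketing is a nonzero multiple of $h_\beta t^{a+b}$, because each real bracketing spans the one-dimensional root space $\fg_{\pm\beta}t^{a}$. Consequently every candidate in degree $k\delta$ contributes a line $\BC h_\beta t^k$ for some $\beta\in\Delta^+$. The flag is then determined by the order in which the coroots $h_\beta$ first appear when the candidates are read from the top. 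In other words, $\spanset_i^k=\spn\{h_\beta t^k \mid \beta\in B_i^k\}$, where $B_i^k$ is the set of positive roots whose first candidate occurs before the $(i+1)$-st independent one.

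Second, the step I expect to be the main obstacle is showing that the first appearance order of the $h_\beta$ in degree $(k+1)\delta$ yields the same subspaces as in degree $k\delta$; exact equality of the orders is not needed. I would attack this by taking the lexicographically largest candidate $\SL(\gamma_1)\SL(\gamma_2)$ among those producing a given $h_\beta$ in degree $k\delta$. I would then produce a candidate for $h_\beta$ in degree $(k+1)\delta$ by replacing $\gamma_2$ with $\gamma_2+\delta$, or $\gamma_1$ with $\gamma_1+\delta$, and control its lexicographic position using the standard factorization via Lemma~\ref{lemma:equiv.to.standard.fac}, together with Lemma~\ref{lem:Melancon}. The aim is to show that if candidate $A$ beats candidate $B$ in degree $k$, then the shifted $A$ beats the shifted $B$ in degree $k+1$. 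I would also show that no new real--real pair in degree $(k+1)\delta$ (for instance, one with $\gamma_1$ of $\delta$-degree $0$ and $\gamma_2=(-\beta,k+1)$) jumps ahead in a way that changes the spans. The delicate case is when the relevant shift changes which of $\SL(\gamma_1),\SL(\gamma_2)$ is smaller, so that the concatenation order flips. Here I would use Corollary~\ref{cor:left.cost.right} and Lemma~\ref{lemma:stronger.no.splitting} to compare the resulting words block by block.

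Finally, with the first-appearance sets shown to generate the same spans, the equality $\spanset_i^{k+1}=\spanset_i^k t$ follows for all $i$ at once. The base of the induction is tautological, since the statement concerns consecutive $k$ only. As a consistency check, Definition~\ref{def:mk} then gives $m_{k+1}(\alpha)=(\,(k+1)\delta,i)$ whenever $m_k(\alpha)=(k\delta,i)$, which is the expected $k$-independence.
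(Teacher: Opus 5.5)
The paper does not reprove this statement; it is quoted from \cite[Proposition~6.2]{ET}. I therefore judge your proposal on its own terms, and it has a genuine gap: everything of substance sits in your step~(ii), for which you give a plan but no argument.

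Step~(i) only reformulates $\spanset^k_\bullet$ as the flag generated by the lines $\BC h_\beta t^k$ of the candidate pairs, read from the top. Even this needs one more ingredient, the Lalonde--Ram triangularity: each bracket $[\sb[\SL(\gamma_1)],\sb[\SL(\gamma_2)]]$ lies in the span of those $\sb[\SL_j(k\delta)]$ with $\SL_j(k\delta)\geq \SL(\gamma_1)\SL(\gamma_2)$. You need it because the selection in Proposition~\ref{prop:generalized.Leclerc.algo}(b) is by words, and one word can come from several splittings with different lines. For example, in $A_2^{(1)}$ with $0<1<2$, the word $012=0\cdot 12=01\cdot 2$ gives both $h_\theta t$ and $h_{\alpha_2}t$.

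Once (i) is granted, your requirement that ``no new pair jumps ahead in a way that changes the spans'' is just the proposition restated. Your stronger aim, that the order of all candidates survives shifting one factor by $\delta$, is not well defined (which factor?) and you give no reason for it.

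More importantly, the tools you name cannot close this gap. Lemma~\ref{lemma:equiv.to.standard.fac}, Lemma~\ref{lem:Melancon}, Corollary~\ref{cor:left.cost.right} and Lemma~\ref{lemma:stronger.no.splitting} hold for arbitrary Lyndon words. They carry no information about which words are standard in $\widehat{\fn}^+$. To place a shifted candidate such as $\SL(\gamma_1+\delta)\SL(\gamma_2)$ among the degree-$(k+1)\delta$ candidates, you must know how $\SL(\gamma_1+\delta)$ compares with $\SL(\gamma_1)$, with other real words, and with the imaginary words. But $\SL(\gamma_1+\delta)$ is produced by the Leclerc algorithm from a different candidate set and is not a simple modification of $\SL(\gamma_1)$. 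In the same $A_2^{(1)}$ example, $\SL(\alpha_1+\alpha_2+\delta)=02121$, while $\SL(\alpha_1+\alpha_2+2\delta)=02110212$; the latter is not obtained by inserting a copy of $\SL_1(\delta)=021$ or $\SL_2(\delta)=012$.

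That missing input is what Convexity and Monotonicity supply. They are formulated for each $k$ separately (Theorem~\ref{thm:convexity}(2)--(4), Proposition~\ref{prop:monotonicity}(3)), so they are available before $k$-independence is known, and \cite[Proposition~6.2]{ET} comes after them. Two examples of what they give:
\begin{itemize}
\item Theorem~\ref{thm:convexity}(4) together with Proposition~\ref{prop:monotonicity} shows that in every candidate pair the smaller factor lies in an increasing chain and the larger in a decreasing one. This persists after shifting either factor by $\delta$, so your ``delicate'' flip case never occurs.
\item Corollary~\ref{cor:mk.im.factor}, Proposition~\ref{prop:monotonicity} and Lemma~\ref{lemma:equiv.to.standard.fac} identify $\deg\SL^{ls}_i(k\delta)$ as the lexicographically largest increasing real root of height $<|k\delta|$ with $m_k=(k\delta,i)$.
\end{itemize}
A proof has to be built from statements of this kind that compare degrees $k\delta$ and $(k+1)\delta$; word combinatorics alone cannot do it.
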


\begin{corollary}\label{cor:equiv.mk.Mk}
For any $k,p \in \BZ_{>0}$ and $\alpha \in \wDelta^{+,\re}$, we have:
\begin{gather*}
  M_k(\alpha) = (k\delta,i) \iff M_p(\alpha) = (p\delta,i), \qquad
  m_k(\alpha) = (k\delta,i) \iff m_p(\alpha) = (p\delta,i).
\end{gather*}
\end{corollary}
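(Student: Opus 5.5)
The plan is to deduce both equivalences directly from Proposition~\ref{prop:spanset.equiv}. The point is that $m_k(\alpha)$ and $M_k(\alpha)$ depend only on the position of certain data relative to the flag~\eqref{eq:flag}. Multiplication by $t^{p-k}$ is a linear isomorphism $\fh t^k\iso \fh t^p$. By Proposition~\ref{prop:spanset.equiv}, applied iteratively (and inverted when $p<k$), this isomorphism carries $\spanset_i^k$ onto $\spanset_i^p$ for every $i$. So it suffices to express each invariant in terms that are transported by this isomorphism.

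\emph{The invariant $m_k$.} Write $\alpha=(\gamma,n)$ with $\gamma\in\Delta$, so that $h_\alpha t^k$ means $h_\gamma t^k$. Then $h_\gamma t^k\in \spanset_i^k\setminus\spanset_{i-1}^k$ holds if and only if, after multiplying by $t^{p-k}$, we have $h_\gamma t^p\in \spanset_i^p\setminus \spanset_{i-1}^p$. Hence $m_k(\alpha)=(k\delta,i)$ if and only if $m_p(\alpha)=(p\delta,i)$.

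\emph{The invariant $M_k$.} First I would reinterpret Definition~\ref{def:Mk} via the flag. For each $j$, write $\sb[\SL_j(k\delta)]=h^{(k)}_j t^k$ with $h^{(k)}_j\in\fh$, and note that $\sb[\SL(\alpha)]$ is a nonzero multiple of $x_\gamma t^n$. In $\widetilde{\fg}$ we have
\[
[h t^k, x_\gamma t^n]=\gamma(h)\,x_\gamma t^{n+k}.
\]
There is no central term, since $k+n\neq 0$ for positive $\alpha$ and $k>0$; the only case to check is $n=0$, $\gamma>0$, where the sum is $k>0$ anyway. Hence the bracket $[\sb[\SL_j(k\delta)],\sb[\SL(\alpha)]]$ is nonzero if and only if $\gamma(h^{(k)}_j)\neq 0$.

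The order satisfies $\SL_1(k\delta)>\cdots>\SL_{|I|}(k\delta)$, so the maximum in Definition~\ref{def:Mk} corresponds to the smallest such $j$. Therefore $M_k(\alpha)=(k\delta,i)$, where $i$ is the smallest index with $\spanset_i^k\not\subset (\ker\gamma)\, t^k$. Such an index exists because $\spanset_{|I|}^k=\fh t^k$ and $\gamma\neq0$. The subspace $(\ker\gamma)\,t^k$ is carried to $(\ker\gamma)\,t^p$ by the isomorphism, as are the $\spanset_i^k$. The minimal index is therefore the same for $k$ and $p$, giving the claimed equivalence for $M$.

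The only step requiring care is the bracket computation in $\widetilde{\fg}$. Specifically, one must check that nonvanishing of the bracket is governed purely by $\gamma(h)$, which needs the absence of the central term and the fact that $x_\gamma\neq 0$. Once that is settled, everything is formal linear algebra from Proposition~\ref{prop:spanset.equiv}.
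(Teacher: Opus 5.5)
Your argument is correct and takes the same route as the paper. The paper gives no separate proof here; it records the statement as an immediate consequence of Proposition~\ref{prop:spanset.equiv} (via \cite[Proposition 6.2, Corollary 6.4]{ET}), and your two steps (rewriting $M_k(\alpha)$ as the smallest $i$ with $\spanset_i^k\not\subset(\ker\gamma)\,t^k$, then transporting the flag by $t^{p-k}$) are exactly that deduction made explicit, with the central term in $[ht^k,x_\gamma t^n]$ vanishing even more simply because $(\fh,\fg_\gamma)=0$ for the invariant pairing.
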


Let us also record the behavior of above functions $m_k$ and $M_k$ on the sums of roots that are also roots
(\cite[Corollary 5.11, Lemma 5.12]{ET}):

\begin{corollary}\label{cor:max.im.rule}
Let $\alpha,\beta \in \wDelta^{+,\re}$ satisfy $\alpha + \beta \in \wDelta^{+,\re}$. If $M_k(\alpha) > M_k(\beta)$
then $M_k(\alpha) = M_k(\alpha + \beta)$, and if $M_k(\alpha) = M_k(\beta)$ then $M_k(\alpha + \beta) \leq M_k(\alpha)$.
\end{corollary}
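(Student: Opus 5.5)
We need to prove Corollary~\ref{cor:max.im.rule}: for real $\alpha,\beta$ with $\alpha+\beta$ real, $M_k(\alpha)>M_k(\beta)$ implies $M_k(\alpha+\beta)=M_k(\alpha)$, and $M_k(\alpha)=M_k(\beta)$ implies $M_k(\alpha+\beta)\le M_k(\alpha)$.

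The plan is to translate everything into the language of the flag~\eqref{eq:flag}. Since $\sb[\SL(\gamma)]$ spans the one-dimensional space $\widehat{\fg}_\gamma=\fg_{\bar\gamma}t^{n}$ for real $\gamma=(\bar\gamma,n)$, we have, for $h\in\fh t^k$,
\[
[h,\sb[\SL(\gamma)]]=\bar\gamma(h)\,\sb[\SL(\gamma)]\,t^k .
\]
Thus the bracket is nonzero iff $\bar\gamma(h)\neq 0$, where we view $h$ as an element of $\fh$. Moreover, the elements $\sb[\SL_i(k\delta)]$ form a basis of $\fh t^k$ compatible with the flag. Hence $M_k(\gamma)=(k\delta,i)$ is characterized as the largest root (in the order~\eqref{eq:order.ext}, where $\SL_1>\dots>\SL_{|I|}$, so largest means smallest index $i$) such that $\bar\gamma(\sb[\SL_i(k\delta)])\neq0$. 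Equivalently, $i$ is the smallest index with $\bar\gamma|_{\spanset_i^k}\neq 0$, consistent with Corollary~\ref{cor:im.span.vanish}. Write $i(\gamma)$ for this index. Then ``$M_k(\alpha)>M_k(\beta)$'' means $i(\alpha)<i(\beta)$.

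The key step is linearity. The restriction of $\overline{\alpha+\beta}=\bar\alpha+\bar\beta$ to $\spanset_j^k$ is the sum of the restrictions.

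If $i(\alpha)<i(\beta)$, then on $\spanset_{i(\alpha)-1}^k$ both $\bar\alpha$ and $\bar\beta$ vanish, so $\bar\alpha+\bar\beta$ does too. On $\sb[\SL_{i(\alpha)}(k\delta)]$ we have $\bar\beta=0$ (since $i(\alpha)<i(\beta)$) while $\bar\alpha\ne0$, so the sum is nonzero. Hence $i(\alpha+\beta)=i(\alpha)$, i.e.\ $M_k(\alpha+\beta)=M_k(\alpha)$.

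If $i(\alpha)=i(\beta)=i$, then the sum still vanishes on $\spanset_{i-1}^k$, so $i(\alpha+\beta)\ge i$, which gives $M_k(\alpha+\beta)\le M_k(\alpha)$. The inequality can be strict because of possible cancellation.

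The main subtlety is justifying the characterization of $M_k$ via the basis vectors rather than arbitrary $h$. This needs that nonvanishing of $[\sb[\SL(\beta)],\sb[\SL(\gamma)]]$ for imaginary $\beta$ depends only on evaluating $\bar\gamma$ on that basis element, which is immediate from the formula above. One also needs $\alpha+\beta$ real, so that $M_k(\alpha+\beta)$ is defined; here $\overline{\alpha+\beta}\neq 0$ guarantees that $i(\alpha+\beta)$ exists, because the $\sb[\SL_i(k\delta)]$ span all of $\fh t^k$.
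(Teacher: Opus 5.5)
Your proof is correct. The eigenvalue formula $[h t^k, x t^n]=\bar\gamma(h)\,x t^{n+k}$ has no central term because $k>0$ and $n\geq 0$. Combined with the fact that $\{\sb[\SL_r(k\delta)]\}_r$ is a basis of $\fh t^k$, ordered oppositely to the index $r$, it identifies $M_k(\gamma)$ with the first flag step $\spanset^k_i$ on which $\bar\gamma$ is nonzero, and both claims then follow from $\overline{\alpha+\beta}=\bar\alpha+\bar\beta$.

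The paper does not prove this statement itself; it imports it from \cite[Corollary~5.11]{ET}, so there is no in-paper proof to compare against. Your argument is a natural self-contained derivation directly from Definition~\ref{def:Mk}, resting on the same vanishing property that Corollary~\ref{cor:im.span.vanish} records.
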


\begin{lemma}\label{lemma:min.im.rule}
Let $\alpha,\beta \in \wDelta^{+,\re}$ satisfy $\alpha + \beta \in \wDelta^{+,\re}$. If $m_k(\alpha) < m_k(\beta)$ then
$m_k(\alpha + \beta) = m_k(\alpha)$, and if $m_k(\alpha) = m_k(\beta)$ then $m_k(\alpha + \beta) \geq m_k(\alpha)$.
\end{lemma}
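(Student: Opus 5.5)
The plan is to reduce both assertions to linear algebra inside the flag~\eqref{eq:flag}, using only Definition~\ref{def:mk}. Two preliminary points need to be fixed first.

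\emph{Order on imaginary roots.} By~\eqref{eq:order.ext} and $\SL_1(k\delta)>\dots>\SL_{|I|}(k\delta)$, we have $(k\delta,i)<(k\delta,j)$ if and only if $i>j$. So a \emph{smaller} value of $m_k$ corresponds to a \emph{larger} index in the filtration $\spanset^k_\bullet$.

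\emph{Meaning of $h_\alpha$.} For $\alpha=(\bar\alpha,n)\in\wDelta^{+,\re}$, I read $h_\alpha$ as the coroot $h_{\bar\alpha}\in\fh$ of the finite part. Note that $m_k(\alpha)$ depends only on $\bar\alpha$ up to sign, since $h_{-\bar\alpha}=-h_{\bar\alpha}$.

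The key step is an identity for coroots. Write $\alpha=(\bar\alpha,n)$ and $\beta=(\bar\beta,m)$. Since $\alpha+\beta$ is real, $\bar\alpha+\bar\beta\in\Delta$. I will use the standard formula
\begin{equation*}
  h_{\bar\alpha+\bar\beta}=\frac{(\bar\alpha,\bar\alpha)}{(\bar\alpha+\bar\beta,\bar\alpha+\bar\beta)}\,h_{\bar\alpha}
  +\frac{(\bar\beta,\bar\beta)}{(\bar\alpha+\bar\beta,\bar\alpha+\bar\beta)}\,h_{\bar\beta}.
\end{equation*}
It comes from identifying $h_\gamma$ with $2\gamma/(\gamma,\gamma)$ under the isomorphism $\fh\simeq\fh^*$. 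Multiplying by $t^k$ gives
\begin{equation*}
  h_{\alpha+\beta}t^k=c\,h_\alpha t^k+c'\,h_\beta t^k \quad\text{with } c,c'\neq 0.
\end{equation*}
If $\bar\alpha$ or $\bar\beta$ is zero this cannot occur, because both roots are real.

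Now set $m_k(\alpha)=(k\delta,i)$ and $m_k(\beta)=(k\delta,j)$. Then $h_\alpha t^k\in\spanset_i^k\setminus\spanset_{i-1}^k$ and $h_\beta t^k\in\spanset_j^k\setminus\spanset_{j-1}^k$.

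\emph{Case $m_k(\alpha)<m_k(\beta)$.} This means $i>j$, so $h_\beta t^k\in\spanset_j^k\subseteq\spanset_{i-1}^k$. Then $c'h_\beta t^k\in\spanset_{i-1}^k$, while $c\,h_\alpha t^k\in\spanset_i^k\setminus\spanset_{i-1}^k$. Hence the sum lies in $\spanset_i^k\setminus\spanset_{i-1}^k$, which says $m_k(\alpha+\beta)=(k\delta,i)=m_k(\alpha)$.

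\emph{Case $m_k(\alpha)=m_k(\beta)$.} Here $i=j$, and the combination lies in $\spanset_i^k$. So the index of $m_k(\alpha+\beta)$ is at most $i$, i.e.\ $m_k(\alpha+\beta)\geq m_k(\alpha)$ in the order above. Cancellation may lower the index, which is why only an inequality is claimed.

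The only genuinely delicate point is the correct interpretation of $h_\alpha$ and the nonvanishing of both coefficients. The sign and order conventions (the reversal between index and order) must also be tracked carefully. Everything else is immediate from the flag structure.
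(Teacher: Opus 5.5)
Your proof is correct. The paper gives no proof of its own here: the lemma is quoted from \cite[Lemma 5.12]{ET} with the proof omitted. So the only thing you take on trust is that Definition~\ref{def:mk} is the definition of $m_k$, which the paper itself justifies only by citing \cite[Lemmas 4.15, 4.17]{ET}.

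Granting that, as the paper does, your argument is a complete, self-contained proof that needs none of the Lyndon-word machinery. It combines three ingredients:
\begin{itemize}
\item the coroot identity $h_{\bar\alpha+\bar\beta}=c\,h_{\bar\alpha}+c'\,h_{\bar\beta}$ with $c,c'>0$;
\item the filtration $\spanset^k_\bullet$;
\item the index reversal $(k\delta,i)<(k\delta,j)\Leftrightarrow i>j$, which you track correctly.
\end{itemize}
In the first case you only need $c\neq 0$.

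The same linear-algebra mechanism is what the paper uses later in Corollary~\ref{cor:mk.roots} and Corollary~\ref{cor:both.increasing.m.k}. Expressing $h$ of a root through the $h_{\beta_j}$ gives coefficients with the same signs as the $c_j$. Hence, when both chains have the same monotonicity, the cancellation you allow for in the equal case cannot occur. Your inequality then sharpens to $m_k(\alpha+\beta)=\min\{m_k(\alpha),m_k(\beta)\}$.
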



\subsection{Invariance of bracketing}
\

While the bracketing of Definition~\ref{bracketing} was defined using the costandard factorization,
it is actually independent of a factorization (\cite[Proposition 4.18]{ET}).

\begin{proposition}\label{prop:general.bracketing}
For any factorization $u = u_1u_2$ with $u,u_1,u_2 \in \SL$, we have:

\medskip
\noindent
(a) $[\sb[u_1],\sb[u_2]] \neq 0$;

\medskip
\noindent
(b) if $u$ is imaginary and $u = \SL_i(k\delta)$, then $[\sb[u_1],\sb[u_2]] \in \spanset^{k}_i \backslash\spanset_{i-1}^k$.
\end{proposition}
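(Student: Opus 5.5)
The plan is to prove both parts with a single triangularity argument in $U(\widehat{\fn}^+)$. We compare the word expansion of the commutator $[\sb[u_1],\sb[u_2]]$ with the word expansions of the standard bracketings $\sb[v]$ for $v\in\SL$ of the same degree. Two facts from~\cite{LR,L} will be used.
\begin{itemize}[leftmargin=0.7cm]
\item[$\bullet$] For any Lyndon $\ell$, the bracketing of Definition~\ref{bracketing} expands as $\sb[\ell]={_\ell e}+\sum_{w>\ell,\ |w|=|\ell|} c_w\,{_we}$.
\item[$\bullet$] The elements ${_we}$ with $w$ standard (Definition~\ref{def:standard}(a)) form a basis of $U(\widehat{\fn}^+)$. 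Moreover, any ${_we}$ with $w$ non-standard is a combination of ${_ve}$ with $v\succ w$, that is, $v>w$ in a fixed length. Iterating this rewriting, which terminates by finiteness in each degree, every element of a fixed degree is written in the standard-word basis. Rewriting a non-standard word only produces strictly larger words.
\end{itemize}

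Step one concerns the commutator. Since $u=u_1u_2$ is Lyndon, we have $u_1<u_2$, hence $u_1u_2<u_2u_1$. Multiply out $\sb[u_1]\sb[u_2]-\sb[u_2]\sb[u_1]$ using the first fact. The first product contributes ${_ue}$ with coefficient $1$, plus words $w_1w_2$ with $w_1\geq u_1$, $w_2\geq u_2$, $|w_i|=|u_i|$ and $(w_1,w_2)\neq(u_1,u_2)$. Since the lengths match, such concatenations are strictly larger than $u_1u_2$. The second product contributes only words $w_2w_1$ with $w_2\ge u_2$ and $w_1\ge u_1$. Such a word satisfies $w_2w_1\geq u_2u_1>u_1u_2$. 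Indeed, $w_2\ge u_2$ with $|w_2|=|u_2|$ gives $w_2w_1\ge u_2w_1$, and $w_1\ge u_1$ gives $u_2w_1\ge u_2u_1$. Hence
\[
[\sb[u_1],\sb[u_2]]={_ue}+\sum_{w>u} d_w\,{_we}.
\]
Now $u$ is standard by Proposition~\ref{prop:standard}. Rewriting the non-standard words on the right only produces larger words, so in the standard-word basis the commutator equals ${_ue}$ plus strictly larger standard words. In particular it is nonzero, which proves (a).

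Step two treats (b). Write $[\sb[u_1],\sb[u_2]]=\sum_v c_v\,\sb[v]$, using Theorem~\ref{thm:standard Lyndon theorem}. Here $v$ runs over the SL-words of degree $k\delta$, namely $\SL_1(k\delta)>\dots>\SL_{|I|}(k\delta)$. Each $\sb[v]$, rewritten in the standard-word basis, equals ${_ve}$ plus strictly larger standard words. Let $v_0$ be the smallest $v$ with $c_v\neq 0$. Then ${_{v_0}e}$ is the smallest standard word appearing in the right-hand side, and it appears with coefficient $c_{v_0}$. Comparing with step one gives $v_0=u=\SL_i(k\delta)$ and $c_u=1$. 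Hence
\[
[\sb[u_1],\sb[u_2]]=\sb[\SL_i(k\delta)]+\sum_{j<i}c_j\,\sb[\SL_j(k\delta)],
\]
which lies in $\spanset^k_i\setminus\spanset^k_{i-1}$ because the $\sb[\SL_j(k\delta)]$ are linearly independent.

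The main obstacle is the bookkeeping in the triangularity claims. First, the leading-term expansion of $\sb[\ell]$ must be justified; this is the standard induction over the costandard factorization, as in~\cite{L}. Second, one must make sure that passing from arbitrary words to the standard-word basis never creates a word smaller than the ones already present. Both points rest on the definition of standardness through the order $\succ$, and they are where the argument needs care.
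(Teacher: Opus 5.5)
Your proof is correct. The paper does not prove this statement; it imports it from \cite[Proposition 4.18]{ET}, so there is no in-paper argument to compare against. Your triangularity argument is a natural self-contained route. The commutator has leading word ${_ue}$ with coefficient $1$, and ${_ue}$ is a basis element of $U(\widehat{\fn}^+)$ because $u$ is standard.

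It also gives a little more than stated. Your argument shows $[\sb[u_1],\sb[u_2]]\in \sb[u]+\spn\{\sb[v]\colon v\in\SL,\ v>u\}$ for every splitting of $u\in\SL$ into two Lyndon words; standardness of $u_1,u_2$ is never used. This unitriangularity underlies the remark after the statement that the flags $\spanset^k_\bullet$ do not change if the costandard factorization in the bracketing is replaced by another factorization.

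The two ``obstacles'' you flag are harmless:
\begin{itemize}
\item The leading-term property of $\sb[\ell]$ is exactly your Step one, applied inductively along $\ell=\ell^l\ell^r$.
\item The claim that rewriting only creates larger standard words follows by descending induction over the finitely many words of a fixed $\wQ_+$-degree. The $\wQ_+$-grading of $U(\widehat{\fn}^+)$ is what lets you replace $\succ$ by $>$ among words of equal length.
\end{itemize}
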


In particular, if we were to use the standard factorization instead of the costandard one in Definition~\ref{bracketing},
we would still get exactly the same affine standard Lyndon words and the same flags $\spanset_\bullet^k$
(\cite[Remark 4.19]{ET}).
We now list several corollaries of above proposition (\cite[Corollary 4.22, Corollary 4.23, Corollary 6.9]{ET}).

\begin{corollary}\label{cor:mk.im.factor}
For any $1\leq i\leq |I|$, $k\in \BZ_{>0}$, and any splitting $\SL_i(k\delta) = uv$ with $u,v \in \SL$, we have
$m_k(\deg(u)) = (k\delta,i)=m_k(\deg(v))$.
\end{corollary}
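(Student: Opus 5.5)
We have to show that for any splitting $\SL_i(k\delta)=uv$ with $u,v\in\SL$, both $m_k(\deg u)$ and $m_k(\deg v)$ equal $(k\delta,i)$. Since $\deg u+\deg v=k\delta$, either both degrees are real, $\deg u=(\gamma,a)$ and $\deg v=(-\gamma,k-a)$, or both are imaginary. The imaginary case is excluded separately: $[\fh t^a,\fh t^{k-a}]$ is central, so it vanishes in $\widehat{\fn}^+$, contradicting Proposition~\ref{prop:general.bracketing}(a). So we may assume both degrees are real, with $\sb[u]=c_1e_\gamma t^a$ and $\sb[v]=c_2e_{-\gamma}t^{k-a}$ for nonzero scalars $c_1,c_2$. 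Their bracket is then a nonzero multiple of $h_\gamma t^k$. (We may normalize $h_\gamma=[e_\gamma,e_{-\gamma}]$, so that $h_{-\gamma}=-h_\gamma$.)

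Proposition~\ref{prop:general.bracketing}(b) gives $[\sb[u],\sb[v]]\in\spanset_i^k\setminus\spanset_{i-1}^k$. Hence $h_\gamma t^k\in\spanset_i^k\setminus\spanset_{i-1}^k$.

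Definition~\ref{def:mk} then gives $m_k(\deg u)=(k\delta,i)$. Here we use that $h_{\deg u}$ means the coroot of the finite part $\gamma$. Likewise $h_{\deg v}t^k=h_{-\gamma}t^k=-h_\gamma t^k$ lies in the same set difference, since that set is stable under nonzero scaling. Therefore $m_k(\deg v)=(k\delta,i)$ as well.

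The main point to verify is the convention that $h_\alpha$ in Definition~\ref{def:mk} depends only on the finite part of $\alpha$, up to sign or scalar. Once that is granted, the rest follows directly from Proposition~\ref{prop:general.bracketing}, and Corollary~\ref{cor:equiv.mk.Mk} is not needed.
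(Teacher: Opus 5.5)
Your argument is correct and follows the paper's route. The paper records this statement as an immediate consequence of Proposition~\ref{prop:general.bracketing}(b): for a real splitting $\deg u=(\gamma,a)$, $\deg v=(-\gamma,k-a)$, the bracket $[\sb[u],\sb[v]]$ is a nonzero multiple of $h_\gamma t^k$, and Definition~\ref{def:mk} then gives $m_k(\deg u)=(k\delta,i)=m_k(\deg v)$.

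You also rule out an imaginary--imaginary splitting via Proposition~\ref{prop:general.bracketing}(a), which the paper leaves implicit; that detail is worth keeping. One small correction there: $[\fh t^a,\fh t^{k-a}]$ is not merely central but outright zero, because the cocycle term only appears when $k=0$.
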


\begin{corollary}\label{cor:imaginary.suffix.prefix}
Let $\alpha \in \wDelta^{+,\re}$ and $\SL(\alpha) = uv$ with $u,v \in \SL$.

\smallskip
\noindent
(1) If $u$ is imaginary, then $u = \SL(M_1(\alpha))$.

\smallskip
\noindent
(2) If $v$ is imaginary, then $v = \SL(M_1(\alpha))$.
\end{corollary}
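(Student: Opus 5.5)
The plan is to treat case (1) in detail; case (2) is symmetric, with the roles of prefix and suffix exchanged and the inequality $u<v$ used on the other side.

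\textbf{Setup.} Write $u=\SL_i(k\delta)$ and $\beta:=\deg(v)=\alpha-k\delta\in\wDelta^{+,\re}$. By Proposition~\ref{prop:general.bracketing}(a), $[\sb[u],\sb[v]]\neq 0$.

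\textbf{Step 1: $M_k(\beta)$ and $M_1(\alpha)$ share an index.} Since $[\sb[u],\sb[v]]\neq 0$, Definition~\ref{def:Mk} gives $M_k(\beta)\geq (k\delta,i)$. Now $M_k$ depends only on the finite part of the root. Indeed, $[ht^k,e_\gamma t^m]=\gamma(h)e_\gamma t^{m+k}$, so the vanishing of such brackets depends only on the finite projection of the root. Hence $\beta$ and $\alpha$ give the same index, and by Corollary~\ref{cor:equiv.mk.Mk} we get
\[
M_k(\beta)=(k\delta,j) \iff M_1(\alpha)=(\delta,j).
\]
So the claim reduces to two statements: $i=j$ and $k=1$.

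\textbf{Step 2: showing $i=j$.} I expect this to follow from the maximality in the generalized Leclerc algorithm, Proposition~\ref{prop:generalized.Leclerc.algo}(a). Suppose $j\neq i$; then $w:=\SL_j(k\delta)>u$ and $[\sb[w],\sb[v]]\neq0$.
\begin{itemize}
\item If $w<v$, then $wv$ is an admissible candidate of degree $\alpha$. Moreover $u$ and $w$ have the same length and $u<w$, so $uv<wv$. This contradicts $\SL(\alpha)=uv$.
\item If $w>v$, I would use the symmetric candidate $vw$ together with the bound $u<v<w$. Via Lemma~\ref{lemma:equiv.to.standard.fac}, I expect to compare $vw$ against $uv$ and again beat $uv$.
\end{itemize}
This case analysis is routine but needs care.

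\textbf{Step 3: showing $k=1$ (main obstacle).} Suppose $k\geq 2$, so $u$ has length at least $|\delta|+1$. Here I would proceed as follows.
\begin{itemize}
\item By Lemma~\ref{lemma:leclerc.14}, write the standard factorization $u=u^{ls}u^{rs}$. By Corollary~\ref{cor:mk.im.factor}, both pieces have $m_k$-index $i$.
\item Consider the competing candidate $\SL(M_1(\alpha))\cdot\SL(\alpha-\delta)$. Its bracket is nonzero by the same finite-part argument as in Step 1.
\item Compare it lexicographically with $uv$. The point is that Proposition~\ref{prop:spanset.equiv} identifies the flags at levels $k$ and $1$ via multiplication by $t^{k-1}$. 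So $\SL_i(\delta)$ should play the role of the imaginary prefix of smallest degree compatible with the index $i$.
\item I would try to show $\SL_i(\delta)\geq u$, using Lemma~\ref{lemma:equiv.to.standard.fac} applied to the prefix $\SL_i(\delta)$ versus $u$. Then Lemma~\ref{lemma:lyndon} and Lemma~\ref{lem:Melancon} would give that the competing word is at least as large as $uv$, with equality forcing $k=1$.
\end{itemize}
The delicate point is comparing imaginary SL-words of different degrees lexicographically. If the direct comparison fails, my fallback is to argue by induction on $k$, applied to the imaginary pieces of $u$ of smaller degree.
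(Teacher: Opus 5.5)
Steps~1 and~2 are correct. Since $[ht^k,e_\gamma t^m]=\gamma(h)e_\gamma t^{m+k}$, $M_k$ only sees the finite part. The maximality argument then works: in the case $w>v$ one has $vw>v>uv$, because a proper suffix of a Lyndon word is larger and, being shorter, is not a prefix of it. This gives $u=\SL_i(k\delta)$ with $(\delta,i)=M_1(\alpha)$, i.e.\ $u=\SL(M_k(\alpha))$.

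\textbf{The gap is Step~3.} Showing $k=1$ is the real content, and what you write is only a list of intentions. The ingredient you never invoke is Monotonicity, Proposition~\ref{prop:monotonicity}.

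First, the word $\SL_i(\delta)\SL(\alpha-\delta)$ is a legitimate competitor in Proposition~\ref{prop:generalized.Leclerc.algo}(a) only if $\SL_i(\delta)<\SL(\alpha-\delta)$, and you do not check this. It does hold:
\begin{itemize}
\item Step~2 and $u<v$ give $M_k(\beta)<\beta$.
\item So $\chain(\beta)$ cannot be increasing, since otherwise $\beta<m_k(\beta)\le M_k(\beta)$. Hence it is decreasing.
\item Therefore $M_1(\alpha-\delta)=(\delta,i)<\alpha-\delta$.
\end{itemize}

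Second, the inequality $\SL_i(\delta)>\SL_i(k\delta)$ is Lemma~\ref{lemma:imaginary.words.decreasing}. Lemma~\ref{lemma:equiv.to.standard.fac} alone only reduces it to $\SL_i(\delta)>\SL_i^{ls}(k\delta)$, which again needs Monotonicity. Indeed, by Corollary~\ref{cor:mk.im.factor}, $\deg\SL_i^{ls}(k\delta)$ has $m_k=(k\delta,i)$ and lies below it. So its chain is increasing, and it lies below $m_1=(\delta,i)$.

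With both facts in place, $\SL_i(\delta)$ is shorter than $u$ and larger, hence not a prefix of $u$. So $\SL_i(\delta)\SL(\alpha-\delta)>uv$ strictly, and there is no ``equality case'' to worry about. Your fallback is empty: both $\SL$-factors of an imaginary word are real, since $[\fh t^a,\fh t^b]=0$ for $a,b>0$.

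\textbf{Part~(2) is not symmetric.} There $u<v=\SL_i(k\delta)$ forces $\chain(\beta)$, with $\beta=\deg(u)$, to be increasing. So the admissible competitor is $\SL(\alpha-\delta)\SL_i(\delta)$, whose first factor is not $u$. The inequality $\SL_i(\delta)>\SL_i(k\delta)$ then says nothing by itself, and you need a second use of maximality:
\begin{itemize}
\item $u\,\SL_i((k-1)\delta)$ is an admissible candidate in degree $\alpha-\delta$, because $\beta<m_{k-1}(\beta)\le M_{k-1}(\beta)=((k-1)\delta,i)$.
\item Hence $\SL(\alpha-\delta)\ge u\,\SL_i((k-1)\delta)$, and these two words have the same length.
\item Combine this with $\SL_i((k-1)\delta)>\SL_i(k\delta)$, where again the former is not a prefix of the latter. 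Then $\SL(\alpha-\delta)$ exceeds $u\,\SL_i(k\delta)$ at a position within its own length.
\end{itemize}
So $\SL(\alpha-\delta)\SL_i(\delta)>u\,\SL_i(k\delta)=\SL(\alpha)$, which is the required contradiction.
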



\subsection{Convexity and Monotonicity}
\

In this subsection, we recall the key two results of~\cite{ET}, referred to as \textbf{Convexity}
(in analogy with the result of~\cite{R} for finite type) and \textbf{Monotonicity}. Henceforth,
we shall use the lexicographical order~\eqref{eq:order.ext} on $\wDelta^{+,\ext}$.

\begin{theorem}[Convexity]\cite[Theorem 5.4, Remark 5.8]{ET}\label{thm:convexity}
\
\begin{enumerate}

\item
If $\alpha < \beta$ with $\alpha,\beta,\alpha + \beta \in \wDelta^{+,\re}$, then $\alpha < \alpha + \beta < \beta$;

\item
For any $k$ if $\alpha < m_k(\alpha)$ with $\alpha \in \wDelta^{+,\re}$, then $\alpha < \alpha + k\delta < m_k(\alpha)$;

\item
For any $k$ if $M_k(\beta) < \beta$ with $\beta \in \wDelta^{+,\re}$, then $M_k(\beta) < \beta + k\delta < \beta$;

\item
If $\alpha < \beta$ with $\alpha,\beta \in \wDelta^{+,\re}$ and $\alpha + \beta$ is imaginary, then for all $k$:
  $$\alpha < m_k(\alpha) = m_k(\beta) \leq M_k(\alpha) = M_k(\beta) < \beta.$$

\end{enumerate}
\end{theorem}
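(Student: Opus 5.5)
The plan is to argue by simultaneous induction on the height $|\gamma|$ of the root being compared, proving all four parts together. Each step uses the generalized Leclerc algorithm of Proposition~\ref{prop:generalized.Leclerc.algo} together with the factorization-independence of bracketings in Proposition~\ref{prop:general.bracketing}. In each part there is a lower and an upper inequality, and they are handled differently.

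\emph{Lower bounds} come directly from the maximality in \eqref{eq:generalized Leclerc}.
\begin{itemize}
\item In part (1), $e_\alpha$ and $e_\beta$ have nonzero bracket because $\alpha+\beta$ is a real root. So $\SL(\alpha)\SL(\beta)$ is a candidate for $\SL(\alpha+\beta)$, giving $\SL(\alpha+\beta)\ge \SL(\alpha)\SL(\beta)>\SL(\alpha)$.
\item In part (2), I pick an imaginary word $\SL_j(k\delta)$ with $\SL(\alpha)<\SL_j(k\delta)$ and $[\sb[\SL_j(k\delta)],\sb[\SL(\alpha)]]\ne 0$. I would try $j$ corresponding to $M_k(\alpha)\ge m_k(\alpha)>\alpha$; Definition~\ref{def:Mk} makes the bracket nonzero. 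This gives $\SL(\alpha+k\delta)\ge \SL(\alpha)\SL(M_k(\alpha))>\SL(\alpha)$.
\item Part (3) is the mirror image, with the imaginary word placed first.
\end{itemize}

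\emph{Upper bounds} are the real content. For part (1), suppose for contradiction that $\SL(\alpha+\beta)\ge\SL(\beta)$. Take the standard factorization $\SL(\alpha+\beta)=uv$, with $u=\SL_*(\gamma_1)$ and $v=\SL_*(\gamma_2)$, and use Lemma~\ref{lemma:equiv.to.standard.fac} to transfer the comparison with $\SL(\beta)$ to a comparison of $u$ with $\SL(\beta)$. The decomposition $\alpha+\beta=\gamma_1+\gamma_2$ is then compared with $\alpha+\beta$ by the usual rank-two reshuffling. One of $\gamma_1-\alpha$, $\gamma_1-\beta$, $\gamma_2-\alpha$, $\gamma_2-\beta$ is a root or zero, and this splits into cases according to whether it is real or imaginary. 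The induction hypothesis (parts (1)--(4) in smaller height) then places $\gamma_1$ or $\gamma_2$ on the wrong side of $\alpha$ or $\beta$. That contradicts either $u<uv$ (via Lemma~\ref{lemma:leclerc.14}) or the maximality of $uv$ among candidate factorizations.

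When an imaginary piece occurs, Corollary~\ref{cor:imaginary.suffix.prefix} forces it to be $\SL(M_1(\cdot))$. Corollaries~\ref{cor:max.im.rule} and \ref{cor:equiv.mk.Mk}, together with Lemma~\ref{lemma:min.im.rule}, then control how $m_k$ and $M_k$ change along the decomposition, so inductive part (2) or (3) applies. The same scheme gives the upper bound $\alpha+k\delta<m_k(\alpha)$ in part (2). Here one needs Corollary~\ref{cor:im.span.vanish} and Definition~\ref{def:mk} to show that no imaginary factor larger than $\SL(m_k(\alpha))$ pairs nontrivially in a way that could lift $\alpha+k\delta$ past it.

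Part (4) I would prove first for $k=1$, then extend to all $k$ by Proposition~\ref{prop:spanset.equiv} and Corollary~\ref{cor:equiv.mk.Mk}.
\begin{itemize}
\item Since $\beta=k\delta-\alpha$, we have $h_\beta=-h_\alpha$, hence $m_k(\alpha)=m_k(\beta)$. Also $[h,e_\alpha]=0$ iff $[h,e_\beta]=0$, hence $M_k(\alpha)=M_k(\beta)$.
\item The inequality $m_k\le M_k$ holds because $h_\alpha t^k\in\spanset_{i}^k$ brackets nontrivially with $e_\alpha$.
\item The outer inequalities $\alpha<m_k(\alpha)$ and $M_k(\beta)<\beta$ come from the imaginary words themselves. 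The word $\SL(\alpha)\SL(\beta)$ is a candidate of degree $k\delta$ whose bracket lies in $\spanset^k_{i}\setminus\spanset^k_{i-1}$ for $m_k(\alpha)=(k\delta,i)$. By Proposition~\ref{prop:general.bracketing}(b) and Corollary~\ref{cor:mk.im.factor}, we then get $\SL(m_k(\alpha))\ge \SL(\alpha)\SL(\beta)>\SL(\alpha)$. Symmetrically, the factorization structure of $\SL(M_k(\beta))$ (an inductive application of parts (1)--(3)) places it below $\SL(\beta)$.
\end{itemize}

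I expect the main obstacle to be the upper bounds, especially in part (2). There the imaginary words are not determined degree-wise, so the case analysis on $(\gamma_1,\gamma_2)$ must be coupled with the flag invariants. I would first try to show that any factor $u$ violating the bound has $m_k(\deg u)$ or $M_k(\deg u)$ strictly comparable with $m_k(\alpha)$ in a way forbidden by Corollary~\ref{cor:max.im.rule} and Lemma~\ref{lemma:min.im.rule}.
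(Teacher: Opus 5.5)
Your overall strategy (a simultaneous induction driven by the generalized Leclerc algorithm and rank-two reshuffling) is reasonable, and your lower bounds are correct. The words $\SL(\alpha)\SL(\beta)$, $\SL(\alpha)\SL(M_k(\alpha))$ and $\SL(M_k(\beta))\SL(\beta)$ are legitimate candidates in Proposition~\ref{prop:generalized.Leclerc.algo}, and $M_k(\alpha)\geq m_k(\alpha)$ because $[h_\alpha t^k,e_\alpha]\neq 0$. But the upper bounds, which are the actual content of the theorem, are left as a template, and the template does not close as stated. In the reshuffling for (1), take an imaginary difference, e.g.\ $\gamma_2=\alpha+k\delta$ and $\gamma_1=\beta-k\delta$. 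To invoke (2) or (3) you must first verify their hypotheses $\alpha<m_k(\alpha)$ and $M_k(\gamma_1)<\gamma_1$. So you need the dichotomy ``$\gamma<m_k(\gamma)$ or $\gamma>M_k(\gamma)$'' for intermediate roots $\gamma$, and a priori nothing excludes $m_k(\gamma)<\gamma<M_k(\gamma)$. Your inductive hypotheses do not provide this. Part (4), applied to $\gamma$ and $q\delta-\gamma$, would provide it, but only in its all-$k$ form, which your argument does not deliver (see below). This dichotomy is exactly Monotonicity (Proposition~\ref{prop:monotonicity}), which the paper points out was established \emph{simultaneously} with Convexity in \cite{ET}; your induction has to carry it. For the upper bounds in (2) and (3) you give nothing beyond ``the same scheme''.

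Part (4) has concrete errors. Write $\alpha+\beta=p\delta$.
\begin{itemize}
\item The candidate $\SL(\alpha)\SL(\beta)$ has degree $p\delta$, so it yields only $\alpha<m_p(\alpha)$. It does not give the case $k=1$ you want to start from when $p>1$.
\item Proposition~\ref{prop:general.bracketing}(b) and Corollary~\ref{cor:mk.im.factor} concern factorizations of words already known to be standard Lyndon, so they do not apply to this candidate. The right justification is the triangularity $[\sb[u],\sb[v]]\in\spn\{\sb[w]\colon w\in\SL,\ w\geq uv\}$ for Lyndon $u<v$. This puts $h_\alpha t^p$ into $\spanset^p_m$ with $\SL_m(p\delta)\geq\SL(\alpha)\SL(\beta)$.
\item The other outer inequality $M_p(\beta)<\beta$ is not ``symmetric'': no candidate word certifies it, and it is the hard direction. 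Pointing to ``the factorization structure'' is not an argument. From the standard factorization $\SL(M_p(\beta))=\SL(\gamma)\SL(p\delta-\gamma)$ one gets $(\beta,\gamma)\neq 0$ by Corollary~\ref{cor:mk.im.factor} and the definition of $M_p$. One then needs a reshuffling argument like the one for the upper bound in (1), with the same dichotomy issue.
\item The passage to all $k$ via Proposition~\ref{prop:spanset.equiv} and Corollary~\ref{cor:equiv.mk.Mk} fails. These are results of \cite[\S6]{ET}, which come after Convexity and Monotonicity, so using them here (or in your inductive step) is circular unless you prove them independently. Even granted, they only say that the index $i$ is independent of $k$. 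They do not convert a comparison of $\SL(\alpha)$ with $\SL_i(p\delta)$ into one with $\SL_i(k\delta)$.
\item Even the ordering $\SL_i(\delta)>\SL_i(2\delta)>\cdots$ of Lemma~\ref{lemma:imaginary.words.decreasing}, itself an application of Monotonicity, would transport $\alpha<m_p(\alpha)$ only to $k\leq p$, and $M_p(\beta)<\beta$ only to $k\geq p$.
\end{itemize}
The all-$k$ statement genuinely requires control over whole chains, i.e.\ Monotonicity, which is the missing ingredient throughout.
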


We are now ready to introduce the key actor of the present work.

\begin{definition}\label{def:chains}
For $\alpha\in \wDelta^{+,\re}$, consider the decomposition $\alpha=\alpha'+k\delta$ with
$\alpha'\in \wDelta^{+,\re}, |\alpha'|<|\delta|, k\in \BZ_{\geq 0}$ (i.e.\ $\alpha'\in \Delta^+\cup (\delta-\Delta^+)$).
Define the \textbf{chain} of $\alpha$ as:
\begin{equation*}
  \chain(\alpha) = (\alpha',\alpha'+\delta,\alpha'+2\delta,\ldots).
\end{equation*}
\end{definition}

The following chain monotonicity property of~\cite[Lemma 5.7, Proposition 5.21]{ET} was actually established
at the same time as the above convexity:

\begin{proposition}[Monotonicity]\label{prop:monotonicity}
For $\alpha \in \wDelta^{+,\re}$, the following are equivalent:
\begin{enumerate}

\item
$\chain(\alpha)$ is monotone increasing (resp.\ decreasing);

\item
$\alpha < M_1(\alpha)$ (resp.\ $\alpha > M_1(\alpha))$;

\item
for all $k$: $\alpha < m_k(\alpha)$ (resp.\ $\alpha > M_k(\alpha)$).

\end{enumerate}
In particular, each chain $\chain(\alpha)$ is monotonous.
\end{proposition}

As an immediate corollary of this result, we obtain (\cite[Corollary 5.23]{ET}):

\begin{corollary}\label{cor:monotonicity}
For $\alpha \in \wDelta^{+,\re}$ with $|\alpha| < |\delta|$, the chain $\chain(\alpha)$ is monotone increasing
(resp.\ decreasing) iff $\chain(\delta - \alpha)$ is monotone decreasing (resp.\ increasing).
\end{corollary}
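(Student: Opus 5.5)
The plan is to treat $\alpha$ and $\beta:=\delta-\alpha$ as the two real roots of height less than $|\delta|$ whose sum is the imaginary root $\delta$, and to feed them into part~(4) of Theorem~\ref{thm:convexity}. Combining that with the equivalences in Proposition~\ref{prop:monotonicity} should give the corollary. We may take $\alpha\in\Delta^+\cup(\delta-\Delta^+)$. Then $\beta=\delta-\alpha$ also lies in $\Delta^+\cup(\delta-\Delta^+)$, so $\beta$ is real, $|\beta|<|\delta|$, and $\alpha+\beta=\delta$ is imaginary. Also $\alpha\neq\beta$, because $2\alpha=\delta$ is impossible: the $Q$-component of $\delta$ is $0$, while that of $\alpha$ is a nonzero root. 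Hence $\SL(\alpha)\neq\SL(\beta)$, and exactly one of $\alpha<\beta$ or $\beta<\alpha$ holds.

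Suppose first that $\alpha<\beta$. Theorem~\ref{thm:convexity}(4) with $k=1$ gives
\[
  \alpha<m_1(\alpha)=m_1(\beta)\leq M_1(\alpha)=M_1(\beta)<\beta .
\]
In particular $\alpha<M_1(\alpha)$, so $\chain(\alpha)$ is increasing by the implication (2)$\Rightarrow$(1) of Proposition~\ref{prop:monotonicity}. Likewise $\beta>M_1(\beta)$, so $\chain(\beta)=\chain(\delta-\alpha)$ is decreasing.

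If instead $\beta<\alpha$, the same argument with the roles of $\alpha$ and $\beta$ swapped shows that $\chain(\alpha)$ is decreasing and $\chain(\delta-\alpha)$ is increasing.

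These two cases are exhaustive. Each chain is either increasing or decreasing, and not both, since $\alpha<\alpha+\delta$ and $\alpha>\alpha+\delta$ cannot hold simultaneously. So in every case exactly one of $\chain(\alpha)$, $\chain(\delta-\alpha)$ is increasing and the other is decreasing, which gives both directions of the stated equivalence.

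There is no real obstacle here. The only points needing care are that $\delta-\alpha$ is again real with height less than $|\delta|$, so that Theorem~\ref{thm:convexity}(4) applies, and that $\alpha\neq\delta-\alpha$, so that the two cases cover everything.
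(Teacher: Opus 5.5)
Your proof is correct, and it is the intended derivation: you apply Theorem~\ref{thm:convexity}(4) to the pair $\alpha,\delta-\alpha$ (both real of height less than $|\delta|$, distinct, with imaginary sum $\delta$) and then use the equivalence (1)$\Leftrightarrow$(2) of Proposition~\ref{prop:monotonicity}. The paper gives no separate proof; it records the statement as an immediate consequence of Monotonicity, citing \cite[Corollary 5.23]{ET}, and your check that $\alpha\neq\delta-\alpha$ and that no chain is both increasing and decreasing is what gives both directions of the equivalence.
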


The next two lemmas follow easily from results above (\cite[Lemmas~5.25--5.26]{ET}):

\begin{lemma}\label{lemma:chain.sum.monotone}
If $\alpha,\beta, \alpha + \beta \in \wDelta^{+,\re}$, and both chains $\chain(\alpha),\chain(\beta)$ are increasing
(resp.\ decreasing), then the chain $\chain(\alpha + \beta)$ is also increasing (resp.\ decreasing).
\end{lemma}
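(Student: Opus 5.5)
We argue via the criterion of Proposition~\ref{prop:monotonicity}: $\chain(\gamma)$ is increasing iff $\gamma<M_1(\gamma)$, and decreasing iff $\gamma>M_1(\gamma)$. We treat the increasing case; the decreasing case is symmetric, with the roles of $<$ and $>$ swapped. So assume $\alpha<M_1(\alpha)$ and $\beta<M_1(\beta)$, and set $\gamma=\alpha+\beta$. Because the property is stated for chains, and $\chain(\alpha)=\chain(\alpha+k\delta)$, we may pick whichever representatives are convenient. Corollary~\ref{cor:equiv.mk.Mk} lets us replace $M_1$ by $M_k$ at will, and the $\wDelta^{+,\re}$ hypothesis is unaffected.

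Without loss of generality let $\alpha<\beta$. Convexity (Theorem~\ref{thm:convexity}(1)) then gives $\alpha<\gamma<\beta$. The argument splits according to how $M_1(\alpha)$ compares with $M_1(\beta)$, using Corollary~\ref{cor:max.im.rule}.

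\textbf{Case $M_1(\alpha)\neq M_1(\beta)$.} Then $M_1(\gamma)=\max\{M_1(\alpha),M_1(\beta)\}$.
\begin{itemize}
\item If the maximum is $M_1(\beta)$, then $\gamma<\beta<M_1(\beta)=M_1(\gamma)$, and we are done.
\item If the maximum is $M_1(\alpha)$, then $M_1(\gamma)=M_1(\alpha)>M_1(\beta)>\beta>\gamma$.
\end{itemize}

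\textbf{Case $M_1(\alpha)=M_1(\beta)$.} Here we only know $M_1(\gamma)\le M_1(\alpha)$, which is the real obstacle. The plan is to switch to the minimal invariants. By Proposition~\ref{prop:monotonicity}(3), $\alpha<m_k(\alpha)$ and $\beta<m_k(\beta)$. Lemma~\ref{lemma:min.im.rule} gives $m_k(\gamma)\ge\min\{m_k(\alpha),m_k(\beta)\}$, with equality to the smaller one when the two differ.
\begin{itemize}
\item If $m_k(\beta)\le m_k(\alpha)$, then $m_k(\gamma)\ge m_k(\beta)>\beta>\gamma$.
\item If $m_k(\alpha)<m_k(\beta)$, then $m_k(\gamma)=m_k(\alpha)$. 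Since $m_k\le M_k$ always (both are defined through the flag~\eqref{eq:flag}), it suffices to show $\gamma<m_k(\alpha)$.
\end{itemize}

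To get $\gamma<m_k(\alpha)$, use the chain: $\alpha<\alpha+k\delta<m_k(\alpha)$ by Convexity~(2). Then express $\gamma+k\delta=(\alpha+k\delta)+\beta$, or $\alpha+(\beta+k\delta)$, and apply Convexity~(1). This places $\gamma+k\delta$ between real roots that are already known to lie below the relevant imaginary root. Comparing with $\gamma$ itself then forces $\gamma<M_k(\gamma)$: if instead $\gamma>M_k(\gamma)$, the chain of $\gamma$ would be decreasing, so $\gamma+k\delta<\gamma$, and we expect this to contradict the sandwich. I expect this last comparison, which excludes a decreasing chain of $\gamma$ in the degenerate case, to be the main technical step.

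An alternative for that step is to assume $\chain(\gamma)$ is decreasing and apply Corollary~\ref{cor:monotonicity} to $\delta-\gamma$, combining it with the increasing chains of $\alpha$ and $\beta$ via Theorem~\ref{thm:convexity}(4) applied to the imaginary sum $\gamma+(\delta-\gamma)$. Here $m_k(\gamma)=m_k(\delta-\gamma)$ sits strictly between them, and we would derive a contradiction with $\alpha<\gamma<\beta$.
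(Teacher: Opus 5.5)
Your case $M_1(\alpha)\neq M_1(\beta)$ and your subcase $m_1(\beta)\le m_1(\alpha)$ are correct. The remaining subcase is not proved, and it is exactly where the lemma has content: $\alpha<\beta$, $M_1(\alpha)=M_1(\beta)$ and $m_1(\alpha)<m_1(\beta)$, so that $m_1(\gamma)=m_1(\alpha)$. Neither of your two sketches closes it.

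If you assume $\chain(\gamma)$ is decreasing, the inequalities you have are all mutually consistent:
$\alpha<\alpha+k\delta<m_1(\alpha)\le M_1(\gamma)<\gamma+k\delta<\gamma<\beta<m_1(\beta)$.
Convexity applied to $(\alpha+k\delta)+\beta$ or to $\alpha+(\beta+k\delta)$ only says that $\gamma+k\delta$ lies between the two summands, and it does. Theorem~\ref{thm:convexity}(4) applied to $\gamma$ and $K\delta-\gamma$ only gives $m_1(\alpha)=m_1(\gamma)<\gamma$, which is again compatible with $\alpha<\gamma<\beta$. The obstruction is that nothing you know places elements of $\chain(\beta)$ below $m_1(\alpha)$. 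So no decomposition of a translate of $\gamma$ into pieces from $\chain(\alpha)$ and $\chain(\beta)$ will give a contradiction.

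The missing idea is to decompose a different root of the triangle $\alpha,\beta,K\delta-\gamma$.
\begin{itemize}
\item Suppose $\chain(\gamma)$ were decreasing, and take $K\gg0$ with $K\delta-\gamma,\,K\delta-\beta\in\wDelta^{+,\re}$.
\item By Corollary~\ref{cor:monotonicity}, $\chain(K\delta-\gamma)$ is increasing and $\chain(K\delta-\beta)$ is decreasing.
\item By Theorem~\ref{thm:convexity}(4), $m_1(K\delta-\gamma)=m_1(\gamma)=m_1(\alpha)$ and $m_1(K\delta-\beta)=m_1(\beta)$.
\item Now write $K\delta-\beta=\alpha+(K\delta-\gamma)$. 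Proposition~\ref{prop:monotonicity} gives $\alpha<m_1(\alpha)$ and $K\delta-\gamma<m_1(\alpha)$.
\item On the other side, $K\delta-\beta>M_1(K\delta-\beta)\ge m_1(\beta)>m_1(\alpha)$.
\end{itemize}
So the sum exceeds both summands, contradicting Theorem~\ref{thm:convexity}(1).

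The decreasing case is dual, but it is not literally your argument with the inequalities reversed; the same step is needed there. In the hard subcase $M_1(\alpha)<M_1(\beta)$, decompose $K\delta-\alpha=\beta+(K\delta-\gamma)$ and use $M_1$, which is also preserved under $x\mapsto K\delta-x$ by Theorem~\ref{thm:convexity}(4). With this step, your $M_1$ case split becomes unnecessary in the increasing case: the trichotomy of Lemma~\ref{lemma:min.im.rule} alone suffices, and dually Corollary~\ref{cor:max.im.rule} alone in the decreasing case.

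One minor point: work with $k=1$ throughout. Corollary~\ref{cor:equiv.mk.Mk} only matches the index $i$. Passing from $\gamma<(k\delta,i)$ to $\gamma<(\delta,i)$ additionally requires Lemma~\ref{lemma:imaginary.words.decreasing}.
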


\begin{lemma}\label{lemma:parity.same.sum}
(a) If $\alpha,\beta,\alpha + \beta \in \wDelta^{+,\re}$, with both $\chain(\alpha), \chain(\beta)$ increasing, then:
\begin{equation*}
  \chain(\alpha + \beta) < \min\{m_k(\alpha),m_k(\beta)\} \qquad \forall\, k \in \BZ_{>0}.
\end{equation*}

\noindent
(b) If $\alpha,\beta,\alpha + \beta \in \wDelta^{+,\re}$, with both $\chain(\alpha), \chain(\beta)$ decreasing, then:
\begin{equation*}
  \chain(\alpha + \beta) > \max\{M_k(\alpha),M_k(\beta)\} \qquad \forall\, k \in \BZ_{>0}.
\end{equation*}
\end{lemma}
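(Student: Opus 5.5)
We prove (a) in detail; (b) is the mirror argument with $M_k$ in place of $m_k$, Corollary~\ref{cor:max.im.rule} in place of Lemma~\ref{lemma:min.im.rule}, and all inequalities reversed. The two inputs are: every element of an increasing chain lies below the relevant $m_k$, and a sum lies between its summands.

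\emph{Step 1: $m_k$ is constant along a chain.} For $\gamma\in\wDelta^{+,\re}$ and $j\in\BZ$ with $\gamma+j\delta\in\wDelta^{+,\re}$, the coroots $h_\gamma$ and $h_{\gamma+j\delta}$ coincide, since both are determined by the finite part of the root. By Definition~\ref{def:mk} this gives $m_k(\gamma+j\delta)=m_k(\gamma)$.

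\emph{Step 2: each summand's chain lies below its $m_k$.} Take $\alpha_1\in\chain(\alpha)$. The chain is increasing, so Proposition~\ref{prop:monotonicity}(3) gives $\alpha_1<m_k(\alpha_1)$, and Step 1 turns this into $\alpha_1<m_k(\alpha)$. In the same way every $\beta_1\in\chain(\beta)$ satisfies $\beta_1<m_k(\beta)$. By Lemma~\ref{lemma:chain.sum.monotone}, $\chain(\alpha+\beta)$ is also increasing; Proposition~\ref{prop:monotonicity} and Step 1 then give $\gamma<m_k(\alpha+\beta)$ for every $\gamma\in\chain(\alpha+\beta)$.

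\emph{Step 3: the case $m_k(\alpha)\neq m_k(\beta)$.} Lemma~\ref{lemma:min.im.rule} gives $m_k(\alpha+\beta)=\min\{m_k(\alpha),m_k(\beta)\}$. Step 2 then yields the claim immediately for every element of the chain.

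\emph{Step 4: the case $m_k(\alpha)=m_k(\beta)=:\mu$.} Here the min-rule only gives $m_k(\alpha+\beta)\geq\mu$, which points the wrong way, so we use Convexity instead.
\begin{itemize}
\item Since $\chain(\alpha+\beta)$ is increasing, each element is smaller than all later ones. It therefore suffices to prove $\gamma<\mu$ for all sufficiently large $\gamma\in\chain(\alpha+\beta)$.
\item For $\gamma=\alpha+\beta+j\delta$ with $j\geq0$, write $\gamma=(\alpha+j\delta)+\beta$. Both summands are real positive roots lying in $\chain(\alpha)$ and $\chain(\beta)$ respectively.
\item By Theorem~\ref{thm:convexity}(1), $\gamma$ lies strictly between $\alpha+j\delta$ and $\beta$. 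Hence $\gamma<\max\{\alpha+j\delta,\beta\}$, and this maximum is $<\mu$ by Step 2.
\end{itemize}
This settles the equal case. Step 4's reduction to large elements is needed because the early elements of $\chain(\alpha+\beta)$, such as $(\alpha+\beta)'$ when $|\alpha'|+|\beta'|>|\delta|$, need not split as a sum of elements of $\chain(\alpha)$ and $\chain(\beta)$. It also uses that $\gamma$ is real while $\mu$ is imaginary, so $\gamma\neq\mu$ and the inequalities are strict.

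Combining Steps 3 and 4 gives $\chain(\alpha+\beta)<\min\{m_k(\alpha),m_k(\beta)\}$ for every $k\in\BZ_{>0}$, which is (a).
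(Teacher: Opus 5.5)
Your proof is correct. The paper does not reprove this lemma; it cites \cite[Lemmas~5.25--5.26]{ET} as an easy consequence of the preceding results, and your derivation is exactly such a consequence: Lemma~\ref{lemma:min.im.rule} (resp.\ Corollary~\ref{cor:max.im.rule}) together with Monotonicity handles the case of unequal values, and Convexity applied to $(\alpha+j\delta)+\beta$ handles the case of equal values. The only small point is in the mirror argument for (b): the constancy of $M_k$ along a chain should be justified not via coroots but by noting that the bracket of $\fh t^k$ with a real root vector depends only on the finite part of its degree.
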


As another application of Proposition~\ref{prop:monotonicity}, we recall the following three results pertaining to
imaginary SL-words (\cite[Lemma~5.27, Lemma~6.7, Lemma~6.8]{ET}):

\begin{lemma}\label{lemma:left.standard.comp.imaginary}
For any $i \in \{1,2,\ldots,|I|-1\}$ and $k \in \BZ_{>0}$: $\SL_{i+1}(k\delta) < \SL^{ls}_i(k\delta)$.
\end{lemma}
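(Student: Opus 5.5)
Throughout, write $u=\SL_i(k\delta)$ and $v=\SL_{i+1}(k\delta)$, with standard factorization $u=u^{ls}u^{rs}$ (Proposition~\ref{prop:stand.factor}); both factors lie in $\SL$. The plan is to argue by contradiction, using only the ordering $\SL_1(k\delta)>\dots>\SL_{|I|}(k\delta)$ together with the flag invariant $m_k$.

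\emph{Step 1: reduce to a prefix statement.} Assume $v\geq u^{ls}$. Since $v<u=u^{ls}u^{rs}$ and $|v|=|u|>|u^{ls}|$, the inequalities $u^{ls}\leq v<u^{ls}u^{rs}$ force $v$ to begin with $u^{ls}$. So I can write $v=u^{ls}w$ with $w$ nonempty. Note that Lemma~\ref{lemma:equiv.to.standard.fac} does not apply directly here, because $|v|=|u|$; this is why the prefix reformulation is needed.

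\emph{Step 2: record the $m_k$ values on each side.} Let $\gamma=\deg u^{ls}$. First I would rule out, or handle separately, the case where $u^{ls}$ is imaginary. Then $u^{rs}$ would also be imaginary of smaller degree, and I would reduce this case by induction on $k$ via Proposition~\ref{prop:spanset.equiv}. In the real case, Corollary~\ref{cor:mk.im.factor} gives $m_k(\gamma)=(k\delta,i)$. For $v$, take its standard factorization $v=ab$. Since $u^{ls}$ is a Lyndon proper prefix of $v$, we get $|a|\geq|u^{ls}|$, so $a=u^{ls}w'$ with $w'$ a prefix of $w$. Corollary~\ref{cor:mk.im.factor} then gives $m_k(\deg a)=(k\delta,i+1)=m_k(\deg b)$.

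\emph{Step 3: obtain the contradiction.} If $a=u^{ls}$, then $m_k(\gamma)$ equals both $(k\delta,i)$ and $(k\delta,i+1)$, which is immediate. Otherwise, let $w'=w_1\cdots w_N$ be the canonical factorization. By Lemma~\ref{lemma:seq.right.word.Lyndon}, every $u^{ls}w_1\cdots w_j$ is Lyndon. Since $v$ is standard, I expect these words, and the $w_j$, to be standard and hence real SL-words of real degrees. I then apply Lemma~\ref{lemma:min.im.rule} repeatedly, going from $\gamma$ to $\gamma+\deg w_1$, and so on up to $\deg a$. By Theorem~\ref{thm:convexity}(4), $\gamma<m_k(\gamma)$, so the chain of $\gamma$ is increasing. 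The goal is to show that $m_k$ can never increase from $(k\delta,i)$ past $(k\delta,i)$ along these additions. For this I would compare each $m_k(\deg w_j)$ with the current value, using Lemma~\ref{lemma:parity.same.sum}(a) and the inequality $w_j\geq u^{ls}$ coming from the Lyndon property. This would yield $m_k(\deg a)\leq(k\delta,i)$, contradicting $m_k(\deg a)=(k\delta,i+1)$.

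\emph{Main obstacle.} Step 3 is the hard part. Lemma~\ref{lemma:min.im.rule} only gives $m_k(\alpha+\beta)\geq m_k(\alpha)$ in the equal case, which points the wrong way. So I would first try to replace it by a flag argument. Write $h_{\deg a}$ as a combination of $h_\gamma$ and the $h_{\deg w_j}$, and show that each $h_{\deg w_j}t^k$ lies in $\spanset_i^k$. If that works, then $h_{\deg a}t^k\in\spanset_i^k$, so $m_k(\deg a)\leq(k\delta,i)$. To show that each $h_{\deg w_j}t^k$ lies in $\spanset_i^k$, I would use Corollary~\ref{cor:im.span.vanish} and the maximality in Proposition~\ref{prop:generalized.Leclerc.algo}(b): a word $u^{ls}x$ with $x\geq w$ and linearly independent bracket would otherwise have been chosen before $v$.
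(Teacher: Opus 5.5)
\emph{Verdict:} Steps 1 and 2 are fine, and so is the subcase $a=u^{ls}$ of Step 3. The gap is the remaining subcase $a\neq u^{ls}$, which carries the whole content of the lemma, and your flag argument there does not go through.

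\emph{The imaginary case.} The case in Step 2 where $u^{ls}$ is imaginary is simply vacuous; no induction on $k$ is needed. If $u^{ls}$ were imaginary, so would be $u^{rs}$. Then $[\fh t^{p},\fh t^{q}]=0$ for $p,q>0$ contradicts Proposition~\ref{prop:general.bracketing}(a).

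\emph{Why Step 3 fails as written.} You only state a hope that each $h_{\deg w_j}t^k$ lies in $\spanset_i^k$. Neither Corollary~\ref{cor:im.span.vanish} nor the maximality in Proposition~\ref{prop:generalized.Leclerc.algo}(b) gives this. Those results govern which words get selected as the $\SL_r(k\delta)$. They say nothing about where the canonical factors $w_j$ sit in the flag, and these factors need not a priori even be real. Nothing you cite excludes a factor with $m_k(\deg w_j)$ of index $>i$. By Lemma~\ref{lemma:min.im.rule}, such a factor would carry the partial sums out of $\spanset_i^k$.

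\emph{An orientation slip.} The set $\imx$ carries the lexicographic order, so $(k\delta,i+1)<(k\delta,i)$. Hence ``$m_k(\deg a)\le(k\delta,i)$'' contradicts nothing. What you actually want is $h_{\deg a}t^k\in\spanset_i^k$, i.e.\ $m_k(\deg a)\ge(k\delta,i)$. In that direction Lemma~\ref{lemma:min.im.rule} is not ``pointing the wrong way'': it is just linearity of the flag. So the entire difficulty sits in the unproved claim about the $w_j$.

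\emph{The missing idea.} You were right that Lemma~\ref{lemma:equiv.to.standard.fac} cannot be applied to $v$ itself, but it applies to $a=\SL^{ls}_{i+1}(k\delta)$.
\begin{itemize}
\item $a$ is Lyndon with $|a|<|v|=|u|$.
\item $u^{ls}$ is a proper prefix of $a$, so $a>u^{ls}$.
\item Lemma~\ref{lemma:equiv.to.standard.fac} then gives $a>u^{ls}u^{rs}=\SL_i(k\delta)$.
\item But $a$ is a proper prefix of $v$, so $a<v<\SL_i(k\delta)$, a contradiction.
\end{itemize}
Combined with your subcase $a=u^{ls}$, this is a complete proof, and the flag analysis of Step 3 can be dropped. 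That subcase can also be closed differently: $\deg v^{rs}=\deg u^{rs}$ is real, so uniqueness of SL-words in real degrees gives $v^{rs}=u^{rs}$ and hence $v=u$.

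\emph{Relation to the paper.} The paper does not reprove this lemma; it imports it from \cite{ET} as an application of Monotonicity. The argument above is a self-contained route that uses only Lemma~\ref{lemma:equiv.to.standard.fac} and Corollary~\ref{cor:mk.im.factor}.
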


\begin{lemma}\label{lemma:imaginary.words.decreasing}
For any $i \in \{1,2,\ldots,|I|\}$, we have $\SL_i(\delta) > \SL_i(2\delta) > \dots$.
\end{lemma}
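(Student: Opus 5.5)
The plan is to compare $\SL_i(k\delta)$ with $\SL_i((k+1)\delta)$ for a fixed $k\in\BZ_{>0}$, using the standard factorization of the longer word and Lemma~\ref{lemma:equiv.to.standard.fac}. Set $w=\SL_i((k+1)\delta)$ and let $w=uv$ be its standard factorization, so that $u=w^{ls}$ and $v=w^{rs}$.

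First I will show that $u,v$ are themselves affine standard Lyndon words. This follows from the standard fact that proper Lyndon factors of standard Lyndon words are standard; it is also implicit in the remark after Proposition~\ref{prop:general.bracketing} that standard factorizations may be used. By Proposition~\ref{prop:general.bracketing}(a), $[\sb[u],\sb[v]]\neq 0$. Since $\deg u+\deg v=(k+1)\delta$, the two degrees are either both imaginary or both real. Both imaginary is impossible: then $\sb[u]\in \fh t^p$ and $\sb[v]\in\fh t^q$ with $p,q>0$, and these commute in $\widehat{\fn}^+$. Hence $u=\SL(\alpha)$ and $v=\SL(\beta)$ with $\alpha,\beta\in\wDelta^{+,\re}$ and $\alpha+\beta=(k+1)\delta$. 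Moreover $\alpha<\beta$, because $u<v$ for the Lyndon word $uv$ (Lemma~\ref{lemma:lyndon} and Definition~\ref{def:lyndon-2}).

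Next I will identify the relevant imaginary root. By Corollary~\ref{cor:mk.im.factor}, $m_{k+1}(\alpha)=((k+1)\delta,i)$. Corollary~\ref{cor:equiv.mk.Mk} then gives $m_k(\alpha)=(k\delta,i)$. Now apply Convexity, Theorem~\ref{thm:convexity}(4), to $\alpha<\beta$ with $\alpha+\beta$ imaginary. Taking its index equal to our $k$, it yields $\alpha<m_k(\alpha)=(k\delta,i)$. In terms of words, this reads $u=\SL(\alpha)<\SL_i(k\delta)$.

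Finally, $|\SL_i(k\delta)|=|k\delta|<|(k+1)\delta|=|w|$. So Lemma~\ref{lemma:equiv.to.standard.fac}, applied with $\ell=\SL_i(k\delta)$ and the standard factorization $uv$ of $w$, converts $\SL_i(k\delta)>u$ into $\SL_i(k\delta)>w=\SL_i((k+1)\delta)$. Iterating over $k$ gives the claimed chain of inequalities.

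The only delicate point I expect is the first step: making sure the standard factors are genuine SL-words of the appropriate degrees, so that Proposition~\ref{prop:general.bracketing} and Corollary~\ref{cor:mk.im.factor} apply, and ruling out the imaginary–imaginary split. Everything after that is a direct chaining of Convexity with Corollary~\ref{cor:equiv.mk.Mk}.
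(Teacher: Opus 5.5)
Your proof is correct. The paper does not reprove this lemma; it quotes it from \cite[Lemma~6.7]{ET} and describes it as an application of Proposition~\ref{prop:monotonicity}. Your argument is a derivation of exactly that kind: the standard factorization of $\SL_i((k+1)\delta)$ into two real SL-words, then the index shift $m_{k+1}(\alpha)=((k+1)\delta,i)\Rightarrow m_k(\alpha)=(k\delta,i)$ via Corollaries~\ref{cor:mk.im.factor} and~\ref{cor:equiv.mk.Mk}, then Lemma~\ref{lemma:equiv.to.standard.fac}. Your preliminary step is also sound; the paper uses factor-closure of standard words in the same way in the proof of Proposition~\ref{prop:cost.fac.inc}.

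The only variation is the source of $\alpha<m_k(\alpha)$. You take it from Theorem~\ref{thm:convexity}(4) applied to the pair $\alpha<\beta$. Monotonicity gives it without reference to $\beta$: $\SL(\alpha)$ is a proper prefix of $\SL_i((k+1)\delta)$, so $\alpha<m_{k+1}(\alpha)\leq M_{k+1}(\alpha)$. This rules out $\chain(\alpha)\in\mathcal{D}$, and Proposition~\ref{prop:monotonicity}(3) then yields $\alpha<m_k(\alpha)$ for all $k$.
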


\begin{lemma}\label{lemma:length.standfac.imaginary}
For any $i \in \{1,2,\ldots,|I|\}$ and $k \in \BZ_{>0}$: $|\deg(\SL^{ls}_i(k\delta))| > |(k-1)\delta|$.
\end{lemma}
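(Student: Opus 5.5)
Since this lemma is quoted from~\cite{ET}, we sketch our own argument. We would argue by contradiction, using only Proposition~\ref{prop:general.bracketing}, Convexity, Monotonicity, and Lemma~\ref{lemma:equiv.to.standard.fac}. Fix $u=\SL_i(k\delta)$ with standard factorization $u=u^{ls}u^{rs}$. We take for granted the standard fact (used throughout~\cite{L,AT,ET}) that both factors of a standard Lyndon word are again standard Lyndon, so $u^{ls},u^{rs}\in \SL$.

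\emph{Step 1: both factors are real.} If $u^{ls}$ were imaginary, then so would be $u^{rs}$, since the degrees add up to $k\delta$. Then $[\sb[u^{ls}],\sb[u^{rs}]]\in[\fh t^{j},\fh t^{k-j}]=0$ inside $\widehat{\fn}^+$, where there is no central term. This contradicts Proposition~\ref{prop:general.bracketing}(a). Hence $\deg u^{ls}=\gamma$ and $\deg u^{rs}=k\delta-\gamma$ are both real roots.

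\emph{Step 2: a height observation.} No real affine root has height divisible by $|\delta|$. Indeed, the elements of $\Delta^+\times\{n\}$ have height strictly between $n|\delta|$ and $(n+1)|\delta|$, and those of $\Delta^-\times\{n\}$ have height strictly between $(n-1)|\delta|$ and $n|\delta|$. So if $|\gamma|\le|(k-1)\delta|$ fails to give the claim, we actually have $|\gamma|<|(k-1)\delta|$, i.e.\ $|\gamma+\delta|<|k\delta|=|u|$. We assume this and seek a contradiction.

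\emph{Step 3: the chain of $\gamma$ is squeezed between $\gamma$ and $u$.} By Corollary~\ref{cor:mk.im.factor}, $m_k(\gamma)=(k\delta,i)$, i.e.\ $\SL(m_k(\gamma))=u$. Since $\gamma<u^{rs}$ and $\gamma+(k\delta-\gamma)$ is imaginary, Theorem~\ref{thm:convexity}(4) gives $\gamma<m_k(\gamma)$. Proposition~\ref{prop:monotonicity} then shows that $\chain(\gamma)$ is increasing. Together with Theorem~\ref{thm:convexity}(2) this yields
\[
  \gamma<\gamma+\delta\le\gamma+k\delta<m_k(\gamma)=(k\delta,i).
\]

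\emph{Step 4: contradiction.} Set $\ell=\SL(\gamma+\delta)$, a Lyndon word with $|\ell|<|u|$ by Step~2. Step~3 gives $\ell>\SL(\gamma)=u^{ls}$. Lemma~\ref{lemma:equiv.to.standard.fac} then forces $\ell>u$, contradicting $\ell<u$ from Step~3. Hence $|\deg(u^{ls})|>|(k-1)\delta|$.

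The main obstacle is the order bookkeeping in Step~3. One must use $m_k$ (rather than $m_1$) to compare with $u=\SL_i(k\delta)$, because Lemma~\ref{lemma:imaginary.words.decreasing} goes the wrong way for comparing with $\SL_i(\delta)$. The other delicate point is the strict length inequality needed for Lemma~\ref{lemma:equiv.to.standard.fac}, which is exactly what the height observation of Step~2 supplies.
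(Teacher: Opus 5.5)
Your argument is correct. The paper gives no proof of its own here: it imports the lemma from \cite{ET} and describes it as an application of Monotonicity. Your squeeze $\SL^{ls}_i(k\delta)<\SL(\gamma+\delta)<\SL_i(k\delta)$, obtained from Theorem~\ref{thm:convexity}(2),(4) and Proposition~\ref{prop:monotonicity} and then combined with Lemma~\ref{lemma:equiv.to.standard.fac}, is exactly that kind of application. It is the same mechanism the paper uses later to prove Lemma~\ref{lemma:im.left.standard.form}, of which your Step~4 is essentially the special case with competitor $\gamma+\delta$.
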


We conclude this section with the following two results pertaining to special orders on $\wI$ where the smallest
simple root $\alpha_\varepsilon$ occurs only once in $\delta$ (in particular, this applies to all orders with
$0\in \wI$ being the smallest letter), cf.~\cite[\S6.14]{ET}.

\begin{lemma}\label{lemma:monotonicity.smallest.once}
If $\alpha_\varepsilon$ occurs only once in $\delta$, then for any $\alpha \in \wDelta^{+,\re}$ with $|\alpha| < |\delta|$:
  $$ \chain(\alpha) \ \textit{increases} \ \Longleftrightarrow  \ \alpha \ \textit{contains\ } \alpha_\varepsilon. $$
\end{lemma}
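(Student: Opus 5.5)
Since $\chain(\alpha)$ is monotone by Proposition~\ref{prop:monotonicity}, it suffices to prove the two implications of Lemma~\ref{lemma:monotonicity.smallest.once} separately. In each direction the plan is to identify a convenient word that is $\geq$ or $\leq \SL(\alpha)$ and compare it with $M_1(\alpha)$ or $m_1(\alpha)$. Throughout, note that every Lyndon word of length $>1$ that contains the letter $\varepsilon$ must start with $\varepsilon$, because $\varepsilon$ is the smallest letter. So whenever $\alpha$ contains $\alpha_\varepsilon$, $\SL(\alpha)$ begins with $\varepsilon$.

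\emph{Claim 1: the words $\SL_i(k\delta)$ begin with $\varepsilon$ and contain it exactly $k$ times.} Since $\alpha_\varepsilon$ occurs in $\delta$ with multiplicity one, $k\delta$ contains $\alpha_\varepsilon$ exactly $k$ times, so every imaginary SL-word $\SL_i(k\delta)$ has exactly $k$ occurrences of $\varepsilon$ and starts with $\varepsilon$.

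\emph{Direction $(\Leftarrow)$.} Suppose $\alpha$ contains $\alpha_\varepsilon$ and $|\alpha|<|\delta|$. Then $\alpha$ contains $\alpha_\varepsilon$ exactly once, so $\SL(\alpha)=\varepsilon w$ where $w$ contains no $\varepsilon$.
\begin{itemize}
\item Consider $\SL(M_1(\alpha))=\SL_i(\delta)$. It also contains $\varepsilon$ exactly once, so it equals $\varepsilon w'$ with $w'$ free of $\varepsilon$.
\item By Proposition~\ref{prop:monotonicity}, we must show $\alpha<M_1(\alpha)$, which amounts to $w<w'$ or $w$ being a proper prefix of $w'$.
\item Strategy: since $[\sb[\SL_i(\delta)],\sb[\SL(\alpha)]]\neq 0$ for $(\delta,i)=M_1(\alpha)$, the word $\SL(\alpha+\delta)$ is the max in~\eqref{eq:generalized Leclerc}. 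That max is at least the concatenation of the smaller and the larger of $\SL(\alpha)$ and $\SL_i(\delta)$.
\item If instead $\alpha>M_1(\alpha)$, the chain would decrease. Then $\alpha+\delta<\alpha$, while $\SL(\alpha+\delta)\geq \SL_i(\delta)\SL(\alpha)$, a word starting with $\varepsilon w'\varepsilon$.
\item Meanwhile $\SL(\alpha)=\varepsilon w$ with $w$ free of $\varepsilon$. Any Lyndon word of degree $\alpha+\delta$ has exactly two occurrences of $\varepsilon$ and so has the form $\varepsilon u\varepsilon v$. Comparing $\varepsilon u\varepsilon v$ with $\varepsilon w$, we get $\varepsilon u \varepsilon v<\varepsilon w$ iff $u\varepsilon\ldots<w$. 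This is where the contradiction should come from, e.g.\ using Convexity (3): $M_1(\alpha)<\alpha+\delta<\alpha$.
\end{itemize}
A cleaner route to try first: for a decreasing chain, Convexity (3) gives $M_1(\alpha)<\alpha+\delta$. Here $\SL(\alpha+\delta)=\varepsilon u\varepsilon v$ with $\varepsilon u$ a prefix, and $\SL(M_1(\alpha))=\varepsilon w'$ with $w'$ $\varepsilon$-free and $|w'|>|u|$ (because $|\alpha|+1\cdots$, lengths to be checked). The comparison of $\varepsilon w'$ against $\varepsilon u\varepsilon$ is decided at or before position $|u|+2$, where $\varepsilon$ is the minimal letter. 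This forces $\varepsilon u\varepsilon\ldots<\varepsilon w'$ unless $u$ is a prefix of $w'$ and the next letter of $w'$ is $\varepsilon$, which is impossible. That contradicts $M_1(\alpha)<\alpha+\delta$.

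\emph{Direction $(\Rightarrow)$.} Suppose $\alpha$ does not contain $\alpha_\varepsilon$. Then $\delta-\alpha$ is a real root with $|\delta-\alpha|<|\delta|$ that does contain $\alpha_\varepsilon$. By the first direction, $\chain(\delta-\alpha)$ increases, so by Corollary~\ref{cor:monotonicity} $\chain(\alpha)$ decreases. The one exception is $\alpha=\alpha_\varepsilon$-free with $\alpha\notin\Delta^+$, but Corollary~\ref{cor:monotonicity} covers every $|\alpha|<|\delta|$, so no exception arises.

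\emph{Main obstacle.} The hard part is the word comparison in $(\Leftarrow)$: showing that a word with two occurrences of $\varepsilon$ is lexicographically smaller than any word $\varepsilon w'$ in which $\varepsilon$ appears once. This requires checking that the second $\varepsilon$ appears before $w'$ ends. That in turn rests on the length bound $|\varepsilon u|\le|\SL(M_1(\alpha))|$, i.e.\ that the first $\varepsilon$-free block of $\SL(\alpha+\delta)$ is shorter than $|\delta|$. I expect this to follow from degree counting: the block $u$ consists of letters other than $\varepsilon$, and their total content is bounded by $\delta-\alpha_\varepsilon$.
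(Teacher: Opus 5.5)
Your reduction of $(\Rightarrow)$ to $(\Leftarrow)$ via Corollary~\ref{cor:monotonicity} is fine. The proof of $(\Leftarrow)$, however, has a genuine gap at the decisive word comparison.

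Assuming $\chain(\alpha)$ decreases, Theorem~\ref{thm:convexity}(3) gives $M_1(\alpha)<\alpha+\delta$. You try to contradict this by showing $\SL(\alpha+\delta)=\varepsilon u\varepsilon v<\varepsilon w'=\SL(M_1(\alpha))$ purely from where the letter $\varepsilon$ sits. That inequality is forced only if $u$ is a prefix of $w'$. If $u$ and $w'$ first differ at some position $j\leq |u|$ with $u_j>w'_j$, then $\varepsilon u\varepsilon v>\varepsilon w'$; think of $131\ldots>12\ldots$ with $\varepsilon=1$. Nothing in your argument excludes this, and ``decided at or before position $|u|+2$'' says nothing about which way the comparison goes.

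So the length bound $|u|<|w'|$ that you single out as the main obstacle would not close the gap even if proven. Degree counting does not give it anyway: the $\varepsilon$-free part of $\SL(\alpha+\delta)$ has total length $|\alpha|+|\delta|-2$.

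Your first bullet attempt stalls for the same reason: $\varepsilon w'\varepsilon w\leq \SL(\alpha+\delta)<\varepsilon w$ is perfectly consistent once $w>w'$. The inequality $\alpha+\delta<M_1(\alpha)$ is true, but only as a consequence of the monotonicity you are trying to prove. Counting occurrences of $\varepsilon$ in $\SL(\alpha+\delta)$ and $\SL(M_1(\alpha))$ cannot decide it.

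The missing idea is to compare $\alpha$ with the complementary real root $\beta=\delta-\alpha$ rather than with $\alpha+\delta$; note that $|\beta|<|\delta|$ as well.

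\begin{itemize}
\item Since $\alpha_\varepsilon$ has multiplicity one in $\delta$, exactly one of $\alpha,\beta$ contains $\alpha_\varepsilon$. Its $\SL$-word begins with $\varepsilon$, while the $\SL$-word of the other contains no $\varepsilon$ at all and so begins with a strictly larger letter.
\item If $\alpha$ contains $\alpha_\varepsilon$, then $\alpha<\beta$ with $\alpha+\beta=\delta$ imaginary. Theorem~\ref{thm:convexity}(4) yields $\alpha<m_1(\alpha)\leq M_1(\alpha)<\beta$, so $\chain(\alpha)$ increases by Proposition~\ref{prop:monotonicity}.
\item If $\alpha$ does not contain $\alpha_\varepsilon$, the same argument with the roles swapped gives $\beta<M_1(\alpha)<\alpha$, so $\chain(\alpha)$ decreases. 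Alternatively, use Corollary~\ref{cor:monotonicity} as you did.
\end{itemize}

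The paper imports this lemma from \cite{ET} without proof. Whatever the original argument, the input you are missing is Convexity applied to a pair of real roots summing to $\delta$, not the internal letter structure of $\SL(\alpha+\delta)$.
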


\begin{lemma}\label{lemma:delta.stand.one.letter}
If $\alpha_\varepsilon$ occurs only once in $\delta$, then for any $i \in \{1,2,\ldots,|I|\}$, we have
$|\SL^{rs}_i(\delta)| = 1$ and all $\{\SL_i(\delta)\}_{i=1}^{|I|}$ end with pairwise distinct letters.
\end{lemma}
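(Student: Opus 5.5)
The plan is to obtain both claims directly from the shape of Lyndon words containing the letter $\varepsilon$ exactly once, and then apply Lemma~\ref{lemma:leclerc.14} and Corollary~\ref{cor:mk.im.factor}. No convexity argument should be needed.

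\emph{Step 1: locating $\varepsilon$.} Fix $i$ and set $\ell=\SL_i(\delta)$, so $\deg \ell=\delta$. The letter $\varepsilon$ is the smallest letter of $\wI$ and occurs exactly once in $\ell$. A Lyndon word begins with its smallest letter, since it is smaller than all of its suffixes, including the one-letter suffixes. Hence $\ell$ begins with $\varepsilon$, and $\varepsilon$ occurs nowhere else in $\ell$. Note also that $|\ell|=|\delta|>1$, so $\ell$ has a standard factorization.

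\emph{Step 2: the standard suffix has length one.} By Lemma~\ref{lemma:leclerc.14}, we can write $\ell=(\ell^{ls})^{k+1}fx$, where $k\geq 0$, $f$ is a possibly empty proper prefix of $\ell^{ls}$, and $x$ is a letter. The word $\ell^{ls}$ is a nonempty prefix of $\ell$, so it begins with $\varepsilon$.
\begin{itemize}
\item If $k\geq 1$, then $\varepsilon$ would occur at least twice in $\ell$, a contradiction. So $k=0$.
\item If $f$ were nonempty, then as a prefix of $\ell^{ls}$ it would also begin with $\varepsilon$, again giving a second occurrence. So $f$ is empty.
\end{itemize}
Thus $\ell=\ell^{ls}x$, and hence $\ell^{rs}=x$ is a single letter, i.e.\ $|\SL^{rs}_i(\delta)|=1$. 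Moreover $x\neq\varepsilon$, since $\varepsilon$ already occurs at the start of $\ell$ and $|\ell|>1$.

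\emph{Step 3: the last letters are pairwise distinct.} Write $\SL_i(\delta)=u_i x_i$ with $u_i=\SL^{ls}_i(\delta)$ and $x_i$ a letter. Both factors of a standard factorization are Lyndon, and they are standard Lyndon: subwords of standard words are standard in the sense of~\cite{LR}, combined with Proposition~\ref{prop:standard}. So $u_i,x_i\in\SL$, with $\deg x_i=\alpha_{x_i}$ and $\deg u_i=\delta-\alpha_{x_i}$. Corollary~\ref{cor:mk.im.factor} applied to the splitting $\SL_i(\delta)=u_ix_i$ then gives $m_1(\alpha_{x_i})=(\delta,i)$.

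Since $m_1$ is a well-defined function of the real root (Definition~\ref{def:mk}), $x_i=x_j$ would force $(\delta,i)=(\delta,j)$, that is, $i=j$. Hence the last letters $x_1,\ldots,x_{|I|}$ are pairwise distinct.

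The only point needing care is the standardness of the factors $u_i$ and $x_i$ in Step 3. It is either the standard~\cite{LR} fact quoted above, or it is implicit in Proposition~\ref{prop:general.bracketing} and the Leclerc-type description in Proposition~\ref{prop:generalized.Leclerc.algo}. The rest is direct word combinatorics.
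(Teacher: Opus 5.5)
Your proof is correct. The paper does not prove this lemma itself; it imports it from \cite[\S6.14]{ET}, so there is no in-paper argument to compare against. Your proposal is a self-contained proof that uses only Lemma~\ref{lemma:leclerc.14}, the Lalonde--Ram fact that factors of standard words are standard (which the paper also invokes in the proof of Proposition~\ref{prop:cost.fac.inc}), and Corollary~\ref{cor:mk.im.factor}.

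A few remarks.
\begin{itemize}
\item In Step~1, the phrase ``including the one-letter suffixes'' is not the right justification, since there is only one such suffix. What forces the first letter to be $\varepsilon$ is that $\ell$ is smaller than its suffix starting at the occurrence of $\varepsilon$.
\item Step~2 can be done without Lemma~\ref{lemma:leclerc.14}. Write $\ell=w'x$. The word $w'$ begins with the unique occurrence of the strictly smallest letter, so every proper suffix of $w'$ begins with a larger letter and $w'$ is Lyndon. Being the longest proper prefix of $\ell$, it equals $\ell^{ls}$.
\item In Step~3, the appeal to $m_1$ is valid but stronger than needed. Suppose $x_i=x_j=x$. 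Then $u_i$ and $u_j$ are affine standard Lyndon words of the same degree $\delta-\alpha_x$, which is real because its height $|\delta|-1$ is not a multiple of $|\delta|$. Uniqueness of $\SL$ in real degrees gives $u_i=u_j$, hence $\SL_i(\delta)=\SL_j(\delta)$ and $i=j$.
\end{itemize}

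What your $m_1$-route buys is the finer information $m_1(\alpha_{x_i})=(\delta,i)$, equivalently $m_1(\delta-\alpha_{x_i})=(\delta,i)$. This identifies which $\SL_i(\delta)$ each letter labels, which is the kind of identification the paper exploits later for these special orders. The uniqueness argument gives only distinctness, but it is more elementary.
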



\section{Chains}\label{sec:chains}


\subsection{Combinatorics of chains}
\

We start with some basic operations on the chains of roots, see Definition~\ref{def:chains}.

\begin{definition}
a) For $\alpha,\beta,\alpha + \beta \in \wDelta^{+,\re}$, we define $\chain(\alpha) + \chain(\beta)$ as
$\chain(\alpha + \beta)$.

\smallskip
\noindent
b) For $\alpha,\beta,\alpha - \beta + k\delta \in \wDelta^{+,\re}$ with $k \in \BZ_{>0}$, we define
$\chain(\alpha) - \chain(\beta)$ as $\chain(\alpha - \beta)$.

\smallskip
\noindent
c) For a chain $\chain(\alpha)$ with $\alpha \in \wDelta^{+,\re}$, we define
$\delta - \chain(\alpha) = \chain(\delta - \alpha) = \chain(k\delta - \alpha)$,
whereas $k$ is large enough so that $k\delta - \alpha \in \wDelta^{+,\re}$.
\end{definition}

Each chain is monotonously decreasing or increasing, due to Proposition~\ref{prop:monotonicity}.
The overall analysis of the present note will crucially distinguish those two types.

\begin{definition}
We define the following two sets:
\begin{align*}
  \mathcal{I} &= \big\{ \text{increasing  }\chain(\alpha) \in \chain(\wDelta^{+,\re}) \big\} ,\\
  \mathcal{D} &= \big\{ \text{decreasing  } \chain(\alpha) \in \chain(\wDelta^{+,\re}) \big\}.
\end{align*}
\end{definition}

\noindent
Following the conventions of Definition~\ref{def:chains}, let $\alpha'$ be the shortest element of $\chain(\alpha)$.

\begin{definition}
Define $\pr\colon \chain(\wDelta^{+,\re}) \to \Delta$ via
  $\pr(\chain(\alpha))=
   \begin{cases}
     \alpha' & \text{if } \alpha' \in \Delta^+ \\
     \alpha'-\delta & \text{if } \alpha' \notin \Delta^+
   \end{cases}$.
We also set $\Delta^+_p=\pr(\mathcal{I})$ and $\Delta^-_p=\pr(\mathcal{D})$.
\end{definition}

\begin{lemma}\label{lemma:chain.positive.roots}
$\Delta=\Delta^+_p\cup \Delta^-_p$ is a polarization of the root system $\Delta$.
\end{lemma}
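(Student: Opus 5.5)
Recall what a polarization is: $\Delta^+_p$ is the positive system of some choice of positivity. Equivalently, $\Delta=\Delta^+_p\sqcup\Delta^-_p$ with $\Delta^-_p=-\Delta^+_p$, and $\Delta^+_p$ is closed under addition within $\Delta$. A closed subset $P$ with $\Delta=P\sqcup(-P)$ is the positive system of a Weyl chamber, i.e.\ it is given by a generic linear functional. So the plan is to verify three things in turn: $\pr$ is a bijection from chains to $\Delta$; $\Delta^-_p=-\Delta^+_p$; and $\Delta^+_p$ (and hence $\Delta^-_p$) is closed.

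First, $\pr$ is a bijection $\chain(\wDelta^{+,\re})\to\Delta$. Chains are indexed by their minimal element $\alpha'\in\Delta^+\cup(\delta-\Delta^+)$, and $\pr$ sends $\alpha'\in\Delta^+$ to itself and $\delta-\gamma$ (with $\gamma\in\Delta^+$) to $-\gamma$. Hence $\pr$ is bijective. Since each chain is either increasing or decreasing by Proposition~\ref{prop:monotonicity}, $\mathcal{I}\sqcup\mathcal{D}$ is all chains, and so $\Delta=\Delta^+_p\sqcup\Delta^-_p$. Next, the chains with $\pr=\gamma$ and $\pr=-\gamma$ are $\chain(\gamma)$ and $\chain(\delta-\gamma)$. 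By Corollary~\ref{cor:monotonicity}, exactly one of them is increasing, which gives $\Delta^-_p=-\Delta^+_p$.

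For closedness, take $\gamma_1,\gamma_2\in\Delta^+_p$ with $\gamma_1+\gamma_2\in\Delta$. Lift them to real affine roots $\alpha_j=\gamma_j+n_j\delta\in\wDelta^{+,\re}$ in the increasing chains, choosing $n_j$ large enough that $\alpha_1+\alpha_2\in\wDelta^{+,\re}$. This is possible because $\gamma_1+\gamma_2\neq 0$ is a root, so its lift is real. Then $\chain(\alpha_1+\alpha_2)$ is increasing by Lemma~\ref{lemma:chain.sum.monotone}. Moreover $\pr(\chain(\alpha_1+\alpha_2))=\gamma_1+\gamma_2$, since $\pr$ of any chain is just the finite part $\Delta$-component of any of its elements. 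Hence $\gamma_1+\gamma_2\in\Delta^+_p$. The same argument with decreasing chains handles $\Delta^-_p$, or one can simply use negation.

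Finally, I would conclude by the standard fact that a closed subset $P\subset\Delta$ with $\Delta=P\sqcup(-P)$ is a positive system (Bourbaki, Ch.~VI, \S1.7). The main point needing care is this last invocation, together with checking that the finite-part bookkeeping under $\pr$ is compatible with addition. The latter is immediate because the $Q$-component of $(\beta,n)$ is unchanged by adding $\delta$. If the paper's notion of polarization only requires the splitting $\Delta=P\sqcup(-P)$ together with closedness, then the proof ends at the previous paragraph.
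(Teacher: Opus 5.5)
Your proposal is correct and follows the paper's proof. The paper uses the same two ingredients: $\Delta^-_p=-\Delta^+_p$ from Corollary~\ref{cor:monotonicity}, and closedness of $\Delta^+_p$ under addition from Lemma~\ref{lemma:chain.sum.monotone}. Your check that $\pr$ is a bijection, the explicit lifting to $\wDelta^{+,\re}$, and the Bourbaki citation only spell out what the paper leaves implicit.
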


\begin{proof}
First, we note that $\beta \in \Pr(\mathcal{I})$ iff $-\beta \in \Pr(\mathcal{D})$, due to Corollary~\ref{cor:monotonicity}.
Additionally, if $\beta,\gamma \in \Pr(\mathcal{I})$ and $\beta + \gamma \in \Delta$, then
$\pr(\beta + \gamma) \in \Pr(\mathcal{I})$ by Lemma~\ref{lemma:chain.sum.monotone}. This completes the proof.
\end{proof}

By the very definition, we note that $\chain(\alpha)$ increases iff $\Pr(\chain(\alpha)) \in \Delta_p^+$.

\begin{definition}
a) We define the \textbf{relative height} of a chain $\chain(\alpha)$, denoted by $\hgt(\chain(\alpha))$,
to be $\hgt(\Pr(\chain(\alpha)))$ under the polarization $\Delta_p^+ \cup \Delta_p^-$.

\smallskip
\noindent
b) We call $\chain(\alpha)$ increasing (resp.\ decreasing) \textbf{\irrchain} if it is increasing
(resp.\ decreasing) and cannot be decomposed as $\chain(\alpha) = \chain(\beta) + \chain(\gamma)$ with
both $\chain(\beta),\chain(\gamma) \in \chain(\wDelta^{+,\re})$ increasing (resp.\ decreasing).
\end{definition}

Let $\Pi_{p}\subset \Delta^+_p$ be the set of simple roots of the polarization from Lemma~\ref{lemma:chain.positive.roots}.
They offer a natural description of irreducible increasing chains as follows.

\begin{lemma}\label{lemma:irr.chain.simple.root}
$\chain(\alpha)$ is an increasing \irrchain\ iff $\pr(\chain(\alpha))\in \Pi_{p}$.
\end{lemma}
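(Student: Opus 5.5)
The plan is to reduce the statement to a purely root-theoretic fact about the polarization $\Delta=\Delta^+_p\sqcup\Delta^-_p$ of Lemma~\ref{lemma:chain.positive.roots}. The key observation is that chain addition is compatible with $\pr$:
\begin{equation*}
  \pr\big(\chain(\beta)+\chain(\gamma)\big)=\pr(\chain(\beta))+\pr(\chain(\gamma)).
\end{equation*}
Indeed, if $\beta+\gamma\in\wDelta^{+,\re}$, then its class modulo $\BZ\delta$ is the sum of the classes of $\beta$ and $\gamma$. Moreover, $\pr$ simply records the $Q$-component of any element of the chain: the shortest element $\alpha'$ is $(\pr,0)$ or $(\pr,1)$. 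In particular, the sum $\pr(\chain(\beta))+\pr(\chain(\gamma))$ is a root of $\Delta$, since the finite part of a real affine root lies in $\Delta$.

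Conversely, suppose $\beta_0,\gamma_0\in\Delta$ satisfy $\beta_0+\gamma_0\in\Delta$. We choose affine lifts $\beta=(\beta_0,n)$ and $\gamma=(\gamma_0,m)$ with $n,m\geq 1$. These lie in $\wDelta^{+,\re}$ by~\eqref{eq:affine-roots}, as does $\beta+\gamma=(\beta_0+\gamma_0,n+m)$ with $n+m\geq1$. Then $\chain(\beta)+\chain(\gamma)$ is defined, and its projection is $\beta_0+\gamma_0$. So decompositions of chains correspond exactly to decompositions in $\Delta$ of the projected root into two roots. Since $\chain(\cdot)$ is increasing iff its projection lies in $\Delta^+_p$, the statement ``$\chain(\alpha)$ is an increasing irreducible chain'' translates into: $\pr(\chain(\alpha))\in\Delta^+_p$ cannot be written as $\beta_0+\gamma_0$ with $\beta_0,\gamma_0\in\Delta^+_p$.

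It remains to recall the standard fact that, for any polarization (a choice of positive system), the elements of $\Delta^+_p$ that are not sums of two elements of $\Delta^+_p$ are exactly the simple roots $\Pi_p$. For one direction, a simple root has height $1$ with respect to $\Pi_p$, while any sum of two positive roots has height at least $2$. For the other direction, a non-simple positive root $\alpha_0$ satisfies $(\alpha_0,\alpha_s)>0$ for some $\alpha_s\in\Pi_p$, because otherwise $(\alpha_0,\alpha_0)=\sum c_s(\alpha_0,\alpha_s)\leq 0$ with $c_s\geq0$. Then $\alpha_0-\alpha_s\in\Delta$, and it is positive, since a non-simple positive root minus a simple root cannot be negative. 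This gives the decomposition $\alpha_0=(\alpha_0-\alpha_s)+\alpha_s$.

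The main (mild) obstacle is the bookkeeping of lifts in the converse direction. One must ensure that the lifted decomposition uses chains in $\chain(\wDelta^{+,\re})$ and recovers the given $\chain(\alpha)$. This holds because a chain is determined by its projection $\pr$, which is a bijection $\chain(\wDelta^{+,\re})\to\Delta$. Choosing $n,m\geq1$ avoids any positivity issue.
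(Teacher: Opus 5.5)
Your proof is correct and is essentially the paper's approach. The paper states this lemma without proof, treating it as immediate from the definitions, and your argument fills in exactly that reasoning. You use that $\pr$ is a bijection $\chain(\wDelta^{+,\re})\to\Delta$ compatible with chain addition, that choosing lifts with $n,m\geq 1$ makes every decomposition in $\Delta$ realizable by chains, and the standard fact that $\Pi_p$ consists precisely of the elements of $\Delta^+_p$ that are not sums of two elements of $\Delta^+_p$.
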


Combining this with standard properties of root systems, we obtain:

\begin{corollary}\label{cor:lin.comb.chains}
a) Any $\chain(\alpha) \in \mathcal{I}$ can be expressed uniquely as a linear combination with
nonnegative integer coefficients of increasing \irrchains.

\smallskip
\noindent
b) Any $\chain(\alpha) \in \mathcal{D}$ can be expressed uniquely as a linear combination with
non-positive integer coefficients of increasing \irrchains.

\smallskip
\noindent
c) The chain graph is acyclic.
\end{corollary}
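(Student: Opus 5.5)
Corollary~\ref{cor:lin.comb.chains} is a translation of standard root-system facts through the map $\pr$. The key observation is that $\pr$ is a bijection from $\chain(\wDelta^{+,\re})$ onto $\Delta$. Indeed, every chain is determined by its shortest element $\alpha'\in \Delta^+\cup(\delta-\Delta^+)$. The assignment $\alpha'\mapsto \alpha'$ (for $\alpha'\in\Delta^+$) or $\alpha'\mapsto\alpha'-\delta$ (for $\alpha'\in\delta-\Delta^+$) is a bijection onto $\Delta^+\sqcup\Delta^-=\Delta$.

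Next I would check that $\pr$ is additive with respect to the chain operations. If $\chain(\alpha)+\chain(\beta)=\chain(\alpha+\beta)$, then $\alpha+\beta\equiv \pr(\chain(\alpha))+\pr(\chain(\beta))$ modulo $\BZ\delta$. The finite part of $\alpha+\beta$ is $\pr(\chain(\alpha))+\pr(\chain(\beta))\in\Delta$, and hence $\pr(\chain(\alpha+\beta))=\pr(\chain(\alpha))+\pr(\chain(\beta))$. Similarly, $\pr(\delta-\chain(\alpha))=-\pr(\chain(\alpha))$. Conversely, whenever $\pr(\chain(\beta))+\pr(\chain(\gamma))\in\Delta$, one can choose representatives $\beta+a\delta$ and $\gamma+b\delta$ with $a,b$ large, so that the sum is a positive real affine root. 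Thus the sum of chains is defined exactly when the sum of projections is a root. So $\pr$ identifies the chain operations with addition in $\Delta$, $\mathcal{I}$ with $\Delta^+_p$ (Lemma~\ref{lemma:chain.positive.roots}), and $\mathcal{D}$ with $\Delta^-_p=-\Delta^+_p$.

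For part (a), I would use the standard fact that, for the polarization $\Delta^+_p$ with base $\Pi_p$, every $\gamma\in\Delta^+_p$ is uniquely a nonnegative integer combination of $\Pi_p$. By Lemma~\ref{lemma:irr.chain.simple.root}, $\Pi_p$ corresponds to the increasing irreducible chains. For existence, I must realize the combination by genuine chain additions: every positive root is obtained from simple roots by successively adding simple roots with all partial sums in $\Delta^+_p$. By the additivity above, this gives a decomposition of $\chain(\alpha)$ as an iterated sum of irreducible increasing chains. Uniqueness follows from the linear independence of $\Pi_p$ together with injectivity of $\pr$. Part (b) follows from (a) applied to $\delta-\chain(\alpha)\in\mathcal{I}$ (Corollary~\ref{cor:monotonicity}), using $\pr(\delta-\chain(\alpha))=-\pr(\chain(\alpha))$.

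For (c), the chain graph is not defined in the visible text. I would interpret it as the graph on chains with edges $\chain(\beta)\to\chain(\beta)+\chain(\gamma)$ for $\chain(\gamma)$ increasing (or irreducible increasing). Along each such edge the relative height $\hgt$ strictly increases, since $\hgt$ is additive and is positive on $\mathcal{I}$. A strictly increasing integer function rules out cycles. The main obstacle is only this bookkeeping: confirming the precise definition of the graph, and checking that the partially defined chain addition agrees with root addition in all cases.
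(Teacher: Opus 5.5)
Your treatment of (a) and (b) is correct and follows the paper's route. The paper gives no written proof; it simply combines Lemma~\ref{lemma:irr.chain.simple.root} with standard facts about the base of a positive system. Your check that $\pr$ is a bijection onto $\Delta$ is exactly the bookkeeping the paper leaves implicit: $\pr$ just records the finite part of any element of the chain, and it intertwines the partially defined chain addition and $\delta-\chain(\cdot)$ with addition and negation in $\Delta$.

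Part (c), however, proves a different and essentially vacuous statement. The chain graph is never defined, but its use in the proof of Lemma~\ref{lemma:unique.simple.add} pins it down: there, a path in the Dynkin diagram of $\Delta_{i-1}$ closed up through $\beta_i$ is declared an impossible cycle. So the chain graph is the undirected graph on the $|I|$ increasing irreducible chains $\chain(\beta_1),\dots,\chain(\beta_{|I|})$. Two of them are joined when their sum is again a chain, i.e.\ when $\beta_a+\beta_b\in\Delta$, equivalently $(\beta_a,\beta_b)<0$.

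Your height argument only rules out directed cycles of ``add an increasing chain'' moves, and it cannot see undirected cycles. In fact the undirected version of your graph has them. If $\beta_1,\beta_2,\beta_3$ form an $A_3$-string, the chains with $\pr$-images $\beta_2$, $\beta_1+\beta_2$, $\beta_1+\beta_2+\beta_3$, $\beta_2+\beta_3$ form a square.

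The missing input is the classical fact that the Dynkin diagram of a base of a finite root system contains no cycles. Since $\Pi_p$ is a base of $\Delta$, it is Weyl-group conjugate to $\{\alpha_i\}_{i\in I}$. Hence the chain graph is the Dynkin diagram of $\fg$ with multiple edges collapsed, which is a tree.

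Alternatively, argue directly. Suppose $\beta_{i_1},\dots,\beta_{i_m}$ with $m\geq 3$ formed a cycle, and put $e_a=\beta_{i_a}/\sqrt{(\beta_{i_a},\beta_{i_a})}$. Then $(e_a,e_b)\leq 0$ for $a\neq b$, and $2(e_a,e_b)\leq -1$ along the $m$ edges of the cycle. So $v=e_1+\cdots+e_m$ satisfies $(v,v)\leq m-m=0$, contradicting the linear independence of $\Pi_p$.
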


We shall often need the following useful result on finite root systems.

\begin{lemma}\label{lemma:seq.simple.root}
Consider any polarization $\Delta=\tilde{\Delta}^+\cup \tilde{\Delta}^-$ of a finite root system, and let
$\tilde{\Pi}=\{\epsilon_1,\ldots,\epsilon_n\}\subset \tilde{\Delta}^+$ be the simple positive roots. For any
$1\leq i_1,\ldots,i_N\leq n$ such that $\alpha=\epsilon_{i_1}+\ldots+\epsilon_{i_N}\in \tilde{\Delta}^+$ and
any $1\leq k\leq N$, there is a permutation $\sigma\in S_N$ such that $\sigma(1)=k$ and
$\eta_r:=\epsilon_{i_{\sigma(1)}}+\ldots+\epsilon_{i_{\sigma(r)}}$ are in $\tilde{\Delta}^+$ for all $1\leq r\leq N$.
\end{lemma}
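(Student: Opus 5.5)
We argue by induction on $N$. For $N=1$ there is nothing to prove. Assume the statement holds for all shorter sums. Given $\alpha=\epsilon_{i_1}+\ldots+\epsilon_{i_N}\in\tilde\Delta^+$ with $N\ge 2$ and a prescribed index $k$, it suffices to find an index $j\neq k$ such that $\alpha-\epsilon_{i_j}\in\tilde\Delta^+$. The induction hypothesis then applies to the sum of the remaining $N-1$ simple roots, still with $k$ as the first index. This produces an ordering $\sigma'$ of those $N-1$ indices with $\sigma'(1)=k$ and all partial sums in $\tilde\Delta^+$. Appending $j$ at the end gives the desired $\sigma$: its last partial sum is $\alpha$ itself, and all earlier partial sums are positive roots by construction.

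To find such a $j$, we use the standard fact about root systems. Since $\alpha\in\tilde\Delta^+$ is not simple (because $N\ge2$ and the expression in the simple roots $\tilde\Pi$ is unique), there is some $\epsilon\in\tilde\Pi$ with $(\alpha,\epsilon)>0$. Hence $\alpha-\epsilon$ is a root. It is positive, because its coefficients in the basis $\tilde\Pi$ are those of $\alpha$ with one coefficient lowered by one, and they are not all zero. Moreover $\epsilon$ must occur among the $\epsilon_{i_r}$, since otherwise $\alpha-\epsilon$ would have a negative coefficient. So the set $S=\{\epsilon\in\tilde\Pi : \alpha-\epsilon\in\tilde\Delta^+\}$ is nonempty.

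The main obstacle is guaranteeing that some element of $S$ can be removed at a position $j\ne k$. This is fine if some $\epsilon\in S$ occurs in the multiset $\{\epsilon_{i_r}\}$ at a position other than $k$. That happens either when $\epsilon\neq\epsilon_{i_k}$, or when $\epsilon=\epsilon_{i_k}$ occurs with multiplicity at least $2$; in both cases we take $j$ to be such a position. The only remaining case is $S=\{\epsilon_{i_k}\}$ with $\epsilon_{i_k}$ occurring exactly once in $\alpha$.

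We rule this case out as follows. First, $(\alpha,\epsilon)\le0$ for every $\epsilon\in\tilde\Pi$ with $\epsilon\ne\epsilon_{i_k}$. Otherwise $\alpha-\epsilon$ would be a root, and the support argument above would show $\epsilon\in S$, contradicting $S=\{\epsilon_{i_k}\}$. Next, write $\alpha=\epsilon_{i_k}+\beta$, where $\beta\ne0$ is a nonnegative combination of simple roots other than $\epsilon_{i_k}$. Then
\begin{equation*}
(\alpha,\beta)=\sum_{\epsilon\ne\epsilon_{i_k}} c_\epsilon(\alpha,\epsilon)\le 0,
\end{equation*}
where $c_\epsilon\ge0$ are the coefficients of $\beta$. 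On the other hand,
\begin{equation*}
(\alpha,\alpha)=(\alpha,\epsilon_{i_k})+(\alpha,\beta)>0.
\end{equation*}
Furthermore,
\begin{equation*}
(\alpha,\epsilon_{i_k})=(\epsilon_{i_k},\epsilon_{i_k})+(\beta,\epsilon_{i_k}),
\end{equation*}
and $(\beta,\epsilon_{i_k})\le0$ because distinct simple roots pair non-positively.

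Since $\alpha-\epsilon_{i_k}=\beta$ is a root, we have $\langle\alpha,\epsilon_{i_k}^\vee\rangle\ge1$ or else $\beta$ and $\epsilon_{i_k}$ would be roots summing to $\alpha$ with some special string structure; a cleaner route is needed. So instead we apply a string argument to $\beta$. The root $\beta$ is itself positive, so if $|\beta|\ge 2$ there is a simple $\epsilon\ne\epsilon_{i_k}$ with $(\beta,\epsilon)>0$. If $(\epsilon_{i_k},\epsilon)=0$, then $(\alpha,\epsilon)=(\beta,\epsilon)>0$, a contradiction. If $(\epsilon_{i_k},\epsilon)<0$, the argument requires care with non-simply-laced string lengths. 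I expect this case analysis to be the main difficulty. If it becomes unwieldy, I would fall back to the classical lemma that every positive root can be built from any simple root in its support by successively adding simple roots: build $\alpha$ upward starting from $\epsilon_{i_k}$, choosing at each step a simple root with $(\gamma,\epsilon)<0$, using connectedness of the support.
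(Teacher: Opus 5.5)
Your reduction is correct: it suffices to find $j\neq k$ with $\alpha-\epsilon_{i_j}\in\tilde\Delta^+$, and $S\neq\emptyset$. However, the reduction only repackages the difficulty. Excluding the case $S=\{\epsilon_{i_k}\}$ with $\epsilon_{i_k}$ of multiplicity one is equivalent to the lemma itself, since in any good ordering starting at $\epsilon_{i_k}$ the last summand is such a $j$. You leave exactly this case open.

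The inequalities you extract cannot settle it on their own. Consider $\alpha=\epsilon_1+\epsilon_2$ in $C_2$ with $\epsilon_{i_k}=\epsilon_2$ long, so that $(\alpha,\epsilon_1)=0$. Then $\epsilon_{i_k}$ has multiplicity one, $\beta=\alpha-\epsilon_{i_k}$ is a root, and $(\alpha,\epsilon)\le 0$ for every simple $\epsilon\neq\epsilon_{i_k}$. Yet $\alpha-\epsilon_1=\epsilon_2$ is a root, so this is not the bad case. The underlying problem is a sign: you get $(\alpha,\beta)\le 0$, but also $(\epsilon_{i_k},\beta)\le 0$. Hence $(\beta,\beta)=(\alpha,\beta)-(\epsilon_{i_k},\beta)$ has no forced sign.

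Your fallback is circular, and its sketch is wrong. Since the $\epsilon_i$ are linearly independent, the multiset of summands is determined by $\alpha$. So ``every positive root can be built from any simple root of its support by adding simple roots'' is precisely the statement to be proved. Moreover, the greedy rule of adding some $\epsilon$ with $(\gamma,\epsilon)<0$ gets stuck. In $G_2$ (with $\epsilon_1$ short), every chain to $3\epsilon_1+2\epsilon_2$ passes through the step $2\epsilon_1+\epsilon_2\to 3\epsilon_1+\epsilon_2$. But $(2\epsilon_1+\epsilon_2,\epsilon_1)>0$ and $(2\epsilon_1+\epsilon_2,\epsilon_2)=0$.

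The missing idea, which is what the paper does, is to prove the stronger statement where the $\epsilon_{i_r}$ are arbitrary positive roots. This allows a second inductive move, besides deleting a summand at the end when $(\alpha,\epsilon_{i_r})>0$ for some $r\neq k$:
\begin{itemize}
\item If $(\epsilon_{i_k},\epsilon_{i_r})<0$ for some $r\neq k$, replace $\epsilon_{i_k},\epsilon_{i_r}$ by the single positive root $\epsilon_{i_k}+\epsilon_{i_r}$. Put it first and apply induction to the $N-1$ summands.
\item If neither move is available, then $(\epsilon_{i_k},\epsilon_{i_r})\ge 0\ge(\alpha,\epsilon_{i_r})$ for all $r\neq k$. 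Summing over $r\neq k$ gives $(\epsilon_{i_k},\beta)\ge 0\ge(\alpha,\beta)$. Hence $(\beta,\beta)\le 0$ with $\beta\neq 0$, a contradiction.
\end{itemize}
The merge move is exactly what flips the sign of $(\epsilon_{i_k},\beta)$ that blocked your computation. It is unavailable in your version because merged summands are no longer simple.
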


\begin{proof}
We shall prove the generalization with all $\epsilon_{i_\bullet}$ being positive roots, instead of simple ones.
The proof proceeds by induction on $N$, the base case $N=1$ being vacuous. To simplify our exposition we shall
also assume $k=1$. If $(\epsilon_{i_1},\epsilon_{i_r}) < 0$ for some $r$, then $\epsilon_{i_1}+\epsilon_{i_r}$
is a positive root, hence we can apply the induction hypothesis for the collection of positive roots
$(\epsilon_{i_1}+\epsilon_{i_r},\epsilon_{i_2},\dots,\epsilon_{i_{r-1}},\epsilon_{i_{r+1}},\dots,\epsilon_{i_n})$.
The analogous argument applies if $(\alpha,\epsilon_{i_r}) > 0$ for some $r$, in which case we can apply the
induction hypothesis for the collection of positive roots
$(\epsilon_{i_1},\dots,\epsilon_{i_{r-1}},\epsilon_{i_{r+1}},\dots,\epsilon_{i_n})$.
Hence the only situation when we could not prove the claim via the argument above would be if
$(\epsilon_{i_1}, \epsilon_{i_r}) \geq 0 \geq (\alpha,\epsilon_{i_r})$ for all $r$. But then
$(\epsilon_{i_1}, \alpha-\epsilon_{i_1}) \geq 0 \geq (\alpha,\alpha-\epsilon_{i_1})$ and so
$(\alpha-\epsilon_{i_1},\alpha-\epsilon_{i_1}) \leq 0$, which is impossible as $\alpha-\epsilon_{i_1} \neq 0$.
\end{proof}

We can now relate irreducible chains to the flag $\spanset^1_{\bullet}$ of~\eqref{eq:flag}.

\begin{proposition}\label{prop:irr.chains}
There are exactly $|I|$ increasing \irrchains\ and any element of $\mathcal{I}$ can be uniquely expressed as
a sum of those. For any $i \in \{1,2,\ldots,|I|\}$:
\begin{equation}\label{eq:chains-vs-flags}
  \exists!  \ \ \text{increasing \irrchain\ } \ \ \chain(\alpha) \in \mathcal{I} \ \
  \text{ such that } \ \  m_1(\alpha) = (\delta,i).
\end{equation}
\end{proposition}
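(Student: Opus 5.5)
We plan to split the statement into a counting part and a flag-matching part. For the counting part, Lemma~\ref{lemma:irr.chain.simple.root} identifies the increasing \irrchains\ with $\Pi_p$. Since $\Delta=\Delta^+_p\cup\Delta^-_p$ is a polarization (Lemma~\ref{lemma:chain.positive.roots}), $\Pi_p$ is a basis of simple roots of $\Delta$ and so has exactly $\rank(\fg)=|I|$ elements. Unique expressibility of any element of $\mathcal{I}$ is then Corollary~\ref{cor:lin.comb.chains}(a), which uses only that every $\beta\in\Delta^+_p$ is a unique nonnegative integer combination of $\Pi_p$. This part is routine.

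For~\eqref{eq:chains-vs-flags}, we first observe that $m_1$ is an invariant of the chain. For $\alpha=(\gamma,n)\in\wDelta^{+,\re}$ we have $h_\alpha t^k=h_\gamma t^k$, and $m_k$ does not depend on $k$ (Corollary~\ref{cor:equiv.mk.Mk}). Moreover $h_{-\gamma}=-h_\gamma$ spans the same line, so $m_1(\alpha)$ depends only on $\pm\pr(\chain(\alpha))$. Write $m(\beta)\in\{1,\dots,|I|\}$ for the index of $m_1$ of any root whose chain projects to $\pm\beta$. It then suffices to show that $\varepsilon\mapsto m(\varepsilon)$ is injective on $\Pi_p$, since a map between two sets of size $|I|$ that is injective is bijective.

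To prove injectivity, set $V_i=\spn\{h_\varepsilon\colon \varepsilon\in\Pi_p,\ m(\varepsilon)\le i\}$. By definition $V_i\subset\spanset^1_i$. The coroots $\{h_\varepsilon\}_{\varepsilon\in\Pi_p}$ are linearly independent, being simple coroots for the dual polarization, so $\dim V_i=\#\{\varepsilon\colon m(\varepsilon)\le i\}$. If we establish $V_i=\spanset^1_i$, then $\dim V_i=i$ for every $i$, and exactly one $\varepsilon$ has $m(\varepsilon)=i$, which is~\eqref{eq:chains-vs-flags}. For the reverse inclusion, note that $\spanset^1_i$ is spanned by the standard bracketings of $\SL_j(\delta)$ for $j\le i$. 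By Proposition~\ref{prop:general.bracketing}, any splitting $\SL_j(\delta)=uv$ gives $[\sb[u],\sb[v]]$, a nonzero multiple of $h_{\deg u}t$, with $m(\deg u)=j$ (Corollary~\ref{cor:mk.im.factor}). Hence $\spanset^1_i$ is spanned by the $h_\gamma t$ with $\gamma\in\Delta^+_p$ and $m(\gamma)\le i$, and it suffices to show that such $h_\gamma$ lie in $V_{m(\gamma)}$.

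We argue this by induction on the height of $\gamma$ relative to $\Pi_p$. Using Lemma~\ref{lemma:seq.simple.root}, choose $\varepsilon$ in the support of $\gamma$ with $m(\varepsilon)$ maximal, and write $\gamma=\eta+\varepsilon$ with $\eta\in\Delta^+_p$. If $m(\eta)<m(\varepsilon)$, then Lemma~\ref{lemma:min.im.rule} gives $m(\gamma)=m(\eta)$. Since $h_\gamma$ is a combination of $h_\eta$ and $h_\varepsilon$ with nonzero coefficient on $h_\varepsilon$, this would put $h_\varepsilon$ in $\spanset^1_{m(\varepsilon)-1}$, a contradiction. Therefore $m(\eta)\ge m(\varepsilon)$, and so $m(\gamma)\ge\min(m(\eta),m(\varepsilon))=m(\varepsilon)$ is at least the $m$-value of every simple root in the support of $\gamma$. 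This already yields $h_\gamma\in V_{m(\gamma)}$. The main obstacle is making this descent rigorous: one must check that maximality of $m(\varepsilon)$ is compatible with the ordering provided by Lemma~\ref{lemma:seq.simple.root} (we would take $\varepsilon$ as the last summand, i.e.\ apply the lemma with $k$ chosen appropriately and reverse the order). One must also verify the linear-algebra step that the coefficient of $h_\varepsilon$ in $h_\gamma=c\,h_\eta+c'\,h_\varepsilon$ is indeed nonzero. If this causes trouble, we would fall back on Lemma~\ref{lemma:parity.same.sum}(a) to compare $m_1$ along sums of increasing chains.
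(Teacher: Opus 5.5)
You split the statement into a counting part and a flag-matching part; the gap is in the flag-matching part, where the descent step misreads the order on imaginary roots.

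\textbf{What works.} The counting part is fine. So is the reduction via Proposition~\ref{prop:general.bracketing} and Corollary~\ref{cor:mk.im.factor}: $\spanset^1_i$ is spanned by coroots $h_\gamma t$ with $\gamma\in\Delta^+_p$ and $m(\gamma)\le i$. Everything therefore comes down to showing $m(\gamma)\ge m(\varepsilon)$ for every simple $\varepsilon$ in the support of $\gamma$.

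\textbf{The gap.} In the paper, $(\delta,1)>(\delta,2)>\cdots>(\delta,|I|)$, because $\SL_1(\delta)>\SL_2(\delta)>\cdots$. Translated into your index $m(\cdot)$, Lemma~\ref{lemma:min.im.rule} says the following. If $m(\eta)\neq m(\varepsilon)$, then $m(\eta+\varepsilon)=\max\{m(\eta),m(\varepsilon)\}$. If $m(\eta)=m(\varepsilon)$, then $m(\eta+\varepsilon)\le m(\varepsilon)$. This is nothing more than linear algebra for $h_{\eta+\varepsilon}=c\,h_\eta+c'\,h_\varepsilon$ with $c,c'\neq 0$.

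Consequently:
\begin{itemize}
\item In your case $m(\eta)<m(\varepsilon)$ you simply get $m(\gamma)=m(\varepsilon)$, and there is no contradiction. Your ``contradiction'' only shows that the misread conclusion $m(\gamma)=m(\eta)$ is impossible.
\item In the remaining case $m(\eta)=m(\varepsilon)$, the lemma gives $m(\gamma)\le m(\varepsilon)$, not $\ge$. This is exactly where cancellation could push $h_\gamma$ into $\spanset^1_{m(\varepsilon)-1}$. Excluding it is the entire content of~\eqref{eq:chains-vs-flags}.
\item Separately, Lemma~\ref{lemma:seq.simple.root} prescribes the first summand, not the last. So $\gamma-\varepsilon$ need not be a root; the middle simple root in the highest root of $A_3$ is an example.
\end{itemize}

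\textbf{The facts you invoke cannot close this gap.} In type $A_2$ with $\Pi_p=\{\varepsilon_1,\varepsilon_2\}$, take a flag with $\spanset^1_1=\BC h_{\varepsilon_1+\varepsilon_2}t$. It is spanned by coroots of $\Delta^+_p$ and is compatible with Lemma~\ref{lemma:min.im.rule}, yet $m(\varepsilon_1)=m(\varepsilon_2)=2$.

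\textbf{What is missing} is genuine input from the word combinatorics, which is how the paper proceeds.
\begin{itemize}
\item For $i=1$, the paper shows $\chain(\deg\SL^{ls}_1(\delta))$ is irreducible. Any splitting into two increasing chains has both summands with $m_1\le(\delta,2)$, since $\dim\spanset^1_1=1$. Lemma~\ref{lemma:parity.same.sum}(a) then forces $\deg\SL^{ls}_1(\delta)<(\delta,2)$. This contradicts $\SL_2(\delta)<\SL^{ls}_1(\delta)$ from Lemma~\ref{lemma:left.standard.comp.imaginary}.
\item For general $i$, uniqueness follows from linear independence inside the $i$-dimensional $\spanset^1_i$, once smaller indices are settled.
\item Existence is proved by decomposing $\tilde\alpha=\deg\SL^{ls}_i(\delta)$ into irreducible increasing chains and taking a summand $\gamma$ of largest index. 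If that index exceeds $i$, order the summands by Lemma~\ref{lemma:seq.simple.root} starting with $\gamma$. Theorem~\ref{thm:convexity} then propagates $\eta_r+(N-r)\delta>m_1(\gamma)$ down to $r=1$, contradicting $\chain(\gamma)<m_1(\gamma)$.
\end{itemize}
Your fallback to Lemma~\ref{lemma:parity.same.sum} points the right way. However, it only becomes decisive when combined with a specific large element such as $\SL^{ls}_i(\delta)$.
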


\begin{proof}
By Lemma~\ref{lemma:irr.chain.simple.root} we know that increasing \irrchains\ are in bijection with $\Pi_p$ and
clearly $|\Pi_p|=|I|$. Any element of $\mathcal{I}$ can be uniquely expressed as an unordered sum of increasing
\irrchains\ by Corollary~\ref{cor:lin.comb.chains}a).

We shall now prove~\eqref{eq:chains-vs-flags} by induction on $i$. For the base case $i=1$, consider
$\alpha=\deg(\SL^{ls}_1(\delta))$. We note that $m_1(\alpha) = (\delta,1)$ which determines $\chain(\alpha)$ uniquely
by definition. Furthermore, $\chain(\alpha)$ is increasing by Proposition~\ref{prop:monotonicity}.
If $\chain(\alpha)=\chain(\beta)+\chain(\gamma)$ with $\chain(\beta),\chain(\gamma)\in \mathcal{I}$, then
$m_1(\beta),m_1(\gamma)\leq(\delta,2)$, since $\chain(\alpha)$ is the only increasing chain satisfying
$m_1(\alpha) = (\delta,1)$ as $\dim(\spanset_{1}^1)=1$. Thus $\alpha<(\delta,2)$ by Lemma~\ref{lemma:parity.same.sum},
which contradicts to Lemma~\ref{lemma:left.standard.comp.imaginary}. This completes the base case.

For the inductive step, assume that~\eqref{eq:chains-vs-flags} holds for $i-1$. First, we note that if there is
$\chain(\alpha) \in \mathcal{I}$ such that $m_1(\alpha) = (\delta,i)$ and $\chain(\alpha)$ is an \irrchain, then it is unique.
If not, then we have $i+1$ increasing \irrchain\ with $m_1(\cdot)\geq (\delta,i)$, and hence their $\Pr$-images (simple roots
from $\Pi_p$ by Lemma~\ref{lemma:irr.chain.simple.root}) are contained in $i$-dimensional $\mathcal{S}^1_i$. As simple roots
are linearly independent, we get a contradiction.

To prove existence of such $\alpha$, let us consider $\chain(\tilde{\alpha})$ with $\tilde{\alpha}=\deg(\SL_i^{ls}(\delta))$.
If that chain is irreducible, then $\alpha=\tilde{\alpha}$ satisfies~\eqref{eq:chains-vs-flags}. Otherwise, apply
Corollary~\ref{cor:lin.comb.chains} to write
$\chain(\tilde{\alpha})=c_1 \chain(\beta_1) + c_2\chain(\beta_2) + \ldots + c_{|I|}\chain(\beta_{|I|})$ with
$c_k\in \BZ_{\geq 0}$, $\chain(\beta_k)\in \mathcal{I}$--irreducible, $|\beta_k|<|\delta|$ (i.e.\ $\beta_k=\beta'_k$),
and $m_1(\beta_k)=(\delta,k)$ for $k<i$ by the induction assumption. Let $\gamma$ be one of $\beta_j$'s with $c_j\ne 0$
and the smallest value of $m_1(\beta_j)$ (it may be not unique). If $m_1(\gamma)=(\delta,i)$, then the result holds with
$\alpha=\gamma$. Otherwise, $m_1(\gamma)\leq (\delta,i+1)$, which implies
$\chain(\gamma) < m_1(\gamma) < \deg(\SL_i^{ls}(\delta)) \leq \chain(\deg(\SL_i^{ls}(\delta)))$ by
Proposition~\ref{prop:monotonicity} and Lemma~\ref{lemma:left.standard.comp.imaginary}.

Let us list above $\beta_\bullet$ in any order, so that $\hat{\alpha}=\beta_{i_1}+\ldots+\beta_{i_N}$ with
$N=c_1+\ldots+c_{|I|}$ and $\hat{\alpha}\in \chain(\tilde{\alpha})$. Applying Lemma~\ref{lemma:seq.simple.root} with
$\tilde{\Delta}^+=\Delta^+_p$, there is a permutation $\sigma\in S_N$ such that $\beta_{i_{\sigma(1)}}=\gamma$ and
$\eta_r:=\beta_{i_{\sigma(1)}}+\ldots+\beta_{i_{\sigma(r)}}\in \Delta^+_p$ for all $1\leq r\leq N$. Note that
$m_1(\beta_s)\geq m_1(\gamma)$ by Lemma~\ref{lemma:min.im.rule} and so $\delta-\beta_s>m_1(\beta_s)\geq m_1(\gamma)$
for any $s$, due to Proposition~\ref{prop:monotonicity} and Corollary~\ref{cor:monotonicity}. Note that
$\eta_N=\hat{\alpha}\geq \tilde{\alpha}>(\delta,i+1)\geq m_1(\gamma)$. Applying Theorem~\ref{thm:convexity} to
$\eta_r+(N-r)\delta=\eta_{r+1}+(N-r-1)\delta +(\delta-\beta_{i_{\sigma(r+1)}})$ and arguing by a descending induction
on $r$, we get $\eta_r+(N-r)\delta > m_1(\gamma)$ for all $r$. For $r=1$, this gives $\gamma+(N-1)\delta>m_1(\gamma)$,
a contradiction with $\chain(\gamma)<m_1(\gamma)$.
\end{proof}

For $1\leq i\leq |I|$, let $\beta_i\in \Pi_p$ be such that $m_1(\beta_i) = (\delta,i)$.

\begin{corollary}\label{cor:mk.roots}
For any chain $\chain(\alpha) \in \chain(\wDelta^{+,\re})$, we have
\[
  m_k(\alpha) = (k\delta,i) \Longleftrightarrow
  \Pr(\chain(\alpha)) = c_1 \beta_1 + c_2\beta_2 + \ldots + c_i\beta_i  \ \text{ with }\ c_i \neq 0.
\]
\end{corollary}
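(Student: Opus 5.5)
First reduce to $k=1$ by Corollary~\ref{cor:equiv.mk.Mk}, since $m_k(\alpha)=(k\delta,i)$ iff $m_1(\alpha)=(\delta,i)$. Next observe that $m_1$ depends only on the chain: $h_\alpha t$ is determined by the finite part of $\alpha$, and for $\alpha'\in\delta-\Delta^+$ the coroot is $h_{\alpha'}=-h_{\pr(\chain(\alpha))}$, which lies in the same subspaces. So $m_1(\alpha)$ is the smallest $i$ with $h_{\pr(\chain(\alpha))}t\in\spanset^1_i$. Identify $\fh t$ with $\fh$, and hence, via the invariant form, with the span of coroots. The statement then amounts to the identity
\begin{equation*}
  \spanset^1_i=\spn\{h_{\beta_1}t,\ldots,h_{\beta_i}t\} \qquad \text{for all } 1\le i\le |I|,
\end{equation*}
combined with the fact that $\{\beta_1,\dots,\beta_{|I|}\}=\Pi_p$ is a basis of the root lattice.

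To prove the identity, use Definition~\ref{def:mk} for the irreducible chains. Since $m_1(\beta_j)=(\delta,j)$, we have $h_{\beta_j}t\in\spanset^1_j\setminus\spanset^1_{j-1}$. By induction on $i$, the vectors $h_{\beta_1}t,\dots,h_{\beta_i}t$ then lie in $\spanset^1_i$ and are linearly independent, because each one escapes the previous flag step. As $\dim\spanset^1_i=i$, they form a basis of $\spanset^1_i$. This is where Proposition~\ref{prop:irr.chains} enters: it guarantees that every flag index $i$ is realized by exactly one simple root $\beta_i$ of $\Pi_p$.

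Now take any chain and write $\pr(\chain(\alpha))=\sum_j c_j\beta_j$ with integer coefficients, all of one sign by Corollary~\ref{cor:lin.comb.chains}. The map $\gamma\mapsto h_\gamma$ is additive up to the normalization by the pairing: $h_\gamma$ corresponds to $2\gamma/(\gamma,\gamma)$. Rescaling does not change which flag step a vector lies in, and since $\spanset^1_i$ is a linear subspace, we may work with $\sum_j c_j\,\nu(\beta_j)$, where $\nu$ is the isomorphism $\fh^*\to\fh$. Let $i$ be the largest index with $c_i\neq0$. Then this vector lies in $\spanset^1_i$. It is not in $\spanset^1_{i-1}$, because modulo $\spanset^1_{i-1}$ it equals $c_i\,\nu(\beta_i)t\ne0$. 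Hence $m_1(\alpha)=(\delta,i)$. Conversely, if $m_1(\alpha)=(\delta,i)$, the top nonzero coefficient must be $c_i$ by uniqueness of the expansion, which gives both directions.

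The main obstacle I expect is being careful that $h_\alpha$ is proportional to $\nu(\alpha)$ rather than additive in $\alpha$. I will handle this by working throughout with the linear span of the $\nu(\cdot)$, which is legitimate because membership in a subspace is invariant under nonzero scaling. The other point to check is the sign and affine shift: $h_{\alpha'}$ for $\alpha'=\delta-\gamma$ equals $-h_\gamma$, which follows from the loop realization $\widehat{\fg}_{(-\gamma,1)}=\fg_{-\gamma}t$.
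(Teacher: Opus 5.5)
Your proposal is correct and follows essentially the same route as the paper: show that $h_{\beta_1}t^k,\ldots,h_{\beta_j}t^k$ span $\spanset^k_j$ for every $j$ (the paper via Proposition~\ref{prop:spanset.equiv} and Corollary~\ref{cor:equiv.mk.Mk}, you by first reducing to $k=1$ and then using $m_1(\beta_j)=(\delta,j)$ with a dimension count), and then read off the flag step from the top nonzero coefficient $c_i$. Your extra care that $h_\gamma$ is only proportional to $\nu(\gamma)$ makes explicit a point the paper leaves implicit. The sign mismatch in $h_{\alpha'}=\pm h_{\pr(\chain(\alpha))}$ between your first and last paragraphs is harmless, since membership in a subspace does not change under nonzero scaling.
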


\begin{proof}
We have $\spn\{h_{\beta_1}t^k,\ldots, h_{\beta_j}t^k\} = \spanset_{j}^k\ \ \forall\, j$,
due to Proposition~\ref{prop:spanset.equiv} and Corollary~\ref{cor:equiv.mk.Mk}.
Then
  $c_i \neq 0 \Leftrightarrow h_{\alpha}t^k \in \spanset_{i}^k \setminus \spanset_{i-1}^k
   \Leftrightarrow m_k(\alpha) = (k\delta,i)$
by Definition~\ref{def:mk}.
\end{proof}

As an immediate corollary, we obtain the following result.

\begin{corollary}\label{cor:both.increasing.m.k}
For any $\chain(\alpha),\chain(\beta) \in \chain(\wDelta^{+,\re})$ which are both increasing or decreasing and
$\chain(\alpha) + \chain(\beta) \in \chain(\wDelta^{+,\re})$, we have $m_k(\alpha + \beta) = \min\{m_k(\alpha),m_k(\beta)\}$.
\end{corollary}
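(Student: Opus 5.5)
The claim should follow directly from Corollary~\ref{cor:mk.roots}. Writing both chains in the basis $\{\beta_j\}$ of the polarization, the index of $m_k$ of a chain is the largest $j$ with a nonzero coefficient. I will therefore read off the coefficient expansion of $\Pr(\chain(\alpha+\beta))$ and compare indices.

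\emph{Step 1.} Write
\[
  \Pr(\chain(\alpha))=\sum_j a_j\beta_j, \qquad \Pr(\chain(\beta))=\sum_j b_j\beta_j .
\]
By Corollary~\ref{cor:lin.comb.chains}, if both chains are increasing then all $a_j,b_j\geq 0$, and if both are decreasing then all $a_j,b_j\leq 0$. In either case the two coefficient vectors have a common sign.

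\emph{Step 2.} Check that $\Pr$ is additive: $\Pr(\chain(\alpha+\beta))=\Pr(\chain(\alpha))+\Pr(\chain(\beta))$. The three projections are determined modulo $\delta$, so their difference is an integer multiple of $\delta$ lying in $Q$, hence zero. So the coefficients of $\Pr(\chain(\alpha+\beta))$ are $a_j+b_j$.

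\emph{Step 3.} Because the $a_j$ and $b_j$ share a sign, there is no cancellation: $a_j+b_j\neq 0$ iff $a_j\neq 0$ or $b_j\neq 0$. Hence the largest index with a nonzero coefficient for $\alpha+\beta$ is the maximum of the corresponding largest indices $i_\alpha$ and $i_\beta$. Corollary~\ref{cor:mk.roots} then gives $m_k(\alpha+\beta)=(k\delta,\max\{i_\alpha,i_\beta\})$. Since $(k\delta,i)>(k\delta,i')$ for $i<i'$ (as $\SL_1>\dots>\SL_{|I|}$), this is exactly $\min\{m_k(\alpha),m_k(\beta)\}$.

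The only delicate point is the decreasing case. There one should confirm that Corollary~\ref{cor:mk.roots} applies verbatim to a chain whose $\Pr$ lies in $\Delta_p^-$ with nonpositive coefficients. This holds because $h_{-\gamma}=-h_\gamma$, so membership in the flag $\spanset^k_\bullet$ is unaffected by the sign.
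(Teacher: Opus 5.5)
Your argument is correct and follows the paper's route: the paper presents this as an immediate consequence of Corollary~\ref{cor:mk.roots}, relying on exactly your observation that chains of the same monotonicity have coefficient vectors of a common sign (Corollary~\ref{cor:lin.comb.chains}). Hence there is no cancellation, and the top nonzero index of the sum is the larger of the two indices. Your extra check of the decreasing case is harmless but unnecessary, since Corollary~\ref{cor:mk.roots} is already stated for arbitrary chains in $\chain(\wDelta^{+,\re})$.
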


We shall also often use the following simple observation.

\begin{corollary}\label{cor:b.j.M.k}
For any $\alpha \in \wDelta^{+,\re}$ with $M_k(\alpha) = (k\delta,i)$, we have:
\begin{equation*}
  (\alpha,\beta_j) = 0 \quad \forall\, j < i , \qquad (\alpha,\beta_i) \neq 0.
\end{equation*}
\end{corollary}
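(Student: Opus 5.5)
The plan is to convert the condition $M_k(\alpha)=(k\delta,i)$ into a statement about which elements of the flag $\spanset^k_\bullet$ commute with $\sb[\SL(\alpha)]$, and then to read off the pairings using the basis $\{h_{\beta_j}t^k\}$ from the proof of Corollary~\ref{cor:mk.roots}. The basic identity is $[h t^k, x_\alpha t^n]=\alpha(h)\,x_\alpha t^{n+k}$ for $x_\alpha\in\fg_{\alpha}$ (here $\alpha$ also denotes its finite part). In particular, $[h_{\beta}t^k,\sb[\SL(\alpha)]]$ is a nonzero multiple of $(\alpha,\beta)\,\sb[\SL(\alpha+k\delta)]$, up to the normalization $h_\beta$ corresponding to the coroot or root. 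Hence it vanishes iff $(\alpha,\beta)=0$. By Proposition~\ref{prop:spanset.equiv} and Corollary~\ref{cor:mk.roots}, $\spanset_j^k=\spn\{h_{\beta_1}t^k,\ldots,h_{\beta_j}t^k\}$ for all $j$.

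First, I prove $(\alpha,\beta_j)=0$ for $j<i$. Corollary~\ref{cor:im.span.vanish} states that $[h,\sb[\SL(\alpha)]]=0$ for all $h\in\spanset^k_{i-1}$. Taking $h=h_{\beta_j}t^k$ with $j\le i-1$ gives $(\alpha,\beta_j)=0$.

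Second, I prove $(\alpha,\beta_i)\neq 0$. By the definition of $M_k(\alpha)$ as a maximum (Definition~\ref{def:Mk}), we have $[\sb[\SL_i(k\delta)],\sb[\SL(\alpha)]]\neq 0$. Since $\sb[\SL_i(k\delta)]\in\spanset^k_i\setminus\spanset^k_{i-1}$, we can write $\sb[\SL_i(k\delta)]=c\,h_{\beta_i}t^k+h'$ with $c\neq0$ and $h'\in\spanset^k_{i-1}$. The bracket of $h'$ with $\sb[\SL(\alpha)]$ vanishes by the first step, so $c\,[h_{\beta_i}t^k,\sb[\SL(\alpha)]]\neq0$. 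This forces $(\alpha,\beta_i)\neq0$.

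The only point needing care is the identification of $\spanset_j^k$ with the span of $h_{\beta_1}t^k,\dots,h_{\beta_j}t^k$, together with the triangular relation between these two bases of the flag. Both are recorded in the proof of Corollary~\ref{cor:mk.roots}, so I expect no real obstacle. The remaining check is that $\sb[\SL(\alpha)]$ is a nonzero element of the one-dimensional space $\widehat{\fg}_\alpha$, which holds because standard bracketings of SL-words form a basis by Theorem~\ref{thm:standard Lyndon theorem}.
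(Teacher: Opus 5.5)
Your proof is correct and follows the paper's argument. Vanishing for $j<i$ comes from Corollary~\ref{cor:im.span.vanish} applied to $h_{\beta_j}t^k\in\spanset^k_{i-1}$, and nonvanishing for $j=i$ comes from the fact that $h_{\beta_i}t^k$ generates $\spanset^k_i$ modulo $\spanset^k_{i-1}$ while $\sb[\SL_i(k\delta)]$ brackets nontrivially with $\sb[\SL(\alpha)]$. The only differences are cosmetic: the paper states this at level $k=1$, implicitly via Corollary~\ref{cor:equiv.mk.Mk}, whereas you work directly at level $k$ using Proposition~\ref{prop:spanset.equiv}, and you spell out the triangular-decomposition step that the paper leaves implicit.
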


\begin{proof}
Note that $h_{\beta_j}t \in \spanset_{i-1}^1$ for all $j < i$ by Proposition~\ref{prop:irr.chains}, which
implies that $(\alpha,\beta_j) = 0$ due to Corollary~\ref{cor:im.span.vanish}. On the other hand, since
$h_{\beta_i}t \in \spanset_{i}^{1} \setminus \spanset_{i-1}^1$ by Proposition~\ref{prop:irr.chains},
we see that $(\alpha,\beta_i) \neq 0$ by Definition~\ref{def:mk}.
\end{proof}


\subsection{Properties of $\mathcal{I}$ and $\mathcal{D}$}
\

In this subsection, we explore some basic properties of increasing and decreasing chains.
The next definition will play a pivotal role through our analysis in Section~\ref{sec:dec}.

\begin{definition}\label{def:M.prime}
For any $\chain(\alpha) \in \mathcal{D}$ with $\chain(\alpha) = c_1\chain(\beta_1) + \ldots + c_{|I|}\chain(\beta_{|I|})$,
we define $M'_{k}(\chain(\alpha))\in \imx$ via
\begin{equation*}
  M'_{k}(\chain(\alpha)) = \max \big\{M_k(\beta_i) \,|\, c_i \ne 0\big\}.
\end{equation*}
We also define $M'_k(\alpha) = M'_k(\chain(\alpha)) = M'_k(\chain(\delta - \alpha))$ for all $\alpha \in \wDelta^{+,\re}$.
\end{definition}

The following property is straightforward from this definition:
\begin{equation}\label{eq:Mprime-additive}
  M'_1(\alpha + \beta) = \max\{M'_1(\alpha), M'_1(\beta)\} \quad \mathrm{for\ any} \quad
  \chain(\alpha),\chain(\beta),\chain(\alpha+\beta) \in \mathcal{D}.
\end{equation}

\begin{lemma}\label{lemma:M.prime.bound}
For any $\chain(\alpha) \in \chain(\wDelta^{+,\re})$ and $k\geq 1$, we have $M'_k(\alpha) \geq M_k(\alpha)$.
If $\chain(\alpha) = c_1\chain(\beta_1) + \ldots + c_{|I|}\chain(\beta_{|I|})$ with $c_{<i}=0, c_i \neq 0$,
then $M'_k(\alpha) \geq (k\delta,i)$.
\end{lemma}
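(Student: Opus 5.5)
The plan is to reduce both claims to the behaviour of $M_k$ on sums, namely Corollary~\ref{cor:max.im.rule}, together with the description of the flag in Corollary~\ref{cor:b.j.M.k}. Since $M'_k$ and $M_k$ are defined through degrees that depend only on the chain, I may pass freely between $\alpha$ and $\delta-\alpha$ for $M_k$. The identity $M_k(\alpha)=M_k(\delta-\alpha)$ holds because bracketing with $h t^k$ only sees $\pm\alpha$, and $(\alpha,h)=0$ iff $(-\alpha,h)=0$. Using Corollary~\ref{cor:equiv.mk.Mk}, everything also reduces to $k=1$. So I assume $\chain(\alpha)$ is written as $\sum c_j\chain(\beta_j)$, where all $c_j\ge 0$ in the increasing case and all $c_j\le 0$ in the decreasing case. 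In the decreasing case I replace $\alpha$ by $\delta-\alpha$ to reduce to the increasing case with nonnegative coefficients.

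\textbf{First inequality.} I argue by induction on the relative height $N=\sum |c_j|$. If $N=1$, then $\chain(\alpha)=\chain(\beta_j)$ and $M'_k=M_k$ trivially. For the inductive step, Lemma~\ref{lemma:seq.simple.root} applied to the polarization $\Delta_p^+$ lets me write $\pr(\chain(\alpha))=\eta+\beta_s$ with $\eta\in\Delta^+_p$, $\beta_s$ simple, and $c_s\neq0$. Choosing representatives in $\wDelta^{+,\re}$, I lift this to an identity $\alpha_0=\gamma+\gamma'$ of real roots with $\gamma\in\chain(\eta)$ and $\gamma'\in\chain(\beta_s)$, shifting by multiples of $\delta$ as needed. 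Corollary~\ref{cor:max.im.rule} gives $M_k(\gamma+\gamma')\le\max\{M_k(\gamma),M_k(\gamma')\}$: in the unequal case the sum equals the larger value, and in the equal case it is at most the common value. By induction, $M_k(\gamma)\le M'_k(\eta)$, and $M'_k(\eta)\le M'_k(\alpha)$ since the support of $\eta$ lies inside the support of $\alpha$. Also $M_k(\gamma')=M_k(\beta_s)\le M'_k(\alpha)$. Chain independence then gives $M_k(\alpha)\le M'_k(\alpha)$.

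\textbf{Second inequality.} Suppose $c_{<i}=0$ and $c_i\ne0$. By definition $M'_k(\alpha)\ge M_k(\beta_i)$, so it suffices to show $M_k(\beta_i)\ge(k\delta,i)$. I apply Corollary~\ref{cor:b.j.M.k} to $\beta_i$. If $M_k(\beta_i)=(k\delta,i')$ with $i'<i$, then $(\beta_i,\beta_{i'})\ne0$ by that corollary. I need a contradiction from this, and I doubt the pairing alone gives one, so I would argue through the flag instead. Indeed $h_{\beta_i}t^k\in\spanset^k_i\setminus\spanset^k_{i-1}$ by Corollary~\ref{cor:mk.roots} (equivalently Definition~\ref{def:mk}), and $[h_{\beta_i}t^k,\sb[\SL(\beta_i)]]$ is proportional to $(\beta_i,\beta_i)\ne0$. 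So bracketing $\sb[\SL(\beta_i)]$ with some element of $\spanset^k_i$ is nonzero. Since the words in $\spanset^k_i$ are $\SL_1(k\delta),\dots,\SL_i(k\delta)$, some $\SL_j(k\delta)$ with $j\le i$ brackets nontrivially with $\sb[\SL(\beta_i)]$. The convention $\SL_1>\dots>\SL_{|I|}$ then makes the maximal such root at least $(k\delta,i)$, because larger roots in $\imx$ correspond to smaller indices. Hence $M_k(\beta_i)\ge(k\delta,i)$.

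I expect the \textbf{main obstacle} to be the lifting step in the first part. It requires care with the chain-level sums: the shifts by multiples of $\delta$ must be checked to produce genuine real roots $\gamma,\gamma',\gamma+\gamma'$, so that Corollary~\ref{cor:max.im.rule} applies. The identification $M_k(\gamma)=M_k(\gamma+n\delta)$ along a chain must also be justified, which I would do via Corollary~\ref{cor:equiv.mk.Mk} and the $\delta$-invariance of the pairing.
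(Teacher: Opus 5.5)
Your proof is correct. For the first inequality you take a different route from the paper; the second part is the paper's argument.

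\textbf{First inequality.} The paper argues in one step by bilinearity of the pairing:
\begin{itemize}
\item If $M_k(\alpha)=(k\delta,j)$, Corollary~\ref{cor:b.j.M.k} gives $(\Pr(\chain(\alpha)),\beta_j)\neq 0$.
\item Writing $\Pr(\chain(\alpha))=\sum_p c_p\beta_p$, some $p$ with $c_p\neq 0$ satisfies $(\beta_p,\beta_j)\neq 0$.
\item Corollary~\ref{cor:b.j.M.k} applied to $\beta_p$ then forces $M_k(\beta_p)\geq (k\delta,j)=M_k(\alpha)$.
\end{itemize}
This uses only the pairing, so it does not care about the signs of the $c_p$ or the value of $k$. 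None of your preliminary reductions (to increasing chains, to $k=1$) and no lifting to affine roots are needed.

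Your induction on relative height instead uses only the rule $M_k(\gamma+\gamma')\leq\max\{M_k(\gamma),M_k(\gamma')\}$ from Corollary~\ref{cor:max.im.rule}, together with $\Pr(\chain(\alpha))=\eta+\beta_s$. This is the same mechanism the paper uses in the base case of Lemma~\ref{lemma:dec.chains.lower.bound}. Its advantage is that it avoids the identification $\spanset^k_j=\spn\{h_{\beta_1}t^k,\ldots,h_{\beta_j}t^k\}$ behind Corollary~\ref{cor:b.j.M.k}, at the cost of extra bookkeeping.

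That bookkeeping, which you flagged as the main obstacle, is harmless:
\begin{itemize}
\item Any shifts $\gamma=\eta+a\delta$, $\gamma'=\beta_s+b\delta$ with $a,b\geq 1$ give real positive roots $\gamma,\gamma',\gamma+\gamma'$.
\item $M_k$ is constant along chains, and $M_k(\alpha)=M_k(\delta-\alpha)$. For $x\in\fg_{\bar\alpha}$ one has $[ht^k,xt^n]=\bar\alpha(h)\,xt^{n+k}$ with no central term. Nonvanishing therefore depends only on whether $\bar\alpha(h)\neq 0$, which is unchanged under shifting by $\delta$ or replacing $\bar\alpha$ by $-\bar\alpha$.
\end{itemize}

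\textbf{Second inequality.} Your argument is the paper's $M_k(\beta_i)\geq m_k(\beta_i)=(k\delta,i)$, unpacked through the flag, and it is fine. One slip along the way: the case $M_k(\beta_i)=(k\delta,i')$ with $i'<i$ does not need ruling out, since then $M_k(\beta_i)>(k\delta,i)$ already (larger roots in $\imx$ have smaller indices). The case to exclude is $i'>i$. There Corollary~\ref{cor:b.j.M.k} gives $(\beta_i,\beta_j)=0$ for all $j<i'$, in particular $(\beta_i,\beta_i)=0$, which is a contradiction. So the pairing alone does suffice, and your flag argument is an equivalent alternative.
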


\begin{proof}
For the first statement, let $M_k(\alpha) = (k\delta,j)$, so that $(\Pr(\chain(\alpha)),\beta_j) \neq 0$ by
Corollary~\ref{cor:b.j.M.k}. Then there exists $p$ with $c_p \neq 0$ such that $(\beta_p,\beta_j) \neq 0$, which
implies $M_k(\beta_p) \geq M_k(\alpha)$ by Corollary~\ref{cor:b.j.M.k}. The latter yields $M'_k(\alpha) \geq M_k(\alpha)$.

For the second statement, note that $M_k(\beta_i) \geq m_k(\beta_i) = (k\delta,i)$, which implies the claimed
inequality $M'_k(\alpha) \geq (k\delta,i)$.
\end{proof}

\begin{definition}\label{def:s-function}
Consider a decreasing chain $\chain(\alpha) = c_1\chain(\beta_1) + \ldots + c_{|I|}\chain(\beta_{|I|})$ and let
$M'_1(\chain(\alpha)) = (\delta,i)$. We then define $s(\chain(\alpha)) = s_\alpha\in \BZ_{>0}$ via
\begin{equation*}
  s(\chain(\alpha)) = \sum_{j\colon M_1(\beta_j) = (\delta,i)} (-c_j).
\end{equation*}
\end{definition}

The following result is crucial for many inductive arguments in this subsection.

\begin{lemma}\label{lemma:s.splitting}
For any $\chain(\alpha) \in \mathcal{D}$, if $s_\alpha > 1$, there are $\chain(\beta),\chain(\gamma) \in \mathcal{D}$
such that $M'_1(\beta) = M'_1(\gamma) = M'_1(\alpha)$ and $\chain(\beta) + \chain(\gamma) = \chain(\alpha)$.
\end{lemma}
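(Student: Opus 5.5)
Throughout, write $\gamma_0=-\Pr(\chain(\alpha))\in\Delta^+_p$, so $\gamma_0=\sum_j d_j\beta_j$ with $d_j=-c_j\ge 0$. Let $M'_1(\alpha)=(\delta,i)$ and set $J=\{j\colon M_1(\beta_j)=(\delta,i)\}$. Then $s_\alpha=\sum_{j\in J}d_j$, and by the definition of $M'_1$ every $j$ with $d_j\neq 0$ satisfies $M_1(\beta_j)\le(\delta,i)$.

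The statement becomes the following problem about the polarized root system $\Delta^+_p$. We must write $\gamma_0=\gamma_1+\gamma_2$ with $\gamma_1,\gamma_2\in\Delta^+_p$ such that each $\gamma_r$ has a positive coefficient at some $\beta_j$ with $j\in J$. Given this, set $\chain(\beta)=\chain(-\gamma_1)$ and $\chain(\gamma)=\chain(-\gamma_2)$. These are decreasing chains because $-\gamma_r\in\Delta^-_p$. Their sum is $\chain(\alpha)$, and by Definition~\ref{def:M.prime} each has $M'_1=(\delta,i)$: the supports of the $\gamma_r$ lie in the support of $\gamma_0$, so every $M_1(\beta_j)$ involved is $\le(\delta,i)$, and the value $(\delta,i)$ is attained on $J$.

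To find the splitting, write $\gamma_0=\epsilon_{1}+\dots+\epsilon_{N}$ as a multiset of simple roots of $\Pi_p$, with $N=\sum_j d_j$. Among them, $s_\alpha\ge 2$ of the $\epsilon$'s lie in $J$. Choose one such element, call it $\epsilon_{k}$ with $\epsilon_k=\beta_{j_0}$, $j_0\in J$. Apply Lemma~\ref{lemma:seq.simple.root} with this $k$ to get an ordering starting at $\epsilon_k$ in which every partial sum $\eta_r$ lies in $\Delta^+_p$. Consider the last index $r$ at which the simple root $\epsilon_{\sigma(r)}$ is in $J$. Since at least two $J$-elements occur, $r\ge2$. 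Then $\eta_{r-1}$ contains $\beta_{j_0}$, and the complementary sum $\gamma_0-\eta_{r-1}$ contains $\epsilon_{\sigma(r)}\in J$.

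The main obstacle is that $\gamma_0-\eta_{r-1}$ need not be a root. We only know $\gamma_0,\eta_{r-1}\in\Delta^+_p$, so the obvious split can fail. Instead I would use the standard fact, proved by the same inner-product trick as Lemma~\ref{lemma:seq.simple.root}, that a positive root that is not simple is a sum of two positive roots. I would run it on the ordering in reverse: write $\gamma_0=\eta_{N-1}+\epsilon_{\sigma(N)}$. If $\epsilon_{\sigma(N)}\in J$ and $\eta_{N-1}$ still contains a $J$-element, we are done. Otherwise $\epsilon_{\sigma(N)}\notin J$; we then peel it off, replace $\gamma_0$ by $\eta_{N-1}$, and induct. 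The induction is on $N$, with hypothesis ``$\gamma\in\Delta^+_p$ with at least two $J$-entries splits into two positive roots, each with a $J$-entry, plus possibly some peeled non-$J$ simple roots.''

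To reabsorb a peeled simple root $\epsilon\notin J$ into one of the two parts $\gamma_1+\gamma_2=\eta_{N-1}$, we need $\gamma_1+\epsilon$ or $\gamma_2+\epsilon$ to be a root. This holds because $(\gamma_1+\gamma_2,\epsilon)=(\gamma_0-\epsilon,\epsilon)<0$ in the nondegenerate case: $\gamma_0=\eta_{N-1}+\epsilon$ is a root with $\eta_{N-1}\neq\pm\epsilon$, so $(\eta_{N-1},\epsilon)<0$. Hence one of $(\gamma_r,\epsilon)$ is negative, and $\gamma_r+\epsilon\in\Delta^+_p$. This reabsorption step is where I expect the real care to be needed. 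In particular, non-simply-laced strings where $(\eta_{N-1},\epsilon)<0$ must be checked, but the root-string property guarantees it.
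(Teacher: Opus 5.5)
Your reduction is correct. With $J=\{j\colon M_1(\beta_j)=M'_1(\alpha)\}$, the lemma amounts to writing $\gamma_0=-\Pr(\chain(\alpha))$ as $\gamma_1+\gamma_2$ with $\gamma_1,\gamma_2\in\Delta^+_p$, each having positive $J$-weight. Peeling off a simple root $\epsilon$ and inducting on height is a sound skeleton. You do not even need the prescribed first root from Lemma~\ref{lemma:seq.simple.root}: if $\epsilon\in J$, then $\gamma_0-\epsilon$ automatically keeps $J$-weight $s_\alpha-1\geq 1$.

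The gap is the reabsorption step. It is not true that $\eta+\epsilon\in\Delta$ with $\eta\neq\pm\epsilon$ forces $(\eta,\epsilon)<0$. Root strings do not give this: they only yield $(a,b)<0\Rightarrow a+b\in\Delta$, not the converse. In $B_2$ one has $(e_1,e_2)=0$ although $e_1+e_2\in\Delta$, with $e_2$ simple. In $G_2$, for the short simple root $a$ and the long simple root $b$, one even has $(2a+b,a)>0$ although $3a+b\in\Delta$.

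This failure really occurs inside your recursion. Take $B_3$ with $\alpha_1=e_1-e_2$, $\alpha_2=e_2-e_3$, $\alpha_3=e_3$, $J=\{\alpha_1,\alpha_2\}$ and $\gamma_0=e_1+e_3$.
\begin{itemize}
\item The ordering $\alpha_1,\alpha_2,\alpha_3,\alpha_3$ is an admissible output of Lemma~\ref{lemma:seq.simple.root}.
\item You then peel $\epsilon=e_3$, and the only valid split of $\eta=e_1$ is $(e_1-e_2)+e_2$.
\item Both $(e_1-e_2,e_3)$ and $(e_2,e_3)$ vanish.
\end{itemize}
So no sign argument can detect that $e_2+e_3$ is a root and produce $\gamma_0=(e_1-e_2)+(e_2+e_3)$. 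Your argument is fine only in simply-laced types, where $a+b\in\Delta$ iff $(a,b)=-1$.

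The missing idea is the Jacobi identity, which is exactly what the paper uses at this point. Since $\gamma_1,\gamma_2,\gamma_1+\gamma_2\in\Delta$, we have $[\fg_{\gamma_1},\fg_{\gamma_2}]=\fg_{\gamma_1+\gamma_2}$. Since $\gamma_1+\gamma_2+\epsilon\in\Delta$, the bracket $[[\fg_{\gamma_1},\fg_{\gamma_2}],\fg_{\epsilon}]$ is nonzero. By Jacobi, $[\fg_{\gamma_1},\fg_{\epsilon}]\neq 0$ or $[\fg_{\gamma_2},\fg_{\epsilon}]\neq 0$, so $\gamma_1+\epsilon$ or $\gamma_2+\epsilon$ lies in $\Delta^+_p$. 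Equivalently, apply Lemma~\ref{lemma:triple.sum.pairs} to $\gamma_1,\gamma_2,\epsilon$.

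With this replacement your proof is correct and is essentially the paper's. There, too, one inducts on relative height and takes a splitting $\chain(\alpha)=\chain(\beta)+\chain(\gamma)$ into decreasing chains. If, say, $M'_1(\gamma)<M'_1(\alpha)$, one splits $\chain(\beta)$ by induction and absorbs $\chain(\gamma)$ into one of the two pieces by the same Jacobi argument. You simply take the peeled summand to be an irreducible chain.
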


\begin{proof}
This will be shown by induction on $\hgt(\chain(\alpha))$. Let $\chain(\alpha)$ be the chain with the minimal relative height
that satisfies the requirement (note that $\chain(\alpha)$ is not irreducible due to $s_\alpha \ne 1$). As $\chain(\alpha)$ is
not an \irrchain, it can be expressed as a sum $\chain(\alpha)=\chain(\beta)+\chain(\gamma)$ of two decreasing chains. Assuming
the contradiction with the lemma, we then have $M'_1(\beta)=M'_1(\alpha)>M'_1(\gamma)$ or $M'_1(\gamma)=M'_1(\alpha)>M'_1(\beta)$,
which implies $s_\beta = s_\alpha > 1$ or $s_\gamma = s_\alpha > 1$, thus contradicting to the choice of $\alpha$.
This completes the base case.

As for the induction step, assume that the claim holds for all chains of the relative height $<\hgt(\chain(\alpha))$.
Since $\chain(\alpha)$ is not an \irrchain, it can be expressed as a sum $\chain(\alpha)=\chain(\beta)+\chain(\gamma)$ of two
decreasing chains. Assuming that $\chain(\beta),\chain(\gamma)$ do not meet the requirements of the claim, we then again have
$M'_1(\beta) = M'_1(\alpha) > M'_1(\gamma)$ or $M'_1(\gamma) = M'_1(\alpha) > M'_1(\beta)$ by~\eqref{eq:Mprime-additive}.
Without loss of generality, we shall assume the former case. By the induction hypothesis, we can decompose
$\chain(\beta) = \chain(\eta) + \chain(\mu)$ into the sum of two decreasing chains satisfying
$M'_1(\eta)=M'_1(\mu) = M'_1(\beta)=M'_1(\alpha)$. Then for any $\hat{\eta},\hat{\mu},\hat{\gamma}$
in $\chain(\eta),\chain(\mu),\chain(\gamma)$, respectively, we have
$[[\sb[\SL(\hat{\eta})],\sb[\SL(\hat{\mu})]],\sb[\SL(\hat{\gamma})]] \neq 0$. By the Jacobi identity this implies that
$[\sb[\SL(\hat{\eta})],\sb[\SL(\hat{\gamma})]] \neq 0$ or $[\sb[\SL(\hat{\mu})],\sb[\SL(\hat{\gamma})]] \neq 0$, and therefore
either $\hat{\eta} + \hat{\gamma}$ or $\hat{\mu} + \hat{\gamma}$ are in $\wDelta^{+}$. Whichever of these sums is a root it
must be real since it is the sum of two elements determining decreasing chains (cf.\ Corollary~\ref{cor:monotonicity}), and
furthermore determines a decreasing chain by Lemma~\ref{lemma:chain.sum.monotone}. Hence, we can express $\chain(\alpha)$ as
either $(\chain(\gamma) + \chain(\eta)) + \chain(\mu)$ or $(\chain(\gamma) + \chain(\mu)) + \chain(\eta)$, respectively, with
both terms satisfying the conditions. This completes the inductive hypothesis.
\end{proof}

We can now deduce a useful lower bound for decreasing chains.

\begin{lemma}\label{lemma:dec.chains.lower.bound}
For any $\chain(\alpha) \in \mathcal{D}$, we have $M_1'(\alpha)<\chain(\alpha)$.
\end{lemma}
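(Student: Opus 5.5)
Since $\chain(\alpha)$ is decreasing, it suffices to show that every element of the chain exceeds $M'_1(\alpha)$; by monotonicity (Proposition~\ref{prop:monotonicity}) it is enough to bound its infimum. I would argue by induction on the relative height $\hgt(\chain(\alpha))$, splitting $\chain(\alpha)$ into two decreasing chains whenever possible and using Lemma~\ref{lemma:parity.same.sum}(b) as the engine.

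\emph{Base case: $\chain(\alpha)$ is a decreasing irreducible chain.} Then $\Pr(\chain(\alpha))=-\beta_i$ for some $\beta_i\in\Pi_p$, so $M'_1(\alpha)=M_1(\beta_i)$. I would like to compare with $M_1(\alpha)$ itself: by Proposition~\ref{prop:monotonicity} we have $\alpha>M_k(\alpha)$ for all $k$ and all elements of the chain. So I need $M_1(\alpha)=M_1(\beta_i)$. Since $\alpha$ and $\delta-\alpha$ lie in the chains of $\mp\beta_i$ and $[\sb[\SL(\beta)],\cdot]$ for imaginary $\beta$ acts through $(\cdot,\cdot)$ up to sign, the set of imaginary roots pairing nontrivially with $\alpha$ and with $\beta_i$ (viewed via $h t^k$ acting by the root value) coincides. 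Hence $M_1(\alpha)=M_1(\beta_i)$, with Corollary~\ref{cor:equiv.mk.Mk} handling the index $k$. This gives $M'_1(\alpha)=M_1(\alpha)<\chain(\alpha)$. More generally, the same observation shows $M_k(\gamma)$ depends only on $\pm\Pr(\chain(\gamma))$, which I will use freely.

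\emph{Inductive step.} If $\chain(\alpha)$ is reducible, write $\chain(\alpha)=\chain(\beta)+\chain(\gamma)$ with both decreasing. By~\eqref{eq:Mprime-additive}, $M'_1(\alpha)=\max\{M'_1(\beta),M'_1(\gamma)\}$; by induction, $M'_1(\beta)<\chain(\beta)$ and $M'_1(\gamma)<\chain(\gamma)$. Lemma~\ref{lemma:parity.same.sum}(b) gives $\chain(\alpha)>\max\{M_1(\beta),M_1(\gamma)\}$, but with $M$ rather than $M'$, and Lemma~\ref{lemma:M.prime.bound} goes the wrong way ($M'\geq M$). So I must redo the proof of Lemma~\ref{lemma:parity.same.sum}(b) with $M'$. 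That proof presumably proceeds via Convexity (Theorem~\ref{thm:convexity}(1),(3)) applied to $\hat\beta+\hat\gamma$. Concretely, take $\hat\alpha=\hat\beta+\hat\gamma$ in the chain, with $\hat\beta,\hat\gamma$ real. If $\hat\beta<\hat\gamma$, convexity gives $\hat\alpha>\hat\beta>M'_1(\beta)$, and similarly in the other case. Thus $\hat\alpha>\min$ of the two $M'$ values, and I need the max.

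\emph{Main obstacle.} The difficulty is upgrading from $\min$ to $\max$, i.e.\ handling $M'_1(\beta)>M'_1(\gamma)$ but $\hat\gamma<\hat\beta$. I would first use freedom in choosing the decomposition: either choose the split so that the summand with the larger $M'$ is the smaller element, shifting $\delta$'s between the summands ($\hat\alpha=(\hat\beta+j\delta)+(\hat\gamma-j\delta)$), or else appeal to Corollary~\ref{cor:max.im.rule}, which shows that the larger $M$ propagates to the sum. Alternatively, when $s_\alpha>1$, I would use Lemma~\ref{lemma:s.splitting} to split into two chains with equal $M'_1=M'_1(\alpha)$, so the min equals the max and the convexity argument closes. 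When $s_\alpha=1$, exactly one irreducible constituent realizes $M'_1(\alpha)=(\delta,i)$ with coefficient $-1$. Then Corollary~\ref{cor:b.j.M.k} forces $(\Pr\chain(\alpha),\beta_i)\neq0$, so $M_1(\alpha)\ge(\delta,i)$; combined with $M_1\le M'_1$ this gives $M_1(\alpha)=M'_1(\alpha)$, and Proposition~\ref{prop:monotonicity} finishes. The step I expect to be delicate is justifying this nonvanishing pairing in the $s_\alpha=1$ case, which likely requires the orthogonality $(\beta_i,\beta_j)=0$ for relevant $j$ coming from Corollary~\ref{cor:b.j.M.k}.
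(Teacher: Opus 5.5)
Your final plan is correct and is essentially the paper's proof. For $s_\alpha>1$ you split via Lemma~\ref{lemma:s.splitting} into two decreasing chains with the same $M'_1$ and conclude by induction and Theorem~\ref{thm:convexity}; for $s_\alpha=1$ you show $M_1(\alpha)=M'_1(\alpha)$ and apply Proposition~\ref{prop:monotonicity}. The only difference is how you justify $M_1(\alpha)=M'_1(\alpha)$ when $s_\alpha=1$: you use the pairing criterion of Corollary~\ref{cor:b.j.M.k} (only one constituent, with coefficient $-1$, pairs nontrivially with $\beta_i$), whereas the paper builds $\alpha$ up via Lemma~\ref{lemma:seq.simple.root} and propagates $M_1$ with Corollary~\ref{cor:max.im.rule}. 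Both work, and your exploratory detours (shifting $\delta$'s, the separate irreducible base case) are unnecessary.
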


\begin{proof}
This will be shown by an induction on $s_\alpha$.

For the base case $s_\alpha = 1$, we have $M_1(\alpha) = M_1'(\alpha)$ as follows from
Corollary~\ref{cor:max.im.rule} and Lemma~\ref{lemma:seq.simple.root}. Hence, the result is implied by
Proposition~\ref{prop:monotonicity}.

If $s_\alpha > 1$, then there are $\chain(\beta),\chain(\gamma) \in \mathcal{D}$ such that
$\chain(\alpha) = \chain(\beta) + \chain(\gamma)$, $M'_1(\beta) = M'_1(\gamma) = M'_1(\alpha)$ by Lemma~\ref{lemma:s.splitting},
so that $s_\beta,s_\gamma<s_\alpha$. For any $\hat{\alpha} \in \chain(\alpha)$ with $|\hat{\alpha}| > |\delta|$ there
are $\hat{\beta} \in \chain(\beta), \hat{\gamma}\in\chain(\gamma)$ such that $\hat{\alpha} = \hat{\beta} + \hat{\gamma}$.
By the induction hypothesis, we have $M'_1(\alpha)=M'_1(\beta)<\hat{\beta}$, $M'_1(\alpha)=M'_1(\gamma)<\hat{\gamma}$, and
so $M'_1(\alpha) < \hat{\alpha}$ by Theorem~\ref{thm:convexity}. The result follows as $\chain(\alpha) \in \mathcal{D}$.
\end{proof}

The following useful observation is due to the first case of the above proof.

\begin{corollary}\label{cor:M.M.prime.diff}
For any $\chain(\alpha) \in \mathcal{D}$, if $M'_1(\chain(\alpha)) \neq M_1(\chain(\alpha))$, then $s_\alpha > 1$.
\end{corollary}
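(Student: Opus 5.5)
We argue by contraposition, following the first case in the proof of Lemma~\ref{lemma:dec.chains.lower.bound}: assuming $s_\alpha=1$, we will show that $M_1(\alpha)=M'_1(\alpha)$. Write $\chain(\alpha)=c_1\chain(\beta_1)+\ldots+c_{|I|}\chain(\beta_{|I|})$ with all $c_j\le 0$, and set $M'_1(\alpha)=(\delta,i)$. The condition $s_\alpha=1$ means there is exactly one index $j_0$ with $c_{j_0}=-1$ and $M_1(\beta_{j_0})=(\delta,i)$. Every other $j$ with $c_j\neq 0$ satisfies $M_1(\beta_j)<(\delta,i)$.

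The first step is to assemble $-\Pr(\chain(\alpha))\in\Delta^+_p$ from the simple roots $\beta_j$ (with multiplicity $-c_j$). We use Lemma~\ref{lemma:seq.simple.root} with the first summand $\beta_{j_0}$, so every partial sum $\eta_r$ is a positive root of the polarization. Passing to the corresponding decreasing chains, each partial sum corresponds to an element of $\mathcal{D}$ of the form $\chain(\delta-\eta_r)$. Note that $M_1$ is unchanged under $\alpha\mapsto\delta-\alpha$ up to chain identification, since $[\sb[\cdot],h t]$ only sees $(h,\cdot)$. The chain for $\eta_1$ is $\chain(\delta-\beta_{j_0})$, whose $M_1$ equals $M_1(\beta_{j_0})=(\delta,i)$.

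Next we induct on $r$, using Corollary~\ref{cor:max.im.rule}. At each step we add a summand $\beta_j$ with $j\neq j_0$, whose $M_1$ is strictly smaller than $(\delta,i)$, to a root whose $M_1$ is $(\delta,i)$. The first clause of Corollary~\ref{cor:max.im.rule} then gives that the sum again has $M_1=(\delta,i)$. Here we apply the corollary to suitable real representatives in the chains: the sum of representatives lies in $\wDelta^{+,\re}$, and Corollary~\ref{cor:equiv.mk.Mk} makes the value independent of $k$. After $N$ steps we obtain $M_1(\alpha)=(\delta,i)=M'_1(\alpha)$, contradicting the hypothesis, so $s_\alpha>1$.

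The step requiring care is the sign and representative bookkeeping. Corollary~\ref{cor:max.im.rule} is stated for positive real affine roots $\alpha,\beta$. We must therefore choose representatives $\hat\eta_r\in\chain(\delta-\eta_r)$ and $\hat\beta\in\chain(\delta-\beta_j)$, shifting by multiples of $\delta$ as needed, so that $\hat\eta_r+\hat\beta$ is the real root representing $\chain(\delta-\eta_{r+1})$. We also need $M_1$ to agree on $\chain(\beta_j)$ and $\chain(\delta-\beta_j)$. Both facts should follow from Definition~\ref{def:Mk} together with Corollary~\ref{cor:b.j.M.k}, which characterize $M_1$ purely through the pairings $(\cdot,\beta_j)$, and these pairings are insensitive to sign and to $\delta$-shifts.
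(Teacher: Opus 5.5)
Your proof is correct and follows the paper's route: the paper obtains this corollary from the base case of Lemma~\ref{lemma:dec.chains.lower.bound}, where $s_\alpha=1$ gives $M_1(\alpha)=M'_1(\alpha)$ by ordering the summands via Lemma~\ref{lemma:seq.simple.root}, starting from the unique $\beta_{j_0}$, and applying the first clause of Corollary~\ref{cor:max.im.rule} at each step. Your extra bookkeeping is what that one-line argument needs to be rigorous: choose representatives with positive $\delta$-coefficient, and note that $M_1$ depends only on the finite part up to sign, so it is constant on chains and symmetric under $\alpha\mapsto\delta-\alpha$.
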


We also make the following general observation.

\begin{lemma}\label{lemma:interlock.m.M}
If $\chain(\alpha),\chain(\beta), \chain(\alpha+\beta) \in \chain(\wDelta^{+,\re})$, then:
\begin{gather*}
  m_1(\chain(\alpha)), m_1(\chain(\beta)), m_1(\chain(\alpha+\beta)) \leq
  M'_1(\chain(\alpha)), M'_1(\chain(\beta)), M'_1(\chain(\alpha+\beta)).
\end{gather*}
\end{lemma}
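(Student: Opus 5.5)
The plan is to turn both sides of the inequality into statements about the supports of the three projections $\pr(\cdot)$ in the basis $\Pi_p=\{\beta_1,\ldots,\beta_{|I|}\}$. Here $(\delta,1)>(\delta,2)>\cdots>(\delta,|I|)$, since $\SL_1(\delta)>\SL_2(\delta)>\cdots$.

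For a chain $X\in\chain(\wDelta^{+,\re})$, write $\pr(X)=\sum_j c_j(X)\beta_j$ and let $\mathrm{supp}(X)=\{j\colon c_j(X)\neq 0\}$. The coefficients are all of one sign by Corollary~\ref{cor:lin.comb.chains}.

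\emph{Translating $m_1$.} By Corollary~\ref{cor:mk.roots}, $m_1(X)=(\delta,\max\mathrm{supp}(X))$.

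\emph{Translating $M'_1$.} First extend Definition~\ref{def:M.prime} to increasing chains via $M'_1(X)=M'_1(\delta-X)$; this does not change the support. Then $M'_1(X)=\max\{M_1(\beta_j)\colon j\in\mathrm{supp}(X)\}$. By Corollary~\ref{cor:b.j.M.k}, $M_1(\beta_j)=(\delta,i)$ where $i$ is the smallest index with $(\beta_j,\beta_i)\neq 0$. Since $(\beta_j,\beta_j)\neq 0$, this gives $i\leq j$, and $i\leq l$ for every $l$ adjacent to $j$ in the Dynkin diagram of $\Pi_p$.

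\emph{Reduction.} Take $Y,X$ among $\chain(\alpha),\chain(\beta),\chain(\alpha+\beta)$ and set $j_0=\max\mathrm{supp}(Y)$. The claim $m_1(Y)\leq M'_1(X)$ then reduces to the following: some $j\in\mathrm{supp}(X)$ satisfies $j\leq j_0$, or is adjacent in the Dynkin diagram of $\Pi_p$ to some $l\leq j_0$. Indeed, in either case $M'_1(X)\geq M_1(\beta_j)\geq(\delta,j_0)=m_1(Y)$.

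\emph{The one bad case.} The reduction fails only if every index of $\mathrm{supp}(X)$ exceeds $j_0$ and no index of $\mathrm{supp}(X)$ is adjacent to any index $\leq j_0$. In particular $\mathrm{supp}(X)$ and $\mathrm{supp}(Y)\subseteq\{1,\ldots,j_0\}$ are disjoint and have no edges between them. This is impossible for $X=Y$, so assume $X\neq Y$.

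\emph{Using the root condition.} Since $\pr$ is just the projection to $Q$ (up to sign conventions), the three vectors satisfy $\pr(\chain(\alpha))+\pr(\chain(\beta))=\pm\pr(\chain(\alpha+\beta))$. So the third chain $Z$ has coefficient vector $\pm c(X)\pm c(Y)$. Because $\mathrm{supp}(X)\cap\mathrm{supp}(Y)=\emptyset$, there is no cancellation, and $\mathrm{supp}(Z)=\mathrm{supp}(X)\sqcup\mathrm{supp}(Y)$.

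\emph{Connectivity.} $\pr(Z)$ is a root, so its support in the simple roots $\Pi_p$ is a connected subgraph of the Dynkin diagram; this is a standard fact for any base of a finite root system. Hence some vertex of $\mathrm{supp}(X)$ is adjacent to some vertex of $\mathrm{supp}(Y)$, and that vertex has index $\leq j_0$. This contradicts the bad case and finishes the proof.

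The step needing most care is the sign bookkeeping in the relation $a+b=\pm c$ when the chains are of mixed type (some increasing, some decreasing). There one must check that disjointness of the two supports really prevents cancellation, and that the convention $M'_1(X)=M'_1(\delta-X)$ is used consistently. All remaining steps are direct applications of Corollaries~\ref{cor:mk.roots} and~\ref{cor:b.j.M.k}.
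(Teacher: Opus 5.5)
Your argument is correct, and it takes a different route from the paper's.

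\textbf{The paper's route.} The paper splits the cross-inequality $m_1(\chain(\beta))\leq M'_1(\chain(\alpha))$ according to whether $(\alpha',\beta')\neq 0$.
\begin{itemize}
\item If $(\alpha',\beta')\neq 0$, it uses the nonvanishing of $[h_{\beta'}t,\fg_{\alpha'}]$ together with Corollary~\ref{cor:im.span.vanish} to get $M_1(\alpha')\geq m_1(\beta')$.
\item If $(\alpha',\beta')=0$, it uses the reflection $r_{\alpha'}$ to see that $\alpha'-\beta'$ is also a root. It then gets a contradiction from mixed-sign coefficients via Corollary~\ref{cor:lin.comb.chains}.
\end{itemize}
The inequalities involving $\chain(\alpha+\beta)$ are obtained by rerunning this argument on the pairs $\chain(\alpha+\beta),\delta-\chain(\beta)$ and $\chain(\alpha+\beta),\delta-\chain(\alpha)$.

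\textbf{Your route.} You make $M'_1$ purely combinatorial. By Corollary~\ref{cor:b.j.M.k}, $M_1(\beta_j)=(\delta,i)$ with $i$ the smallest index in the closed neighbourhood of $j$ in the Dynkin diagram of $\Pi_p$. This reduces everything to one bad case: disjoint, mutually non-adjacent supports. You eliminate that case with the connectedness of supports of roots (the classical fact the paper records later as Claim~\ref{claim:support-connectedness}), applied to the third chain.

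\textbf{Comparison.} Your version treats all nine pairs at once. It needs no reflection trick and no case split on the pairing or on monotonicity. It also shows slightly more: $m_1(Y)\leq M'_1(X)$ holds whenever the supports of $Y$ and $X$ meet or are adjacent, with no root hypothesis at all. The paper's version stays closer to the Lie-theoretic definitions. It relies on the sign rigidity of Corollary~\ref{cor:lin.comb.chains} rather than on support connectedness.

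\textbf{On the step you flagged.} The sign bookkeeping is a non-issue. $\pr$ is exactly the projection $Q\times\BZ\to Q$ onto the finite part, so $\pr(\chain(\alpha))+\pr(\chain(\beta))=\pr(\chain(\alpha+\beta))$ on the nose. The convention $M'_1(X)=M'_1(\delta-X)$ is already part of Definition~\ref{def:M.prime} and does not change supports.
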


\begin{proof}
We will first show that $m_1(\chain(\alpha)), m_1(\chain(\beta)) \leq  M'_1(\chain(\alpha)), M'_1(\chain(\beta))$.
Note that $m_1(\chain(\gamma))\leq M_1(\chain(\gamma))\leq M'_1(\chain(\gamma))$ for $\gamma\in \{\alpha,\beta\}$.
Let $\alpha',\beta'$ be the shortest elements in the chains $\chain(\alpha),\chain(\beta)$. We note that
$m_1(\alpha')=m_1(\chain(\alpha))$, $M'_1(\alpha')=M'_1(\chain(\alpha))$, and alike for $\beta'$.
We consider two~cases:
\begin{enumerate}[leftmargin=0.7cm]

\item[1)]
If $(\alpha',\beta') \neq 0$, then $[h_{\beta'},\fg_{\alpha'}] \neq 0$, hence $[h_{\beta'}t,\fg_{\alpha'}] \neq 0$.
If $m_1(\beta')=(\delta,i)$, then
$h_{\beta'}t \in \spanset_{i}^1$, and so $M'_1(\alpha')\geq M_1(\alpha') \geq m_1(\beta')$ by
Lemma~\ref{lemma:M.prime.bound} and Corollary~\ref{cor:im.span.vanish}. Switching the roles of $\alpha',\beta'$, we
also get $M'_1(\beta') \geq m_1(\alpha')$.

\item[2)]
If $(\alpha',\beta') = 0$, then let $r_{\alpha'}$ denote the reflection with respect to the root $\alpha'$. Then,
$\alpha'-\beta'=-r_{\alpha'}(\alpha'+\beta')\in \wDelta^{\re}$, so that $\chain(\alpha) - \chain(\beta) \in \chain(\wDelta^{+,\re})$.
If $M'_1(\beta') = (\delta,i)$, then in the decomposition $\chain(\beta) = c_1\chain(\beta_1) + \ldots + c_{|I|}\chain(\beta_{|I|})$
we have $c_j =0$ for $j < i$ by Lemma~\ref{lemma:M.prime.bound}. Similarly, if $m_1(\alpha') = (\delta,k)$ and
$\chain(\alpha) = \bar{c}_1\chain(\beta_1) + \ldots + \bar{c}_{|I|}\chain(\beta_{|I|})$, then $\bar{c}_\jmath = 0$ for $\jmath > k$
by Lemma~\ref{cor:mk.roots}. Assuming the contradiction, let $M'_1(\beta') < m_1(\alpha')$ so that $i>k$. If $\chain(\alpha)$
and $\chain(\beta)$ have the same monotonicity (resp.\ different monotonicity), then $\chain(\alpha) - \chain(\beta)$
(resp.\ $\chain(\alpha) + \chain(\beta)$) would clearly have both positive and negative coefficients in its decomposition
into increasing \irrchains. The latter contradicts to Corollary~\ref{cor:lin.comb.chains}. This implies that
$M'_1(\beta') \geq m_1(\alpha')$, and switching the roles of $\alpha',\beta'$, we also get $M'_1(\alpha') \geq m_1(\beta')$.

\end{enumerate}

For the proof of the statement including $m_1(\chain(\alpha + \beta)),M'_1(\chain(\alpha + \beta))$, we can
apply the previous logic to $\chain(\alpha + \beta),\chain(-\beta)$ to get
$m_1(\chain(\beta)),m_1(\chain(\alpha + \beta))\leq M'_1(\chain(\beta)),M'_1(\chain(\alpha+\beta))$,
and similar argument applied to $\chain(\alpha + \beta),\chain(-\alpha)$ yields
$m_1(\chain(\alpha)),m_1(\chain(\alpha + \beta)) \leq M'_1(\chain(\alpha)),M'_1(\chain(\alpha+\beta))$.
\end{proof}

\begin{corollary}\label{cor:inc.dec.comparison}
For any $\alpha,\beta,\alpha + \beta \in \wDelta^{+,\re}$, if $\chain(\alpha) \in \mathcal{I}$ and
$\chain(\beta) \in \mathcal{D}$, then $\alpha< \alpha +\beta < \beta$.
\end{corollary}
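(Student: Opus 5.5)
The plan is to reduce the claim to the single inequality $\alpha<\beta$ and then apply Theorem~\ref{thm:convexity}(1). Since $\alpha,\beta,\alpha+\beta\in\wDelta^{+,\re}$, once $\alpha<\beta$ is known, part (1) of Convexity gives $\alpha<\alpha+\beta<\beta$ directly. To get $\alpha<\beta$, I will compare both roots with imaginary roots of degree $\delta$, namely $m_1(\alpha)$ and $M'_1(\beta)$.

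First, for the increasing side, $\chain(\alpha)\in\mathcal{I}$. Proposition~\ref{prop:monotonicity}(3) then gives $\alpha<m_k(\alpha)$ for all $k$, and in particular
\[
\alpha<m_1(\alpha).
\]
This holds for every element of the chain, not only its shortest element. Indeed, $m_1$ is constant along chains: it depends only on $h_\alpha$, and $h_{\alpha+k\delta}=h_\alpha$. Corollary~\ref{cor:mk.roots} also shows this.

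Second, for the decreasing side, $\chain(\beta)\in\mathcal{D}$. Lemma~\ref{lemma:dec.chains.lower.bound} gives $M'_1(\beta)<\chain(\beta)$, so
\[
M'_1(\beta)<\beta.
\]
Third, I need to link the two imaginary roots. The chains $\chain(\alpha)$, $\chain(\beta)$ and $\chain(\alpha+\beta)$ all lie in $\chain(\wDelta^{+,\re})$. Lemma~\ref{lemma:interlock.m.M} therefore yields
\[
m_1(\chain(\alpha))\leq M'_1(\chain(\beta)).
\]
This comparison takes place inside $\imx$ at the same level $k=1$, so it is an inequality in the lexicographic order~\eqref{eq:order.ext}. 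Chaining the three steps gives
\[
\alpha<m_1(\alpha)\leq M'_1(\beta)<\beta ,
\]
and Convexity then finishes the proof.

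The only delicate point is to make sure that all three inequalities refer to the same total order~\eqref{eq:order.ext} on $\wDelta^{+,\ext}$. They must also be applied to $\alpha,\beta$ themselves rather than to the shortest representatives $\alpha',\beta'$ of their chains. Both issues are handled by the chain-invariance of $m_1$ and $M'_1$, which follows from Corollary~\ref{cor:equiv.mk.Mk} and the definition of $M'_k$. They are also handled by the fact that Proposition~\ref{prop:monotonicity} and Lemma~\ref{lemma:dec.chains.lower.bound} are stated for every element of the chain. All the substantive work has already been done in Lemma~\ref{lemma:interlock.m.M} and Lemma~\ref{lemma:dec.chains.lower.bound}.
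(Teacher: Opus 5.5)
Your proposal is correct and matches the paper's proof essentially verbatim. Like the paper, you use Proposition~\ref{prop:monotonicity}, Lemma~\ref{lemma:interlock.m.M} and Lemma~\ref{lemma:dec.chains.lower.bound} to obtain the chain $\alpha<m_1(\alpha)\leq M'_1(\beta)<\beta$, and then conclude with Theorem~\ref{thm:convexity}(1).
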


\begin{proof}
If $\alpha,\beta,\alpha+\beta \in \wDelta^{+,\re}$, then $\alpha < m_1(\alpha)$ by Proposition~\ref{prop:monotonicity},
and $M'_1(\beta) < \beta$ by Lemma~\ref{lemma:dec.chains.lower.bound}. Thus by Lemma~\ref{lemma:interlock.m.M}, we have
$\alpha < m_1(\alpha) \leq M'_1(\beta) < \beta$. Applying Theorem~\ref{thm:convexity} we conclude that
$\alpha < \alpha + \beta < \beta$.
\end{proof}

This corollary implies the following useful property of chains.

\begin{corollary}\label{cor:factor.sets.monotonicity}
For any $\alpha,\beta,\alpha + \beta \in \wDelta^{+,\re}$ with $\alpha < \beta$:
\begin{enumerate}[leftmargin=1cm]

\item
if $\chain(\alpha + \beta) \in \mathcal{I}$, then $\chain(\alpha) \in \mathcal{I}$;

\item
if $\chain(\alpha + \beta) \in \mathcal{D}$, then $\chain(\beta) \in \mathcal{D}$.

\end{enumerate}
\end{corollary}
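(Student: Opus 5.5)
We argue by contradiction in each part, using Corollary~\ref{cor:inc.dec.comparison} together with the convexity of Theorem~\ref{thm:convexity}(1). The key point is that Corollary~\ref{cor:inc.dec.comparison} forces the increasing summand to be the smaller one whenever the two summands have different monotonicity.

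For part (1), suppose $\chain(\alpha+\beta)\in\mathcal{I}$ but $\chain(\alpha)\in\mathcal{D}$. We split according to the monotonicity of $\chain(\beta)$.

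\begin{itemize}
\item If $\chain(\beta)\in\mathcal{D}$, then both summands lie in $\mathcal{D}$. By Lemma~\ref{lemma:chain.sum.monotone}, $\chain(\alpha+\beta)$ is then decreasing, a contradiction.
\item If $\chain(\beta)\in\mathcal{I}$, we apply Corollary~\ref{cor:inc.dec.comparison} with the increasing root $\beta$ and the decreasing root $\alpha$. It gives $\beta<\alpha+\beta<\alpha$, and in particular $\beta<\alpha$. This contradicts the hypothesis $\alpha<\beta$.
\end{itemize}

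Hence $\chain(\alpha)\in\mathcal{I}$.

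Part (2) is symmetric. Suppose $\chain(\alpha+\beta)\in\mathcal{D}$ but $\chain(\beta)\in\mathcal{I}$.

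\begin{itemize}
\item If $\chain(\alpha)\in\mathcal{I}$, then both summands are increasing, so $\chain(\alpha+\beta)$ is increasing by Lemma~\ref{lemma:chain.sum.monotone}, a contradiction.
\item If $\chain(\alpha)\in\mathcal{D}$, Corollary~\ref{cor:inc.dec.comparison} applied to the increasing root $\beta$ and the decreasing root $\alpha$ yields $\beta<\alpha$, again contradicting $\alpha<\beta$.
\end{itemize}

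Hence $\chain(\beta)\in\mathcal{D}$.

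The only point needing care is that the hypotheses of Corollary~\ref{cor:inc.dec.comparison} hold: $\alpha$, $\beta$ and $\alpha+\beta$ must all lie in $\wDelta^{+,\re}$. This is exactly our assumption, and the order in which the two roots enter that corollary is irrelevant.

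There is therefore no real obstacle. The argument is a two-case check in each part, and the substance is already contained in Corollary~\ref{cor:inc.dec.comparison} and Lemma~\ref{lemma:chain.sum.monotone}.
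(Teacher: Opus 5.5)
Your proof is correct and is essentially the paper's argument. In each part you use Lemma~\ref{lemma:chain.sum.monotone} to force $\beta$ into the opposite monotonicity class from $\alpha$. You then apply Corollary~\ref{cor:inc.dec.comparison}, with $\beta$ as the increasing root and $\alpha$ as the decreasing one, to get $\beta<\alpha$, which contradicts $\alpha<\beta$.
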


\begin{proof}
If $\chain(\alpha + \beta) \in \mathcal{I}$ but $\chain(\alpha) \in \mathcal{D}$, then
$\chain(\beta) \in \mathcal{I}$ by Lemma~\ref{lemma:chain.sum.monotone}. As $\beta < \alpha$
by Corollary~\ref{cor:inc.dec.comparison}, we obtain a contradiction. Thus $\chain(\alpha)$ must be increasing.

If $\chain(\alpha + \beta) \in \mathcal{D}$ but $\chain(\beta) \in \mathcal{I}$, then $\chain(\alpha) \in \mathcal{D}$
by Lemma~\ref{lemma:chain.sum.monotone}. As $\beta < \alpha$ by Corollary~\ref{cor:inc.dec.comparison}, we obtain
a contradiction. Hence $\chain(\beta)$ must be decreasing.
\end{proof}

The above result implies the following important property of affine SL-words.

\begin{corollary}\label{cor:lyndon.prefix.suffix.chains}
For any $\alpha \in \wDelta^{+,\re}$ and $\hat\alpha \in \chain(\alpha)$:
\begin{itemize}[leftmargin=1cm]

\item[(1)]
if $\chain(\alpha) \in \mathcal{I}$ and $\ell \in \LL$ is a nonempty prefix of $\SL(\hat\alpha)$,
then $\chain(\deg(\ell)) \in \mathcal{I}$;

\item[(2)]
if $\chain(\alpha) \in \mathcal{D}$ and $\ell \in \LL$ is a nonempty suffix of $\SL(\hat\alpha)$,
then $\chain(\deg(\ell)) \in \mathcal{D}$.

\end{itemize}
For any $\alpha \in \imx$:
\begin{itemize}[leftmargin=1cm]

\item[(3)]
if $\ell \in \LL$ is a nonempty proper prefix of $\SL(\alpha)$, then $\chain(\deg(\ell)) \in \mathcal{I}$;

\item[(4)]
if $\ell \in \LL$ is a nonempty proper suffix of $\SL(\alpha)$, then $\chain(\deg(\ell)) \in \mathcal{D}$.

\end{itemize}
\end{corollary}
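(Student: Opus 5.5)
Parts (1) and (2) follow from Corollary~\ref{cor:factor.sets.monotonicity} by induction on $|\ell|$, and parts (3) and (4) come from one factorization step followed by (1) and (2).

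For (1), fix $\chain(\alpha)\in\mathcal{I}$, $\hat\alpha\in\chain(\alpha)$, and a Lyndon prefix $\ell$ of $\SL(\hat\alpha)$. Write $\SL(\hat\alpha)=uv$ for its standard factorization; by Proposition~\ref{prop:stand.factor}, $u=\SL(\hat\alpha)^{ls}$ is the longest proper Lyndon prefix. Both factors of an SL-word are again SL-words, as used throughout via Proposition~\ref{prop:general.bracketing}. Hence $u=\SL_*(\deg u)$ and $v=\SL_*(\deg v)$ with $u<v$.

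There are two cases for the degrees.
\begin{itemize}[leftmargin=0.7cm]
\item If both $\deg u$ and $\deg v$ are real, then $\deg u<\deg v$ and $\deg u+\deg v=\hat\alpha$. Corollary~\ref{cor:factor.sets.monotonicity}(1) then gives $\chain(\deg u)\in\mathcal{I}$.
\item If one factor is imaginary, then the other is real with degree in $\chain(\alpha)$. If $u$ is imaginary, Corollary~\ref{cor:imaginary.suffix.prefix} gives $u=\SL(M_1(\hat\alpha))$. Then $u<\SL(\hat\alpha)$ means $M_1(\hat\alpha)<\hat\alpha$, so the chain would be decreasing by Proposition~\ref{prop:monotonicity}, a contradiction. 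Hence $u$ is real, and $\chain(\deg u)\in\mathcal{I}$.
\end{itemize}

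Any Lyndon prefix $\ell$ is either $\SL(\hat\alpha)$ itself or a Lyndon prefix of $u$, since $u$ is the longest proper one. Since $\chain(\deg u)\in\mathcal{I}$ and $|u|<|\SL(\hat\alpha)|$, induction on length applied to $\deg u$ finishes (1).

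Part (2) is symmetric and uses the costandard factorization $\SL(\hat\alpha)=\ell^l\ell^r$, where $\ell^r$ is the longest proper Lyndon suffix by Proposition~\ref{prop:cost.factor}. In the real case, Corollary~\ref{cor:factor.sets.monotonicity}(2) applies to the larger factor $\ell^r$ and gives $\chain(\deg\ell^r)\in\mathcal{D}$. If $\ell^r$ is imaginary, Corollary~\ref{cor:imaginary.suffix.prefix}(2) gives $\ell^r=\SL(M_1(\hat\alpha))>\SL(\hat\alpha)$. This forces $\hat\alpha<M_1(\hat\alpha)$, so the chain would be increasing, a contradiction. Any Lyndon suffix is either the whole word or a suffix of $\ell^r$, and induction applies.

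For (3), let $\SL(\alpha)=\SL_i(k\delta)=uv$ be the standard factorization. Both factors are SL-words, and their degrees sum to $k\delta$. They cannot both be imaginary: by Corollary~\ref{cor:mk.im.factor} each factor would have a real-type $m_k$, which requires real degrees. Both factors are therefore real, with $\deg u<\deg v$ and $\deg u+\deg v=k\delta$.

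Theorem~\ref{thm:convexity}(4) then gives $\deg u<m_1(\deg u)$, so $\chain(\deg u)\in\mathcal{I}$ by Proposition~\ref{prop:monotonicity}. The same theorem gives $M_1(\deg v)<\deg v$, so $\chain(\deg v)\in\mathcal{D}$. Any nonempty proper Lyndon prefix of $\SL(\alpha)$ is a Lyndon prefix of $u$, so part (1) applied to $\deg u$ yields (3). Part (4) is the same argument with the costandard factorization: $\ell^r$ is real, decreasing by Theorem~\ref{thm:convexity}(4), and every proper Lyndon suffix is a suffix of $\ell^r$, so part (2) applies.

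The main obstacle is the imaginary-factor cases. One must check that Corollary~\ref{cor:imaginary.suffix.prefix} together with Proposition~\ref{prop:monotonicity} really excludes an imaginary prefix for increasing chains (respectively an imaginary suffix for decreasing chains). One must also justify that a factorization of an imaginary word cannot have two imaginary factors.
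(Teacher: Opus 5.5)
Your proposal is correct and follows essentially the paper's proof. The paper also runs an induction on length, using Corollary~\ref{cor:factor.sets.monotonicity} on a real two-factor Lyndon splitting and ruling out an imaginary prefix/suffix via Corollary~\ref{cor:imaginary.suffix.prefix} and Proposition~\ref{prop:monotonicity}; it then reduces (3)--(4) to (1)--(2) through $\SL^{ls}(\alpha)$ and $\SL^{r}(\alpha)$, just as you do. The only cosmetic difference is that the paper does a decreasing induction on $|\ell|$ via the splitting $\ell\cdot w_1$ (with $w_1$ the first canonical factor of the remainder, so $\ell w_1\in\LL$ by Lemma~\ref{lemma:seq.right.word.Lyndon}), whereas you descend through iterated standard (resp.\ costandard) factorizations. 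In (3)--(4), a cleaner way to exclude two imaginary factors than appealing to the domain of $m_k$ is Proposition~\ref{prop:general.bracketing}(a): two imaginary bracketings lie in $\fh t^{a}$ and $\fh t^{b}$ with $a,b>0$, which commute.
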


\begin{proof}
We prove part (1) by a decreasing induction on the length of $\ell$. The base case $\ell = \SL(\hat\alpha)$ is obvious.
For the inductive step, suppose the result holds for all Lyndon prefixes $\ell'$ such that $|\ell'| > |\ell|$. We have
$\SL(\hat\alpha) = \ell w$ for some nonempty~$w$, and let $w = w_1\ldots w_n$ be its canonical factorization. Then
$\ell w_1\in \LL$ by Lemma~\ref{lemma:seq.right.word.Lyndon}, and so $\chain(\deg(\ell w_1)) \in \mathcal{I}$ by
the inductive hypothesis. If $\deg(w_1)$ is imaginary, then $\chain(\deg(\ell)) = \chain(\deg(\ell w_1)) \in \mathcal{I}$.
Assume now that $\deg(w_1)$ is real.
If $\deg(\ell)$ was imaginary, then $\ell = \SL(M_1(\deg(\ell w_1)))$ by Corollary~\ref{cor:imaginary.suffix.prefix}, implying
$\ell > \ell w_1$ by Proposition~\ref{prop:monotonicity}. The latter contradicts an obvious $\ell<\ell w_1$,
so that $\deg(\ell)$ must be real. Applying Corollary~\ref{cor:factor.sets.monotonicity} to $\deg(\ell)$ and $\deg(w_1)$,
we then obtain $\chain(\deg(\ell)) \in \mathcal{I}$.

Part (2) proceeds alike by a decreasing induction on the length of $\ell$. The base case $\ell = \SL(\hat\alpha)$
is clear. For the inductive step, suppose the result holds for all Lyndon suffixes $\ell'$ such that $|\ell'|> |\ell|$.
We have $\SL(\hat\alpha) = w\ell$ for some nonempty $w$, and let $w = w_1\ldots w_n$ be its canonical factorization.
Then $w_n\ell\in \LL$ by Lemma~\ref{lemma:seq.left.word.Lyndon}, and hence $\chain(\deg(w_n\ell)) \in \mathcal{D}$ by
the inductive hypothesis. If $\deg(w_n)$ is imaginary, then $\chain(\deg(\ell)) = \chain(\deg(w_n\ell)) \in \mathcal{D}$.
Assume now that $\deg(w_n)$ is real.
If $\deg(\ell)$ was imaginary, then $\ell = \SL(M_1(\deg(w_n\ell)))$ by Corollary~\ref{cor:imaginary.suffix.prefix},
implying $\ell < w_n\ell$ by Proposition~\ref{prop:monotonicity}. The latter contradicts to $w_n\ell\in \LL$,
so that $\deg(\ell)$ must be real. Applying Corollary~\ref{cor:factor.sets.monotonicity} to $\deg(w_n)$ and $\deg(\ell)$,
we then obtain $\chain(\deg(\ell)) \in \mathcal{D}$.

For part (3) note that any proper Lyndon prefix of $\SL(\alpha)$ must be a prefix of $\SL^{ls}(\alpha)$.
As $\chain(\deg(\SL^{ls}(\alpha))) \in \mathcal{I}$, applying part (1) yields the result.
Similarly for part (4) note that any proper Lyndon suffix of $\SL(\alpha)$ must be a suffix of $\SL^r(\alpha)$.
As $\chain(\deg(\SL^r(\alpha))) \in \mathcal{D}$, applying part (2) yields the result.
\end{proof}


\subsection{Intermediate root systems: connectivity}
\

In this subsection, we explore \textbf{intermediate root systems}, which form a nested system of root subsystems
of $\Delta$ induced by the complete flag $\spanset_\bullet^1$ of $\mathfrak{h}t$.

\begin{definition}\label{def:root_subsystems}
For $1 \leq i \leq |I|$, let $\Delta_i =\{\alpha \in \Delta \colon h_\alpha t \in \spanset_i^1\}$.
\end{definition}

We note that $\Delta_i$ is naturally equipped with a polarization through Lemma~\ref{lemma:chain.positive.roots}. Moreover,
$\{\Pr(\chain(\beta_j))\}_{j=1}^{i}$ is a set of simple roots for $\Delta_i$, due to Proposition~\ref{prop:irr.chains} and
Corollary~\ref{cor:mk.roots}. Our analysis of increasing chains is crucially based on the following conjectured property of
these root subsystems.

\begin{conjecture}[Connectivity]\label{conj:connectivity}
For any $1\leq i\leq |I|$, the root subsystem $\Delta_i$ is irreducible, or equivalently, the Dynkin subdiagram formed by
$\{\beta_j\}_{j=1}^{i}$ is connected.
\end{conjecture}

We note that if $\Delta_{i-1}$ is irreducible but $\Delta_i$ was not, then we would get
  $$(\beta_i,\alpha) = 0 \qquad \forall\, \alpha \in \Delta_{i-1}.$$
Thus the above conjecture is equivalent to
\begin{equation}\label{eq:connectivity.equiv}
  \forall\, i>1 \ \ \exists\, j<i \colon (\beta_i,\beta_j)\ne 0.
\end{equation}

We note the following simple criteria that suffices to establish Conjecture~\ref{conj:connectivity}:

\begin{proposition}\label{prop:connectivity.helper}
Conjecture~\ref{conj:connectivity} holds if $\SL_i^l(\delta) \neq \SL_i^{ls}(\delta)$ for all $i > 1$.
\end{proposition}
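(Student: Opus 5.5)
The plan is to exploit the overlap between the standard and the costandard factorizations of $\SL_i(\delta)$ when they differ. I will show that this overlap is incompatible with $\beta_i$ being orthogonal to all of $\beta_1,\ldots,\beta_{i-1}$, and then conclude via~\eqref{eq:connectivity.equiv}.

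\emph{Setting up the overlap.} Fix $i>1$. Write $w=\SL_i(\delta)=\SL_i^l(\delta)\SL_i^r(\delta)=\SL_i^{ls}(\delta)\SL_i^{rs}(\delta)$.
\begin{itemize}
\item $\SL_i^{ls}(\delta)$ is the longest proper Lyndon prefix, and $\SL_i^l(\delta)$ is a proper Lyndon prefix (Proposition~\ref{prop:cost.factor}). Hence $|\SL_i^l(\delta)|\leq|\SL_i^{ls}(\delta)|$.
\item By the hypothesis $\SL_i^l(\delta)\neq\SL_i^{ls}(\delta)$, this inequality is strict.
\end{itemize}
So we may write $w=xyz$ with $x=\SL_i^l(\delta)$, $xy=\SL_i^{ls}(\delta)$, $yz=\SL_i^r(\delta)$, $z=\SL_i^{rs}(\delta)$, where $x,y,z$ are all nonempty. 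All four words $x,xy,yz,z$ are standard Lyndon, being factors in standard/costandard factorizations of a standard Lyndon word. Therefore Corollary~\ref{cor:mk.im.factor} applies to both factorizations and gives
\[
m_1(\deg x)=m_1(\deg xy)=(\delta,i)=m_1(\deg yz)=m_1(\deg z).
\]
Moreover, by Corollary~\ref{cor:lyndon.prefix.suffix.chains}(3), the chains of $\deg x$ and $\deg(xy)$ are both increasing.

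\emph{Translating into the polarized root system.} By Corollary~\ref{cor:mk.roots}, both $\pr(\chain(\deg x))$ and $\pr(\chain(\deg xy))$ lie in $\Delta_p^+\cap\Delta_i$. When expanded in the simple roots $\beta_1,\ldots,\beta_i$ of $\Delta_i$, each has a nonzero coefficient at $\beta_i$.

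Now suppose~\eqref{eq:connectivity.equiv} fails at this $i$, that is, $(\beta_i,\beta_j)=0$ for all $j<i$. Then $\beta_i$ is an isolated node of the Dynkin diagram of $\{\beta_j\}_{j\leq i}$. Hence $\Delta_i=\Delta_{i-1}\sqcup\{\pm\beta_i\}$, and the only positive root of $\Delta_i$ involving $\beta_i$ is $\beta_i$ itself. Consequently
\[
\pr(\chain(\deg x))=\beta_i=\pr(\chain(\deg xy)).
\]
Since $\deg x$ and $\deg(xy)$ have length less than $|\delta|$, each of them equals its $\pr$-image or that image plus $\delta$. It follows that $\deg y=\deg(xy)-\deg x\in\BZ\delta$.

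\emph{Contradiction and conclusion.} We have $0<|\deg y|<|\delta|$, because $y$ is nonempty and $x,z$ are nonempty inside a word of degree $\delta$. So $\deg y$ is a nonzero element of $\BZ\delta$ of length less than $|\delta|$, which is impossible. Thus~\eqref{eq:connectivity.equiv} holds for every $i>1$, which is equivalent to Conjecture~\ref{conj:connectivity}.

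\emph{Main obstacle.} The delicate point is justifying that $x,xy,yz,z$ are all standard Lyndon words, so that Corollary~\ref{cor:mk.im.factor} applies to both factorizations simultaneously. I would use the Lalonde--Ram fact that Lyndon factors in these factorizations of a standard Lyndon word are standard. It also needs checking that "isolated node" really forces $\Delta_i\setminus\Delta_{i-1}=\{\pm\beta_i\}$; this is the standard decomposition of a root system along the connected components of its Dynkin diagram.
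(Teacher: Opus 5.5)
Your proof is correct and is essentially the paper's argument. Both proofs apply Corollary~\ref{cor:mk.im.factor} to $\SL_i^l(\delta)$ and $\SL_i^{ls}(\delta)$ to obtain two distinct increasing chains with $m_1=(\delta,i)$. This is impossible once $\beta_i$ is orthogonal to $\beta_1,\dots,\beta_{i-1}$, since by connectedness of supports $\chain(\beta_i)$ is then the only such chain.

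The differences are cosmetic:
\begin{itemize}
\item You verify~\eqref{eq:connectivity.equiv} directly for each $i$, rather than by induction.
\item You obtain increasingness from Corollary~\ref{cor:lyndon.prefix.suffix.chains}(3), instead of Proposition~\ref{prop:monotonicity}.
\item You phrase the final contradiction via $\deg y\in\BZ\delta$, rather than via distinctness of the two chains.
\end{itemize}
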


The proof will use the classical connectedness property of supports of roots, see e.g.\ \cite[Proposition 16.21]{C},
which we formulate in the form as will be used below:

\begin{claim}\label{claim:support-connectedness}
For any $\gamma=\sum_{i=1}^{|I|} c_i\beta_i\in \Delta^+_{p}$, its \emph{support}
$\mathrm{supp}(\gamma):=\{\beta_i\colon c_i\ne 0\}$ determines a connected subdiagram of the Dynkin diagram of $\fg$.
\end{claim}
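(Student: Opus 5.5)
The Claim is the classical connectedness of supports, here applied to the polarization $\Delta=\Delta^+_p\cup\Delta^-_p$ of Lemma~\ref{lemma:chain.positive.roots}. I would prove it directly by induction on the height of $\gamma$ with respect to $\Pi_p=\{\beta_1,\ldots,\beta_{|I|}\}$. The Dynkin diagram is formed by $\{\beta_i\}$, with $\beta_i,\beta_j$ joined iff $(\beta_i,\beta_j)\neq 0$. Since $\Pi_p$ is a base of $\Delta$, this is the Dynkin diagram of $\fg$. The base case $\hgt(\gamma)=1$ is trivial, as then $\gamma=\beta_i$ has a one-point support.

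\textbf{Step 1: peel off a simple root.} For the inductive step, write $\gamma=\sum_i c_i\beta_i$ with $c_i\geq 0$ and $\hgt(\gamma)=N>1$. Apply Lemma~\ref{lemma:seq.simple.root} with $\tilde\Delta^+=\Delta^+_p$ to the multiset of simple roots in which each $\beta_i$ occurs $c_i$ times. It gives an ordering whose partial sums all lie in $\Delta^+_p$. In particular
\[
  \gamma=\gamma'+\epsilon \quad \text{with} \quad \gamma'\in\Delta^+_p,\ \hgt(\gamma')=N-1,\ \epsilon\in\Pi_p .
\]
By the induction hypothesis, $\mathrm{supp}(\gamma')$ is connected. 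Moreover $\mathrm{supp}(\gamma)=\mathrm{supp}(\gamma')\cup\{\epsilon\}$. If $\epsilon\in\mathrm{supp}(\gamma')$, the support does not change and we are done.

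\textbf{Step 2: the new simple root is adjacent to the old support.} Now suppose $\epsilon\notin\mathrm{supp}(\gamma')$. It suffices to show that $(\epsilon,\beta_j)\neq 0$ for some $\beta_j\in\mathrm{supp}(\gamma')$. Distinct simple roots pair non-positively, so $(\gamma',\epsilon)\leq 0$. Suppose, for contradiction, that $\epsilon$ is orthogonal to every element of $\mathrm{supp}(\gamma')$; then $(\gamma',\epsilon)=0$ and $(\gamma,\epsilon)=(\epsilon,\epsilon)$. Hence the reflection satisfies
\[
  r_\epsilon(\gamma)=\gamma-\frac{2(\gamma,\epsilon)}{(\epsilon,\epsilon)}\,\epsilon=\gamma'-\epsilon\in\Delta .
\]
However, $\gamma'-\epsilon$ has a positive coefficient on the elements of $\mathrm{supp}(\gamma')\neq\emptyset$ and coefficient $-1$ on $\epsilon$. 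This mixes signs in the basis $\Pi_p$, which is impossible for a root (cf.\ Corollary~\ref{cor:lin.comb.chains}). So $\epsilon$ is joined to some vertex of the connected set $\mathrm{supp}(\gamma')$, and $\mathrm{supp}(\gamma)$ is connected.

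The only point needing care is Step 2. One must not simply assume that $\gamma'+\epsilon\in\Delta$ forces $(\gamma',\epsilon)<0$, since this fails for non-simply-laced systems. The reflection argument, which uses that $\epsilon$ lies outside $\mathrm{supp}(\gamma')$, avoids that issue. Everything else is formal once Lemma~\ref{lemma:seq.simple.root} supplies the decomposition $\gamma=\gamma'+\epsilon$.
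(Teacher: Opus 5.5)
Your proof is correct. The paper does not actually prove this claim; it cites the classical connectedness of supports of roots, \cite[Proposition 16.21]{C}, and implicitly uses that the result holds for any base, in particular for $\Pi_p=\{\beta_1,\ldots,\beta_{|I|}\}$. This is legitimate because the Dynkin diagram does not depend on the choice of base.

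You instead reprove the classical fact directly for $\Pi_p$, by induction on height. Lemma~\ref{lemma:seq.simple.root} supplies a decomposition $\gamma=\gamma'+\epsilon$ with $\gamma'\in\Delta^+_p$ and $\epsilon\in\Pi_p$. The reflection computation $r_\epsilon(\gamma)=\gamma'-\epsilon$ then rules out $\epsilon$ being orthogonal to $\mathrm{supp}(\gamma')$. Equivalently, the $\epsilon$-string through $\gamma'$ must start at $\gamma'$, which forces $\langle\gamma',\epsilon^\vee\rangle<0$. You were right to flag that $\gamma'+\epsilon\in\Delta$ alone does not give $(\gamma',\epsilon)<0$ in non-simply-laced types, and your argument handles exactly this point.

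The citation is shorter and is the standard textbook route. Your version is self-contained: it uses only that $\Pi_p$ is a base for the polarization of Lemma~\ref{lemma:chain.positive.roots}, plus the paper's own Lemma~\ref{lemma:seq.simple.root}. It also makes explicit that nothing about Lyndon words or chains is involved beyond that.

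Two cosmetic points. The inequality $(\gamma',\epsilon)\le 0$ is never used. For the fact that a root cannot have mixed-sign coordinates, it is cleaner to cite the defining property of a base than Corollary~\ref{cor:lin.comb.chains}, which is phrased in terms of chains.
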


\begin{proof}[Proof of Proposition~\ref{prop:connectivity.helper}]
Assuming $\SL_i^l(\delta) \neq \SL_i^{ls}(\delta)$ for all $i > 1$, we shall show that $\Delta_i$ are irreducible
by induction on $i$. The base case $i=1$ is vacuous. For the inductive hypothesis, suppose that $\Delta_{i-1}$
is irreducible. Assuming the contrary with $\Delta_i$ being irreducible, we get $(\beta_i,\beta_j) = 0$ for all $j<i$,
due to~\eqref{eq:connectivity.equiv}.

Then there is only one increasing chain $\chain(\gamma)$ (namely $\chain(\beta_i)$) with $m_1(\chain(\gamma))=(\delta,i)$,
due to Claim~\ref{claim:support-connectedness}. But the roots $\alpha = \deg(\SL^l_i(\delta))$ and
$\beta = \deg(\SL^{ls}_i(\delta))$ satisfy $m_1(\alpha)=m_1(\beta)=(\delta,i)$ by Corollary~\ref{cor:mk.im.factor},
and thus $\chain(\alpha),\chain(\beta)\in \mathcal{I}$ by Proposition~\ref{prop:monotonicity}.
Moreover, since $0<|\alpha|, |\beta| < |\delta|$ and $\SL_i^l(\delta) \neq \SL_i^{ls}(\delta)$, we get
$\chain(\alpha) \ne \chain(\beta)$, a contradiction with above. This completes the inductive step.
\end{proof}

\begin{remark}
Using the code, we verified the above property $\SL_i^l(\delta) \neq \SL_i^{ls}(\delta)\, \forall\, i > 1$
for all orders on $\wI$ for all exceptional types and all classical types in rank $\leq 6$.
\end{remark}

\begin{lemma}
If the smallest simple root occurs only once in $\delta$, then Conjecture~\ref{conj:connectivity} holds.
\end{lemma}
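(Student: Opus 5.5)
The plan is to verify the criterion of Proposition~\ref{prop:connectivity.helper}: it suffices to show that $\SL_i^l(\delta)\neq \SL_i^{ls}(\delta)$ for all $i>1$. The key input is Lemma~\ref{lemma:delta.stand.one.letter}, which applies precisely when $\alpha_\varepsilon$ occurs only once in $\delta$. It gives $|\SL_i^{rs}(\delta)|=1$ for every $i$, that is, $\SL_i^{ls}(\delta)$ is $\SL_i(\delta)$ with its last letter removed. So it is enough to show that the costandard suffix $\SL_i^r(\delta)$ has length at least $2$ for $i>1$.

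Suppose instead that $|\SL_i^r(\delta)|=1$. Then $\SL_i^l(\delta)=\SL_i^{ls}(\delta)$, and $\SL_i^r(\delta)$ is a single letter $x$. By Lemma~\ref{lemma:right.costfac.minimal}, $x$ is the smallest proper suffix of $\SL_i(\delta)$. In particular, $x$ is at most every letter occurring in $\SL_i(\delta)$ other than the first. The first letter is also at most $x$, since $\SL_i(\delta)$ is Lyndon. Hence $x$ and the first letter are the two smallest letters in the word.

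Since $\deg \SL_i(\delta)=\delta$ contains every letter of $\wI$, the smallest letter $\varepsilon$ occurs in the word. Because it occurs only once in $\delta$, it must be the first letter. The word is Lyndon of length $>1$, so its first letter is strictly smaller than its last letter, and therefore $x\neq\varepsilon$. Then $x$ must be the second smallest letter $\varepsilon'$ of $\wI$, and every letter of the word other than the first is $\geq x$. This is automatically consistent, so the letter-counting alone does not produce a contradiction. I will instead use the second assertion of Lemma~\ref{lemma:delta.stand.one.letter}: the words $\SL_1(\delta),\dots,\SL_{|I|}(\delta)$ end with pairwise distinct letters.

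The refined argument runs as follows. If $\SL_i^r(\delta)=x$, then every letter after the first is $\geq x$. The last letters $x_j$ of the words $\SL_j(\delta)$ are pairwise distinct, and none of them equals $\varepsilon$, by the Lyndon property just used. So they form $|I|$ distinct elements of $\wI\setminus\{\varepsilon\}$, which has exactly $|I|$ elements. Hence every letter other than $\varepsilon$ is the last letter of exactly one $\SL_j(\delta)$. In particular, the letter $\varepsilon'$ is the last letter of exactly one index $j_0$.

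For $x=x_i$ to be $\leq$ all letters of $\SL_i(\delta)$ after the first, we need $x_i=\varepsilon'$, so $i=j_0$. The remaining task is to show $j_0=1$, which gives the contradiction with $i>1$. For this I will compare $\SL_{j_0}(\delta)$ with $\SL_1(\delta)$ via Lemma~\ref{lemma:leclerc.14}, or directly via maximality in Proposition~\ref{prop:generalized.Leclerc.algo}(b).

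The word ending in $\varepsilon'$ has the form $\varepsilon\, w\, \varepsilon'$ with all letters of $w$ at least $\varepsilon'$. Any competing word $\varepsilon\, w'\, y$ with $y>\varepsilon'$ contains $\varepsilon'$ somewhere inside $w'$. I expect the main obstacle to be this lexicographic comparison. My first attempt will show that the word $\varepsilon w\varepsilon'$ is lexicographically largest among the candidates, by rearranging the placement of $\varepsilon'$, and so must be $\SL_1(\delta)$.

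If this direct comparison fails, the fallback is to avoid it altogether. Instead, I would use Lemma~\ref{lemma:monotonicity.smallest.once} and Corollary~\ref{cor:mk.im.factor} to argue that, when $|\SL_i^r(\delta)|=1$, the suffix $x$ has $\deg x=\alpha_{x}$ with a decreasing chain and $m_1(\alpha_x)=(\delta,i)$. That same simple root would then force $h_{\alpha_x}t\in\spanset_1^1$, which contradicts $i>1$.
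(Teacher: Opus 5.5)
\textbf{There is a genuine gap: the step $j_0=1$, on which everything depends, is left unproved.}

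Your reduction is correct as far as it goes. Since $|\SL_i^{rs}(\delta)|=1$, it suffices to exclude $|\SL_i^r(\delta)|=1$ for $i>1$. A one-letter costandard suffix must be the second smallest letter $\varepsilon'$, by Lemma~\ref{lemma:right.costfac.minimal}. The last letters of the $\SL_j(\delta)$ are in bijection with $\wI\setminus\{\varepsilon\}$. But the argument then needs $j_0=1$, i.e.\ that $\SL_1(\delta)$ ends with $\varepsilon'$.

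Your first route cannot work as described. The words $\varepsilon w\varepsilon'$ and $\varepsilon w'y$ are unrelated words with the same letter content. Their lexicographic order is decided at the first position where $w$ and $w'$ differ, not by where $\varepsilon'$ sits: for $\varepsilon'<a<b$ one has $\varepsilon ab\varepsilon'<\varepsilon b\varepsilon' a$. Moreover, a rearranged word is not automatically a candidate in Proposition~\ref{prop:generalized.Leclerc.algo}, which requires a splitting into two standard Lyndon words with nonzero bracket. Your fallback is circular: the claim $h_{\alpha_{\varepsilon'}}t\in\spanset_1^1$ is equivalent to $\SL_1^r(\delta)=\varepsilon'$, which is exactly what has to be shown.

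The missing input, used in the paper, is to apply Corollary~\ref{cor:mk.im.factor} to \emph{both} factorizations of $\SL_1(\delta)$. Set $\gamma=\deg\SL_1^r(\delta)$ and $\gamma'=\deg\SL_1^{rs}(\delta)$. Then $m_1(\gamma)=(\delta,1)=m_1(\gamma')$, so $h_\gamma t$ and $h_{\gamma'}t$ both lie in the line $\spanset_1^1$, and their finite parts agree up to sign. Both words are proper suffixes of $\SL_1(\delta)$, so $\gamma,\gamma'$ have height $<|\delta|$ and do not involve $\alpha_\varepsilon$ (the unique $\varepsilon$ is the first letter). If their finite parts were opposite, we would get $\gamma+\gamma'=\delta$, which is impossible since neither involves $\alpha_\varepsilon$. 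Hence $\gamma=\gamma'$, so $\SL_1^r(\delta)=\SL_1^{rs}(\delta)$ is a single letter, necessarily $\varepsilon'$. (The Leclerc-algorithm argument in the proof of Lemma~\ref{lemma:smallest.once.SL.1} also gives this, with $0$ replaced by $\varepsilon$.) With $j_0=1$ established, your argument closes: $x_i=\varepsilon'$ forces $i=1$.
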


\begin{proof}
According to Proposition~\ref{prop:connectivity.helper}, it suffices to show that $\SL_i^r(\delta) \neq \SL_i^{rs}(\delta)$
for all $i > 1$. First, we note that $\SL_1^r(\delta) = \SL_1^{rs}(\delta) = k$, where $k$ is the second smallest letter of $\wI$
(the first equality follows from Corollary~\ref{cor:mk.im.factor}, while the second one is then due to
Lemma~\ref{lemma:delta.stand.one.letter}). For any $i>1$, we also know that $\SL^{rs}_i(\delta) = k'$ for a letter
$k' > k$, due to Lemma~\ref{lemma:delta.stand.one.letter}. Hence, the letter $k$ must appear in $\SL^{ls}_i(\delta)$ and
cannot be its first letter, implying $\SL^r_i(\delta) \neq \SL^{rs}_i(\delta)$ by Lemma~\ref{lemma:right.costfac.minimal}.
\end{proof}

\begin{lemma}\label{lemma:unique.simple.add}
If Conjecture~\ref{conj:connectivity} holds, then for any $1 < i \leq |I|$, there exists a unique $j < i$ such that
$\beta_i + \beta_j \in \Delta_{i}$, and for such $j$ we also have $M_1(\beta_i) = (\delta,j)$.
\end{lemma}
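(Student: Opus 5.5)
The plan is to reduce everything to two standard facts about the simple roots $\{\beta_1,\ldots,\beta_{|I|}\}=\Pi_p$ of the polarization $\Delta=\Delta^+_p\cup\Delta^-_p$ from Lemma~\ref{lemma:chain.positive.roots}. First, for distinct simple roots we have $(\beta_a,\beta_b)\le 0$, and $\beta_a+\beta_b\in\Delta$ iff $(\beta_a,\beta_b)<0$, i.e.\ iff $\beta_a,\beta_b$ are adjacent in the Dynkin diagram. Second, the Dynkin diagram of a finite root system is a forest. Combined with Corollary~\ref{cor:b.j.M.k}, these should give both claims.

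\emph{Existence.} Fix $1<i\le |I|$. By Conjecture~\ref{conj:connectivity} in the form~\eqref{eq:connectivity.equiv}, there is some $j<i$ with $(\beta_i,\beta_j)\ne0$, hence $(\beta_i,\beta_j)<0$. Therefore $\beta_i+\beta_j\in\Delta$. Moreover $h_{\beta_i+\beta_j}t$ is a linear combination of $h_{\beta_i}t$ and $h_{\beta_j}t$. Both of these lie in $\spanset^1_i$ by Proposition~\ref{prop:irr.chains} (more precisely, by the identity $\spn\{h_{\beta_1}t,\ldots,h_{\beta_i}t\}=\spanset^1_i$ used in the proof of Corollary~\ref{cor:mk.roots}). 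So $\beta_i+\beta_j\in\Delta_i$.

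\emph{Uniqueness.} Suppose $j\ne j'$, both $<i$, with $\beta_i+\beta_j,\beta_i+\beta_{j'}\in\Delta$. Then $\beta_i$ is adjacent to both $\beta_j$ and $\beta_{j'}$. By the conjecture, the subdiagram on $\{\beta_1,\ldots,\beta_{i-1}\}$ is connected, so it contains a path from $\beta_j$ to $\beta_{j'}$. Together with the two edges through $\beta_i$, this path forms a cycle in the Dynkin diagram of $\Delta_i$. That contradicts the acyclicity of Dynkin diagrams of finite root systems (cf.\ Corollary~\ref{cor:lin.comb.chains}c)). Hence $j$ is the unique index $<i$ with $(\beta_i,\beta_j)\ne0$.

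\emph{Identification of $M_1(\beta_i)$.} Write $M_1(\beta_i)=(\delta,m)$. By Corollary~\ref{cor:b.j.M.k}, $m$ is characterized as the smallest index with $(\beta_i,\beta_m)\ne0$, since all $(\beta_i,\beta_l)$ with $l<m$ vanish. Because $(\beta_i,\beta_i)\ne0$, we get $m\le i$. Because $(\beta_i,\beta_j)\ne0$ with $j<i$, we get $m\le j$. Since $j$ is the only index below $i$ with nonzero pairing, every $l<j$ has $(\beta_i,\beta_l)=0$, so $m=j$. This gives $M_1(\beta_i)=(\delta,j)$.

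The only delicate step is the uniqueness. There one must make sure that the Dynkin diagram of the polarization $\Pi_p$ is a genuine Dynkin diagram of the finite root system $\Delta$ (or of its subsystem $\Delta_i$), so that acyclicity applies. This holds because $\Pi_p$ is a base of $\Delta$, and every base of a finite root system has the same Dynkin diagram up to relabeling. Everything else is a direct application of Corollary~\ref{cor:b.j.M.k} and the connectivity conjecture.
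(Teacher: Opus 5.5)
Your proof is correct and follows essentially the paper's argument: the same ingredients appear (connectivity in the form \eqref{eq:connectivity.equiv}, Corollary~\ref{cor:b.j.M.k}, and the cycle argument in the Dynkin diagram of $\Delta_i$ for uniqueness). The only difference is the order. The paper takes the witness $j$ directly from $M_1(\beta_i)=(\delta,j)$, so the identification of $M_1(\beta_i)$ comes for free once uniqueness is shown. You instead pick $j$ via connectivity and recover $M_1(\beta_i)=(\delta,j)$ afterwards, while spelling out the simple-root facts the paper leaves implicit.
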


\begin{proof}
For the existence, we note that such $j$ is provided by $M_1(\beta_i) = (\delta,j)$: indeed $j < i$ is due
to~\eqref{eq:connectivity.equiv}. Assume now that $j\ne j' < i$ both satisfy
$\beta_i + \beta_j,\beta_i + \beta_{j'} \in \Delta_i$. Consider the Dynkin diagram of $\Delta_{i-1}$,
which is connected by Conjecture~\ref{conj:connectivity}.
Thus the nodes in $\Delta_{i-1}$ corresponding to $\beta_j,\beta_{j'}$ are connected by a path, and since
both of them are connected by an edge to the node corresponding to $\beta_i$ in $\Delta_i$, we obtain a
cycle in $\Delta_i$, a contradiction. This establishes the uniqueness of such $j$.
\end{proof}

\begin{corollary}\label{cor:irr.chain.M.prime}
If Conjecture~\ref{conj:connectivity} holds, then for any $1 \leq i \leq |I|$ and $\chain(\alpha) \in \mathcal{I}$
with $m_1(\alpha) \geq (\delta,i)$ such that $\beta_i + \alpha$ is a real root, we have $M_1(\beta_i) \leq M'_1(\alpha)$.
\end{corollary}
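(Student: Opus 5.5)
The plan is to work entirely inside the polarized root system $\Delta_i$ of Definition~\ref{def:root_subsystems}, and to reduce the inequality to the identification of $M_1(\beta_i)$ given by Lemma~\ref{lemma:unique.simple.add}. We use that the order on $\imx$ satisfies $(\delta,1)>(\delta,2)>\dots$, since $\SL_1(\delta)>\SL_2(\delta)>\cdots$. Hence the hypothesis $m_1(\alpha)\geq(\delta,i)$ means, by Corollary~\ref{cor:mk.roots}, that
\[
  \Pr(\chain(\alpha))=c_1\beta_1+\dots+c_i\beta_i \qquad \text{with } c_j\in\BZ_{\geq 0},
\]
and the coefficients are nonnegative because $\chain(\alpha)\in\mathcal{I}$ (Corollary~\ref{cor:lin.comb.chains}). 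Thus $\gamma:=\Pr(\chain(\alpha))$ is a positive root of $\Delta_i$ for the polarization whose simple roots are $\beta_1,\dots,\beta_i$. By Definition~\ref{def:M.prime}, $M'_1(\alpha)=\max\{M_1(\beta_p)\colon c_p\neq 0\}$.

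First dispose of the trivial cases. If $i=1$, then $\gamma$ is a multiple of $\beta_1$, hence $\gamma=\beta_1$, and $\beta_1+\alpha$ cannot be a real root since its projection would be $2\beta_1$. So the statement is vacuous. If $c_i\neq 0$, then $M'_1(\alpha)\geq M_1(\beta_i)$ holds directly from the definition of $M'_1$.

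The main case is $i>1$ and $c_i=0$, so that $\gamma\in\Delta_{i-1}$ is positive. Here I would invoke Lemma~\ref{lemma:unique.simple.add}, which relies on Conjecture~\ref{conj:connectivity}. It gives a unique $j<i$ with $\beta_i+\beta_j\in\Delta_i$, and moreover $M_1(\beta_i)=(\delta,j)$. The key step is to show that $c_j\neq 0$.

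Since $\alpha+\beta_i$ is a real root, $\gamma+\beta_i\in\Delta$. On the other hand, $\gamma-\beta_i$ is not a root, because it has coefficients of both signs in the basis $\{\beta_p\}$. By the standard $\beta_i$-string argument this forces $(\gamma,\beta_i)<0$. Therefore some $p$ with $c_p\neq 0$, and necessarily $p<i$, satisfies $(\beta_p,\beta_i)<0$. For simple roots this means $\beta_p+\beta_i$ is a root, and it lies in $\Delta_i$. By the uniqueness in Lemma~\ref{lemma:unique.simple.add} we get $p=j$, so $c_j\neq 0$.

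Consequently,
\[
  M'_1(\alpha)\geq M_1(\beta_j)\geq m_1(\beta_j)=(\delta,j)=M_1(\beta_i).
\]
The middle inequality $M_1\geq m_1$ is the one already used in the proof of Lemma~\ref{lemma:M.prime.bound}, and $m_1(\beta_j)=(\delta,j)$ holds by the choice of the $\beta_\bullet$.

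The only delicate point I anticipate is the root-string step in the non-simply-laced case. To handle it, I would make sure that $\gamma\neq\beta_i$, which is automatic since $c_i=0$. I would also check that the pairing computed on the projections $\Pr(\cdot)$ agrees with that on $\alpha$, which holds because the pairing ignores the $\delta$-component. Everything else is a direct application of the cited results.
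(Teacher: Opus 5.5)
Your proposal is correct and matches the paper's proof. The paper makes the same case split: the case $m_1(\alpha)=(\delta,i)$ (equivalently $c_i\neq 0$) is immediate, and otherwise it produces $j<i$ with $c_j>0$ and $\beta_i+\beta_j$ a root, identifies this $j$ via Lemma~\ref{lemma:unique.simple.add}, and concludes $M_1(\beta_i)=(\delta,j)=m_1(\beta_j)\leq M_1(\beta_j)\leq M'_1(\alpha)$. The only difference is how you find that $j$: you use the $\beta_i$-string through $\Pr(\chain(\alpha))$ to force a negative pairing, whereas the paper uses Lemma~\ref{lemma:seq.simple.root}, ordering the summands so that $\beta_i$ comes first; both justifications are valid.
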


\begin{proof}
The claim is clear if $m_1(\alpha) = (\delta,i)$, due to the definition of $M'_1$ and Corollary~\ref{cor:mk.roots}.
Suppose now $m_1(\alpha) > (\delta,i)$, so that in the decomposition
$\chain(\alpha) = c_1\chain(\beta_1) + \cdots + c_{|I|}\chain(\beta_{|I|})$, we have $c_{i'} = 0$ for all $i' \geq i$
by Corollary~\ref{cor:mk.roots}. As $\beta_i + \alpha$ is a real root, evoking Lemma~\ref{lemma:seq.simple.root},
we obtain $\beta_i + \beta_{j}$ is a root for some $j < i$ with $c_j > 0$, which must be the same $j$ as in
Lemma~\ref{lemma:unique.simple.add}. Thus, by Lemma~\ref{lemma:unique.simple.add} and~\eqref{eq:Mprime-additive},
we get $M_1(\beta_i) = (\delta,j) = m_1(\beta_j) \leq M_1(\beta_j) \leq M'_1(\alpha)$.
\end{proof}

The following simple property of root systems will be used below.

\begin{lemma}\label{lemma:triple.sum.pairs}
For any $\alpha,\beta,\gamma \in \Delta$ such that $\alpha+\beta+\gamma \in \Delta$, if none of the pairwise sums
$\alpha + \beta,\alpha + \gamma,\beta + \gamma$ vanishes then at least two of them are in $\Delta$.
\end{lemma}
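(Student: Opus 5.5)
We argue with the inner product on the finite root system $\Delta$. Put $\sigma=\alpha+\beta+\gamma\in\Delta$. For roots $x,y$ with $x+y\neq 0$, the standard criterion says that $(x,y)<0$ forces $x+y\in\Delta$. So it suffices to show that at least two of the three pairings $(\alpha,\beta),(\alpha,\gamma),(\beta,\gamma)$ are strictly negative. We will not use this criterion in reverse.

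\emph{First pairing is negative.} Since $\sigma$ is a root, $(\sigma,\sigma)>0$, and
\begin{equation*}
  (\sigma,\sigma)=(\alpha,\alpha)+(\beta,\beta)+(\gamma,\gamma)+2\big((\alpha,\beta)+(\alpha,\gamma)+(\beta,\gamma)\big).
\end{equation*}
Consider $(\sigma,\alpha)=(\alpha,\alpha)+(\alpha,\beta)+(\alpha,\gamma)$. If $(\sigma,\alpha)\le 0$, then $(\alpha,\beta)+(\alpha,\gamma)<0$, so one of these pairings is negative. If $(\sigma,\alpha)>0$ and $\sigma\neq\alpha$, then $\sigma-\alpha=\beta+\gamma$ is a root. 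The case $\sigma=\alpha$ means $\beta+\gamma=0$, which is excluded by hypothesis. The same reasoning applies with $\beta$ or $\gamma$ in place of $\alpha$.

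\emph{Reduction to a clean dichotomy.} For each $x\in\{\alpha,\beta,\gamma\}$, either $\sigma-x$ is a root or $(\sigma,x)\le 0$. Since $\sum_x(\sigma,x)=(\sigma,\sigma)>0$, some $(\sigma,x)>0$, so at least one pairwise sum is a root. Suppose, for contradiction, that exactly one is, say $\beta+\gamma\in\Delta$, while $\alpha+\beta,\alpha+\gamma\notin\Delta$. Then $(\sigma,\beta)\le 0$ and $(\sigma,\gamma)\le 0$. Moreover $(\alpha,\beta)\ge 0$ and $(\alpha,\gamma)\ge 0$, since otherwise those sums would be roots (they are nonzero by hypothesis).

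\emph{Main step: deriving the contradiction.} Write $\mu=\beta+\gamma\in\Delta$, so $\sigma=\alpha+\mu$. Then $(\alpha,\mu)=(\alpha,\beta)+(\alpha,\gamma)\ge 0$. Also
\begin{equation*}
  (\sigma,\mu)=(\sigma,\beta)+(\sigma,\gamma)\le 0 .
\end{equation*}
This gives $(\mu,\mu)=(\sigma,\mu)-(\alpha,\mu)\le 0$, contradicting $\mu\neq 0$. So at least two of the pairwise sums are roots.

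This is essentially the same quadratic-form trick as the one closing the proof of Lemma~\ref{lemma:seq.simple.root}. The main subtlety is keeping track of the nonvanishing hypotheses, so that the criterion "$(x,y)<0\Rightarrow x+y\in\Delta$" is only applied to nonproportional pairs. The cases $x=-y$ are excluded exactly by the assumption that no pairwise sum vanishes. The case $x=y$ is harmless: then $(x,y)>0$, so the criterion is never invoked for such a pair.
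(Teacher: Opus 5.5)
Your argument is correct, and its second half takes a different route from the paper's. Both proofs begin the same way, using $(\sigma,\alpha)>0 \Rightarrow \beta+\gamma\in\Delta$ to show that at least one pairwise sum is a root.

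\textbf{Where the routes differ.} The paper gets the second root from the Lie algebra. If $\alpha+\beta\in\Delta$, then $[[\fg_\alpha,\fg_\beta],\fg_\gamma]=\fg_\sigma\neq 0$, and the Jacobi identity forces $[\fg_\beta,\fg_\gamma]\neq 0$ or $[\fg_\gamma,\fg_\alpha]\neq 0$. You stay inside the root system. The contrapositives of the two standard criteria turn ``$\alpha+\beta,\alpha+\gamma\notin\Delta$'' into $(\sigma,\beta),(\sigma,\gamma)\le 0$ and $(\alpha,\beta),(\alpha,\gamma)\ge 0$, which gives $(\beta+\gamma,\beta+\gamma)\le 0$.

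\textbf{What each route buys.} Yours is self-contained and avoids the Lie-theoretic input $[\fg_x,\fg_y]=\fg_{x+y}$ for $x,y,x+y\in\Delta$, which the paper uses implicitly. The paper's route is shorter once $\fg$ is available, and it matches the Jacobi-identity arguments used elsewhere in the paper, e.g.\ in Lemma~\ref{lemma:s.splitting}. Your main step never uses $\beta+\gamma\in\Delta$, only $\beta+\gamma\neq 0$. It therefore shows directly that no two pairwise sums can fail to be roots at the same time, so your preliminary ``at least one'' step is unnecessary.

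\textbf{One correction.} Delete the opening claim that it suffices to show at least two of the pairings $(\alpha,\beta),(\alpha,\gamma),(\beta,\gamma)$ are strictly negative. That is not what you prove, and it is false in non-simply-laced types. In $B_3$, with roots $\pm\varepsilon_i$ and $\pm\varepsilon_i\pm\varepsilon_j$ for orthonormal $\varepsilon_1,\varepsilon_2,\varepsilon_3$, take $\alpha=\varepsilon_3-\varepsilon_1$, $\beta=\varepsilon_1$, $\gamma=\varepsilon_2$. Then $\sigma=\varepsilon_2+\varepsilon_3\in\Delta$, and $\alpha+\beta=\varepsilon_3$ and $\beta+\gamma=\varepsilon_1+\varepsilon_2$ are roots. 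However, only $(\alpha,\beta)$ is negative, since $(\alpha,\gamma)=(\beta,\gamma)=0$. Your actual argument does not rely on that claim, so the proof stands once the sentence is removed. The heading ``First pairing is negative'' should be renamed accordingly.
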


\begin{proof}
Let us first show that at least one of above sums is a root. If not, then we would have
$(\alpha,\beta), (\alpha,\gamma), (\beta,\gamma) \geq 0$. But then
$(\alpha + \beta + \gamma,\alpha) = (\alpha,\alpha) + (\beta,\alpha) + (\gamma,\alpha) > 0$, which implies
that $(\alpha + \beta + \gamma) - \alpha = \beta + \gamma$ is a root (as $\beta + \gamma \neq 0$).

Suppose now, without loss of generality, that $\alpha + \beta$ is a root. Then we must have
$[[\fg_\alpha,\fg_\beta],\fg_{\gamma}] \neq 0$. By the Jacobian identity we then obtain
$[[\fg_\beta,\fg_\gamma],\fg_\alpha]\ne 0$ or $[[\fg_{\gamma},\fg_\alpha],\fg_\beta]\ne 0$,
so that $\beta + \gamma\in \Delta$ or $\alpha + \gamma\in \Delta$, respectively, as claimed.
\end{proof}

For the upcoming analysis of increasing chains, we shall need the following result.

\begin{lemma}\label{lemma:connectivity.splitting}
If connectivity of Conjecture~\ref{conj:connectivity} holds, then for any not irreducible $\chain(\alpha) \in \mathcal{I}$
there exists a splitting $\chain(\alpha) = \chain(\beta) + \chain(\gamma)$ with $\chain(\beta),\chain(\gamma) \in \mathcal{I}$
such that $m_1(\beta) = m_1(\alpha)$ and $M'_1(\gamma) = M'_1(\alpha)$.
\end{lemma}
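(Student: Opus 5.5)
We argue at the level of the projections $\Pr(\chain(\cdot))\in \Delta^+_p$, writing $\Pr(\chain(\alpha))=\sum_j c_j\beta_j$ with $c_j\geq 0$. By Corollary~\ref{cor:mk.roots}, $m_1(\alpha)=(\delta,i)$, where $i$ is the largest index with $c_i\neq 0$. By definition of $M'_1$ (extended to increasing chains through the same decomposition), $M'_1(\alpha)=\max\{M_1(\beta_j)\colon c_j\neq 0\}$.

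For a splitting $\chain(\alpha)=\chain(\beta)+\chain(\gamma)$ into increasing chains, the supports of $\beta,\gamma$ lie inside that of $\alpha$ and together cover it. Two consequences follow.
\begin{itemize}
\item $m_1(\beta)=(\delta,i)$ iff $\beta_i\in\mathrm{supp}(\beta)$, and in any case $m_1(\beta)\geq(\delta,i)$.
\item $M'_1(\gamma)\leq M'_1(\alpha)$, with equality iff $\mathrm{supp}(\gamma)$ contains some $\beta_p$ with $M_1(\beta_p)=M'_1(\alpha)$. Call such $\beta_p$ \emph{maximizers}.
\end{itemize}
The task is therefore to find a splitting with $\beta_i$ in the first summand and a maximizer in the second. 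We argue by induction on $\hgt(\chain(\alpha))$.

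Since $\chain(\alpha)$ is not irreducible, fix any splitting $\chain(\alpha)=\chain(\beta)+\chain(\gamma)$ with both summands in $\mathcal{I}$. The easy cases are these.
\begin{itemize}
\item If $\beta_i$ lies in the support of one summand and a maximizer lies in the support of the other, label them accordingly and we are done.
\item If $\beta_i$ lies in both supports, then both summands have $m_1=(\delta,i)$. We then take as $\gamma$ whichever summand contains a maximizer, and the other one as $\beta$.
\end{itemize}
The remaining case is that one summand, say $\beta$, contains both $\beta_i$ and all maximizers, while $\gamma$ contains neither.

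In that remaining case we split further according to whether $\chain(\beta)$ is irreducible.

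If $\chain(\beta)$ is irreducible, then $\beta=\beta_i$ is a simple root of the polarization. Since $\beta$ contains all maximizers, $\beta_i$ is itself a maximizer, so $M'_1(\alpha)=M_1(\beta_i)$. Then $\chain(\gamma)\in\mathcal{I}$ has $m_1(\gamma)\geq(\delta,i)$ and $\beta_i+\gamma$ is a real root. Corollary~\ref{cor:irr.chain.M.prime} (this is where Connectivity enters) gives $M_1(\beta_i)\leq M'_1(\gamma)\leq M'_1(\alpha)=M_1(\beta_i)$. Hence the original splitting, in the order $(\beta_i,\gamma)$, already works.

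If $\chain(\beta)$ is not irreducible, the induction hypothesis gives $\chain(\beta)=\chain(\beta^1)+\chain(\beta^2)$ with $m_1(\beta^1)=m_1(\beta)=(\delta,i)$ and $M'_1(\beta^2)=M'_1(\beta)=M'_1(\alpha)$. The three projections of $\beta^1,\beta^2,\gamma$ are positive roots of $\Delta^+_p$, so no pairwise sum vanishes, and their total is a root. Lemma~\ref{lemma:triple.sum.pairs} then shows that $\beta^1+\gamma$ or $\beta^2+\gamma$ is a root; it is increasing by Lemma~\ref{lemma:chain.sum.monotone}.
\begin{itemize}
\item In the first case take $\chain(\alpha)=\chain(\beta^1+\gamma)+\chain(\beta^2)$. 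Here $m_1(\beta^1+\gamma)=\min\{m_1(\beta^1),m_1(\gamma)\}=(\delta,i)$ by Corollary~\ref{cor:both.increasing.m.k}, using $m_1(\gamma)\geq(\delta,i)$.
\item In the second case take $\chain(\alpha)=\chain(\beta^1)+\chain(\beta^2+\gamma)$. Here $M'_1(\beta^2+\gamma)=\max\{M'_1(\beta^2),M'_1(\gamma)\}=M'_1(\alpha)$ by additivity of $M'_1$.
\end{itemize}
Either way the claim follows, closing the induction. The base case is covered as well: a reducible chain of minimal height splits into two simple roots, so the irreducible subcase above applies.

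The step I expect to need the most care is the degenerate subcase where $\beta=\beta_i$ is simple and is the only maximizer. It is the one place where the combinatorics alone does not suffice and Connectivity (via Lemma~\ref{lemma:unique.simple.add} and Corollary~\ref{cor:irr.chain.M.prime}) is genuinely needed. One must also check that $M'_1$ for increasing chains satisfies the same additivity as~\eqref{eq:Mprime-additive}; this is immediate from its definition via supports.
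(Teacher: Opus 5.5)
Your proof is correct and follows essentially the same route as the paper. Both argue by induction on relative height: fix an arbitrary splitting, use Corollary~\ref{cor:irr.chain.M.prime} to rule out the summand carrying $m_1(\alpha)$ and $M'_1(\alpha)$ being irreducible (your ``already works'' subcase is in fact vacuous, which is how the paper phrases it), and otherwise re-split that summand by the induction hypothesis and Lemma~\ref{lemma:triple.sum.pairs}. The only differences are cosmetic: you organize the cases via supports, swapping summands when both contain $\beta_i$, and you absorb the paper's explicit height-$2$ base case (done there via Lemma~\ref{lemma:unique.simple.add}) into the irreducible subcase.
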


\begin{proof}
Let $m_1(\alpha) = (\delta,i)$. We will prove this by induction on the relative height of $\chain(\alpha)$.
The base case $\hgt(\chain(\alpha)) = 2$ corresponds to $\chain(\alpha) = \chain(\beta_i) + \chain(\beta_j)$
with $j$ as in Lemma~\ref{lemma:unique.simple.add}. Then
$M'_1(\alpha) = M'_1(-\alpha) = \max\{M'_1(-\beta_i),M'_1(-\beta_j)\} = \max\{(\delta,j),M'_1(-\beta_j)\} = M_1(\beta_j)$
by Lemma~\ref{lemma:unique.simple.add}, establishing the base case.

Suppose now that the claim holds for all increasing chains of the relative height $<\hgt(\chain(\alpha))$. Consider
any splitting $\chain(\alpha) = \chain(\beta) + \chain(\gamma)$ with $\chain(\beta),\chain(\gamma) \in \mathcal{I}$.
By Corollary~\ref{cor:both.increasing.m.k}, we can assume without loss of generality that $m_1(\beta) = m_1(\alpha)$.
According to~\eqref{eq:Mprime-additive}, we shall now consider two cases:
\begin{itemize}[leftmargin=0.7cm]

\item
If $M'_1(\gamma) = M'_1(\alpha)$, then $\chain(\beta),\chain(\gamma)$ satisfy the requirements of the lemma;

\item
If $M'_1(\gamma) \ne M'_1(\alpha)$, then $M'_1(\alpha) = M'_1(\beta)$ by~\eqref{eq:Mprime-additive}. We also note that
$\chain(\beta)$ cannot irreducible, as that would contradict Corollary~\ref{cor:irr.chain.M.prime}. Applying the
inductive hypothesis to $\chain(\beta)$, there exists a decomposition $\chain(\beta) = \chain(\eta) + \chain(\mu)$
with $\chain(\eta),\chain(\mu)\in \mathcal{I}$, $m_1(\eta) = m_1(\beta)$ and $M'_1(\mu) = M'_1(\beta)$.
Considering the projections of $\chain(\eta),\chain(\mu),\chain(\gamma)$, by Lemma~\ref{lemma:triple.sum.pairs}
either $\Pr(\chain(\eta)) + \Pr(\chain( \gamma))$ or $\Pr(\chain(\mu)) + \Pr(\chain(\gamma))$ are in $\Delta$,
hence either $\chain(\eta + \gamma) \in \mathcal{I}$ or $\chain(\mu + \gamma) \in \mathcal{I}$ by
Lemma~\ref{lemma:chain.sum.monotone}. In the first case, $m_1(\eta + \gamma) = m_1(\alpha)$ and
$M'_1(\mu) = M'_1(\beta) = M'_1(\alpha)$ by Corollary~\ref{cor:both.increasing.m.k}. In the second case, we have
$m_1(\eta) = m_1(\beta) = m_1(\alpha)$ and $M'_1(\mu + \gamma) =  M'_1(\beta) = M'_1(\alpha)$ by~\eqref{eq:Mprime-additive}.

\end{itemize}
This completes the inductive step.
\end{proof}


\section{Periodicity of Decreasing Chains}\label{sec:dec}


\subsection{Preliminaries}
\

In this section, we will show that decreasing chains exhibit a compelling \textbf{periodicity}, that is we
can easily relate $\SL(\alpha)$ to $\SL(\alpha + k\delta)$ for some $k$ with $|\alpha| \geq q$, where $q,k$ depend only on
$\chain(\alpha)\in \mathcal{D}$. In fact, we explicitly describe what $k,q$ are.

Our first result investigates the prefix and suffix of words in decreasing chains.

\begin{lemma}\label{lemma:left.standard.decreasing}
For any $\alpha \in \wDelta^{+,\re}$ such that $|\alpha| > |\delta|$ and $\chain(\alpha)\in \mathcal{D}$,
$\chain(\SL^{rs}(\alpha))$ decreases and if $\SL^{ls}(\alpha)$ is real then $\chain(\deg(\SL^{ls}(\alpha)))$ decreases.
\end{lemma}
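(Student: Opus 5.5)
Write $u=\SL^{ls}(\alpha)$ and $v=\SL^{rs}(\alpha)$, so that $\SL(\alpha)=uv$ with $u,v\in\LL$. Standard Lyndon words are closed under taking Lyndon subwords, so $u,v\in \SL$.

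\textbf{The suffix $v$.} The word $v$ is a nonempty Lyndon suffix of $\SL(\alpha)$, and $\chain(\alpha)\in\mathcal{D}$. Corollary~\ref{cor:lyndon.prefix.suffix.chains}(2), applied with $\hat\alpha=\alpha$, then gives $\chain(\deg(v))\in \mathcal{D}$ directly. This argument only requires $\deg(v)$ to be real. We check that it is: if $\deg(v)$ were imaginary, Corollary~\ref{cor:imaginary.suffix.prefix}(2) would give $v=\SL(M_1(\alpha))$. Since $\chain(\alpha)$ decreases, Proposition~\ref{prop:monotonicity} gives $M_1(\alpha)<\alpha$, i.e.\ $v<uv$. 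This contradicts $uv\in\LL$, because a Lyndon word is smaller than its proper suffixes. Hence $\deg(v)$ is real, and the first claim holds.

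\textbf{The prefix $u$, assumed real.} We must show $\chain(\deg(u))\in\mathcal{D}$. Suppose instead that $\chain(\deg(u))\in\mathcal{I}$. Since both $\deg(u)$ and $\deg(v)$ are real and $\deg(u)+\deg(v)=\alpha$ is real, the three chains $\chain(\deg(u))\in\mathcal I$, $\chain(\deg(v))\in\mathcal D$ and $\chain(\alpha)$ are all defined. Corollary~\ref{cor:inc.dec.comparison} then gives
\[
\deg(u)<\alpha<\deg(v),
\]
which is consistent with $u<uv<v$, so there is no contradiction yet. To get one, I would use the Monotonicity and Convexity estimates together with $|\alpha|>|\delta|$. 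From Proposition~\ref{prop:monotonicity} we have $u<m_1(\deg u)$. Lemmas~\ref{lemma:dec.chains.lower.bound} and~\ref{lemma:interlock.m.M} give $m_1(\deg u)\le M'_1(\deg v)<v$, and we also know $M'_1(\alpha)<uv$.

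The key step is a length argument. Since $|\alpha|>|\delta|$, at least one of $u,v$ is longer than $|\delta|$.
\begin{itemize}
\item If $|u|>|\delta|$, then $\deg(u)-\delta$ lies in the same increasing chain, so $\SL(\deg(u)-\delta)<\SL(\deg(u))$. I would try to build a longer Lyndon prefix or a better factorization. A cleaner route is Lemma~\ref{lemma:equiv.to.standard.fac}: take $\ell=\SL(m_1(\deg u))$ and compare it with $u$ and $uv$. Because $\ell>u$, the lemma gives $\ell>uv$. Applying Lemma~\ref{lemma:equiv.to.standard.fac} requires $|\ell|<|uv|$, i.e.\ $|\delta|<|\alpha|$, which is exactly the hypothesis. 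This yields $m_1(\deg u)>\alpha$. Combined with $\alpha>M'_1(\alpha)\ge m_1(\alpha)$ (Lemma~\ref{lemma:interlock.m.M}), this gives $m_1(\deg u)>m_1(\alpha)$.
\item Lemma~\ref{lemma:min.im.rule} and Corollary~\ref{cor:mk.roots} should force $m_1(\alpha)$ to be comparable with $m_1(\deg u)$. Since $\chain(\deg v)$ decreases, write its polarization coefficients with opposite signs to those of $\deg(u)$. The coefficients of $\Pr\chain(\alpha)$ are then all nonpositive, yet they include the index of $m_1(\deg u)$ with a contribution from $u$. I would try to show that this index $i$ appears in $\Pr\chain(\alpha)$ with a strictly negative coefficient, while $M'_1(\alpha)\ge$ the index where that contribution lies. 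This would give $M'_1(\alpha)\ge m_1(\deg u)>\alpha$, contradicting Lemma~\ref{lemma:dec.chains.lower.bound}.
\end{itemize}

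\textbf{Main obstacle.} This last comparison, relating $m_1(\deg u)$ to $M'_1(\alpha)$ through Lemma~\ref{lemma:interlock.m.M} applied to the triple $\chain(\deg u),\chain(\deg v),\chain(\alpha)$, is the crux. That lemma gives exactly $m_1(\chain(\deg u))\le M'_1(\chain(\alpha))$. Together with $\SL(m_1(\deg u))>uv$ from Lemma~\ref{lemma:equiv.to.standard.fac}, which is where $|\alpha|>|\delta|$ is used, this yields $M'_1(\alpha)>\alpha$, contradicting Lemma~\ref{lemma:dec.chains.lower.bound}. I expect this part to work as described, with care needed only in checking the length hypothesis of Lemma~\ref{lemma:equiv.to.standard.fac}.
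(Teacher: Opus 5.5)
Your final argument (the ``Main obstacle'' paragraph) is correct and is essentially the paper's proof: both combine Proposition~\ref{prop:monotonicity}, Lemma~\ref{lemma:equiv.to.standard.fac} (this is where $|\delta|<|\alpha|$ is used), Lemma~\ref{lemma:interlock.m.M} and Lemma~\ref{lemma:dec.chains.lower.bound}, the only difference being that the paper applies Lemma~\ref{lemma:equiv.to.standard.fac} to $\ell=\SL(M'_1(\alpha))$ rather than to your $\ell=\SL(m_1(\deg u))$. The case split on $|u|>|\delta|$ and the coefficient-chasing bullet are unnecessary and should be deleted, since the chain $M'_1(\alpha)\geq m_1(\deg u)>\alpha$ needs no case distinction.
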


\begin{proof}
The first claim, asserting that $\chain(\SL^{rs}(\alpha))$ is decreasing, follows immediately from
Corollary~\ref{cor:lyndon.prefix.suffix.chains}.

We prove the second part by a contradiction: assume that $\beta = \deg(\SL^{ls}(\alpha))$ is real and
$\chain(\beta)\in \mathcal{I}$. We then have $M'_1(\alpha) < m_1(\beta)$: otherwise we would have $M'_1(\alpha) > \beta$
by Proposition~\ref{prop:monotonicity}, which contradicts to Lemma~\ref{lemma:equiv.to.standard.fac} as
$\beta < M'_1(\alpha) < \alpha$ by Lemma~\ref{lemma:dec.chains.lower.bound} and $|M'_1(\alpha)| = |\delta| < |\alpha|$.
But the inequality $M'_1(\alpha) < m_1(\beta)$ contradicts to Lemma~\ref{lemma:interlock.m.M}, hence,
$\chain(\beta)\in \mathcal{D}$ if $\beta$ is real.
\end{proof}

This allows to explicitly describe SL-words in irreducible decreasing chains.

\begin{corollary}\label{cor:dec.irr.chains}
For any $1\leq i\leq |I|$, pick $\beta \in \chain(\delta - \beta_i)$ with $|\beta| < |\delta|$. Then:
\begin{equation*}
  \SL(\beta + k\delta) = \underbrace{\SL(M_1(\beta))}_{k \text{ times}}\SL(\beta), \qquad
  \SL^{ls}(\beta + k\delta) = \SL(M_1(\beta)) \qquad \forall\, k\geq 1.
\end{equation*}
\end{corollary}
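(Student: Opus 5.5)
Since $\chain(\beta)=\chain(\delta-\beta_i)=\delta-\chain(\beta_i)$ and $\chain(\beta_i)$ is an increasing irreducible chain, $\chain(\beta)$ is a decreasing irreducible chain by Corollary~\ref{cor:monotonicity} (in the decomposition of Corollary~\ref{cor:lin.comb.chains}b) it is $-\chain(\beta_i)$). We argue by induction on $k\geq 1$, and prove the two formulas together. Set $\mu=M_1(\beta)$. By Corollary~\ref{cor:equiv.mk.Mk} and Proposition~\ref{prop:monotonicity}, $M_k(\beta)$ is the $k$-analogue of $\mu$, and $\beta+k\delta<\dots<\beta+\delta<\beta$ with $\mu<\beta+k\delta$ for every $k$ (Theorem~\ref{thm:convexity}(3)).

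\emph{Step 1: what the standard factorization can be.} Write $u=\SL^{ls}(\beta+k\delta)$ and $v=\SL^{rs}(\beta+k\delta)$. By Lemma~\ref{lemma:left.standard.decreasing}, $\chain(\deg v)\in\mathcal{D}$, and if $u$ is real then $\chain(\deg u)\in\mathcal{D}$ as well. If both $u,v$ were real, then $\chain(\beta)=\chain(\deg u)+\chain(\deg v)$ would be a sum of two decreasing chains, contradicting irreducibility. Now $v$ cannot be imaginary: a nonempty proper Lyndon suffix of $\SL(\beta+k\delta)$ has degree whose chain is decreasing (Corollary~\ref{cor:lyndon.prefix.suffix.chains}(2)), and chains are only defined for real roots. (If needed, one can instead argue via Corollary~\ref{cor:imaginary.suffix.prefix}(2), which would force $v=\SL(\mu)$, but $v>\SL(\beta+k\delta)>\SL(\mu)$.) Hence $u$ is imaginary, and Corollary~\ref{cor:imaginary.suffix.prefix}(1) then gives $u=\SL(M_1(\beta+k\delta))$ with degree $j\delta$ for some $j\le k$. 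Since $v$ is real with decreasing chain, we get $v=\SL(\beta+(k-j)\delta)$.

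\emph{Step 2: pin down $j=1$ and the imaginary word.} By Corollary~\ref{cor:equiv.mk.Mk}, $M_j(\beta)=(j\delta,r)$ where $\mu=(\delta,r)$. So the candidate words are $\SL_r(j\delta)\,\SL(\beta+(k-j)\delta)$, and by the induction hypothesis $\SL(\beta+(k-j)\delta)=\SL(\mu)^{k-j}\SL(\beta)$. To show $j=1$, the plan is to use Lemma~\ref{lemma:imaginary.words.decreasing} ($\SL_r(\delta)>\SL_r(j\delta)$) together with the maximality in Proposition~\ref{prop:generalized.Leclerc.algo}. The word $\SL(\mu)\,\SL(\beta+(k-1)\delta)$ is an admissible candidate: the bracket is nonzero because $\mu=M_1$ of that root, and the word is Lyndon since $\SL(\mu)<\SL(\beta+(k-1)\delta)$. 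Because $\SL_r(\delta)>\SL_r(j\delta)$ and neither word is a prefix of the other, except when $\SL_r(j\delta)$ itself begins with $\SL_r(\delta)$, the $j=1$ candidate is lexicographically larger. The prefix case needs care, and this is the \textbf{main obstacle}. I would try to handle it using Lemma~\ref{lemma:length.standfac.imaginary}, or by observing that $u$ is the \emph{longest} Lyndon proper prefix. Indeed, if $j\ge2$, then $\SL(\mu)\SL(\mu)^{k-1}\dots$ compared with $\SL_r(j\delta)\dots$ can be decided via Lemma~\ref{lemma:equiv.to.standard.fac}. Alternatively, use Lemma~\ref{lemma:stronger.no.splitting}(1) to show that the standard factorization of a word containing $\SL(\mu)^{k}\SL(\beta)$ cannot split at a longer imaginary block.

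\emph{Step 3: verify the stated form.} With $j=1$, induction gives $\SL(\beta+k\delta)=\SL(\mu)\SL(\beta+(k-1)\delta)=\SL(\mu)^k\SL(\beta)$. It remains to check consistency: $\SL(\mu)$ must be the longest Lyndon proper prefix of this word. Any longer Lyndon prefix $\SL(\mu)^a w$ would have to be compared against $\SL(\mu)$, and Lemma~\ref{lemma:lyndon.subword} applied to the canonical factorization $\SL(\mu)^{k}\cdot\SL(\beta)$ rules out the other candidates. This yields $\SL^{ls}(\beta+k\delta)=\SL(M_1(\beta))$.
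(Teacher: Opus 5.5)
Your Step~1 is exactly the paper's proof. Lemma~\ref{lemma:left.standard.decreasing}, together with irreducibility of the decreasing chain $\chain(\delta-\beta_i)$, forces $u=\SL^{ls}(\beta+k\delta)$ to be imaginary. Corollary~\ref{cor:imaginary.suffix.prefix}(1) then identifies $u=\SL(M_1(\beta+k\delta))$, and $v=\SL(\beta+(k-1)\delta)$ gives the first formula by iteration.

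The ``main obstacle'' in Step~2 does not exist; it comes from misreading $M_1$. By Definition~\ref{def:Mk}, $M_1(\alpha)$ is an element of $\imx$ with $|M_1(\alpha)|=|\delta|$, i.e.\ of the form $(\delta,r)$. So Corollary~\ref{cor:imaginary.suffix.prefix}(1) already says $\deg u=\delta$, i.e.\ $j=1$, and no comparison of $\SL_r(\delta)$ with $\SL_r(j\delta)$ is needed.

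The one point worth making explicit is $M_1(\beta+k\delta)=M_1(\beta)$. This is not Corollary~\ref{cor:equiv.mk.Mk}, which varies the index of $M_\bullet$ for a fixed root. It is immediate from Definition~\ref{def:Mk}: $\sb[\SL(\beta+k\delta)]$ is a nonzero element of $\fg_\gamma t^n$ with $\gamma=\pr(\chain(\beta))$, and $[ht,xt^n]=\gamma(h)\,xt^{n+1}$ for $h\in\fh$, $x\in\fg_\gamma$. So the nonvanishing condition depends only on $\gamma$.

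Step~3 is likewise superfluous. Once you know $u$ is the standard left factor and equals $\SL(M_1(\beta))$, there is nothing left to check about it being the longest Lyndon proper prefix. Moreover, the argument sketched there is incorrect: $\SL(\mu)^k\cdot\SL(\beta)$ is not a canonical factorization. Indeed $\SL(\mu)<\SL(\beta)$ by Proposition~\ref{prop:monotonicity}, since the chain is decreasing and so $\beta>M_1(\beta)$. In fact this concatenation is itself the Lyndon word $\SL(\beta+k\delta)$, so Lemma~\ref{lemma:lyndon.subword} gives nothing.

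Delete Steps~2 and~3 and what remains is a complete proof that coincides with the paper's.
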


\begin{proof}
First, we note that $\SL^{ls}(\beta + k\delta)$ is imaginary due to Lemma~\ref{lemma:left.standard.decreasing},
as $\chain(\beta)$ is a decreasing \irrchain. Therefore $\SL^{ls}(\beta + k\delta) = \SL(M_1(\beta))$ by
Corollary~\ref{cor:imaginary.suffix.prefix}. Note that $M_1(\beta)=M'_1(\beta)$. We further obtain the first formula
by an iterative application of $\SL(\beta + k\delta)=\SL(M_1(\beta))\SL(\beta + (k-1)\delta)$.
\end{proof}

The goal of the next few results is to ultimately show that $\SL(M'_1(\alpha))$ is the only imaginary word which can appear
as a subword of $\SL(\alpha)$ when $\chain(\alpha)\in \mathcal{D}$.

\begin{lemma}\label{lemma:M.prime.prefix}
For any $\alpha \in \wDelta^{+,\re}$, if $\SL(\alpha)$ has $\SL_i(\delta)$ as a prefix, then $\chain(\alpha) \in \mathcal{D}$
and $M'_1(\alpha) = (\delta,i)$.
\end{lemma}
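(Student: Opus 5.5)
The plan is to induct on $|\alpha|$, reducing to the standard factorization of $\SL(\alpha)$. Set $u=\SL_i(\delta)$. It is a proper prefix of $\SL(\alpha)$: equality is impossible because $\alpha$ is real, so $|\alpha|>|\delta|$. Being a prefix, $u$ is not a suffix of $\SL(\alpha)$, so Lemma~\ref{lemma:stronger.no.splitting}(1) says the standard factorization does not split $u$. Since $|u|=|\delta|$ and $u$ starts at position $1$, this means $u$ is a prefix of $\SL^{ls}(\alpha)$. Write $\beta=\deg(\SL^{ls}(\alpha))$ and $\gamma=\deg(\SL^{rs}(\alpha))$.

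\emph{Step 1: $\chain(\alpha)\in\mathcal{D}$ and $M'_1(\alpha)\geq(\delta,i)$.} I would split into three cases.
\begin{itemize}
\item If $\SL^{ls}(\alpha)=u$ is imaginary, then Corollary~\ref{cor:imaginary.suffix.prefix} gives $M_1(\alpha)=(\delta,i)$. As $u$ is a proper prefix, $u<\SL(\alpha)$, so $\alpha>M_1(\alpha)$. Proposition~\ref{prop:monotonicity} then makes $\chain(\alpha)$ decreasing, and Lemma~\ref{lemma:M.prime.bound} gives $M'_1(\alpha)\geq M_1(\alpha)=(\delta,i)$.
\item If $\beta$ is real, then $\SL(\beta)$ has $u$ as a prefix and $|\beta|<|\alpha|$. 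By induction, $\chain(\beta)\in\mathcal{D}$ and $M'_1(\beta)=(\delta,i)$.
\begin{itemize}
\item If $\gamma$ is imaginary, then $\chain(\alpha)=\chain(\beta)$, and we are done.
\item If $\gamma$ is real, then $\chain(\gamma)\in\mathcal{D}$. Otherwise $\chain(\gamma)\in\mathcal{I}$, and Corollary~\ref{cor:inc.dec.comparison} gives $\gamma<\beta+\gamma<\beta$. This contradicts $\SL^{ls}(\alpha)<\SL^{rs}(\alpha)$, which holds because $\SL(\alpha)$ is Lyndon. Now Lemma~\ref{lemma:chain.sum.monotone} makes $\chain(\alpha)$ decreasing, and \eqref{eq:Mprime-additive} gives $M'_1(\alpha)=\max\{M'_1(\beta),M'_1(\gamma)\}\geq(\delta,i)$.
\end{itemize}
\end{itemize}

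\emph{Step 2: the reverse inequality $M'_1(\alpha)\leq(\delta,i)$.} Suppose $M'_1(\alpha)=(\delta,j)$. Lemma~\ref{lemma:dec.chains.lower.bound} gives $\SL_j(\delta)<\SL(\alpha)$. The first $|\delta|$ letters of $\SL(\alpha)$ form $\SL_i(\delta)$. Since $\SL_j(\delta)$ and $\SL_i(\delta)$ have the same length, comparing them lexicographically forces $\SL_j(\delta)\leq\SL_i(\delta)$, i.e.\ $j\geq i$, i.e.\ $(\delta,j)\leq(\delta,i)$. Combined with Step 1, this gives $M'_1(\alpha)=(\delta,i)$.

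The main obstacle is the case in Step 1 where $\beta$ and $\gamma$ are both real. There the decreasing property of $\chain(\gamma)$ must be obtained without already knowing that $\chain(\alpha)$ decreases. I handle this with Corollary~\ref{cor:inc.dec.comparison} rather than Corollary~\ref{cor:lyndon.prefix.suffix.chains}, since the latter presupposes $\chain(\alpha)\in\mathcal{D}$. One must also check that the base of the induction is covered: the smallest $\alpha$ with this property necessarily has $\SL^{ls}(\alpha)=u$, which is exactly the imaginary case.
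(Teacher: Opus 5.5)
Your proof is correct, but it takes a different route from the paper. The differences are the decomposition you induct on and how you obtain $\chain(\alpha)\in\mathcal{D}$.

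\textbf{How the paper does it.} The paper gets $\chain(\alpha)\in\mathcal{D}$ in one line from Corollary~\ref{cor:lyndon.prefix.suffix.chains}. Part~(1) of that corollary, read contrapositively, says that every Lyndon prefix of a word in an increasing chain is real. So an imaginary prefix $\SL_i(\delta)$ already excludes $\chain(\alpha)\in\mathcal{I}$. Your remark that this corollary ``presupposes'' $\chain(\alpha)\in\mathcal{D}$ is therefore not accurate; the ``main obstacle'' you identify can be avoided.

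The paper then writes $\SL(\alpha)=\SL_i(\delta)w$ and inducts on the number $n$ of factors in the canonical factorization $w=w_1\cdots w_n$. At each step it splits off $w_n$:
\begin{itemize}
\item $\SL_i(\delta)w_1\cdots w_{n-1}$ is Lyndon by Lemma~\ref{lemma:seq.right.word.Lyndon};
\item $\chain(\deg(w_n))\in\mathcal{D}$ by part~(2) of the same corollary.
\end{itemize}
The base case $n=1$ is your Case~A. Note that there Corollary~\ref{cor:imaginary.suffix.prefix} is applied to the factorization $\SL_i(\delta)\cdot w_1$, which need not be the standard one. The inductive step is your Case~B2. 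In the paper, the upper bound from Lemma~\ref{lemma:dec.chains.lower.bound} is applied inside each case rather than once at the end.

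\textbf{What each route buys.} Your version inducts on $|\alpha|$ through the standard factorization. It obtains decreasing-ness of $\chain(\gamma)$ and $\chain(\alpha)$ from Corollary~\ref{cor:inc.dec.comparison} together with Lemma~\ref{lemma:chain.sum.monotone}. This costs an extra case analysis on the right factor (imaginary, increasing, or decreasing), but it uses only the comparison results and isolates the upper bound as one uniform step. The paper's route gets reality and decreasing-ness of both factors for free from the prefix/suffix corollary, so only the $M'_1$ bookkeeping remains.

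\textbf{Two small tidy-ups.}
\begin{itemize}
\item $\SL_i(\delta)$ is a prefix of $\SL^{ls}(\alpha)$ simply because $\SL^{ls}(\alpha)$ is the longest proper Lyndon prefix. You do not need Lemma~\ref{lemma:stronger.no.splitting} for this.
\item Phrase Case~A as ``$\SL^{ls}(\alpha)$ is imaginary.'' By Corollary~\ref{cor:imaginary.suffix.prefix}, this forces $\SL^{ls}(\alpha)=\SL(M_1(\alpha))$, which has length $|\delta|$ and hence equals $\SL_i(\delta)$. With that phrasing your case split is visibly exhaustive.
\end{itemize}
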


\begin{proof}
First, we note that $\chain(\alpha) \in \mathcal{D}$ by Corollary~\ref{cor:lyndon.prefix.suffix.chains}. By the assumption
we have $\SL(\alpha) = \SL_i(\delta)w$ for some nonempty $w$, and let $w= w_1w_2\ldots w_n$ be its canonical factorization.
We shall prove that $M'_1(\alpha) = (\delta,i)$ by induction on $n$. The base case $n=1$ corresponds to
$\SL(\alpha) = \SL_i(\delta)\ell$ with Lyndon~$\ell$. From Corollary~\ref{cor:imaginary.suffix.prefix} we have
$M_1(\alpha) = (\delta,i)$, hence $M'_1(\alpha) \geq (\delta,i)$ by Lemma~\ref{lemma:M.prime.bound}. However, since
$M'_1(\alpha) < \alpha$ by Lemma~\ref{lemma:dec.chains.lower.bound}, we must have $M'_1(\alpha) = (\delta,i)$.

For the step of induction, we can assume that the result holds for $v=\SL_i(\delta)w_1w_2\ldots w_{n-1}$, which is Lyndon
by Lemma~\ref{lemma:seq.right.word.Lyndon}. Thus $M'_1(\beta) = (\delta,i)$ with $\beta = \deg(v)$.
As $\chain(\gamma) \in \mathcal{D}$ by Corollary~\ref{cor:lyndon.prefix.suffix.chains}, if $M'_1(\alpha) \neq (\delta,i)$
then $M'_1(\alpha) > (\delta,i)$ by~\eqref{eq:Mprime-additive}, a contradiction with $\SL(M'_1(\alpha)) < \SL(\alpha)$ of
Lemma~\ref{lemma:dec.chains.lower.bound} as now $\SL(M'_1(\alpha)) > \SL_i(\delta)w =\SL(\alpha)$.
This completes the inductive step.
\end{proof}

As an immediate application of Lemma~\ref{lemma:stronger.no.splitting} to the present setup, we get:

\begin{lemma}\label{lemma:no.splitting.decreasing}
For any $\alpha \in \wDelta^{+,\re}$ if $\chain(\alpha) \in \mathcal{D}$, then the standard factorization of $\SL(\alpha)$ does
not split imaginary words. Moreover, if $\SL^{ls}(\alpha)$ is real, then $\SL_i(\delta)$ is not a suffix of $\SL^{ls}(\alpha)$.
\end{lemma}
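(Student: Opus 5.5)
The plan has two parts, matching the two claims of the statement. Both rest on Lemma~\ref{lemma:stronger.no.splitting}(1) together with the structure of decreasing chains developed above.

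\emph{First claim.} Suppose an imaginary word $u=\SL_i(k\delta)$ occurs as a subword of $\SL(\alpha)$ and is split by the standard factorization $\SL(\alpha)=\SL^{ls}(\alpha)\SL^{rs}(\alpha)$. Since $u$ is Lyndon, Lemma~\ref{lemma:stronger.no.splitting}(1) already forbids this unless $u$ is a suffix of $\SL(\alpha)$. So it remains to exclude the case where $u$ is a proper suffix of $\SL(\alpha)$, i.e.\ $u$ is a Lyndon suffix of imaginary degree. By Corollary~\ref{cor:lyndon.prefix.suffix.chains}(2), every nonempty Lyndon suffix of $\SL(\alpha)$ with $\chain(\alpha)\in\mathcal{D}$ has $\chain(\deg(\cdot))\in\mathcal{D}$. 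Chains are only defined for real degrees, so the intended conclusion is that this suffix must be real. To make this precise, I would replay the induction in the proof of part (2) of that corollary, which shows that a Lyndon suffix $\ell$ of $\SL(\hat\alpha)$ cannot be imaginary. Indeed, if $w_n\ell$ is the next Lyndon suffix as in Lemma~\ref{lemma:seq.left.word.Lyndon}, Corollary~\ref{cor:imaginary.suffix.prefix} would give $\ell=\SL(M_1(\deg(w_n\ell)))$. Then Proposition~\ref{prop:monotonicity} for the decreasing chain would give $\ell<w_n\ell$, contradicting $w_n\ell\in\LL$. The degenerate case where $w_n$ is itself imaginary is handled by noting that $\deg(w_n\ell)$ would then be imaginary too, whereas the base case $\ell=\SL(\alpha)$ is real. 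Hence no imaginary word is a suffix of $\SL(\alpha)$, and the standard factorization splits no imaginary subword.

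\emph{Second claim.} Assume $\SL^{ls}(\alpha)$ is real and, for contradiction, that $\SL_i(\delta)$ is a suffix of it. Then $\SL^{ls}(\alpha)$ is a Lyndon word with a proper imaginary Lyndon suffix. Since $\SL^{ls}(\alpha)$ is a Lyndon prefix of $\SL(\alpha)$, its degree lies in a chain that is decreasing by Lemma~\ref{lemma:left.standard.decreasing} (this uses $|\alpha|>|\delta|$; if $|\alpha|\leq|\delta|$, then $\SL^{ls}(\alpha)$ has degree of length $<|\delta|$ and cannot contain $\SL_i(\delta)$). Applying the first part of the argument (the suffix analysis via Corollary~\ref{cor:lyndon.prefix.suffix.chains}(2)) to $\SL(\deg\SL^{ls}(\alpha))=\SL^{ls}(\alpha)$ shows that it has no imaginary Lyndon suffix, which is a contradiction.

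The main obstacle is making the suffix argument rigorous when intermediate canonical factors $w_n$ are imaginary. I would handle this by applying Corollary~\ref{cor:imaginary.suffix.prefix}(2) directly to the pair $(w_n,\ell)$ whenever $w_n\ell$ is real, and by descending induction on $|\ell|$ otherwise. If the subword route through Lemma~\ref{lemma:stronger.no.splitting} leaves gaps, a fallback is the inequality $\SL_i(\delta)>\SL^{ls}(\alpha)$ combined with Lemma~\ref{lemma:equiv.to.standard.fac}.
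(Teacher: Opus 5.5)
Your proposal is correct and follows the paper's proof. For the first claim, you combine Lemma~\ref{lemma:stronger.no.splitting}(1) with the fact that $\SL(\alpha)$ has no imaginary suffix, which the proof of Corollary~\ref{cor:lyndon.prefix.suffix.chains}(2) establishes. For the second, you use Lemma~\ref{lemma:left.standard.decreasing} to get $\chain(\deg(\SL^{ls}(\alpha)))\in\mathcal{D}$ and then apply Corollary~\ref{cor:lyndon.prefix.suffix.chains} to $\SL^{ls}(\alpha)$. Your fallback via Lemma~\ref{lemma:equiv.to.standard.fac} is not needed.
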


\begin{proof}
For the first statement, note that $\SL(\alpha)$ cannot have an imaginary suffix by Corollary~\ref{cor:lyndon.prefix.suffix.chains},
hence the claim follows immediately from Lemma~\ref{lemma:stronger.no.splitting}. For the second statement, assume that
$\SL^{ls}(\alpha)$ is real and contains $\SL_i(\delta)$ as a suffix, then $|\alpha| > |\delta|$. However, by
Lemma~\ref{lemma:left.standard.decreasing}, we must have $\chain(\deg(\SL^{ls}(\alpha))) \in \mathcal{D}$.
Applying Corollary~\ref{cor:lyndon.prefix.suffix.chains} to $\SL^{ls}(\alpha)$ then yields a contradiction.
\end{proof}

Here, it is crucial to use the standard factorization for decreasing chains, since the costandard
factorization can actually split imaginary words, as illustrated in the following example.

\begin{example}\label{ex:costand-decr-fails}
Consider the affine type $A_n^{(1)}$ with the order $1 < 2 < \cdots < n < 0$. According to~\cite[Theorem 4.2]{AT},
applied to $\chain(\alpha_0 + \delta)\in \mathcal{D}$ by Lemma~\ref{lemma:monotonicity.smallest.once}:
\begin{equation*}
  \SL(\alpha_0 + \delta) = 10\vert2\ldots n0 = \SL_{n-1}(\delta)0,
\end{equation*}
where the vertical line denotes the costandard factorization by Lemma~\ref{lemma:right.costfac.minimal}.
Moreover, this example also shows that there is no analogue of Lemma~\ref{lemma:left.standard.decreasing}
for costandard factorizations: $\chain(\deg(\SL^l(\alpha_0+\delta)))=\chain(\alpha_0+\alpha_1)$ is increasing
by Lemma~\ref{lemma:monotonicity.smallest.once}.
\end{example}

With the help of above auxiliary steps, we are finally ready to establish the aforementioned result on imaginary subwords
in decreasing chains.

\begin{proposition}\label{prop:imaginary.substring.decreasing}
For any $\alpha \in \wDelta^{+,\re}$ with $\chain(\alpha) \in \mathcal{D}$, if $\SL_i(\delta)$ is a subword of $\SL(\alpha)$,
then $M'_1(\chain(\alpha)) = (\delta,i)$. Moreover, $\SL_i(\delta)$ is also a prefix of $\SL(\alpha)$.
\end{proposition}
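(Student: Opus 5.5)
We argue by induction on $|\alpha|$, peeling off the standard factorization of $\SL(\alpha)$. The key tools are Lemma~\ref{lemma:no.splitting.decreasing} (the standard factorization never splits an imaginary subword), Lemma~\ref{lemma:left.standard.decreasing} (both standard factors stay in $\mathcal{D}$ when real), Corollary~\ref{cor:lyndon.prefix.suffix.chains}, and Lemma~\ref{lemma:M.prime.prefix}. The last of these already gives $M'_1(\alpha)=(\delta,i)$ once we know that $\SL_i(\delta)$ is a prefix. So the real content is the prefix claim.

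The base case is $|\alpha|\le|\delta|$. Here $\SL_i(\delta)$ can be a subword only if $|\alpha|>|\delta|$, so this case is vacuous.

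For the inductive step, write $\SL(\alpha)=uv$ with $u=\SL^{ls}(\alpha)$ and $v=\SL^{rs}(\alpha)$. By Lemma~\ref{lemma:no.splitting.decreasing}, the occurrence of $\SL_i(\delta)$ lies entirely inside $u$ or entirely inside $v$.

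Suppose first that $u$ is imaginary. Then $u=\SL(M_1(\alpha))$ by Corollary~\ref{cor:imaginary.suffix.prefix}, and $u$ is some $\SL_j(\delta)$. Lemma~\ref{lemma:length.standfac.imaginary} shows that a longer imaginary word cannot be a standard left factor here: its $\SL^{ls}$ would be of length $>|(k-1)\delta|$, and one compares it via Lemma~\ref{lemma:equiv.to.standard.fac}. Alternatively, Lemma~\ref{lemma:M.prime.prefix} together with Lemma~\ref{lemma:dec.chains.lower.bound} forces $|u|=|\delta|$. Hence $u=\SL_j(\delta)$ with $M'_1(\alpha)=(\delta,j)$.

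If the occurrence lies in $u$, then $i=j$ and we are done. If instead it lies in $v$, then $\chain(\deg v)\in\mathcal{D}$, and induction gives that $v$ begins with $\SL_i(\delta)$ and $M'_1(v)=(\delta,i)$. Now $u=\SL_j(\delta)<v$ together with $v=\SL_i(\delta)\cdots$ gives $j\ge i$, that is, $(\delta,j)\le(\delta,i)$. On the other hand, \eqref{eq:Mprime-additive} gives $M'_1(\alpha)=\max\{M'_1(u\text{-part}),M'_1(v)\}\ge(\delta,i)$. Combined with $M'_1(\alpha)=(\delta,j)$, this forces $i=j$. The sum $\alpha=j\delta$-part${}+\deg v$ is handled by noting that adding $\delta$ does not change the chain.

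Suppose next that $u$ is real. Then $\chain(\deg u)\in\mathcal{D}$ by Lemma~\ref{lemma:left.standard.decreasing}, and $\SL_i(\delta)$ is not a suffix of $u$ by Lemma~\ref{lemma:no.splitting.decreasing}.

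If the occurrence lies in $u$, induction shows that $u$, and hence $\SL(\alpha)$, begins with $\SL_i(\delta)$. Lemma~\ref{lemma:M.prime.prefix} then finishes this case.

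If the occurrence lies in $v$, induction shows that $v$ begins with $\SL_i(\delta)$. We must show that $u$ also contains an imaginary word, so that we can reduce to the previous subcase. Suppose not. We have $u<v$, and $M'_1(\deg v)=(\delta,i)<v$ by Lemma~\ref{lemma:dec.chains.lower.bound}. Also $M'_1(\alpha)<\alpha<u$, because $\chain(\deg u)$ is decreasing and $u$ is the standard prefix, compared via Lemma~\ref{lemma:equiv.to.standard.fac}. Combining these, $M'_1(\alpha)\ge M'_1(v)=(\delta,i)$, so $\SL_i(\delta)\le \SL(M'_1(\alpha))<u<v$. Since $v$ starts with $\SL_i(\delta)$, the word $u$ must also start with $\SL_i(\delta)$, which contradicts our supposition. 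Thus the occurrence also sits in $u$ as a prefix.

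The main obstacle is this last comparison: showing that a real left factor $u$ squeezed between $\SL_i(\delta)$ and a word beginning with $\SL_i(\delta)$ must itself begin with $\SL_i(\delta)$. This needs $|u|\ge|\delta|$, which I expect to obtain from Lemma~\ref{lemma:leclerc.14} applied to $\SL(\alpha)=u^{k+1}fx$. Once the prefix statement is established, $M'_1(\chain(\alpha))=(\delta,i)$ follows directly from Lemma~\ref{lemma:M.prime.prefix}.
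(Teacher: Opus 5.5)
Your proof is correct and is essentially the paper's: induction on $|\alpha|$ through the standard factorization $\SL(\alpha)=uv$ (with $u=\SL^{ls}(\alpha)$, $v=\SL^{rs}(\alpha)$), Lemma~\ref{lemma:no.splitting.decreasing} to localize the occurrence, and a split according to whether $u$ is imaginary or real. Invoking Lemma~\ref{lemma:M.prime.prefix} up front is a genuine streamlining. It reduces everything to the prefix claim, and in the hard subcase you only need $M'_1(\alpha)\ge(\delta,i)$ from \eqref{eq:Mprime-additive}, rather than first ruling out $j\neq i$ as the paper does.

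Two slips in that subcase should be fixed.

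\begin{itemize}
\item The inequality ``$\alpha<u$'' is backwards. Since $u$ is a proper prefix of $\SL(\alpha)$, you have $u<\SL(\alpha)$. What you actually need, $\SL(M'_1(\alpha))<u$, does hold: it follows from Lemma~\ref{lemma:dec.chains.lower.bound} combined with Lemma~\ref{lemma:equiv.to.standard.fac}. Simpler still, squeeze $\SL(\alpha)$ itself, as the paper does: $\SL_i(\delta)\le\SL(M'_1(\alpha))<\SL(\alpha)<v$.
\item Your ``main obstacle'' is not an obstacle, and no length bound or Lemma~\ref{lemma:leclerc.14} is needed. Suppose $a<u<av'$. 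Since $a<u$, either $a$ is a prefix of $u$, or $u$ and $a$ first differ at a position $\le|a|$ where $u$ is larger. In the second case $u>av'$ as well, a contradiction, so $a$ must be a prefix of $u$.
\end{itemize}

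In the imaginary case, your detours are unnecessary. First, $|u|=|\delta|$ is immediate, because $M_1(\alpha)$ has degree $\delta$ by definition. Second, $M'_1(\deg v)=M'_1(\alpha)$ holds simply because $\deg v=\alpha-\delta\in\chain(\alpha)$. So the appeal to Lemma~\ref{lemma:length.standfac.imaginary} and the comparison $j\ge i$ can be dropped.
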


\begin{proof}
We shall prove this by induction on $|\alpha|$. For the base case, $\alpha$ being of minimal height satisfying the conditions,
we claim that $\SL^{ls}(\alpha)$ is imaginary, in which case $|\SL^{ls}(\alpha)| = |\delta|$ by
Corollary~\ref{cor:imaginary.suffix.prefix}, and the result holds by Lemma~\ref{lemma:M.prime.prefix}.
If not, then $\SL^{ls}(\alpha)$ is real and so $\chain(\deg(\SL^{ls}(\alpha))),\chain(\deg(\SL^{rs}(\alpha)))\in \mathcal{D}$
by Lemma~\ref{lemma:left.standard.decreasing}. But then $\SL_i(\delta)$ is a subword of either $\SL^{ls}(\alpha)$ or
$\SL^{rs}(\alpha)$ by Lemma~\ref{lemma:no.splitting.decreasing}, a contradiction with the choice of $\alpha$.

For the inductive step, assume the result holds for all roots in decreasing chains of height $< |\alpha|$.
By Lemma~\ref{lemma:no.splitting.decreasing}, if $\SL_j(\delta)$ is a subword of $\SL(\alpha)$ then it is a subword of
either $\SL^{ls}(\alpha)$ or $\SL^{rs}(\alpha)$. Since $|\alpha| > |\delta|$, Lemma~\ref{lemma:left.standard.decreasing}
provides two cases:
\begin{enumerate}[leftmargin=0.7cm]

\item[1)]
If $\SL^{ls}(\alpha)$ is imaginary, then it must be of length $|\delta|$ by Corollary~\ref{cor:imaginary.suffix.prefix},
and so $M'_1(\alpha) = (\delta,j)$ by Lemma~\ref{lemma:M.prime.prefix}. By the inductive assumption for $\SL^{rs}(\alpha)$,
the only possible imaginary subword of $\SL(\alpha)$ is $\SL_i(\delta)$, and is its~prefix.

\item[2)]
If $\SL^{ls}(\alpha)$ is real, then $\chain(\deg(\SL^{ls}(\alpha))),\chain(\deg(\SL^{rs}(\alpha)))$ are both decreasing
by Lemma~\ref{lemma:left.standard.decreasing}. Let $M'_1(\chain(\alpha)) = (\delta,i)$. Assume first that $\SL_j(\delta)$
is a subword of $\SL^{ls}(\alpha)$ for $j \neq i$. We note that $j > i$, as otherwise applying the inductive hypothesis to
$\SL^{ls}(\alpha)$ we would get $M'_1(\chain(\deg(\SL^{ls}(\alpha)))) > M'_1(\chain(\alpha))$, a contradiction
with~\eqref{eq:Mprime-additive}. By the inductive hypothesis, $\SL_j(\delta)$ is then a prefix of $\SL^{ls}(\alpha)$,
and so $M'_1(\alpha) > \alpha$, in contradiction with Lemma~\ref{lemma:dec.chains.lower.bound}. Likewise
by~\eqref{eq:Mprime-additive} if $\SL_j(\delta)$ was a subword of $\SL^{rs}(\alpha)$, then $j \geq i$. However, if $j > i$,
then by the inductive hypothesis $\SL_j(\delta)$ is a prefix of $\SL^{rs}(\alpha)$ and we get
$\SL(\alpha) < \SL^{rs}(\alpha) < \SL(M'_1(\alpha))$ again contradicting Lemma~\ref{lemma:dec.chains.lower.bound}.

\end{enumerate}

It thus only remains to show that if $\SL_i(\delta)$ is a subword of $\SL(\alpha)$, then $\SL_i(\delta)$ is a prefix of
$\SL(\alpha)$. If $\SL_i(\delta)$ is a subword of $\SL^{ls}(\alpha)$, then applying the inductive hypothesis yields the result.
If $\SL_i(\delta)$ is a subword of $\SL^{rs}(\alpha)$, then by the inductive hypothesis we have $\SL_i(\delta)$ is a prefix
of $\SL^{rs}(\alpha)$, and we get $\SL(M'_1(\alpha)) = \SL_i(\delta) < \SL(\alpha) < \SL^{rs}(\alpha)$ by
Lemma~\ref{lemma:dec.chains.lower.bound}, which implies that $\SL_i(\delta)$ is also a prefix of $\SL(\alpha)$.
This completes the inductive step.
\end{proof}

We are now ready to prove the first result regarding repetitive occurrence of imaginary subwords in decreasing chains.

\begin{proposition}\label{prop:weak.form.desc.irr.chains}
For any decreasing chain $\chain(\alpha)$ and any $\hat{\alpha}\in \chain(\alpha)$, we have:
\begin{gather}\label{eqn:imaginary.prefix.chunks}
  \SL(\hat{\alpha}) = \underbrace{\SL(M'_1(\alpha))}_{p \text{ times}} w
\end{gather}
for some $p \geq 0$, where $\SL(M'_1(\alpha))$ is not a prefix of $w$ and $w > \SL(M'_1(\alpha))$.
Moreover, for any $p' > 0$ there exists a $q$ such that if $|\hat{\alpha}| > q|\delta|$, then $p \geq p'$.
Finally, the above function $p\colon \chain(\alpha)\to \BZ_{\geq 0}$ is monotone non-decreasing.
\end{proposition}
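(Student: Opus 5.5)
The plan is to write $u=\SL(M'_1(\alpha))$ and first obtain the decomposition~\eqref{eqn:imaginary.prefix.chunks} formally. Take $p$ maximal such that $u^p$ is a prefix of $\SL(\hat\alpha)$, and set $\SL(\hat\alpha)=u^pw$. Then $u$ is not a prefix of $w$ by maximality. The inequality $w>u$ needs a short argument. If $p=0$, then $w=\SL(\hat\alpha)>u$ by Lemma~\ref{lemma:dec.chains.lower.bound}. If $p>0$, then $w$ is a nonempty proper suffix of the Lyndon word $\SL(\hat\alpha)$, so $w>\SL(\hat\alpha)=u^pw\geq u$, using that $u$ is a prefix of $u^pw$. (Here $w\neq\emptyset$, since $\hat\alpha$ is real.)

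Next I would prove monotonicity of $p$ along the chain. For this I want to relate $\SL(\hat\alpha+\delta)$ to $\SL(\hat\alpha)$ via the standard factorization. By Lemma~\ref{lemma:left.standard.decreasing}, $\SL^{ls}(\hat\alpha+\delta)$ is either imaginary, hence equal to $\SL_i(\delta)$ with $(\delta,i)=M'_1(\alpha)$ by Proposition~\ref{prop:imaginary.substring.decreasing}, or real with a decreasing chain. In the imaginary case, $\SL^{rs}(\hat\alpha+\delta)$ has degree $\hat\alpha$, so $\SL(\hat\alpha+\delta)=u\,\SL(\hat\alpha)$ and $p$ increases by one.

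The real case is the harder one. I would argue by induction on $|\hat\alpha|$ combined with the relative height of the chain. Write $\SL(\hat\alpha+\delta)=\SL(\gamma_1)\SL(\gamma_2)$ with both chains decreasing. By Proposition~\ref{prop:imaginary.substring.decreasing}, any imaginary block inside can only be $u$ and must sit as a prefix. A cleaner route to both remaining claims is probably a direct comparison. The candidate $u\,\SL(\hat\alpha)$ is a Lyndon word of degree $\hat\alpha+\delta$, by Lemma~\ref{lemma:lyndon} since $u<\SL(\hat\alpha)$, and its bracket is nonzero because $M_1(\hat\alpha)$-type nonvanishing holds. If nonvanishing fails, I instead use $s$-splitting. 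By the maximality in~\eqref{eq:generalized Leclerc}, $\SL(\hat\alpha+\delta)\geq u\,\SL(\hat\alpha)$. Comparing prefixes using $w>u$, any word starting with $u^{p'}$ for $p'<p$ followed by something other than $u$ would be compared against $u^{p+1}w$. Here I must check the direction carefully, since larger words with fewer $u$'s are a priori allowed. This is where I would bring in Lemma~\ref{lemma:equiv.to.standard.fac} and the fact that $M'_1<$ chain.

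For the growth claim ($p\to\infty$), I would induct on $s_\alpha$, mirroring Lemma~\ref{lemma:dec.chains.lower.bound}. When $s_\alpha=1$, I expect that $\SL^{ls}$ becomes imaginary for large $|\hat\alpha|$, as in Corollary~\ref{cor:dec.irr.chains}. When $s_\alpha>1$, use Lemma~\ref{lemma:s.splitting} to write $\chain(\alpha)=\chain(\beta)+\chain(\gamma)$ with equal $M'_1$. By induction both summands eventually start with many copies of $u$, and the word $\SL(\hat\alpha)$ dominates the concatenation $\SL(\hat\beta)\SL(\hat\gamma)$, whose prefix is $u^{p'}$. I expect the \textbf{main obstacle} to be the monotonicity step in the real-$\SL^{ls}$ case: showing that a larger competing word cannot have fewer leading $u$'s. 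I would first try to establish the identity $\SL(\hat\alpha+k\delta)\geq u^k\SL(\hat\alpha)$ together with an upper bound forcing the prefix $u^p$.
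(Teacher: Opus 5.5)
Your first step (take $p$ maximal, then get $w>\SL(M'_1(\alpha))$ from Lemma~\ref{lemma:dec.chains.lower.bound} and the Lyndon property) is correct and matches the paper. The monotonicity claim, however, is not proved, and the direction you pursue cannot prove it. A lower bound like $\SL(\hat\alpha+\delta)\geq u\,\SL(\hat\alpha)$ says nothing about the number of leading copies of $u$: a word $u^{p''}w''$, with $w''>u$ not starting with $u$, only gets \emph{larger} as $p''$ decreases. Moreover, $u\,\SL(\hat\alpha)$ is not an admissible Leclerc candidate when $M_1(\alpha)<M'_1(\alpha)$, since then $[\sb[u],\sb[\SL(\hat\alpha)]]=0$ by the maximality defining $M_1$. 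The missing observation is that the required \emph{upper} bound is the hypothesis itself: $\chain(\alpha)\in\mathcal{D}$ means $\SL(\hat\alpha+\delta)<\SL(\hat\alpha)$. Write $\SL(\hat\alpha+\delta)=u^{p''}w''$ as in your first step. If $p''<p$, cancel $u^{p''}$; then $w''$ beats $u$ at some position $\leq|u|$, so $w''>u^{p-p''}w$ and $\SL(\hat\alpha+\delta)>\SL(\hat\alpha)$, a contradiction. This is the paper's one-line argument, and no factorization analysis is needed.

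The growth claim has a more serious gap. For $s_\alpha>1$, ``$\SL(\hat\alpha)$ dominates $\SL(\hat\beta)\SL(\hat\gamma)$'' is again only a lower bound and forces no prefix. This step can be repaired: choose $\hat\alpha=\hat\beta+\hat\gamma$ with both summands long, and use Theorem~\ref{thm:convexity}(1) to place $\hat\alpha$ strictly between $\hat\beta$ and $\hat\gamma$, whose SL-words both begin with $u^{p'}$. But the base case $s_\alpha=1$ of your induction is unsupported. It includes reducible chains, and $\SL^{ls}$ need not become imaginary. For example, take $A_3^{(1)}$ with $0<1<2<3$. There $\SL_1(\delta)=0321$, $\chain(\delta-\beta_j)=\chain(\alpha_j)$, $M_1(\beta_2)=(\delta,1)$ and $M_1(\beta_3)=(\delta,2)$. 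Hence $\chain(\alpha_2+\alpha_3)=\chain(\alpha_2)+\chain(\alpha_3)$ has $s=1$. Yet $\SL(\alpha_2+\alpha_3+k\delta)$ is $k$ copies of $0321$ followed by $23$, and its standard factorization cuts off only the final letter $3$, so $\SL^{ls}$ is real for every $k\geq 1$.

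The paper instead inducts on $\hgt(\chain(\alpha))$ through the standard factorization, with two cases:
\begin{itemize}
\item If $\SL^{ls}(\alpha'+k\delta)$ is imaginary for $k\gg0$, it equals $\SL(M'_1(\alpha))$, and $p$ grows by one at each step.
\item If it is real for unboundedly many $k$, both factors lie in decreasing chains of smaller relative height (Lemma~\ref{lemma:left.standard.decreasing}), and one of them is long. A long left factor must have the same $M'_1$ as $\alpha$, since otherwise Lemma~\ref{lemma:dec.chains.lower.bound} fails. A long right factor passes its leading copies to the left factor by Lemma~\ref{lemma:leclerc.14}.
\end{itemize}
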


\begin{proof}
The first statement follows from $\SL(\hat{\alpha})>\SL(M'_1(\alpha))$ due to Lemma~\ref{lemma:dec.chains.lower.bound} and
$\SL(\hat{\alpha})$ being Lyndon (implying that all proper suffixes of $\SL(\hat{\alpha})$ are $>\SL(M'_1(\alpha))$). The last
claim that $p\colon \chain(\alpha)\to \BZ_{\geq 0}$ is monotone non-decreasing is clear, as otherwise we would have
$\hat{\alpha}<\hat{\alpha} + \delta$ for some $\hat{\alpha}\in \chain(\alpha)$, a contradiction with $\chain(\alpha)\in \mathcal{D}$.

We shall now prove the second statement by induction on $\hgt(\chain(\alpha))$. The base case $\hgt(\chain(\alpha))=1$ follows
from Corollary~\ref{cor:dec.irr.chains}. As for the induction step, let us assume that the claim holds for all decreasing chains
of relative height $<\hgt(\chain(\alpha))$. We shall consider two cases:
\begin{enumerate}[leftmargin=0.7cm]

\item[1)]
First, if $\SL^{ls}(\alpha' + k\delta)$ is imaginary for $k \gg 0$, then $\SL^{ls}(\alpha' + k\delta) = \SL(M_1(\alpha'))$
for $k \gg 0$ by Corollaries~\ref{cor:imaginary.suffix.prefix}. We note that then
$M_1(\alpha') = M'_1(\alpha')$ as otherwise $M_1(\alpha') < M'_1(\alpha')$ by Lemma~\ref{lemma:M.prime.bound}, and so
$\alpha' + k\delta < M'_1(\alpha')$ by Lemma~\ref{lemma:equiv.to.standard.fac}, a contradiction with
Lemma~\ref{lemma:dec.chains.lower.bound}. Thus, $\SL^{ls}(\alpha' + k\delta) = \SL(M_1(\alpha')) = \SL(M'_1(\alpha'))$ and
$\SL^{rs}(\alpha' + k\delta)=\SL(\alpha' + (k-1)\delta)$. Hence, for any $p'>0$ there is a sufficiently large $q$ such that
$p(\alpha' + k\delta) \geq p'$ for $k\geq q$.

\item[2)]
Let us now assume that $\SL^{ls}(\alpha' + k\delta)$ is real for an unbounded set of $k$. By the induction assumption, we know
that for any $\chain(\beta)\in \mathcal{D}$ with $\hgt(\chain(\beta))<\hgt(\chain(\alpha))$, given any $p' > 0$ there is some
$q_{\chain(\beta)}$ such that $p(\hat{\beta}) \geq p'$ for any $\hat{\beta} \in \chain(\beta)$ with
$|\hat{\beta}| > q_{\chain(\beta)}|\delta|$. We claim that $p(\alpha' + k\delta) \geq p'$ whenever
$\SL^{ls}(\alpha' + k\delta)$ is real and
\begin{equation*}
  k \geq 1 + 2\max\big\{q_{\chain(\beta)} \,|\, \hgt(\chain(\beta)) < \hgt(\chain(\alpha))\big\}.
\end{equation*}
Assuming that $\SL^{ls}(\alpha' + k\delta)$ is real, both $\SL^{ls}(\alpha' + k\delta)$, $\SL^{rs}(\alpha' + k\delta)$
determine decreasing chains by Lemma~\ref{lemma:left.standard.decreasing}, and one of these words is sufficiently long
so that it starts with at least $p'$ copies of $\SL(M'_1(\cdot))$. We consider two cases:
\begin{enumerate}[leftmargin=0.7cm]

\item[i)]
Suppose first that $\SL^{ls}(\alpha' + k\delta)$ starts with $\geq p'$ copies of $\SL(M'_1(\beta))$, where
$\beta = \deg(\SL^{ls}(\alpha' + k\delta))$. We note that $M'_1(\beta) \leq M'_1(\alpha)$ by~\eqref{eq:Mprime-additive}.
We also cannot have $M'_1(\beta) < M'_1(\alpha)$ as this would imply $M'_1(\alpha) > \alpha' + k\delta$, a contradiction
with Lemma~\ref{lemma:dec.chains.lower.bound}. Thus, $M'_1(\beta) = M'_1(\alpha)$,
and so $\SL(\alpha' + k\delta)$ starts with at least $p'$ copies of $\SL(M'_1(\alpha))$, as claimed.

\item[ii)]
Assume next that $\SL^{rs}(\alpha' + k\delta)$ starts with $\geq p'$ copies of $\SL(M'_1(\gamma))$, where
$\gamma = \deg(\SL^{rs}(\alpha' + k\delta))$. According to Lemma~\ref{lemma:leclerc.14} (an already established
inequality $w > \SL(M'_1(\alpha))$ implies that $w$ cannot be a proper prefix of $\SL(M'_1(\alpha))$) we see that
$\SL^{ls}(\alpha' + k\delta)$ also starts with $\geq p'$ copies of $\SL(M'_1(\gamma))$.
The latter reduces to the setup of i) by Lemma~\ref{lemma:M.prime.prefix}.

\end{enumerate}

\end{enumerate}
This completes the inductive step, and hence concludes the proof.
\end{proof}

Let us record two important corollaries of the above proposition.

\begin{corollary}\label{cor:tight.bound.M.prime}
For any $\chain(\alpha)\in \mathcal{D}$ with $M'_1(\alpha) = (\delta,i)$, we have $\alpha + k\delta < (\delta,i-1)$ for $k \gg 0$.
\end{corollary}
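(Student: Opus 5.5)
The idea is to read off the claimed inequality from the prefix structure supplied by Proposition~\ref{prop:weak.form.desc.irr.chains}. Only $i>1$ needs treatment, since $(\delta,i-1)$ is defined only then. Recall that $\SL_1(\delta)>\SL_2(\delta)>\dots>\SL_{|I|}(\delta)$, so in the order~\eqref{eq:order.ext} we have $(\delta,i-1)>(\delta,i)$, i.e.\ $\SL_{i-1}(\delta)>\SL_i(\delta)$.

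First, apply Proposition~\ref{prop:weak.form.desc.irr.chains} to $\chain(\alpha)\in\mathcal{D}$ with $p'=1$. This gives $q$ such that for every $\hat\alpha=\alpha+k\delta$ with $|\hat\alpha|>q|\delta|$ we can write
\[
\SL(\alpha+k\delta)=\SL(M'_1(\alpha))^{p}w \quad\text{with } p\ge 1.
\]
Since $M'_1(\alpha)=(\delta,i)$, for all $k\gg0$ the word $\SL(\alpha+k\delta)$ therefore has $\SL_i(\delta)$ as a prefix. Alternatively, Proposition~\ref{prop:imaginary.substring.decreasing} gives the same conclusion as soon as $\SL_i(\delta)$ occurs as a subword, but the weak form already produces the prefix directly.

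Second, compare $\SL_i(\delta)u$ with $\SL_{i-1}(\delta)$ for an arbitrary word $u$. The words $\SL_{i-1}(\delta)$ and $\SL_i(\delta)$ are distinct and have the same length $|\delta|$, so neither is a prefix of the other. By the definition of the lexicographic order, there is a first position $a+1\le|\delta|$ where they differ, and the letter of $\SL_{i-1}(\delta)$ there is larger. Appending $u$ to $\SL_i(\delta)$ does not change the letters in the first $|\delta|$ positions. Hence
\[
\SL_i(\delta)u<\SL_{i-1}(\delta)
\]
for every word $u$. Taking $u=\SL_i(\delta)^{p-1}w$ gives $\SL(\alpha+k\delta)<\SL_{i-1}(\delta)$, that is, $\alpha+k\delta<(\delta,i-1)$ for all $k\gg0$.

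The only substantive step is the first one: guaranteeing that the imaginary prefix $\SL(M'_1(\alpha))$ actually appears for large $k$. This is exactly the ``moreover'' part of Proposition~\ref{prop:weak.form.desc.irr.chains}, which we take as given, so no real obstacle remains. The remaining step is the elementary equal-length comparison of words.
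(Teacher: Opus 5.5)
Your argument is correct and is essentially the paper's own: the paper records this statement without a separate proof, as an immediate consequence of Proposition~\ref{prop:weak.form.desc.irr.chains}, which guarantees the prefix $\SL(M'_1(\alpha))=\SL_i(\delta)$ for $k\gg 0$. Combined with the equal-length comparison $\SL_i(\delta)<\SL_{i-1}(\delta)$, exactly as you spell it out, this gives the claim, and your restriction to $i>1$ is the right reading of the statement.
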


\begin{remark}\label{rem:M'1-role}
This result together with Lemma~\ref{lemma:dec.chains.lower.bound} signifies the importance of $M'_1$ instead of $M_1$,
as we get $\SL_i(\delta)<\SL(\hat{\alpha})<\SL_{i-1}(\delta)$ for all $\hat{\alpha}\in \chain(\alpha)$ with $|\hat\alpha|\gg 1$.
\end{remark}

\begin{corollary}\label{cor:M.prime.bounding}
For any $\chain(\alpha)\ne \chain(\beta) \in \mathcal{D}$ with $M'_1(\alpha) = M'_1(\beta)$, the chain $\chain(\alpha)$
is not bounded below by any element of $\chain(\beta)$.
\end{corollary}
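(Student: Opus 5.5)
Here is the plan. Set $\mu:=M'_1(\alpha)=M'_1(\beta)=(\delta,i)$ and $u:=\SL(\mu)=\SL_i(\delta)$. Suppose, to the contrary, that some $\hat\beta\in\chain(\beta)$ satisfies $\hat\beta\le\hat\alpha$ for every $\hat\alpha\in\chain(\alpha)$. The aim is to contradict the periodic prefix structure from Proposition~\ref{prop:weak.form.desc.irr.chains}.

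By Proposition~\ref{prop:weak.form.desc.irr.chains}, write
\[
\SL(\hat\beta)=u^{p}w, \qquad u \text{ not a prefix of } w, \qquad w>u .
\]
The same proposition gives $q$ such that every $\hat\alpha\in\chain(\alpha)$ with $|\hat\alpha|>q|\delta|$ has $\SL(\hat\alpha)=u^{p'}w'$ with $p'\ge p+1$. For such $\hat\alpha$ the words $\SL(\hat\alpha)$ and $\SL(\hat\beta)$ share the prefix $u^{p}$. What remains is to compare $u\,w'$ with $w$.

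Two cases arise. If $w$ is not a prefix of $u$, then since $w>u$ the first letter where $w$ and $u$ differ is larger in $w$. Hence $w>uw'$, so $\SL(\hat\beta)>\SL(\hat\alpha)$, which contradicts $\hat\beta\le\hat\alpha$.

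If instead $w$ is a proper prefix of $u$, I expect this to be the main obstacle. I would first try to rule it out using Lemma~\ref{lemma:leclerc.14} together with Lyndon-ness of $\SL(\hat\beta)$. The suffix $w$ of the Lyndon word $\SL(\hat\beta)$ must exceed $\SL(\hat\beta)=u^pw$. If $w$ were a proper prefix of $u$, it would be a prefix of $\SL(\hat\beta)$ and hence smaller than it (when $p\ge1$), a contradiction. So this case can occur only with $p=0$.

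The case $p=0$ I would remove by first replacing $\hat\beta$ with a longer element of its own chain. Since $\chain(\beta)$ is decreasing, a longer element is still $\le$ every $\hat\alpha$. By Proposition~\ref{prop:weak.form.desc.irr.chains} we may therefore assume $p\ge1$, and only the first case survives. This yields the contradiction.

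It also remains to justify that $\hat\beta=\hat\alpha$ is impossible. This holds because $\chain(\alpha)\ne\chain(\beta)$ and $\SL$ is injective.
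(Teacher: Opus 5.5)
Your argument is correct and is essentially the paper's proof: write $\SL(\hat\beta)=\SL(M'_1(\beta))^{p}w$ via Proposition~\ref{prop:weak.form.desc.irr.chains}, choose $\hat\alpha\in\chain(\alpha)$ whose word begins with $p+1$ copies of $\SL(M'_1(\alpha))=\SL(M'_1(\beta))$, and use $w>\SL(M'_1(\beta))$ to conclude $\hat\alpha<\hat\beta$. Your second case is vacuous, because a proper prefix of $u$ is lexicographically smaller than $u$ and is therefore already excluded by $w>u$; the detour through $p\ge 1$ and the replacement of $\hat\beta$ by a longer element is unnecessary, though harmless.
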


\begin{proof}
Suppose the contrary, that is $\chain(\alpha)>\hat{\beta}$ for some $\hat{\beta}\in \chain(\beta)$.
Let $\SL(\hat{\beta}) = \underbrace{\SL(M'_1(\beta))}_{p \text{ times}} w$ for some $p\geq 0$, as
in~\eqref{eqn:imaginary.prefix.chunks}. According to Proposition~\ref{prop:weak.form.desc.irr.chains}, there is
$\hat{\alpha}\in \chain(\alpha)$ such that $\SL(\hat{\alpha})$ starts with $(p + 1)$ repeated occurrences of
$\SL(M'_1(\alpha))$. Then $\hat{\alpha}<\hat{\beta}$ by Proposition~\ref{prop:weak.form.desc.irr.chains}
(as $w>\SL(M'_1(\beta))=\SL(M'_1(\alpha))$), a contradiction.
\end{proof}

Let us now investigate further the first case from the proof of Proposition~\ref{prop:weak.form.desc.irr.chains}.

\begin{lemma}\label{lemma:decreasing.left.standard.imaginary.chain}
If $\chain(\alpha) \in \mathcal{D}$ and $\SL^{ls}(\alpha) = \SL(M_1(\alpha))$, then $M'_1(\alpha) = M_1(\alpha)$ and
\begin{equation}\label{eq:prefix-periodicity}
  \SL(\alpha' + k\delta) = \underbrace{\SL(M'_1(\alpha))}_{k\text{ times}}\SL(\alpha')
  \quad \mathrm{and} \quad \SL^{ls}(\alpha' + k\delta) = \SL(M'_1(\alpha))
\end{equation}
for all $k \in \BZ_{\geq 0}$, where as before $\alpha'$ denotes the shortest element of $\chain(\alpha)$.
\end{lemma}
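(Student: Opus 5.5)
We first show $M'_1(\alpha)=M_1(\alpha)$, then prove \eqref{eq:prefix-periodicity} by propagating the imaginary standard prefix along the chain. For the first claim we argue as in case 1) of the proof of Proposition~\ref{prop:weak.form.desc.irr.chains}. Suppose $M_1(\alpha)\neq M'_1(\alpha)$. Then $M_1(\alpha)<M'_1(\alpha)$ by Lemma~\ref{lemma:M.prime.bound}. The word $\SL(M'_1(\alpha))$ is Lyndon with $|M'_1(\alpha)|=|\delta|<|\alpha|$, and $\SL(M'_1(\alpha))>\SL^{ls}(\alpha)$. Lemma~\ref{lemma:equiv.to.standard.fac} then gives $M'_1(\alpha)>\alpha$, which contradicts Lemma~\ref{lemma:dec.chains.lower.bound}. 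Since $M_k$, and hence $M'_k$, depend only on the chain (Corollary~\ref{cor:equiv.mk.Mk}), we get $M'_1(\hat\alpha)=M_1(\hat\alpha)=M_1(\alpha)$ for every $\hat\alpha\in\chain(\alpha)$.

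Next we pin down $\alpha$ inside its chain. Because $\SL^{ls}(\alpha)$ has degree $\delta$, we have $\SL(\alpha)=\SL(M_1(\alpha))\SL^{rs}(\alpha)$ with $\deg\SL^{rs}(\alpha)=\alpha-\delta$. Here $\SL^{rs}(\alpha)$ is real by Corollary~\ref{cor:lyndon.prefix.suffix.chains}(2). So $\alpha=\alpha'+k_0\delta$ with $k_0\ge1$.

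The heart of the argument is an upward and downward propagation of the imaginary standard prefix along the chain. For the upward direction, suppose $\SL^{ls}(\hat\alpha)=\SL(M'_1)$ for some $\hat\alpha\in\chain(\alpha)$. We show $\SL^{ls}(\hat\alpha+\delta)=\SL(M'_1)$. The word $\SL(M'_1)\SL(\hat\alpha)$ is a candidate in \eqref{eq:generalized Leclerc}: it is Lyndon since $\SL(M'_1)<\SL(\hat\alpha)$, and its bracket is nonzero by the definition of $M_1$. Hence $\SL(\hat\alpha+\delta)\ge \SL(M'_1)\SL(\hat\alpha)$. By Proposition~\ref{prop:weak.form.desc.irr.chains} and monotonicity, $\SL(\hat\alpha+\delta)<\SL(\hat\alpha)$. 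Writing $\SL(\hat\alpha)=\SL(M'_1)^{p}w$ with $p\ge1$, these two inequalities force $\SL(\hat\alpha+\delta)$ to begin with $\SL(M'_1)^{p}$, because a word squeezed between $\SL(M'_1)^{p+1}w$ and $\SL(M'_1)^{p}w$ must share the prefix $\SL(M'_1)^p$. Then Proposition~\ref{prop:imaginary.substring.decreasing} and Lemma~\ref{lemma:no.splitting.decreasing} show that the standard factorization cannot cut inside the leading $\SL(M'_1)$. Finally, Lemma~\ref{lemma:leclerc.14} applied to $\SL(\hat\alpha+\delta)$, whose standard prefix starts with $\SL(M'_1)$, should give $\SL^{ls}=\SL(M'_1)$. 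The key comparison is that any longer real Lyndon prefix starting with $\SL(M'_1)$ would, by Lemma~\ref{lemma:equiv.to.standard.fac}, contradict maximality in the Leclerc algorithm. The remaining factor $\SL^{rs}(\hat\alpha+\delta)$ has degree $\hat\alpha$, so it equals $\SL(\hat\alpha)$. By induction this yields \eqref{eq:prefix-periodicity} for all $k\ge k_0$.

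The downward direction, which gives $\SL^{ls}(\alpha'+k\delta)=\SL(M'_1)$ for $1\le k<k_0$, is the main obstacle. Here we would argue by contradiction. Let $k$ be maximal with $\SL^{ls}(\alpha'+k\delta)$ real. Then $\SL(\alpha'+(k+1)\delta)=\SL(M'_1)\SL(\alpha'+k\delta)$, and $\SL(\alpha'+k\delta)$ begins with $\SL(M'_1)^{p}$ with $p\ge1$, using Proposition~\ref{prop:imaginary.substring.decreasing} if $\SL(M'_1)$ occurs at all. If $p=0$, then $\SL(\alpha'+k\delta)$ contains no imaginary subword. In that case we would try to show directly, via \eqref{eq:generalized Leclerc}, that $\SL(M'_1)\SL(\alpha'+(k-1)\delta)$ beats the real factorization. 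If $p\ge1$, the real standard prefix $\SL^{ls}(\alpha'+k\delta)$ starts with $\SL(M'_1)$ by Lemma~\ref{lemma:no.splitting.decreasing}, and we would compare it with $\SL(M'_1)$ via Lemma~\ref{lemma:equiv.to.standard.fac}, aiming for a contradiction with maximality of the standard prefix. Once the downward step is established, iterating gives \eqref{eq:prefix-periodicity} for all $k\ge1$, and the case $k=0$ is trivial.
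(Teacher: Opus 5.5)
Your skeleton matches the paper: first show $M'_1(\alpha)=M_1(\alpha)$, then propagate the imaginary standard prefix up and down the chain. Your proof of the first claim is valid. It is the argument from case 1) of Proposition~\ref{prop:weak.form.desc.irr.chains}, whereas the paper simply cites Lemma~\ref{lemma:M.prime.prefix}. One citation is off: chain-invariance of $M_1$ holds because $[ht,xt^n]=\bar\alpha(h)xt^{n+1}$ depends only on the finite part $\bar\alpha$ of the degree, while Corollary~\ref{cor:equiv.mk.Mk} is about varying $k$, not moving along the chain.

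The genuine gap is that neither propagation step is actually proved. In the upward step, everything up to ``$\SL^{ls}(\hat\alpha+\delta)$ begins with $\SL(M'_1)$'' is fine. The squeeze is not needed, since Proposition~\ref{prop:weak.form.desc.irr.chains} already says the number of leading copies of $\SL(M'_1)$ is non-decreasing. The conclusion, however, is only asserted. Lemma~\ref{lemma:leclerc.14} plays no role there. Comparing a longer prefix $\SL(\beta)=\SL^{ls}(\hat\alpha+\delta)$ directly with $\SL(\hat\alpha)$ via Lemma~\ref{lemma:equiv.to.standard.fac} only works when $|\beta|<|\hat\alpha|$.

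The missing idea is to shift $\beta$ down by $\delta$. Here $\beta$ is real by Corollary~\ref{cor:imaginary.suffix.prefix}, $|\beta|>|\delta|$, and $\chain(\beta)\in\mathcal{D}$ by Lemma~\ref{lemma:left.standard.decreasing}. So $\SL(\beta-\delta)>\SL(\beta)>\SL(M'_1)=\SL^{ls}(\hat\alpha)$, and with $|\beta-\delta|<|\hat\alpha|$ this gives $\beta-\delta>\hat\alpha$. Similarly $\gamma=\hat\alpha+\delta-\beta=\deg\SL^{rs}(\hat\alpha+\delta)$ satisfies $\SL(\gamma)>\SL(\beta)>\SL(M'_1)$ and $|\gamma|<|\hat\alpha|$, so $\gamma>\hat\alpha$. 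Since $\hat\alpha=(\beta-\delta)+\gamma$, this contradicts Theorem~\ref{thm:convexity}.

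You leave the downward step as a plan and call it the main obstacle, but it is the easy direction. It needs neither the Leclerc algorithm nor Lemma~\ref{lemma:equiv.to.standard.fac}, only the definition of $\SL^{ls}$ as the \emph{longest} proper Lyndon prefix together with Lemma~\ref{lemma:lyndon}.

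Suppose $\SL^{ls}(\hat\alpha)=\SL(M'_1)$ and $|\hat\alpha|>2|\delta|$. Lemma~\ref{lemma:leclerc.14} gives $\SL(\hat\alpha)=\SL(M'_1)^{m+1}fx$ with $|fx|\leq|\delta|$. Hence $m\geq 1$ and $\SL(\hat\alpha-\delta)$ begins with $\SL(M'_1)$, so your case $p=0$ only occurs at $\alpha'$ itself. By Lemma~\ref{lemma:no.splitting.decreasing}, $\SL^{ls}(\hat\alpha-\delta)$ also begins with $\SL(M'_1)$. If it were strictly longer, it would be a Lyndon word $>\SL(M'_1)$. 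Then by Lemma~\ref{lemma:lyndon}, $\SL(M'_1)\SL^{ls}(\hat\alpha-\delta)$ would be a Lyndon proper prefix of $\SL(\hat\alpha)=\SL(M'_1)\SL(\hat\alpha-\delta)$ longer than $\SL(M'_1)$, a contradiction. Iterating this down to $\alpha'$, together with the corrected upward step, yields \eqref{eq:prefix-periodicity}.
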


\begin{proof}
The equality $M'_1(\alpha) = M_1(\alpha)$ follows directly from Lemma~\ref{lemma:M.prime.prefix}.

Next, let us show that $\SL(\alpha - \delta)$ is of the form~\eqref{eq:prefix-periodicity}. If $|2\delta| > |\alpha| > |\delta|$,
then this is clear as $\alpha-\delta=\alpha'$. If $|\alpha| > |2\delta|$, then $\SL(\alpha-\delta) = \SL(M'_1(\alpha))w$ for some
$w$ by Lemma~\ref{lemma:leclerc.14}, and we claim that this splitting is actually the standard factorization. Indeed,
if it was not, then $\SL^{ls}(\alpha - \delta) = \SL(M'_1(\alpha))u$ for some $u > \SL(M'_1(\alpha))$, hence,
$\SL(M'_1(\alpha))\SL^{ls}(\alpha - \delta)\in \LL$, a contradiction with $\SL(M'_1(\alpha))$ being the longest proper Lyndon
prefix of $\SL(\alpha)$. Thus $\SL^{ls}(\alpha-\delta) = \SL(M'_1(\alpha))$, and iterating this argument we
derive~\eqref{eq:prefix-periodicity} whenever $|\alpha' + k\delta| \leq |\alpha|$.

It remains to show that if $\SL^{ls}(\alpha) = \SL(M'_1(\alpha))$ then $\SL^{ls}(\alpha + \delta) = \SL(M'_1(\alpha))$.
Assuming the contradiction and using that $\SL(\alpha + \delta)$ starts with $\SL(M'_1(\alpha))$ by
Proposition~\ref{prop:weak.form.desc.irr.chains}, we get $\SL^{ls}(\alpha + \delta)=\SL(M'_1(\alpha))w$ for some nonempty $w$.
Let $\beta = \deg(\SL^{ls}(\alpha + \delta))$ so that $|\beta| > |\delta|$ and $\beta$ is real due to
Corollary~\ref{cor:imaginary.suffix.prefix}. Moreover, we have
$\SL(\beta-\delta) > \SL(\beta) > \SL(M'_1(\alpha))=\SL^{ls}(\alpha)$, since $\chain(\beta)$ is decreasing by
Lemma~\ref{lemma:left.standard.decreasing}. Thus $\beta - \delta > \alpha$ by Lemma~\ref{lemma:equiv.to.standard.fac}.
Additionally we have $\alpha - \beta + \delta = \deg(\SL^{rs}(\alpha + \delta)) > \beta > M'_1(\alpha)$, hence
$\alpha - \beta + \delta > \alpha$ by Lemma~\ref{lemma:equiv.to.standard.fac}.
As $\alpha = \beta- \delta + (\alpha - \beta + \delta)$, we get a contradiction with Theorem~\ref{thm:convexity}.
Hence $\SL^{ls}(\alpha + \delta) = \SL(M_1(\alpha))$, and iterating this argument we derive~\eqref{eq:prefix-periodicity}
whenever $|\alpha' + k\delta| > |\alpha|$.
\end{proof}

We also record the following property of the above setup.

\begin{corollary}\label{cor:decreasing.left.factor.imaginary}
For any $\alpha \in \wDelta^{+,\re}$ such that $\chain(\alpha)\in \mathcal{D}$, if $\SL^{ls}(\alpha) = \SL(M_1(\alpha))$,
then $s_\alpha = 1$.
\end{corollary}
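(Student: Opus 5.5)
We argue by contradiction: suppose $s_\alpha>1$. By Lemma~\ref{lemma:s.splitting} there are $\chain(\beta),\chain(\gamma)\in\mathcal{D}$ with $\chain(\beta)+\chain(\gamma)=\chain(\alpha)$ and $M'_1(\beta)=M'_1(\gamma)=M'_1(\alpha)$. By Lemma~\ref{lemma:decreasing.left.standard.imaginary.chain}, $M'_1(\alpha)=M_1(\alpha)$. Moreover, for every $k\geq 0$ we have the explicit form
\[
\SL(\alpha'+k\delta)=\SL(M'_1(\alpha))^k\,\SL(\alpha').
\]
The strategy is to set this rigid shape against the flexibility coming from the splitting. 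Corollary~\ref{cor:M.prime.bounding} gives exactly such flexibility: since $\chain(\beta)\neq\chain(\alpha)$ and both share $M'_1$, the chain $\chain(\alpha)$ is not bounded below by any element of $\chain(\beta)$, and likewise for $\chain(\gamma)$.

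The main step is to compare the $p$-function of Proposition~\ref{prop:weak.form.desc.irr.chains} on the three chains. On $\chain(\alpha)$ the word $w$ is the fixed word $\SL(\alpha')$, so $p(\alpha'+k\delta)=k$ exactly. Hence $\SL(\alpha'+k\delta)$ is determined by the prefix count $k$ followed by the fixed tail $\SL(\alpha')>\SL(M'_1(\alpha))$.

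Next we pick $\hat\beta\in\chain(\beta)$ and $\hat\gamma\in\chain(\gamma)$ with $\hat\beta+\hat\gamma=\alpha'+k\delta$, taking $k$ large. By Theorem~\ref{thm:convexity}, $\alpha'+k\delta$ lies between $\hat\beta$ and $\hat\gamma$. Say $\hat\beta<\alpha'+k\delta<\hat\gamma$. The lower bound is what we exploit. We write $\SL(\hat\beta)=\SL(M'_1)^{p}w_\beta$ with $w_\beta>\SL(M'_1)$ not starting with $\SL(M'_1)$, and compare it with $\SL(M'_1)^k\SL(\alpha')$ using the rule that more leading copies means smaller. Then $\hat\beta<\alpha'+k\delta$ forces either $p>k$, or $p=k$ and $w_\beta<\SL(\alpha')$.

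We now use the splitting freedom to produce a contradiction. Shifting $\delta$ from $\hat\beta$ to $\hat\gamma$ (or back) gives other decompositions of the same root $\alpha'+k\delta$. In particular we can choose $\hat\beta$ to be the shortest element $\beta'$ of $\chain(\beta)$, so that $p(\beta')$ is bounded independently of $k$. For large $k$ this gives $p(\beta')<k$, hence $\beta'>\alpha'+k\delta$. The convexity ordering then forces $\hat\gamma<\alpha'+k\delta$ for the complementary $\hat\gamma=\alpha'+k\delta-\beta'$, which has large $p$.

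Running the symmetric argument with $\gamma'$ and $\hat\beta$ large, for all large $k$ we get both $\gamma+(k-c)\delta<\alpha'+k\delta$ and $\beta+(k-c')\delta<\alpha'+k\delta$, with bounded shifts $c,c'$. Comparing prefix counts, these elements have at least $k-c$ copies of $\SL(M'_1)$, while $\alpha'+k\delta$ has exactly $k$.

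The obstacle is to derive a genuine contradiction from these inequalities. The plan is as follows. By Proposition~\ref{prop:weak.form.desc.irr.chains} applied to $\chain(\beta)$ and $\chain(\gamma)$, the chains $\chain(\beta)$ and $\chain(\gamma)$ reach any given prefix count at bounded $|\cdot|$. Combined with Corollary~\ref{cor:M.prime.bounding} (some element of $\chain(\alpha)$ lies below each fixed element of $\chain(\beta)$), this would show that at most $k$ copies of $\SL(M'_1)$ are accumulated along $\chain(\beta)$ in roughly $k-c$ steps. Summing the two chains, $\alpha'+k\delta$ would then need about $2k$ copies, versus the $k$ it actually has.

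I would try to make this rigorous via the standard factorization. Take $\alpha+k\delta$ with $k$ large. Lemma~\ref{lemma:decreasing.left.standard.imaginary.chain} says its $\SL^{rs}$ equals $\SL(\alpha+(k-1)\delta)$. On the other hand, the Lyndon-prefix argument in the proof of that lemma, applied to $\hat\beta$ and $\hat\gamma$ with the convexity inequalities above, should produce a longer Lyndon prefix $\SL(M'_1)\SL^{ls}(\hat\beta)$. That would contradict the maximality of $\SL^{ls}$.
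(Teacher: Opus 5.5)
Your setup is sound up to the two inequalities $\alpha'+k\delta-\beta'<\alpha'+k\delta$ and $\alpha'+k\delta-\gamma'<\alpha'+k\delta$. However, the proof stops exactly where the contradiction is needed, and neither of the routes you sketch supplies it. The copy-counting heuristic is not valid: the number of copies of $\SL(M'_1(\alpha))$ in $\SL(\hat\beta+\hat\gamma)$ is not the sum of the counts for $\SL(\hat\beta)$ and $\SL(\hat\gamma)$. In your own decomposition $\beta'+(\alpha'+k\delta-\beta')$ the counts are $0$ and at most $k$, so nothing forces ``about $2k$'' copies. Your final paragraph only asserts that some argument ``should produce'' a longer Lyndon prefix. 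As written, no contradiction is actually derived.

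The missing ingredient is Lemma~\ref{lemma:equiv.to.standard.fac}, applied to a root whose standard left factor is $\SL(M'_1(\alpha))$. It says that any Lyndon word shorter than $\SL(\alpha)$ and bigger than $\SL^{ls}(\alpha)=\SL(M_1(\alpha))=\SL(M'_1(\alpha))$ is bigger than $\SL(\alpha)$.

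Since $|\alpha|>|\delta|$, write $\alpha=\hat\beta+\hat\gamma$ with $\hat\beta\in\chain(\beta)$ and $\hat\gamma\in\chain(\gamma)$. Lemma~\ref{lemma:dec.chains.lower.bound} gives $\SL(\hat\beta),\SL(\hat\gamma)>\SL(M'_1(\alpha))$. Hence both $\hat\beta>\alpha$ and $\hat\gamma>\alpha$, contradicting Theorem~\ref{thm:convexity}. This is the paper's entire argument; it needs no large $k$, no periodic formula, and no Corollary~\ref{cor:M.prime.bounding}.

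The same lemma would also close your own argument in one line. By Lemma~\ref{lemma:decreasing.left.standard.imaginary.chain}, $\SL^{ls}(\alpha'+k\delta)=\SL(M'_1(\alpha))$. The root $\hat\gamma=\alpha'+k\delta-\beta'\in\chain(\gamma)$ is shorter and satisfies $\SL(\hat\gamma)>\SL(M'_1(\alpha))$. So $\hat\gamma>\alpha'+k\delta$, which contradicts the inequality you derived.
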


\begin{proof}
Assuming the contrary, i.e.\ $s_\alpha > 1$, there are $\chain(\beta),\chain(\gamma) \in \mathcal{D}$ such that
$\chain(\beta) + \chain(\gamma) = \chain(\alpha)$ and $M'_1(\beta) = M'_1(\gamma) = M'_1(\alpha)$ according to
Lemma~\ref{lemma:s.splitting}. We also note that $|\alpha| > |\delta|$. Thus, we can express $\alpha = \hat{\beta} + \hat{\gamma}$
for some $\hat{\beta}\in \chain(\beta)$, $\hat{\gamma}\in \chain(\gamma)$. Without loss of generality, we shall assume
that $\hat{\beta} < \hat{\gamma}$. According to Lemma~\ref{lemma:dec.chains.lower.bound} and
$M'_1(\hat{\beta}) = M'_1(\hat{\gamma}) = M'_1(\alpha)$, we have $M'(\alpha) < \hat{\beta} < \hat{\gamma}$.
However, this implies $\SL^{ls}(\alpha) = \SL(M_1(\alpha)) \leq \SL(M'_1(\alpha)) < \SL(\hat{\beta})$ by
Lemma~\ref{lemma:M.prime.bound}, hence $\alpha < \hat\beta,\hat\gamma$ by Lemma~\ref{lemma:equiv.to.standard.fac},
contradicting Theorem~\ref{thm:convexity}.
\end{proof}


\subsection{$u$-function}
\

The next handful of lemmas will allow us to determine when imaginary words start appearing as subwords in decreasing chains.

\begin{definition}\label{def:u}
For any $\chain(\alpha) \in \mathcal{D}$, define $u(\chain(\alpha))=u(\alpha) \in \chain(\alpha)$ to be the longest element
such that $\SL(u(\alpha))$ does not contain $\SL(M'_1(\alpha))$ as a prefix (equivalently, does not contain $\SL(M'_1(\alpha))$
as a subword, by Proposition~\ref{prop:imaginary.substring.decreasing}).
\end{definition}

While $u$ is the primary object of interest for us, the following extension is useful for computing the value of $u$.
Let $C_i = \{\chain(\alpha) \in \mathcal{D} \,|\, M'_1(\alpha) \leq (\delta,i), \alpha' > (\delta,i)\}$.
Define the function $u_i\colon C_i \to \wDelta^{+,\re}$ via:
\begin{gather*}
  u_i(\chain(\alpha)) = u_i(\alpha) :=
  \max_{|\cdot|}\left\{\hat\alpha \in \chain(\alpha) \,\bigg|\,
        \substack{\SL(\hat\alpha) > \SL_{i}(\delta) \ \mathrm{and} \\
                  \SL(\hat\alpha) \text{ does not have } \SL_i(\delta) \text{ as a prefix}}\right\}.
\end{gather*}
Note that this set is non-empty by definition of $C_i$, and is finite by Proposition~\ref{prop:weak.form.desc.irr.chains}.
We also note that if $\chain(\alpha) \in C_i,C_j$ for $i < j$, then $|u_i(\alpha)| \geq |u_j(\alpha)|$.
Finally:
\begin{equation}\label{eq:u-vs-ui}
  u_i(\alpha) = u(\alpha) \qquad \mathrm{if} \quad  M'_1(\alpha) = (\delta,i).
\end{equation}

\noindent
Though we cannot say much about $\SL(\hat\alpha)$ for $\hat{\alpha}\in \chain(\alpha)$ with $|\hat\alpha| \leq |u(\alpha))|$,
we note that the value of $u(\alpha)$ can be computed iteratively via the following simple result
(henceforth, we identify $u(\alpha)$ with one of $u_i(\alpha)$ through~\eqref{eq:u-vs-ui}).

\begin{lemma}\label{lemma:alt.u.def}
For any $\chain(\alpha) \in C_i$, we have:
\begin{gather}\label{eqn:alt.u.def}
  u_i(\alpha) = \max_{|\cdot|}\left(\left\{u_i(\beta) + u_i(\gamma) \,\bigg\vert\,
  \substack{\chain(\beta),\chain(\gamma) \in C_i\\
  \chain(\beta) + \chain(\gamma) = \chain(\alpha)}\right\} \cup \{\alpha'\}\right),
\end{gather}
where as before $\alpha'$ denotes the shortest element of $\chain(\alpha)$.
\end{lemma}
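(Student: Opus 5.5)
We prove the two inequalities $\leq$ and $\geq$ in \eqref{eqn:alt.u.def} separately, comparing lengths of elements in the same chain. Write $R$ for the right-hand side and $P_i$ for the property defining $u_i$: $\SL(\hat\alpha)>\SL_i(\delta)$ and $\SL_i(\delta)$ is not a prefix of $\SL(\hat\alpha)$.

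A preliminary remark is used throughout. If $M'_1(\hat\alpha)=(\delta,j)$ with $j\le i$, then every $\hat\alpha$ satisfies $\SL(\hat\alpha)>\SL(M'_1(\alpha))\geq \SL_i(\delta)$. So for $j<i$ the condition $P_i$ reduces to the prefix condition. By Proposition~\ref{prop:imaginary.substring.decreasing}, for $j<i$ the word $\SL_i(\delta)$ never occurs as a subword, and for $j=i$ the prefix and subword conditions agree. Also, the set of $\hat\alpha$ satisfying $P_i$ is downward closed in length. For $j=i$ this follows from the monotonicity of $p$ in Proposition~\ref{prop:weak.form.desc.irr.chains}. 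For $j<i$ the condition is vacuous, so $u_i$ is the top element with respect to this property. I will double-check this case, since the finiteness in the definition is claimed.

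\textbf{Inequality $\geq$.} Clearly $|u_i(\alpha)|\ge|\alpha'|$, since $\alpha'>(\delta,i)$ and $|\alpha'|<|\delta|$ rule out an imaginary prefix. Now take a splitting $\chain(\beta)+\chain(\gamma)=\chain(\alpha)$ in $C_i$ and set $\hat\alpha=u_i(\beta)+u_i(\gamma)$. By Theorem~\ref{thm:convexity}(1), $\SL(\hat\alpha)$ lies between $\SL(u_i(\beta))$ and $\SL(u_i(\gamma))$, so $\SL(\hat\alpha)>\SL_i(\delta)$. Suppose $\SL_i(\delta)$ were a prefix of $\SL(\hat\alpha)$. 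Then $\SL(\hat\alpha)$ would lie strictly between $\SL_i(\delta)$ and every word $>\SL_i(\delta)$ that does not begin with $\SL_i(\delta)$. This contradicts $\min(\SL(u_i(\beta)),\SL(u_i(\gamma)))<\SL(\hat\alpha)$, since both of these words exceed $\SL_i(\delta)$ without having it as a prefix. Hence $\hat\alpha$ satisfies $P_i$, giving $|u_i(\alpha)|\ge|\hat\alpha|$.

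\textbf{Inequality $\leq$ (main step).} Let $\hat\alpha=u_i(\alpha)$ with $|\hat\alpha|>|\delta|$; otherwise $\hat\alpha=\alpha'$ and we are done. Consider the standard factorization $\SL(\hat\alpha)=\SL^{ls}\SL^{rs}$.

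First, $\SL^{ls}$ is real. If it were imaginary, Corollary~\ref{cor:imaginary.suffix.prefix} would make it $\SL(M_1(\hat\alpha))=\SL_j(\delta)$ with $j\le i$. Since the prefix is excluded when $j=i$, we get $j<i$, and then Lemma~\ref{lemma:M.prime.prefix} gives $M'_1=(\delta,j)$. In that case Lemma~\ref{lemma:decreasing.left.standard.imaginary.chain} makes every longer element begin with $\SL_j(\delta)>\SL_i(\delta)$ without $\SL_i(\delta)$ as a subword. This contradicts the finiteness and maximality of $u_i$, so this case is excluded.

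So $\SL^{ls}$ is real, and Lemma~\ref{lemma:left.standard.decreasing} gives decreasing chains $\chain(\beta)=\chain(\deg\SL^{ls})$ and $\chain(\gamma)=\chain(\deg\SL^{rs})$ summing to $\chain(\alpha)$. We must show both lie in $C_i$ and that $|\deg\SL^{ls}|\le|u_i(\beta)|$ and $|\deg\SL^{rs}|\le|u_i(\gamma)|$.

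1. \emph{$M'_1$ bound.} Equation~\eqref{eq:Mprime-additive} gives $M'_1(\beta),M'_1(\gamma)\le M'_1(\alpha)\le(\delta,i)$.

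2. \emph{Property $P_i$ for the factors.} $\SL^{ls}$ is a prefix of $\SL(\hat\alpha)$, so it is $>\SL_i(\delta)$. By Lemma~\ref{lemma:equiv.to.standard.fac}, it also lacks $\SL_i(\delta)$ as a prefix, since otherwise it would be a prefix of $\SL(\hat\alpha)$. For $\SL^{rs}$, we have $\SL^{rs}>\SL(\hat\alpha)>\SL_i(\delta)$. Lemma~\ref{lemma:no.splitting.decreasing} ensures no imaginary subword is split, so $\SL^{rs}$ does not contain $\SL_i(\delta)$ either.

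3. \emph{Condition $\beta'>(\delta,i)$.} This is the delicate point. The shortest element satisfies $\beta'\geq\beta>(\delta,i)$ by the decreasing property, and similarly for $\gamma'$.

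Downward closure then yields $|\deg\SL^{ls}|\le|u_i(\beta)|$ and $|\deg\SL^{rs}|\le|u_i(\gamma)|$. The elements $u_i(\beta)$ and $u_i(\gamma)$ sum to an element of $\chain(\alpha)$ of length at least $|\hat\alpha|$, which completes the proof. The expected obstacle is making the downward-closure and finiteness claims rigorous in the case $M'_1<(\delta,i)$. There I would use Corollary~\ref{cor:tight.bound.M.prime} to see that long elements drop below $(\delta,i)$, so that $P_i$ fails for them.
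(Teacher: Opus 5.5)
Your proposal is correct and follows essentially the paper's proof: convexity (Theorem~\ref{thm:convexity}) gives $\geq$. For $\leq$, you take the standard factorization of $\SL(u_i(\alpha))$, show that both factors lie in $C_i$ and satisfy the defining property, and rule out the imaginary-left-factor case using Lemma~\ref{lemma:decreasing.left.standard.imaginary.chain}.

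Two slips need fixing.

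\emph{First slip.} $\SL^{ls}(u_i(\alpha))>\SL_i(\delta)$ does not follow merely from $\SL^{ls}(u_i(\alpha))$ being a prefix of $\SL(u_i(\alpha))$. A priori it could be a proper prefix of $\SL_i(\delta)$. This is exactly where Lemma~\ref{lemma:equiv.to.standard.fac} is needed, applied with $\ell=\SL_i(\delta)$ and $|\delta|<|u_i(\alpha)|$. By contrast, the absence of $\SL_i(\delta)$ as a prefix of $\SL^{ls}(u_i(\alpha))$ is the trivial part, so your citation is attached to the wrong claim.

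\emph{Second slip.} Your preliminary remark reverses the order. $M'_1(\alpha)\leq(\delta,i)$ means $M'_1(\alpha)=(\delta,j)$ with $j\geq i$. So the ``vacuous'' case $j<i$ never occurs in $C_i$, and in your imaginary case $j<i$ already contradicts $\chain(\alpha)\in C_i$ directly. The genuine second case is $j>i$, where long elements fall below $\SL_i(\delta)$, as your last sentence correctly notes.

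This reversal is harmless for the argument. The bounds $|\deg\SL^{ls}|\le|u_i(\cdot)|$ and $|\deg\SL^{rs}|\le|u_i(\cdot)|$ follow directly from $u_i$ being the longest element with the property, so no downward-closure statement is needed.
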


\begin{proof}
We will prove this by induction on $\hgt(\chain(\alpha))$. The base case $\hgt(\chain(\alpha)) = 1$ corresponds
to $\chain(\alpha)$ being an \irrchain. Then the first set in the RHS of~\eqref{eqn:alt.u.def} is empty and
$u_i(\alpha) = \alpha'$ by Corollary~\ref{cor:dec.irr.chains}, completing the base case.

As for the inductive step, let us assume the claim holds for all decreasing chains of relative height $< \hgt(\chain(\alpha))$.
We claim first that $|u_i(\alpha)| \geq |\hat{\alpha}|$, where $\hat{\alpha}$ is the RHS of~\eqref{eqn:alt.u.def}. This is
obvious if $\hat{\alpha} = \alpha'$. Otherwise let $u_i(\beta) = \hat\beta, u_i(\gamma) = \hat\gamma$ be two elements which
attain the max, so that $\hat{\alpha}=\hat{\beta}+\hat{\gamma}$. Assuming without loss of generality that $\hat\beta < \hat\gamma$,
we get $\hat\beta < \hat\beta + \hat\gamma < \hat\gamma$ by Theorem~\ref{thm:convexity}. Since $\SL(\hat\beta)$ does not contain
$\SL_i(\delta)$ as a prefix and $\SL(\hat\beta) > \SL_i(\delta)$, then $\SL(\hat{\alpha}) > \SL_i(\delta)$ and $\SL(\hat{\alpha})$
does not contain $\SL_i(\delta)$ as a prefix, establishing $|u_i(\alpha)| \geq |\hat\alpha|$.

To show $|u_i(\alpha)| \leq |\hat\alpha|$, consider $\mu = \deg(\SL^{ls}(u(\alpha)))$ and
$\eta = \deg(\SL^{rs}(u(\alpha)))$. We note that $\eta \not \in \chain(\alpha)$, since otherwise $\mu = M_1(\chain(\alpha))$
by Corollary~\ref{cor:imaginary.suffix.prefix}, and so $\alpha' = u_i(\alpha)$
by Lemma~\ref{lemma:decreasing.left.standard.imaginary.chain}. Thus $\chain(\mu),\chain(\eta) \in \mathcal{D}$ by
Lemma~\ref{lemma:left.standard.decreasing}, and $\SL(\mu) > \SL_i(\delta)$ by Lemma~\ref{lemma:equiv.to.standard.fac}.
Since $\SL(\mu)$ does not contain $\SL_i(\delta)$ as a prefix, we note that $\SL(\eta) > \SL_i(\delta)$ and $\SL(\eta)$ cannot
contain $\SL_i(\delta)$ as a prefix by Lemma~\ref{lemma:leclerc.14}. Hence, $|\mu| \leq |u_i(\mu)|$, $|\eta| \leq |u_i(\eta)|$,
and so $|u_i(\alpha)|= |\mu + \eta| \leq |\hat\alpha|$.

This completes the proof of the lemma.
\end{proof}

We note the following simple property of $u(\cdot)$.

\begin{lemma}\label{lem:aux_unused}
For any $\alpha$ with $\chain(\alpha) \in \mathcal{D}$ and $|\alpha| \geq |u(\alpha)|$, consider any splitting
$\SL(\alpha) = \SL(\beta)\SL(\gamma)$. If $M'_1(\gamma) < M'_1(\alpha) = (\delta,i)$, then $\gamma = u_i(\gamma)$.
\end{lemma}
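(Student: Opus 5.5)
We read the hypothesis $|\alpha|\geq |u(\alpha)|$ as saying that $\SL(\alpha)$ lies at or beyond the point of its chain where imaginary words may start to appear. We also read "splitting" as any factorization $\SL(\alpha)=\SL(\beta)\SL(\gamma)$ into two SL-words. The goal is to show that $\gamma$ itself is the $u_i$-value of its own chain. This requires two things: $\chain(\gamma)\in C_i$, and $\SL(\gamma)$ is the longest element of its chain that exceeds $\SL_i(\delta)$ and does not start with $\SL_i(\delta)$.

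The first step is to place $\gamma$ correctly.
\begin{itemize}
\item Since $\SL(\gamma)$ is a proper Lyndon suffix of $\SL(\alpha)$ and $\chain(\alpha)\in\mathcal{D}$, Corollary~\ref{cor:lyndon.prefix.suffix.chains}(2) gives $\chain(\gamma)\in\mathcal{D}$.
\item By hypothesis $M'_1(\gamma)<(\delta,i)$, i.e.\ $M'_1(\gamma)\leq(\delta,i+1)$.
\item By Proposition~\ref{prop:imaginary.substring.decreasing}, every imaginary subword of $\SL(\alpha)$ equals $\SL_i(\delta)$. Every imaginary subword of $\SL(\gamma)$ would have to equal $\SL(M'_1(\gamma))\neq\SL_i(\delta)$. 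Hence $\SL(\gamma)$ contains no imaginary subword, and in particular does not start with $\SL_i(\delta)$.
\item Being a proper suffix of the Lyndon word $\SL(\alpha)$, we get $\SL(\gamma)>\SL(\alpha)>\SL(M'_1(\alpha))=\SL_i(\delta)$, using Lemma~\ref{lemma:dec.chains.lower.bound}.
\end{itemize}
Since $\SL(\gamma')>\SL(\gamma)$ for the shortest element $\gamma'$ of the decreasing chain, we also get $\gamma'>(\delta,i)$. So $\chain(\gamma)\in C_i$ and $\gamma$ belongs to the set defining $u_i(\gamma)$, which gives $|\gamma|\leq|u_i(\gamma)|$.

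The main step is the reverse inequality, and we argue by contradiction. Suppose $\hat\gamma=\gamma+k\delta$ with $k\geq1$ also satisfies $\SL(\hat\gamma)>\SL_i(\delta)$ and $\SL(\hat\gamma)$ does not start with $\SL_i(\delta)$. The aim is to produce an element of $\chain(\alpha)$ that is longer than or equal to $\alpha$ and avoids $\SL_i(\delta)$, contradicting $|\alpha|\geq|u(\alpha)|$.

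The natural tool is Lemma~\ref{lemma:alt.u.def} applied to $\chain(\alpha)\in C_i$; here we use \eqref{eq:u-vs-ui} to identify $u(\alpha)=u_i(\alpha)$. Since $\chain(\beta)+\chain(\gamma)=\chain(\alpha)$, the lemma gives $|u_i(\alpha)|\geq|u_i(\beta)+u_i(\gamma)|$, provided $\chain(\beta)\in C_i$. Two cases arise:
\begin{itemize}
\item \emph{$\beta$ real.} Show $\chain(\beta)\in C_i$ via $M'_1(\beta)\leq M'_1(\alpha)$ from \eqref{eq:Mprime-additive}. The needed condition $\beta'>(\delta,i)$ follows from $\SL(\beta)$ being a prefix of $\SL(\alpha)$ combined with Lemma~\ref{lemma:dec.chains.lower.bound}, possibly after handling the case where $\SL(\beta)$ starts with $\SL_i(\delta)$. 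Then we chain the inequalities
\[
|u(\alpha)|\geq|u_i(\beta)|+|u_i(\gamma)|\geq|\beta|+|\gamma|+|k\delta|>|\alpha|,
\]
which contradicts the hypothesis.
\item \emph{$\beta$ imaginary.} Then $\SL(\beta)=\SL_i(\delta)$ by Corollary~\ref{cor:imaginary.suffix.prefix}, and $\chain(\gamma)=\chain(\alpha)$. This is incompatible with $M'_1(\gamma)<M'_1(\alpha)$, so this case is excluded.
\end{itemize}

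The expected obstacle is the real-$\beta$ case: verifying $\chain(\beta)\in C_i$ and $|\beta|\leq|u_i(\beta)|$ when $\SL(\beta)$ may begin with $\SL_i(\delta)$. If it does, I would not use $u_i(\beta)$ directly. Instead I would strip off the leading copies of $\SL_i(\delta)$ via Proposition~\ref{prop:weak.form.desc.irr.chains}. This replaces $\beta$ by a shorter element of its chain, still giving a sum in $\chain(\alpha)$ that is at least as long as $\alpha$.
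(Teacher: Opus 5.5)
\textbf{There is a genuine gap: your length-bookkeeping argument cannot handle the case $|\alpha|>|u(\alpha)|$, and the proposed stripping fix does not close it.}

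Your argument via Lemma~\ref{lemma:alt.u.def} works when $\alpha=u(\alpha)$. Then $\SL(\beta)$ cannot begin with $\SL_i(\delta)$, so $|\beta|\le|u_i(\beta)|$, and you get $|u(\alpha)|\ge|\alpha|+k|\delta|$, a contradiction. (The paper handles this case differently: it compares $\SL(\alpha+\delta)$ with the Leclerc concatenation of $\SL(\beta)$ and $\SL(\gamma+\delta)$.)

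The problem is the case $|\alpha|>|u(\alpha)|$, which is exactly the one you flag. There $\SL(\alpha)$, hence $\SL(\beta)$, begins with $\SL_i(\delta)$, so $\beta=u_i(\beta)+m\delta$ with $m\ge1$. The stripped root $u_i(\beta)+(\gamma+k\delta)$ has length
$|u_i(\beta)|+|\gamma+k\delta|\le|u_i(\beta)|+|u_i(\gamma)|\le|u(\alpha)|<|\alpha|$,
by the very Lemma~\ref{lemma:alt.u.def} you invoke. So it is never ``at least as long as $\alpha$''. Writing $\alpha=u(\alpha)+K\delta$, the length bookkeeping only yields $m\ge K+k$, which is perfectly consistent. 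No argument built from Lemma~\ref{lemma:alt.u.def} and lengths alone can exclude $|\gamma|<|u_i(\gamma)|$ here; you need a comparison of words.

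The missing idea is to move one $\delta$ from $\beta$ to $\gamma$ and apply convexity to $\alpha=(\beta-\delta)+(\gamma+\delta)$ (note $|\beta|>|\delta|$).
\begin{itemize}
\item Since $\chain(\beta)$ decreases, $\SL(\beta-\delta)>\SL(\beta)$. Being shorter than the prefix $\SL(\beta)$ of $\SL(\alpha)$, it also exceeds $\SL(\alpha)$.
\item Under your contradiction hypothesis, $\SL(\gamma+\delta)\ge\SL(u_i(\gamma))>\SL_i(\delta)$. By Lemma~\ref{lemma:M.prime.prefix}, $\SL(\gamma+\delta)$ does not start with $\SL_i(\delta)$, so it exceeds $\SL(\alpha)$, which does.
\end{itemize}
Both summands exceeding $\alpha$ contradicts Theorem~\ref{thm:convexity}; this is essentially the paper's argument.

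Separately, you use $\chain(\beta)\in\mathcal{D}$ without proving it. You need it both for \eqref{eq:Mprime-additive} and for $\SL(\beta)>\SL_i(\delta)$. Your hint ``prefix of $\SL(\alpha)$ plus Lemma~\ref{lemma:dec.chains.lower.bound}'' does not give the latter, since a prefix is smaller than the word. One must instead rule out $\SL(\beta)$ being a proper prefix of $\SL_i(\delta)$ via Corollary~\ref{cor:lyndon.prefix.suffix.chains}(3). To get $\chain(\beta)\in\mathcal{D}$: if $\chain(\beta)\in\mathcal{I}$, then $\chain(\gamma)=\chain(\alpha)+\chain(\delta-\beta)$ is a sum of decreasing chains. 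Hence $M'_1(\gamma)\ge M'_1(\alpha)$, contrary to hypothesis.
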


\begin{proof}
By Lemma~\ref{lemma:dec.chains.lower.bound}, we have $ (\delta,i) = M'_1(\alpha) < \alpha < \gamma$, which together with
$M'_1(\gamma) < M'_1(\alpha)$ implies that $\chain(\gamma) \in C_i$ and $|\gamma| \leq |u_i(\gamma)|$. We note that $\beta$
must be real as $M'_1(\gamma) < M'_1(\alpha)$. Moreover $\chain(\beta)\in \mathcal{D}$, since if $\chain(\beta)$ was
increasing, then $M'_1(\gamma) = \max\{M'_1(-\beta),M'_1(\alpha)\}$ by~\eqref{eq:Mprime-additive}, thus contradicting to
$M'_1(\gamma) < M'_1(\alpha)$. Henceforth, we shall assume the contradiction to to the claim: $|\gamma| < u_i(\gamma)$.

If $\alpha = u(\alpha)$, then we consider the appropriate concatenation of $\SL(\beta), \SL(\gamma + \delta)$:
\begin{itemize}[leftmargin=0.7cm]

\item
If $\SL(\beta) < \SL(\gamma + \delta)$, then $\SL(\hat\alpha + \delta) \geq \SL(\beta)\SL(\gamma + \delta)$ by Leclerc's algorithm.
However, $\SL(\beta)\SL(\gamma + \delta)$ does not contain $\SL(M'_1(\alpha))$ as a prefix: $\SL(\beta)$ does not have
$\SL(M'_1(\alpha))$ as a prefix nor it can be a prefix of $\SL(M'_1(\alpha))$ since  $M'_1(\alpha) < \beta$ by
Lemma~\ref{lemma:dec.chains.lower.bound}. Invoking that $\SL(\alpha + \delta)$ does have a prefix $\SL(M'_1(\alpha))$,
we thus obtain $\SL(\alpha + \delta) < \SL(\beta)\SL(\gamma + \delta)$, a contradiction;

\item
If $\SL(\gamma + \delta) < \SL(\beta)$, then we have $\SL(\alpha + \delta) \geq \SL(\gamma + \delta)\SL(\beta)$ by Leclerc's
algorithm. But $|\gamma + \delta| > |\delta|$ and $\SL(\gamma + \delta)$ does not have $\SL(M'_1(\alpha))$ as a prefix since
that would contradict $M'_1(\gamma) \ne M'_1(\alpha)$ by Lemma~\ref{lemma:M.prime.prefix}.
Thus, we again obtain a contradiction, exactly as in the previous case.

\end{itemize}
Assume now that $\alpha > u(\alpha)$, hence, $\SL(\alpha)$ has $\SL(M'_1(\alpha))$ as a prefix. We then claim that $\SL(\beta)$
also has $\SL(M'_1(\alpha))$ as a prefix. Indeed if $\SL(\beta)$ was a proper prefix of $\SL(M'_1(\alpha))$, then
$\chain(\beta) \in \mathcal{I}$ by part (3) of Corollary~\ref{cor:lyndon.prefix.suffix.chains}, a contradiction.
We then consider the appropriate concatenation of $\SL(\beta - \delta)$ and $\SL(\gamma + \delta)$:
\begin{itemize}[leftmargin = 0.7cm]

\item
If $\SL(\beta - \delta) < \SL(\gamma + \delta)$, then invoking $\beta - \delta > \beta$, we obtain
$\gamma +\delta > \beta - \delta > \alpha$ contradicting to Theorem~\ref{thm:convexity};

\item
If $\SL(\gamma + \delta) < \SL(\beta - \delta)$, then $\SL(\alpha) \geq \SL(\gamma + \delta)\SL(\beta - \delta)$ by Leclerc's
algorithm. Since we assumed that $|\gamma| < |u_i(\gamma)|$, we have $\gamma + \delta > (\delta,i) = M'_1(\alpha)$, and as
$\SL(M'_1(\alpha))$ is not a prefix of $\SL(\gamma + \delta)$ by Lemma~\ref{lemma:M.prime.prefix}, we obtain
$\beta - \delta > \gamma + \delta > \alpha$, which again contradicts to Theorem~\ref{thm:convexity}.

\end{itemize}
Thus in either case we obtain a contradiction, and so we must have $\gamma = u_i(\gamma)$.
\end{proof}

The following upgrade of Lemma~\ref{lemma:s.splitting} is needed for inductive arguments below.

\begin{lemma}\label{lemma:u.splitting}
For any $\chain(\alpha) \in \mathcal{D}$ with $s_\alpha \geq 2$ there are $\chain(\beta),\chain(\gamma) \in \mathcal{D}$
such that $\chain(\beta) + \chain(\gamma) = \chain(\alpha)$, $M'_1(\beta) = M'_1(\gamma) = M'_1(\alpha)$, and
$u(\beta) + u(\gamma) = u(\alpha)$.
\end{lemma}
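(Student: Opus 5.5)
Set $M'_1(\alpha)=(\delta,i)$. Then $u(\alpha)=u_i(\alpha)$ by~\eqref{eq:u-vs-ui}, and the plan is to read the required splitting off the recursion of Lemma~\ref{lemma:alt.u.def}. Since $s_\alpha\geq 2$, Corollary~\ref{cor:decreasing.left.factor.imaginary} shows that $\SL^{ls}(\hat\alpha)\neq\SL(M_1(\alpha))$ for every $\hat\alpha\in\chain(\alpha)$ of length $>|\delta|$. So the maximum in~\eqref{eqn:alt.u.def} should be attained by a sum $u_i(\beta)+u_i(\gamma)$ rather than by $\alpha'$. To see this, I would repeat the second half of the proof of Lemma~\ref{lemma:alt.u.def}. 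Put $\mu=\deg(\SL^{ls}(u(\alpha)))$ and $\eta=\deg(\SL^{rs}(u(\alpha)))$ when $|u(\alpha)|>|\delta|$. Lemma~\ref{lemma:left.standard.decreasing} then gives $\chain(\mu),\chain(\eta)\in\mathcal D$ and $u(\alpha)=\mu+\eta$ with $|\mu|\le|u_i(\mu)|$ and $|\eta|\le|u_i(\eta)|$. If instead $u(\alpha)=\alpha'$, I would use Lemma~\ref{lemma:s.splitting} to produce some splitting, and then check that $u_i(\beta)+u_i(\gamma)$ has length at least $|\alpha'|$.

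\textbf{Step 1.} Show that in any splitting $\chain(\alpha)=\chain(\beta)+\chain(\gamma)$ realizing the maximum in~\eqref{eqn:alt.u.def}, both summands satisfy $M'_1=(\delta,i)$. By~\eqref{eq:Mprime-additive} at least one summand, say $\beta$, has $M'_1(\beta)=(\delta,i)$. Suppose $M'_1(\gamma)<(\delta,i)$. Then Lemma~\ref{lem:aux_unused}, applied to the standard factorization $\SL(u(\alpha))=\SL(\mu)\SL(\eta)$, forces the relevant factor to sit exactly at $u_i$ of its chain. This only constrains lengths, so an exchange argument is needed in addition. Since $s_\alpha\ge2$ and $s_\gamma=0$, we get $s_\beta=s_\alpha\ge 2$. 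By induction on $\hgt(\chain(\alpha))$ (the statement itself being the inductive hypothesis), $\chain(\beta)=\chain(\beta_1)+\chain(\beta_2)$ with $M'_1(\beta_j)=(\delta,i)$ and $u(\beta_1)+u(\beta_2)=u(\beta)$.

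\textbf{Step 2.} Regroup. As in Lemma~\ref{lemma:s.splitting}, the Jacobi identity applied to $[[\sb[\SL(u(\beta_1))],\sb[\SL(u(\beta_2))]],\sb[\SL(u_i(\gamma))]]\neq0$ shows that $u(\beta_1)+u_i(\gamma)$ or $u(\beta_2)+u_i(\gamma)$ is a real root in a decreasing chain (Lemma~\ref{lemma:chain.sum.monotone}). Say the first. Then $\chain(\beta_1)+\chain(\gamma)$ has $M'_1=(\delta,i)$, and by Lemma~\ref{lemma:alt.u.def} its $u_i$ has length $\ge|u(\beta_1)+u_i(\gamma)|$. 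Together with $u(\beta_2)$ this gives a decomposition of $\chain(\alpha)$ into two summands, both with $M'_1=(\delta,i)$, whose $u$-values sum to length $\ge|u(\alpha)|$. The reverse inequality comes from Lemma~\ref{lemma:alt.u.def}, since the sum is itself a candidate in~\eqref{eqn:alt.u.def}. Because all elements involved lie in the fixed chain $\chain(\alpha)$, equality of lengths forces $u(\beta_1+\gamma)+u(\beta_2)=u(\alpha)$.

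The main obstacle is the regrouping step. One has to guarantee that the Jacobi-produced sum is a root for the specific representatives $u(\beta_1)$ and $u_i(\gamma)$, and not merely for some chain representatives. One also has to ensure that $u_i$ of the new chain does not overshoot, that is, no $u_i$ exceeds $u$ within $C_i$. I would handle this by working with chains throughout: I would use the definition of chain addition together with the height/length bookkeeping, and compare lengths only at the end via Lemma~\ref{lemma:alt.u.def}. The degenerate case $u(\alpha)=\alpha'$ is treated the same way, starting from Lemma~\ref{lemma:s.splitting}.
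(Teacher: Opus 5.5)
Your proposal is correct and follows the paper's proof. Both go by induction on relative height. Both take a pair attaining the maximum in~\eqref{eqn:alt.u.def}; when one summand has strictly smaller $M'_1$, they split the other one (which has the same $s$) by the inductive hypothesis, regroup via the Jacobi argument of Lemma~\ref{lemma:s.splitting}, and conclude by comparing lengths with Lemma~\ref{lemma:alt.u.def}. A few remarks: Step~1 should claim only that \emph{some} maximizing splitting has both $M'_1$ equal to $(\delta,i)$, since your exchange produces exactly that, not that every one does. The detour through Lemma~\ref{lem:aux_unused} and the worries in your last paragraph are unnecessary. Your explicit check that the maximum is attained by a pair rather than by $\alpha'$ fills in a point the paper leaves implicit.
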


\begin{proof}
We shall proceed by induction on $\hgt(\chain(\alpha))$. The base case is when $\chain(\alpha)$ is the sum of two irreducible
chains: $\chain(\alpha)=\chain(\beta_i)+\chain(\beta_j)$. We must then have $M'_1(\beta_i) = M'_1(\beta_j) = M'_1(\alpha)$ as
$s_\alpha = 2$, and the result follows from Lemma~\ref{lemma:alt.u.def}.

Assume now the result holds for all decreasing chains of the relative height $<\hgt(\chain(\alpha))$.
Let $M'_1(\alpha)=(\delta,i)$ and pick $\hat\beta,\hat\gamma$ which achieve the max in~\eqref{eqn:alt.u.def}.
If $M'_1(\beta) = M'_1(\gamma) = M'_1(\alpha)$, then the claim follows. Otherwise, without loss of generality,
we can assume that $M'_1(\beta) < M'_1(\alpha)$, and so $M'_1(\gamma) = M'_1(\alpha) = (\delta,i)$ by~\eqref{eq:Mprime-additive}.
Then $s_\gamma = s_\alpha$, and by the inductive hypothesis there exist $\chain(\mu),\chain(\nu) \in \mathcal{D}$ such that
$\chain(\mu) + \chain(\nu) = \chain(\gamma)$, $M'_1(\mu) = M'_1(\nu) = M'_1(\gamma)=(\delta,i)$, and $u(\mu) + u(\nu) = u(\gamma)$.
We must then have that either $\chain(\mu) + \chain(\beta)$ or $\chain(\nu) + \chain(\beta) \in \mathcal{D}$ by the same logic
as that used in the end of the proof of Lemma~\ref{lemma:s.splitting}. Assume without loss of generality that
$\chain(\mu) + \chain(\beta) \in \mathcal{D}$. Note $M'_1(\mu + \beta) = M'_1(\alpha)$ by~\eqref{eq:Mprime-additive}.
Applying Lemma~\ref{lemma:alt.u.def} (with $u=u_i$), we have $|u(\mu)| + |u(\beta)| \leq |u(\mu + \beta)|$. Thus we get
$|u(\mu + \beta)| + |u(\nu)| \geq |u(\mu)| + |u(\beta)| + |u(\nu)| = |u(\beta)| + |u(\gamma)| = |u(\alpha)|$, but we must we have
the equality by Lemma~\ref{lemma:alt.u.def}. Thus $\chain(\mu + \beta)$ and $\chain(\nu)$ satisfy the requirements of the lemma,
completing the step of induction.
\end{proof}


\subsection{Periodicity}
\

Our main goal is to describe the structure of decreasing chains $\chain(\alpha)$ once imaginary prefixes start appearing,
that is,  for $|\hat\alpha| > |u(\alpha)|$. To this end, we introduce some new terminology: given a standard Lyndon word of
the form $uvw$, where $v$ is of the form $\underbrace{\SL_i(\delta)}_{k \text{ times}}$ for some $i$ and $k \geq 1$, $u$ does
not end with $\SL_i(\delta)$, and $w$ does not start with $\SL_i(\delta)$,  we will refer to $v$ as an \textbf{imaginary chunk}
or just a \emph{chunk}.

The following result gives an upper bound on the number of chunks in a given standard Lyndon word
associated with a decreasing chain.

\begin{lemma}\label{lemma:form.dec.words}
For any $\chain(\alpha) \in \mathcal{D}$ and $k \in\BZ_{> 0}$, we have
\begin{gather}\label{eqn:form.of.dec.word}
  \SL(u(\alpha) + k\delta) =
  \underbrace{\SL_i(\delta)}_{p_1 \text{ times}} w_1 \underbrace{\SL_i(\delta)}_{p_2 \text{ times}}w_2 \dots
  \underbrace{\SL_i(\delta)}_{p_{n} \text{ times}}w_{n}
\end{gather}
where $M'_1(\alpha) = (\delta,i)$, $n \leq s_\alpha$, $p_j > 0$ for all $1\leq j\leq n$,
each $w_j$ does not contain $\SL_i(\delta)$ as a subword, and any non-empty suffix of $w_j$ is $>\SL_i(\delta)$.
\end{lemma}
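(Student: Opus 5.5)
The plan is to argue by induction on $s_\alpha$, mirroring the structure of Lemma~\ref{lemma:dec.chains.lower.bound} and Lemma~\ref{lemma:u.splitting}. First, the shape~\eqref{eqn:form.of.dec.word} itself, apart from the bound $n\le s_\alpha$, is essentially formal. By Proposition~\ref{prop:imaginary.substring.decreasing}, the only imaginary word that can occur as a subword of $\SL(u(\alpha)+k\delta)$ is $\SL_i(\delta)$ with $(\delta,i)=M'_1(\alpha)$, and it is also a prefix. Moreover, since $|u(\alpha)+k\delta|>|u(\alpha)|$, Definition~\ref{def:u} guarantees that this prefix actually occurs, so $p_1>0$. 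Grouping maximal consecutive runs of $\SL_i(\delta)$ then yields the displayed decomposition with each $w_j$ free of $\SL_i(\delta)$. The claim that every nonempty suffix of $w_j$ is $>\SL_i(\delta)$ follows because any suffix of $w_j$ is followed in the word by a proper suffix of the Lyndon word $\SL(u(\alpha)+k\delta)$, which is $>\SL(u(\alpha)+k\delta)>\SL_i(\delta)$ by Lemma~\ref{lemma:dec.chains.lower.bound}. Since $w_j$ is not a prefix of $\SL_i(\delta)$ and does not start with $\SL_i(\delta)$, truncation preserves the inequality. The real content is therefore the bound $n\le s_\alpha$.

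For the base case $s_\alpha=1$, I will show $n=1$, i.e.\ that the word is $\SL_i(\delta)^{p}w$ with $w$ free of imaginary subwords. The approach is to use the standard factorization. By Lemma~\ref{lemma:no.splitting.decreasing}, this factorization does not split imaginary subwords, and by Lemma~\ref{lemma:left.standard.decreasing} both factors, if real, lie in decreasing chains. If $\SL^{ls}$ is imaginary, it equals $\SL(M_1(\alpha))=\SL_i(\delta)$ (here $M_1=M'_1$ when $s_\alpha=1$), and Lemma~\ref{lemma:decreasing.left.standard.imaginary.chain} gives the pure form $\SL_i(\delta)^k\SL(\alpha')$, so $n=1$. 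If $\SL^{ls}$ is real, then the two factors have $M'_1$ values whose maximum is $(\delta,i)$ by~\eqref{eq:Mprime-additive}. Since $s$ is additive over the $\beta_j$ with $M_1(\beta_j)=(\delta,i)$, exactly one factor has $M'_1=(\delta,i)$ and $s=1$, while the other has $M'_1<(\delta,i)$ and therefore contains no $\SL_i(\delta)$ at all. If the latter is the right factor, induction on length gives $n=1$. If it is the left factor, then $\SL^{ls}$ does not begin with $\SL_i(\delta)$, which contradicts $p_1>0$.

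For the inductive step $s_\alpha\ge2$, I will use the same standard factorization $\SL(\hat\alpha)=\SL(\mu)\SL(\eta)$ with $\hat\alpha=u(\alpha)+k\delta$. The case where $\SL^{ls}$ is imaginary is excluded by Corollary~\ref{cor:decreasing.left.factor.imaginary}. So $\chain(\mu),\chain(\eta)\in\mathcal{D}$, and $s_\mu+s_\eta=s_\alpha$ once we restrict to the factors with $M'_1=(\delta,i)$; a factor with smaller $M'_1$ contributes no chunks. Chunks are not split, so the chunk count of $\SL(\hat\alpha)$ is at most the sum of the chunk counts of the two factors. Applying the induction hypothesis to each factor, where legitimate, gives $n\le s_\mu+s_\eta=s_\alpha$.

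The main obstacle is that the induction hypothesis applies only to elements of the factor chains of length $>|u(\cdot)|$, whereas for a factor of length $\le|u(\mu)|$ the statement is vacuous (there are no chunks), which is also harmless. The real issue is making the induction well-founded. Inducting on $s$ alone fails when one factor has $s=s_\alpha$ and the other has smaller $M'_1$. I will therefore induct on $|\hat\alpha|$ (or on relative height) within all decreasing chains simultaneously, proving the general bound ``number of chunks $\le s$'' for every element of a decreasing chain, including those below $u$, where the count is zero. The reduction in length across the standard factorization then makes the argument go through, with Lemma~\ref{lem:aux_unused} available if I need to control a factor with smaller $M'_1$.
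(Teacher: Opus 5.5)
Your proposal is correct and is essentially the paper's proof. The paper uses the same standard factorization $\SL(\hat\alpha)=\SL(\mu)\SL(\eta)$, which does not split $\SL_i(\delta)$ by Lemma~\ref{lemma:no.splitting.decreasing}. It handles an imaginary left factor by Lemma~\ref{lemma:decreasing.left.standard.imaginary.chain} and Corollary~\ref{cor:decreasing.left.factor.imaginary}, and splits the real case according to whether $M'_1(\eta)=M'_1(\alpha)$ (additivity of $s$) or not (no $\SL_i(\delta)$ in $\SL(\eta)$ by Proposition~\ref{prop:imaginary.substring.decreasing}); its induction on $\hgt(\chain(\alpha))$ rather than on $|\hat\alpha|$ is immaterial. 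The only real difference is that you derive the properties of the $w_j$ directly rather than inductively, and there your truncation step should concern every nonempty suffix $x$ of $w_j$, not $w_j$ itself: $x$ cannot be a proper prefix $\SL_i(\delta)=xy$, since for $j=n$ it would be both a proper prefix and a proper suffix of $\SL(\hat\alpha)$, while for $j<n$ it is followed by $\SL_i(\delta)$, so that $y>\SL_i(\delta)$ with $|y|<|\SL_i(\delta)|$ forces the corresponding proper suffix of $\SL(\hat\alpha)$ to be $<\SL_i(\delta)<\SL(\hat\alpha)$, a contradiction.
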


\begin{proof}
This will be shown by induction on $\hgt(\chain(\alpha))$. The base case corresponds to $\chain(\alpha)$ being irreducible
for which the result follows from Corollary~\ref{cor:dec.irr.chains}. Here, the fact that any suffix of $w_1$ is $> \SL_i(\delta)$
follows from $\SL(u(\alpha)+k\delta)$ being Lyndon, while the fact that $w_1$ does not have $\SL_i(\delta)$ as a subword is due to
Proposition~\ref{prop:imaginary.substring.decreasing}.

For the inductive step, suppose the result holds for all chains of relative height $ < \hgt(\chain(\alpha))$.
Let $\hat\alpha  = u(\alpha) + k\delta$ for some $k \in \BZ_{> 0}$, and let $\SL(\hat\alpha) = \SL(\beta)\SL(\gamma)$ be
the standard factorization, which does not split any of $\SL_i(\delta)$ by Lemma~\ref{lemma:no.splitting.decreasing}.
If $\beta$ is imaginary, then $\beta = M_1(\gamma)$ by Corollary~\ref{cor:imaginary.suffix.prefix}, and the result follows
from Lemma~\ref{lemma:decreasing.left.standard.imaginary.chain}.
If $\beta$ is real, then $\SL(\beta)$ contains $\SL_i(\delta)$ as a proper prefix of $\SL(\beta)$, hence,
$M'_1(\beta) = M'_1(\alpha)$ by Lemma~\ref{lemma:M.prime.prefix}.

We will first establish $n \leq s_\alpha$. To this end, it suffices to show that there are $\leq s_\alpha$ chunks of
$\SL(M'_1(\alpha))$'s in the words $\SL(\beta)$ and $\SL(\gamma)$ combined, due to Lemma~\ref{lemma:no.splitting.decreasing}:
\begin{itemize}[leftmargin=0.7cm]

\item
if $M'_1(\gamma) = M'_1(\alpha)$, then $s_\alpha = s_\beta + s_\gamma$. If $|\gamma| > |u(\gamma)|$, then the result
follows by applying the inductive hypothesis to $\SL(\beta)$ and $\SL(\gamma)$, while if $|\gamma| \leq |u(\gamma)|$,
then the result follows from the inductive hypothesis applied to $\SL(\beta)$ alone;

\item
if $M'_1(\gamma) \neq M'_1(\alpha)$, then $s_\beta=s_\alpha$ and $\SL_i(\delta)$ is not a subword of $\SL(\gamma)$ by
Proposition~\ref{prop:imaginary.substring.decreasing}, hence, the result follows by applying the inductive hypothesis
applied to $\SL(\beta)$.

\end{itemize}

We will now prove the two stated propertied of $w_j$'s. If the standard factorization of $\SL(\hat\alpha)$ does not split any $w_j$,
then the result is clear by applying the inductive hypothesis to $\SL(\beta), \SL(\gamma)$. If some $w_j$ actually got split by
the standard factorization, then we claim that $j=n$: this follows by noting that otherwise $\SL_i(\delta)$ would be a subword
of $\SL(\gamma)$ and hence would also be its prefix by Proposition~\ref{prop:imaginary.substring.decreasing}, a contradiction.
By the inductive hypothesis applied to $\SL(\beta)$, we obtain both claimed properties for $\{w_j\}_{j=1}^{n-1}$.
Finally, given a non-empty suffix $u$ of $w_n$, either $u$ starts in $\SL(\beta)$ and thus $u>\SL_i(\delta)$ by the inductive
hypothesis applied to $\SL(\beta)$, or $u$ is a suffix of $\SL(\gamma)$ and so $u \geq \SL(\gamma)>\SL(\beta)>\SL_i(\delta)$.
This completes the proof.
\end{proof}

We also note the following lower bound on $\delta$-words occurring in decreasing chains.

\begin{lemma}\label{lem:imaginary-lowerbound}
For any $\chain(\alpha) \in \mathcal{D}$ and $k>0$, the word $\SL(u(\alpha)+k\delta)$ contains at least $k$
copies of $\SL(M'_1(\alpha))$ as subwords.
\end{lemma}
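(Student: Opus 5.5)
Our plan is induction on $k$, using Proposition~\ref{prop:weak.form.desc.irr.chains} and the structure of standard factorizations in decreasing chains. Set $M'_1(\alpha)=(\delta,i)$ and write $\hat\alpha_k=u(\alpha)+k\delta$. We treat $\hat\alpha_k$ as a real root with $|\hat\alpha_k|>|\delta|$.

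For $k=1$ the claim follows from the definition of $u(\alpha)$ together with Proposition~\ref{prop:imaginary.substring.decreasing}. Since $\hat\alpha_1$ is longer than $u(\alpha)$, the word $\SL(\hat\alpha_1)$ has $\SL_i(\delta)$ as a prefix.

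For the step from $k-1$ to $k$ (with $k\geq 2$), we would rather prove a slightly stronger statement by induction on $\hgt(\chain(\alpha))$ first and on $k$ second. The stronger statement is that the count of $\SL_i(\delta)$ subwords grows by at least one each time we pass from $\hat\alpha_{k-1}$ to $\hat\alpha_k$. Take the standard factorization $\SL(\hat\alpha_k)=\SL(\beta)\SL(\gamma)$. By Lemma~\ref{lemma:no.splitting.decreasing} it splits no imaginary subword, so the copies of $\SL_i(\delta)$ are counted additively in $\SL(\beta)$ and $\SL(\gamma)$. There are two cases.

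If $\beta$ is imaginary, then $\beta=M_1(\alpha)=M'_1(\alpha)$ and $\gamma=\hat\alpha_{k-1}$ by Corollary~\ref{cor:imaginary.suffix.prefix} and Lemma~\ref{lemma:decreasing.left.standard.imaginary.chain}. Induction on $k$ then gives $1+(k-1)=k$ copies.

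If $\beta$ is real, Lemma~\ref{lemma:left.standard.decreasing} shows that both $\chain(\beta)$ and $\chain(\gamma)$ are decreasing, and $\chain(\beta)+\chain(\gamma)=\chain(\alpha)$. Write $\beta=u(\beta)+k_1\delta$ and $\gamma=u(\gamma)+k_2\delta$, where $k_1,k_2\in\BZ$ may be non-positive. Degrees add, so
\[
  u(\alpha)+k\delta=u(\beta)+u(\gamma)+(k_1+k_2)\delta .
\]
We want $|u(\beta)|+|u(\gamma)|\leq |u(\alpha)|$, which gives $k_1+k_2\geq k$. When $M'_1(\beta)=M'_1(\gamma)=(\delta,i)$, this should follow from Lemma~\ref{lemma:alt.u.def} with $u=u_i$. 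When one of them, say $\gamma$, has $M'_1(\gamma)<(\delta,i)$, it should follow from the same lemma applied to $u_i(\gamma)\geq u(\gamma)$ in length; here Lemma~\ref{lem:aux_unused} pins down $\gamma=u_i(\gamma)$, so $\gamma$ contributes no copies but absorbs the $\delta$-shift. The relative-height induction applied to $\chain(\beta)$ and $\chain(\gamma)$ then yields at least $\max(k_1,0)+\max(k_2,0)\geq k$ copies. When $M'_1(\gamma)<(\delta,i)$ we instead need $k_1\geq k$, which follows from $\gamma=u_i(\gamma)$ and $|u(\beta)|+|u_i(\gamma)|\leq|u(\alpha)|$.

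We expect the main obstacle to be this degree bookkeeping, namely justifying $|u(\beta)|+|u_i(\gamma)|\leq|u_i(\alpha)|$ for an arbitrary splitting arising from the standard factorization. Equation~\eqref{eqn:alt.u.def} only says that $u_i(\alpha)$ is the maximum over splittings, but that is exactly the inequality we need, provided both chains lie in $C_i$. We would check membership in $C_i$ using Lemma~\ref{lemma:dec.chains.lower.bound} and Lemma~\ref{lemma:M.prime.prefix}, which give $\SL(\beta),\SL(\gamma)>\SL_i(\delta)$. For the $\gamma$ term we would use the monotonicity of $|u_i|$ in $i$.
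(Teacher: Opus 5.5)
Your argument is correct and is essentially the paper's proof. Both take the standard factorization of $\SL(u(\alpha)+k\delta)$, use Lemma~\ref{lemma:left.standard.decreasing} and Lemma~\ref{lemma:alt.u.def} (with both chains in $C_i$) to get $k_1+k_2\geq k$, split according to whether $M'_1(\gamma)=M'_1(\alpha)$, and induct on $\hgt(\chain(\alpha))$.

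The differences are minor. You treat an imaginary left factor explicitly via Lemma~\ref{lemma:decreasing.left.standard.imaginary.chain} and an inner induction on $k$. In the case $M'_1(\gamma)<M'_1(\alpha)$ you pin down $\gamma=u_i(\gamma)$ using Lemma~\ref{lem:aux_unused}, whereas the paper only needs $k_2\leq 0$ and reads it off from $\gamma>\beta$. Finally, the ``stronger statement'' you announce is never actually used or needed.
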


\begin{proof}
We will prove this by induction on the relative height $\hgt(\chain(\alpha))$. The base case $\hgt(\chain(\alpha))=1$
corresponds to an irreducible decreasing chain $\chain(\alpha)$, and the result follows from
Corollary~\ref{cor:dec.irr.chains}. For the step of induction, we shall assume the result holds for all decreasing chains of
relative height $<\hgt(\chain(\alpha))$.

Consider the standard factorization $\SL(\alpha) = \SL(\beta)\SL(\gamma)$. If $\beta$ is imaginary, then $s_\alpha = 1$ by
Corollaries~\ref{cor:decreasing.left.factor.imaginary} and~\ref{cor:imaginary.suffix.prefix}, reducing to the
base case. If $\beta$ is real, then $\chain(\beta),\chain(\gamma) \in \mathcal{D}$ by Lemma~\ref{lemma:left.standard.decreasing}.
Let $M'_1(\alpha) = (\delta,i)$. Note that $M'_1(\beta) = M'_1(\alpha)$ by Lemma~\ref{lemma:M.prime.prefix}, thus
$\chain(\beta) \in C_i$ by Lemma~\ref{lemma:dec.chains.lower.bound}. Additionally, we must have $\chain(\gamma) \in C_i$ as
$M'_1(\gamma) \leq M'_1(\alpha)=(\delta,i) < \beta < \gamma$. Thus applying Lemma~\ref{lemma:alt.u.def}, we have
$|u(\alpha)| = |u_i(\alpha)| \geq |u_i(\beta)| + |u_i(\gamma)|$, and so $|\alpha| \geq |u_i(\beta)| + |u_i(\gamma)| + |k\delta|$.
Let $\beta = u_i(\beta) + k_1\delta,\gamma = u_i(\gamma) + k_2\delta$, so that $k_1 + k_2 = k$. We consider two cases:
\begin{itemize}[leftmargin=0.7cm]

\item
if $M'_1(\gamma) < M'_1(\alpha)$, then $k_2 \leq 0$ (as otherwise we would get a contradiction with $\gamma > \beta$) and by the
induction assumption the word $\SL(\beta)$ has at least $k_1 \geq  k$ copies of $\SL(M'_1(\beta))=\SL(M'_1(\alpha))$ as a subword,
hence so does $\SL(\alpha)$;

\item
if $M'_1(\gamma) = M'_1(\alpha)$, then by the induction assumption the words $\SL(\beta)$ and $\SL(\gamma)$ contain at least $k_1$
and $k_2$ copies of $\SL(M'_1(\alpha))$, respectively, so that $\SL(\alpha)=\SL(\beta)\SL(\gamma)$ contains at least $k_1 + k_2 \geq k$
copies of $\SL(M'_1(\alpha))$.

\end{itemize}
This completes the proof.
\end{proof}

The following result provides a lower bound on the length of the first chunk.

\begin{corollary}\label{cor:p.1.value}
For any $\chain(\alpha) \in \mathcal{D}$ and $\hat\alpha = u(\alpha) + k\delta$ with $k > 0$, we have
$p_1 \geq \lceil k /s_\alpha\rceil$ in~\eqref{eqn:form.of.dec.word}.
\end{corollary}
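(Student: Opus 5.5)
We combine the two preceding lemmas: Lemma~\ref{lem:imaginary-lowerbound} bounds the total number of copies of $\SL_i(\delta)$ from below, and Lemma~\ref{lemma:form.dec.words} bounds the number of chunks from above. A pigeonhole-type step is then needed to transfer the bound to the \emph{first} chunk. Throughout, write $M'_1(\alpha)=(\delta,i)$ and $\hat\alpha=u(\alpha)+k\delta$ with $k>0$.

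First, by Lemma~\ref{lem:imaginary-lowerbound} the word $\SL(\hat\alpha)$ contains at least $k$ copies of $\SL_i(\delta)$ as subwords. By~\eqref{eqn:form.of.dec.word}, $\SL(\hat\alpha)$ splits into at most $n\le s_\alpha$ chunks, and the $w_j$ contain no copy of $\SL_i(\delta)$. We must also exclude copies straddling a chunk boundary (partly inside some $w_j$). For this we use Lemma~\ref{lemma:lyndon.subword}: refine~\eqref{eqn:form.of.dec.word} into a canonical factorization. Each $\SL_i(\delta)$ is Lyndon, and since every non-empty suffix of $w_j$ is $>\SL_i(\delta)$, the canonical factors of the $w_j$ are all $>\SL_i(\delta)$. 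A Lyndon subword equal to $\SL_i(\delta)$ must then lie inside a single canonical factor, and these are either the copies of $\SL_i(\delta)$ or factors of the $w_j$'s, which contain no such copy. Hence $p_1+\dots+p_n\ge k$.

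This gives only $\max_j p_j\ge\lceil k/s_\alpha\rceil$. The remaining task, and the main obstacle, is to show $p_1\ge p_j$ for all $j$. I would argue via the Lyndon property. Suppose $p_j>p_1$ for some $j>1$. Then the suffix starting at the $j$-th chunk begins with $\SL_i(\delta)^{p_1+1}$. The whole word begins with $\SL_i(\delta)^{p_1}w_1$, where $w_1$ does not start with $\SL_i(\delta)$ and $w_1>\SL_i(\delta)$; the latter holds because $w_1$ is a non-empty suffix of itself.

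Comparing after the common prefix $\SL_i(\delta)^{p_1}$, we need $w_1\cdots>\SL_i(\delta)\cdots$. Since $w_1>\SL_i(\delta)$ and $\SL_i(\delta)$ is not a prefix of $w_1$, there are two cases. Either $w_1$ is a proper prefix of $\SL_i(\delta)$, or they differ at some letter. The latter case immediately gives that the whole word exceeds the suffix, contradicting Definition~\ref{def:lyndon-2}.

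The prefix case must be excluded separately. I would try Lemma~\ref{lemma:leclerc.14} together with the structure from the proof of Lemma~\ref{lemma:form.dec.words}, as in part ii) of the proof of Proposition~\ref{prop:weak.form.desc.irr.chains}. Alternatively, if $w_1$ were a proper prefix of $\SL_i(\delta)$ then, as $w_1$ is followed by $\SL_i(\delta)$ when $n>1$, we would get $w_1\SL_i(\delta)<\SL_i(\delta)$, contradicting that the suffix $w_1\SL_i(\delta)\cdots$ is $>\SL_i(\delta)$. The case $n=1$ is trivial, since then $p_1\ge k$ directly. Thus $p_1\ge p_j$ for all $j$, so $s_\alpha p_1\ge n p_1\ge\sum_j p_j\ge k$, and hence $p_1\ge\lceil k/s_\alpha\rceil$.
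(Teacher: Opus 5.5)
Your main line is the paper's proof. Lemma~\ref{lem:imaginary-lowerbound} gives $\sum_j p_j\ge k$. Then $p_1\ge p_j$ holds because $w_1>\SL_i(\delta)$ and $\SL_i(\delta)$ is not a prefix of $w_1$, so otherwise $\SL(\hat\alpha)$ would exceed its suffix starting at the $j$-th chunk. With $n\le s_\alpha$ chunks, pigeonhole gives the bound. Two of your auxiliary steps need correcting, however.

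\textbf{The step excluding straddling copies does not work.} Your refinement of~\eqref{eqn:form.of.dec.word} into $\SL_i(\delta),\dots,\SL_i(\delta)$, then the canonical factors of $w_1$, then $\SL_i(\delta),\dots$ is not a canonical factorization. As you yourself observe, the factors of $w_1$ are $>\SL_i(\delta)$, so the sequence increases at every junction. In fact $\SL(\hat\alpha)$ is Lyndon, so its canonical factorization is the single factor $\SL(\hat\alpha)$, and Lemma~\ref{lemma:lyndon.subword} gives nothing.

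The paper passes over this point silently. The correct justification is that a Lyndon word $\ell$ is unbordered: a nonempty proper prefix $x$ of $\ell$ that is also a suffix would satisfy $x<\ell<x$ by Definition~\ref{def:lyndon-2}. Now take an occurrence of $\SL_i(\delta)$ that is not one of the chunk copies. Since $p_j>0$ for all $j$, it either lies inside a single $w_j$, which Lemma~\ref{lemma:form.dec.words} forbids, or it overlaps some chunk copy. In the latter case the overlap is a nonempty proper prefix of one copy and a nonempty proper suffix of the other, i.e.\ a border of $\SL_i(\delta)$. Hence the $\ge k$ occurrences are exactly chunk copies, and $\sum_j p_j\ge k$.

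\textbf{Your ``prefix case'' is vacuous.} A proper prefix of $\SL_i(\delta)$ is lexicographically smaller than $\SL_i(\delta)$. So $w_1>\SL_i(\delta)$, together with $\SL_i(\delta)$ not being a prefix of $w_1$, already forces a first differing letter at which $w_1$ is larger. No appeal to Lemma~\ref{lemma:leclerc.14}, or to what follows $w_1$, is needed.
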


\begin{proof}
Let $M'_1(\alpha) = (\delta,i)$. By Lemma~\ref{lem:imaginary-lowerbound}, we have $\sum_{j=1}^{s_\alpha} p_j \geq k$.
Thus, by pigeonhole principal, it suffices to show $p_1 \geq p_j$ for all $1< j \leq s_\alpha$. Assuming the contradiction,
i.e.\ $p_1 < p_j$ for some $j$, we have
$\underbrace{\SL_i(\delta)}_{p_1 \text{ times}}w_1 > \underbrace{\SL_i(\delta)}_{p_j \text{ times}}$ as $w_1 > \SL_i(\delta)$
and $\SL_i(\delta)$ is not a prefix of $w_1$ by Lemma~\ref{lemma:form.dec.words}. The latter contradicts to $\SL(\hat\alpha)$
being Lyndon, thus completing the proof.
\end{proof}

The following simple result will be important in the proof of Theorem~\ref{thm:dec.modulo.s}.

\begin{lemma}\label{lemma:lifting.chains.dec}
Consider four words $w,w',v,v'$ of the following form:
\begin{align*}
  w &= \underbrace{\ell}_{p_1 \text{ times}}w_1 \underbrace{\ell}_{p_2 \text{ times}}w_2 \ldots
      \underbrace{\ell}_{p_{t} \text{ times}}w_t , \\
  w'&= \underbrace{\ell}_{p_1+1 \text{ times}}w_1 \underbrace{\ell}_{p_2+1 \text{ times}}w_2 \ldots
      \underbrace{\ell}_{p_{t}+1 \text{ times}}w_t , \\
  v &= \underbrace{\ell}_{q_1 \text{ times}}v_1 \underbrace{\ell}_{q_2 \text{ times}}v_2 \ldots
      \underbrace{\ell}_{q_{s} \text{ times}}v_s , \\
  v'&= \underbrace{\ell}_{q_1+1 \text{ times}}v_1 \underbrace{\ell}_{q_2+1 \text{ times}}v_2 \ldots
      \underbrace{\ell}_{q_{s}+1 \text{ times}}v_{s} ,
\end{align*}
with $\ell \in \LL$, which satisfy the following conditions for all $i$:
\begin{enumerate}[leftmargin=0.8cm]

\item
$w_i$,$v_i$ are non-empty;

\item
$\ell$ is not a subword of $v_i,w_i$;

\item
any non-empty suffix of $v_i,w_i$ is $> \ell$;

\item
$p_i,q_i > 0$ for $i>1$.

\end{enumerate}
Then, we have $w > v \Leftrightarrow w' > v'$.
\end{lemma}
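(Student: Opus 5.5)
The plan is to prove both directions at once. I will show that the first position where $w$ and $v$ differ is governed by the same local comparison as for $w'$ and $v'$. It is convenient to view each word as a sequence of \emph{blocks}: $\ell^{p_1},w_1,\ell^{p_2},w_2,\ldots$. Passing from $w,v$ to $w',v'$ lengthens every $\ell$-block by one copy of $\ell$ and leaves the $w_i,v_i$ untouched. So it suffices to locate the first block at which the two block sequences disagree, and to check that the outcome of the comparison there depends only on the relative size of the exponents or on the $w_j$ versus $v_j$ comparison.

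The basic tool is the following observation, to be proved first. If $z$ is any non-empty suffix of some $w_j$ or $v_j$, then $z x>\ell y$ for arbitrary words $x,y$. Indeed, $z>\ell$ by condition (3). Also $z$ is not a proper prefix of $\ell$, since a proper prefix of $\ell$ is smaller than $\ell$. And $\ell$ is not a prefix of $z$, by condition (2). Hence $z$ and $\ell$ first differ at a position $\le\min(|z|,|\ell|)$, where $z$ has the larger letter, and appending anything does not change this.

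The proof then proceeds by induction along the blocks, with the following cases.

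\textbf{Case 1: unequal exponents.} Suppose the first disagreement is at an $\ell$-block, i.e.\ $p_j\neq q_j$ while all earlier blocks agree. Say $p_j<q_j$; this includes $p_1=0$, which is allowed. After the common prefix, $w$ continues with $w_j\ldots$ and $v$ continues with $\ell\ldots$. By the observation applied to $z=w_j$, we get $w>v$. The same holds for $w'$ versus $v'$, since $p_j+1<q_j+1$.

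\textbf{Case 2: $w_j\neq v_j$, neither a prefix of the other.} Here they differ at a genuine position, and this same position decides both comparisons.

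\textbf{Case 3: one is a proper prefix of the other.} Say $v_j=w_jz$ with $z$ non-empty. If $j<t$, then $w$ continues with $\ell^{p_{j+1}}w_{j+1}\ldots$ with $p_{j+1}>0$ by condition (4), while $v$ continues with $z\ldots$. By the observation, $v>w$. In the primed words the exponent becomes $p_{j+1}+1>0$, so again $v'>w'$. If $j=t$, then $w$ ends at this point, so it is a proper prefix of $v$ and $w<v$. Likewise $w'$ is a proper prefix of $v'$.

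\textbf{Case 4: all common blocks agree.} Then one word has more blocks, or both coincide. If $w=v$ then $w'=v'$. Otherwise the shorter word is a proper prefix of the longer in both versions, so the inequality is the same for both pairs.

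I expect the only delicate step to be the observation itself. Specifically, one must ensure that a suffix $z$ can never be a prefix of $\ell$, nor begin with $\ell$, so that the comparison is decided within $\min(|z|,|\ell|)$ letters, independently of what follows. Both exclusions are exactly what conditions (2) and (3) provide. Once this is in place, every case gives the same verdict for $(w,v)$ and $(w',v')$, which yields $w>v\Leftrightarrow w'>v'$.
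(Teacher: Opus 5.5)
Your proof is correct and follows essentially the same route as the paper. The paper also locates the first point of difference, phrased as an induction on $\max\{s,t\}$ after stripping common prefixes. It uses conditions (2)--(3) exactly as in your observation, to show that a non-empty suffix of $w_j$ or $v_j$ beats anything beginning with $\ell$, and uses (4) to guarantee that an $\ell$ follows every non-final $w_j$.

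Your version has two minor advantages. First, it shows directly that the comparison verdict (including equality) is the same for $(w,v)$ and $(w',v')$, so both directions come out at once without appealing to symmetry. Second, it explicitly handles the terminal cases ($j=t$ or $j=s$, where one word is a proper prefix of the other), which the paper leaves implicit.
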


\begin{proof}
We will first show the ``$\Rightarrow$'' direction, by induction on $\max\{s,t\}$. The result in the base case $s = t = 1$
clearly holds as $\ell$ is inserted at the same spot for both $w,v$ to produce words $w',v'$. For the inductive hypothesis,
suppose the result holds for all $s',t'$ with $\max\{s',t'\} < \max \{s,t\}$. Note that when comparing two words we can
repeatedly remove common prefixes, hence we shall assume that $\min\{p_1,q_1\} = 0$. We consider three cases:
\begin{itemize}[leftmargin=0.7cm]

\item
If $p_1 > q_1 = 0$, then as $v_1 > \ell$ by (3) and $\ell$ is not a prefix of $v_1$ by (2), we get $v>w$, a contradiction;

\item
If $0 = p_1 < q_1$, then as $w_1 > \ell$ by (3) and $\ell$ is not a prefix of $w_1$ by (2), we get $w'>v'$, as claimed;

\item
Finally, suppose that $p_1 = q_1 = 0$. In this case, we consider three subcases:
\begin{itemize}[leftmargin=0.7cm]

\item
If $w_1 = v_1$, then the result follows from the inductive assumption applied to the pair of words
$\underbrace{\ell}_{p_2 \text{ times}}w_2\ldots$ and $\underbrace{\ell}_{q_2 \text{ times}}v_2\ldots$
(obtained from $w,v$ by erasing the common prefix $w_1=v_1$);

\item
If $w_1 < v_1$, then as $w > v$, we must then have that $w_1$ is a prefix of $v_1$, i.e.\ $v_1=w_1u$ for some non-empty $u$.
Since $p_2 > 0$ by (4), we then get a contradiction with $w>v$ as $u>\ell$ and $\ell$ is not a prefix of $u$ by (3) and~(2);

\item
If $w_1 > v_1$, then we clearly have $w' > v'$ unless $v_1$ is a prefix of $w_1$. In the latter case, $w_1=v_1u$ for some
non-empty $u$. Then we still have $w' > v'$ as $u>\ell$ and $\ell$ is not a prefix of $u$ by (3) and~(2), respectively.

\end{itemize}

\end{itemize}
This exhausts all possible cases, thus completing the inductive step.

We note that the ``$\Leftarrow$'' direction follows by applying the above to the contrapositive as all conditions (1)--(4)
are symmetric.
\end{proof}

In fact, the upper bound on the number of chunks in all decreasing chains from Lemma~\ref{lemma:form.dec.words} is robust,
due to the following key structural result which also features the main periodicity pattern.

\begin{theorem}\label{thm:dec.modulo.s}
For any $\chain(\alpha) \in \mathcal{D}$, let $M'_1(\alpha) = (\delta,i)$. Fix any $s_\alpha\leq k < 2s_\alpha$, and consider
the decomposition~\eqref{eqn:form.of.dec.word}. Then for any $k' = k + qs_\alpha$ with $q \in \BZ_{\geq 0}$:
\begin{itemize}[leftmargin=0.7cm]

\item[(1)]
We have
\begin{gather}\label{eq:chunk-periodicity-s}
  \SL(u(\alpha) + k'\delta) = \underbrace{\SL_i(\delta)}_{p_1 + q \text{ times}}w_1
  \underbrace{\SL_i(\delta)}_{p_2 + q \text{ times}} w_2 \, \cdots
  \underbrace{\SL_i(\delta)}_{p_{s_\alpha} +q\text{ times}}w_{s_\alpha}.
\end{gather}
Furthermore, all $p_j$ all equal to either $1$ or $2$ (for any $1 < j \leq s_\alpha$),
and $p_1$ is equal to $1$ when $k = s_\alpha$ and $2$ otherwise.

\item[(2)]
Let $\beta = \deg(\SL^{ls}(u(\alpha) + k\delta))$. If $\beta$ is imaginary then $\deg(\SL^{ls}(u(\alpha) + k'\delta)) = \beta$,
while if $\beta$ is real then $\deg(\SL^{ls}(u(\alpha) + k'\delta)) = \beta + qs_\beta \delta$.

\end{itemize}
\end{theorem}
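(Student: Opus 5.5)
The plan is an induction on the relative height $\hgt(\chain(\alpha))$, proving (1) and (2) simultaneously, with Lemma~\ref{lemma:lifting.chains.dec} as the engine that transports comparisons from level $k$ to level $k+s_\alpha$.

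\textbf{Base case.} If $\chain(\alpha)$ is irreducible, then $s_\alpha=1$ and $u(\alpha)=\alpha'$. Corollary~\ref{cor:dec.irr.chains} gives $\SL(\alpha'+k\delta)=\SL_i(\delta)^k\SL(\alpha')$ with imaginary standard left factor, so both items are immediate.

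\textbf{Inductive step, setup.} Let $\hat\alpha=u(\alpha)+k\delta$ and take the standard factorization $\SL(\hat\alpha)=\SL(\beta)\SL(\gamma)$. Two cases arise.
\begin{itemize}
\item If $\beta$ is imaginary, then Corollary~\ref{cor:decreasing.left.factor.imaginary} gives $s_\alpha=1$, and Lemma~\ref{lemma:decreasing.left.standard.imaginary.chain} gives exact prefix periodicity.
\item Otherwise $\chain(\beta),\chain(\gamma)\in\mathcal{D}$ by Lemma~\ref{lemma:left.standard.decreasing}, and $M'_1(\beta)=M'_1(\alpha)$ by Lemma~\ref{lemma:M.prime.prefix}.
\end{itemize}
In the second case I would use Lemmas~\ref{lemma:u.splitting} and \ref{lemma:alt.u.def} (together with Lemma~\ref{lem:aux_unused} when $M'_1(\gamma)<M'_1(\alpha)$) to write $\beta=u(\beta)+k_1\delta$ and $\gamma=u(\gamma)+k_2\delta$. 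Here $k_1+k_2=k$ or $k$ plus a nonnegative defect, and $s_\beta+s_\gamma=s_\alpha$ when $M'_1(\gamma)=M'_1(\alpha)$, while $s_\beta=s_\alpha$ and $\gamma=u_i(\gamma)$ otherwise.

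\textbf{Key claim.} The candidate $\SL(\beta+qs_\beta\delta)\SL(\gamma+qs_\gamma\delta)$ (with $s_\gamma:=0$ when $M'_1(\gamma)<M'_1(\alpha)$) equals $\SL(\hat\alpha+qs_\alpha\delta)$. By induction, both factors have the chunk form \eqref{eq:chunk-periodicity-s} with each chunk enlarged by $q$. Their concatenation therefore has every chunk of $\SL(\hat\alpha)$ enlarged by $q$: by Lemma~\ref{lemma:no.splitting.decreasing} no $\SL_i(\delta)$ is split, and the chunk count is additive. This proves (1) as soon as the identification holds.

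To identify the concatenation with $\SL(\hat\alpha+qs_\alpha\delta)$, I would use the generalized Leclerc algorithm~\eqref{eq:generalized Leclerc}. Every competing factorization $\SL(\mu)\SL(\nu)$ at level $k$ lifts under the same chunk-insertion, and Lemma~\ref{lemma:lifting.chains.dec} preserves strict inequalities. Hence maximality at level $k$ transfers to level $k+qs_\alpha$.

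The converse direction also has to be handled: every factorization at the higher level must arise by lifting one at level $k$. This is where the hypothesis $s_\alpha\le k$ enters. By Corollary~\ref{cor:p.1.value} we have $p_1\ge1$, and by Lemma~\ref{lemma:form.dec.words} there are exactly $s_\alpha$ chunks, each nonempty. So all chunks are present, and deleting one copy from each chunk is well defined and inverts the lifting. I expect this step to be the main obstacle: showing that the maximal competitor at the higher level has every chunk of size $\ge q+1$. For this I would use Lemma~\ref{lem:imaginary-lowerbound} applied to the subchains, together with the pigeonhole bound $p_j\le p_1$ from Corollary~\ref{cor:p.1.value}. Standardness of the lifted factorization then follows because the longest-Lyndon-prefix property is preserved under lifting, again by Lemma~\ref{lemma:lifting.chains.dec}. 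This gives (2) with $\beta\mapsto\beta+qs_\beta\delta$.

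\textbf{Values of the $p_j$.} At level $k\in[s_\alpha,2s_\alpha)$, Lemma~\ref{lem:imaginary-lowerbound} gives $\sum p_j\ge k$. Going down to level $k-s_\alpha<s_\alpha$, the inverse of the lifting removes one copy from each chunk. Because there are fewer than $s_\alpha$ forced copies at that level, each $p_j\le2$; and $p_j\ge1$ for $j>1$. Finally, $p_1=1$ exactly when $k=s_\alpha$, since $p_1\ge p_j$ and $p_1\ge\lceil k/s_\alpha\rceil$.
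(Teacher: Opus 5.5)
Your skeleton is the right one: induction on $\hgt(\chain(\alpha))$, with Lemma~\ref{lemma:lifting.chains.dec} used to transport comparisons by $s_\bullet\delta$ between levels $k$ and $k+s_\alpha$. However, the statement everything rests on is assumed rather than proved. You need that for $|\hat\alpha|\ge|u(\alpha)+ks_\alpha\delta|$ the word $\SL(\hat\alpha)$ has \emph{exactly} $s_\alpha$ chunks, each of length at least $k$. You also need the standard factors $\SL(\hat\alpha)=\SL(\hat\beta)\SL(\hat\gamma)$ to be saturated, $|\hat\beta|\ge|u(\hat\beta)+ks_{\hat\beta}\delta|$ and $|\hat\gamma|\ge|u_i(\hat\gamma)+k(s_\alpha-s_{\hat\beta})\delta|$; this is Claim~\ref{claim:saturated.dec} in the paper. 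Neither follows from what you cite:
Lemma~\ref{lemma:form.dec.words} gives only $n\le s_\alpha$, not $n=s_\alpha$. Lemma~\ref{lem:imaginary-lowerbound} together with $p_j\le p_1$ yields only the lower bound on $p_1$ of Corollary~\ref{cor:p.1.value}, and a single long first chunk is consistent with both. Likewise $k_1+k_2\ge k\ge s_\beta+s_\gamma$ does not give $k_1\ge s_\beta$ and $k_2\ge s_\gamma$ separately.

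Without saturation, three steps are unsupported. You cannot apply the inductive hypothesis to the factors, since the theorem is silent below level $s_\beta$ of $\chain(\beta)$. You cannot verify hypothesis (4) of Lemma~\ref{lemma:lifting.chains.dec}. And you cannot un-lift $\SL(\hat\alpha+qs_\alpha\delta)$ or its standard left factor, because higher-level competitors need not be lifts of anything. So both your ``converse direction'' and your deduction of (2) are unsupported.

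The paper proves saturation by contradiction. An unsaturated standard factor leads either, via Lemma~\ref{lem:aux_unused}, to $|u(\alpha)|<|u_i(\hat\beta)|+|u_i(\hat\gamma)|$, contradicting Lemma~\ref{lemma:alt.u.def}; or to a concatenation of $\SL(\hat\beta-\delta)$ and $\SL(\hat\gamma+\delta)$ exceeding $\SL(\hat\alpha)$, contradicting Leclerc's algorithm. With this in hand it proves (2) first, by a sandwich. With $\SL(\hat\alpha-s_\alpha\delta)=\SL(\beta)\SL(\gamma)$ the standard factorization, it shows $\beta+s_\beta\delta<\hat\alpha$ and $\hat\beta-s_{\hat\beta}\delta<\hat\alpha-s_\alpha\delta$. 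Lemma~\ref{lemma:equiv.to.standard.fac} and Claim~\ref{claim:aux} then force $\hat\beta=\beta+s_\beta\delta$, and (1) for $q>0$ follows from the factors.

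Your derivation of $p_j\in\{1,2\}$ and of the exact value of $p_1$ also fails. Passing ``down to level $k-s_\alpha$'' presupposes the chunk-insertion relation between levels $k-s_\alpha<s_\alpha$ and $k$. That relation is false in general: in Example~\ref{ex:strong_period_counterex}, $\SL(\alpha+\delta)$ and $\SL(\alpha+5\delta)$ are not related this way, which is exactly why the theorem starts at $k\ge s_\alpha$. Moreover, Lemma~\ref{lem:imaginary-lowerbound} is a lower bound, so it cannot give $p_1\le 2$.

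The missing ingredient is an upper bound on $p_1$. Take the splitting of Lemma~\ref{lemma:u.splitting} and write $u(\alpha)+k\delta$ as the sum of $u(\beta)+(s_\beta+k_1)\delta$ and $u(\gamma)+(s_\gamma+k_2)\delta$. Both summands lie in the range covered by induction, so neither starts with $\SL_i(\delta)^2$ (resp.\ $\SL_i(\delta)^3$ when $k>s_\alpha$). A larger $p_1$ would then make $u(\alpha)+k\delta$ smaller than both summands, contradicting Theorem~\ref{thm:convexity}. The case $s_\alpha=1$ with real standard left factor needs its own convexity argument, via the $u_i$-splitting of Lemma~\ref{lemma:alt.u.def}. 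Only then do $p_j\le p_1$ (Lyndon property) and $p_j\ge 1$ (saturation) give $p_j\in\{1,2\}$.
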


\begin{proof}
This will be proven by induction on $\hgt(\chain(\alpha))$. The base case corresponds to irreducible chains $\chain(\alpha)$,
for which the claim follows from Corollary~\ref{cor:dec.irr.chains}. Before proceeding to the step of induction, we shall
first establish the following result:

\begin{claim}\label{claim:aux}
If Theorem~\ref{thm:dec.modulo.s} holds for $\chain(\beta),\chain(\gamma) \in \mathcal{D}$ with $M'_1(\beta) = M'_1(\gamma) = (\delta,i)$,
then inequality $\SL(u(\beta) + (k_1 + s_\beta)\delta) < \SL(u(\gamma) + (k_2 + s_\gamma)\delta)$ for some $k_1,k_2 \in \BZ_{\geq 0}$
holds if and only if $\SL(u(\beta) + (k_1+2s_\beta)\delta) < \SL(u(\gamma) + (k_2+2s_\gamma)\delta)$.
\end{claim}

\begin{proof}
This follows from Lemma~\ref{lemma:lifting.chains.dec}, taking $\ell = \SL_i(\delta)$. We note that conditions (2)--(3)
of the lemma follow from Lemma~\ref{lemma:form.dec.words}, while condition (4) is due to part (1) of the present theorem applied to
$\chain(\beta)$ and $\chain(\gamma)$.
\end{proof}

For the step of induction, we shall assume that the result holds for all chains of relative height $< \hgt(\chain(\alpha))$.
For part~(2), it suffices to show that for any $\hat\alpha \in \chain(\alpha)$ with $|\hat\alpha| \geq |u(\alpha) +  2s_\alpha\delta|$
we have $\deg(\SL^{ls}(\hat\alpha )) = \beta + s_\beta\delta$ if $\beta = \deg(\SL^{ls}(\hat\alpha -  s_\alpha\delta))$ is real,
or $\deg(\SL^{ls}(\hat\alpha)) = \beta$ if $\beta$ is imaginary. The latter case follows from
Lemma~\ref{lemma:decreasing.left.standard.imaginary.chain} and Corollary~\ref{cor:decreasing.left.factor.imaginary}.
We shall henceforth assume that $\beta$ is real.

We also need to show that $\SL(\hat\alpha)$ has $s_\alpha$ chunks in order to apply Lemma~\ref{lemma:lifting.chains.dec}.

\begin{claim}\label{claim:saturated.dec}
For any $\hat\alpha \in \chain(\alpha)$ with $|\hat\alpha| \geq |u(\alpha) + ks_\alpha\delta|$ and $M'_1(\alpha) = (\delta,i)$,
there are $s_\alpha$ chunks each of length at least $|k\delta|$ in $\SL(\hat\alpha)$. Moreover if
$\SL(\hat\alpha) = \SL(\hat\beta)\SL(\hat\gamma)$ is the standard factorization, then
$|\hat\beta| \geq |u(\hat\beta) + ks_{\hat\beta}\delta|$ and
$|\hat\gamma| \geq |u_i(\hat\gamma) + k(s_{\hat\alpha} - s_{\hat\beta})\delta|$.
\end{claim}

\begin{proof}
Recall that $u_i(\alpha) = u(\alpha)$ by~\eqref{eq:u-vs-ui}. We note that there are at most $s_\alpha$ chunks, due to
Lemma~\ref{lemma:form.dec.words}. Let $\SL(\hat\alpha) = \SL(\hat\beta)\SL(\hat\gamma)$ be the standard factorization.
Suppose there are less than $s_\alpha$ chunks in $\SL(\hat\alpha)$ or there is a chunk of length $< |k\delta|$. In accordance
with Lemma~\ref{lemma:no.splitting.decreasing}, then either $\SL(\hat\beta)$ has $< s_{\hat\beta}$ chunks of length at least
$|k\delta|$ or $\SL(\hat\gamma)$ has $< s_\alpha - s_{\hat\beta}$ chunks of length at least $|k(s_\alpha - s_{\hat\beta})\delta|$
in $\SL(\hat\gamma)$):
\begin{itemize}[leftmargin=0.7cm]

\item
Suppose the number of chunks in $\SL(\hat\beta)$ of length at least $|k\delta|$ is less than $s_{\hat\beta}$, so that
$|\hat\beta| < |u(\hat\beta) + ks_{\hat\beta}\delta|$ by the inductive hypothesis. We then consider two cases:
\begin{itemize}[leftmargin=0.7cm]

\item
if $M'_1(\hat\gamma) < M'_1(\hat\beta) = M'_1(\alpha)$, then $\hat\gamma = u_i(\hat\gamma)$ by Lemma~\ref{lem:aux_unused},
which contradicts to Lemma~\ref{lemma:alt.u.def} as $|u(\alpha)| = |u_i(\alpha)| < |u_i(\hat\beta)| + |u_i(\hat\gamma)|$;

\item
if $M'_1(\hat\gamma)  = M'_1(\hat\beta)$, then we must have $|\hat\gamma| \leq |u(\hat\gamma) + ks_{\hat\gamma}\delta|$, as
otherwise $\underbrace{\SL_i(\delta)}_{k+1 \text{ times}}$ would be a prefix of $\SL(\hat\gamma)$ but not of $\SL(\hat\beta)$,
contradicting to $\hat\beta < \hat\gamma$. Then we again get a contradiction with Lemma~\ref{lemma:alt.u.def}
as $|u(\alpha)| = |u_i(\alpha)| < |u_i(\hat\beta)| + |u_i(\hat\gamma)|$.

\end{itemize}

\item
Suppose the number of chunks in $\SL(\hat\gamma)$ of length at least $|k\delta|$ is less than $k(s_{\hat\alpha} - s_{\hat\beta})$.
Then $M'_1(\hat\gamma) = M'_1(\hat\beta)$ (as otherwise $s_{\hat\beta} = s_{\hat\alpha}$) and
$|\hat\gamma| < |u(\hat\gamma) + s_{\hat\gamma}\delta|$ by the inductive hypothesis. We consider two cases:
\begin{itemize}[leftmargin=0.7cm]

\item
if $|\hat\beta| \leq |u(\hat\beta) + ks_{\hat\beta}\delta|$, then we again get a contradiction with Lemma~\ref{lemma:alt.u.def},
since $|u(\alpha)| = |u_i(\alpha)| < |u_i(\hat\beta)| + |u_i(\hat\gamma)|$;

\item
if $|\hat\beta| > |u(\hat\beta) + ks_{\hat\beta} \delta|$, then $\underbrace{\SL_i(k\delta)}_{k+1 \text{ times}}$ is a prefix
of $\SL(\hat\beta)$ by the inductive hypothesis. Consider the appropriate concatenation of
$\SL(\hat\beta- \delta),\SL(\hat\gamma + \delta)$. If $\SL(\hat\beta - \delta) < \SL(\hat\gamma + \delta)$, then as
$\hat\beta - \delta > \hat\beta$ and $\SL(\hat\beta)$ is a prefix of $\SL(\hat\alpha)$, we obtain $\hat\beta - \delta > \hat\alpha$
so that $\SL(\hat\beta - \delta) \SL(\hat\gamma + \delta) > \SL(\hat\alpha)$, contradicting Leclerc's algorithm.
If $\SL(\hat\gamma + \delta) < \SL(\hat\beta - \delta)$, then as $\SL(\hat\gamma + \delta)$ has at most $k$ copies of
$\SL_i(\delta)$ as a prefix while $\SL(\hat\beta)$ (and hence $\SL(\hat\alpha)$) has at least $k+1$ copies of
$\SL_i(\delta)$ as a prefix (by the induction assumption), we obtain
$\SL(\hat\gamma + \delta) \SL(\hat\beta - \delta) > \SL(\hat\alpha)$, again contradicting Leclerc's algorithm.

\end{itemize}

\end{itemize}
This completes the proof.
\end{proof}

Let $\SL(\hat\alpha - s_\alpha\delta) = \SL(\beta)\SL(\gamma)$ be the standard factorization.
By Lemma~\ref{lemma:left.standard.decreasing}, $\gamma$ is real and as we assume $\beta$ is real we have
$\chain(\beta),\chain(\gamma)\in \mathcal{D}$. Additionally, we have $M'_1(\beta) = M'_1(\hat\alpha)$ by
Lemma~\ref{lemma:M.prime.prefix} as $\SL(M'_1(\hat\alpha))$ is a prefix of $\SL(\beta)$.
We shall now show that $\beta + s_\beta\delta < \hat\alpha$ through the following analysis:
\begin{itemize}[leftmargin=0.7cm]

\item
if $M'_1(\gamma) < M'_1(\beta)$, then $s_\beta = s_\alpha$ and $\beta + s_\beta\delta < \beta < \gamma$, so that
$\beta + s_\beta\delta < \hat\alpha < \gamma$ by Theorem~\ref{thm:convexity};

\item
if $M'_1(\gamma) = M'_1(\beta)$, then $s_\alpha=s_\beta+s_\gamma$ and $\beta+s_\beta\delta < \gamma + s_\gamma\delta$
by Claim~\ref{claim:aux}, where we note that $|\beta| \geq |u(\beta) + s_\beta\delta|, |\gamma| \geq |u(\gamma) + s_\gamma\delta|$
by Claim~\ref{claim:saturated.dec}.
Therefore $\beta + s_\beta\delta < (\beta + s_\beta\delta) + (\gamma + s_\gamma\delta) = \hat\alpha < \gamma  + s_\gamma\delta$
by Theorem~\ref{thm:convexity}.

\end{itemize}
We likewise consider the standard factorization $\SL(\hat\alpha)=\SL(\hat\beta)\SL(\hat\gamma)$. As $\beta$ is assumed to be real,
then $\hat\beta$ must also be real by Lemma~\ref{lemma:decreasing.left.standard.imaginary.chain}. Moreover, $\hat{\gamma}$ is real
and $\chain(\hat\beta),\chain(\hat\gamma)\in \mathcal{D}$ by Lemma~\ref{lemma:left.standard.decreasing}, and also
$M'_1(\hat\beta)=M'_1(\hat\alpha)$ by Lemma~\ref{lemma:M.prime.prefix}. Similarly to the above $\beta + s_\beta\delta < \hat\alpha$,
we shall now show that $\hat{\beta} - s_{\hat{\beta}}\delta < \hat\alpha - s_{\alpha}\delta$:
\begin{itemize}[leftmargin=0.7cm]

\item
if $M'_1(\hat\gamma) < M'_1(\hat\beta)$, then $s_{\hat\beta}=s_{\alpha}$ and $\hat\beta - s_{\hat\beta}\delta < \gamma$
since $\SL(\hat\beta-s_{\hat\beta}\delta)$ has a prefix $\SL(M'_1(\hat\beta))$ by definition of $u(\hat\beta)$, thus
$\hat\beta - s_{\hat\beta}\delta < \hat\alpha - s_\alpha\delta < \gamma$ by Theorem~\ref{thm:convexity};

\item
if $M'_1(\hat\gamma) = M'_1(\hat\beta)$, then $\hat\beta - s_{\hat\beta}\delta < \hat\gamma - s_{\hat\gamma}\delta$
by Claim~\ref{claim:aux}, where
  $|\hat\beta - s_{\hat\beta}\delta| \geq |u(\hat\beta) + s_{\hat\beta}\delta|,
   |\hat\gamma - s_{\hat\gamma}\delta| \geq |u(\hat\gamma) + s_{\hat\gamma}\delta|$
by Claim~\ref{claim:saturated.dec}, and so by Theorem~\ref{thm:convexity} we obtain
  $\hat\beta - s_{\hat\beta}\delta < (\hat\beta - s_{\hat\beta}\delta) + (\hat\gamma - s_{\hat\gamma}\delta)
   = \hat\alpha - s_\alpha\delta < \hat\gamma - s_{\hat\gamma}\delta$.

\end{itemize}
The above inequalities $\beta + s_\beta\delta < \hat\alpha$ and $\hat{\beta} - s_{\hat{\beta}}\delta < \hat\alpha - s_{\alpha}\delta$
imply $\hat\beta - s_{\hat\beta}\delta \leq \beta$ and $\beta+s_\beta\delta \leq \hat\beta$, due to emma~\ref{lemma:equiv.to.standard.fac}.
Since $M'_1(\hat\beta)=M'_1(\hat\alpha)=M'_1(\beta)$ and also
$|\hat\beta - s_{\hat\beta}\delta| \geq |u(\hat\beta) + s_{\hat\beta}\delta|$ as well as
$|\beta| \geq |u(\beta) + s_\beta\delta|$ by Claim~\ref{claim:saturated.dec},
we obtain $\hat\beta = \beta + s_\beta\delta$ by Claim~\ref{claim:aux}. This completes the proof of part~(2).

We shall now prove part (1). For $k \in \{s_\alpha,\ldots, 2s_\alpha - 1\}$, we note that $\SL(u(\alpha) + k\delta)$ has the
decomposition~\eqref{eqn:form.of.dec.word}. First, let us show that $p_1 = 1$ if $k = s_\alpha$ and $p_1 = 2$ if $k > s_\alpha$.
By Corollary~\ref{cor:p.1.value}, we note that $p_1 \geq 1$ for $k = s_\alpha$ and $p_1 \geq 2$ for $k > s_\alpha$.
We consider three cases:
\begin{itemize}[leftmargin=0.7cm]

\item
If $\SL_i(\delta) = \SL^{ls}(u(\alpha) + k\delta)$, then $s_\alpha = 1$ by Corollary~\ref{cor:decreasing.left.factor.imaginary},
so that $k=1$ and the result follows from Lemma~\ref{lemma:decreasing.left.standard.imaginary.chain};

\item
If $s_\alpha = 1$ (so that $k=1$), $\SL^{ls}(u(\alpha) + \delta)=\SL(\beta)$ is real, and $p_1 > 1$, then $\SL_i(\delta)\SL_i(\delta)$
must be a prefix of $\SL(\beta)$ by Lemma~\ref{lemma:no.splitting.decreasing}. Note that $|u(\alpha)| > |\delta|$. Let $\gamma,\eta$
be two elements from Lemma~\ref{lemma:alt.u.def} such that $u(\alpha) = u_i(\alpha) = \gamma + \eta$, so that
$\gamma = u_i(\gamma),\eta = u_i(\eta)$ and $\chain(\gamma),\chain(\eta)\in C_i$. As $s_\alpha=1$, without loss of generality
we have $M'_1(\gamma) = M'_1(\alpha)$ and $M'_1(\eta) < M'_1(\alpha)$. As $\SL(\gamma + \delta)$ has a prefix $\SL_i(\delta)$
while $\SL_i(\delta) < \SL(\eta)$ and $\SL_i(\delta)$ is not a prefix of $\SL(\eta)$ by definition of $u_i(\eta)$, we obtain
that $\SL(\gamma+\delta)\SL(\eta)$ is Lyndon. Since $\SL(\gamma + \delta)$ does not have $\SL_i(\delta)\SL_i(\delta)$ as
a prefix and $\SL(\gamma + \delta) = \SL_i(\delta)v$ for some $v > \SL_i(\delta)$, we get $\gamma + \delta > \beta$.
Thus $ \eta > \gamma + \delta > \beta$, so that $\eta,\gamma + \delta > u(\alpha) + k\delta$ by Lemma~\ref{lemma:equiv.to.standard.fac},
yielding a contradiction with Theorem~\ref{thm:convexity};

\item
If $s_\alpha > 1$, then choose $\chain(\beta),\chain(\gamma)$ as in Lemma~\ref{lemma:u.splitting}. Given any
$s_\alpha\leq k < 2s_\alpha$, we define $k_1 = \min(k - s_\alpha,s_\beta)$ and $k_2 = \max(0,(k - s_\alpha) - s_\beta)$.
Note that $s_\beta \leq s_{\beta}+k_1\leq 2s_\beta$, $s_\gamma \leq s_{\gamma}+k_2 < 2s_\gamma$, and
$u(\beta) + (s_{\beta}+ k_1)\delta + u(\gamma) + (s_\gamma + k_2)\delta = u(\alpha) + k\delta$. For the case $k = s_\alpha$,
if we suppose $p_1 > 1$, then applying the inductive hypothesis to $\chain(\beta),\chain(\gamma)$, we see that
$\SL(u(\beta) + s_\beta\delta),\SL(u(\gamma) + s_\gamma\delta)$ do not have prefix $\SL_i(\delta)\SL_i(\delta)$
(note that in this case $k_1 = k_2 = 0$), so that $u(\alpha) + k\delta < u(\beta) + s_\beta\delta, u(\gamma) + s_\gamma\delta$,
a contradiction with Theorem~\ref{thm:convexity}. For $s_\alpha < k < 2s_\alpha$, if $p_1 > 2$, then
$u(\alpha) + k\delta < u(\beta) + (s_\beta + k_1)\delta, u(\gamma) + (s_\gamma + k_2)\delta$, since neither of
$\SL(u(\beta) + (s_\beta + k_1)\delta),\SL(u(\gamma) + (s_\gamma + k_2)\delta)$ have $\underbrace{\SL_i(\delta)}_{3 \text{ times}}$
as a prefix by the inductive hypothesis, yielding again a contradiction with Theorem~\ref{thm:convexity}.

\end{itemize}
This establishes the claimed values of $p_1$. Since $1 \leq p_j \leq p_1$, with the first inequality due to
Claim~\ref{claim:saturated.dec} and the second due to $\SL(u(\alpha)+k\delta)$ being Lyndon (cf.\ the proof of
Corollary~\ref{cor:p.1.value}), we see that $p_j\in\{1,2\}$ for $2\leq j\leq s_\alpha$. This proves (1) for $q=0$.

As with part (2), it suffices to verify (1) for $\hat\alpha, \hat\alpha - s_\alpha\delta$ with
$|\hat\alpha| \geq |u(\alpha) + 2s_\alpha\delta|$. Consider the standard factorization
$\SL(\hat\alpha) = \SL(\hat\beta)\SL(\hat\gamma)$. We can assume that $\hat\beta$ is real, as otherwise the result follows
from Lemma~\ref{lemma:decreasing.left.standard.imaginary.chain}. We consider two cases:
\begin{itemize}[leftmargin=0.7cm]

\item
if $M'_1(\hat\gamma) \neq M'_1(\hat\alpha)$, then $s_{\hat\beta}=s_\alpha$ and the result follows from part (2)
and the inductive hypothesis for $\hat\beta$;

\item
if $M'_1(\hat\gamma) = M'_1(\hat\alpha)$, then $|\hat\gamma| \geq |u(\hat\gamma) + 2s_{\hat\gamma}\delta|$
by~\eqref{eq:u-vs-ui} and Claim~\ref{claim:saturated.dec}, and so the result follows from part (2) and the inductive
hypothesis for both $\hat\beta, \hat\gamma$.

\end{itemize}
This completes the inductive step.
\end{proof}

The structure of decreasing chains from Theorem~\ref{thm:dec.modulo.s} motivates the following:

\begin{definition}\label{def:periodicity-D}
We shall call $s_\alpha$ the \textbf{periodicity} of the chain $\chain(\alpha)\in \mathcal{D}$.
\end{definition}

We note that while it may be natural to expect the periodicity pattern of~\eqref{eq:chunk-periodicity-s} to appear starting
from $u(\alpha) + \delta$, as it was in type $A_n^{(1)}$ (see~\cite[Theorem 4.7]{AT}), this is not true in general as shown by
the following example (found through coding).

\begin{example}\label{ex:strong_period_counterex}
Consider the affine type $F_4^{(1)}$ with the order $0 < 2 < 4 < 1 <3$ (our labeling of simple roots
follows~\cite[\S4, Table Fin]{K}). For $\alpha = \alpha_1 + \alpha_2 + 2\alpha_3 + 2\alpha_4$, using the code
in Listing~\ref{lst:dec.per.counter}, we find for all $k\in \BZ_{\geq 0}$:
\begin{align*}
  \SL(\alpha) &= 233144, \\
  \SL(\alpha + \delta) &= \SL_1(\delta)342314, \\
  \SL(\alpha + 2\delta + 4k\delta) &=
    \underbrace{\SL_1(\delta)}_{k+1 \text{ times}}34 \underbrace{\SL_1(\delta)}_{k+1 \text{ times}}34
    \underbrace{\SL_1(\delta)}_{k \text{ times}}2 \underbrace{\SL_1(\delta)}_{k \text{ times}}1, \\
  \SL(\alpha + 3\delta + 4k\delta) &=
    \underbrace{\SL_1(\delta)}_{k +1 \text{ times}}1 \underbrace{\SL_1(\delta)}_{k \text{ times}}2
    \underbrace{\SL_1(\delta)}_{k +1 \text{ times}}34 \underbrace{\SL_1(\delta)}_{k + 1\text{ times}}34, \\
  \SL(\alpha + 4\delta + 4k\delta) &=
    \underbrace{\SL_1(\delta)}_{k+1 \text{ times}}2\underbrace{\SL_1(\delta)}_{k+1 \text{ times}}34
    \underbrace{\SL_1(\delta)}_{k+1 \text{ times}}34\underbrace{\SL_1(\delta)}_{k+1 \text{ times}}1, \\
  \SL(\alpha + 5\delta + 4k\delta) &=  \underbrace{\SL_1(\delta)}_{k+2 \text{ times}}34
  \underbrace{\SL_1(\delta)}_{k+1 \text{ times}}2\underbrace{\SL_1(\delta)}_{k+1 \text{ times}}34
  \underbrace{\SL_1(\delta)}_{k+1 \text{ times}}1,
\end{align*}
where $\SL_1(\delta) = 012334423312$ and $\chain(\alpha)\in \mathcal{D}$ by Lemma~\ref{lemma:monotonicity.smallest.once}.
In particular, the words $\SL(\alpha+\delta)$ and $\SL(\alpha+5\delta)=\SL(\alpha+\delta+s_{\alpha}\delta)$ are not
related through~\eqref{eq:chunk-periodicity-s}. Here, the equality $s_\alpha = 4$ follows from
$\delta - \beta_1=\alpha_2, \delta - \beta_2=\alpha_1, \delta - \beta_3=\alpha_3, \delta - \beta_4=\alpha_4$ and
the following table:
\begin{center}
\begin{tabular}{|c|cccc|}
  \hline
     $i$ &  1 & 2 & 3 & 4\\
  \hline
     $M_1(\beta_i)$& 1 & 1 & 1 & 3\\
  \hline
\end{tabular}
\end{center}
\end{example}

\begin{remark}
In type $A_n^{(1)}$, considered in~\cite{AT}, we can indeed extend Theorem~\ref{thm:dec.modulo.s} to a statement about
$u(\alpha) + k\delta$ for any $k\geq 1$, not just $k\ge s_\alpha$. The issue with extending this to other types arises from
an edge case in Lemma~\ref{lemma:lifting.chains.dec}, corresponding to the setup $w = yw_1w_2, v = yv_1$ with $w > v$ and
$v_1=w_1u$ with $u > y$, in which case $w' = yyw_1yw_2< yyv_1 = v'$. However, in type $A_n^{(1)}$, the comparisons that we
need to do in Theorem~\ref{thm:dec.modulo.s} do not have this edge case.
\end{remark}


\subsection{Corollaries and periodicity bounds}
\

The next few results provide additional information on the decomposition~\eqref{eqn:form.of.dec.word}. We start with
the following analogue of Claim~\ref{claim:saturated.dec} for the costandard factorization:

\begin{lemma}\label{lemma:saturated.dec.costfac}
For any $\alpha$ with $\chain(\alpha)\in \mathcal{D}$, $|\alpha| \geq |u(\alpha) + ks_\alpha\delta|$, and $s_\alpha > 1$,
its costandard factorization $\SL(\alpha) = \SL(\beta)\SL(\gamma)$ satisfies the following:
\begin{enumerate}[leftmargin=0.7cm]

\item
$\chain(\beta),\chain(\gamma) \in \mathcal{D}$;

\item
$M'_1(\beta) = M'_1(\gamma) = M'_1(\alpha)$;

\item
$|\beta| \geq |u(\beta) + ks_{\beta}\delta|$ and $|\gamma| \geq |u(\gamma) + ks_{\gamma}\delta|$;

\item
$\SL(\beta)$ and $\SL(\gamma)$ do not contain $\SL(M'_1(\alpha))$ as a suffix.

\end{enumerate}
\end{lemma}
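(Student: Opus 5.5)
Set $M'_1(\alpha)=(\delta,i)$ throughout, and presumably $k\geq 1$, so that $\SL(\alpha)$ is saturated in the sense of Claim~\ref{claim:saturated.dec}. By Theorem~\ref{thm:dec.modulo.s} and Claim~\ref{claim:saturated.dec}, $\SL(\alpha)$ then has the form~\eqref{eqn:form.of.dec.word} with exactly $s_\alpha$ chunks, each consisting of at least $k$ copies of $\SL_i(\delta)$.

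The first step is to locate the costandard split. By Lemma~\ref{lemma:right.costfac.minimal}, $\SL(\gamma)=\SL^r(\alpha)$ is the smallest proper suffix of $\SL(\alpha)$. Among the suffixes starting at the beginning of a chunk $\underbrace{\SL_i(\delta)}_{p_j}w_j\cdots$, the Lyndon property forces comparisons of the type in Corollary~\ref{cor:p.1.value}. Any suffix beginning inside some $w_j$ is $>\SL_i(\delta)$ and is not prefixed by it (Lemma~\ref{lemma:form.dec.words}), so it exceeds every suffix beginning with a chunk. Hence the smallest proper suffix begins at the start of a chunk, specifically the chunk $j>1$ minimizing the tail lexicographically. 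Since $s_\alpha>1$, such a $j$ exists, and the cut lies strictly between $w_{j-1}$ and the $j$-th chunk. This gives (4) immediately: $\SL(\beta)$ ends with $w_{j-1}$ and $\SL(\gamma)$ ends with $w_{s_\alpha}$, and by Lemma~\ref{lemma:form.dec.words} neither $w$ contains $\SL_i(\delta)$. It also shows that $\SL(\beta)$ and $\SL(\gamma)$ both start with $\SL_i(\delta)$ and that neither is imaginary: $\SL(\gamma)$ is not imaginary because it ends in some $w$, and $\SL(\beta)$ is not imaginary because it contains a whole chunk followed by $w_{j-1}$.

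For (1) and (2): both words are real and have $\SL_i(\delta)$ as a proper prefix, so Lemma~\ref{lemma:M.prime.prefix} gives $\chain(\beta),\chain(\gamma)\in\mathcal D$ and $M'_1(\beta)=M'_1(\gamma)=(\delta,i)$. Consequently $s_\beta+s_\gamma=s_\alpha$, since the $\beta_j$-coefficients with $M_1(\beta_j)=(\delta,i)$ add.

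For (3), the plan is to count chunks. $\SL(\beta)$ has $j-1$ chunks and $\SL(\gamma)$ has $s_\alpha-j+1$ chunks, each of length $\geq k$. Lemma~\ref{lemma:form.dec.words} bounds these counts by $s_\beta$ and $s_\gamma$, which sum to $s_\alpha$, so the counts equal $s_\beta$ and $s_\gamma$ exactly. It remains to convert ``$s_\beta$ chunks, each of length $\geq k$'' into $|\beta|\geq|u(\beta)+ks_\beta\delta|$. This is the main obstacle. I would first argue by contradiction using Lemma~\ref{lem:imaginary-lowerbound} together with Theorem~\ref{thm:dec.modulo.s}(1): if $|\beta|<|u(\beta)+ks_\beta\delta|$, then write $\beta=u(\beta)+m\delta$ with $m<ks_\beta$. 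If $m\geq s_\beta$, the periodic form~\eqref{eq:chunk-periodicity-s} has chunk sizes $p_j+q$ with $q=\lfloor m/s_\beta\rfloor-1$ and $p_j\le 2$, and some chunk must then have fewer than $k$ copies. Here one should check that the exponent sum equals $m$, which follows from the degree count: the total number of $\SL_i(\delta)$'s grows by exactly $s_\beta$ per period. If $m<s_\beta$, there are too few copies overall, since the word has degree $u(\beta)+m\delta$, and a word in which the $w$-parts have positive total degree cannot fit $s_\beta k$ copies. If this degree-counting route fails, the fallback is to mimic the exchange argument in Claim~\ref{claim:saturated.dec}: concatenate $\SL(\beta\pm\delta)$ and $\SL(\gamma\mp\delta)$ and compare the result against $\SL(\alpha)$ via Leclerc's algorithm and Theorem~\ref{thm:convexity}.
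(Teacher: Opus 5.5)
\textbf{The gap is in locating the costandard cut.} Your suffix analysis handles suffixes that start at a chunk boundary or inside some $w_j$. It omits suffixes that start at the second, third, \dots\ copy of $\SL_i(\delta)$ \emph{inside the first chunk}.

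For a chunk $j>1$, such interior starts are beaten by the start of that chunk. For the first chunk, however, the chunk start is the whole word. So whenever the first chunk has at least two copies (e.g.\ always when $k\geq 2$), the suffix $S$ obtained by deleting the first copy of $\SL_i(\delta)$ from $\SL(\alpha)$ is a genuine competitor.

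The competition is undecided exactly when the first chunk is one copy longer than every other chunk. By Theorem~\ref{thm:dec.modulo.s} this is the case $p_1=2$, $p_j=1$ for $j\geq 2$, and it occurs, e.g., for $\SL(\alpha+5\delta)$ in Example~\ref{ex:strong_period_counterex}. Then $S$ and every chunk-start suffix begin with the same number of copies, and the minimum is decided by comparing $w_1\cdots$ with $w_j\cdots$. Neither the Lyndon property nor Lemma~\ref{lemma:form.dec.words} says anything about that comparison.

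Concretely, in that example the word $\SL_1(\delta)\SL_1(\delta)\,2\,\SL_1(\delta)\,34\,\SL_1(\delta)\,34\,\SL_1(\delta)\,1$ has the following properties:
\begin{itemize}
\item it is Lyndon;
\item it has the same degree and chunk profile as $\SL(\alpha+5\delta)$;
\item it satisfies all the conditions of Lemma~\ref{lemma:form.dec.words};
\item yet its costandard factorization is $\SL_1(\delta)\,|\,\SL_1(\delta)\,2\cdots$.
\end{itemize}
If this happened for an actual $\SL(\alpha)$, then $\SL(\beta)=\SL_i(\delta)$ would be imaginary. Then (1)--(3) fail for $\beta$, and your derivations of (4) and of the realness of $\beta$ collapse.

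Excluding $\SL(\beta)=\SL_i(\delta)$ is the main content of the paper's proof, and it needs standardness, not just word combinatorics. The paper argues by induction along the chain:
\begin{itemize}
\item The base case $|\alpha|=|u(\alpha)+s_\alpha\delta|$ uses $p_1=1$ from Theorem~\ref{thm:dec.modulo.s}.
\item The inductive step takes the costandard factorization $\SL(\alpha-\delta)=\SL(\mu)\SL(\eta)$, for which the lemma already holds. It checks that both $\SL(\mu)\SL(\eta+\delta)$ and $\SL(\eta+\delta)\SL(\mu)$ exceed $\SL(\alpha)=\SL_i(\delta)\SL(\mu)\SL(\eta)$, contradicting Leclerc's algorithm.
\end{itemize}
Once this is added, your derivation of (1), (2), (4) goes through. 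The paper gets $\SL_i(\delta)$ as a proper prefix of $\SL(\gamma)$ directly from Lemma~\ref{lemma:right.costfac.minimal}, and deduces (4) from (1) via Corollary~\ref{cor:lyndon.prefix.suffix.chains}.

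\textbf{Your primary route to (3) is also incomplete.} Periodicity only shows that the number of copies of $\SL_i(\delta)$ grows by $s_\beta$ per period. You would still need $\sum_j p_j=k_0$ at the base level $s_\beta<k_0<2s_\beta$. Lemma~\ref{lem:imaginary-lowerbound} gives only $\geq$, and nothing established so far excludes $p_j=2$ for all $j$ there. For $m<s_\beta$, the degree count gives no contradiction at all, since $|u(\beta)|$ can exceed $|\delta|$.

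So your fallback is what is actually needed, and it is the paper's route: (3) is proved exactly as in Claim~\ref{claim:saturated.dec}. That argument uses Lemma~\ref{lemma:alt.u.def}, Lemma~\ref{lem:aux_unused}, and the Leclerc exchange argument comparing concatenations of $\SL(\hat\beta\pm\delta)$ and $\SL(\hat\gamma\mp\delta)$ with $\SL(\hat\alpha)$.
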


\begin{proof}
As (1) $\Rightarrow$ (4) by Corollary~\ref{cor:lyndon.prefix.suffix.chains}, it suffices to show the other three.
Let $M'_1(\alpha) = (\delta,i)$. Since $s_\alpha > 1$ and $|\alpha| \geq |u(\alpha) + ks_\alpha\delta|$, we must have
that there are two separate imaginary chunks in $\SL(\alpha)$ by Theorem~\ref{thm:dec.modulo.s}. Then as $\SL_i(\delta)$
is a prefix of $\SL(\alpha)$, we must have that $\SL_i(\delta)$ is a proper prefix of $\SL(\gamma)$ by
Lemma~\ref{lemma:right.costfac.minimal} (it has to be proper since it is not a suffix of $\SL(\alpha)$).
This implies that $\chain(\gamma) \in \mathcal{D}$ and $M'_1(\gamma) = M'_1(\alpha)$ by
Corollary~\ref{cor:lyndon.prefix.suffix.chains} and Lemma~\ref{lemma:M.prime.prefix}.

We also note that $\SL_i(\delta)$ is a prefix of $\SL(\beta)$. If $\SL(\beta)\ne \SL_i(\delta)$, then we likewise get
$\chain(\beta) \in \mathcal{D}$ and $M'_1(\beta) = M'_1(\alpha)$, establishing (1)--(2).
We shall now prove $\SL(\beta)\ne \SL_i(\delta)$ by induction on the height $|\alpha|$. The base case corresponds to
$|\alpha| = |u(\alpha) + s_\alpha\delta|$, and if $\SL(\beta) = (\delta,i)$, then $\SL_i(\delta)\SL_i(\delta)$ would be
a prefix of $\SL(\alpha)$, contradicting Theorem~\ref{thm:dec.modulo.s}. For the inductive step, suppose that
$\SL(\beta) = \SL_i(\delta)$, but the result holds for $\gamma=\alpha-\delta$. Then (1)--(4) hold for the
costandard factorization $\SL(\gamma) = \SL(\mu)\SL(\eta)$. By Leclerc's algorithm, we must have
$\SL(\alpha) = \SL(\gamma + \delta) \geq \min\{\SL(\eta + \delta)\SL(\mu),\SL(\mu)\SL(\eta + \delta)\}$.
However, we note that
\begin{itemize}[leftmargin=0.7cm]

\item
if $\SL(\mu)\SL(\eta+\delta)$ is the above minimum, then it starts with one less repeated copy of $\SL_i(\delta)$
followed by $w_1>\SL_i(\delta)$, and is thus $>\SL(\alpha)$;

\item
if $\SL(\eta + \delta)\SL(\mu)$ is the above minimum, then it is $\geq \SL_i(\delta)\SL(\eta)\SL(\mu)$ and thus
is $>\SL_i(\delta)\SL(\mu)\SL(\eta)=\SL_i(\delta)\SL(\gamma)=\SL(\alpha)$.

\end{itemize}
This completes the proof of the above claim $\SL(\beta)\ne \SL_i(\delta)$, establishing (1)--(2).
Moreover, since $\SL(\beta), \SL(\gamma)$ contain $\SL_i(\delta)$ as a proper prefix, part (4) follows too.

Finally the proof of part (3) is completely analogous to that of Claim~\ref{claim:saturated.dec}
(as we never used the fact that we considered the standard factorization there).
\end{proof}

The $s_\alpha=1$ counterpart of the above result is treated in the following remark.

\begin{remark}
For any $\chain(\alpha) \in \mathcal{D}$ with $s_\alpha = 1$, the costandard factorization of $\SL(u(\alpha) + k\delta)$
with $k>1$ is $\SL(u(\alpha) + k\delta) = \SL(M'_1(\alpha))\SL(u(\alpha) + (k-1)\delta)$. To see this, we note that
$\SL(u(\alpha) + k\delta) = \SL(M'_1(\alpha)) \SL(u(\alpha) + (k-1)\delta)$, and $\SL(u(\alpha) + (k-1)\delta)$ is the
longest proper Lyndon suffix, since it starts with $\SL(M'_1(\alpha))$.
\end{remark}

We note that Lemma~\ref{lemma:saturated.dec.costfac} also allows to establish a version of Theorem~\ref{thm:dec.modulo.s}(2)
for the costandard factorization.

\begin{corollary}\label{cor:dec.costfac}
For any $\chain(\alpha) \in \mathcal{D}$ with $s_\alpha  > 1$, $q \in \BZ_{\geq 0}$ and $s_\alpha \leq k < 2s_\alpha$,
set $k' = k + qs_\alpha$. For the costandard factorization $\SL(u(\alpha) + k'\delta)=\SL(\beta)\SL(\gamma)$:
\begin{gather*}
    \SL(\beta - qs_\beta\delta)=\SL^l(u(\alpha) + k\delta) ,\qquad \SL(\gamma - qs_\gamma\delta)=\SL^r(u(\alpha)+k\delta).
\end{gather*}
\end{corollary}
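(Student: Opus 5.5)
The plan is to mirror the proof of Theorem~\ref{thm:dec.modulo.s}(2), replacing the standard factorization by the costandard one and using Lemma~\ref{lemma:saturated.dec.costfac} in place of Claim~\ref{claim:saturated.dec}. It suffices to prove the one-step version: for $\hat\alpha\in\chain(\alpha)$ with $|\hat\alpha|\geq |u(\alpha)+2s_\alpha\delta|$, if $\SL(\hat\alpha)=\SL(\hat\beta)\SL(\hat\gamma)$ and $\SL(\hat\alpha-s_\alpha\delta)=\SL(\beta)\SL(\gamma)$ are the costandard factorizations, then $\hat\beta=\beta+s_\beta\delta$ and $\hat\gamma=\gamma+s_\gamma\delta$. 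Iterating in $q$ then gives the corollary.

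First, by Lemma~\ref{lemma:saturated.dec.costfac} (with $k=1$, and with $k=2$ for $\hat\alpha$), all four roots $\beta,\gamma,\hat\beta,\hat\gamma$ lie in decreasing chains with $M'_1$ equal to $M'_1(\alpha)=(\delta,i)$. They are also saturated, e.g.\ $|\beta|\geq|u(\beta)+s_\beta\delta|$. Since both pieces share $M'_1(\alpha)$, the chunk counts add: $s_\alpha=s_\beta+s_\gamma=s_{\hat\beta}+s_{\hat\gamma}$. Hence $\beta+s_\beta\delta$ and $\gamma+s_\gamma\delta$ sum to $\hat\alpha$, and $\hat\beta-s_{\hat\beta}\delta$ and $\hat\gamma-s_{\hat\gamma}\delta$ sum to $\hat\alpha-s_\alpha\delta$, with all four shifted roots still saturated.

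Next, I compare the factorizations. By Claim~\ref{claim:aux}, the order of $\beta,\gamma$ lifts: $\beta<\gamma$ implies $\beta+s_\beta\delta<\gamma+s_\gamma\delta$. So $\SL(\beta+s_\beta\delta)\SL(\gamma+s_\gamma\delta)$ is a Lyndon concatenation of degree $\hat\alpha$, and likewise for the down-shifted pair of $\hat\beta,\hat\gamma$. Lemma~\ref{lemma:lifting.chains.dec} then shows that these two concatenations equal $\SL(\hat\alpha)$ and $\SL(\hat\alpha-s_\alpha\delta)$ respectively. Here conditions (2)--(4) of that lemma come from Lemma~\ref{lemma:form.dec.words} and Theorem~\ref{thm:dec.modulo.s}(1). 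Concretely, inserting one extra $\SL_i(\delta)$ at the head of each chunk of $\SL(\beta)\SL(\gamma)$ yields exactly the chunk form~\eqref{eq:chunk-periodicity-s} of $\SL(\hat\alpha)$. This uses that the costandard cut does not split chunks and that no chunk of $\SL(\beta)$ ends at its end, by part (4) of Lemma~\ref{lemma:saturated.dec.costfac}. So the periodicity map of~\eqref{eq:chunk-periodicity-s}, which adds one $\SL_i(\delta)$ to each chunk, sends the splitting of $\SL(\hat\alpha-s_\alpha\delta)$ to a splitting of $\SL(\hat\alpha)$ into two Lyndon words. It also sends the splitting of $\SL(\hat\alpha)$ back, by removing one copy from each chunk, which is legitimate since all chunks there have length $\geq2$.

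It remains to show that the costandard factorization is preserved, i.e.\ that the right factor stays the longest proper Lyndon suffix. By Lemma~\ref{lemma:right.costfac.minimal} and Corollary~\ref{cor:left.cost.right}, the right factor is the minimal proper suffix, and every proper suffix starting inside a $w_j$ is $>\SL_i(\delta)$. So the minimal proper suffix must start at the beginning of a chunk. Comparing two chunk-initial suffixes is exactly the type of comparison handled by Lemma~\ref{lemma:lifting.chains.dec}, so their relative order is unchanged under the shift. Hence the position of the minimal proper suffix, and therefore the costandard cut, corresponds under the shift. This step is the main obstacle: I must check that hypotheses (1) and (4) of Lemma~\ref{lemma:lifting.chains.dec} hold for suffixes starting at chunk boundaries, in particular that $p_1\geq1$ for both suffixes. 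I would try first to handle it by noting that in the saturated range every chunk is nonempty (Claim~\ref{claim:saturated.dec}) and every suffix being compared begins with a full chunk.
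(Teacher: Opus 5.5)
Your proposal is correct. It uses the same ingredients as the paper: the one-step reduction $\hat\alpha-s_\alpha\delta\to\hat\alpha$, saturation and equal $M'_1$ for all four factors from Lemma~\ref{lemma:saturated.dec.costfac}, the chunk shift of Theorem~\ref{thm:dec.modulo.s}(1), the characterization of $\SL^r$ as the minimal proper suffix (Lemma~\ref{lemma:right.costfac.minimal}), and shift-invariance of comparisons (Lemma~\ref{lemma:lifting.chains.dec}). Only the closing step is organized differently.

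\emph{Your closing step.} You compare all chunk-initial proper suffixes at once and argue that the position of the minimum is shift-invariant. This works, and the obstacle you flag is harmless. Every suffix you compare begins with a full chunk, and in the saturated range all chunks have multiplicity $\geq 1$. The connecting words are nonempty and satisfy (2)--(3) by Lemma~\ref{lemma:form.dec.words}.

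Two small points need fixing. First, the step ``so the minimal proper suffix starts at a chunk beginning'' must also exclude cuts at non-initial copies of $\SL_i(\delta)$. In chunks $j\geq 2$ these give larger suffixes. In the first chunk they would force $\SL(\beta)$ to equal $\SL_i(\delta)$ or a non-Lyndon power of it, and the former is ruled out by Lemma~\ref{lemma:saturated.dec.costfac}. Second, the identity $\SL(\hat\alpha)=\SL(\beta+s_\beta\delta)\SL(\gamma+s_\gamma\delta)$ does not come from Lemma~\ref{lemma:lifting.chains.dec}. It comes from Theorem~\ref{thm:dec.modulo.s}(1) applied to $\alpha,\beta,\gamma$, together with the cut lying at a chunk boundary.

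\emph{The paper's closing step.} You already have the pieces for a shorter finish. You know $\SL(\gamma+s_\gamma\delta)$ is a Lyndon suffix of $\SL(\hat\alpha)$ and $\SL(\hat\gamma-s_{\hat\gamma}\delta)$ is a Lyndon suffix of $\SL(\hat\alpha-s_\alpha\delta)$. Minimality of the costandard right factor then gives $\hat\gamma\leq\gamma+s_\gamma\delta$ and $\gamma\leq\hat\gamma-s_{\hat\gamma}\delta$. Claim~\ref{claim:aux} applies because all these roots are saturated with the same $M'_1$, and it lifts the second inequality to $\gamma+s_\gamma\delta\leq\hat\gamma$. Hence $\hat\gamma=\gamma+s_\gamma\delta$, and the left factors follow by degree. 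This sandwich only compares $\SL$-words of saturated roots, so nothing has to be rechecked for arbitrary suffixes, which are not $\SL$-words. Your version proves slightly more, namely that the whole order on chunk-initial suffixes is preserved, at the cost of that extra bookkeeping.
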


\begin{proof}
According to Lemma~\ref{lemma:saturated.dec.costfac}, we note that $\chain(\beta),\chain(\gamma) \in \mathcal{D}$ and
$M'_1(\alpha) = M'_1(\beta) = M'_1(\gamma)$. The proof proceeds by induction on the height of $u(\alpha) + k'\delta$, with
the base case $k' = k = s_\alpha$ being obvious.

For the inductive step, suppose the result holds whenever $|\eta| < |u(\alpha) + k'\delta|$. It suffices to establish
$\SL^r(u(\alpha) + (k'-s_\alpha)\delta) = \SL(\gamma - s_\gamma\delta)$. First, we note that $\SL(\gamma - s_\gamma\delta)$
is a Lyndon suffix of $\SL(u(\alpha) + (k'-s_\alpha)\delta)$ by part~(1) of Theorem~\ref{thm:dec.modulo.s}.
Likewise, for the costandard factorization $\SL(u(\alpha) + (k'-s_\alpha)\delta) = \SL(\mu)\SL(\eta)$, we have
$|\gamma| \geq |u(\gamma) + s_\gamma \delta|$ and $|\eta| \geq |u(\eta) + s_\eta \delta|$ by Lemma~\ref{lemma:saturated.dec.costfac},
and so $\SL(\eta + s_\eta \delta)$ is a Lyndon suffix of $\SL(u(\alpha) + k'\delta)$. Thus
$\eta \leq \gamma - s_\gamma\delta$ and $\gamma \leq \eta + s_\eta\delta$ by Lemma~\ref{lemma:right.costfac.minimal}.
However, invoking Claim~\ref{claim:aux}, we get $\eta \leq \gamma - s_\gamma \delta \Leftrightarrow \eta  + s_\eta \delta \leq \gamma$,
and so we must have the equality $\eta = \gamma - s_\gamma\delta$. This completes the inductive step.
\end{proof}

We shall now derive some basic properties of the subwords $w_i$ in~\eqref{eq:chunk-periodicity-s}.

\begin{corollary}\label{cor:dec.w.form}
For any $\chain(\alpha) \in \mathcal{D}$, consider the subwords $w_i$ as in~\eqref{eq:chunk-periodicity-s}, that is,
subwords connecting chunks in $\SL(u(\alpha)+k\delta)$ for $k \geq s_\alpha$. For all $i \in \{1,\ldots, s_\alpha\}$:
\begin{enumerate}[leftmargin=0.8cm]

\item
$\deg(w_i)$ is real;

\item
$\chain(\deg(w_i)) \in \mathcal{D}$;

\item
$M'_1(\deg(w_i)) = M'_1(\alpha)$;

\item
$s_{\deg(w_i)} = 1$;

\item
$\deg(w_i) = u(\deg(w_i))$.

\end{enumerate}
\end{corollary}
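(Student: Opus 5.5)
Let $M'_1(\alpha)=(\delta,i)$ and work with $\hat\alpha=u(\alpha)+k\delta$, $k\ge s_\alpha$, decomposed as in~\eqref{eq:chunk-periodicity-s}. The plan is to argue by induction on $\hgt(\chain(\alpha))$ through the standard factorization $\SL(\hat\alpha)=\SL(\hat\beta)\SL(\hat\gamma)$. By Lemma~\ref{lemma:no.splitting.decreasing} this factorization never splits a copy of $\SL_i(\delta)$. By Theorem~\ref{thm:dec.modulo.s}(2) and Lemma~\ref{lemma:decreasing.left.standard.imaginary.chain}, either $\SL(\hat\beta)=\SL_i(\delta)$, or $\hat\beta,\hat\gamma$ are real with $\chain(\hat\beta),\chain(\hat\gamma)\in\mathcal{D}$.

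In the first case, $s_\alpha=1$ and $\SL(\hat\alpha)=\SL_i(\delta)^k\SL(\alpha')$ with $\alpha'=u(\alpha)$. Hence $w_1=\SL(u(\alpha))$, and (1)--(5) hold directly.

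In the second case, consider the connecting word $w_j$ that the factorization might split; only $j=n$ can be split, as shown in the proof of Lemma~\ref{lemma:form.dec.words}. I would first rule out such a split. Claim~\ref{claim:saturated.dec} shows that $\SL(\hat\gamma)$ either has $M'_1(\hat\gamma)<M'_1(\alpha)$, in which case $\hat\gamma=u_i(\hat\gamma)$ by Lemma~\ref{lem:aux_unused}, or starts with a chunk. In the first alternative I still have to show that the last $w_{s_\alpha}$ is then the concatenation of the tail of $\SL(\hat\beta)$ with $\SL(\hat\gamma)$.

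To handle this uniformly, I would characterize each $w_j$ directly. Take the suffix $x_j:=w_j\SL_i(\delta)^{p_{j+1}}w_{j+1}\cdots$ of $\SL(\hat\alpha)$, and use Lemma~\ref{lemma:seq.left.word.Lyndon} to get Lyndon words ending there. The key claim is that $w_j$ itself is Lyndon. Its proper suffixes are $>\SL_i(\delta)$ and contain no $\SL_i(\delta)$ (Lemma~\ref{lemma:form.dec.words}), so the canonical factorization of $w_j$ consists of Lyndon words each $>\SL_i(\delta)$. Lemma~\ref{lemma:lyndon.subword} then applied to the canonical factorization of $\SL_i(\delta)^{p_j}w_j$ should force a single factor. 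Once $w_j$ is Lyndon, I would show it is standard. Iterating the standard factorization (inductive hypothesis on the shorter chains $\hat\beta,\hat\gamma$) exhibits $\SL_i(\delta)^{p}w_j$ as $\SL$ of a real root in a decreasing chain, and since it has no imaginary suffix, $w_j$ arises as $\SL$ of the shortest element of a chain. This gives (1) and (2) via Corollary~\ref{cor:lyndon.prefix.suffix.chains}(2).

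For (3), $w_j>\SL_i(\delta)$, so Lemma~\ref{lemma:dec.chains.lower.bound} and Corollary~\ref{cor:tight.bound.M.prime} bound $M'_1(\deg w_j)\le(\delta,i)$. Equality should follow from Lemma~\ref{lemma:M.prime.prefix} applied to $\SL_i(\delta)^{p_j}w_j$, which is a standard Lyndon word with prefix $\SL_i(\delta)$ and $M'_1$ additive by~\eqref{eq:Mprime-additive}. Property (5) is then immediate, because $w_j$ does not contain $\SL_i(\delta)$ while $\SL(\deg(w_j)+\delta)$ does. Property (4) comes from counting. The $w_j$ have $M'_1=(\delta,i)$, so each contributes $s_{\deg w_j}\ge1$, and summing degrees gives $\deg\hat\alpha\equiv\sum_j\deg w_j$ modulo $\delta$, hence $s_\alpha=\sum_j s_{\deg w_j}$ via the $M'_1$-component of the decomposition into irreducible chains. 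Since there are exactly $s_\alpha$ chunks by Claim~\ref{claim:saturated.dec}, every $s_{\deg w_j}=1$.

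The main obstacle is showing $w_j$ is standard, and not merely Lyndon, with $\deg(w_j)$ a real root. I would try to get this by induction, transporting the factorization through Theorem~\ref{thm:dec.modulo.s}(2) and Corollary~\ref{cor:dec.costfac}, and peeling off blocks $\SL_i(\delta)^{p_j}w_j$ via repeated costandard factorization (Lemma~\ref{lemma:saturated.dec.costfac}) until a block with $s=1$ remains.
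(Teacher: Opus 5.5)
Your proposal has a genuine gap. Its central claim is that each connecting word $w_j$ is Lyndon, and is in fact $\SL$ of the shortest element of its chain. This claim is false.

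The paper's own Example~\ref{ex:strong_period_counterex} is a counterexample. Take type $F_4^{(1)}$ with the order $0<2<4<1<3$. There $u(\alpha)=\alpha$ and $s_\alpha=4$, and $\SL(\alpha+4\delta)=\SL_1(\delta)\,2\,\SL_1(\delta)\,34\,\SL_1(\delta)\,34\,\SL_1(\delta)\,1$, which is exactly the regime $k=s_\alpha$ of the statement. The connecting word $w=34$ is not Lyndon, since $3>4$ in this order gives $34>4$. Moreover $\SL(\alpha_3+\alpha_4)=43\neq w$. This is why Lemma~\ref{lemma:dec.chains.canon.fac} works with a canonical factorization $w=v_1\cdots v_n$ with possibly $n>1$.

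Your appeal to Lemma~\ref{lemma:lyndon.subword} only shows that the whole block $\SL_i(\delta)^{p_j}w_j$ is a single canonical factor, not $w_j$ itself. So the derivations of (1), (2), (5) that go through $w_j=\SL(\deg w_j)$ collapse. Two further steps are also unsupported:
\begin{itemize}
\item For (5), you assume without proof that $\SL(\deg(w_j)+\delta)$ starts with $\SL_i(\delta)$; that is essentially the whole content of (5).
\item For (3), the upper bound $M'_1(\deg w_j)\le(\delta,i)$ does not follow from $w_j>\SL_i(\delta)$ via Lemma~\ref{lemma:dec.chains.lower.bound} and Corollary~\ref{cor:tight.bound.M.prime}. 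A decreasing chain with $M'_1>(\delta,i)$ also lies entirely above $\SL_i(\delta)$.
\end{itemize}

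The idea in your last sentence is the right one, but it should be aimed at $\deg(w_j)$ only, never at $w_j$ being standard. It should also use the costandard factorization rather than the standard one. The standard factorization can cut inside the last $w_n$, which is exactly the case you left unresolved.

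Induct on $s_\alpha$.
\begin{itemize}
\item Base case $s_\alpha=1$: Theorem~\ref{thm:dec.modulo.s}(1) with $k=s_\alpha=1$ gives $p_1=1$, so $\SL(u(\alpha)+\delta)=\SL_i(\delta)w_1$. Hence $\deg(w_1)=u(\alpha)$. Properties (1)--(5) are then immediate, because $M'_1$, $s$ and $u$ depend only on the chain.
\item Inductive step $s_\alpha>1$: apply Lemma~\ref{lemma:saturated.dec.costfac} to the costandard factorization $\SL(u(\alpha)+k\delta)=\SL(\beta)\SL(\gamma)$. It gives that $\chain(\beta),\chain(\gamma)$ are decreasing, with $M'_1(\beta)=M'_1(\gamma)=M'_1(\alpha)$. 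Hence $s_\beta+s_\gamma=s_\alpha$ and both are smaller than $s_\alpha$. It also gives the saturation bounds $|\beta|\ge|u(\beta)+s_\beta\delta|$ and $|\gamma|\ge|u(\gamma)+s_\gamma\delta|$.
\item Since $\SL(\gamma)$ begins with $\SL_i(\delta)$ and $\SL(\beta)$ does not end with it, the cut falls exactly before a chunk. So the $w_j$ of $\alpha$ are precisely those of $\beta$ and of $\gamma$, and the inductive hypothesis finishes the proof.
\end{itemize}

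Your counting argument for (4), using additivity of $s$ over decreasing chains with equal $M'_1$, is valid once (1)--(3) are established. It is not needed on this route.
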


\begin{proof}
We will prove this by induction on $s_\alpha$. The base case $s_\alpha = 1$ follows from the equality $\deg(w_1) = u(\alpha)$,
due to Theorem~\ref{thm:dec.modulo.s}. For the induction step, assume the result holds for all $\chain(\beta)\in \mathcal{D}$
with $s_\beta < s_\alpha$. According to Theorem~\ref{thm:dec.modulo.s}, the subwords $w_i$ all appear in
$\alpha = u(\alpha)+ k\delta$ with $k\geq s_\alpha$. Consider the corresponding costandard factorization
$\SL(u(\alpha) + k\delta) = \SL(\beta)\SL(\gamma)$. According to Lemma~\ref{lemma:saturated.dec.costfac}, we then have
$M'_1(\alpha) = M'_1(\beta) = M'_1(\gamma)$ so that $s_\beta,s_\gamma<s_\alpha$, both chains $\chain(\beta),\chain(\gamma)$
are decreasing, and $|\beta| \geq |u(\beta) + ks_\beta\delta|, |\gamma| \geq |u(\gamma) + ks_\gamma\delta|$.
Thus, all parts (1)--(5) follow by applying the inductive hypothesis for $\beta, \gamma$.
\end{proof}

\begin{remark}\label{rem:cost.fact.structure}
The above two results imply that applying iteratively the costandard factorization to $\SL(u(\alpha) + k\delta)$ with
$k \geq s_\alpha$, one splits this word into $s_\alpha$ pieces of the form
$\underbrace{\SL_i(\delta)}_{ q_j \text{ times}}w_j$ for $q_j \in \{\lfloor k/s_\alpha\rfloor,\lceil k/s_\alpha \rceil\}$,
and moreover affine roots $\gamma_j = \deg(w_j)$ satisfy $\sum_{j=1}^{s_\alpha} \gamma_j \in \chain(\alpha)$,
$M'_1(\gamma_1) = \cdots = M'_1(\gamma_{s_\alpha}) = M'_1(\alpha)$, and $s_{\gamma_j} = 1$ for all $j$.
\end{remark}

Our next result pertains to the structure of canonical factorizations of these $w_i$.

\begin{lemma}\label{lemma:dec.chains.canon.fac}
For any $\chain(\alpha) \in \mathcal{D}$, set $M'_1(\alpha) = (\delta,i)$ and let $w$ be one of $w_\bullet$'s
in~\eqref{eq:chunk-periodicity-s}, i.e.\ a subword connecting imaginary chunks in $\SL(u(\alpha)+k\delta)$
for $k \geq s_\alpha$. For the canonical factorization $w = v_1\ldots v_n$, we have for all $j \in \{1,\ldots, n\}$:
\begin{enumerate}[leftmargin=0.8cm]

\item
$\chain(\deg(v_j)) \in \mathcal{D}$;

\item
$\chain(\deg(v_j)) \in C_i$;

\item
$u_i(\deg(v_j)) = \deg(v_j)$;

\item
$M'_1(\deg(v_j)) = (\delta,i) \Leftrightarrow j = 1$.
\end{enumerate}
\end{lemma}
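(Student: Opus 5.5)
Throughout, write $\ell:=\SL_i(\delta)$, and use that $w$ is a subword of a decreasing SL-word with the properties recorded in Lemma~\ref{lemma:form.dec.words} and Corollary~\ref{cor:dec.w.form}. From Corollary~\ref{cor:dec.w.form}, $\gamma:=\deg(w)$ is real, $\chain(\gamma)\in\mathcal{D}$, $M'_1(\gamma)=(\delta,i)$, $s_\gamma=1$ and $\gamma=u(\gamma)$. By Remark~\ref{rem:cost.fact.structure}, the word $\ell^{q}w$ arises from iterated costandard factorizations, so it is itself an SL-word, namely $\SL(\gamma+q\delta)$ with $q\ge 1$.

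\emph{Step 1: realize each $v_j$ as an SL-word.} I will use that $\ell^q w=\ell^q v_1\cdots v_n$ is Lyndon.
\begin{itemize}
\item Lemma~\ref{lemma:seq.right.word.Lyndon} shows that every $\ell^q v_1\cdots v_j$ is Lyndon.
\item $w$ does not contain $\ell$, and the canonical factors satisfy $v_1\ge\dots\ge v_n$. The standard-word machinery (subwords of standard words are standard, via Proposition~\ref{prop:standard}) then makes each $v_j$ a standard Lyndon word, hence $v_j=\SL(\deg v_j)$.
\item Each $v_n$, and more generally each Lyndon suffix, is a suffix of a decreasing SL-word. So Corollary~\ref{cor:lyndon.prefix.suffix.chains}(2), applied to the Lyndon suffixes $v_j\cdots v_n$ and then to $v_j$ through Lemma~\ref{lemma:seq.left.word.Lyndon}, gives (1).
\item The same lemma excludes imaginary $v_j$: none of them contains $\ell$, and an imaginary $v_j$ would have to equal $\SL(M_1(\cdot))$ by Corollary~\ref{cor:imaginary.suffix.prefix}. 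A suffix-comparison argument as in Corollary~\ref{cor:lyndon.prefix.suffix.chains} then yields a contradiction.
\end{itemize}

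\emph{Step 2: properties (2) and (3).}
\begin{itemize}
\item Every nonempty suffix of $w$ is $>\ell$ and does not start with $\ell$, by Lemma~\ref{lemma:form.dec.words}. In particular $v_j>\ell$, and the same holds for all elements shorter than $\deg v_j$.
\item Since $\gamma=\sum_j\deg v_j$ with all chains decreasing, \eqref{eq:Mprime-additive} gives $M'_1(v_j)\le M'_1(\gamma)=(\delta,i)$.
\item For $\alpha'$ we need the shortest element of $\chain(\deg v_j)$ to be $>(\delta,i)$. This follows because $\SL$ decreases along the chain and $v_j>\ell$. This gives (2).
\item For (3), suppose $\deg v_j+\delta$ were also in the defining set of $u_i$. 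Then I would replace $v_j$ by $\SL(\deg v_j+\delta)$ and use Leclerc's algorithm (Proposition~\ref{prop:generalized.Leclerc.algo}) to build a word of degree $\gamma+\delta$ exceeding $\SL(\gamma+\delta)$, or one without prefix $\ell$. That contradicts $\gamma=u(\gamma)$.
\item Equivalently, Lemma~\ref{lemma:alt.u.def} applied to the decomposition $\gamma=\sum_j\deg v_j$ gives $|u_i(\gamma)|\ge\sum_j|u_i(\deg v_j)|$. With $|\gamma|=|u_i(\gamma)|$ this forces equality, provided the partial sums are arranged to lie in $C_i$ via Lemma~\ref{lemma:seq.simple.root}. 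I would take this second route, since it is cleaner.
\end{itemize}

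\emph{Step 3: property (4), which I expect to be the main obstacle.} By \eqref{eq:Mprime-additive} and $s_\gamma=1$, exactly one irreducible summand of $\chain(\gamma)$ carries $M_1=(\delta,i)$. So at most one $j$ has $M'_1(v_j)=(\delta,i)$, and at least one does.

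It remains to show this $j$ is $1$. Suppose $M'_1(v_1)<(\delta,i)$ while $M'_1(v_j)=(\delta,i)$ for some $j>1$. Then $\chain(v_j)$ contains elements beginning with $\ell$ (by Proposition~\ref{prop:weak.form.desc.irr.chains}), while $\chain(v_1)$ lies entirely above $\SL_{i-1}(\delta)$ or is otherwise controlled, via Corollary~\ref{cor:tight.bound.M.prime} and Remark~\ref{rem:M'1-role}. Corollary~\ref{cor:M.prime.bounding}-type comparisons should then give $v_1<v_j$, contradicting $v_1\ge v_j$.

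I am least sure this comparison closes directly. If it does not, I would instead swap to $\deg v_j+\delta$ and $\deg v_1$ and exhibit, through Leclerc's algorithm and convexity (Theorem~\ref{thm:convexity}), a larger candidate for $\SL(\gamma+q\delta)$.
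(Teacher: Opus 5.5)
\textbf{Main gap: Step 3.} Your argument does not identify the distinguished index as $j=1$. The comparison you sketch is not supported by anything you cite. Corollary~\ref{cor:M.prime.bounding} only compares chains with \emph{equal} $M'_1$. Remark~\ref{rem:M'1-role} only controls elements far out in a chain. For $\chain(\deg v_1)$ with $M'_1(\deg v_1)<(\delta,i)$, it puts those elements \emph{below} $\SL_i(\delta)$, so your claim that this chain lies above $\SL_{i-1}(\delta)$ is false. Since $v_1$ and $v_j$ are the specific elements $u_i(\cdot)$ of their chains, none of this gives $v_1<v_j$, and the Leclerc fallback is not spelled out.

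The missing step is short. The word $\ell v_1$ is Lyndon by Lemma~\ref{lemma:lyndon}, since $\ell<v_1$. It is standard as a subword of a standard word, so $\ell v_1=\SL(\deg v_1+\delta)$, which is real by (1). It has $\SL_i(\delta)$ as a prefix, so Lemma~\ref{lemma:M.prime.prefix} gives $M'_1(\deg v_1)=M'_1(\deg v_1+\delta)=(\delta,i)$. Your uniqueness count from $s_\gamma=1$ then finishes (4). This is also how the paper argues. It reduces to $s_\alpha=1$ and inducts on relative height by splitting off $v_n$. The prefix $x=\SL_i(\delta)v_1\cdots v_{n-1}$ is a real SL-word with $M'_1(\deg x)=(\delta,i)$ by Lemma~\ref{lemma:M.prime.prefix}. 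Hence $s_\alpha=1$ forces $M'_1(\deg v_n)<(\delta,i)$, and the induction hypothesis is applied to $x$.

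\textbf{Secondary gap: Step 1 for $j<n$.} Lemma~\ref{lemma:M.prime.prefix} is also what is missing here. The words $v_j\cdots v_n$ are not Lyndon for $j<n$. Lemma~\ref{lemma:seq.left.word.Lyndon} only produces Lyndon words ending in a fixed Lyndon suffix, so it never isolates $v_j$.

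Instead, take $q=1$, which Theorem~\ref{thm:dec.modulo.s} permits since $s_\gamma=1$. Then $v_j$ is a Lyndon suffix of the SL-word $\ell v_1\cdots v_j$.
\begin{itemize}
\item This word is real: apply Corollary~\ref{cor:imaginary.suffix.prefix}(1) downward to the splittings $(\ell v_1\cdots v_{t-1})\cdot v_t$, whose left factors are longer than $|\delta|$ for $t\geq 2$.
\item Its chain is decreasing by Lemma~\ref{lemma:M.prime.prefix}.
\item Corollary~\ref{cor:lyndon.prefix.suffix.chains}(2) then gives (1).
\end{itemize}

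\textbf{Your route to (3).} Once (1) is in place, your second route via Lemma~\ref{lemma:alt.u.def} is sound. It differs from the paper's Leclerc-concatenation argument and is somewhat cleaner. The proviso you flag does hold. If $\chain(a),\chain(b)\in C_i$ have a decreasing sum, then $a'+b'$ lies in the sum chain, and $\SL(a'+b')>\SL_i(\delta)$ by Theorem~\ref{thm:convexity}. Since that chain decreases, its shortest element also exceeds $\SL_i(\delta)$.
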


\begin{proof}
Let us first prove this when $s_\alpha = 1$ by induction on $\hgt(\chain(\alpha))$, in which case there is only one $w$.
The base case $\hgt(\chain(\alpha))=1$ corresponds to $\chain(\alpha)$ being an \irrchain, in which case $w$ is Lyndon
and $w = \SL(u(\alpha)) = \SL(\alpha')$ by Corollary~\ref{cor:dec.irr.chains} and Lemma~\ref{lemma:alt.u.def}.

For the inductive step, suppose the results hold for all chains of relative height $< \hgt(\chain(\alpha))$.
If $w$ is Lyndon, then (1)--(4) follow from Corollary~\ref{cor:dec.w.form}.
If $w$ is not Lyndon, then $n>1$ in its canonical factorization $w = v_1\ldots v_n$.
By Corollary~\ref{cor:lyndon.prefix.suffix.chains}, we have $\chain(\deg(v_n)) \in \mathcal{D}$.
By Lemma~\ref{lemma:seq.right.word.Lyndon}, we have $x =\SL(M'_1(\alpha))v_1\ldots v_{n-1}$ is a Lyndon prefix of
$\SL(u(\alpha) +\delta)$. Note that $x$ is real by Corollary~\ref{cor:imaginary.suffix.prefix},
$\chain(\deg(x))\in \mathcal{D}$ by Corollary~\ref{cor:lyndon.prefix.suffix.chains}, and $M'_1(\deg(x)) = (\delta,i)$
by Lemma~\ref{lemma:M.prime.prefix}. Applying the inductive hypothesis to $x$ yields part (1). We also note that
$M'_1(\deg(v_n)) < (\delta,i)$ as otherwise $s_\alpha > 1$, contradicting our assumption, which together with the
inductive hypothesis for $x$ implies part (4). Additionally we have $\SL_i(\delta) < v_n$ because $\SL(u(\alpha) +\delta)$
is Lyndon, implying $\chain(\deg(v_n)) \in C_i$. Together with the induction hypothesis for $x$ this establishes part (2).
It is clear that $|v_n|  \leq |u_i(\deg(v_n))|$, as otherwise $v_n < \SL_i(\delta)$ which would contradict $xv_n$ being Lyndon.
If $|v_n| < |u_i(\deg(v_n))|$, then words $\SL(\deg(v_n) + \delta)$ and $\SL(\deg(x) - \delta)$ do not contain $\SL_i(\delta)$
as a prefix and are both $>\SL_i(\delta)$, so that their appropriate concatenation is $> xv_n = \SL(u(\alpha) + \delta)$,
contradicting to Leclerc's algorithm. Thus $|v_n| = |u_i(\deg(v_n))|$, which together with the inductive assumption for $x$
proves~(3).

We now consider the general case, proceeding by induction on $s_\alpha$, with the base case $s_\alpha = 1$ treated above.
For the inductive step, suppose the result holds for all $\eta$ with $s_\eta < s_\alpha$. Consider the costandard factorization
$\SL(\alpha) = \SL(\beta)\SL(\gamma)$, so that $\chain(\beta),\chain(\gamma)\in \mathcal{D}$,
$M'_1(\beta) = M'_1(\gamma) = M'_1(\alpha)$, and
$|\beta| \geq |u(\beta) + s_\beta\delta|,|\gamma| \geq |u(\gamma) + s_\gamma\delta|$ by Lemma~\ref{lemma:saturated.dec.costfac}.
In particular, this factorization does not split $w_\bullet$'s in $\SL(\alpha)$, and thus the result follows from the inductive
assumption for $\beta, \gamma$.
\end{proof}

\begin{remark}\label{rem:u_i}
Results from the code (Listing \ref{lst:u_i}) indicate that $|u(\alpha)| < 10|\delta|$ for all decreasing chains $\chain(\alpha)$.
This has been checked for all orders in the five exceptional types as well as in rank $\leq 6$ cases of the classical types.
\end{remark}

In contrast to the pattern of decreasing chains from~\cite{AT} in $A$-type, we note that in general it is \underline{not}
always true that the multiset $\{w_i\}_{i=1}^{s_\alpha}$ for $\SL(u(\alpha) + k\delta)$ from Theorem~\ref{thm:dec.modulo.s}
is independent of $k \geq 1$, as illustrated in the following example.

\begin{example}\label{ex:D5}
Consider the affine type $D_5^{(1)}$ with the order $0<1<2<3<4<5$. Then $\chain(\delta - \alpha_0)$ is decreasing by
Lemma~\ref{lemma:monotonicity.smallest.once} and using the code in Listing~\ref{lst:D5} we~find
\begin{gather*}
  \SL(\delta - \alpha_0 + 3k\delta) = \underbrace{\SL_1(\delta)}_{k \text{ times}} 1
    \underbrace{\SL_1(\delta)}_{k \text{ times}} 23543 \underbrace{\SL_1(\delta)}_{k \text{ times}} 2,\\
  \SL(2\delta - \alpha_0 + 3k\delta) = \underbrace{\SL_1(\delta)}_{k+1 \text{ times}} 23543
    \underbrace{\SL_1(\delta)}_{k \text{ times}} 1 \underbrace{\SL_1(\delta)}_{k \text{ times}} 2,\\
 \SL(3\delta - \alpha_0 + 3k\delta) = \underbrace{\SL_1(\delta)}_{k + 1\text{ times}} 234
   \underbrace{\SL_1(\delta)}_{k \text{ times}} 1 \underbrace{\SL_1(\delta)}_{k + 1 \text{ times}} 235,
\end{gather*}
for all $k \in \BZ_{\geq 0}$. This corresponds to two different splittings $\delta - \alpha_0=\gamma_1 + \gamma_2 + \gamma_3$
with $M'_1(\gamma_\bullet) = M'_1(\delta - \alpha_0)$ and $s_{\gamma_\bullet} = 1$. Explicitly, in this case we have
$\chain(\delta-\beta_i) = \chain(\alpha_i)$ for all $1\leq i\leq 5$, cf.~Corollary~\ref{cor:simple-0}.
The following table lists the values of all $M_1(\beta_i)$:
\begin{center}
\begin{tabular}{|c|ccccc|}
  \hline
    $i$& 1 & 2 & 3 & 4 & 5 \\
  \hline
    $M_1(\beta_i)$ & 1 & 1 & 2 &3  &3 \\
  \hline
\end{tabular}
\end{center}
In particular, all above subwords $w\in \{1, 2, 234, 235, 23543\}$ have $M'_1(\deg(w))=1$ (this matches nicely with
Corollary~\ref{cor:dec.w.form} and Lemma~\ref{lemma:dec.chains.canon.fac}, see also Remark~\ref{rem:cost.fact.structure}).
\end{example}

We shall now obtain a tight upper bound on the periodicity $s_\alpha$ for $\chain(\alpha)\in \mathcal{D}$.

\begin{definition}
For any $\chain(\alpha) \in \mathcal{D}$, we define $f_\alpha\in \BZ_{>0}$ via
\begin{equation}\label{eq:f-function}
  f_{\alpha} = \max_{k \in \{1,2,\ldots, |I|\}} \left\{\sum_{(\beta_k,\beta_j) \neq 0} (-c_j)\right\}
\end{equation}
where $\chain(\alpha) = c_1\chain(\beta_1) + \ldots + c_{|I|}\chain(\beta_{|I|})$.
\end{definition}

Let $\theta'\in \Delta^-_p$ be the highest root with respect to the polarization from Lemma~\ref{lemma:chain.positive.roots}.

\begin{lemma}\label{lemma:f.theta.maximal}
For any $\chain(\alpha) \in \mathcal{D}$, we have $f_{\alpha} \leq f_{\theta'}$.
\end{lemma}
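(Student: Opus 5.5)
We reduce the statement to a coefficient-wise domination: the function $f$ depends only on the coefficients $-c_j\geq 0$ of the decomposition $\chain(\alpha)=c_1\chain(\beta_1)+\ldots+c_{|I|}\chain(\beta_{|I|})$. It is monotone in these coefficients, since each inner sum in~\eqref{eq:f-function} is a sum of nonnegative numbers $-c_j$ over an index set independent of $\alpha$. So it suffices to show that for every decreasing chain the coefficients of $-\Pr(\chain(\alpha))\in\Delta^+_p$ in the simple roots $\beta_1,\ldots,\beta_{|I|}$ of the polarization are bounded above by the corresponding coefficients of $-\theta'$.

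To be careful with signs: $\Delta^-_p=\Pr(\mathcal{D})$. Write $\gamma=-\Pr(\chain(\alpha))\in\Delta^+_p$, so that $-c_j$ is the $j$-th coefficient of $\gamma$ in the basis $\Pi_p=\{\beta_j\}$. The root $\theta'$, the "highest root" of $\Delta^-_p$, should be understood as the element of $\Delta^-_p$ whose negative $-\theta'$ is the highest root of $\Delta^+_p$; it is thus the lowest root in the usual sense, and it is this root that maximizes $f$. The key step is then the classical fact that the highest root $\theta_p=-\theta'$ of the (irreducible) root system $\Delta$ with respect to the positive system $\Delta^+_p$ dominates every positive root coefficientwise: $\theta_p-\gamma\in\sum_j\BZ_{\geq 0}\beta_j$ for all $\gamma\in\Delta^+_p$. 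This uses that $\Delta$ is the finite root system of the simple Lie algebra $\fg$, hence irreducible, and that $\Pi_p$ is a base by Lemma~\ref{lemma:chain.positive.roots} and Lemma~\ref{lemma:irr.chain.simple.root}. If one wants to keep the argument self-contained, we would prove it directly: start from $\gamma$ and repeatedly add simple roots $\beta_j$ with $(\gamma,\beta_j)<0$, or equivalently use Lemma~\ref{lemma:seq.simple.root} to build a chain of positive roots from $\gamma$ up to $\theta_p$. The maximal element of this process is unique and equals $\theta_p$, since $\Delta$ is irreducible.

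Given the domination, for each $k$ we have
\[
  \sum_{(\beta_k,\beta_j)\neq 0}(-c_j)\ \leq\ \sum_{(\beta_k,\beta_j)\neq 0}(-c'_j),
\]
where $\theta'=\sum_j c'_j\beta_j$. Taking the maximum over $k$ gives $f_\alpha\leq f_{\theta'}$. Here we also use that $\theta'$ corresponds to a genuine decreasing chain, namely the one with $\Pr=\theta'$, which exists by the definition of $\Delta^-_p=\Pr(\mathcal{D})$.

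The main obstacle is the sign and convention bookkeeping rather than the mathematics. One must identify precisely which element the paper calls $\theta'$ (the root of $\Delta^-_p$ with maximal absolute coefficients), and check that the index set $\{j\colon(\beta_k,\beta_j)\neq 0\}$, which includes $j=k$, is independent of $\alpha$, so that monotonicity in the coefficients applies termwise.
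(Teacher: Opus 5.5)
Your argument is correct and is essentially the paper's own proof. The function $f$ is monotone in the nonnegative coefficients $-c_j$, summed over index sets independent of $\alpha$, and $-\theta'$, the highest root of $\Delta^+_p$, dominates every root of $\Delta^+_p$ coefficientwise; this is exactly the classical fact the paper invokes. Drop your optional ``self-contained'' sketch of that fact, though. Adding $\beta_j$ only when $(\gamma,\beta_j)<0$ can stall at the highest short root in non-simply-laced types, and appealing to Lemma~\ref{lemma:seq.simple.root} already presupposes $\theta_p-\gamma\in\sum_j\BZ_{\geq 0}\beta_j$, so simply cite the standard result.
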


\begin{proof}
This follows from the basic property of finite root systems that the difference of the highest positive root and
any positive root is a sum of simple roots.
\end{proof}

We also note that the number $f_{\theta'}$ is independent of the order on $\wI$.

\begin{lemma}\label{lemma:algo.periodicity.decreasing.chains}
For any $\chain(\alpha)\in \mathcal{D}$, we have $s_\alpha \leq f_{\alpha}$.
\end{lemma}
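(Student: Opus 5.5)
The goal is to show $s_\alpha\leq f_\alpha$, and the plan is to bound $s_\alpha$ by a single summand in~\eqref{eq:f-function}, namely the one indexed by $k=i'$, where $M'_1(\alpha)=(\delta,i)$ and $\beta_{i'}$ is a suitable irreducible root with $M_1(\beta_{i'})=(\delta,i)$. Write $\chain(\alpha)=c_1\chain(\beta_1)+\ldots+c_{|I|}\chain(\beta_{|I|})$ with all $c_j\leq 0$ (Corollary~\ref{cor:lin.comb.chains}). By Definition~\ref{def:s-function},
\begin{equation*}
  s_\alpha=\sum_{j\colon M_1(\beta_j)=(\delta,i)}(-c_j).
\end{equation*}
It therefore suffices to find one index $k$ such that every $j$ with $c_j\neq 0$ and $M_1(\beta_j)=(\delta,i)$ satisfies $(\beta_k,\beta_j)\neq 0$. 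The sum over $(\beta_k,\beta_j)\neq 0$ in~\eqref{eq:f-function} then contains all the terms defining $s_\alpha$, and all of its terms are nonnegative.

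The natural candidate is $k=i$. By Corollary~\ref{cor:b.j.M.k}, applied to $\beta_j$ with $M_1(\beta_j)=(\delta,i)$, we get $(\beta_j,\beta_i)\neq 0$, where $\beta_j$ denotes the projection $\Pr$, which lies in $\Delta^+_p$. If $j=i$ itself contributes, the term $(\beta_i,\beta_i)\neq 0$ is included automatically. Hence every $j$ with $M_1(\beta_j)=(\delta,i)$ satisfies $(\beta_i,\beta_j)\neq 0$, and
\begin{equation*}
  s_\alpha\leq \sum_{(\beta_i,\beta_j)\neq 0}(-c_j)\leq f_\alpha.
\end{equation*}

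The only step that needs care is checking that Corollary~\ref{cor:b.j.M.k} is applied to the right object. That corollary is stated for an arbitrary real root $\alpha$ with $M_k(\alpha)=(k\delta,i)$. For $\beta_j$, which is a positive finite root whose chain is increasing, we use $M_1(\beta_j)=(\delta,i)$ and conclude $(\beta_j,\beta_i)\neq 0$ in the finite pairing. This holds because the pairing on $\wQ$ ignores the $\delta$-component, so it does not matter whether we pair $\beta_j$ or its chain representative.

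I expect no serious obstacle; the statement reduces to this one observation. Note also that the index set $\{j : M_1(\beta_j)=(\delta,i)\}$ is independent of the signs $c_j$, so the bound is genuinely term-by-term.
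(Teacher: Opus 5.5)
Your argument is correct and is essentially the paper's proof: with $M'_1(\alpha)=(\delta,i)$ you take $k=i$ in~\eqref{eq:f-function} and use Corollary~\ref{cor:b.j.M.k} to see that every $j$ with $M_1(\beta_j)=(\delta,i)$ satisfies $(\beta_j,\beta_i)\neq 0$. The paper writes this as an equality of the two sums; the reverse inclusion holds for $c_j\neq 0$ because $M'_1(\alpha)$ is the maximum of the $M_1(\beta_j)$. You use only the inclusion together with $-c_j\geq 0$, which suffices, and the ``suitable'' index $i'$ in your first paragraph is not needed.
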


\begin{proof}
Let $M'_1(\chain(\alpha)) = (\delta,i)$. Then we have:
\begin{equation*}
  s_\alpha = \sum_{j}^{M_1(\beta_j) = (\delta,i)} (-c_j) = \sum_{j}^{(\beta_j,\beta_i) \neq 0} (-c_j)
  \leq \max_{1\leq k\leq |I|} \left\{\sum_{j}^{(\beta_k,\beta_j) \neq 0} (-c_j) \right\} = f_{\alpha},
\end{equation*}
where the second equality is due to Corollary~\ref{cor:b.j.M.k}.
\end{proof}

By combining the above two lemmas, we obtain an upper bound for the periodicity of all decreasing chains, independent of
the underlying order on the root system. A detailed case-by-case treatment of all finite root systems yields the following.

\begin{corollary}\label{cor:max.periodicities}
We have the following upper bounds for the periodicity of all decreasing chains for any untwisted affine root system:
\begin{center}
\begin{tabular}{|c|c|}
    \hline
         type & max periodicity \\
    \hline
         $A_1^{(1)}$ & 1 \\
         $A_2^{(1)}$ & 2 \\
         $A_n^{(1)}, n > 2$ & 3 \\
    \hline
         $B_2^{(1)},C_2^{(1)}$ & 3 \\
         $B_3^{(1)},C_3^{(1)}$ & 5 \\
         $B_n^{(1)},C_n^{(1)}, n > 3$ & 6 \\
    \hline
         $D_4^{(1)}$ & 5 \\
         $D_n^{(1)}, n > 4$ & 6 \\
    \hline
         $E_6^{(1)}$ & 9 \\
         $E_7^{(1)}$ & 12 \\
         $E_8^{(1)}$ & 18 \\
    \hline
         $F_4^{(1)}$ & 9 \\
    \hline
         $G_2^{(1)}$ & 5 \\
    \hline
\end{tabular}
\captionof{table}{Upper bound on periodicity of decreasing chains}\label{table:dec.periodicity}
\end{center}
\end{corollary}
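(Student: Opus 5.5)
By Lemmas~\ref{lemma:algo.periodicity.decreasing.chains} and~\ref{lemma:f.theta.maximal}, every $\chain(\alpha)\in\mathcal{D}$ satisfies $s_\alpha\le f_\alpha\le f_{\theta'}$. So the corollary amounts to computing the single number $f_{\theta'}$ for each type and checking it against the table. The plan has two steps: first rewrite $f_{\theta'}$ purely in terms of the Dynkin diagram of $\fg$ and the marks of its highest root, then do a finite case-by-case evaluation.

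\emph{Step 1: reduction to diagram data.} By Lemma~\ref{lemma:chain.positive.roots}, $\Delta=\Delta^+_p\cup\Delta^-_p$ is a polarization of $\Delta$, and $\Pi_p=\{\beta_1,\ldots,\beta_{|I|}\}$ is its base. Hence $\Pi_p$ is the image of the standard base $\{\alpha_i\}_{i\in I}$ under some element of the Weyl group combined with a diagram identification. Under this identification the Dynkin diagram on the $\beta_j$'s is that of $\fg$, and pairings are preserved, so $(\beta_k,\beta_j)\neq0$ exactly when $j=k$ or $j,k$ are adjacent.

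Next I expand $\Pr(\theta')=\sum_j c_j\beta_j$. Since $\theta'$ is the highest root of the polarization inside $\Delta^-_p$, I read this as $-\Pr(\theta')$ being the highest root of $\Delta$ relative to $\Pi_p$. Then $-c_j=a_j$, where $a_j$ are the usual marks of the highest root $\theta$, i.e.\ the coefficients of $\delta$ in the affine simple roots. This gives
\begin{equation*}
  f_{\theta'}=\max_{k\in I}\Big(a_k+\sum_{j\ \text{adjacent to}\ k}a_j\Big).
\end{equation*}
The right-hand side visibly depends only on the type and not on the order on $\wI$, which also explains the earlier remark that $f_{\theta'}$ is order-independent.

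\emph{Step 2: case-by-case evaluation.} I would use the standard marks (e.g.\ from~\cite{K}) and maximize $a_k$ plus the marks of its neighbours; in each type the maximum sits at a branch node or at an interior node with large marks.
\begin{itemize}
\item $A_n$: all marks equal $1$. The value is $1$ for $n=1$, $2$ for $n=2$, and $3$ for $n>2$ (an interior node with two neighbours).
\item $B_n$: marks $1,2,\ldots,2$. This gives $3$ for $n=2$, $1+2+2=5$ for $n=3$, and $2+2+2=6$ for $n>3$.
\item $C_n$: marks $2,\ldots,2,1$. This gives the same values $3,5,6$.
\item $D_n$: marks $1,2,\ldots,2,1,1$. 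For $D_4$ the central node gives $2+1+1+1=5$. For $n>4$ the branch node gives $2+2+1+1=6$.
\item $E_6$: the branch node has mark $3$ and neighbours $2,2,2$, giving $9$.
\item $E_7$: the branch node has mark $4$ and neighbours $3,3,2$, giving $12$.
\item $E_8$: the branch node has mark $6$ and neighbours $5,4,3$, giving $18$.
\item $F_4$: marks $2,3,4,2$. The node with mark $4$ gives $4+3+2=9$.
\item $G_2$: marks $2,3$, giving $5$.
\end{itemize}
These values reproduce Table~\ref{table:dec.periodicity}.

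\emph{Main obstacle.} The only non-mechanical point is the identification in Step 1: that the $\Pr$-image of the highest root of the decreasing polarization has coefficients $-a_j$ in the base $\Pi_p$. I would settle this by noting that the highest root of any polarization is the unique root dominant with respect to that base. Its coefficients in that base are therefore the same for every base, via the Weyl-group conjugacy of bases. The bookkeeping with the sign (the root sits in $\Delta^-_p$, hence the minus sign in front of $c_j$) should be checked against the definition~\eqref{eq:f-function}. This statement only gives upper bounds, so tightness is not part of the argument here.
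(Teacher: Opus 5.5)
Your proposal is correct and follows the paper's own argument: you combine $s_\alpha\le f_\alpha\le f_{\theta'}$ from Lemmas~\ref{lemma:algo.periodicity.decreasing.chains} and~\ref{lemma:f.theta.maximal} with a type-by-type evaluation of $f_{\theta'}$, which you make explicit as $\max_k\bigl(a_k+\sum_{j\sim k}a_j\bigr)$ in terms of the marks of $\theta$, and all your values agree with the table. One small correction to your justification: in non-simply-laced types the highest root is not the unique dominant root (the highest short root is dominant too), but the Weyl-conjugacy of bases that you also invoke already carries $\theta$, together with its marks, to the highest root relative to $\Pi_p$, so the conclusion is unaffected.
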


In fact, the above bounds are precise, as we show in Proposition~\ref{lem:precise-bound} below.

\begin{lemma}\label{lem:opposite_polarizations}
If $0\in \wI$ is the smallest letter, then $\Delta^+ = \Delta_p^-$.
\end{lemma}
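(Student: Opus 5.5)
The plan is to use Lemma~\ref{lemma:monotonicity.smallest.once}. When $0$ is the smallest letter, $\alpha_\varepsilon=\alpha_0$, and $\alpha_0$ occurs exactly once in $\delta=\alpha_0+\theta$, so the lemma applies. It tells us that for any $\alpha\in \wDelta^{+,\re}$ with $|\alpha|<|\delta|$, the chain $\chain(\alpha)$ increases iff $\alpha$ contains $\alpha_0$.

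Next we compute the projections. Recall that the shortest element $\alpha'$ of a chain lies in $\Delta^+\cup(\delta-\Delta^+)$.
\begin{itemize}
\item If $\alpha'=\gamma\in\Delta^+$, i.e.\ $\alpha'=(\gamma,0)$, then $\alpha'$ is a combination of $\alpha_i$ with $i\in I$ only, so it does not contain $\alpha_0$. Hence $\chain(\alpha')\in\mathcal{D}$, and $\pr(\chain(\alpha'))=\gamma\in\Delta^+$. This gives $\Delta^+\subseteq \Delta^-_p$.
\item If $\alpha'=\delta-\gamma$ with $\gamma\in\Delta^+$, then $\alpha'$ contains $\alpha_0$ with coefficient $1$. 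Hence its chain increases, and $\pr=\alpha'-\delta=-\gamma\in\Delta^-$. This gives $\Delta^-\subseteq\Delta^+_p$.
\end{itemize}

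Since $\Delta=\Delta^+_p\sqcup\Delta^-_p$ is a polarization (Lemma~\ref{lemma:chain.positive.roots}), and $\Delta^+$ and $\Delta^-$ are disjoint of equal size, the two inclusions force $\Delta^+=\Delta^-_p$ (and $\Delta^-=\Delta^+_p$).

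There is no real obstacle; the only care needed is to check that roots in $\Delta^+\times\{0\}$ have zero $\alpha_0$-coefficient when written in the simple roots of $\wQ$. This holds because $\alpha_i=(\alpha_i,0)$ for $i\in I$ and $\alpha_0$ is the only generator with nonzero $\BZ$-component.
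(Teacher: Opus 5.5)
Your proposal is correct and follows the paper's proof. Both apply Lemma~\ref{lemma:monotonicity.smallest.once} with $\alpha_\varepsilon=\alpha_0$ to see that a chain decreases exactly when its shortest element avoids $\alpha_0$, i.e.\ lies in $\Delta^+\times\{0\}$; you additionally write out the two cases for $\alpha'$ and the partition argument that the paper leaves implicit.
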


\begin{proof}
According to Lemma~\ref{lemma:monotonicity.smallest.once}, for $|\alpha|<|\delta|$ the chain $\chain(\alpha)$
is decreasing iff $\alpha$ does not contain $\alpha_0$, that is $\alpha\in \Delta^+$.
\end{proof}

Comparing simple roots of both $\Delta^{+}$ and $\Delta_{p}^{-}$, we thus derive the following result.

\begin{corollary}\label{cor:simple-0}
If $0 \in \wI$ is the smallest letter, then $\{\delta-\beta_i\}_{i=1}^{|I|} = \{\alpha_j\}_{j=1}^{|I|}$.
\end{corollary}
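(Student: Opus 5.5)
The plan is to compare the two sets of simple roots directly. By Lemma~\ref{lem:opposite_polarizations}, when $0\in\wI$ is the smallest letter we have $\Delta^+=\Delta_p^-$. Both sides are therefore the positive part of one and the same polarization of $\Delta$, and a polarization has a unique set of simple roots: these are exactly the indecomposable elements of the positive part. So the simple roots of $\Delta_p^-$ are precisely $\{\alpha_j\}_{j\in I}$.

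The key steps, in order, are as follows.

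First, we recall that $\Pi_p=\{\beta_i\}_{i=1}^{|I|}$ is the set of simple roots of $\Delta^+_p$. This follows from Lemma~\ref{lemma:irr.chain.simple.root} together with the definition of the $\beta_i$ right after Proposition~\ref{prop:irr.chains}. Here each $\beta_i$ lies in $\Delta^+$ (the shortest element of $\chain(\beta_i)$) or equals $\alpha'-\delta$ with $\alpha'\in\delta-\Delta^+$, according to the definition of $\pr$.

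Second, since $\Delta^-_p=-\Delta^+_p$ (as in the proof of Lemma~\ref{lemma:chain.positive.roots}), the simple roots of $\Delta^-_p$ are $\{-\pr(\chain(\beta_i))\}$. Combined with $\Delta^+=\Delta^-_p$, this gives the set equality $\{-\pr(\chain(\beta_i))\}_{i=1}^{|I|}=\{\alpha_j\}_{j\in I}$.

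Third, we translate back to affine roots. Because $\chain(\beta_i)$ is increasing, Lemma~\ref{lemma:monotonicity.smallest.once} shows that its shortest element contains $\alpha_0$. Since $\alpha_0$ occurs only once in $\delta$, this shortest element lies in $\delta-\Delta^+$, so $\pr(\chain(\beta_i))=\beta_i-\delta$ when $\beta_i$ is taken to be the shortest element. Consequently $\delta-\beta_i=-\pr(\chain(\beta_i))\in\{\alpha_j\}$, which is exactly the claimed equality of sets.

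The only delicate point is the normalization. The statement identifies $\beta_i$ with the shortest element of its chain, so that $\delta-\beta_i$ is a genuine positive root of height less than $|\delta|$. We must make sure the sign convention in $\pr$ yields $\delta-\beta_i$ rather than $\beta_i$. This is settled by Lemma~\ref{lemma:monotonicity.smallest.once}: increasing chains are exactly those whose short element contains $\alpha_0$, which forces the second case in the definition of $\pr$. All remaining steps are standard facts about uniqueness of simple systems.
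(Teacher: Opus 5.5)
Your proof is correct and takes the same route as the paper, which derives the statement in one line by comparing the simple roots of $\Delta^+$ and $\Delta^-_p$; these two positive systems coincide by Lemma~\ref{lem:opposite_polarizations}. The one thing you add is making explicit the normalization the paper leaves implicit: you take $\beta_i$ to be the shortest element of its chain, so that $\pr(\chain(\beta_i))=\beta_i-\delta$ by Lemma~\ref{lemma:monotonicity.smallest.once}.
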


\begin{remark}
The above can be naturally generalized to the setup when the smallest simple root $\alpha_\varepsilon$ occurs only once
in $\delta$: then $\{\delta-\beta_i\}_{i=1}^{|I|} = \{\alpha_j\}_{j=0}^{|I|} \setminus \{\alpha_{\varepsilon}\}$.
\end{remark}

We also note that $\theta'$ coincides with the highest root $\theta$ of $\Delta^{+}$,
due to Lemma~\ref{lem:opposite_polarizations}.

\begin{lemma}\label{lemma:smallest.once.SL.1}
For any order on $\wI$ of the form $0 < k < \cdots $, we have $\SL^{r}_1(\delta) = k$.
Moreover, we also have $\chain(\alpha_k) = \chain(\delta - \beta_1)$.
\end{lemma}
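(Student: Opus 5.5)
Since $0$ is the smallest letter and occurs exactly once in $\delta$ (with coefficient $1$), Lemma~\ref{lemma:delta.stand.one.letter} applies. Every $\SL_i(\delta)$ is Lyndon of degree $\delta$ and contains the letter $0$ once, so it must start with $0$. By Lemma~\ref{lemma:delta.stand.one.letter}, $\SL^{rs}_1(\delta)$ is a single letter.

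\textbf{Step 1: $\SL^r_1(\delta)=k$.} By Proposition~\ref{prop:generalized.Leclerc.algo}(b), $\SL_1(\delta)$ is the lexicographically largest admissible concatenation $\SL_*(\gamma_1)\SL_*(\gamma_2)$ of degree $\delta$. I will first show that $\SL_1(\delta)$ ends with $k$. Since $\delta-\alpha_k$ is a positive real root (because $\delta-\alpha_j\in\wDelta^{+,\re}$ for every simple $\alpha_j$), the word $\SL(\delta-\alpha_k)\,k$ is a candidate as soon as $\SL(\delta-\alpha_k)<k$ and the bracket $[\sb[\SL(\delta-\alpha_k)],e_k]$ is nonzero. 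The first condition holds because $\SL(\delta-\alpha_k)$ starts with $0$. For the second, the bracket is a nonzero multiple of $h_{\alpha_k}t$, hence nonzero.

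Next I compare this candidate with any other candidate $\SL(\gamma_1)\,j$ whose final letter is $j=\SL^{rs}(\ldots)$. Here I use Lemma~\ref{lemma:right.costfac.minimal}: the smallest proper suffix of $\SL_1(\delta)$ is $\SL^r_1(\delta)$. Every proper suffix of $\SL_1(\delta)$ avoids the letter $0$, because $0$ appears only at the front. Each nonempty word in letters $\geq k$ is $\geq k$, so the smallest proper suffix is the single letter $k$ precisely when $k$ is the last letter.

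To see that the maximal candidate ends in $k$, I argue as follows. Once the first letter $0$ is fixed, maximizing lexicographically means the prefix $\SL(\delta-\alpha_j)$ should be as large as possible. I expect that the largest choice is achieved by removing the smallest remaining letter, $j=k$, because then the prefix contains only letters that tend to be larger.

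\textbf{Main obstacle.} The difficulty lies in justifying this comparison rigorously. If that direct argument stalls, I will try an alternative route using Corollary~\ref{cor:mk.im.factor} together with Proposition~\ref{prop:irr.chains}. Among all the $\SL_i(\delta)$, whose final letters are pairwise distinct, exactly one ends in $k$. Suppose it is $\SL_i(\delta)=x\,k$ with $i>1$. Then $\SL_1(\delta)=y\,j$ with $j>k$. I will then show that $x\,k$, or a rearrangement of it, beats $y\,j$ lexicographically, which would contradict the maximality of $\SL_1(\delta)$. Comparing $x$ with $y$ after the common prefix $0$ is the delicate point, and I will use Lemma~\ref{lemma:equiv.to.standard.fac} to make that comparison.

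\textbf{Step 2: the chain statement.} Because $\SL^{rs}_1(\delta)=k$, we get $\SL^{ls}_1(\delta)=\SL(\delta-\alpha_k)$. In the base case of Proposition~\ref{prop:irr.chains}, $\chain(\deg\SL^{ls}_1(\delta))$ is precisely the irreducible increasing chain with $m_1=(\delta,1)$, which is $\chain(\beta_1)$. Therefore $\chain(\beta_1)=\chain(\delta-\alpha_k)$. Applying $\delta-\chain(\cdot)$ then gives $\chain(\alpha_k)=\chain(\delta-\beta_1)$.
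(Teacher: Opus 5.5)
Your reduction is sound. Since $0$ occurs once and only in the first position, every proper suffix of $\SL_1(\delta)$ is a nonempty word in letters $\geq k$. So by Lemma~\ref{lemma:right.costfac.minimal}, $\SL^r_1(\delta)=k$ is equivalent to $\SL_1(\delta)$ ending in $k$. Step~2 is also correct: once $\SL^r_1(\delta)=\SL^{rs}_1(\delta)=k$, you get $\SL^{ls}_1(\delta)=\SL(\delta-\alpha_k)$, and the base case of Proposition~\ref{prop:irr.chains} finishes.

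However, the heart of the lemma is never proved, namely why $\max_j \SL(\delta-\alpha_j)\,j$ is attained at $j=k$. The heuristic that removing the smallest letter makes the prefix $\SL(\delta-\alpha_j)$ largest has no justification. These prefixes all start with $0$ and have different letter contents, and nothing you cite controls where they first differ. Your fallback (distinct final letters plus Lemma~\ref{lemma:equiv.to.standard.fac}) leaves exactly the same comparison open, as you acknowledge. So Step~1, and with it the whole lemma, is unproved.

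The missing idea is an exchange argument that avoids comparing different $j$'s altogether.
\begin{enumerate}
\item By Lemma~\ref{lemma:right.costfac.minimal}, $\SL^r_1(\delta)$ starts with $k$. Suppose $\SL^r_1(\delta)\neq k$, and take its costandard factorization $\SL^r_1(\delta)=uv$ with $u<v$ Lyndon. Neither $u$ nor $v$ contains $0$.
\item Since $\SL^l_1(\delta)$ starts with $0$, we have $\SL^l_1(\delta)<v$. Both degrees are real and their sum $\delta-\deg(u)$ is a real root, so the bracket of the corresponding root vectors is nonzero. Leclerc's algorithm then gives $\SL(\delta-\deg(u))\geq \SL^l_1(\delta)\,v$.
\item The word $\SL(\delta-\deg(u))$ starts with $0$, so $\SL(\delta-\deg(u))<u$. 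Two real root vectors of complementary degrees in $\delta$ bracket to a nonzero element of $\fh t$, so $\SL(\delta-\deg(u))\,u$ is an admissible candidate in degree $\delta$.
\item Therefore $\SL_1(\delta)\geq \SL(\delta-\deg(u))\,u\geq \SL^l_1(\delta)\,vu>\SL^l_1(\delta)\,uv=\SL_1(\delta)$. The middle inequality holds because $\SL(\delta-\deg(u))$ and $\SL^l_1(\delta)v$ have equal length. The strict inequality is $uv<vu$ from Lemma~\ref{lemma:lyndon}.
\end{enumerate}
This contradiction forces $\SL^r_1(\delta)=k$. It is the step your proposal needs in place of the heuristic comparison.
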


\begin{proof}
According to Lemma~\ref{lemma:right.costfac.minimal}, $\SL^r_1(\delta)$ starts with $k$. Assume that
$\SL^r_1(\delta) \neq k$, and consider its costandard factorization with $u = \SL^{rl}_1(\delta)$,
$v = \SL^{rr}_1(\delta)$. As $\SL^l_1(\delta)<v$, we have $\SL(\delta - \deg(u)) \geq \SL^l_1(\delta)v$ by
Leclerc's algorithm, but still $\SL(\delta - \deg(u)) < u$ as the former starts with $0$ and the latter does not.
But then $\SL^{l}_1(\delta)uv = \SL_1(\delta) \geq \SL(\delta - \deg(u))u \geq  \SL^{l}_1(\delta)vu$, a contradiction.
Therefore, $\SL^r_1(\delta) = k$ as claimed.
We also note that $\chain(\beta_1)=\chain(\deg(\SL^{ls}_1(\delta)))=\chain(\deg(\SL^l_1(\delta)))$
(with the first equality following from the proof of Proposition~\ref{prop:irr.chains} and the second equality
due to Corollary~\ref{cor:mk.im.factor}) and so $\chain(\alpha_k) = \chain(\delta - \beta_1)$.
\end{proof}

\begin{proposition}\label{lem:precise-bound}
Let $1 \leq k \leq |I|$ be the value which maximizes~\eqref{eq:f-function} for $\theta$. For any order $0 < k < \cdots$
on $\wI$ and any integer $p$ between $1$ and the value listed in Table~\ref{table:dec.periodicity}, there is
$\chain(\alpha) \in \mathcal{D}$ with periodicity $p$. Moreover, the periodicity of $\chain(\theta)$ is given precisely
by the value in Table~\ref{table:dec.periodicity}.
\end{proposition}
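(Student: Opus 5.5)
Our plan is to work with the order $0<k<\cdots$ on $\wI$, where $k$ maximizes~\eqref{eq:f-function} for $\theta$. By Lemma~\ref{lem:opposite_polarizations} and Corollary~\ref{cor:simple-0}, the decreasing chains of this order are exactly $\chain(\gamma)$ with $\gamma\in\Delta^+$. The simple roots $\delta-\beta_i$ are the $\alpha_j$ with $j\in I$, and $\theta'=\theta$. In this dictionary, the coefficients $-c_j$ of a decreasing chain are the ordinary coefficients of $\gamma$ in the basis $\{\alpha_j\}$. The key input is Lemma~\ref{lemma:smallest.once.SL.1}: it gives $\chain(\alpha_k)=\chain(\delta-\beta_1)$. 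Relabeling so that $\beta_1\leftrightarrow\alpha_k$, Corollary~\ref{cor:b.j.M.k} shows that $M_1(\beta_j)=(\delta,1)$ exactly when $(\beta_j,\beta_1)\neq0$, i.e.\ when $(\alpha_j,\alpha_k)\neq0$ (this includes $j$ with $\alpha_j=\alpha_k$). Since $(\delta,1)$ is the largest imaginary element, for any $\gamma\in\Delta^+$ with support meeting $\{\alpha_j\colon(\alpha_j,\alpha_k)\ne0\}$ we get $M'_1(\gamma)=(\delta,1)$. Then
\begin{equation*}
  s_\gamma=\sum_{j\colon(\alpha_j,\alpha_k)\neq0}[\gamma:\alpha_j],
\end{equation*}
the sum of the coefficients of $\gamma$ at $\alpha_k$ and its Dynkin neighbours.

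For the second claim we take $\gamma=\theta$. The formula gives $s_\theta=\sum_{(\alpha_j,\alpha_k)\ne0}[\theta:\alpha_j]=f_\theta$, by the choice of $k$ as the maximizer. Together with Corollary~\ref{cor:max.periodicities} and Lemma~\ref{lemma:f.theta.maximal}, which identify $f_{\theta'}=f_\theta$ with the tabulated value, this gives the exact periodicity. As a routine side check we would confirm from the highest-root coefficients that the maximizing $k$ yields the tabulated numbers. For example, in $E_8$ take the trivalent node (coefficient $6$) and its neighbours $4,5,3$, giving $18$; in $A_n$, $n>2$, take an interior node, giving $3$.

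For intermediate values $1\le p\le f_\theta$ we use a height argument. By Lemma~\ref{lemma:seq.simple.root}, applied in the usual polarization $\Delta^+$ with first root $\alpha_k$, there is a chain of positive roots $\alpha_k=\eta_1\subset\eta_2\subset\cdots\subset\eta_N=\theta$, each obtained from the previous one by adding a single simple root. Along this chain the quantity $S(\eta)=\sum_{(\alpha_j,\alpha_k)\ne0}[\eta:\alpha_j]$ increases by $0$ or $1$ at each step. It starts at $S(\alpha_k)=1$ and ends at $f_\theta$, so every intermediate value $p$ is attained by some $\eta_r$. Each $\eta_r$ contains $\alpha_k$, so its support meets the neighbourhood set, and hence $M'_1(\eta_r)=(\delta,1)$ and $s_{\eta_r}=S(\eta_r)$. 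This gives the required chains $\chain(\eta_r)\in\mathcal{D}$ with periodicity $p$.

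The main obstacle we expect is the identification $M_1(\beta_j)=(\delta,1)\iff(\beta_j,\beta_1)\neq 0$. Corollary~\ref{cor:b.j.M.k} gives this directly for $i=1$, since the condition over $j<1$ is vacuous. We must also make sure $\beta_1$ really corresponds to $\alpha_k$: Lemma~\ref{lemma:smallest.once.SL.1} gives this, using that $\beta_1$ is the irreducible increasing chain with $m_1=(\delta,1)$ from the proof of Proposition~\ref{prop:irr.chains}. Everything else is bookkeeping in finite root systems.
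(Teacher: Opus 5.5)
Your proposal is correct and follows essentially the paper's proof. You identify $\chain(\delta-\beta_1)=\chain(\alpha_k)$ via Lemma~\ref{lemma:smallest.once.SL.1}, use Corollary~\ref{cor:b.j.M.k} to read off $s_\theta$ as the sum of the coefficients of $\theta$ at $\alpha_k$ and its Dynkin neighbours, and then climb a root sequence from Lemma~\ref{lemma:seq.simple.root} along which $s$ grows by $0$ or $1$ per step. Starting that sequence at $\alpha_k$ slightly streamlines the paper's choice of the first index where $M'_1$ becomes $(\delta,1)$.

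One small correction to your justification. The implication $(\beta_j,\beta_1)\neq 0\Rightarrow M_1(\beta_j)=(\delta,1)$ comes from the $j<i$ clause of Corollary~\ref{cor:b.j.M.k} applied with $i>1$: if $M_1(\beta_j)=(\delta,i)$ with $i>1$, then $(\beta_j,\beta_1)=0$. The $i=1$ case you cite only gives the reverse direction.
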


\begin{proof}
By Lemma~\ref{lem:opposite_polarizations}, simple positive roots of $\Delta^+_p$ are the negative of those in~$\Delta^+$.

We start by proving the last part about the periodicity $s_\theta$ of $\chain(\theta')=\chain(\theta)$. Note that
$M'_1(\theta') = (\delta,1)$ by Lemma~\ref{lemma:M.prime.bound}, as each simple root occurs at least once in the highest
root (cf.~the proof of Lemma~\ref{lemma:f.theta.maximal}). We have $M_1(\beta_i) = (\delta,1)$ iff $(\beta_1,\beta_i) \neq 0$,
that is, if $\Pr(\chain(\delta - \beta_i))$ either coincides with $\Pr(\chain(\delta - \beta_1))$ or is connected to it in the
Dynkin diagram of $\fg$, due to Corollary~\ref{cor:b.j.M.k}. As $\chain(\delta - \beta_1) = \chain(\alpha_k)$ by
Lemma~\ref{lemma:smallest.once.SL.1}, we conclude that $s_\theta$ precisely coincides with the values in
Table~\ref{table:dec.periodicity}.

Consider a sequence of chains $(\chain(\eta_i))_{i=1}^{N}$ with roots $\eta_i$ satisfying Lemma~\ref{lemma:seq.simple.root},
that is $\chain(\eta_{i+1})- \chain(\eta_i)  = - \chain(\beta_j)$ for some $j$, where $\eta_0:=0$, and
$\chain(\eta_N) = \chain(\theta)$. Choose the minimal $k \in [1,N]$ such that $M'_1(\eta_k) = (\delta,1)$, hence
$s_{\eta_k} = 1$. Then $M'_1(\eta_{i}) = (\delta,1)$ for any $i \geq k$ by~\eqref{eq:Mprime-additive}, and
$s_{\eta_{i+1}} - s_{\eta_i} \in \{0,1\}$ as $\chain(\eta_{i+1}) - \chain(\eta_{i})$ is an \irrchain.
As $s_{\eta_k} = 1$ and $\chain(\eta_N)=\chain(\theta')$, the first claim follows.
\end{proof}

We conclude this section with the following general observation.

\begin{lemma}
If the smallest simple root $\alpha_\varepsilon$ occurs only once in $\delta$, then for any decreasing chain
$\chain(\alpha) \in C_i$, we have $u_i(\alpha) = \alpha'$, the shortest element of $\chain(\alpha)$.
\end{lemma}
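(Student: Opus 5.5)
We argue by induction on $\hgt(\chain(\alpha))$, using the recursive description of $u_i$ from Lemma~\ref{lemma:alt.u.def}. The goal is to show that the first set on the right-hand side of~\eqref{eqn:alt.u.def} never contributes a root longer than $\alpha'$. Equivalently, for every splitting $\chain(\alpha)=\chain(\beta)+\chain(\gamma)$ with $\chain(\beta),\chain(\gamma)\in C_i$, we want $|\beta'+\gamma'|\le|\alpha'|$, where by the inductive hypothesis $u_i(\beta)=\beta'$ and $u_i(\gamma)=\gamma'$.

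The base case $\hgt(\chain(\alpha))=1$ is already contained in the proof of Lemma~\ref{lemma:alt.u.def}, since there $u_i(\alpha)=\alpha'$ by Corollary~\ref{cor:dec.irr.chains}. For the inductive step, the key input is Lemma~\ref{lemma:monotonicity.smallest.once}: a root $\eta$ with $|\eta|<|\delta|$ has decreasing chain iff $\eta$ does not contain $\alpha_\varepsilon$. Since $\alpha_\varepsilon$ occurs exactly once in $\delta$, the shortest element of a decreasing chain has $\alpha_\varepsilon$-coefficient $0$, while every element $\eta'+k\delta$ of that chain has $\alpha_\varepsilon$-coefficient exactly $k$. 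So within a decreasing chain, the $\alpha_\varepsilon$-coefficient is precisely the number of $\delta$'s added to the shortest element.

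Now take $\beta'$ and $\gamma'$, both of $\alpha_\varepsilon$-coefficient $0$. Their sum $\beta'+\gamma'$ lies in $\chain(\alpha)$ and also has $\alpha_\varepsilon$-coefficient $0$. It therefore equals $\alpha'+k\delta$ with $k=0$, i.e.\ $\beta'+\gamma'=\alpha'$. Hence every candidate in~\eqref{eqn:alt.u.def} equals $\alpha'$, and $u_i(\alpha)=\alpha'$.

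The one point needing care is that $\beta'+\gamma'$ is a genuine element of $\chain(\alpha)$, and not an element of $\chain(\alpha)$ shifted by a negative multiple of $\delta$. This holds because $\beta'+\gamma'$ is a positive real root: $\beta',\gamma'\in\wDelta^{+,\re}$, and their sum is the real root $\Pr$-compatible with $\chain(\alpha)$. Its $\alpha_\varepsilon$-coefficient is nonnegative, and the coefficient-counting argument above then forces it to be the shortest element. The paper may instead bypass Lemma~\ref{lemma:alt.u.def} and argue directly that $\SL(\alpha'+\delta)$ starts with $\SL_i(\delta)$. I would not take that route: it would need to show that the letter $0$ (or $\varepsilon$) is the first letter of $\SL(\alpha'+\delta)$, which is harder to control than the coefficient count.
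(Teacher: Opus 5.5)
Your proof is correct and is essentially the paper's argument: induction on $\hgt(\chain(\alpha))$ via Lemma~\ref{lemma:alt.u.def}, where Lemma~\ref{lemma:monotonicity.smallest.once} forces $\beta'+\gamma'=\alpha'$ for any splitting into decreasing chains, so every candidate in~\eqref{eqn:alt.u.def} equals $\alpha'$. Your final ``point needing care'' is slightly circular as phrased, but it is not needed: the definition of $\chain(\beta)+\chain(\gamma)$ already gives $\beta'+\gamma'-\alpha'\in\BZ\delta$, and comparing $\alpha_\varepsilon$-coefficients shows that this multiple of $\delta$ is zero.
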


\begin{proof}
By Lemma~\ref{lemma:monotonicity.smallest.once}, we have $\chain(\alpha) \in \mathcal{D}$ if and only if $\SL(\alpha')$ does not
contain the smallest letter $\varepsilon$. Hence for any $\chain(\alpha),\chain(\beta),\chain(\gamma) \in \mathcal{D}$ such that
$\chain(\alpha) = \chain(\beta) + \chain(\gamma)$, we have $\alpha' = \beta' + \gamma'$. We thus proceed by induction on
$\hgt(\chain(\alpha))$. The base case $\hgt(\chain(\alpha)) = 1$, corresponding to irreducible $\chain(\alpha)$, follows
from Corollary~\ref{cor:dec.irr.chains}.

For the inductive step suppose the result holds for all chains of relative height $<\hgt(\chain(\alpha))$. If the first set
in the RHS of~\eqref{eqn:alt.u.def} is empty then the result follows from Lemma~\ref{lemma:alt.u.def}. Otherwise, by the above
argument and the inductive hypothesis the RHS of~\eqref{eqn:alt.u.def} still contains only $\alpha'$, and the result follows from
Lemma~\ref{lemma:alt.u.def}.
\end{proof}


\section{Periodicity of Increasing Chains}\label{sec:incr}


\subsection{Preliminaries}

\

This section is devoted to the description of periodicity pattern for increasing chains. While having
similar nature to the periodicity from Section~\ref{sec:dec}, the present patterns are more complicated,
as already illustrated by $A^{(1)}_n$-type in~\cite{AT}.

\begin{definition}\label{def:inc.chains.c}
For any $\chain(\alpha) \in \mathcal{I}$, we define $c_\alpha = c_i$, where
$\chain(\alpha) = c_1\chain(\beta_1) + c_2\chain(\beta_2) + \cdots + c_i\chain(\beta_i)$ with $c_i \neq 0$,
cf.\ Corollary~\ref{cor:lin.comb.chains}(a).
\end{definition}

The following result is crucial for many inductive arguments below
(and is a close analogue of Lemma~\ref{lemma:s.splitting} for decreasing chains).

\begin{lemma}\label{lemma:c.splitting}
For any $\chain(\alpha) \in \mathcal{I}$ with $c_\alpha > 1$, there exist $\chain(\beta),\chain(\gamma) \in \mathcal{I}$
such that $m_1(\beta) = m_1(\gamma) = m_1(\alpha)$ and $\chain(\beta) + \chain(\gamma) = \chain(\alpha)$.
\end{lemma}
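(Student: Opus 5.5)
We mirror the proof of Lemma~\ref{lemma:s.splitting}, replacing $M'_1$ and maxima by $m_1$ and minima, and inducting on $\hgt(\chain(\alpha))$. Throughout, let $m_1(\alpha)=(\delta,i)$, so that $c_\alpha=c_i$ in the decomposition $\chain(\alpha)=c_1\chain(\beta_1)+\cdots+c_i\chain(\beta_i)$ given by Corollary~\ref{cor:mk.roots}.

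The key additive fact replacing~\eqref{eq:Mprime-additive} is Corollary~\ref{cor:both.increasing.m.k}. For $\chain(\beta)+\chain(\gamma)=\chain(\alpha)$ with both summands increasing, it gives $m_1(\alpha)=\min\{m_1(\beta),m_1(\gamma)\}$. Equivalently, in terms of coefficients, $c_\alpha$ is additive: it equals $c_\beta+c_\gamma$ if $m_1(\beta)=m_1(\gamma)$, and otherwise equals the $c$ of whichever summand has the smaller $m_1$.

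\emph{Base case.} Take $\chain(\alpha)$ of minimal relative height with $c_\alpha>1$. It is not irreducible, since irreducible chains have $c=1$ by Lemma~\ref{lemma:irr.chain.simple.root}. By Lemma~\ref{lemma:seq.simple.root} in the polarization $\Delta^+_p$, it splits as a sum of two increasing chains $\chain(\beta)+\chain(\gamma)$. If the conclusion failed for this splitting, then without loss of generality $m_1(\beta)=m_1(\alpha)<m_1(\gamma)$. In that case $c_\beta=c_\alpha>1$ and $\hgt(\chain(\beta))<\hgt(\chain(\alpha))$, contradicting minimality.

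\emph{Inductive step.} Take any splitting $\chain(\alpha)=\chain(\beta)+\chain(\gamma)$ into increasing chains. If $m_1(\beta)=m_1(\gamma)$, we are done. Otherwise, without loss of generality $m_1(\beta)=m_1(\alpha)<m_1(\gamma)$, so $c_\beta=c_\alpha>1$. By the inductive hypothesis, $\chain(\beta)=\chain(\eta)+\chain(\mu)$ with $\chain(\eta),\chain(\mu)\in\mathcal{I}$ and $m_1(\eta)=m_1(\mu)=m_1(\alpha)$.

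We then apply Lemma~\ref{lemma:triple.sum.pairs} to the projections $\Pr(\chain(\eta)),\Pr(\chain(\mu)),\Pr(\chain(\gamma))\in\Delta^+_p$. Their total is $\Pr(\chain(\alpha))\in\Delta$, and no pairwise sum vanishes, since all three lie in $\Delta^+_p$. Alternatively one can use the Jacobi-identity argument from the end of the proof of Lemma~\ref{lemma:s.splitting}, via $[[\sb[\SL(\hat\eta)],\sb[\SL(\hat\mu)]],\sb[\SL(\hat\gamma)]]\neq0$. Either way, at least two pairwise sums are roots, so one of $\Pr(\chain(\eta))+\Pr(\chain(\gamma))$ or $\Pr(\chain(\mu))+\Pr(\chain(\gamma))$ lies in $\Delta$, hence in $\Delta^+_p$.

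Say the first one does. Then $\chain(\eta+\gamma)\in\mathcal{I}$ by Lemma~\ref{lemma:chain.sum.monotone}, and $m_1(\eta+\gamma)=\min\{m_1(\eta),m_1(\gamma)\}=m_1(\alpha)$ by Corollary~\ref{cor:both.increasing.m.k}. So $\chain(\alpha)=\chain(\eta+\gamma)+\chain(\mu)$ is the desired splitting. The other case is symmetric.

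The only subtle point is ensuring that sums of chains make sense as real roots, i.e.\ that the chain sums remain in $\chain(\wDelta^{+,\re})$. Since both summands lie in $\Delta^+_p$ and $\Delta^+_p$ is a polarization (Lemma~\ref{lemma:chain.positive.roots}), the sum is nonzero and therefore real. Everything else is routine bookkeeping with Corollary~\ref{cor:both.increasing.m.k}.
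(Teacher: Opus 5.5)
Your proposal is correct and matches the paper's proof essentially step for step: the same minimal-height base case, the same reduction to $m_1(\beta)=m_1(\alpha)<m_1(\gamma)$ followed by splitting $\chain(\beta)$ via the inductive hypothesis, and the same regrouping $\chain(\alpha)=(\chain(\gamma)+\chain(\eta))+\chain(\mu)$ (or with $\mu,\eta$ swapped), justified by Corollary~\ref{cor:both.increasing.m.k}. The only cosmetic difference is that you mainly invoke Lemma~\ref{lemma:triple.sum.pairs} on the projections to get that $\Pr(\chain(\eta))+\Pr(\chain(\gamma))$ or $\Pr(\chain(\mu))+\Pr(\chain(\gamma))$ is a root, whereas the paper points to the Jacobi-identity argument from Lemma~\ref{lemma:s.splitting}; these establish the same fact.
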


\begin{proof}
This will be shown by induction on $\hgt(\chain(\alpha))$. Let $\chain(\alpha)$ be the chain with the minimal
relative height and $c_\alpha > 1$, and consider any splitting $\chain(\alpha) = \chain(\beta) + \chain(\gamma)$
with $\chain(\beta),\chain(\gamma) \in \mathcal{I}$. Assuming contradiction with the lemma, we then have
$m_1(\beta)=m_1(\alpha)<m_1(\gamma)$ or $m_1(\gamma)=m_1(\alpha)<m_1(\beta)$, which implies $c_\beta = c_\alpha > 1$
or $c_\gamma = c_\alpha > 1$, thus contradicting to the choice of $\alpha$. This completes the base case.

For the induction step, assume that the claim holds for all chains of relative height $< \hgt(\chain(\alpha))$.
Since $\chain(\alpha)$ is not an \irrchain, it can be expressed as $\chain(\alpha)=\chain(\beta)+\chain(\gamma)$
with $\chain(\beta),\chain(\gamma) \in \mathcal{I}$. Assuming that $\chain(\beta),\chain(\gamma)$ do not meet
the requirements of the claim, we then again have $m_1(\beta) = m_1(\alpha) < m_1(\gamma)$ or
$m_1(\gamma) = m_1(\alpha) < m_1(\beta)$. Without loss of generality, we shall assume the former case.
We then have $c_\beta \geq 2$ and by the induction assumption for $\chain(\beta)$ there exist
$\chain(\mu),\chain(\eta) \in \mathcal{I}$ such that $\chain(\mu) +\chain(\eta) = \chain(\beta)$ and
$m_1(\mu) = m_1(\eta) = m_1(\beta)$. Applying the same logic as in the proof of Lemma~\ref{lemma:s.splitting},
we must have either $\chain(\mu) + \chain(\gamma) \in \mathcal{I}$ or $\chain(\eta) + \chain(\gamma) \in \mathcal{I}$,
due to Lemma~\ref{lemma:chain.sum.monotone}. Hence, we can express $\chain(\alpha)$ as either
$(\chain(\gamma) + \chain(\mu)) + \chain(\eta)$ or $(\chain(\gamma) + \chain(\eta)) + \chain(\mu)$, with both terms
satisfying the conditions by Corollary~\ref{cor:both.increasing.m.k}. This completes the step of induction.
\end{proof}

The following result is a counterpart of Corollary~\ref{cor:M.prime.bounding} for increasing chains.

\begin{lemma}\label{lemma:no.bound.increasing}
For any $\chain(\alpha), \chain(\beta) \in \mathcal{I}$ with $m_1(\alpha) = m_1(\beta)$ and any
$\hat\alpha \in \chain(\alpha)$, there exists $\hat\beta \in \chain(\beta)$ such that $\hat\beta > \hat\alpha$.
\end{lemma}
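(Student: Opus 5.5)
Set $m_1(\alpha)=m_1(\beta)=(\delta,i)$. The idea is that increasing chains are bounded above by $m_1$: by Proposition~\ref{prop:monotonicity}, $\chain(\alpha)<m_1(\alpha)$. The plan is to show that $\chain(\beta)$ comes arbitrarily close to $(\delta,i)$ from below, i.e.\ that its elements eventually exceed any fixed word $\SL(\hat\alpha)<\SL_i(\delta)$. This mirrors Corollary~\ref{cor:M.prime.bounding}, where $\SL(M'_1)$ prefixes were used. Here I will use prefixes of the form $\SL_i(\delta)^{p}$ truncated, i.e.\ approximation of the infinite word $\SL_i(\delta)^\infty$ from below.

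Concretely, fix $\hat\alpha$ and let $N=|\SL(\hat\alpha)|$. Since $\SL(\hat\alpha)<\SL_i(\delta)$ and $\SL(\hat\alpha)$ is not a prefix of $\SL_i(\delta)^\infty$ (otherwise $\SL(\hat\alpha)$ would be a prefix of some power of $\SL_i(\delta)$ and hence would compare as $\ge$ against words sharing long prefixes), it suffices to find $\hat\beta\in\chain(\beta)$ such that $\SL(\hat\beta)$ agrees with $\SL_i(\delta)^\infty$ on the first $N$ letters, or at least first differs from $\SL(\hat\alpha)$ upward. I would argue this by induction on $\hgt(\chain(\beta))$ using Lemma~\ref{lemma:c.splitting} and Lemma~\ref{lemma:connectivity.splitting}-type decompositions. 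The base case is $\chain(\beta)=\chain(\beta_i)$ irreducible. Here I want $\SL(\beta_i+k\delta)$ to have a growing prefix of the form $\SL_i(\delta)^{k'}$ (or a suffix structure $u\,\SL_i(\delta)^{k}$ with $u$ a prefix). I would derive this from the costandard factorization, paralleling Corollary~\ref{cor:dec.irr.chains}. For an increasing chain, the costandard suffix should be imaginary and equal to $\SL(m_1(\cdot))$ by Corollary~\ref{cor:imaginary.suffix.prefix} and Corollary~\ref{cor:lyndon.prefix.suffix.chains}(1). This would give $\SL(\beta_i+k\delta)=\SL(\beta_i)\SL_i(\delta)^k$ or a similar form. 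Since $\SL(\beta_i)$ is a proper prefix-type piece with $\SL(\beta_i)<\SL_i(\delta)$, the relevant comparison is between $\SL(\hat\alpha)$ and $\SL(\beta_i)\SL_i(\delta)^k$, which are increasing in $k$ with supremum $\SL(\beta_i)\SL_i(\delta)^\infty$.

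For the general case, an alternative cleaner route avoids explicit words. Suppose the contrary: $\chain(\beta)<\hat\alpha$ entirely. By Lemma~\ref{lemma:no.bound.increasing}'s symmetric form we may pick $\hat\alpha$ to lie in $\chain(\alpha)$. Then I would take $\gamma$ with $\chain(\gamma)=\chain(\alpha)-\chain(\beta)$ if this is a root chain, or else use a combination via Lemma~\ref{lemma:seq.simple.root}, and apply Theorem~\ref{thm:convexity} to sums $\hat\beta+\hat\gamma$ to force a contradiction with $\chain(\alpha)<m_1(\alpha)$ (Proposition~\ref{prop:monotonicity}) or with Lemma~\ref{lemma:parity.same.sum}. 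The key mechanism is the following. If all of $\chain(\beta)$ lies below $\hat\alpha$, then any element of $\chain(\alpha)$ large enough, written as a sum involving elements of $\chain(\beta)$ and elements bounded by $m_1$, is squeezed by convexity below $\hat\alpha$. This contradicts monotone increase of $\chain(\alpha)$, whose later elements exceed $\hat\alpha$.

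The main obstacle is making this squeeze rigorous. Convexity requires pairwise real sums, and the decomposition of $\chain(\alpha)$ relative to $\chain(\beta)$ need not be a single sum. My first attempt would be the word-level argument. I would show that $\SL(\hat\beta)$ for $|\hat\beta|\gg0$ has a prefix $\SL(\beta_i)$-independent of length $>N$ that agrees with the corresponding prefix of the lexicographic supremum $\sup\chain(\beta)$. Then I would show that this supremum equals $\sup\chain(\alpha)$, which is $\SL_i(\delta)$-determined, by an induction on height via Lemma~\ref{lemma:c.splitting}. The decomposition $\chain=\chain(\mu)+\chain(\eta)$ with equal $m_1$ lets the convexity inequality $\hat\mu<\hat\mu+\hat\eta<\hat\eta$ transport suprema.
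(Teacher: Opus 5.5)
There is a genuine gap. The word-level mechanism your argument relies on is false, and your alternative route is circular.

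\textbf{The word-level mechanism fails.} Elements of an increasing chain with $m_1=(\delta,i)$ do \emph{not} approximate $\SL_i(\delta)^\infty$. By Corollary~\ref{cor:imaginary.suffix.prefix}, an imaginary factor in a splitting $\SL(\hat\alpha)=uv$ is $\SL(M_1(\hat\alpha))$, not $\SL(m_1(\hat\alpha))$, and these differ in general. A concrete case is type $A_2^{(1)}$ with order $0<1<2$:
\begin{itemize}
\item $\SL_1(\delta)=021$ and $\SL_2(\delta)=012$;
\item $\beta_2=\alpha_0+\alpha_1$, with $m_1(\beta_2)=(\delta,2)$ but $M_1(\beta_2)=(\delta,1)$;
\item $\SL(\alpha_0+\alpha_1+k\delta)=01(021)^k$, which for $k\geq 1$ agrees with $(012)^\infty$ only in its first two letters.
\end{itemize}
This refutes your base-case formula $\SL(\beta_i)\SL_i(\delta)^k$. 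Your preliminary reduction also fails here: $01=\SL(\beta_2)$ is itself a prefix of $(012)^\infty$.

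\textbf{The correct word shape is not available at this point.} The correct shape $y_i\SL_j(\delta)^k$ (Corollary~\ref{cor:inc.irr.chains}, Lemma~\ref{lemma:y.i.prefix}, Proposition~\ref{prop:cost.fac.inc}) is derived only later. It goes through $l(\alpha)$, whose well-definedness \emph{is} this lemma. It also relies on Conjecture~\ref{conj:connectivity}, as does Lemma~\ref{lemma:connectivity.splitting}, whereas the statement you are proving is unconditional.

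\textbf{The alternative route does not work.} It invokes the lemma itself. Moreover, the convexity squeeze cannot succeed. Suppose $\chain(\gamma)=\chain(\alpha)-\chain(\beta)$ is decreasing, as happens for $\chain(\alpha)=\chain(\alpha_0+\alpha_1)$, $\chain(\beta)=\chain(\alpha_0)$ in the example above. Then Corollary~\ref{cor:inc.dec.comparison} gives $\hat\beta<\hat\beta+\hat\gamma<\hat\gamma$. Lemmas~\ref{lemma:dec.chains.lower.bound} and \ref{lemma:interlock.m.M}, with Proposition~\ref{prop:monotonicity}, give $\hat\gamma>M'_1(\gamma)\geq m_1(\beta)=(\delta,i)>\hat\alpha$. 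So nothing is pushed below $\hat\alpha$.

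\textbf{The height induction misses the essential case.} Lemma~\ref{lemma:c.splitting} produces equal-$m_1$ splittings only when $c>1$. It says nothing about a chain such as $\chain(\alpha_0)=\chain(\beta_1)+\chain(\beta_2)$, which is the case that actually needs handling.

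\textbf{The missing idea} is to work purely with convexity and pivot through the irreducible chain $\chain(\beta_i)$. The relation ``every element of $\chain(\alpha)$ is exceeded by some element of $\chain(\beta)$'' is transitive. So it suffices to treat the pairs $(\chain(\alpha),\chain(\beta_i))$ and $(\chain(\beta_i),\chain(\beta))$, each by induction on relative height. Each step uses an arbitrary splitting $\chain(\gamma)+\chain(\eta)$ into increasing chains with $m_1(\gamma)=(\delta,i)$, which exists by Corollary~\ref{cor:both.increasing.m.k}.
\begin{itemize}
\item For the first pair, note $\hat\alpha+(\delta-\eta')\in\chain(\gamma)$. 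Corollary~\ref{cor:inc.dec.comparison} gives $\hat\alpha<\hat\alpha+(\delta-\eta')$, so you climb into the lower-height chain $\chain(\gamma)$ and apply induction.
\item For the second pair, if $m_1(\eta)=(\delta,i)$, induction gives $\hat\gamma,\hat\eta$ above $\hat\beta_i$, and Theorem~\ref{thm:convexity} finishes.
\item If instead $m_1(\eta)=(\delta,j)$ with $j<i$, apply the first part for index $j$ to $\chain(\deg\SL^{ls}_j(\delta))$ (cf.\ Corollary~\ref{cor:mk.im.factor}), together with induction. This gives $\hat\eta$ with $\SL(\hat\eta)>\SL^{ls}_j(\delta)>\SL_i(\delta)>\SL(\hat\beta_i)$, by Lemma~\ref{lemma:left.standard.comp.imaginary} and Proposition~\ref{prop:monotonicity}. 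Convexity then yields $\hat\beta_i<\hat\gamma+\hat\eta\in\chain(\beta)$.
\end{itemize}
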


\begin{proof}
Consider a relation on the set $\mathcal{I}$ defined through the set
\begin{gather*}
  R = \left\{(\chain(\alpha),\chain(\beta)) \in \mathcal{I} \times \mathcal{I} \,\bigg\vert\,
         \substack{ m_1(\chain(\alpha)) = m_1(\chain(\beta))
        \\\forall\, \hat\alpha \in \chain(\alpha) \ \exists\, \hat\beta\in\chain(\beta)\colon \hat\beta > \hat\alpha}\right\}.
\end{gather*}
This relation is clearly reflexive and transitive. Due to transitivity, to show $\chain(\alpha)R\chain(\beta)$ for any
$\chain(\alpha),\chain(\beta) \in \mathcal{I}$ with $m_1(\alpha) = (\delta,i) = m_1(\beta)$, it suffices
to show $\chain(\alpha)R\chain(\beta_i)$ and $\chain(\beta_i)R\chain(\beta)$. Henceforth, we fix index $i$.

We first prove $\chain(\alpha)R \chain(\beta_i)$ whenever $m_1(\alpha) = (\delta,i)$ by induction on the relative height
$\hgt(\chain(\alpha))$. The base case $\hgt(\chain(\alpha))=1$, corresponding to $\chain(\alpha) = \chain(\beta_i)$, is obvious.
For the inductive step, suppose the result holds for all $\chain(\mu)\in \mathcal{I}$ with $m_1(\mu) = (\delta,i)$ and
$\hgt(\chain(\mu)) < \hgt(\chain(\alpha))$. Consider any splitting $\chain(\alpha) = \chain(\gamma) + \chain(\eta)$ with
$\chain(\gamma),\chain(\eta) \in \mathcal{I}$. By Corollary~\ref{cor:both.increasing.m.k}, we can assume without loss of
generality that $m_1(\gamma) = (\delta,i)$. For any $\hat\alpha\in \chain(\alpha)$, we have
$\hat\alpha + (\delta - \eta') = \hat\gamma \in \chain(\gamma)$, where $\eta'$ is the shortest element of $\chain(\eta')$.
According to Corollary~\ref{cor:inc.dec.comparison}, we have $\hat\alpha < \hat\gamma < \delta-\eta'$. Applying the inductive
hypothesis to $\chain(\gamma)$, there is $\hat\beta_i \in \chain(\beta_i)$ such that $\hat\gamma < \hat\beta_i$, and so
$\hat{\alpha} < \hat\beta_i$. This completes the step of induction.

We will now prove $\chain(\beta_i)R\chain(\beta)$ whenever $m_1(\beta) = (\delta,i)$ by induction on the relative height
$\hgt(\chain(\alpha))$. The base case $\hgt(\chain(\beta)) = 1$ is obvious. For the inductive step, suppose the result holds
for all $\chain(\mu) \in \mathcal{I}$ with $m_1(\mu) = (\delta,i)$ and $\hgt(\chain(\mu)) < \hgt(\chain(\beta))$. Consider
any splitting $\chain(\beta) = \chain(\gamma) + \chain(\eta)$ with $\chain(\gamma), \chain(\eta) \in \mathcal{I}$.
Again by Corollary~\ref{cor:both.increasing.m.k}, suppose without loss of generality that $m_1(\gamma) = (\delta,i)$.
We now consider two cases:
\begin{itemize}[leftmargin=0.7cm]

\item
If $m_1(\eta) = m_1(\gamma)$, then we have $\chain(\beta_i)R\chain(\gamma)$ and $\chain(\beta_i)R\chain(\eta)$
by the inductive hypothesis, that is, for any $\hat\beta_i \in \chain(\beta_i)$ there exist $\hat\gamma\in \chain(\gamma)$
and $\hat\eta\in \chain(\eta)$ such that $\hat\beta_i < \hat\gamma,\hat\eta$. Then,
$\hat\beta_i < \hat\gamma + \hat\eta$ by Theorem~\ref{thm:convexity}, whereas $\hat\gamma + \hat\eta \in \chain(\beta)$;

\item
If $m_1(\eta) = (\delta,j) \neq (\delta,i)$, then $m_1(\eta) > m_1(\gamma)$ by Corollary~\ref{cor:both.increasing.m.k}.
Applying the previous part and Corollary~\ref{cor:mk.im.factor} yields
$\chain(\deg(\SL^{ls}_j(\delta)))R\chain(\beta_j)$. Moreover, $\chain(\beta_j)R\chain(\eta)$ by the inductive hypothesis,
and so $\chain(\deg(\SL^{ls}_j(\delta)))R\chain(\eta)$ by transitivity. Thus there is $\hat\eta \in \chain(\eta)$ satisfying
$\SL^{ls}_j(\delta) < \hat\eta$ and hence $(\delta,i) < \hat\eta$ by Lemma~\ref{lemma:left.standard.comp.imaginary}.
Applying the inductive hypothesis to $\chain(\gamma)$, for any $\hat\beta_i \in \chain(\beta_i)$ there exists
$\hat\gamma \in \chain(\gamma)$ such that $\hat\beta_i < \hat\gamma$. We also have $\hat\beta_i < (\delta,i) < \hat\eta$
by above and Proposition~\ref{prop:monotonicity}. Therefore, $\hat\beta_i < \hat\eta + \hat\gamma \in \chain(\beta)$
by Theorem~\ref{thm:convexity}.

\end{itemize}
This completes the inductive step.
\end{proof}

The following result is an analogue of Corollary~\ref{cor:tight.bound.M.prime} for increasing chains.

\begin{corollary}\label{cor:tight.bound.m}
For any $\chain(\alpha)\in \mathcal{I}$ with $m_1(\alpha) = (\delta,i)$, we have $(\delta,i+1) < \alpha + k\delta$
for $k \gg 0$.
\end{corollary}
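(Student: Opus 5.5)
The plan is to mirror the argument for Corollary~\ref{cor:tight.bound.M.prime}, but with Lemma~\ref{lemma:no.bound.increasing} playing the role that Proposition~\ref{prop:weak.form.desc.irr.chains} played there. Let $m_1(\alpha)=(\delta,i)$ and assume $i<|I|$; otherwise the statement is vacuous. The idea is to produce one element of some increasing chain $\chain(\gamma)$ with $m_1(\gamma)=(\delta,i)$ that lies strictly above $(\delta,i+1)$. Since increasing chains are unbounded above in this sense by Lemma~\ref{lemma:no.bound.increasing}, the chain $\chain(\alpha)$ must eventually exceed that element, and by monotonicity (Proposition~\ref{prop:monotonicity}) it then stays above it for all larger $k$.

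The natural candidate is $\gamma=\deg(\SL^{ls}_i(\delta))$. By Corollary~\ref{cor:mk.im.factor} we have $m_1(\gamma)=(\delta,i)$. The chain $\chain(\gamma)$ is increasing by Proposition~\ref{prop:monotonicity}, since $\gamma<\SL_i(\delta)=m_1(\gamma)$ (a proper prefix is smaller). Lemma~\ref{lemma:left.standard.comp.imaginary} gives $\SL_{i+1}(\delta)<\SL^{ls}_i(\delta)$, i.e.\ $(\delta,i+1)<\gamma$. For the subtler case where $\gamma$ has length exceeding $|\delta|$, so that $\gamma\ne\gamma'$, Lemma~\ref{lemma:length.standfac.imaginary} applies only for larger $k$; but for $\delta$ itself $|\gamma|<|\delta|$, so $\gamma$ is the shortest element of its chain, which is harmless either way.

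Next, apply Lemma~\ref{lemma:no.bound.increasing} to the pair $\chain(\gamma),\chain(\alpha)$, which have equal $m_1$. With $\hat\alpha:=\gamma\in\chain(\gamma)$ in the role of the lemma's $\hat\alpha$, it yields some $\hat\beta=\alpha+k_0\delta\in\chain(\alpha)$ with $\alpha+k_0\delta>\gamma>(\delta,i+1)$. Since $\chain(\alpha)$ is increasing, $\alpha+k\delta\ge\alpha+k_0\delta>(\delta,i+1)$ for all $k\ge k_0$, which gives the claim. If $\alpha$ is not itself the shortest element of its chain, the same argument applies verbatim with $\alpha'$ in place of $\alpha$.

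The main obstacle is only bookkeeping: checking that $\gamma$ is a real root, so that it defines a chain. This holds because $\SL^{ls}_i(\delta)$ is a proper Lyndon prefix of an imaginary word of degree $\delta$, hence has degree of length less than $|\delta|$, and therefore cannot be imaginary. After that, the argument is immediate from the cited lemmas.
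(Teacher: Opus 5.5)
Your proposal is correct and follows essentially the paper's own argument: apply Lemma~\ref{lemma:no.bound.increasing} to $\chain(\deg(\SL^{ls}_i(\delta)))$ and $\chain(\alpha)$, which have equal $m_1$ by Corollary~\ref{cor:mk.im.factor}, then conclude with Lemma~\ref{lemma:left.standard.comp.imaginary} and the monotonicity of $\chain(\alpha)$. The aside about Lemma~\ref{lemma:length.standfac.imaginary} and the case $|\gamma|>|\delta|$ is unnecessary, since $|\deg(\SL^{ls}_i(\delta))|<|\delta|$ automatically.
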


\begin{proof}
According to Lemma~\ref{lemma:no.bound.increasing} and Corollary~\ref{cor:mk.im.factor}, there exists
$\hat\alpha \in \chain(\alpha)$ satisfying $\SL(\hat\alpha) > \SL_i^{ls}(\delta)$, and so $\hat\alpha > (\delta,i+1)$
by Lemma~\ref{lemma:left.standard.comp.imaginary}. As $\chain(\alpha)$ is increasing, we get
$\alpha+k\delta > (\delta,i+1)$ whenever $|\alpha+k\delta|\geq |\hat\alpha|$.
\end{proof}

\begin{remark}\label{rem:m1-role}
Similarly to Remark~\ref{rem:M'1-role}, we emphasize the importance played by the function $m_1$ for increasing
chains $\chain(\alpha)\in \mathcal{I}$: we have $\SL_{i+1}(\delta) < \SL(\hat{\alpha}) < \SL_{i}(\delta)$ for
all $\hat{\alpha}\in \chain(\alpha)$ with $|\hat\alpha|\gg 1$, where $m_1(\alpha)=(\delta,i)$.
\end{remark}

The following result is an analogue of Lemma~\ref{lemma:M.prime.prefix} for increasing chains.

\begin{lemma}\label{lemma:left.standard.prefix.increasing}
If $\chain(\alpha) \in \mathcal{I}$ and $\SL(\hat\alpha)$ has $\SL_i^{ls}(\delta)$ as a prefix for some
$\hat\alpha \in \chain(\alpha)$, then $m_1(\alpha) = (\delta,i)$.
\end{lemma}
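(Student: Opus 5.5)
We must show that if $\chain(\alpha)\in\mathcal{I}$ and $\SL(\hat\alpha)$ starts with $\SL_i^{ls}(\delta)$, then $m_1(\alpha)=(\delta,i)$. The plan is to prove the two inequalities $m_1(\alpha)\geq(\delta,i)$ and $m_1(\alpha)\leq(\delta,i)$ separately. Both are lexicographic comparisons, obtained by sandwiching $\SL(\hat\alpha)$ between imaginary words and invoking Proposition~\ref{prop:monotonicity} and Corollary~\ref{cor:tight.bound.m}.

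\emph{Upper bound on $\hat\alpha$.} First I would compare $\SL(\hat\alpha)$ with $\SL_i(\delta)$, and I expect $\SL(\hat\alpha)<\SL_i(\delta)$. Write $\SL_i(\delta)=\SL_i^{ls}(\delta)\SL_i^{rs}(\delta)$ and $\SL(\hat\alpha)=\SL_i^{ls}(\delta)x$.
\begin{itemize}
\item If $\hat\alpha$ is shorter than $\delta$, apply Lemma~\ref{lemma:equiv.to.standard.fac} with $\ell=\SL(\hat\alpha)$ and $uv=\SL_i(\delta)$. This gives $\SL(\hat\alpha)>\SL_i(\delta)$ iff $\SL(\hat\alpha)>\SL_i^{ls}(\delta)$, and the latter is automatic since $\SL_i^{ls}(\delta)$ is a proper prefix.
\item So in that range we get $\hat\alpha>(\delta,i)$, i.e.\ the opposite inequality. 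I would therefore not aim for an upper bound there, and instead argue via $m_1$ directly.
\end{itemize}

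\emph{Showing $m_1(\alpha)\leq(\delta,i)$.} Since $\chain(\alpha)$ is increasing, Proposition~\ref{prop:monotonicity} gives $\hat\alpha<m_1(\alpha)$ for every element of the chain. Suppose $m_1(\alpha)=(\delta,j)$ with $j>i$. Then $\SL(\hat\alpha)<\SL_j(\delta)$. On the other hand, $\SL_j(\delta)\leq\SL_{i+1}(\delta)<\SL_i^{ls}(\delta)$ by Lemma~\ref{lemma:left.standard.comp.imaginary}. By the case analysis above, $\SL(\hat\alpha)$ is $\geq\SL_i^{ls}(\delta)$ since it has this as a prefix. This is a contradiction, so $j\leq i$.

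\emph{Showing $m_1(\alpha)\geq(\delta,i)$.} Suppose $m_1(\alpha)=(\delta,j)$ with $j<i$. By Corollary~\ref{cor:tight.bound.m}, $\alpha+k\delta>(\delta,j+1)\geq(\delta,i)$ for $k\gg0$, which is not yet a contradiction. Instead, I would take the Lyndon prefix $\ell=\SL_i^{ls}(\delta)$ of $\SL(\hat\alpha)$ and use the costandard/standard machinery:
\begin{itemize}
\item By Corollary~\ref{cor:mk.im.factor}, $m_1(\deg\ell)=(\delta,i)$, and $\chain(\deg\ell)\in\mathcal{I}$.
\item Write $\SL(\hat\alpha)=\ell w$ with canonical factorization $w=w_1\cdots w_n$. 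By Lemma~\ref{lemma:seq.right.word.Lyndon}, the words $\ell w_1\cdots w_r$ are Lyndon. Each is a prefix of $\SL(\hat\alpha)$ and hence is an SL-word.
\item By Corollary~\ref{cor:lyndon.prefix.suffix.chains}(1), these all give increasing chains, provided each prefix is real. Imaginary prefixes are handled as in the proof of that corollary.
\item Inductively along the prefixes $\ell,\ell w_1,\dots$, use Corollary~\ref{cor:both.increasing.m.k} and Corollary~\ref{cor:factor.sets.monotonicity} to show that $m_1$ cannot drop below $(\delta,i)$ when a $w_r$ is appended.
\end{itemize}

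\emph{Main obstacle.} The key step is appending a $w_r$ with a decreasing chain: subtracting a decreasing chain adds increasing irreducibles, which could introduce some $\beta_j$ with $j<i$. To handle it, I would use Lemma~\ref{lemma:interlock.m.M} together with the fact that $w_r$ is compared against $\ell$: since $\ell<w_r$ lexicographically and $\ell>\SL_{i+1}(\delta)$, this should control $M_1'$ of the appended piece. If that fails, the fallback is to use the $j<i$ assumption directly: Remark~\ref{rem:m1-role} gives $\SL(\hat\alpha)>\SL_{j+1}(\delta)\geq\SL_i(\delta)$ eventually, and since $\SL(\hat\alpha)$ begins with $\SL_i^{ls}(\delta)$, this forces a comparison of $w$ with $\SL_i^{rs}(\delta)$. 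That comparison should contradict Leclerc's algorithm (Proposition~\ref{prop:generalized.Leclerc.algo}) by producing a larger splitting of $\SL_i(\delta)$.
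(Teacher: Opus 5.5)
Your exclusion of $m_1(\alpha)=(\delta,j)$ with $j>i$ is correct and is exactly the paper's first step. It uses $\SL(\hat\alpha)<\SL_j(\delta)\leq\SL_{i+1}(\delta)<\SL^{ls}_i(\delta)$ via Proposition~\ref{prop:monotonicity} and Lemma~\ref{lemma:left.standard.comp.imaginary}. Note that $(\delta,1)>(\delta,2)>\cdots$, so this step proves $m_1(\alpha)\geq(\delta,i)$; your two headings are swapped.

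The genuine gap is the other direction, excluding $j<i$, which is the real content of the lemma. There you name the obstacle but give no argument. In your prefix scheme (which matches the paper's induction), write $\SL(\hat\alpha)=\SL^{ls}_i(\delta)w_1\cdots w_n$, $\beta=\deg(\SL^{ls}_i(\delta)w_1\cdots w_{n-1})$ with $m_1(\beta)=(\delta,i)$, and $\gamma=\deg(w_n)$. If $m_1(\alpha)>(\delta,i)$, Corollary~\ref{cor:both.increasing.m.k} applied to $\chain(\beta)=\chain(\alpha)+\chain(\delta-\gamma)$ forces $\chain(\gamma)\in\mathcal{D}$ with $m_1(\gamma)=(\delta,i)$.

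In this configuration Lemma~\ref{lemma:interlock.m.M} yields only inequalities $m_1(\cdot)\leq M'_1(\cdot)$ among the three chains, which do not by themselves contradict anything available. Likewise $w_n>\SL^{ls}_i(\delta)$ gives no upper control on $M'_1(\gamma)$.

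Your fallback fails too. Remark~\ref{rem:m1-role} only concerns $|\hat\alpha|\gg 1$, while a word beginning with $\SL^{ls}_i(\delta)$ can lie strictly between $\SL_i(\delta)$ and $\SL_j(\delta)$. Your own first bullet shows that short such words exceed $\SL_i(\delta)$, so no purely lexicographic sandwich can conclude. Nor can Proposition~\ref{prop:generalized.Leclerc.algo} be played against $\SL_i(\delta)$: $\deg(w)\neq\delta-\deg(\SL^{ls}_i(\delta))$ in general, so there is no competing splitting of $\delta$. Moreover, imaginary words are selected subject to linear independence of bracketings, not as a single maximum.

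The missing idea is to replace $\gamma$ by the short increasing root $k\delta-\gamma$ with $0<|k\delta-\gamma|<|\delta|$. It has $m_1(k\delta-\gamma)=(\delta,i)$, so $k\delta-\gamma<(\delta,i)$ by Proposition~\ref{prop:monotonicity}. Lemma~\ref{lemma:equiv.to.standard.fac} then forces $\SL(k\delta-\gamma)\leq\SL^{ls}_i(\delta)\leq\SL(\beta)$. Now decompose $\beta=(\hat\alpha-k\delta)+(k\delta-\gamma)$.

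If $|\hat\alpha|<|k\delta|$, Corollary~\ref{cor:inc.dec.comparison} applied to $k\delta-\gamma=\beta+(k\delta-\hat\alpha)$ gives $\beta<k\delta-\gamma$, a contradiction.

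If $|\hat\alpha|>|k\delta|$, Theorem~\ref{thm:convexity} places $\beta$ strictly between $\hat\alpha-k\delta$ and $k\delta-\gamma$. The order $\beta<k\delta-\gamma$ is excluded as before. If instead $\beta<\hat\alpha-k\delta$, there are two cases. Either $\SL(\hat\alpha-k\delta)$ begins with $\SL^{ls}_i(\delta)$, and the induction hypothesis gives $m_1(\alpha)=(\delta,i)$. Or it does not, and then $\hat\alpha-k\delta>\hat\alpha$, contradicting $\chain(\alpha)\in\mathcal{I}$.

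This last step applies the induction hypothesis to a shorter element of the same chain, not to a prefix. So your induction must run over $|\hat\alpha|$, as in the paper, rather than only along the prefixes of one fixed word.
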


\begin{proof}
We have $\SL(\hat\alpha) = \SL^{ls}_i(\delta)w$ for some $w$, and the proof proceeds by induction on $|\hat\alpha|$.
The base case $\SL(\hat\alpha) = \SL^{ls}_i(\delta)$ is clear. For the inductive step, suppose the result holds for
roots of height $ <|\hat\alpha|$.

Consider the canonical factorization $w = w_1w_2\ldots w_n$. Let $\gamma = \deg(w_n)$ and
$\beta = \deg(\SL^{ls}_i(\delta)w_1w_2\ldots w_{n-1})$. By Lemma~\ref{lemma:seq.right.word.Lyndon}, the word
$\SL^{ls}_i(\delta)w_1\ldots w_{n-1}$ is Lyndon, hence $\chain(\beta) \in \mathcal{I}$ by
Corollary~\ref{cor:lyndon.prefix.suffix.chains}, and so $m_1(\beta) = (\delta,i)$ by the inductive hypothesis.
We also claim that $m_1(\alpha) \geq (\delta,i)$, as otherwise we get get a contradiction with Proposition~\ref{prop:monotonicity}
since $\SL_{i+1}(\delta) < \SL_i^{ls}(\delta) < \SL(\hat\alpha)$ by Lemma~\ref{lemma:left.standard.comp.imaginary}.

If $m_1(\alpha) > (\delta,i)$, then $\chain(\gamma)\in \mathcal{D}$ and $m_1(\gamma) = (\delta,i)$
by Corollary~\ref{cor:both.increasing.m.k}. Pick an integer $k$ so that $0<|k\delta - \gamma| < |\delta|$, and consider
the decomposition $\beta = (\hat\alpha - k\delta) + (k\delta - \gamma)$. We have two cases to consider:
\begin{itemize}[leftmargin=0.7cm]

\item
If $|\hat\alpha| < |k\delta|$, then $\beta <  k\delta - \gamma  < k\delta -\hat\alpha$ by Corollary~\ref{cor:inc.dec.comparison}.
But then $m_1(k\delta - \gamma) = (\delta,i) < k\delta - \gamma$ by Lemma~\ref{lemma:equiv.to.standard.fac} as
$\SL_i^{ls}(\delta) \leq \SL(\beta) < \SL(k\delta - \gamma)$, a contradiction with Proposition~\ref{prop:monotonicity};

\item
If $|\hat\alpha| > |k\delta|$, then by Theorem~\ref{thm:convexity} we have two sub-cases to consider:
either $\hat\alpha - k\delta < \beta < k\delta - \gamma$ or $k\delta - \gamma < \beta < \hat\alpha - k\delta$.
In the first case, combining $|k\delta - \gamma| < |\delta|$, $m_1(\gamma) = (\delta,i)$,
$\chain(k\delta - \gamma) \in \mathcal{I}$, we get $\SL(k\delta - \gamma) < \SL_i^{ls}(\delta)< \SL(\beta)$
by Lemma~\ref{lemma:equiv.to.standard.fac}, a contradiction. In the second case, if $\SL(\hat\alpha - k\delta)$ has
$\SL_i^{ls}(\delta)$ as a prefix then $m_1(\alpha)=m_1(\hat\alpha-k\delta)=(\delta,i)$ by the induction hypothesis,
while otherwise $\hat\alpha - k\delta > \hat\alpha$, contradicting to $\chain(\alpha) \in \mathcal{I}$.

\end{itemize}
Therefore, $m_1(\alpha) = (\delta,i)$, which completes the inductive step.
\end{proof}


\subsection{$l$-function}
\

The following result is crucial for our analysis of increasing chains.

\begin{lemma}\label{lemma:recursive.u.j}
For any $1 \leq i \leq |I|$, consider a sequence of words $\{u_j\}_{j=0}^\infty$ defined by $u_0 = \SL^{ls}_i(\delta)$ and
the recursive relation $u_j = u_{j-1}v_{j}$, where $v_j$ is the lexicographically largest element of the following set:
\begin{equation}\label{eq:v.j}
    V_j = \left\{\ell \in \SL \, \bigg\vert \, \substack{ [\sb[u_{j-1}],\sb[\ell]] \neq 0, \\
    \chain(\deg(u_{j-1}) + \deg(\ell)) \in \mathcal{I}, \\
    m_1(\deg(u_{j-1}) + \deg(\ell)) = (\delta,i)}\right\}.
\end{equation}
This sequence of words satisfies the following properties:
\begin{enumerate}[leftmargin=0.8cm]

\item
$u_j \in \SL$ for all $j$;

\item
if $\deg(v_j)$ is real, then $\chain(\deg(v_j))\in \mathcal{D}$;

\item
$v_j \geq v_{j+1}$ for all j;

\item
there is $N$ such that $v_N$ is imaginary while all $v_{<N}$ are real, and furthermore
$v_{>N} = v_N = \SL(M_1(\deg(u_{N-1})))$.

\end{enumerate}
\end{lemma}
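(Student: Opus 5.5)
I will prove (1)--(4) simultaneously by induction on $j$. The first step is to check that $V_j$ is nonempty and that $u_j=u_{j-1}v_j$ is standard Lyndon. Write $\alpha_{j-1}=\deg(u_{j-1})$, and assume inductively that $u_{j-1}\in\SL$ is real with $\chain(\alpha_{j-1})\in\mathcal I$ and $m_1(\alpha_{j-1})=(\delta,i)$. For $j=1$ these hold by Corollary~\ref{cor:mk.im.factor} and Proposition~\ref{prop:monotonicity}.

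\textbf{Nonemptiness of $V_j$.} The word $\SL(M_1(\alpha_{j-1}))$ always lies in $V_j$:
\begin{itemize}
\item its bracket with $\sb[u_{j-1}]$ is nonzero by Definition~\ref{def:Mk};
\item adding an imaginary root does not change the chain or $m_1$.
\end{itemize}

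\textbf{Why $u_j\in\SL$.} Set $\gamma=\alpha_{j-1}+\deg(v_j)$. By Proposition~\ref{prop:generalized.Leclerc.algo}, $\SL(\gamma)$ is the maximum of admissible concatenations, so $\SL(\gamma)\ge u_{j-1}v_j$ whenever $u_{j-1}<v_j$. I will argue by contradiction. Suppose $\SL(\gamma)>u_{j-1}v_j$. Then $\SL(\gamma)$ does not begin with $u_{j-1}$, or it begins with $u_{j-1}$ followed by a factor larger than $v_j$.
\begin{itemize}
\item In the first case, take the standard factorization of $\SL(\gamma)$. By Lemma~\ref{lemma:equiv.to.standard.fac} and Corollary~\ref{cor:lyndon.prefix.suffix.chains}(1), its left factor $\ell$ is increasing with $\ell>u_{j-1}\ge \SL^{ls}_i(\delta)$. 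I then plan to show $m_1(\deg\ell)=(\delta,i)$, using Lemma~\ref{lemma:left.standard.prefix.increasing} together with Remark~\ref{rem:m1-role}-type bounds. The convexity argument of Theorem~\ref{thm:convexity}, applied to $\alpha_{j-1}=\deg\ell+(\text{rest})$, should then contradict the maximality used in the earlier steps $v_{j-1}$.
\item In the second case, Lemma~\ref{lemma:leclerc.14} forces the extra factor to be an element of $V_j$ larger than $v_j$, contradicting the choice of $v_j$.
\end{itemize}
For this step I also need $u_{j-1}<v_j$, i.e.\ that $u_{j-1}v_j$ is Lyndon. This follows from $v_j\ge\SL(M_1(\alpha_{j-1}))$ and Proposition~\ref{prop:monotonicity}(2), which gives $\alpha_{j-1}<M_1(\alpha_{j-1})$.

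\textbf{Part (2).} This is Corollary~\ref{cor:lyndon.prefix.suffix.chains}(2) applied to the suffix structure, or more directly Corollary~\ref{cor:factor.sets.monotonicity}. Since $u_{j-1}<v_j$ with sum increasing, if $\chain(\deg v_j)$ were increasing, Lemma~\ref{lemma:parity.same.sum}(a) would give $\gamma<m_1(\deg v_j)$. Combined with the maximality of $v_j$ over $\SL(M_1)$, this should be impossible. I will instead use that $\gamma\in\mathcal I$ while $v_j$ is not a prefix, and then invoke Corollary~\ref{cor:factor.sets.monotonicity}(1) on the pair. That corollary gives information only about the smaller root, so I will refine it via Corollary~\ref{cor:inc.dec.comparison}. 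Specifically, if $\deg v_j$ were increasing, both summands would be increasing, and Lemma~\ref{lemma:parity.same.sum} bounds $v_j<m_1(\alpha_{j-1})=(\delta,i)\le \SL(M_1(\alpha_{j-1}))$, contradicting maximality.

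\textbf{Part (3).} This is the key monotonicity step. Any $\ell\in V_{j+1}$ with $\ell>v_j$ should also lie in $V_j$: I would use the Jacobi identity on $[[\sb u_{j-1},\sb v_j],\sb\ell]\neq0$, as in the proof of Lemma~\ref{lemma:s.splitting}, together with $m_1$-additivity from Corollary~\ref{cor:both.increasing.m.k}. If instead the bracket goes through $v_j$, then $v_j\ell$ is a root. Convexity then gives $v_j<\deg(v_j)+\deg\ell<\ell$, and $u_{j-1}(v_j\ell)$ would beat $u_{j-1}v_j$ in $V_j$. Verifying the chain and $m_1$ conditions in this Jacobi case is the main obstacle I expect.

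\textbf{Part (4).} Because $v_j\ge\SL(M_1(\alpha_{j-1}))$, which is imaginary of length $|\delta|$, any real $v_j$ satisfies $v_j>\SL(M_1(\alpha_{j-1}))$. By (2) such a $v_j$ is decreasing, so adding it lowers the height of $\Pr(\chain(\alpha_j))$ in the polarization. This can happen only finitely often, since $\chain(\alpha_j)$ stays in $\mathcal I$ with $m_1=(\delta,i)$ and the relative heights are bounded. Hence some $v_N$ is imaginary. By Corollary~\ref{cor:imaginary.suffix.prefix}(2) it equals $\SL(M_1(\alpha_{N-1}))$.

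Afterwards $\chain(\alpha_j)$ no longer changes, so by Corollary~\ref{cor:equiv.mk.Mk} the index of $M_1$ is fixed and the same element remains admissible. By (3) every later $v_j$ is at most $v_N$, and since the imaginary candidate is always present, each later $v_j$ is at least it. Hence $v_{>N}=v_N$.
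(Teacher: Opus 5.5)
Your handling of nonemptiness of $V_j$, of $u_{j-1}<\SL(M_1(\alpha_{j-1}))\le v_j$, and of part (4) (granted (1)--(3)) is fine. The core of part (1), however, is missing.

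\textbf{Part (1).} In the case where $\SL(\gamma)$ does not begin with $u_{j-1}$ you give no working mechanism. There is no reason for $\deg\ell$ (a factor of $\SL(\gamma)$) to be a summand of $\alpha_{j-1}$, so ``convexity applied to $\alpha_{j-1}=\deg\ell+(\text{rest})$'' has nothing to act on. In the other case, Lemma~\ref{lemma:leclerc.14} says nothing about membership in $V_j$.

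The missing idea is an inner induction showing that $\SL(\gamma)$ begins with $u_k$ for every $k\le j$.
\begin{itemize}
\item The base case is $k=0$. We have $u_0<u_{j-1}v_j\le\SL(\gamma)<\SL_i(\delta)=u_0\SL^{rs}_i(\delta)$, where the last inequality holds by Proposition~\ref{prop:monotonicity} since $\chain(\gamma)\in\mathcal{I}$ and $m_1(\gamma)=(\delta,i)$. This forces $u_0$ to be a prefix of $\SL(\gamma)$.
\item For the step, write $\SL(\gamma)=u_{k-1}w$ with canonical factorization $w=w_1\cdots w_n$. Then $u_{k-1}w_1\in\LL$ by Lemma~\ref{lemma:seq.right.word.Lyndon}. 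Its chain is increasing by Corollary~\ref{cor:lyndon.prefix.suffix.chains}(1), and its $m_1$ is $(\delta,i)$ by Lemma~\ref{lemma:left.standard.prefix.increasing}. Also $[\sb[u_{k-1}],\sb[w_1]]\neq 0$ by Proposition~\ref{prop:general.bracketing}(a). Hence $w_1\in V_k$, so $w_1\le v_k$. If $w_1<v_k$, Lemma~\ref{lem:Melancon} gives $\SL(\gamma)<u_{k-1}v_k\cdots v_j=u_{j-1}v_j$, a contradiction.
\end{itemize}
This handles both of your cases at once. It also gives (3) for free: the tail of $\SL(\gamma)$ after $u_0$ has canonical factorization exactly $v_1\cdots v_j$, so $v_1\ge\cdots\ge v_j$. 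The Jacobi-identity route to (3), which you leave unresolved, is therefore unnecessary.

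\textbf{Part (2).} The argument as written does not work. Lemma~\ref{lemma:parity.same.sum}(a) bounds the chain of the \emph{sum}, not the summand $v_j$. If $\chain(\deg v_j)\in\mathcal{I}$, Corollary~\ref{cor:both.increasing.m.k} only gives $m_1(\deg v_j)\ge(\delta,i)$. So $v_j<\SL(m_1(\deg v_j))$ gives no bound by $\SL_i(\delta)$.

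The correct bound is $m_1(\deg v_j)\le M'_1(\alpha_{j-1})$ from Lemma~\ref{lemma:interlock.m.M}, giving $v_j<\SL(M'_1(\alpha_{j-1}))$. This beats the candidate $\SL(M_1(\alpha_{j-1}))$ only when $M'_1(\alpha_{j-1})=M_1(\alpha_{j-1})$. Otherwise the argument goes as follows.
\begin{itemize}
\item By Corollary~\ref{cor:M.M.prime.diff}, $s_{-\alpha_{j-1}}>1$.
\item By Lemma~\ref{lemma:s.splitting}, write $\chain(-\alpha_{j-1})=\chain(\eta)+\chain(\mu)$ with both chains decreasing and $M'_1(\eta)=M'_1(\mu)=M'_1(\alpha_{j-1})$. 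After relabeling, $m_1(\mu)=m_1(\alpha_{j-1})$.
\item Let $\eta'$ be the shortest element of $\chain(\eta)$. Then $\SL(\eta')\in V_j$, because $\chain(\alpha_{j-1})+\chain(\eta)=\chain(-\mu)\in\mathcal{I}$ has $m_1=(\delta,i)$.
\item Finally $\SL(\eta')>\SL(M'_1(\alpha_{j-1}))$ by Lemma~\ref{lemma:dec.chains.lower.bound}, so $v_j$ is not maximal.
\end{itemize}

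\textbf{Well-definedness of $v_j$.} Your proposal does not address this. $V_j$ is infinite, so the existence of a lexicographic maximum must be proved. The same reduction supplies it: once increasing-chain candidates are discarded, each of the finitely many decreasing chains contributes only its shortest member lying in $V_j$. The imaginary candidates decrease in $k$ by Lemma~\ref{lemma:imaginary.words.decreasing}.
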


\begin{proof}
Let $\alpha = \deg(u_{j-1})$.
First, let us show the above $v_j$ is well-defined, that is, the set $V_j$ of~\eqref{eq:v.j} is non-empty and contains
the maximal element. The former property follows from $\SL(M_1(\alpha)) \in V_j$, while the latter
is established in the next paragraph (note that the set $V_j$ is infinite as $\{\SL(M_k(\alpha))\}_{k>0} \subseteq V_j$).

We will first show that we can ignore real $\ell \in V_j$ with $\chain(\deg(\ell)) \in \mathcal{I}$, when determining the
maximum of $V_j$. We note that $m_1(\deg(\ell)) < M'_1(\alpha)$ by Lemma~\ref{lemma:interlock.m.M}. We now consider two cases:
\begin{itemize}[leftmargin=0.8cm]

\item
If $M'_1(\alpha) = M_1(\alpha)$, then by Proposition~\ref{prop:monotonicity} we have
$\ell < \SL(m_1(\deg(\ell))) < \SL(M_1(\alpha))$, and so $\ell$ cannot be the maximum of the set $V_j$
since we already proved that $\SL(M_1(\alpha)) \in V_j$;

\item
If $M'_1(\alpha) \neq M_1(\alpha)$, then $s_{-\alpha} > 1$ by Corollary~\ref{cor:M.M.prime.diff}.
Applying Lemma~\ref{lemma:s.splitting}, consider the decomposition $\chain(-\alpha) = \chain(\beta) + \chain(\gamma)$
with $\chain(\beta),\chain(\gamma) \in \mathcal{D}$ and $M'_1(\beta) = M'_1(\gamma) = M'_1(\alpha)$. Without loss of
generality we can assume that $m_1(\gamma) = m_1(\alpha)$ using Corollary~\ref{cor:both.increasing.m.k}. Let $\beta'$
be the shortest element of $\chain(\beta)$. We claim that $\SL(\beta') \in V_j$, which follows from
$\chain(\alpha)+\chain(\beta) = \chain(-\gamma)\in \mathcal{I}$ and  $m_1(-\gamma) = m_1(\alpha)$.
By Lemma~\ref{lemma:dec.chains.lower.bound}, we have $M'_1(\alpha) = M'_1(\beta) < \beta'$, so that
$\ell < \SL(m_1(\deg \ell)) < \SL(M'_1(\alpha)) < \SL(\beta')$. Therefore, $\ell$ again cannot be
the maximum of the set $V_j$.

\end{itemize}
Thus for any $\ell\in V_j$ with $\chain(\deg(\ell)) \in \mathcal{I}$, we can find $\ell' \in V_j$ with $\ell'>\ell$ and
$\chain(\deg(\ell')) \in \mathcal{D}$ or $\ell'$ is imaginary. Once we restrict to $\ell \in V_j$ which correspond to
decreasing chains or are imaginary, this set has a clear maximum:
\begin{itemize}[leftmargin=0.8cm]

\item
for decreasing chains (there are finitely many of those), the shortest word that lies in $V_j$ is larger than
all other words in the same chain that belong to $V_j$;

\item
for imaginary words, this follows from Corollary~\ref{cor:equiv.mk.Mk} and Lemma~\ref{lemma:imaginary.words.decreasing}.

\end{itemize}
This proves that $v_j$ is indeed well-defined and establishes part~(2) of the lemma.

We next prove part~(1) by induction on $j$, the base case $j=0$ being vacuous. For the inductive step, assume that
$u_{j-1} \in \SL$. By the second line of~\eqref{eq:v.j} we have $\alpha + \deg(v_j) \in \wDelta^{+,\re}$. We also note
that $u_{j-1} < v_j$, due to Corollary~\ref{cor:inc.dec.comparison} (if $v_j$ is real) or Proposition~\ref{prop:monotonicity}
(if $v_j$ is imaginary). Thus, $u_{j-1}v_j \leq \SL(\alpha + \deg(v_j))$ by Leclerc's algorithm. Next, we claim that
$\SL(\alpha + \deg(v_j))$ contains $u_{k}$ as a prefix for all $0 \leq k \leq j$, which we prove by induction. The base case
$k=0$ follows from a sequence of inequalities $u_0 < u_1 < \ldots < u_j \leq \SL(\alpha + \deg(v_j)) < \SL_i(\delta)$ combined
with $u_0$ being a prefix of $\SL_i(\delta)$.
For the inductive step, assume that $u_{k-1}$ is a prefix of $\SL(\alpha + \deg(v_j))$.
Then $\SL(\alpha + \deg(v_j))) = u_{k-1}w$ for some word $w$, and consider the canonical factorization $w = w_1w_2\ldots w_n$.
We note that $u_{k-1}w_1$ is Lyndon by Lemma~\ref{lemma:seq.right.word.Lyndon}, and $\deg(u_{k-1}w_1) \in \mathcal{I}$
by Corollary~\ref{cor:lyndon.prefix.suffix.chains}, as $u_{k-1}w_1$ is a Lyndon prefix of $\SL(\alpha + \deg(v_j))$ with
$\chain(\alpha + \deg(v_j)) \in \mathcal{I}$. Furthermore, $m_1(\deg(u_{k-1}w_1)) = (\delta,i)$ by
Lemma~\ref{lemma:left.standard.prefix.increasing} and $[\sb[u_{k-1}],\sb[w_1]] \neq 0$ as
$\deg(w_1) + \deg(u_{k-1}) \in \wDelta^{+,\re}$. This implies $w_1 \in V_k$, so that $w_1 \leq v_k$.
If $w_1 < v_k$, then $u_{j-1}v_j = u_{k-1}v_k \ldots v_{j-1} v_{j} > u_{k-1}w_1 \ldots w_n = \SL(\alpha + \deg(v_j))$
by Lemma~\ref{lem:Melancon}, contradicting $u_{j-1}v_j \leq \SL(\alpha + \deg(v_j))$ established above.
Thus $w_1 = v_k$ which completes the induction step, while for $k=j$ we obtain that $\SL(\alpha + \deg(v_j))$ starts with $u_j$.
This establishes part (1) as $\SL(\alpha + \deg(v_j)) = u_j$ for degree reasons. By above, we also see that if
$\SL(\alpha + \deg(v_j)) = u_0w$, then $w = v_1\ldots v_j$ is the canonical factorization, implying part~(3).

As all real $v_j$ correspond to $\chain(\deg(v_j))\in \mathcal{D}$ and $\chain(\deg(u_j))\in \mathcal{I}$,
we note that there must be an imaginary word among $v_j$'s with $1\leq j\leq \hgt(\chain(\deg(u_0)))$.
Let $N$ be the smallest index such that $v_N$ is imaginary. Then $v_N = \SL(M_1(\deg(u_{N-1})))$ by
Corollary~\ref{cor:imaginary.suffix.prefix}.
We have $v_{N+1} \leq v_N$ by part (3), but also $v_{N+1} \geq \SL(M_1(\deg(u_{N-1}))) = v_N$ as
$\SL(M_1(\deg(u_{N-1}))) = \SL(M_1(\deg(u_N))) \in V_{N+1}$ by above. Thus $v_{N+1} = v_{N}$,
and repeating this argument implies part (4).
\end{proof}

Using the notations from the lemma above, we make the following key definition.

\begin{definition}\label{def:y-elt}
For any $1 \leq i \leq |I|$ and the corresponding sequence $\{u_j\}_{j=0}^{\infty}$ from Lemma~\ref{lemma:recursive.u.j},
set $y_i = u_{N-1}$.
\end{definition}

We start with the following important property of this construction.

\begin{lemma}\label{lemma:y.i.irr.chain}
Connectivity of Conjecture~\ref{conj:connectivity} implies that $\deg(y_i) \in  \chain(\beta_i)$ for all $1 \leq i \leq |I|$.
\end{lemma}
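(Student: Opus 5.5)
The plan is to argue by contradiction, using the maximality of $v_N$ in the set $V_N$ from Lemma~\ref{lemma:recursive.u.j}. Set $\alpha=\deg(y_i)=\deg(u_{N-1})$. By construction of the sequence $\{u_j\}$ in Lemma~\ref{lemma:recursive.u.j}, we have $\chain(\alpha)\in \mathcal{I}$ and $m_1(\alpha)=(\delta,i)$. For $N-1\geq 1$ this follows from the defining conditions of $V_{N-1}$. For $N-1=0$, i.e.\ $u_0=\SL^{ls}_i(\delta)$, it follows from Corollary~\ref{cor:mk.im.factor} and Proposition~\ref{prop:monotonicity}. By Corollary~\ref{cor:mk.roots}, $\Pr(\chain(\alpha))=c_1\beta_1+\dots+c_i\beta_i$ with $c_i\neq 0$. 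Hence it suffices to show that $\chain(\alpha)$ is an \irrchain: then Lemma~\ref{lemma:irr.chain.simple.root} gives $\Pr(\chain(\alpha))\in\Pi_p$, which forces $\Pr(\chain(\alpha))=\beta_i$, i.e.\ $\alpha\in\chain(\beta_i)$.

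Suppose instead that $\chain(\alpha)$ is not irreducible. Connectivity lets us apply Lemma~\ref{lemma:connectivity.splitting}, which gives a splitting $\chain(\alpha)=\chain(\beta)+\chain(\gamma)$ with $\chain(\beta),\chain(\gamma)\in\mathcal{I}$, $m_1(\beta)=m_1(\alpha)=(\delta,i)$ and $M'_1(\gamma)=M'_1(\alpha)$. Then $\chain(\delta-\gamma)\in\mathcal{D}$ by Corollary~\ref{cor:monotonicity}, and $\chain(\alpha)+\chain(\delta-\gamma)=\chain(\beta)$. Choose the element $\mu\in\chain(\delta-\gamma)$ of minimal length such that $\alpha+\mu\in\wDelta^{+,\re}$ (this is a member of $\chain(\beta)$), and set $\ell=\SL(\mu)$. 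We then check that $\ell\in V_N$:
\begin{itemize}
\item $\chain(\alpha+\mu)=\chain(\beta)\in\mathcal{I}$;
\item $m_1(\alpha+\mu)=m_1(\beta)=(\delta,i)$;
\item $[\sb[u_{N-1}],\sb[\ell]]\neq 0$, because both brackets are nonzero multiples of root vectors $x_\alpha t^a$ and $x_\mu t^b$ in the loop realization, and the sum $\alpha+\mu$ is a real root, so their bracket is a nonzero multiple of the root vector of degree $\alpha+\mu$.
\end{itemize}

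Next we compare $\ell$ with $v_N=\SL(M_1(\alpha))$. By Lemma~\ref{lemma:dec.chains.lower.bound} applied to the decreasing chain $\chain(\delta-\gamma)$, we get $\ell>\SL(M'_1(\delta-\gamma))$. By Definition~\ref{def:M.prime}, $M'_1(\delta-\gamma)=M'_1(\gamma)=M'_1(\alpha)$. By Lemma~\ref{lemma:M.prime.bound}, $M'_1(\alpha)\geq M_1(\alpha)$. Combining these, $\ell>\SL(M'_1(\alpha))\geq \SL(M_1(\alpha))=v_N$. This contradicts $v_N$ being the lexicographically largest element of $V_N$, so $\chain(\alpha)$ must be irreducible.

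The main point to verify carefully is the membership $\ell\in V_N$, specifically the nonvanishing of the bracket for this particular representative $\mu$ and that the sum lies in $\wDelta^{+,\re}$. Choosing $\mu$ long enough handles positivity of the sum. The delicate part is that the bracket of the two SL-bracketings is genuinely nonzero rather than merely lying in a root space. If needed, I would instead argue via Leclerc's algorithm exactly as in the proof of part~(1) of Lemma~\ref{lemma:recursive.u.j}, where nonvanishing of brackets for real root sums was already used.
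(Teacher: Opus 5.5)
Your proof is correct and follows essentially the same route as the paper. Both arguments split a non-irreducible $\chain(\deg(y_i))$ via Lemma~\ref{lemma:connectivity.splitting}, produce an element of $\chain(\delta-\gamma)$ lying in $V_N$, and then use Lemmas~\ref{lemma:dec.chains.lower.bound} and~\ref{lemma:M.prime.bound} to show that it exceeds $v_N=\SL(M_1(\deg(y_i)))$, contradicting maximality. The paper takes $\SL(\delta-\gamma')$ directly, with $\gamma'$ the shortest element of $\chain(\gamma)$; this is in fact your $\mu$, since $\deg(y_i)+\delta-\gamma'$ is automatically a positive real root. Your explicit checks of membership in $V_N$ (the bracket nonvanishing via $[\fg_{\bar\alpha},\fg_{\bar\mu}]=\fg_{\bar\alpha+\bar\mu}$, and the $N=1$ case) fill in details the paper leaves implicit.
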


\begin{proof}
If $\deg(y_i) \not \in \chain(\beta_i)$, then $\chain(\deg(y_i))$ is not an \irrchain. Applying
Lemma~\ref{lemma:connectivity.splitting}, there exist $\chain(\beta),\chain(\gamma) \in \mathcal{I}$ with
$\chain(\beta) + \chain(\gamma) = \chain(\deg(y_i))$, $m_1(\beta) = (\delta,i)$, and $M'_1(\gamma) = M'_1(\deg(y_i))$.
We note that $\SL(\delta - \gamma') \in V_{N}$ and $\delta - \gamma' > M'_1(\deg(y_i)) \geq M_1(\deg(y_i))$ with the
inequalities due to Lemmas~\ref{lemma:M.prime.bound} and~\ref{lemma:dec.chains.lower.bound}. Thus we have
$\delta - \gamma' > M_1(\deg(y_i)) = v_N$, a contradiction.
\end{proof}

The following definition plays the role of Definition~\ref{def:u} for increasing chains, while
we note that it is well-defined due to Lemma~\ref{lemma:no.bound.increasing}.

\begin{definition}
For any $\chain(\alpha) \in \mathcal{I}$ with $m_1(\alpha) = (\delta,i)$, let $l(\alpha) \in \chain(\alpha)$
be the shortest element such that $\SL(l(\alpha)) \geq y_i$, with $y_i$ defined in Definition~\ref{def:y-elt}.
\end{definition}

The first property of this construction is as follows.

\begin{lemma}\label{lemma:y.i.prefix}
For any $\chain(\alpha) \in \mathcal{I}$ with $m_1(\alpha) = (\delta,i)$ and for any $\hat\alpha \in \chain(\alpha)$
with $|\hat\alpha| \geq |l(\alpha)|$, the word $\SL(\hat\alpha)$ contains $y_i$ as a prefix.
\end{lemma}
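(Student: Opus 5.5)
Fix $\hat\alpha\in\chain(\alpha)$ with $|\hat\alpha|\geq|l(\alpha)|$. Monotonicity of the increasing chain gives $\SL(\hat\alpha)\geq \SL(l(\alpha))\geq y_i$. On the other side, $m_1(\alpha)=(\delta,i)$ and Proposition~\ref{prop:monotonicity} give $\SL(\hat\alpha)<\SL_i(\delta)$. The plan is to show that every Lyndon word $w$ with
\[
y_i\leq w<\SL_i(\delta),\qquad w=\SL(\hat\alpha),\qquad \chain(\deg(w))\in\mathcal{I},\qquad m_1(\deg(w))=(\delta,i)
\]
has $y_i=u_{N-1}$ as a prefix. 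I will show, by induction on $j\le N-1$, that $u_j$ is a prefix of $\SL(\hat\alpha)$, mimicking the inner induction in the proof of part (1) of Lemma~\ref{lemma:recursive.u.j}.

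For the base case $j=0$, recall that $u_0=\SL_i^{ls}(\delta)$ is a prefix of $\SL_i(\delta)$. Also $u_0\leq y_i\leq \SL(\hat\alpha)<\SL_i(\delta)$. Any word lying lexicographically between a word $x$ and a word $xz$ must begin with $x$, so $u_0$ is a prefix of $\SL(\hat\alpha)$.

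For the inductive step, suppose $\SL(\hat\alpha)=u_{j-1}w$ with $j\leq N-1$, and let $w=w_1\cdots w_n$ be the canonical factorization. The word $w$ is nonempty, since otherwise $\SL(\hat\alpha)=u_{j-1}<u_{N-1}=y_i$, a contradiction. By Lemma~\ref{lemma:seq.right.word.Lyndon}, $u_{j-1}w_1$ is Lyndon. By Corollary~\ref{cor:lyndon.prefix.suffix.chains}(1) its chain is increasing, and by Lemma~\ref{lemma:left.standard.prefix.increasing}, since it has prefix $\SL_i^{ls}(\delta)$, we get $m_1=(\delta,i)$. To conclude $w_1\in V_j$, I also need $u_{j-1}w_1\in\SL$, or at least nonvanishing of the bracket. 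Here I would use that $u_{j-1}w_1$ is a Lyndon prefix of the standard word $\SL(\hat\alpha)$, and that prefixes of standard words are standard (Lalonde--Ram). Then $w_1,u_{j-1}\in\SL$, and Proposition~\ref{prop:general.bracketing}(a) gives $[\sb[u_{j-1}],\sb[w_1]]\neq 0$. Hence $w_1\in V_j$, and so $w_1\leq v_j$.

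If $w_1<v_j$, then either $w_1$ is not a prefix of $v_j$, or it is a proper prefix. In the first case Lemma~\ref{lem:Melancon} gives $\SL(\hat\alpha)=u_{j-1}w_1\cdots w_n<u_{j-1}v_j\cdots\leq y_i$, contradicting $\SL(\hat\alpha)\geq y_i$. Here I compare canonical factorizations: $u_{j-1}$ followed by $v_j\geq v_{j+1}\geq\cdots$ is canonical by Lemma~\ref{lemma:recursive.u.j}(3) together with the factorization of $y_i$ obtained there. So $w_1=v_j$ unless $w_1$ is a proper prefix of $v_j$.

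This proper-prefix case is the main obstacle. Since $j\le N-1$, the word $v_j$ is real and its chain is decreasing. I would apply Corollary~\ref{cor:lyndon.prefix.suffix.chains}(1) in a variant: a Lyndon proper prefix of $\SL(\deg v_j)$ is not obviously constrained, so I would instead use Lemma~\ref{lem:Melancon} on the full tail. Write $v_j=w_1x$. Then compare $w_2\cdots w_n$ with $x v_{j+1}\cdots$. The canonical factorization condition $w_2\leq w_1$ together with $x>w_1$ (as $v_j$ is Lyndon, its proper suffix $x$ exceeds it, which exceeds its prefix $w_1$) should force $\SL(\hat\alpha)<y_i$ once more. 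If this comparison is delicate, I would fall back on length: $|\hat\alpha|$ is large, so one could pass to $\hat\alpha+k\delta$ and use that $\SL(\hat\alpha)\geq y_i$ persists.

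Either way, $w_1=v_j$ and $u_j$ is a prefix of $\SL(\hat\alpha)$. At $j=N-1$ this gives the claim that $y_i$ is a prefix of $\SL(\hat\alpha)$.
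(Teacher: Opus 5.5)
Your argument is the paper's proof. You show by induction on $j\le N-1$ that $u_j$ is a prefix of $\SL(\hat\alpha)$. The base case comes from $u_0\le y_i\le \SL(\hat\alpha)<\SL_i(\delta)$. In the inductive step you get $w_1\in V_j$, hence $w_1\le v_j$, and then use Lemma~\ref{lem:Melancon} to rule out $w_1<v_j$.

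The ``main obstacle'' you isolate is not one. In Lemma~\ref{lem:Melancon}, the comparison $\ell_a<\ell'_a$ of Lyndon factors is the full lexicographic order of Subsection~\ref{ssec:L-words}, and that order already includes the proper-prefix case. So apply the lemma once, with $a=1$, to the canonical factorizations $w_1\cdots w_n$ and $v_j\cdots v_{N-1}$; the latter is canonical by Lemma~\ref{lemma:recursive.u.j}(3). Whenever $w_1<v_j$, whether or not $w_1$ is a prefix of $v_j$, this gives $w_1\cdots w_n<v_j\cdots v_{N-1}$, hence $\SL(\hat\alpha)<y_i$, a contradiction.

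The only configuration the lemma excludes is $w_1\cdots w_n$ being a prefix of $v_j\cdots v_{N-1}$. In that case $\SL(\hat\alpha)$ is a prefix of $y_i$, so $\SL(\hat\alpha)\ge y_i$ forces $\SL(\hat\alpha)=y_i$. Then $w_1=v_j$ by uniqueness of canonical factorizations.

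So you can drop the tentative comparison of $w_2\cdots w_n$ with $x\,v_{j+1}\cdots$. You can also drop the fallback to $\hat\alpha+k\delta$, which as stated would not prove anything about $\hat\alpha$ itself.

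Your justification of $[\sb[u_{j-1}],\sb[w_1]]\neq 0$ is fine. You use that Lyndon factors of standard words are standard, together with Proposition~\ref{prop:general.bracketing}(a). This is slightly more careful than the paper's appeal to $\deg(u_{j-1})+\deg(w_1)$ being a real root, which by itself would not suffice if $w_1$ were imaginary.
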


\begin{proof}
Consider $\{u_j\}_{j=0}^{\infty}$ as in Lemma~\ref{lemma:recursive.u.j}, and let $u_{N-1} = y_i$. We will now show
by induction that $u_j$ is a prefix of $\SL(\hat\alpha)$ for all $ 0 \leq j < N$. The base case $j=0$ is clear
as $u_0=\SL^{ls}(m_1(\alpha)) \leq y_i  \leq \SL(\hat\alpha) < m_1(\alpha)$ by Proposition~\ref{prop:monotonicity}.

For the inductive step, we shall assume that $u_{j-1}$ is a prefix of $\SL(\hat\alpha)$, that is,
$\SL(\hat\alpha) = u_{j-1}w$ for some $w$. Consider the canonical factorization $w=w_1w_2\ldots w_m$. We note that
$u_{j-1}w_1$ is Lyndon by Lemma~\ref{lemma:seq.right.word.Lyndon}. Moreover, $\chain(\deg(u_{j-1}w_1))$ must be increasing
by Corollary~\ref{cor:lyndon.prefix.suffix.chains}, as $\chain(\alpha) \in \mathcal{I}$ and $u_{j-1}w_1$ is a Lyndon prefix.
Thus $m_1(\deg(u_{j-1}w_1)) = (\delta,i)$ by Lemma~\ref{lemma:left.standard.prefix.increasing}, which together with
$\deg(u_{j-1}) + \deg(w_1) \in \wDelta^{+,\re}$ implies $w_1\in V_j$, and so $w_1 \leq v_j$. However, $w_1 < v_j$
would contradict to $\SL(\hat\alpha) > y_i$ by Lemma~\ref{lem:Melancon}. Hence $w_1=v_j$, completing the inductive step.
\end{proof}

We note the following simple property about imaginary words.

\begin{lemma}\label{lemma:im.left.standard.form}
For any $1 \leq i \leq |I|$ and $k \in \BZ_{> 0}$:
\begin{gather}\label{eq:im.left.standard.form}
   \deg(\SL_i^{ls}(k\delta)) = \max \left\{ \alpha \in \wDelta^{+,\re} \,\bigg\vert\,
   m_1(\alpha) = (\delta,i), |\alpha| < |k\delta|, \chain(\alpha) \in \mathcal{I}\right\} .
\end{gather}
\end{lemma}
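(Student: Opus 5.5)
The plan is to set $\gamma:=\deg(\SL_i^{ls}(k\delta))$ and prove two things: that $\gamma$ belongs to the set on the right-hand side of~\eqref{eq:im.left.standard.form}, and that no element $\alpha$ of this set satisfies $\alpha>\gamma$. The set is finite, since it consists of roots of height $<|k\delta|$. Hence these two facts identify $\gamma$ as its maximum.

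\emph{Membership.} By Corollary~\ref{cor:lyndon.prefix.suffix.chains}(3), $\chain(\gamma)\in\mathcal{I}$. This in particular forces $\gamma$ to be real. Should this need separate justification, I would rule out an imaginary proper prefix directly: its complementary factor $\SL_i^{rs}(k\delta)$ would then also have imaginary degree, and Corollary~\ref{cor:mk.im.factor} would be violated. Next, Corollary~\ref{cor:mk.im.factor} gives $m_k(\gamma)=(k\delta,i)$, and Corollary~\ref{cor:equiv.mk.Mk} converts this into $m_1(\gamma)=(\delta,i)$. Finally, $|\gamma|<|k\delta|$ holds because $\SL_i^{ls}(k\delta)$ is a proper prefix.

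\emph{Maximality.} Suppose $\alpha$ lies in the set and $\alpha>\gamma$. Put $\beta:=k\delta-\alpha$. It is a real positive root with $|\beta|<|k\delta|$, and $\chain(\beta)\in\mathcal{D}$ by Corollary~\ref{cor:monotonicity}.

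\begin{enumerate}
\item I first show $\alpha<\beta$. Otherwise $\beta<\alpha$, and Theorem~\ref{thm:convexity}(4) would give $M_k(\alpha)<\alpha$. This contradicts Proposition~\ref{prop:monotonicity}, since $\chain(\alpha)$ is increasing.
\item Next I compute the bracket. We have $[\sb[\SL(\alpha)],\sb[\SL(\beta)]]$ equal to a nonzero multiple of $h_\alpha t^k$ (modulo the harmless sign convention for $\alpha$ versus its finite part). By Definition~\ref{def:mk} together with $m_k(\alpha)=(k\delta,i)$ (again via Corollary~\ref{cor:equiv.mk.Mk}), this element lies in $\spanset_i^k\setminus\spanset_{i-1}^k$.
\item Consequently the word $w:=\SL(\alpha)\SL(\beta)$ is an admissible candidate in the list of Proposition~\ref{prop:generalized.Leclerc.algo}(b). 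Moreover $w>\SL(\alpha)$.
\item Lemma~\ref{lemma:equiv.to.standard.fac}, applied with $\ell=\SL(\alpha)$ and the standard factorization of $\SL_i(k\delta)$ (valid since $|\alpha|<|k\delta|$), turns $\SL(\alpha)>\SL_i^{ls}(k\delta)$ into $\SL(\alpha)>\SL_i(k\delta)$. Hence $w>\SL_i(k\delta)$.
\end{enumerate}

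The key step is turning this into a contradiction through the greedy description in Proposition~\ref{prop:generalized.Leclerc.algo}(b). The words $\SL_1(k\delta)>\dots>\SL_{|I|}(k\delta)$ are obtained by scanning candidates in decreasing lexicographic order. A candidate is kept exactly when its bracket is independent of the previously kept ones, so every rejected candidate lies between kept words with its bracket in the corresponding partial span. Our $w$ exceeds $\SL_i(k\delta)$, so either $w=\SL_j(k\delta)$ for some $j<i$, or $w$ was rejected at some stage $j\le i$. In either case its bracket lies in $\spanset_{j}^k\subseteq\spanset_{i-1}^k$ (using $\spanset_{j-1}^k$ in the rejected case). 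This contradicts step~2.

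I expect the main obstacle to be this greedy bookkeeping. One must check that the flag $\spanset^k_\bullet$ of~\eqref{eq:flag} agrees with the spans produced by the greedy selection over all candidates, including candidates whose bracketing uses a factorization other than the costandard one; this is supplied by Proposition~\ref{prop:general.bracketing}. A second point to check is that $\sb[w]$ genuinely equals $[\sb[\SL(\alpha)],\sb[\SL(\beta)]]$ up to a nonzero scalar.
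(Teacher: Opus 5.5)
Your membership argument is the same as the paper's. In the maximality part, your step~4 already reaches the inequality the paper's proof rests on: $\SL(\alpha)>\SL_i(k\delta)$, via Lemma~\ref{lemma:equiv.to.standard.fac}. You also have $m_k(\alpha)=(k\delta,i)$ from Corollary~\ref{cor:equiv.mk.Mk}, so this says $\alpha>m_k(\alpha)$. That directly contradicts Proposition~\ref{prop:monotonicity}(3), because $\chain(\alpha)$ is increasing. This is the paper's entire argument, and you already use the same monotonicity fact in step~1. So introducing $\beta=k\delta-\alpha$, steps~1--3, and the greedy bookkeeping are all unnecessary.

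The detour is also incomplete as written, so it should not be your main route. Your contradiction requires $[\sb[\SL(\alpha)],\sb[\SL(\beta)]]\in\spanset^k_{i-1}$. Consider the case where $w=\SL(\alpha)\SL(\beta)$ is a rejected candidate.
\begin{itemize}
\item Then $w\notin\SL$, and Proposition~\ref{prop:general.bracketing} only concerns splittings of standard Lyndon words, so it tells you nothing here. Your suggested resolution therefore does not apply.
\item The rejection of $w$ in Proposition~\ref{prop:generalized.Leclerc.algo}(b) controls whatever bracketing of $w$ the selection uses. It does not a priori control the bracket of these two particular factors; for instance, $\SL(\beta)$ need not be the costandard suffix $w^r$.
\end{itemize}
To close this you would need the triangularity fact that, for Lyndon $\ell_1<\ell_2$, $[\sb[\ell_1],\sb[\ell_2]]$ is a linear combination of $\sb[v]$ over Lyndon $v\geq \ell_1\ell_2$ of the same degree. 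You would then apply the Lalonde--Ram reduction of each such $\sb[v]$ to standard Lyndon words $\geq v$. Ending the argument at step~4 avoids all of this.
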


\begin{proof}
First, we clearly have LHS $\leq$ RHS as the LHS meets the three requirements of the set in the RHS by
Proposition~\ref{prop:monotonicity}, Corollary~\ref{cor:mk.im.factor}, and Proposition~\ref{prop:spanset.equiv}.
Assuming the contradiction, let LHS $<\beta=$ RHS. Then $\SL(m_k(\alpha))=\SL_i(k\delta) < \beta$ by
Lemma~\ref{lemma:equiv.to.standard.fac} and Proposition~\ref{prop:spanset.equiv}, contradicting
Proposition~\ref{prop:monotonicity}.
\end{proof}

Using this result, we can now derive two simple properties of $y_i$ and $l(\alpha)$.

\begin{corollary}\label{cor:y.i.left.standard.im}
For any $1 \leq i \leq |I|$, the following three properties are equivalent:
\begin{enumerate}[leftmargin=0.8cm]

\item
$|y_i| < |\delta|$;

\item
$y_i = \SL^{ls}_i(\delta)$;

\item
$\chain(\deg(\SL_i^{ls}(\delta)))$ is irreducible.

\end{enumerate}
\end{corollary}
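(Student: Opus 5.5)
The plan is to prove the cycle (2)$\Rightarrow$(1)$\Rightarrow$(3)$\Rightarrow$(2). Throughout, write $u_0=\SL^{ls}_i(\delta)$ and let $\{u_j\}$, $\{v_j\}$, $N$ be as in Lemma~\ref{lemma:recursive.u.j}, so that $y_i=u_{N-1}$.

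\emph{(2)$\Rightarrow$(1).} This is immediate, since $u_0$ is a proper prefix of $\SL_i(\delta)$ and hence $|y_i|=|u_0|<|\delta|$.

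\emph{(1)$\Rightarrow$(3).} Assume $|y_i|<|\delta|$. Since $y_i$ is real, Lemma~\ref{lemma:recursive.u.j}(1) gives $y_i=\SL(\deg(y_i))$. Its degree $\deg(y_i)$ has $|\deg(y_i)|<|\delta|$, $\chain(\deg(y_i))\in\mathcal{I}$ and $m_1(\deg(y_i))=(\delta,i)$, by the defining conditions of $V_{N-1}$ (or trivially if $N=1$). Lemma~\ref{lemma:im.left.standard.form} with $k=1$ then yields $\deg(y_i)\leq \deg(u_0)$. On the other hand $u_0\leq u_1\leq\dots\leq u_{N-1}=y_i$, and each $u_j$ with $j\geq 1$ is strictly longer than $u_{j-1}$. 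The maximality therefore forces $N=1$, i.e.\ $y_i=u_0$.

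So it remains to see that $\chain(\deg(u_0))$ is irreducible in this situation. The paper's Lemma~\ref{lemma:y.i.irr.chain} gives this under Connectivity. Since I do not want to assume Conjecture~\ref{conj:connectivity}, I would argue directly as in the proof of Lemma~\ref{lemma:y.i.irr.chain}, but substituting the splitting of Lemma~\ref{lemma:c.splitting}-type arguments.
\begin{itemize}
\item Suppose $\chain(\deg(u_0))=\chain(\beta)+\chain(\gamma)$ with both chains increasing, and arrange $m_1(\beta)=(\delta,i)$.
\item Then $\SL(\delta-\gamma')\in V_1$, where $\gamma'$ is the shortest element of $\chain(\gamma)$, and $\chain(\delta-\gamma')\in\mathcal{D}$.
\item Hence $v_1$ is real or $\geq \SL(\delta-\gamma')$.
\item By Lemma~\ref{lemma:dec.chains.lower.bound} and Lemma~\ref{lemma:interlock.m.M}, $\delta-\gamma'>M'_1(\delta-\gamma')\geq m_1(\deg(u_0))$.
\end{itemize}
I expect this to yield $\delta-\gamma'>\SL(M_1(\deg u_0))$ in the cases needed, so that $v_1$ is not imaginary and $N>1$, a contradiction. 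This comparison of $\delta-\gamma'$ with $M_1(\deg u_0)$ without Connectivity is the step I expect to be the main obstacle. If it fails, I would fall back on Lemma~\ref{lemma:connectivity.splitting}, as the paper does.

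\emph{(3)$\Rightarrow$(2).} If $\chain(\deg(u_0))$ is irreducible, then it is a simple chain $\chain(\beta_i)$ by Lemma~\ref{lemma:irr.chain.simple.root} and the proof of Proposition~\ref{prop:irr.chains}. I would then show $N=1$, i.e.\ that $v_1$ is imaginary.
\begin{itemize}
\item Any real $\ell\in V_1$ has $\chain(\deg\ell)\in\mathcal{D}$ by Lemma~\ref{lemma:recursive.u.j}(2).
\item Since $\deg(u_0)+\deg\ell$ is again increasing with $m_1=(\delta,i)$ and $\deg(u_0)=\beta_i$ is simple in $\Delta^+_p$, the chain $\chain(\deg\ell)$ is minus a combination of $\beta_j$ with some $j$ adjacent to $\beta_i$.
\item Then $M'_1(\deg\ell)\geq M_1(\beta_i)$, and so $\ell>\SL(M'_1(\deg\ell))$.
\end{itemize}
This goes in the wrong direction for a contradiction, so instead I would use Lemma~\ref{lemma:im.left.standard.form}. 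Indeed, $\deg(u_0)+\deg(\ell)-\delta$ would be an increasing root of height $<|\delta|$ with $m_1=(\delta,i)$ whose SL-word exceeds $u_0$, because it starts with $u_0\ell$ after removing $\delta$ suitably. This contradicts maximality, so all real $\ell$ are excluded and $v_1$ is imaginary, giving $y_i=u_0$.

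Making this last comparison precise is the second delicate point. I would handle it via Lemma~\ref{lemma:equiv.to.standard.fac}.
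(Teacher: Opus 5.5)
Your (2)$\Rightarrow$(1) and (1)$\Rightarrow$(2) are correct. The latter uses Lemma~\ref{lemma:im.left.standard.form} to get $y_i\le u_0$, while $u_0$ is a proper prefix of every $u_j$ with $j\ge1$; this is a tidy variant of the paper's argument, which applies the same lemma to $u_1$ to get $|u_1|>|\delta|$.

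For irreducibility in (1)$\Rightarrow$(3), your Connectivity-free attempt stalls exactly where you expect. An arbitrary splitting only gives $\delta-\gamma'>M'_1(\gamma)\ge m_1(\deg u_0)$, which cannot be compared with $v_1=\SL(M_1(\deg u_0))$. What is needed is a splitting with $M'_1(\gamma)=M'_1(\deg u_0)\ge M_1(\deg u_0)$, and that is Lemma~\ref{lemma:connectivity.splitting}. Falling back on Lemma~\ref{lemma:y.i.irr.chain} is therefore fine, and it is what the paper tacitly does too: its step $|y_i|\ge|u_1|$ needs $N\ge2$, which comes from that lemma. So this part holds conditionally on Conjecture~\ref{conj:connectivity}, just as in the paper.

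The genuine gap is in (3)$\Rightarrow$(2).

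First, Lemma~\ref{lemma:recursive.u.j}(2) only concerns $v_j$, not arbitrary real $\ell\in V_1$. The proof of that lemma explicitly allows real words with increasing chains in $V_j$ and only shows they are not maximal.

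More seriously, your second bullet reasons about decreasing $\ell$ with $\chain(\deg u_0)+\chain(\deg\ell)\in\mathcal{I}$. The contradiction you then propose, via ``$\deg(u_0)+\deg(\ell)-\delta$'', is unsupported: nothing forces its height below $|\delta|$, or its SL-word to begin with $u_0\ell$.

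The missing idea is just additivity of $\Pr$:
\begin{itemize}
\item If $\chain(\deg u_0)$ is irreducible, then $\Pr(\chain(\deg u_0))=\beta_i\in\Pi_p$.
\item For a real decreasing $\ell$, $\Pr(\chain(\deg\ell))$ is a nonzero combination of $\Pi_p$ with non-positive coefficients.
\item Their sum is therefore either $0$ or has a negative coefficient, so it never lies in $\Delta^+_p$.
\end{itemize}
Hence no real decreasing word lies in $V_1$. By Lemma~\ref{lemma:recursive.u.j}(2), $v_1$ must then be imaginary, so $N=1$ and $y_i=u_0$. This is the relative-height observation from the proof of Lemma~\ref{lemma:recursive.u.j}(4) (an imaginary $v_j$ appears for some $j\le\hgt(\chain(\deg u_0))$), which is what the paper invokes for (3)$\Rightarrow$(2).
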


\begin{proof}
We clearly have $(2) \Rightarrow (1)$, and Lemma~\ref{lemma:recursive.u.j} implies $(3) \Rightarrow (2)$. It thus remains
to prove $(1) \Rightarrow (3)$. Assume the contrary: $|y_i| < |\delta|$ but $\chain(\deg(\SL_i^{ls}(\delta)))$ is not
irreducible. Using the notation of Lemma~\ref{lemma:recursive.u.j}, we note that $|y_i| \geq |u_1| = |u_0v_1|$.
As $m_1(u_1)=(\delta,i)$, $\chain(\deg(u_1))\in \mathcal{I}$, and $u_0=\SL^{ls}_i(\delta)$, we cannot have
$|u_1| < |\delta|$, according to Lemma~\ref{lemma:im.left.standard.form} (with $k=1$). Also $|u_1| \ne |\delta|$ as
$u_1$ is real by Lemma~\ref{lemma:recursive.u.j}. Thus, $|u_1|>|\delta|$ contradicting to $|\delta| > |y_i| \geq |u_1|$.
This proves $(1) \Rightarrow (3)$.
\end{proof}

For non-irreducible increasing chains, we have the following useful lower bound.

\begin{corollary}\label{cor:l.length}
If $\chain(\alpha) \in \mathcal{I}$ is not irreducible, then $|l(\alpha)| > |\delta|$.
\end{corollary}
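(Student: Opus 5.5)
Let $m_1(\alpha)=(\delta,i)$. The plan is to show that $y_i$ has length greater than $|\delta|$, and then to compare lengths. By definition, $\SL(l(\alpha))\geq y_i$, and the argument of Lemma~\ref{lemma:y.i.prefix} runs verbatim for $\hat\alpha=l(\alpha)$: it only used $\SL(\hat\alpha)\geq y_i$ together with $\SL(\hat\alpha)<\SL_i(\delta)$, and the latter holds by Proposition~\ref{prop:monotonicity}. Hence $y_i$ is a prefix of $\SL(l(\alpha))$, so $|l(\alpha)|\geq |y_i|$. It therefore suffices to prove $|y_i|>|\delta|$, or to handle separately the case where this fails.

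\textbf{Case 1: $\chain(\deg(\SL^{ls}_i(\delta)))$ is not irreducible.} By Corollary~\ref{cor:y.i.left.standard.im}, $(1)\Rightarrow(3)$, we get $|y_i|\geq|\delta|$. Equality is impossible, since $y_i=u_{N-1}$ is real by Lemma~\ref{lemma:recursive.u.j}(1),(4), or is $u_0$ which has length $<|\delta|$. Thus $|y_i|>|\delta|$, and we are done.

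\textbf{Case 2: $\chain(\deg(\SL^{ls}_i(\delta)))$ is irreducible.} Then $y_i=\SL^{ls}_i(\delta)$. By Proposition~\ref{prop:irr.chains} (uniqueness of the irreducible increasing chain with $m_1=(\delta,i)$), this chain equals $\chain(\beta_i)$, and $\deg(y_i)=\beta_i$ is the shortest element with $|\beta_i|<|\delta|$. Now argue by contradiction: suppose $|l(\alpha)|<|\delta|$, noting that $|l(\alpha)|=|\delta|$ is impossible since $l(\alpha)$ is real. Then $l(\alpha)$ is real with $m_1(l(\alpha))=(\delta,i)$, $\chain(\alpha)\in\mathcal{I}$, and $|l(\alpha)|<|\delta|$. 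Lemma~\ref{lemma:im.left.standard.form} with $k=1$ gives $l(\alpha)\leq\deg(\SL^{ls}_i(\delta))=\beta_i$. Combined with $\SL(l(\alpha))\geq y_i=\SL(\beta_i)$, this forces $l(\alpha)=\beta_i$. So $\chain(\alpha)=\chain(\beta_i)$ is irreducible, contradicting the hypothesis.

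\textbf{Main obstacle.} The delicate step is checking that the prefix argument of Lemma~\ref{lemma:y.i.prefix} applies to $l(\alpha)$ itself; the lemma is stated for $|\hat\alpha|\geq|l(\alpha)|$, which includes $l(\alpha)$, so it applies directly. If that reading is in doubt, Case 1 can instead be finished by a direct comparison. Assume $|l(\alpha)|<|\delta|$. By Lemma~\ref{lemma:im.left.standard.form}, $l(\alpha)\leq\deg(\SL^{ls}_i(\delta))=\deg(u_0)$. By part (3) of Lemma~\ref{lemma:recursive.u.j}, $u_0<u_1<\dots<u_{N-1}=y_i$, with strict inequalities since each $v_j$ is nonempty. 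Hence $\SL(l(\alpha))<y_i$ when $N\geq2$, a contradiction. The remaining case $N=1$ is exactly the irreducible case $y_i=u_0$, which is Case 2.
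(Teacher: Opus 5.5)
Your proof is correct, but it is organized differently from the paper's.

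The paper does no case split. It writes $\SL(l(\alpha))=y_iw$ by Lemma~\ref{lemma:y.i.prefix}, and uses Lemma~\ref{lemma:y.i.irr.chain} together with non-irreducibility of $\chain(\alpha)$ to get $w\neq\emptyset$. Since $\SL^{ls}_i(\delta)=u_0$ is a prefix of $y_i$, this gives the strict inequality $\SL(l(\alpha))>\SL^{ls}_i(\delta)$. Lemma~\ref{lemma:im.left.standard.form} with $k=1$, plus realness, then finishes.

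Your Case~2 and your fallback for Case~1 use the same comparison with Lemma~\ref{lemma:im.left.standard.form}. The difference is that you handle the equality case directly instead of excluding it via the prefix lemma. Your main line for Case~1 is a genuinely different, length-based argument through Corollary~\ref{cor:y.i.left.standard.im}.

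Your fallback actually makes the case split unnecessary. If $|l(\alpha)|<|\delta|$, then $\SL^{ls}_i(\delta)=u_0\leq y_i\leq \SL(l(\alpha))\leq \SL^{ls}_i(\delta)$. This forces $l(\alpha)=\deg(y_i)\in\chain(\beta_i)$ by Lemma~\ref{lemma:y.i.irr.chain}, a contradiction. This one-line version does not even need Lemma~\ref{lemma:y.i.prefix}, so it is slightly more economical than the paper's.

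Neither route avoids Conjecture~\ref{conj:connectivity}. In your Case~1 it enters through the implication $(1)\Rightarrow(3)$ of Corollary~\ref{cor:y.i.left.standard.im}. That proof tacitly assumes $N\geq 2$, which itself comes from Lemma~\ref{lemma:y.i.irr.chain}.

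Two small slips:
\begin{itemize}
\item The chain $u_0<u_1<\cdots$ is not part~(3) of Lemma~\ref{lemma:recursive.u.j}, which says $v_j\geq v_{j+1}$. It holds simply because each $u_{j-1}$ is a proper prefix of $u_j$.
\item In Case~2 the appeal to Proposition~\ref{prop:irr.chains} is superfluous. Once $l(\alpha)=\deg(\SL^{ls}_i(\delta))$, $\alpha$ already lies in the chain you assumed irreducible.
\end{itemize}
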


\begin{proof}
Let $m_1(\alpha) = (\delta,i)$. By Lemma~\ref{lemma:y.i.prefix}, we have $l(\alpha) = y_iw$ for some $w$. The word $w$
is non-empty by Lemma~\ref{lemma:y.i.irr.chain} and assumption that $\chain(\alpha)$ is not irreducible. Thus,
$\SL(l(\alpha)) > \SL_i^{ls}(\delta) $ as $\SL_i^{ls}(\delta)$ is a prefix of $y_i$.
According to Lemma~\ref{lemma:im.left.standard.form} ($k=1$ case), we obtain $|l(\alpha)| \geq |\delta|$.
As $l(\alpha)$ is not imaginary, we get $|l(\alpha)| > |\delta|$.
\end{proof}

Our next result investigates the prefix and suffix of the costandard factorization for words in increasing chains,
akin to Lemma~\ref{lemma:left.standard.decreasing} for decreasing chains.

\begin{proposition}\label{prop:cost.fac.inc}
For any $\chain(\alpha) \in \mathcal{I}$ with $m_1(\alpha)=(\delta,i)$ and $\hat\alpha \in \chain(\alpha)$ with
$|\hat\alpha| \geq |l(\alpha)|$ (if $\chain(\alpha)$ is irreducible, we rather require $|\hat\alpha| > |l(\alpha)|$),
we have $\SL(\hat\alpha) = y_iv$ by Lemma~\ref{lemma:y.i.prefix}. Consider the canonical factorization
$v = v_1\ldots v_n$ and define $\beta = \deg(\SL^l(\hat\alpha))$, $\gamma = \deg(\SL^r(\hat\alpha))$. Then:
\begin{enumerate}[leftmargin=0.8cm]

\item
$\chain(\beta) \in \mathcal{I}$;

\item
$m_1(\beta) = (\delta,i)$;

\item
$\chain(\gamma) \in \mathcal{I}$ if $\hat\alpha \not \in \chain(\beta_i)$,
and $\gamma = M_1(\beta_i)$ if $\hat\alpha \in \chain(\beta_i)$;

\item
$\SL(\gamma)=\SL^r(\hat\alpha) = v_n$;

\item
if $v_j$ is imaginary for some $j$, then $v_{j'} = v_j = \SL(M_1(\beta_i))$ for all $1 \leq j' \leq j$;

\item
if $c_\alpha > 1$, then $y_i$ is a prefix of $v_n$, $m_1(\deg(v_n)) = (\delta,i)$, $|v_n| \geq |l(\deg(v_n))|$.

\end{enumerate}
\end{proposition}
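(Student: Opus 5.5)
The plan is to derive everything from the structure $\SL(\hat\alpha)=y_iv_1\cdots v_n$ supplied by Lemma~\ref{lemma:y.i.prefix}, combined with the characterization of the costandard suffix as the smallest proper suffix (Lemma~\ref{lemma:right.costfac.minimal}) and with the monotonicity and convexity results of Section~\ref{sec:earlier-results}.

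\textbf{Step 1: locate the costandard suffix and prove (4).} Lemma~\ref{lemma:stronger.no.splitting}(2) applied to $u=y_i$ (a prefix, so not directly), and more usefully Lemma~\ref{lemma:lyndon.subword}, will show that $\SL^r(\hat\alpha)$ is a suffix of $v$. Indeed, any proper Lyndon suffix starting inside $y_i$ would, by Lemma~\ref{lemma:seq.left.word.Lyndon}-type arguments, be larger than $v_n$. The canonical factorization $v=v_1\cdots v_n$ has $v_1\ge\dots\ge v_n$. By Lemma~\ref{lemma:lyndon.subword}, any Lyndon suffix of $v$ is contained in $v_n$. Since $v_n$ is itself the minimal Lyndon factor, the longest proper Lyndon suffix is $v_n$ whenever $y_iv_1\cdots v_{n-1}$ is Lyndon, and this holds by Lemma~\ref{lemma:seq.right.word.Lyndon}. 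That gives (4) and shows $\SL^l(\hat\alpha)=y_iv_1\cdots v_{n-1}$. Parts (1) and (2) then follow at once: the left factor is a Lyndon prefix of an increasing-chain word, so Corollary~\ref{cor:lyndon.prefix.suffix.chains}(1) gives (1), and it has $\SL_i^{ls}(\delta)$ as a prefix, so Lemma~\ref{lemma:left.standard.prefix.increasing} gives (2).

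\textbf{Step 2: prove (5).} Since $y_i=u_{N-1}$ and Lemma~\ref{lemma:recursive.u.j}(4) says $v_N=\SL(M_1(\deg y_i))$, I would mimic the proof of Lemma~\ref{lemma:y.i.prefix}. The first factor $v_1$ lies in the set $V_N$ (the same degree, chain and $m_1$ checks), so $v_1\le v_N$. If some $v_j$ is imaginary, then the $v_{j'}$ with $j'\le j$ are all $\ge v_j$. To pin these down, use Corollary~\ref{cor:imaginary.suffix.prefix} together with Lemma~\ref{lemma:y.i.irr.chain}, which gives $\deg(y_i)\in\chain(\beta_i)$ and hence $M_1(\deg y_i)=M_1(\beta_i)$ by Corollary~\ref{cor:equiv.mk.Mk}. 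This forces $v_j=\SL(M_1(\beta_i))$. The remaining $v_{j'}$ are sandwiched between $v_j$ and the maximum of $V_N$, which equals $v_j$, so they are equal as well.

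\textbf{Step 3: prove (3) and (6).} If $\hat\alpha\in\chain(\beta_i)$, then $\beta$ also lies in $\chain(\beta_i)$ and $\gamma$ is imaginary. Corollary~\ref{cor:imaginary.suffix.prefix}(2) then gives $\gamma=M_1(\beta_i)$. Otherwise, suppose $\gamma$ is real. Decreasing $\chain(\gamma)$ is excluded because the hypothesis $|\hat\alpha|\ge|l(\alpha)|$ makes $\hat\alpha$ large: by Corollary~\ref{cor:factor.sets.monotonicity} and the convexity inequality $\beta<\hat\alpha<\gamma$, a decreasing $\gamma$ would have to be a factor absorbed into the $v_j$-pattern of Lemma~\ref{lemma:recursive.u.j}. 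If $\gamma$ is imaginary, then $\deg(v_1\cdots v_n)$ is a multiple of $\delta$ and $\hat\alpha\in\chain(\deg y_i)=\chain(\beta_i)$. For (6), when $c_\alpha>1$, I would split using Lemma~\ref{lemma:c.splitting} and compare with Lemma~\ref{lemma:no.bound.increasing}. This should show $m_1(\gamma)=(\delta,i)$: otherwise $m_1(\gamma)>(\delta,i)$ by Corollary~\ref{cor:both.increasing.m.k}, and then Corollary~\ref{cor:tight.bound.m} would place $v_n$ below $\SL_i^{ls}(\delta)$, which is too small, contradicting minimality. Then $v_n\ge\SL(\hat\alpha)\ge y_i$ yields $|v_n|\ge|l(\deg v_n)|$, and Lemma~\ref{lemma:y.i.prefix} gives the prefix $y_i$.

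\textbf{Main obstacle.} I expect the hardest part to be excluding a decreasing $\gamma$ in (3) and establishing $m_1(\gamma)=(\delta,i)$ in (6). Both will need careful Leclerc-algorithm comparisons, replacing the pair $(\beta,\gamma)$ by $(\beta\pm\delta,\gamma\mp\delta)$ as in Lemma~\ref{lem:aux_unused}.
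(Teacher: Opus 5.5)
Your Step~1 does not prove (4), and your deductions of (2) and (3) rest on it. The general principle you invoke is false: it is not true that the longest proper Lyndon suffix of $y_iv_1\cdots v_n$ is $v_n$ as soon as $y_iv_1\cdots v_{n-1}$ is Lyndon. For instance, $aabb=(aab)(b)$ has costandard suffix $abb$, not $b$. Your fallback claim that a Lyndon suffix starting inside $y_i$ ``would be larger than $v_n$'' cannot be proved either. Any such suffix has $v_n$ as a proper suffix, so it is automatically \emph{smaller} than $v_n$; the claim is just a restatement of (4). The correct criterion is Corollary~\ref{cor:left.cost.right}: $v_n=\SL^r(\hat\alpha)$ iff $(y_iv_1\cdots v_{n-1})^r\geq v_n$. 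This is why the paper proves (2)--(6) simultaneously by induction on $n$. In the inductive step, $(y_iv_1\cdots v_{n-1})^r=v_{n-1}\geq v_n$. The base case needs the non-formal fact $y_i^r\geq v_1$. The paper gets it by noting that otherwise $y_i^rv_1$ would be a standard Lyndon word, and then using irreducibility of $\chain(\deg y_i)$ (Lemma~\ref{lemma:y.i.irr.chain}) together with Corollary~\ref{cor:lyndon.prefix.suffix.chains} to exclude every possibility for $y_i^r$ and $y_i^rv_1$.

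The other steps have the same defect: each needs the inductive hypothesis for the shorter prefix $y_iv_1\cdots v_{n-1}$.
\begin{itemize}
\item In Step~2, the bound $v_1\leq v_N=\SL(M_1(\beta_i))$ via $v_1\in V_N$ is fine. But Corollary~\ref{cor:imaginary.suffix.prefix} applied to $(y_iv_1\cdots v_{j-1})v_j$ only gives $v_j=\SL(M_1(\deg(y_iv_1\cdots v_{j-1})))$. This equals $\SL(M_1(\beta_i))$ only once $v_1,\ldots,v_{j-1}$ are known to be imaginary, so the argument is circular. What is missing is a way to exclude a real $v_{n-1}$ followed by an imaginary $v_n$. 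The paper uses (3) for the prefix to get $\chain(\deg v_{n-1})\in\mathcal{I}$, hence $v_{n-1}<\SL(\deg(v_{n-1})+\delta)$. Leclerc's algorithm then gives $\SL(\hat\alpha)\geq y_iv_1\cdots v_{n-2}\SL(\deg(v_{n-1})+\delta)>\SL(\hat\alpha)$, a contradiction.
\item In Step~3 you give no actual argument excluding a decreasing $\chain(\gamma)$; Corollary~\ref{cor:factor.sets.monotonicity} only constrains the smaller factor $\beta$. The paper needs irreducibility of $\chain(\deg y_i)$ for $n=1$, and the separate inductive Claim~\ref{claim:seq_gm} for $n>1$. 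That claim shows a decreasing $\deg v_n$ would exceed some earlier $v_k$, contradicting $v_1\geq\cdots\geq v_n$.
\item Your argument for (6) is wrong on two counts. First, Corollary~\ref{cor:tight.bound.m} is a \emph{lower} bound for long chain elements, so it cannot place $v_n$ below $\SL^{ls}_i(\delta)$. Second, it never uses $c_\alpha>1$, yet $m_1(\deg v_n)=(\delta,i)$ genuinely fails for reducible chains with $c_\alpha=1$: the $\beta_i$-coefficient of $\chain(\gamma)$ is then $0$ (cf.\ Corollary~\ref{cor:costandard.l.i}). The paper argues as follows. If $m_1(\deg v_n)>(\delta,i)$, the prefix $y_iv_1\cdots v_{n-1}$ still has $c>1$, so by induction $y_i$ is a prefix of $v_{n-1}$. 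Then $y_i<\SL(\hat\alpha)<v_n\leq v_{n-1}$ forces $y_i$ to be a prefix of $v_n$, so $m_1(\deg v_n)=(\delta,i)$ by Lemma~\ref{lemma:left.standard.prefix.increasing}, a contradiction.
\end{itemize}
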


\begin{proof}
While (1) follows from Corollary~\ref{cor:lyndon.prefix.suffix.chains}, we show (2)--(6) by induction on $n$.

Let us first treat the base case $n=1$. As $\chain(\deg(y_i))$ is irreducible by Lemma~\ref{lemma:y.i.irr.chain}
and $\chain(\hat\alpha) \in \mathcal{I}$, we cannot have $\chain(\deg(v_1)) \in \mathcal{D}$. Thus,
$\chain(\deg(v_1)) \in \mathcal{I}$ if $\hat\alpha \not \in \chain(\beta_i)$, and $v_1$ is imaginary otherwise.
In the latter case, $v_1 = \SL(M_1(\beta_i))$ by Corollary~\ref{cor:imaginary.suffix.prefix}, establishing part (5).
We also note that part (4) implies (2) and (3), so that it remains to verify (4) and (6). By Corollary~\ref{cor:left.cost.right},
part (4) follows from $y_i^r \geq v_1$. Suppose instead $y_i^r < v_1$, so that $y_i^r v_1$ is standard Lyndon, as
it is a Lyndon subword of a standard Lyndon word. As $\chain(\deg(y_i^l)) \in \mathcal{I}$ by
Corollary~\ref{cor:lyndon.prefix.suffix.chains}, we note that $\chain(\deg(y_i^r)) \not \in \mathcal{I}$.
If $y_i^r$ is imaginary, we would have a contradiction with Corollary~\ref{cor:lyndon.prefix.suffix.chains}(1).
If $\chain(\deg(y_i^r)) \in \mathcal{D}$, then we argue similarly:
\begin{itemize}[leftmargin=0.7cm]

\item
if $y_i^rv_1$ is imaginary, then we have a contradiction with Corollary~\ref{cor:lyndon.prefix.suffix.chains}(3,4),

\item
if $\chain(\deg(y_i^rv_1)) \in \mathcal{I}$, then we have a contradiction with Corollary~\ref{cor:lyndon.prefix.suffix.chains}(1),

\item
if $\chain(\deg(y_i^rv_1)) \in \mathcal{D}$, then we have a contradiction with Corollary~\ref{cor:lyndon.prefix.suffix.chains}(2).

\end{itemize}
All the contradictions above finally establish part (4).
For part (6), we first note that $m_1(v_1) = (\delta,i)$ as $c_\alpha > 1$ and $c_{\beta_i} = 1$, and thus $y_i$ must be
a prefix of $v_1$ by Lemma~\ref{lemma:y.i.prefix}, which can be applied as $y_i < \SL(\hat\alpha) < v_1$. As $v_1 > y_i$,
$\chain(\deg(v_1)) \in \mathcal{I}$, and $m_1(\deg(v_1)) = (\delta,i)$, we also have $|v_1| \geq |l(\deg(v_1))|$.

Let us now proceed to the inductive step, assuming (2)--(6) hold for $n-1$. For part (4), it suffices to show that
$(y_iv_1\ldots v_{n-1})^r \geq v_{n}$ by Corollary~\ref{cor:left.cost.right}, which follows from the inductive hypothesis
$(y_iv_1\ldots v_{n-1})^r = v_{n-1}$ combined with $v_{n-1} \leq v_n$. Part (2) follows from
Lemma~\ref{lemma:left.standard.prefix.increasing} coupled with (1).

Let us now prove part (5). By the inductive hypothesis, it suffices to show that if $v_n$ is imaginary then $v_{n-1} = v_n$.
First, we note that $v_n=\SL(M_1(\hat\alpha))$ by Corollary~\ref{cor:imaginary.suffix.prefix}.
If $\hat\alpha \in \chain(\beta_i)$, then
$\deg(y_iv_1\ldots v_{n-1}) \in \chain(\beta_i)$ and by the inductive hypothesis $v_{n-1} = \SL(M_1(\beta_i)) = v_n$.
If $\hat\alpha \not \in \chain(\beta_i)$, then $\chain(\deg(v_{n-1})) \in \mathcal{I}$ by the inductive assumption, hence
$v_{n-1} < \SL(\deg(v_{n-1}) + \delta)$. Since $y_iv_1\ldots v_{n-2}v_{n-1}$ is Lyndon by Lemma~\ref{lemma:seq.right.word.Lyndon},
we get $y_iv_1\ldots v_{n-2} < v_{n-1} < \SL(\deg(v_{n-1}) + \delta)$. According to Leclerc's algorithm, we thus obtain
  $\SL(\hat\alpha)=\SL(\deg(y_iv_1\ldots v_{n-2}) + \deg(v_{n-1}) + \delta) \geq
    y_iv_1\ldots v_{n-2}\SL(\deg(v_{n-1}) + \delta) > y_iv = \SL(\hat\alpha)$
with $\SL(\deg(v_{n-1}) + \delta) > v_{n-1}v_n$ due to Lemma~\ref{lem:Melancon}, a contradiction.

We will now prove part~(3). By part~(5), if $v_n$ is imaginary then $v_1,\ldots, v_{n}$ are all imaginary and
$\deg(y_i) \in \chain(\beta_i)$ by Lemma~\ref{lemma:y.i.irr.chain}, so that $\hat\alpha \in \chain(\beta_i)$ and
$v_n=\SL(M_1(\beta_i))$. For the case when $v_n$ is real, we use the following result.

\begin{claim}\label{claim:seq_gm}
Assume we have sequences of affine roots $(\gamma_i)_{i=0}^k$, $(\mu_i)_{i=0}^k$ such that
$\mu_0 = \gamma_0$, $\mu_i = \mu_{i-1} + \gamma_i$, $\chain(\mu_i) \in \mathcal{I}$ for all $i$, $\gamma_i$ is either
imaginary or $\chain(\gamma_i) \in \mathcal{I}$ for all $i$, and $\chain(\gamma_0)$ is irreducible. Then for any real
affine root $\rho$ such that $\chain(\mu_k + \rho) \in \mathcal{I}$ and $\chain(\rho) \in \mathcal{D}$, we have
$\rho > \gamma_i$ for some $1 \leq i \leq k$.
\end{claim}

\begin{proof}[Proof of Claim~\ref{claim:seq_gm}]
We will show this by induction on $k$, the base case $k=0$ being vacuous as there are no $\rho$ satisfying the conditions.
For the inductive step, assume that the result holds for $k-1$. We consider the following two cases:
\begin{itemize}[leftmargin=0.7cm]

\item
if $\gamma_k$ is imaginary, then the result follows from the inductive hypothesis;

\item
if $\gamma_k$ is real, we first handle the case when $\gamma_k + \rho$ or $\mu_{k-1} + \rho$ is imaginary.
In the former case, we get $\gamma_k < \rho$ by Theorem~\ref{thm:convexity} and Proposition~\ref{prop:monotonicity},
while in the latter case $\chain(\mu_{k-1}) + \chain(\gamma_k) \in \mathcal{I}$ implies
$\chain(\delta - \mu_{k-1}) + \chain(\delta - \gamma_k) = \chain(\rho) + \chain(\delta - \gamma_k) \in \mathcal{D}$
so that $m_1(\gamma_k) \leq M'_1(\rho)$ by Lemma~\ref{lemma:interlock.m.M}, resulting in
$\gamma_k < m_1(\gamma_k) \leq M'_1(\rho) < \rho$ by Proposition~\ref{prop:monotonicity} and
Lemma~\ref{lemma:dec.chains.lower.bound}. We shall henceforth assume that the sum of any two elements of
$\{\gamma_k,\rho,\mu_{k-1}\}$ is real, and so by Lemma~\ref{lemma:triple.sum.pairs} either $\gamma_{k} + \rho$ or
$\mu_{k-1} + \rho$ is a real root. If $\gamma_{k} + \rho$ is a root, then we have $\gamma_{k} < \gamma_{k} + \rho < \rho$
by Corollary~\ref{cor:inc.dec.comparison}, establishing the claim. Assume now that $\mu_{k-1} + \rho$ is a real root.
If $\chain(\mu_{k-1} + \rho) \in \mathcal{I}$, then the result follows from the inductive hypothesis. Finally, we
consider the case $\chain(\mu_{k-1} + \rho) \in \mathcal{D}$.
Let $\chain(\mu_{k-1}) = c_1\chain(\beta_1) + \cdots + c_{|I|}\chain(\beta_{|I|})$ and
$\chain(\rho) = c'_1\chain(\beta_1) + \cdots + c'_{|I|}\chain(\beta_{|I|})$, so that
$c_i\geq 0, c'_i\leq 0, c_i + c'_i \leq 0$ for all $i$, implying $M'_1(\mu_{k-1}) \leq M'_1(\rho)$.
Thus we have $\gamma_k < m_1(\gamma_k) \leq M'_1(\mu_{k-1}) \leq M'_1(\rho) < \rho$ by Proposition~\ref{prop:monotonicity},
Lemma~\ref{lemma:dec.chains.lower.bound}, and Lemma~\ref{lemma:interlock.m.M}.

\end{itemize}
This completes the proof of the inductive step.
\end{proof}

Define the sequences $(\gamma_i)_{i=0}^{n-1}$ as $(\deg(y_i),\deg(v_1),\deg(v_2),\ldots,\deg(v_{n-1}))$ and
$(\mu_i)_{i=0}^{n-1}$ as $(\deg(y_i),\deg(y_i) + \deg(v_1), \ldots,\deg(y_i) + \deg(v_1) + \ldots + \deg(v_{n-1}))$.
They satisfy all the conditions in the above Claim: $\chain(\mu_i)$ are all increasing by
Corollary~\ref{cor:lyndon.prefix.suffix.chains} coupled with Lemma~\ref{lemma:seq.right.word.Lyndon},
none of $\chain(\deg(y_i)),\chain(\deg(v_1)),\ldots$ are decreasing by part~(3) of the inductive hypothesis,
and $\chain(\gamma_0)=\chain(\deg(y_i))$ is irreducible by Lemma~\ref{lemma:y.i.irr.chain}.
Thus, we cannot have $\chain(\deg(v_n)) \in \mathcal{D}$ by Claim~\ref{claim:seq_gm}, as that would imply $v_k < v_n$
for some $k < n$ contradicting the canonical factorization.

Let us now establish part~(6). If $m_1(\deg(v_n)) > (\delta,i)$, then $c_{\hat\alpha - \deg(v_n)} > 1$, and so $y_i$ is a prefix
of $v_{n-1}$ by the inductive hypothesis. Since $y_i < \SL(\hat\alpha) < v_n \leq v_{n-1}$, we see that $v_n$ has $y_i$ as
a prefix, and so $m_1(v_n) = (\delta,i)$ by Lemma~\ref{lemma:left.standard.prefix.increasing}, a contradiction.
If $m_1(\deg(v_n)) = (\delta,i)$, then combining $y_i < \SL(\hat\alpha) < v_n$ with Lemma~\ref{lemma:y.i.prefix}, we see
that $y_i$ is a prefix of $v_n$. Finally, the case $m_1(\deg(v_n)) < (\delta,i)$ cannot occur as that would imply
$m_1(\hat\alpha) < (\delta,i)$ by Corollary~\ref{cor:both.increasing.m.k}, a contradiction. Finally, as $v_n > y_i$,
$\chain(\deg(v_n)) \in \mathcal{I}$, and $m_1(\deg(v_n)) = (\delta,i)$, we also have $|v_n| \geq |l(\deg(v_n))|$.

This completes the inductive step.
\end{proof}

\begin{remark}
We note that Proposition~\ref{prop:cost.fac.inc} immediately implies the analogue of
Corollary~\ref{cor:decreasing.left.factor.imaginary} for increasing chains: for any $\alpha \in \wDelta^{+,\re}$ such that
$\chain(\alpha) \in \mathcal{I}$ with $|\alpha| > |l(\alpha)|$, if $\SL^r(\alpha)$ is imaginary,
then $\chain(\alpha)$ is irreducible (and thus $c_\alpha=1$).
\end{remark}

Combining Proposition~\ref{prop:cost.fac.inc} with $\deg(y_i) \in \chain(\beta_i)$, due to Lemma~\ref{lemma:y.i.irr.chain},
we immediately obtain the following counterpart of Corollary~\ref{cor:dec.irr.chains} for increasing chains.

\begin{corollary}\label{cor:inc.irr.chains}
$\SL(\deg(y_i) + k\delta) = y_i\underbrace{\SL_j(\delta)}_{k \text{ times}}$ for any $1 \leq i \leq |I|$, $k \in \BZ_{\geq 0}$,
where $M_1(\beta_i) = (\delta,j)$.
\end{corollary}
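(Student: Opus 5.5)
The plan is to induct on $k$, using Proposition~\ref{prop:cost.fac.inc} to read off the costandard factorization of $\SL(\deg(y_i)+k\delta)$ and then peel off the rightmost imaginary letter. For $k=0$ the claim is just $\SL(\deg(y_i))=y_i$, which holds by definition of $y_i=u_{N-1}$ and Lemma~\ref{lemma:recursive.u.j}(1). By Lemma~\ref{lemma:y.i.irr.chain}, $\deg(y_i)\in\chain(\beta_i)$, so the whole chain under consideration is $\chain(\beta_i)$. Moreover $l(\beta_i)=\deg(y_i)$, since $\SL(\deg(y_i))=y_i\geq y_i$ and shorter elements of the increasing chain are smaller. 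Also $M_1(\beta_i)=(\delta,j)$, so by Corollary~\ref{cor:equiv.mk.Mk} we have $\SL(M_1(\deg(y_i)))=\SL_j(\delta)$; this degree-$\delta$ imaginary word is the one that will repeat.

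For the inductive step, fix $k\geq1$ and set $\hat\alpha=\deg(y_i)+k\delta$, so that $|\hat\alpha|>|l(\beta_i)|$. Then Proposition~\ref{prop:cost.fac.inc} applies in the irreducible case and gives $\SL(\hat\alpha)=y_iv$ with canonical factorization $v=v_1\cdots v_n$. By part~(3) of that proposition, $\gamma=\deg(\SL^r(\hat\alpha))=M_1(\beta_i)$, since $\hat\alpha\in\chain(\beta_i)$. Part~(4) gives $v_n=\SL^r(\hat\alpha)=\SL_j(\delta)$, which is imaginary. Part~(5) then forces $v_1=\cdots=v_n=\SL_j(\delta)$. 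Comparing degrees, $\deg(v)=\hat\alpha-\deg(y_i)=k\delta$ with each $v_t$ of degree $\delta$, so $n=k$ and $\SL(\hat\alpha)=y_i\SL_j(\delta)^k$.

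This argument does not even need the induction hypothesis, so I would present it directly for each $k\geq1$, treating $k=0$ separately. The only point needing care is checking the hypotheses of Proposition~\ref{prop:cost.fac.inc}: in the irreducible case it requires $|\hat\alpha|>|l(\alpha)|$, which is why I identify $l(\beta_i)=\deg(y_i)$ explicitly. I also need $\SL(\deg(y_i))=y_i\geq y_i$ together with monotonicity of the increasing chain, so that no shorter element of $\chain(\beta_i)$ qualifies as $l(\beta_i)$.

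A second point to check is that $M_1(\hat\alpha)$ and $M_1(\beta_i)$ give the same index $j$ at level $1$. This holds because $M_1$ depends only on the chain, via Corollary~\ref{cor:equiv.mk.Mk} and the fact that $M_k(\alpha)$ is determined by which $h$'s pair nontrivially with $\alpha$, and that pairing is unchanged when adding $\delta$. I expect this bookkeeping to be the main subtlety; the rest is immediate.
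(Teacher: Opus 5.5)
Your argument is correct and is the paper's own route: the paper obtains this corollary directly by combining Lemma~\ref{lemma:y.i.irr.chain} ($\deg(y_i)\in\chain(\beta_i)$, so that $l(\beta_i)=\deg(y_i)$) with parts (3)--(5) of Proposition~\ref{prop:cost.fac.inc}, which is exactly what you do. The ``subtlety'' you flag about the index $j$ along the chain is not needed, since part (3) already gives $\SL^r(\hat\alpha)=\SL(M_1(\beta_i))=\SL_j(\delta)$ directly. Note also that the result inherits its dependence on Conjecture~\ref{conj:connectivity} from Lemma~\ref{lemma:y.i.irr.chain}.
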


Combining the above corollary with Lemma~\ref{lemma:no.bound.increasing} yields the following result:

\begin{lemma}\label{lemma:inc.upper.bound}
For any $\alpha$ with $m_1(\alpha) = (\delta,i)$ and $\chain(\alpha) \in \mathcal{I}$, we have:
\begin{gather*}
  \SL(\alpha) < y_i\underbrace{\SL_j(\delta)}_{ k \text{ times}}
\end{gather*}
where $M_1(\beta_i) = (\delta,j)$ and $k \gg 1$.
\end{lemma}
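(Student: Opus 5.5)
The statement should follow by combining Corollary~\ref{cor:inc.irr.chains} with Lemma~\ref{lemma:no.bound.increasing}, applied to the pair $\chain(\alpha)$ and $\chain(\beta_i)$. The interpretation is that $\alpha$ is fixed, and the inequality holds for all sufficiently large $k$.

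\textbf{Step 1.} Since $m_1(\alpha)=(\delta,i)=m_1(\beta_i)$, both $\chain(\alpha),\chain(\beta_i)\in\mathcal{I}$ and Lemma~\ref{lemma:no.bound.increasing} applies to the pair $(\chain(\alpha),\chain(\beta_i))$. Taking $\hat\alpha=\alpha$, it produces some $\hat\beta_i\in\chain(\beta_i)$ with $\hat\beta_i>\alpha$, i.e.\ $\SL(\hat\beta_i)>\SL(\alpha)$.

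\textbf{Step 2.} By Lemma~\ref{lemma:y.i.irr.chain} (which relies on Conjecture~\ref{conj:connectivity}, as does Corollary~\ref{cor:inc.irr.chains} itself), $\deg(y_i)\in\chain(\beta_i)$. Since $\chain(\beta_i)$ is increasing, we may replace $\hat\beta_i$ by any longer element of the chain. In particular, for every $k$ with $|\deg(y_i)+k\delta|\geq|\hat\beta_i|$ we get $\SL(\deg(y_i)+k\delta)\geq\SL(\hat\beta_i)>\SL(\alpha)$. Corollary~\ref{cor:inc.irr.chains} identifies the left-hand side as $y_i\SL_j(\delta)^k$ with $M_1(\beta_i)=(\delta,j)$. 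This yields the claimed inequality for all $k\gg1$.

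\textbf{Main obstacle.} The only delicate point is the case where $\hat\beta_i$ is shorter than $\deg(y_i)$, i.e.\ it lies at the start of $\chain(\beta_i)$ before $y_i$. This is harmless: the chain is monotone increasing, so every element of length at least $|\hat\beta_i|$ exceeds $\alpha$, and the elements $\deg(y_i)+k\delta$ with $k$ large are such elements.
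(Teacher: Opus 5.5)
Your proof is correct and is exactly the paper's argument: the paper obtains this lemma by combining Corollary~\ref{cor:inc.irr.chains} with Lemma~\ref{lemma:no.bound.increasing} applied to $\chain(\alpha)$ and $\chain(\beta_i)$, then using that $\chain(\beta_i)\ni\deg(y_i)$ is increasing to pass to all sufficiently large $k$. Your remark that this depends on Conjecture~\ref{conj:connectivity} through Lemma~\ref{lemma:y.i.irr.chain} is also accurate.
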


We note the following analogue of Lemma~\ref{lemma:no.splitting.decreasing} for increasing chains.

\begin{lemma}
For any $\alpha \in \wDelta^{+,\re}$ if $\chain(\alpha) \in \mathcal{I}$, then the costandard factorization of $\SL(\alpha)$
does not split imaginary words. Moreover, if $\chain(\alpha)$ is not an \irrchain\ and  $|\alpha| \geq |l(\alpha)|$, then
$\SL_i(\delta)$ is not a prefix of $\SL^{r}(\alpha)$.
\end{lemma}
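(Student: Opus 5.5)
The first claim should follow directly from Lemma~\ref{lemma:stronger.no.splitting}(2), in exactly the way Lemma~\ref{lemma:no.splitting.decreasing} followed from part~(1) of that lemma. Suppose the costandard factorization of $\SL(\alpha)$ splits some occurrence of an imaginary word $\SL_j(k\delta)$. By Lemma~\ref{lemma:stronger.no.splitting}(2), this occurrence must then be a prefix of $\SL(\alpha)$. That is impossible: part~(3) of Corollary~\ref{cor:lyndon.prefix.suffix.chains} applied to $\SL_j(k\delta)$ cannot be used directly, so instead I would use the prefix form of Corollary~\ref{cor:lyndon.prefix.suffix.chains}(1). If $\SL_j(k\delta)$ were a Lyndon prefix of $\SL(\alpha)$ with $\chain(\alpha)\in\mathcal{I}$, then $\chain(\deg(\SL_j(k\delta)))$ would have to be increasing, but this degree is imaginary. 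One could also argue more concretely: $\SL(\alpha)<\SL(m_1(\alpha))\le\SL_1(\delta)$ and similar comparisons via Proposition~\ref{prop:monotonicity}. Either way, imaginary prefixes are excluded, so no imaginary word is split. This mirrors the first sentence of the proof of Lemma~\ref{lemma:no.splitting.decreasing}, where imaginary suffixes were excluded.

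For the second claim, let $m_1(\alpha)=(\delta,i)$, write $\SL(\alpha)=y_iv$ with $v=v_1\cdots v_n$ canonical, and recall $\SL^r(\alpha)=v_n$ from Proposition~\ref{prop:cost.fac.inc}(4). Since $\chain(\alpha)$ is not irreducible, the case $\hat\alpha\in\chain(\beta_i)$ does not occur. So by Proposition~\ref{prop:cost.fac.inc}(3) the chain $\chain(\deg(v_n))$ is increasing, and in particular $v_n$ is real. Suppose $\SL_j(\delta)$ is a prefix of $v_n$ for some $j$ (the letter $i$ in the statement is a free index). Then $\SL_j(\delta)$ is a Lyndon prefix of the standard Lyndon word $v_n$, whose chain is increasing. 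Here Corollary~\ref{cor:lyndon.prefix.suffix.chains}(1) gives the contradiction directly, since it forces $\chain(\deg(\SL_j(\delta)))\in\mathcal{I}$ while this degree is imaginary.

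The only delicate point is whether Corollary~\ref{cor:lyndon.prefix.suffix.chains}(1) genuinely covers imaginary prefixes. Its proof handles the case of an imaginary $\deg(\ell)$ via Corollary~\ref{cor:imaginary.suffix.prefix} and Proposition~\ref{prop:monotonicity}, showing that such an $\ell$ would satisfy $\ell>\ell w_1$, which is absurd. So imaginary prefixes are indeed ruled out there. If that reading is too generous, I would redo the step by hand. Write $v_n=\SL_j(\delta)w$ and take the canonical factorization of $w$. Lemma~\ref{lemma:seq.right.word.Lyndon} makes $\SL_j(\delta)w_1$ Lyndon, and Corollary~\ref{cor:imaginary.suffix.prefix} then gives $\SL_j(\delta)=\SL(M_1(\cdot))$. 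Combined with increasing monotonicity this yields $\SL_j(\delta)>\SL_j(\delta)w_1$, a contradiction. I expect this verification, that the increasing-chain hypothesis really forbids imaginary Lyndon prefixes, to be the main (though mild) obstacle. The hypothesis $|\alpha|\ge|l(\alpha)|$ enters only through Proposition~\ref{prop:cost.fac.inc}, which identifies $\SL^r(\alpha)$ and guarantees it is real with an increasing chain.
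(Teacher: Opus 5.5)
Your proposal is correct and follows the paper's proof: the first claim comes from Lemma~\ref{lemma:stronger.no.splitting}(2) together with Corollary~\ref{cor:lyndon.prefix.suffix.chains}(1), whose proof does exclude imaginary Lyndon prefixes. The second claim comes from Proposition~\ref{prop:cost.fac.inc}(3),(4), which makes $\chain(\deg(\SL^r(\alpha)))$ increasing, followed by Corollary~\ref{cor:lyndon.prefix.suffix.chains}(1) again. Drop your aside comparing $\SL(\alpha)$ with $\SL(m_1(\alpha))$: it is unnecessary, and by itself it would not exclude an imaginary prefix $\SL_j(\delta)$ when $j$ exceeds the index of $m_1(\alpha)$.
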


\begin{proof}
The first claim follows from Corollary~\ref{cor:lyndon.prefix.suffix.chains}(1) and Lemma~\ref{lemma:stronger.no.splitting}(2).
For the second claim, we note that $\chain(\deg(\SL^r(\alpha)))$ is increasing by Proposition~\ref{prop:cost.fac.inc}, and so
$\SL_i(\delta)$ cannot be its prefix by Corollary~\ref{cor:lyndon.prefix.suffix.chains}(1).
\end{proof}

Similarly to decreasing chains, it is crucial that we use the costandard factorization for increasing chains,
since the standard factorization can actually split imaginary words, as illustrated in the following example,
cf.~Example~\ref{ex:costand-decr-fails}.

\begin{example}\label{ex:stand-incr-fails}
Consider the affine type $A_2^{(1)}$ with the order $1 < 2 <0$. According to \cite[Theorem 4.2]{AT},
applied to $\chain(\alpha_1 + \alpha_2 + \delta)\in \mathcal{I}$ by Lemma~\ref{lemma:monotonicity.smallest.once}, we have:
\begin{equation*}
  \SL(\alpha_1 + \alpha_2 + \delta) = 1210|2 = 12\SL_1(\delta),
\end{equation*}
where the vertical line denotes the standard factorization.
Moreover, this example also shows that there is no analogue of Proposition~\ref{prop:cost.fac.inc}(3)
for standard factorizations: $\chain(\deg(\SL^{rs}(\alpha_1+\alpha_2+\delta)))=\chain(\alpha_2)$ is decreasing
by Lemma~\ref{lemma:monotonicity.smallest.once}.
\end{example}

We are now ready to prove the first result regarding repetitive occurrence of imaginary subwords in increasing chains,
which is a counterpart of Proposition~\ref{prop:weak.form.desc.irr.chains}.

\begin{corollary}
For any increasing chain $\chain(\alpha)$ with $m_1(\chain(\alpha)) = (\delta,i)$ and any $\hat\alpha \in \chain(\alpha)$
with $|\hat\alpha| \geq |l(\alpha)|$, we have:
\begin{equation}\label{eq:period-incr}
  \SL(\hat\alpha) = y_i\underbrace{\SL_j(\delta)}_{p \text{ times }} w
\end{equation}
for some $p \geq 0$, where $M_1(\beta_i) = (\delta,j)$, $\SL_j(\delta)$ is not a prefix of $w$, and $w < \SL_j(\delta)$.
Moreover, for any $p' > 0$, there exists a $q$ such that if $|\hat\alpha | > q|\delta|$, then $p \geq p'$.
Finally, the above function $p$ is monotone non-decreasing.
\end{corollary}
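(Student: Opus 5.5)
Here is the plan. By Lemma~\ref{lemma:y.i.prefix} we already have $\SL(\hat\alpha)=y_iv$. First, strip off the maximal power of $\SL_j(\delta)$ at the front of $v$, where $(\delta,j)=M_1(\beta_i)$; this writes $\SL(\hat\alpha)=y_i\SL_j(\delta)^p w$ with $\SL_j(\delta)$ not a prefix of $w$, so only $w<\SL_j(\delta)$ needs checking. By Lemma~\ref{lemma:inc.upper.bound} we have $\SL(\hat\alpha)<y_i\SL_j(\delta)^{K}$ for $K\gg1$. Compare the two words after the common prefix $y_i\SL_j(\delta)^p$: this gives $w<\SL_j(\delta)^{K-p}$. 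Since $\SL_j(\delta)$ is not a prefix of $w$, and $w$ cannot be a proper prefix of $\SL_j(\delta)$ without already being smaller, we get $w<\SL_j(\delta)$. The degenerate case $w=\emptyset$ is allowed and is trivially fine.

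Monotonicity of $p$ follows as in Proposition~\ref{prop:weak.form.desc.irr.chains}. The chain is increasing, so $\SL(\hat\alpha)<\SL(\hat\alpha+\delta)$, and both words begin with $y_i$. Suppose $p(\hat\alpha+\delta)<p(\hat\alpha)$. Then after the common prefix $y_i\SL_j(\delta)^{p(\hat\alpha+\delta)}$ the longer word continues with some $w'<\SL_j(\delta)$ that does not begin with $\SL_j(\delta)$, while the shorter continues with $\SL_j(\delta)$. This gives $\SL(\hat\alpha+\delta)<\SL(\hat\alpha)$, a contradiction.

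The growth statement is the main step, and I would prove it by induction on $\hgt(\chain(\alpha))$. The base case is irreducible chains. There $\deg(y_i)\in\chain(\beta_i)$ by Lemma~\ref{lemma:y.i.irr.chain}, and Corollary~\ref{cor:inc.irr.chains} gives $p=k$ exactly. For the inductive step, use the costandard factorization and Proposition~\ref{prop:cost.fac.inc}. Write $\SL(\hat\alpha)=\SL(\beta)\SL(\gamma)$ with $\chain(\beta),\chain(\gamma)\in\mathcal{I}$ and $m_1(\beta)=(\delta,i)$. Here $\SL(\gamma)=v_n$ is the last canonical factor of $v$, and $\SL(\beta)=y_iv_1\cdots v_{n-1}$.

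Since $\chain(\alpha)$ has only finitely many decompositions into pairs of chains, for large $|\hat\alpha|$ one of $\beta,\gamma$ must be long. If $\beta$ is long, then since $\SL(\beta)$ is a prefix of $\SL(\hat\alpha)$ starting with $y_i$, the inductive hypothesis gives at least $p'$ copies of $\SL_j(\delta)$ after $y_i$. If $\gamma$ is long, split by $c_\alpha$.

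If $c_\alpha>1$, part~(6) of Proposition~\ref{prop:cost.fac.inc} gives $y_i$ as a prefix of $v_n$ with $m_1(\deg v_n)=(\delta,i)$. By induction, $v_n$ therefore starts with $y_i\SL_j(\delta)^{p'}$. Since $v_1\ge\cdots\ge v_n$ and each $v_k$ is a Lyndon word exceeding $y_i$, I expect that $v_1$ also begins with $y_i\SL_j(\delta)^{p'}$, or else is imaginary and equal to $\SL_j(\delta)$ by part~(5). I would then conclude with the argument of Lemma~\ref{lemma:leclerc.14}, as in case ii) of Proposition~\ref{prop:weak.form.desc.irr.chains}.

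If $c_\alpha=1$ and $m_1(\gamma)>(\delta,i)$, I would try to show that $\gamma$ stays bounded. Otherwise $\SL(\gamma)$ would tend to a word below $\SL_{i+1}(\delta)$ by Remark~\ref{rem:m1-role}, which would contradict $v_n\ge v_{n-1}$ and Leclerc maximality, in the spirit of the proof of part~(5).

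I expect the hard part to be this case analysis for long $\gamma$, namely transferring imaginary chunks from the suffix to the front.
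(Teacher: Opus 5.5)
Your derivation of the shape $\SL(\hat\alpha)=y_i\SL_j(\delta)^{p}w$ with $w<\SL_j(\delta)$ from Lemma~\ref{lemma:inc.upper.bound} is correct and essentially the paper's, and so is your monotonicity argument. Stripping the maximal power of $\SL_j(\delta)$ off $v$ letter by letter gives the same $p$ as the paper's ``first canonical factor $v_m\neq\SL_j(\delta)$''.

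The gap is in the growth statement. Your induction via the costandard factorization $\SL(\hat\alpha)=\SL(\beta)\SL(\gamma)$ works only when $\beta$ is long; for long $\gamma$ the sketch fails.
\begin{itemize}
\item For $c_\alpha>1$, your alternative ``$v_1$ begins with $y_i\SL_j(\delta)^{p'}$'' is the wrong case. It makes $\SL(\hat\alpha)$ begin with $y_iy_i$. Since $c_\alpha>1$ forces $i>1$, Lemma~\ref{lemma:unique.simple.add} gives $j<i$. Then $y_i$ is not a proper prefix of $\SL_j(\delta)$: otherwise $\SL^{ls}_j(\delta)$ would start with $\SL^{ls}_i(\delta)$, forcing $j=i$ by Lemma~\ref{lemma:left.standard.prefix.increasing} and Corollary~\ref{cor:mk.im.factor}. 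So $v$ would not start with $\SL_j(\delta)$ and $p(\hat\alpha)=0$. This is the situation to be excluded, not the conclusion.
\item Your other alternative, $v_1=\SL_j(\delta)$, only gives $p\ge 1$, not $p\ge p'$.
\item Lemma~\ref{lemma:leclerc.14} describes the \emph{standard} factorization $\ell=(\ell^{ls})^{k+1}fx$ and has no counterpart for the costandard one. Knowing that $v_n$ starts with $y_i\SL_j(\delta)^{p'}$ says nothing about how many $\SL_j(\delta)$'s follow the initial $y_i$ of $\SL(\beta)$, since $\SL(\beta)<\SL(\gamma)$ is only an upper bound.
\item For $c_\alpha=1$, your heuristic reverses the order. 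Here $m_1(\gamma)>(\delta,i)$ means $m_1(\gamma)=(\delta,i')$ with $i'<i$. By Remark~\ref{rem:m1-role}, long elements of $\chain(\gamma)$ lie \emph{above} $\SL_{i'+1}(\delta)\ge\SL_i(\delta)$, which contradicts nothing. Boundedness of $\gamma$ is true (Corollary~\ref{cor:costandard.l.i}), but the paper only obtains it from Theorem~\ref{thm:incr.modulo.c}(2), which is far heavier than the present statement.
\end{itemize}

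No induction is needed; the missing ingredient is Lemma~\ref{lemma:no.bound.increasing}, and the argument is a sandwich.
\begin{itemize}
\item By Corollary~\ref{cor:inc.irr.chains}, $\SL(\deg(y_i)+p'\delta)=y_i\SL_j(\delta)^{p'}$.
\item $\chain(\deg(y_i))=\chain(\beta_i)$ has the same $m_1$ as $\chain(\alpha)$, so Lemma~\ref{lemma:no.bound.increasing} gives $\hat\alpha_0\in\chain(\alpha)$ with $\SL(\hat\alpha_0)>y_i\SL_j(\delta)^{p'}$.
\item Lemma~\ref{lemma:inc.upper.bound} gives $\SL(\hat\alpha_0)<y_i\SL_j(\delta)^{t}$ for $t\gg 1$.
\item The lower bound is a prefix of the upper bound, so $\SL(\hat\alpha_0)$ must begin with $y_i\SL_j(\delta)^{p'}$, i.e.\ $p(\hat\alpha_0)\ge p'$.
\end{itemize}
Your monotonicity argument then carries $p\ge p'$ to every longer element of $\chain(\alpha)$. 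You already use the other two ingredients, so this single lemma replaces your whole case analysis.
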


\begin{proof}
The result clearly holds for irreducible increasing $\chain(\alpha)$, due to Lemma~\ref{cor:inc.irr.chains}.
If $\chain(\alpha)$ is not irreducible, then $\SL(\hat\alpha) = y_iv$ for a nonempty word $v$ by Lemma~\ref{lemma:y.i.prefix}.
Let $v = v_1\ldots v_n$ be the canonical factorization, as discussed in Proposition~\ref{prop:cost.fac.inc}, and let $m$ be
the smallest index such that $v_m \neq \SL_j(\delta)$ (such $m$ exists since $\chain(\alpha)$ is not irreducible).
Therefore,~\eqref{eq:period-incr} holds with $w = v_m\ldots v_n$ and $p=m-1$. It remains to verify $w < \SL_j(\delta)$.
As $y_i\underbrace{\SL_j(\delta)}_{p \text{ times}}w < y_i\underbrace{\SL_j(\delta)}_{p + t \text{ times}}$ for $t\gg 1$
by Lemma~\ref{lemma:inc.upper.bound}, and $v_m\ne \SL_j(\delta)$ we get $v_m<\SL_j(\delta)$ by Lemma~\ref{lem:Melancon},
and so $w < \SL_j(\delta)$ by Lemma~\ref{lem:Melancon}.
The latter inequality also implies that the function $p$ is monotone non-decreasing, as otherwise we would find $\hat\alpha$
such that $\hat\alpha > \hat\alpha + \delta$ contradicting to $\chain(\alpha) \in \mathcal{I}$.

For any $p' > 0$, we have $\SL(l(\beta_i) + p'\delta) = y_i\underbrace{\SL_j(\delta)}_{p' \text{ times}}$
by Corollary~\ref{cor:inc.irr.chains}. Hence, $\SL(l(\beta_i) + p') < \SL(\alpha' + q\delta)$ for some $q$, due to
Lemma~\ref{lemma:no.bound.increasing}. Combining this with Lemma~\ref{lemma:inc.upper.bound}, we get
$y_i\underbrace{\SL_j(\delta)}_{p' \text{ times}} < \SL(\alpha' + q\delta) < y_i\underbrace{\SL_j(\delta)}_{t \text{ times}}$
for some $t\gg 1$, so that $\SL(\alpha' + q\delta)$ contains $y_i\underbrace{\SL_j(\delta)}_{p' \text{ times}}$ as a prefix.
Combining this with the fact that $p$ is monotone non-decreasing completes the result.
\end{proof}

The following result is analogous to Lemma~\ref{lemma:alt.u.def}.

\begin{lemma}\label{lemma:recursive.l}
For any non irreducible $\chain(\alpha) \in \mathcal{I}$ with $m_1(\alpha) = (\delta,i)$, we have:
\begin{gather}\label{eq:l}
    l(\alpha) =
    \min_{|\cdot|} \left\{l_i(\beta) + l_i(\gamma) \,\bigg\vert\,
      \substack{\chain(\beta),\chain(\gamma) \in \mathcal{I}\\
      \chain(\beta) + \chain(\gamma) = \chain(\alpha)}\right\},
\end{gather}
where $l_i(\beta) = \min_{|\cdot|} \{\hat\beta \in \chain(\beta) \,|\, \SL(\hat\beta) > y_i\}$\footnote{Existence of
such $\hat\beta$ is established in the proof.}. Moreover if $c_\alpha > 1$, then actually
\begin{gather}\label{eq:l-2}
    l(\alpha) =
    \min_{|\cdot|} \left\{l_i(\beta) + l_i(\gamma) \,\bigg\vert\,
      \substack{\chain(\beta),\chain(\gamma) \in \mathcal{I}\\
      \chain(\beta) + \chain(\gamma) = \chain(\alpha)\\
      m_1(\beta) = m_1(\gamma) = m_1(\alpha)}\right\}.
\end{gather}
\end{lemma}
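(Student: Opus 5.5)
We mirror the proof of Lemma~\ref{lemma:alt.u.def}, proving the two inequalities $|l(\alpha)|\le|\text{RHS}|$ and $|l(\alpha)|\ge|\text{RHS}|$ separately, using the costandard factorization in place of the standard one (as Proposition~\ref{prop:cost.fac.inc} suggests). The first step is to check that $l_i(\beta)$ is well defined for every $\chain(\beta)\in\mathcal{I}$ appearing in a splitting $\chain(\beta)+\chain(\gamma)=\chain(\alpha)$. Since $m_1(\alpha)=(\delta,i)$, Corollary~\ref{cor:both.increasing.m.k} gives $m_1(\beta)\ge(\delta,i)$. If $m_1(\beta)=(\delta,i)$, existence follows from Lemma~\ref{lemma:no.bound.increasing} together with $\SL(l(\beta_i)+\delta)>y_i$ (Corollary~\ref{cor:inc.irr.chains}). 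If $m_1(\beta)=(\delta,j)$ with $j<i$, then Corollary~\ref{cor:tight.bound.m} gives $\hat\beta>(\delta,j+1)\ge(\delta,i)>y_i$ for $|\hat\beta|\gg0$.

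For $|l(\alpha)|\le|\text{RHS}|$, take $\hat\beta=l_i(\beta)$ and $\hat\gamma=l_i(\gamma)$ attaining the minimum. Both satisfy $\hat\beta,\hat\gamma>y_i$, and both are increasing, so their sum lies in $\chain(\alpha)$. By Theorem~\ref{thm:convexity}, $\hat\beta+\hat\gamma$ lies between $\hat\beta$ and $\hat\gamma$, hence $\SL(\hat\beta+\hat\gamma)>y_i$. Since $\chain(\alpha)$ is increasing, $l(\alpha)$ is the shortest element exceeding $y_i$ (with $\geq$ versus $>$ harmless, as $\deg y_i\notin\chain(\alpha)$ in the non-irreducible case by Lemma~\ref{lemma:y.i.irr.chain}). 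Hence $|l(\alpha)|\le|\hat\beta+\hat\gamma|$.

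For the reverse inequality, write $\hat\alpha=l(\alpha)$, which satisfies $|\hat\alpha|>|\delta|$ by Corollary~\ref{cor:l.length}, and take its costandard factorization $\SL(\hat\alpha)=\SL(\mu)\SL(\eta)$. By Proposition~\ref{prop:cost.fac.inc}, both $\chain(\mu)$ and $\chain(\eta)$ are increasing with $m_1(\mu)=(\delta,i)$, and $\SL(\mu)$ has prefix $y_i$. If $\SL(\mu)=y_i$ exactly, then $\mu$ is not $>y_i$, and I would handle this case separately. Then $\SL(\eta)=v_n$ is the largest factor, so $\SL(\eta)>\SL(\hat\alpha)>y_i$ and $|\eta|\ge|l_i(\eta)|$. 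The main obstacle is showing $|\mu|\ge|l_i(\mu)|$, i.e.\ $\SL(\mu)>y_i$, including the degenerate case $\SL(\mu)=y_i$. To resolve it, I would argue by contradiction, in the style of Lemma~\ref{lem:aux_unused}. Assuming $\SL(\mu-\delta)\le y_i$, or assuming $\SL(\mu)=y_i$, consider concatenations of $\SL(\mu-\delta)$ and $\SL(\eta+\delta)$, or of $\SL(\mu+\delta)$ and $\SL(\eta-\delta)$. These contradict either Leclerc's algorithm or the minimality of $l(\alpha)$ by producing an element of $\chain(\alpha)$ shorter than $l(\alpha)$ whose word exceeds $y_i$; here Theorem~\ref{thm:convexity} compares the sum with its pieces. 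Once both $\SL(\mu),\SL(\eta)>y_i$ hold, we get $|\hat\alpha|=|\mu|+|\eta|\ge|l_i(\mu)|+|l_i(\eta)|$.

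For~\eqref{eq:l-2} with $c_\alpha>1$, part~(6) of Proposition~\ref{prop:cost.fac.inc} gives $m_1(\eta)=(\delta,i)$, so the splitting produced above already satisfies $m_1(\mu)=m_1(\eta)=m_1(\alpha)$. The minimum over this restricted set is therefore attained by the same pair. Since the restricted set is contained in the unrestricted one, its minimum is also $\ge|l(\alpha)|$ by the first inequality, which gives equality.
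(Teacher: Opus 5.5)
There is a genuine gap: your plan for the degenerate case $\SL(\mu)=y_i$ in the lower bound cannot work, because that case genuinely occurs. With the strict inequality in $l_i$, the identity \eqref{eq:l} is in fact false there, so no contradiction argument can rule it out.

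Here is a counterexample. Take type $A_2^{(1)}$ with the order $0<1<2$. Then $\SL_1(\delta)=021$, $\SL_2(\delta)=012$, $\beta_1=\alpha_0+\alpha_2$, $\beta_2=\alpha_0+\alpha_1$, and $y_2=\SL_2^{ls}(\delta)=01$ by Corollary~\ref{cor:y-for-specialorder}.
\begin{itemize}
\item The chain $\chain(\alpha_0)=\chain(\beta_1)+\chain(\beta_2)$ is increasing and not irreducible, with $m_1(\alpha_0)=(\delta,2)$.
\item Leclerc's algorithm gives $\SL(\alpha_0)=0$ and $\SL(\alpha_0+\delta)=0102$. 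Hence $l(\alpha_0)=\alpha_0+\delta$, whose costandard factorization is $01\,|\,02$, i.e.\ $\SL(\mu)=y_2$ exactly.
\item The only splitting is $\chain(\beta_1)+\chain(\beta_2)$. With strict inequality, $l_2(\beta_1)=\beta_1$, but $l_2(\beta_2)=\beta_2+\delta$, because $\SL(\beta_2)=y_2$ is not $>y_2$.
\end{itemize}
So the right-hand side of \eqref{eq:l} would be $\alpha_0+2\delta\neq l(\alpha_0)$. Separately, your auxiliary assumption $\SL(\mu-\delta)\le y_i$ is beside the point. The negation of $|\mu|\ge|l_i(\mu)|$ along an increasing chain is $\SL(\mu)\le y_i$, not $\SL(\mu-\delta)\le y_i$.

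The fix is to read $l_i$ with $\SL(\hat\beta)\ge y_i$, matching the definition of $l$. This is what the paper's proof actually uses: it asserts $l_i(\beta)=l(\beta)$ when $m_1(\beta)=(\delta,i)$, and it works with $y_i\le\SL(\mu)$. Under this reading the degenerate case costs nothing.
\begin{itemize}
\item By Proposition~\ref{prop:cost.fac.inc}, $\SL(\mu)=y_iv_1\cdots v_{n-1}$ has $y_i$ as a prefix, so $\SL(\mu)\ge y_i$. Together with $\SL(\eta)>\SL(\mu)$, this immediately gives $|\mu|\ge|l_i(\mu)|$ and $|\eta|\ge|l_i(\eta)|$, with no contradiction argument needed.
\item Your upper bound survives under $\ge$: Theorem~\ref{thm:convexity} gives $\SL(\hat\beta+\hat\gamma)>\min\{\SL(\hat\beta),\SL(\hat\gamma)\}\ge y_i$. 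The paper gets the same conclusion from Leclerc's algorithm, via $\SL(\hat\beta+\hat\gamma)\ge\SL(\hat\beta)\SL(\hat\gamma)$.
\item Your well-definedness check and your derivation of \eqref{eq:l-2} from Proposition~\ref{prop:cost.fac.inc}(6) then coincide with the paper's argument.
\end{itemize}
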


\begin{proof}
First, we note that $m_1(\beta),m_1(\gamma) \geq m_1(\alpha)$ for any $\beta,\gamma$ featured in the RHS
of~\eqref{eq:l} by Corollary~\ref{cor:both.increasing.m.k}. Furthermore, $l_i(\beta) = l(\beta)$ if $m_1(\beta) = (\delta,i)$,
and $l_i(\beta)$ is well-defined by Corollary~\ref{cor:tight.bound.m} if $m_1(\beta) > (\delta,i)$. Let
$\hat\alpha = \hat\beta + \hat\gamma$ be the RHS of~\eqref{eq:l} with $\hat\beta=l_i(\beta), \hat\gamma = l_i(\gamma)$.
Let us first show that $l(\alpha) \leq |\hat\alpha|$. Without loss of generality, let $\hat\beta < \hat\gamma$, so that
$\SL(\hat\alpha) \geq \SL(\hat\beta)\SL(\hat\gamma)$ by Leclerc's algorithm. As $\SL(\hat\beta) \geq y_i$, we get
$\SL(\hat\alpha) > y_i$, implying $l(\alpha) \leq |\hat\alpha|$. Let us now prove $l(\alpha) \geq |\hat\alpha|$.
Consider the costandard factorization $\SL(l(\alpha)) = \SL(\mu)\SL(\eta)$. By Proposition~\ref{prop:cost.fac.inc}
we have $\chain(\mu),\chain(\eta)\in \mathcal{I}$ and $y_i \leq \SL(\mu) < \SL(\eta)$, so that
$|\mu| \geq |l_i(\mu)|, |\gamma| \geq |l_i(\gamma)|$. Hence $l(\alpha) \geq |\hat\alpha|$, completing the proof of~\eqref{eq:l}.
Furthermore, if $c_\alpha  > 1$, then $m_1(\mu) = m_1(\eta) = m_1(\alpha)$ by Proposition~\ref{prop:cost.fac.inc},
establishing~\eqref{eq:l-2}.
\end{proof}


\subsection{Periodicity}
\

Our main goal is to describe the structure of increasing chains $\chain(\alpha)$ once the prefix $y_i$ starts appearing
(whereas $m_1(\alpha)=(\delta,i)$), that is, for $|\hat\alpha| \geq |l(\alpha)|$. Akin to Theorem~\ref{thm:dec.modulo.s},
we shall see that increasing chains have \emph{periodicity} $c_\alpha$, and thus it essentially suffices to calculate
$\SL(\hat\alpha)$ for $\hat\alpha\in \chain(\alpha)$ with $|\hat\alpha| < |l(\alpha) + c_\alpha \delta|$. Similarly to
decreasing chains, given a standard Lyndon word of the form $uvw$ where $v$ has a form $y_i\underbrace{\SL_j(\delta)}_{t \text{ times}}$
and $w$ does not start with $\SL_j(\delta)$, we will refer to $v$ as a \textbf{chunk}.

The following result is an analogue of Lemma~\ref{lemma:form.dec.words} for increasing chains and establishes $c_\alpha$
as the upper bound in the number of chunks for increasing chains.

\begin{lemma}\label{lem:chunks-increasing}
For any $\chain(\alpha) \in \mathcal{I}$ and $\hat\alpha \in \chain(\alpha)$ satisfying $|\hat\alpha| \geq |l(\alpha)|$,
we have:
\begin{gather}\label{eq:form.of.increasing.word}
  \SL(\hat\alpha) = y_i\underbrace{\SL_j(\delta)}_{p_1 \text{ times}} w_1
  y_i\underbrace{\SL_j(\delta)}_{p_2 \text{ times}}w_2  \ldots
  y_i\underbrace{\SL_j(\delta)}_{p_{c_\alpha} \text{ times}}w_{c_\alpha},
\end{gather}
whereas $m_1(\alpha) = (\delta,i)$ and $M_1(\beta_i) = (\delta,j)$. Furthermore, (possibly empty) words $w_k$ do not
contain $\SL_j(\delta)$ as a prefix nor $y_i$ as a subword, and satisfy $w_k < \SL_j(\delta)$.
\end{lemma}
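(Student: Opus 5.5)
We argue by induction on $c_\alpha$, using the costandard factorization of $\SL(\hat\alpha)$ studied in Proposition~\ref{prop:cost.fac.inc} to cut the word into chunks. The claim for $c_\alpha=1$ is handled first, and the case $c_\alpha>1$ is reduced to it by splitting off the costandard right factor. Throughout, $m_1(\alpha)=(\delta,i)$ and $M_1(\beta_i)=(\delta,j)$.

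\emph{Base case $c_\alpha=1$.} By Lemma~\ref{lemma:y.i.prefix} we have $\SL(\hat\alpha)=y_iv$. By the corollary giving~\eqref{eq:period-incr}, $\SL(\hat\alpha)=y_i\SL_j(\delta)^{p}w$, where $\SL_j(\delta)$ is not a prefix of $w$ and $w<\SL_j(\delta)$. It remains to show that $y_i$ is not a subword of $w$. Suppose it were. By Lemma~\ref{lemma:lyndon.subword}, $y_i$ would then lie inside a single factor $v_k$ of the canonical factorization $v=v_1\cdots v_n$, and $v_k$ would be a Lyndon word containing $y_i$ but not as a suffix.

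\emph{Key step.} We want to show that such a $v_k$ forces $m_1(\deg(v_k))=(\delta,i)$, so that $c$ splits as $c_\alpha\ge 2$, a contradiction. The tool is Lemma~\ref{lemma:stronger.no.splitting}: iterated standard or costandard factorizations of $v_k$ never cut $y_i$ unless $y_i$ is a prefix or suffix of a piece, so we descend to a Lyndon factor of $v_k$ having $y_i$ as a prefix. Lemma~\ref{lemma:left.standard.prefix.increasing} then gives $m_1=(\delta,i)$ for that factor, and we need its chain to be increasing. That last point comes from Corollary~\ref{cor:lyndon.prefix.suffix.chains} applied to $\SL(\hat\alpha)$, together with Proposition~\ref{prop:cost.fac.inc}(3), which says the $v_k$ are imaginary or lie in $\mathcal{I}$. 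Summing coefficients along the canonical factorization via Corollary~\ref{cor:lin.comb.chains} then gives $c_\alpha\ge 2$.

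\emph{Inductive step $c_\alpha>1$.} Let $\SL(\hat\alpha)=\SL(\mu)\SL(\eta)$ be the costandard factorization, with $\SL(\eta)=v_n$. By Proposition~\ref{prop:cost.fac.inc}(1),(2),(6), both $\chain(\mu)$ and $\chain(\eta)$ lie in $\mathcal{I}$ with $m_1=(\delta,i)$, $y_i$ is a prefix of $v_n$, $|\eta|\ge|l(\eta)|$, and similarly $|\mu|\ge |l(\mu)|$ since $\SL(\mu)\ge y_i$. As $c_\alpha=c_\mu+c_\eta$ with both at least $1$, the inductive hypothesis gives decompositions~\eqref{eq:form.of.increasing.word} of $\SL(\mu)$ and $\SL(\eta)$ with $c_\mu$ and $c_\eta$ chunks, and concatenating them yields the claimed form for $\SL(\hat\alpha)$. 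The conditions on $w_k$ are inherited from the two factors. The only extra point is that the last $w$ of $\SL(\mu)$ is followed by $y_i$, which is the prefix of $\SL(\eta)$, so the chunk boundaries match.

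The expected obstacle is the base-case key step, showing that $y_i$ cannot reappear inside $w$. This requires carefully combining the no-splitting Lemma~\ref{lemma:stronger.no.splitting} with the monotonicity of prefixes to conclude $m_1=(\delta,i)$ for the factor containing $y_i$. A secondary check is that the chunk counts match exactly: there are exactly $c_\alpha$ chunks, not merely at most $c_\alpha$, with $p_k=0$ allowed.
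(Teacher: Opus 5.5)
Your overall architecture is sound and is a mild reorganization of the paper's argument. The paper inducts on relative height and, when $c_\alpha=1$, applies the induction hypothesis to $\SL^l(\hat\alpha)$, so only the last canonical factor $v_n=\SL^r(\hat\alpha)$ needs checking. You instead induct on $c_\alpha$ and treat $c_\alpha=1$ directly via \eqref{eq:period-incr}, which obliges you to check every canonical factor $v_k$ of $w$. Your $c_\alpha>1$ step is the same as the paper's. The genuine gap is in your key step.

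Descending through iterated (co)standard factorizations of $v_k$ via Lemma~\ref{lemma:stronger.no.splitting} can end at a piece $f$ with prefix $y_i$ that is not a prefix of $v_k$, for instance $f=v_k^r$ or a deeper right factor.
\begin{itemize}
\item For such $f$, Corollary~\ref{cor:lyndon.prefix.suffix.chains}(1) does not give $\chain(\deg f)\in\mathcal{I}$, since it only concerns Lyndon \emph{prefixes} of words in increasing chains. Proposition~\ref{prop:cost.fac.inc}(3) does not give it either, so Lemma~\ref{lemma:left.standard.prefix.increasing} is unavailable for $f$.
\item Even granting $m_1(\deg f)=(\delta,i)$, this says nothing about the $\beta_i$-coefficient of $\chain(\deg v_k)$. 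The complementary pieces of $v_k$ may be decreasing and contribute negative coefficients.
\end{itemize}
So summing coefficients does not yield $c_\alpha\ge 2$ as written.

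The missing idea is a short lexicographic observation that makes the descent unnecessary; it is exactly what the paper does for $v_n$.
\begin{itemize}
\item First, $y_i<\SL(\hat\alpha)<v_n\le v_k$, because $y_i$ is a proper prefix and $v_n$ a proper suffix of the Lyndon word $\SL(\hat\alpha)$, and $v_1\ge\cdots\ge v_n$.
\item If $y_i$ occurred in $v_k$ at a non-initial position, the corresponding proper suffix $s=y_i\cdots$ of the Lyndon word $v_k$ would satisfy $v_k<s$. Then $y_i<v_k<y_i\cdots$ forces $y_i$ to be a prefix of $v_k$. This also justifies your unproved remark that $y_i$ is not a suffix of $v_k$.
\item Every $v_k$ inside $w$ is real: by Proposition~\ref{prop:cost.fac.inc}(5), the imaginary factors are exactly the leading copies of $\SL_j(\delta)$, which are excluded from $w$.
\item Each such $v_k$ has $\chain(\deg v_k)\in\mathcal{I}$. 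To see this, apply Proposition~\ref{prop:cost.fac.inc}(3),(4) to the truncation $y_iv_1\cdots v_k$. It is Lyndon by Lemma~\ref{lemma:seq.right.word.Lyndon}, standard as a factor of a standard word, has $m_1=(\delta,i)$, and is $\ge y_i$.
\item Hence Lemma~\ref{lemma:left.standard.prefix.increasing} applies to $v_k$ itself and gives $m_1(\deg v_k)=(\delta,i)$.
\item Since $\deg y_i\in\chain(\beta_i)$ and every other factor is increasing or imaginary, the $\beta_i$-coefficient of $\chain(\alpha)$ is at least $2$. This contradicts $c_\alpha=1$.
\end{itemize}
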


\begin{proof}
Let $m_1(\alpha) = (\delta,i)$. We will prove the result by induction on $\hgt(\chain(\alpha))$. The base case
$\chain(\alpha)=\chain(\beta_i)$ follows from Corollary~\ref{cor:inc.irr.chains} and Lemma~\ref{lemma:y.i.irr.chain}.
Suppose now the result holds for increasing chains of the relative height $<\hgt(\chain(\alpha))$.
By Lemma~\ref{lemma:y.i.prefix}, we have $\SL(\hat\alpha) = y_iv$ and let $v = v_1\ldots v_n$ be the canonical factorization.
By Proposition~\ref{prop:cost.fac.inc}, if $v_a$ is imaginary then so are $v_{a-1},v_{a-2},\ldots,v_1$ and we further have
$v_1=\ldots=v_a=\SL(M_1(\beta_i))=\SL_j(\delta)$. Thus, if $p$ denotes the smallest index such that $v_p$ is not imaginary,
then we get $\SL(\hat\alpha) = y_i\underbrace{\SL_j(\delta)}_{p-1 \text{ times}}v_p\ldots v_n$.

First, we consider the case $c_\alpha = 1$. According to Proposition~\ref{prop:cost.fac.inc}(1,3,4), both chains
$\chain(\hat\alpha - \deg(v_n))$ and $\chain(\deg(v_n))$ are increasing. Thus
$y_i\underbrace{\SL_j(\delta)}_{p-1 \text{ times}}v_p\ldots v_{n-1}$ can be expressed as in~\eqref{eq:form.of.increasing.word}
by the inductive assumption, while $v_n$ cannot contain $\SL_j(\delta)$ as a prefix by
Corollary~\ref{cor:lyndon.prefix.suffix.chains}(1). By Lemma~\ref{lemma:lyndon.subword} and the inductive hypothesis,
it remains to show that $y_i$ is a not a subword of $v_n$. If $y_i$ was a subword of $v_n$ but not a prefix, then by
properties of Lyndon words we would have $y_i < \SL(\hat\alpha) < v_n < y_i\ldots$, so that $v_n$ contains $y_i$ as a prefix.
However, if $y_i$ was a prefix of $v_n$, then $m_1(\deg(v_n)) = (\delta,i)$ by Lemma~\ref{lemma:left.standard.prefix.increasing},
contradicting $c_\alpha = 1$.

Now, if $c_\alpha > 1$, then by Proposition~\ref{prop:cost.fac.inc}(6) and the inductive hypothesis, $v_n$ is of the form
as in~\eqref{eq:form.of.increasing.word} (possibly with more than one chunk) as well as
$\SL^l(\hat\alpha)=y_i\underbrace{\SL_j(\delta)}_{p_1 \text{ times}}v_1\ldots v_{n-1}$, hence so is their concatenation
$\SL(\hat\alpha)$.
\end{proof}

By the above proof, we note the following property of~\eqref{eq:form.of.increasing.word}:
\begin{equation}\label{eq:chunk-incr}
\begin{split}
  u_t=y_i\underbrace{\SL_j(\delta)}_{p_t \text{ times}}w_t \ \
  & \mathrm{are\ real\ standard\ Lyndon\ words\ satisfying} \\
  & m_1(\deg(u_t)) = (\delta,i),\ \chain(\deg(u_t)) \in \mathcal{I}.
\end{split}
\end{equation}

As mentioned in the last sentence of the above proof, if $c_\alpha > 1$ then $c_{\deg(v_n)}$ may be $>1$
(so that $v_n$ has $>1$ chunk), as illustrated in the following example.

\begin{example}
Consider the affine type $G_2^{(1)}$ with the order $0 < 1 < 2$. Then using the code we find for $k \geq 3$:
\begin{multline*}
  \SL(\alpha_0 + \alpha_1 + k\delta) = \\
  \begin{cases}
    01221\ub{1}{k/3-1} \big|\ 01221 \ub{1}{k/3-1} 01221 \ub{1}{k/3-1} 01222 & \mathrm{if}\ k \equiv 0 \ \mathrm{mod} \ 3 \\
    01221 \ub{1}{\lfloor k/3 \rfloor -1} \big|\ 01221 \ub{1}{\lfloor k/3 \rfloor -1} 0122201221 \ub{1}{\lceil k/3 \rceil -1}
      & \mathrm{if}\ k \equiv 1 \ \mathrm{mod} \ 3 \\
    01221 \ub{1}{\lfloor k/3 \rfloor-1} 0122201221 \ub{1}{\lceil k/3 \rceil -1} \big|\ 01221 \ub{1}{\lceil k/3 \rceil -1}
    & \mathrm{if}\ k \equiv 2 \ \mathrm{mod} \ 3 \\
  \end{cases}
\end{multline*}
where the vertical line represents the costandard factorization. Here, we have $\SL_1(\delta) = 012221$, while additionally
$m_1(\alpha_0 + \alpha_1) = (\delta,2)$, so that $y_2 = 01221$.
\end{example}

The next result gives an upper bound on the number of imaginary words in~\eqref{eq:form.of.increasing.word}.

\begin{lemma}\label{lemma:p.i.inc}
For any $\chain(\alpha) \in \mathcal{I}$, applying~\eqref{eq:form.of.increasing.word} to $\hat\alpha = l(\alpha) + k\delta$
we have $\sum_{j=1}^{c_\alpha} p_j \leq k$.
\end{lemma}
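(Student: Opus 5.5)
We argue by induction on $\hgt(\chain(\alpha))$, mirroring the proof of Lemma~\ref{lem:imaginary-lowerbound} but with the inequality reversed, and using the costandard factorization in place of the standard one.

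\textbf{Base case.} For the base case $\chain(\alpha)=\chain(\beta_i)$ we have $c_\alpha=1$ and $l(\beta_i)=\deg(y_i)$ by Lemma~\ref{lemma:y.i.irr.chain} and Corollary~\ref{cor:y.i.left.standard.im}. Corollary~\ref{cor:inc.irr.chains} then gives $\SL(\deg(y_i)+k\delta)=y_i\SL_j(\delta)^k$, so $p_1=k$ and equality holds.

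\textbf{Inductive step.} Let $\hat\alpha=l(\alpha)+k\delta$ with $m_1(\alpha)=(\delta,i)$, and write $\SL(\hat\alpha)=\SL(\mu)\SL(\eta)$ for the costandard factorization. By Proposition~\ref{prop:cost.fac.inc}, both chains $\chain(\mu)$ and $\chain(\eta)$ are increasing. Moreover $\SL(\mu)$ has prefix $y_i$ with $m_1(\mu)=(\delta,i)$, so $|\mu|\ge |l(\mu)|$. The factorization does not split imaginary words, so the chunks of $\SL(\hat\alpha)$ are the chunks of $\SL(\mu)$ followed by those of $\SL(\eta)$ (the latter only when $y_i$ occurs in $\SL(\eta)$).

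Write $\mu=l(\mu)+k_1\delta$. There are two cases.
\begin{itemize}
\item If $c_\alpha>1$, Proposition~\ref{prop:cost.fac.inc}(6) gives $m_1(\eta)=(\delta,i)$ and $|\eta|\ge|l(\eta)|$. Write $\eta=l(\eta)+k_2\delta$ and $c_\alpha=c_\mu+c_\eta$. The recursion~\eqref{eq:l-2} of Lemma~\ref{lemma:recursive.l} gives $|l(\alpha)|\le |l_i(\mu)|+|l_i(\eta)|=|l(\mu)|+|l(\eta)|$. Hence $k_1+k_2\le k$, and induction applied to $\mu$ and $\eta$ bounds the total number of $\SL_j(\delta)$'s by $k_1+k_2\le k$.
\item If $c_\alpha=1$, then $\SL(\eta)$ contains no $y_i$, as in the proof of Lemma~\ref{lem:chunks-increasing}. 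Hence $\SL(\eta)$ contributes no chunk copies, and we only need $k_1\le k$. Now~\eqref{eq:l} gives $|l(\alpha)|\le |l(\mu)|+|l_i(\eta)|$, and we need $|\eta|\ge |l_i(\eta)|$. This holds because $\SL(\eta)>\SL(\hat\alpha)>y_i$. Therefore $|l(\mu)|+k_1|\delta|+|l_i(\eta)|\le|\hat\alpha|=|l(\alpha)|+k|\delta|\le |l(\mu)|+|l_i(\eta)|+k|\delta|$, which gives $k_1\le k$.
\end{itemize}

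\textbf{Remaining difficulty.} The degenerate case is when $\SL^r(\hat\alpha)$ is imaginary; by the remark after Proposition~\ref{prop:cost.fac.inc}, this happens only for irreducible chains. Irreducible chains are covered directly by Corollary~\ref{cor:inc.irr.chains}, so we handle them up front.

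The main obstacle is the matching of offsets. Equation~\eqref{eq:l} is phrased with $l_i$ and not $l$, so we must check that $l_i(\mu)=l(\mu)$ when $m_1(\mu)=(\delta,i)$, which is noted in the proof of Lemma~\ref{lemma:recursive.l}. We must also check that the chunk count of $\SL(\mu)$ measured from $l(\mu)$ is exactly what the inductive hypothesis controls. If the $c_\alpha=1$ case resists this direct argument, the fallback is to induct on $n$ in the canonical factorization $v_1\ldots v_n$ of Proposition~\ref{prop:cost.fac.inc}, peeling off $v_n$ one step at a time.
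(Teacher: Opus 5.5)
Your proposal is correct and follows the paper's proof essentially step for step: induction on relative height via the costandard factorization, with the $c_\alpha>1$ case handled by \eqref{eq:l-2} and the $c_\alpha=1$ case by \eqref{eq:l}, where you derive $k_1\le k$ directly and the paper instead argues by contradiction from $p_1>k$. The one step you leave implicit is that $\SL(\eta)$ cannot begin with $\SL_j(\delta)$, which is needed so that the leading run of $\SL_j(\delta)$'s in $\SL(\mu)$ does not continue into $\SL(\eta)$; the paper obtains this from $\chain(\eta)\in\mathcal{I}$ and Corollary~\ref{cor:lyndon.prefix.suffix.chains}(1).
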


\begin{proof}
We show this by induction on the relative height of $\chain(\alpha)$. The base case corresponds to irreducible
$\chain(\alpha)$ and thus follows from Corollary~\ref{cor:inc.irr.chains} and Lemma~\ref{lemma:y.i.irr.chain}.
For the inductive step, suppose the result holds for all increasing chains of relative height $<\hgt(\chain(\alpha))$.
Let $\SL(l(\alpha)+k\delta) = \SL(\beta)\SL(\gamma)$ be the costandard factorization.

If $c_\alpha = 1$, then as $y_i < \SL(l(\alpha) + k\delta) < \SL(\gamma)$, we have $|\gamma| \geq |l_i(\gamma)|$.
Suppose that $p_1 > k$, then we must have that $p_1 > k$ for $\SL(\beta)$ by Lemma~\ref{lemma:stronger.no.splitting}
and additionally noting that $\SL_j(\delta)$ cannot be a prefix of $\SL(\gamma)$, as $\chain(\gamma) \in \mathcal{I}$ by
Proposition~\ref{prop:cost.fac.inc} and imaginary words cannot be a prefix of standard Lyndon words which are associated
with increasing chains by Corollary~\ref{cor:lyndon.prefix.suffix.chains}. Hence by the inductive hypothesis,
$|\beta| > |l(\beta) + k\delta| = |l_i(\beta) + k\delta|$. Thus, we obtain $|l(\alpha)| > |l_i(\beta) + l_i(\gamma)|$,
contradicting Lemma~\ref{lemma:recursive.l}.

If $c_\alpha > 1$, then $m_1(\beta) = m_1(\gamma) = m_1(\alpha)$ by Proposition~\ref{prop:cost.fac.inc}.
Writing $\beta = l(\beta) + k_1\delta, \gamma = l(\gamma) + k_2\delta$, we note that $k\geq k_1 + k_2$ by
Lemma~\ref{lemma:recursive.l}. By the inductive hypothesis, we note that $\SL(\beta),\SL(\gamma)$ have at most
$k_1,k_2$ occurrences of $\SL_j(\delta)$, respectively. By Lemma~\ref{lemma:stronger.no.splitting}, we then see
that $\SL(l(\alpha)+k\delta) = \SL(\beta)\SL(\gamma)$ has $\leq k_1+k_2\leq k$ occurrences of $\SL_j(\delta)$.

Thus in either case we must have $\sum_j p_j \leq k$, completing the inductive step.
\end{proof}

The following analogue of Lemma~\ref{lemma:lifting.chains.dec} is key to the proof of Theorem~\ref{thm:incr.modulo.c}.

\begin{lemma}\label{lemma:lifting.chains}
Consider four words $w,w',v,v'$ of the following form:
\begin{align*}
  w &= y\underbrace{\ell}_{p_1 \text{ times}}w_1y \underbrace{\ell}_{p_2 \text{ times}}w_2y \ldots
      \underbrace{\ell}_{p_{t} \text{ times}}w_t , \\
  w'&= y\underbrace{\ell}_{p_1+1 \text{ times}}w_1y \underbrace{\ell}_{p_2+1 \text{ times}}w_2y \ldots
      \underbrace{\ell}_{p_{t}+1 \text{ times}}w_t , \\
  v &= y\underbrace{\ell}_{q_1 \text{ times}}v_1y \underbrace{\ell}_{q_2 \text{ times}}v_2y \ldots
      \underbrace{\ell}_{q_{s} \text{ times}}v_s , \\
  v'&= y\underbrace{\ell}_{q_1+1 \text{ times}}v_1y \underbrace{\ell}_{q_2+1 \text{ times}}v_2y \ldots
      \underbrace{\ell}_{q_{s}+1 \text{ times}}v_{s} ,
\end{align*}
with $\ell \in \LL$ and $y$ being nonempty, which satisfy the following conditions for all $i$:
\begin{enumerate}[leftmargin=0.8cm]

\item
any suffix of $w_i,v_i$ is greater than $y$;

\item
$y$ is not a subword of $w_i$ and $v_i$;

\item
$\ell$ is not a prefix of $w_i,v_i$;

\item
$\ell > y,w_i,v_i$.

\end{enumerate}
Then, we have $w > v \Leftrightarrow w' > v'$.
\end{lemma}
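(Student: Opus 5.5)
We will mirror the proof of Lemma~\ref{lemma:lifting.chains.dec}: first establish ``$\Rightarrow$'', then get ``$\Leftarrow$'' by applying the same argument with the roles of $(w,w')$ and $(v,v')$ swapped. All hypotheses (1)--(4) are symmetric in $w$ and $v$, and if $w'>v'$ while $w\le v$, then either $w=v$ or $v>w$. If $w=v$, the two block decompositions coincide: conditions (2)--(4) pin down where each block $y\ell^{p}w_i$ begins, so $w'=v'$, a contradiction. If $v>w$, then ``$\Rightarrow$'' gives $v'>w'$, again a contradiction.

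For ``$\Rightarrow$'' we argue by induction on $\max\{s,t\}$. We compare the blocks $y\ell^{p_1}w_1$ and $y\ell^{q_1}v_1$ after stripping the common prefix $y$. The base case, and the case where the two first blocks are equal, reduce to the inductive hypothesis. When the first blocks are equal, we erase them from both words; the remainders again have the prescribed shape, starting with $y$, and the primed versions have one extra $\ell$ inserted at the same places.

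Suppose first that $p_1\ne q_1$, say $p_1>q_1$. After removing $y\ell^{q_1}$, we compare $\ell\cdots$ with $v_1 y\cdots$ (or with $y\cdots$ if $v_1$ is empty). By (3) and (4), $v_1<\ell$ and $\ell$ is not a prefix of $v_1$. When $v_1$ is empty, we use $y<\ell$ together with the fact that $\ell$ is not a prefix of $y\cdots$. The genuinely delicate point is that $y$ might be a prefix of $\ell$. Here we will use (1) and (2) to show that the comparison is decided strictly before the end of the shorter block, at a position unaffected by the shift. So $w>v$ forces $p_1>q_1$, and then the same first difference persists in $w'$ versus $v'$. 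The case $p_1<q_1$ is symmetric and yields $v>w$, contradicting the assumption; hence it cannot occur.

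Now suppose $p_1=q_1$ and $w_1\ne v_1$. If neither of $w_1,v_1$ is a prefix of the other, the first difference lies inside $w_1$ and $v_1$, and it persists after adding the extra $\ell$ on the left. Otherwise, say $v_1=w_1u$ with $u$ nonempty. Then we compare $y\cdots$ (the next block of $w$, or the end of $w$) against $u\,y\cdots$. By (1), $u>y$; by (2), $y$ is not a prefix of $u$, since otherwise $y$ would be a subword of $v_1$. So $v>w$, unless $w$ ends right there, in which case $w$ is a prefix of $v$ and again $w<v$. Symmetrically, if $w_1=v_1u$, then $w>v$, and the same comparison $u$ versus $y$ sits at the same relative position in $w'$ and $v'$, because the extra $\ell$ is inserted before $w_1,v_1$; hence $w'>v'$. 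Note that the empty-word edge cases for $w_i$ and $v_i$ must be handled here, since conditions (1)--(4) are vacuous for them.

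The main obstacle is the $p_1\neq q_1$ comparison when $y$ is a prefix of $\ell$, or when one word terminates. We expect to resolve it by showing that in both $(w,v)$ and $(w',v')$ the decision is made at the same letter: specifically, at a letter where $\ell$ is compared with either $v_1$ or $y\ell$. We would first try to use (4) in the form $\ell>y$, with $\ell$ Lyndon and not a prefix of $y\ell^{k}$ beyond a bounded length, to locate the first difference inside a single copy of $\ell$.
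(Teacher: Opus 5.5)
The overall structure matches the paper: induction on $\max\{s,t\}$, a case split on $p_1$ versus $q_1$ and then on $w_1$ versus $v_1$, and ``$\Leftarrow$'' obtained by symmetry. Your treatment of the case $p_1=q_1$ is correct and is essentially the paper's third case.

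The genuine gap is the case $p_1\neq q_1$, which is the heart of the lemma. You do not prove it, and the mechanism you propose for it is false. Your plan is to show that the comparison is decided at the same letter in both pairs, strictly before the end of the shorter block, so that ``the same first difference persists''. Take letters $a<b$, $y=a$, $\ell=ab$, and all $w_i,v_i$ empty; all hypotheses hold. Set $w=y\ell=aab$ and $v=yy=aa$.
\begin{itemize}
\item Here $w>v$ only because $v$ is a proper prefix of $w$, so there is no first difference to transport.
\item By contrast, $w'=aabab$ and $v'=aabaab$ separate at the fifth letter. There the second copy of $\ell$ in $w'$ meets the copy of $\ell$ that the lifting inserted after the second $y$ of $v'$.
\end{itemize}
So exactly when $y$ is a prefix of $\ell$, the case you single out, the two pairs must be compared independently. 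Your step ``the fact that $\ell$ is not a prefix of $y\cdots$'' is precisely the unproved claim.

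What is missing is a direct proof that the word with more leading copies of $\ell$ wins, in each pair separately.
\begin{itemize}
\item \emph{Primed pair.} This is short, because in $v'$ every $y$ is followed by $\ell$. If $v_1$ is not a prefix of $\ell$, use (3)--(4). If $\ell=v_1u$, then $u\geq\ell>y$, so $\ell>v_1y$. If $\ell=v_1yz$, then $z$ is a proper suffix of the Lyndon word $\ell$, so $z>\ell$ and they differ within $|z|$ letters; this gives $\ell>v_1y\ell\cdots$. Hence $p_1>q_1$ forces $w'>v'$ without using $w>v$; this is the paper's first case.
\item \emph{Unprimed pair.} You still need $p_1>q_1\Rightarrow w>v$, equivalently that $p_1<q_1$ is incompatible with $w>v$; this is the paper's second case. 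It is harder, because in $v$ a $y$ may be followed by a block $v_i$, another $y$, or the end of the word. You need three further observations:
\begin{enumerate}
\item If $y$ is a prefix of $\ell$, then (1), (2), (4) force every $w_i,v_i$ to be empty, since any word strictly between $y$ and $\ell$ must begin with $y$.
\item For nonempty $v_1$, the word $v_1y$ is never a prefix of $\ell$: otherwise the proper suffix $yz$ of $\ell$ would be $<\ell$ by (1)--(2).
\item One then iterates through shorter and shorter proper suffixes of $\ell$, each of which is $>\ell>y$.
\end{enumerate}
\end{itemize}

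Your uniqueness claim for the equality case in ``$\Leftarrow$'' rests on this same fact. It also needs (1), not only (2)--(4).
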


\begin{proof}
We will first show the ``$\Rightarrow$'' direction, by induction on $\max\{s,t\}$. The result in the base case $s = t = 1$
clearly holds as $\ell$ is inserted at the same spot for both $w,v$ to produce words $w',v'$. For the inductive hypothesis,
suppose the result holds for all $s',t'$ with $\max\{s',t'\} < \max \{s,t\}$. Note that when comparing two words we can
repeatedly remove common prefixes, hence we shall assume that $\min\{p_1,q_1\} = 0$. We consider three cases:
\begin{itemize}[leftmargin=0.7cm]

\item
If $p_1 > q_1 = 0$, then we first claim that $\ell > v_1y$. This follows from (4) unless $v_1$ is a proper prefix of $\ell$,
but in the latter case $\ell=v_1u$ we also have $u>\ell>y$ by (4) and $\ell\in \LL$. If $q_2 = 0$, then $\ell > v_1y$ as $w > v$.
Then we clearly get $w' > v'$ unless $v_1y$
is a prefix of $\ell$. In the latter case, we have $\ell=v_1yu$ for some non-empty $u$ and $u>\ell$ since $\ell \in \LL$, so that
$\ell > v_1y\ell$ and hence $w' > v'$. If $q_2 > 0$, then $w>v$ already implies $\ell > v_1y\ell$ and thus $w' > v'$;

\item
Next, assume that $0 = p_1 < q_1$. We note that it is impossible for all $p_i = 0$, as that would imply $\ell < w_1yw_2\ldots yw_t$,
contradicting to (4) and $\ell\in \LL$ is Lyndon. Let $k>1$ be the smallest index such that $p_k > 0$, so that
$\ell < w_1y\ldots w_{k-1}y\ell$. If $w_1y\ldots w_{k-1}y$ is a prefix of $\ell$, i.e.\ $\ell=w_1y\ldots w_{k-1}y u$, then $u<\ell$
contradicting the fact that $\ell$ is Lyndon. However, if $w_1y\ldots w_{k-1}y$ is not a prefix of $\ell$, then
$\ell < w_1y\ldots w_{k-1}y\ell$ implies $\ell < w_1y\ldots w_{k-1}y$, contradicting to (4) and $\ell \in \LL$;

\item
Finally, suppose that $p_1 = q_1 = 0$. In this case, we consider three sub-cases:
\begin{itemize}[leftmargin=0.7cm]

\item
If $w_1 = v_1$, then the result follows from the inductive assumption applied to the pair of words
$y\underbrace{\ell}_{p_2 \text{ times}}w_2\ldots$ and $y\underbrace{\ell}_{q_2 \text{ times}}v_2\ldots$
(obtained from $w,v$ by erasing the common prefix $yw_1=yv_1$);

\item
Now suppose $w_1 < v_1$. As $w > v$, we must then have that $w_1$ is a prefix of $v_1$, i.e.\ $v_1=w_1u$ for some $u$.
As $y$ is nonempty, then $u > y$ by~(1) and $y$ cannot be a prefix of $u$ by (2), contradicting to $w>v$;

\item
If $w_1 > v_1$, then we clearly have $w' > v'$ unless $v_1$ is a prefix of $w_1$. In the latter case, $w_1=v_1u$
for some non-empty $u$. As $y$ is non-empty, then $w' > v'$ as $u > y$ and $y$ is not a prefix of $u$ by (1)--(2).

\end{itemize}

\end{itemize}
This exhausts all possible cases, thus completing the inductive step.

We note that the ``$\Leftarrow$'' direction follows by applying the above to the contrapositive as all conditions
(1)--(7) are symmetric.
\end{proof}

The following result will be useful in the proof of the theorem below.

\begin{lemma}\label{lemma:delta.length.suffix.l}
For any increasing chain $\chain(\alpha)$, if $|\alpha| > |l(\alpha)|$ and $w$ is a suffix of $\SL(\alpha)$ of length
$|w| = |\delta|$, then $w > y_i\SL_j(\delta)$ with $i,j$ as in Lemma~\ref{lem:chunks-increasing}.
\end{lemma}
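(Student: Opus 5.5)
We argue by comparing the suffix $w$ with the structure of $\SL(\alpha)$ given by Lemma~\ref{lem:chunks-increasing}. Write
\[
  \SL(\alpha)=y_i\underbrace{\SL_j(\delta)}_{p_1}w_1\cdots y_i\underbrace{\SL_j(\delta)}_{p_{c_\alpha}}w_{c_\alpha}.
\]
Since $\SL(\alpha)$ is Lyndon, every proper suffix is $>\SL(\alpha)$. Moreover, $\SL(\alpha)$ has $y_i$ as a prefix by Lemma~\ref{lemma:y.i.prefix}. It therefore suffices to show that $w$ is not a prefix-compatible word lying between $y_i$ and $y_i\SL_j(\delta)$. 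Concretely, we rule out $w\le y_i\SL_j(\delta)$.

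First we treat the easy situations. If $w$ does not start with $y_i$, then $w>\SL(\alpha)\ge y_i$, and $y_i$ is not a prefix of $w$. Hence $w$ differs from $y_i$ at some position before $|y_i|$ and is larger there. This gives $w>y_i\SL_j(\delta)$ as well, provided $|w|=|\delta|\ge|y_i|$, which holds because $|\deg(y_i)|<|\delta|$ whenever $\deg(y_i)=\beta_i$. By Lemma~\ref{lemma:y.i.irr.chain} we have $\deg(y_i)\in\chain(\beta_i)$. If $|y_i|>|\delta|$, then $w$ is shorter than $y_i$ and we instead compare directly: $w>\SL(\alpha)$ with $w$ not a prefix of $\SL(\alpha)$ would suffice. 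So the main case is $w=y_iz$ with $|z|=|\delta|-|y_i|$. By Lemma~\ref{lemma:lyndon.subword} and the chunk decomposition, this occurrence of $y_i$ is the beginning of the last chunk $y_i\SL_j(\delta)^{p_{c}}w_{c}$, since $w_k$ contains no $y_i$. Then $z$ is the suffix of length $|\delta|-|y_i|<|\delta|$ of $\SL_j(\delta)^{p_c}w_c$.

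Next we compare $z$ with $\SL_j(\delta)$. If $p_c\ge1$, then $|\SL_j(\delta)^{p_c}w_c|\ge|\delta|>|z|$ forces $z$ to be a proper suffix of $\SL_j(\delta)w_c$ or of $w_c$. In the first sub-case, $z$ is a suffix of $\SL_j(\delta)$ followed by $w_c$. A proper suffix of the Lyndon word $\SL_j(\delta)$ is $>\SL_j(\delta)$ and not its prefix, so $z>\SL_j(\delta)$. In the second sub-case, $z$ is a suffix of $w_c$, and we must show $z>\SL_j(\delta)$ or at least $z$ exceeds it at the first difference. The case $p_c=0$, where $w=y_iw_c$ exactly, reduces to the same comparison $w_c$ versus $\SL_j(\delta)$.

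The main obstacle is this comparison of suffixes of $w_c$ with $\SL_j(\delta)$. The lemma gives only $w_c<\SL_j(\delta)$, which points the wrong way. Here we use $|\alpha|>|l(\alpha)|$. The word $\SL(\alpha-\delta)$ also has the chunk form, and we plan a Leclerc-algorithm argument in the style of Proposition~\ref{prop:cost.fac.inc}(5). Suppose $y_iz\le y_i\SL_j(\delta)$ with $z$ a suffix of $w_c$. Splitting off the Lyndon suffix starting at this $y_i$ via Lemma~\ref{lemma:seq.left.word.Lyndon} and Proposition~\ref{prop:cost.fac.inc}(4,6), we obtain a root $\gamma$ in an increasing chain with $m_1(\gamma)=(\delta,i)$, $|\gamma|=|\delta|$ real, and $\SL(\gamma)$ having prefix $y_i$. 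Replacing $\SL(\gamma)$ by $y_i$ times imaginary factors is not degree-compatible, so instead we compare $\SL(\gamma)$ with $\SL_i^{ls}(\delta)$ using Lemma~\ref{lemma:im.left.standard.form}. Since $y_i$ has prefix $\SL_i^{ls}(\delta)$ and $\gamma$ has length exactly $|\delta|$ (not $<$), we would instead use Corollary~\ref{cor:inc.irr.chains}. Comparing $\SL(\gamma-\deg(y_i)+\deg(y_i))$ against $y_i\SL_j(\delta)=\SL(\deg(y_i)+\delta)$, the monotonicity of the chain together with Lemma~\ref{lemma:inc.upper.bound} should yield $\SL(\gamma)>y_i\SL_j(\delta)$, giving the contradiction. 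We expect the delicate bookkeeping to lie in identifying this $\gamma$ correctly in each sub-case.
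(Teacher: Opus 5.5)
Your easy case is correct: if $w$ does not begin with $y_i$, then $w>\SL(\alpha)\ge y_i$ forces a first difference inside $y_i$, so $w>y_i\SL_j(\delta)$, and no length proviso is needed. Your sub-case where $z$ starts inside the last copy of $\SL_j(\delta)$ is also fine. The rest, however, has a genuine gap.

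First, the assertion that an occurrence of $y_i$ at the start of $w$ must begin the last chunk is not justified. Lemma~\ref{lemma:lyndon.subword} concerns canonical factorizations, and the chunk decomposition of the Lyndon word $\SL(\alpha)$ is not one. The assertion even contradicts your own case $p_{c_\alpha}\ge 1$, where all of $w$ lies inside $\SL_j(\delta)^{p_{c_\alpha}}w_{c_\alpha}$.

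More seriously, the case you single out as the main obstacle is left as a plan that cannot be carried out. This is the case where $z$ is a suffix of $w_{c_\alpha}$, including $w=y_iw_{c_\alpha}$ when $p_{c_\alpha}=0$. Your plan postulates a real root $\gamma$ with $|\gamma|=|\delta|$, but no real root has height $|\delta|$: a real root $(\beta,n)$ has height $n|\delta|+\hgt(\beta)$ with $0<|\hgt(\beta)|<|\delta|$. The subsequent appeals to Corollary~\ref{cor:inc.irr.chains} and Lemma~\ref{lemma:inc.upper.bound} never produce an inequality. Comparing $z$ with $\SL_j(\delta)$ is simply the wrong comparison.

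The missing idea is \eqref{eq:chunk-incr}: the last chunk $u=y_i\SL_j(\delta)^{p_{c_\alpha}}w_{c_\alpha}$ is itself a Lyndon word, so every proper suffix of $u$ exceeds $u$.

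If $p_{c_\alpha}\ge 1$, then $|u|>|\delta|$. So $w$ is a proper suffix of $u$, and $w>u\ge y_i\SL_j(\delta)$, with no need to locate occurrences of $y_i$.

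If $p_{c_\alpha}=0$, first note $w_{c_\alpha}\neq\emptyset$. Otherwise $\SL(\alpha)$ would either end with its own prefix $y_i$, impossible for a Lyndon word, or equal $y_i$, forcing $\alpha=l(\alpha)$. Now $u=y_iw_{c_\alpha}=\SL(\beta)$ with $\chain(\beta)\in\mathcal{I}$, $m_1(\beta)=(\delta,i)$ and $u\ge y_i$. Since $w_{c_\alpha}$ is nonempty and does not start with $\SL_j(\delta)$, Corollary~\ref{cor:inc.irr.chains} shows $\chain(\beta)$ is not irreducible. Hence $|u|\ge |l(\beta)|>|\delta|$ by Corollary~\ref{cor:l.length}, and $w$ is a proper suffix of $u$.

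In that case $w$ cannot begin with $y_i$. Indeed, $y_i$ is not a subword of $w_{c_\alpha}$, and the Lyndon word $y_i$ has no nonempty proper suffix that is also its prefix. So you are back in your easy case. In particular, your ``main case'' $w=y_iw_{c_\alpha}$ never occurs.
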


\begin{proof}
Consider the decomposition~\eqref{eq:form.of.increasing.word} of $\SL(\alpha)$. Let $w' = w_{c_\alpha}$.
We shall now treat two cases determined by whether or not $p_{c_\alpha}$ vanishes:
\begin{itemize}[leftmargin=0.7cm]

\item
If $p_{c_\alpha} = 0$, then $y_iw'$ is a suffix of $\SL(\alpha)$. We note that $w'$ cannot be empty, as that would
contradict to $\SL(\alpha)$ being Lyndon. By~\eqref{eq:chunk-incr}, $y_iw'=\SL(\beta)$ for some
$\beta\in \wDelta^{+,\re}$ with $m_1(\beta)=(\delta,i)$, $\chain(\beta) \in \mathcal{I}$, and thus $|\beta| \geq |l(\beta)|$.
Note that $\chain(\beta)$ is not an \irrchain, due to Corollary~\ref{cor:inc.irr.chains} and $w'$ being non-empty, and so
$|y_iw'| > |\delta|$ by Corollary~\ref{cor:l.length}. Thus $w$ is a proper suffix of $y_iw'$, so that $w>y_iw'$.
We claim that $y_i$ is not a prefix of $w$. Indeed, since $y_i$ is not a subword of $w'$ by Lemma~\ref{lem:chunks-increasing},
if $y_i$ was a prefix of $w$, then some nonempty prefix of $y_i$ would be a suffix of $y_i$, contradicting $y_i$ being Lyndon.
Thus as $w > y_iw' > y_i$ and $y_i$ is not a prefix of $w$, we get $w > y_i\SL_j(\delta)$.

\item
If $p_{c_\alpha}\ne 0$, then $\SL_j(\delta)w'$ is a suffix of $\SL(\alpha)$, hence $w$ is a suffix of $\SL_j(\delta)w'$.
As $y_i\underbrace{\SL_j(\delta)}_{p_{c_\alpha} \text{ times}}w'$ is Lyndon by~\eqref{eq:chunk-incr}, we have
$w > y_i \underbrace{\SL_j(\delta)}_{p_{c_\alpha} \text{ times}}w' > y_i\SL_j(\delta)$.

\end{itemize}
Thus we obtain the claimed inequality $w > y_i\SL_j(\delta)$ in both cases.
\end{proof}

The following is our key structural result featuring the periodicity pattern for increasing chains,
which should be viewed as a natural analogue of Theorem~\ref{thm:dec.modulo.s}.

\begin{theorem}\label{thm:incr.modulo.c}
For any $\chain(\alpha) \in \mathcal{I}$, let $m_1(\alpha) = (\delta,i)$ and $M_1(\deg(y_i)) = (\delta,j)$.
If $\SL(l(\alpha) + k\delta)$ is of the form~\eqref{eq:form.of.increasing.word}
for $k \in \{0,1,\ldots, c_\alpha-1\}$, and $k' - k = qc_\alpha$
with $q \in \BZ_{\geq 0}$, then:
\begin{itemize}[leftmargin=0.7cm]

\item[(1)]
We have
\begin{gather}\label{eq:inc.chains.form}
  \SL(l(\alpha) + k'\delta) =
  y_i\underbrace{\SL_j(\delta)}_{q \text{ times}}w_1
  y_i\underbrace{\SL_j(\delta)}_{p_2 + q \text{ times}}w_2 \cdots
  y_i\underbrace{\SL_j(\delta)}_{p_{c_\alpha} +q\text{ times}}w_{c_\alpha}.
\end{gather}
Furthermore, we have $p_n\in \{0,1\}$ for any $2 \leq n \leq c_\alpha$.

\item[(2)]
If $\chain(\alpha) = \chain(\beta_i)$, then $\SL(l(\alpha) + k\delta) = \SL(l(\alpha) + (k-1)\delta)\SL_j(\delta)$ is
the costandard factorization for all $k \geq 1$. Otherwise, if $\beta = \deg(\SL^l(l(\alpha) + k\delta))$ then
$\deg(\SL^l(l(\alpha) + k'\delta)) = \beta + qc_{\beta}\delta$.

\end{itemize}
\end{theorem}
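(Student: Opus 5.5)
The plan is to mirror the proof of Theorem~\ref{thm:dec.modulo.s}, with the costandard factorization replacing the standard one, and to argue by induction on $\hgt(\chain(\alpha))$.

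\textbf{Base case.} For the base case $\chain(\alpha)=\chain(\beta_i)$ we have $c_\alpha=1$. Lemma~\ref{lemma:y.i.irr.chain} gives $\deg(y_i)\in\chain(\beta_i)$, and Corollary~\ref{cor:inc.irr.chains} then yields $\SL(l(\alpha)+k\delta)=y_i\SL_j(\delta)^{k}$. This gives~(1) directly. It also gives the first claim of~(2) via Corollary~\ref{cor:left.cost.right}: $(\SL(l(\alpha)+(k-1)\delta))^r$ is either $\SL_j(\delta)$ or $y_i^r$, and in both cases it is $\geq\SL_j(\delta)$, which is exactly the inequality established in the base case of Proposition~\ref{prop:cost.fac.inc}.

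\textbf{Comparison tool.} Before the inductive step, I will prove an analogue of Claim~\ref{claim:aux}. Suppose $\chain(\beta),\chain(\gamma)\in\mathcal{I}$ with $m_1(\beta)=m_1(\gamma)=(\delta,i)$, and both satisfy the theorem. Then
\[
\SL(l(\beta)+(k_1+qc_\beta)\delta)<\SL(l(\gamma)+(k_2+qc_\gamma)\delta)
\]
holds either for all $q\geq 0$ or for none. The proof is Lemma~\ref{lemma:lifting.chains} with $y=y_i$ and $\ell=\SL_j(\delta)$. Its hypotheses~(1)--(4) come from Lemma~\ref{lem:chunks-increasing}, together with $y_i<\SL_j(\delta)$ (by Lemma~\ref{lemma:inc.upper.bound}) and the fact that suffixes of the $w_t$ exceed $y_i$ (since the $u_t$ in~\eqref{eq:chunk-incr} are Lyndon). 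Passing from $q$ to $q+1$ inserts one $\SL_j(\delta)$ into every chunk, which is precisely the passage $w\mapsto w'$ in that lemma.

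\textbf{Inductive step for~(2).} Take $\hat\alpha=l(\alpha)+k'\delta$ with $k'\geq c_\alpha$, and let $\SL(\hat\alpha)=\SL(\hat\beta)\SL(\hat\gamma)$ and $\SL(\hat\alpha-c_\alpha\delta)=\SL(\beta)\SL(\gamma)$ be the costandard factorizations. Proposition~\ref{prop:cost.fac.inc} shows that all four chains are increasing, with $m_1(\beta)=m_1(\hat\beta)=(\delta,i)$. A saturation claim, the analogue of Claim~\ref{claim:saturated.dec} built from Lemma~\ref{lemma:recursive.l} and Lemma~\ref{lemma:p.i.inc}, gives $|\beta|\geq|l(\beta)|$ and $|\gamma|\geq|l_i(\gamma)|$. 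When $c_\alpha>1$, Proposition~\ref{prop:cost.fac.inc}(6) gives $m_1(\gamma)=(\delta,i)$ and $c_\alpha=c_\beta+c_\gamma$. When $c_\gamma$ does not contribute, we get $c_\beta=c_\alpha$.

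\begin{itemize}
\item Using Theorem~\ref{thm:convexity} and the comparison tool, I will show $\SL(\beta+c_\beta\delta)$ is a Lyndon prefix of $\SL(\hat\alpha)$ whose complement is Lyndon, and symmetrically $\SL(\hat\beta-c_{\hat\beta}\delta)$ for $\hat\alpha-c_\alpha\delta$.
\item By Lemma~\ref{lemma:right.costfac.minimal} (the costandard suffix is the minimal proper suffix), this gives $\hat\gamma\leq\gamma+c_\gamma\delta$ and $\gamma\leq\hat\gamma-c_{\hat\gamma}\delta$ in the appropriate sense.
\item The comparison tool then forces equality, hence $\hat\beta=\beta+c_\beta\delta$, as in Corollary~\ref{cor:dec.costfac}.
\end{itemize}

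\textbf{Part~(1) and the main obstacle.} Part~(1) for $q\geq1$ then follows by concatenating the inductive forms of $\SL(\hat\beta)$ and $\SL(\hat\gamma)$. The bound $p_n\in\{0,1\}$ for $n\geq2$ at $k<c_\alpha$ follows from Lemma~\ref{lemma:p.i.inc}, and in fact that lemma gives $\sum p_n\leq k\leq c_\alpha-1$. I also need $p_n\leq 1$ individually and $p_1=0$; for these I will argue by contradiction via Leclerc's algorithm. If some chunk carried too many $\SL_j(\delta)$, I would split $\chain(\alpha)$ as in~\eqref{eq:l-2} (via Lemma~\ref{lemma:c.splitting}) with shifts $k_1+k_2=k$ and concatenate to get a larger word, as in the $p_1$ analysis of Theorem~\ref{thm:dec.modulo.s}.

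I expect the main obstacle to be the saturation claim together with the case $c_\gamma=0$, that is, $m_1(\gamma)>(\delta,i)$. In that case $\gamma$ lies in a chain whose own periodicity is not $c_\alpha$. I will handle it as in Lemma~\ref{lem:aux_unused}: first show $\gamma$ is frozen, i.e.\ $\hat\gamma=\gamma$, by using Lemma~\ref{lemma:delta.length.suffix.l} to compare $\SL(\gamma\pm\delta)$ against $y_i\SL_j(\delta)$.
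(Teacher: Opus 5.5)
Your base case and your comparison tool (which is Claim~\ref{claim:aux.incr}, correctly proved via Lemma~\ref{lemma:lifting.chains}) are fine. The inductive step for part~(2), however, has a genuine gap exactly where the content lies.

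\textbf{The identification of the costandard factor.} You assert that convexity and the comparison tool show that $\SL(\beta+c_\beta\delta)$ is a Lyndon prefix of $\SL(\hat\alpha)$ with Lyndon complement. These tools only give order relations. Combined with Leclerc's algorithm, once $\beta+c_\beta\delta<\gamma+(c_\alpha-c_\beta)\delta$ is checked, they yield only the \emph{lower} bound $\SL(\beta+c_\beta\delta)\SL(\gamma+(c_\alpha-c_\beta)\delta)\leq\SL(\hat\alpha)$, and nothing forces equality.

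For the standard factorization, Lemma~\ref{lemma:equiv.to.standard.fac} identifies the left factor by comparisons alone; this is what drives Theorem~\ref{thm:dec.modulo.s}. There is no analogous characterization of the costandard factor. The argument of Corollary~\ref{cor:dec.costfac} that you want to imitate used the already known chunk structure of $\SL(\hat\alpha)$. In your plan that structure is deduced from~(2) afterwards, so the step is circular.

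The missing idea is a two-sided squeeze:
\begin{enumerate}
\item Also show $\hat\beta-c_{\hat\beta}\delta<\hat\gamma-(c_\alpha-c_{\hat\beta})\delta$, and apply Leclerc's algorithm one period lower to get $\SL(\hat\beta-c_{\hat\beta}\delta)\SL(\hat\gamma-(c_\alpha-c_{\hat\beta})\delta)\leq\SL(\beta)\SL(\gamma)$.
\item Apply Lemma~\ref{lemma:lifting.chains} to these \emph{concatenations}, not merely to single words from two chains (which is all your tool covers). A strict inequality here would lift to a strict inequality contradicting the lower bound above, so both inequalities are equalities.
\item Only then do the minimal-suffix comparison and Claim~\ref{claim:aux.incr} give $\hat\beta=\beta+c_\beta\delta$, and $\hat\gamma=\gamma$ when $c_\alpha=1$. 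So no separate ``freezing'' argument for $\gamma$ is needed.
\end{enumerate}

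Shifting $\hat\beta$ down by a period also requires $|\hat\beta|\geq|l(\hat\beta)+c_{\hat\beta}\delta|$, and likewise for $\hat\gamma$ when $c_\alpha>1$. Your saturation claim does not give this: $|\beta|\geq|l(\beta)|$ and $|\gamma|\geq|l_i(\gamma)|$ are immediate from Proposition~\ref{prop:cost.fac.inc} and are too weak. The needed bound comes from the lower bound above together with Lemma~\ref{lemma:inc.upper.bound}, which make $y_i\SL_j(\delta)$ a prefix of $\SL(\hat\beta)$, and then from the inductive form of~(1).

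\textbf{Part~(1) for $0\le k<c_\alpha$.} Lemma~\ref{lemma:p.i.inc} gives only $\sum_n p_n\le k$. This forces $p_n\le1$ only when $c_\alpha\le2$, not in types $E_8^{(1)}$ and $G_2^{(1)}$, where $c_\alpha=3$ occurs.

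More importantly, ``concatenate to get a larger word'' cannot prove $p_1=0$. In an increasing chain, extra copies of $\SL_j(\delta)$ right after $y_i$ make the word \emph{larger} (cf.\ Lemma~\ref{lemma:inc.upper.bound}), so a Leclerc lower bound cannot refute them. The convexity variant, with both pieces below $y_i\SL_j(\delta)$, breaks down at $k=c_\alpha-1$. For splittings as in~\eqref{eq:l-2}, one piece must then be at least a full period past its $l$, so it already has first exponent $\geq1$.

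What is needed is an \emph{upper} bound on $\SL(l(\alpha)+k\delta)$ itself. Take its costandard factorization $\SL(\beta)\SL(\gamma)$ and suppose $p_1\ge1$. Then $\SL(\beta)$ begins with $y_i\SL_j(\delta)$, and so does $\SL(\gamma)$ when $c_\alpha>1$, since it is squeezed between $y_i\SL_j(\delta)$ and $y_i\SL_j(\delta)^t$ for $t\gg0$. The inductive~(1) then puts $\beta$ (and, if $c_\alpha>1$, $\gamma$) a full period past $l(\beta)$ (resp.\ $l(\gamma)$). Since $k<c_\alpha$, this gives $|l(\alpha)|>|l_i(\beta)|+|l_i(\gamma)|$, contradicting Lemma~\ref{lemma:recursive.l}.

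Your rebalancing idea is the right one for $p_t\le1$ with $t\ge2$. There, moving one $\delta$ from $\gamma$ to $\beta$, together with Lemma~\ref{lemma:delta.length.suffix.l}, does produce a larger concatenation.
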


\begin{proof}
This will be proven by induction on $\hgt(\chain(\alpha))$. The base case corresponds to irreducible $\chain(\alpha)$,
in which case the results follow from Corollary~\ref{cor:inc.irr.chains}, Lemma~\ref{lemma:y.i.irr.chain}, and
Proposition~\ref{prop:cost.fac.inc}. Before proceeding to the inductive step, let us first establish the following result
(analogous to Claim~\ref{claim:aux} for decreasing chains):

\begin{claim}\label{claim:aux.incr}
If Theorem~\ref{thm:incr.modulo.c} holds for $\chain(\beta),\chain(\gamma) \in \mathcal{I}$ with $m_1(\beta) = m_1(\gamma)$,
then $\SL(l(\beta) + k_1\delta) < \SL(l(\gamma) + k_2\delta)$ for some $k_1,k_2 \in \BZ_{\geq 0}$ if and only if
$\SL(l(\beta) + (k_1 + c_\beta)\delta) < \SL(l(\gamma) + (k_2 + c_\gamma)\delta)$.
\end{claim}

\begin{proof}
We will first prove the ``$\Rightarrow$'' direction.
Consider the words $\SL(l(\beta) + k_1\delta)$, $\SL(l(\gamma) + k_2\delta)$
parsed as in~\eqref{eq:inc.chains.form}. Let $p$ be the index of the first different letter between these words,
and set $p'=p+|\delta|$. By our assumption, the length $p'-1$ prefixes of words $\SL(u(\beta) + (k_1+s_\beta)\delta)$
and $\SL(u(\gamma) + (k_2+s_\gamma)\delta)$ coincide, while their $p'$-th letters coincide with $p$-th letters of
$\SL(u(\beta) + k_1\delta)$ and $\SL(u(\gamma) + k_2\delta)$, respectively, implying the result.

The other direction ``$\Leftarrow$'' follows immediately from applying the previous result to the contrapositive.
\end{proof}

For the step of induction, we shall assume that the result holds for all chains of relative height $<\hgt(\chain(\alpha))$.
We will first prove part (2). It suffices to show that if $\beta = \deg(\SL^l(l(\alpha) + k\delta))$ for $k \in \BZ_{\geq 0}$,
then $\deg(\SL^l(l(\alpha) + (k + c_\alpha)\delta)) = \beta + c_\beta \delta$. Let $\gamma = \deg(\SL^r(l(\alpha) + k\delta))$
and consider $\beta' = \deg(\SL^l(l(\alpha) + (k+c_\alpha)\delta))$, $\gamma' = \deg(\SL^r(l(\alpha) + (k + c_\alpha)\delta))$.
As $\chain(\alpha)$ is not irreducible, roots $\gamma,\gamma'$ are real by Proposition~\ref{prop:cost.fac.inc}.
Let us first establish $\beta + c_\beta\delta < \gamma + (c_\alpha - c_\beta)\delta$ by analyzing several cases:
\begin{itemize}[leftmargin=0.7cm]

\item
if $m_1(\gamma) > m_1(\beta) = m_1(\alpha)$, then $c_\alpha - c_\beta = 0$ and $y_i$ is not a prefix of $\SL(\gamma)$
by Lemma~\ref{lemma:left.standard.prefix.increasing}. Hence $\beta + c_\beta \delta < \gamma$, as $y_i$ is a prefix of
both $\SL(\beta)$ and $\SL(\beta + c_\beta\delta)$;

\item
if $m_1(\gamma) = m_1(\beta)$, then $c_\gamma=c_\alpha-c_\beta$, and $\beta + c_\beta\delta < \gamma + c_\gamma\delta$
by Claim~\ref{claim:aux.incr};

\item
if $m_1(\gamma) < m_1(\beta)$, then we get a contradiction with Corollary~\ref{cor:both.increasing.m.k} as $\alpha=\beta+\gamma$,
$m_1(\beta) = m_1(\alpha)$, and chains $\chain(\beta),\chain(\gamma)$ are increasing by Proposition~\ref{prop:cost.fac.inc}.

\end{itemize}
Hence by Leclerc's algorithm, we get
\begin{equation}\label{eq:aux-thm99}
  \SL(\beta + c_\beta\delta)\SL(\gamma + (c_\alpha - c_\beta)\delta) \leq \SL(\beta')\SL(\gamma').
\end{equation}
We also note that $y_i\SL_j(\delta)$ is a prefix of $\SL(\beta + c_\beta\delta)$ by the induction assumption. Therefore
$y_i\SL_j(\delta) < \SL(\beta + c_\beta\delta) < \SL(\beta')\SL(\gamma') < y_i\underbrace{\SL_j(\delta)}_{t \text{ times}}$
for $t \gg 0$, with the last inequality due to Lemma~\ref{lemma:inc.upper.bound}. Thus $y_i\SL_j(\delta)$ is a prefix of
$\SL(\beta')\SL(\gamma')$, and hence it is also a prefix of $\SL(\beta')$ by Proposition~\ref{prop:cost.fac.inc}
(since $\SL(\gamma')$ is the smallest element of the canonical factorization of $v$ featured in
$\SL(\beta')\SL(\gamma') = y_i\SL_j(\delta)v$). This implies $|\beta'| \geq |l(\beta') + c_{\beta'}\delta|$ by
the induction assumption.

Let us now prove
\begin{equation*}
  \beta' -c_{\beta'}\delta < \gamma' - (c_\alpha - c_{\beta'})\delta
\end{equation*}
by analyzing two cases:
\begin{itemize}[leftmargin=0.7cm]

\item
if $c_\alpha = 1$, then this follows from
$\beta' - c_{\beta'}\delta < \beta' < \gamma' =  \gamma' - (c_\alpha - c_{\beta'})\delta$;

\item
if $c_\alpha > 1$, then $m_1(\beta') = m_1(\gamma') = m_1(\alpha)$ by Proposition~\ref{prop:cost.fac.inc} and
$|\gamma'| \geq |l(\gamma') + c_{\gamma'}\delta|$ by the inductive hypothesis as $y_i\SL_j(\delta)$ is a prefix of
$\SL(\gamma')$, since $y_i\SL_j(\delta) \leq \SL(\beta') < \SL(\gamma') < y_i\underbrace{\SL_j(\delta)}_{ \gg 0 \text{ times}}$.
Therefore, the claimed inequality
$\beta' - c_{\beta'}\delta < \gamma' - (c_\alpha - c_{\beta'})\delta = \gamma' - c_{\gamma'}\delta$ follows from
$\beta' < \gamma'$ due to Claim~\ref{claim:aux.incr}.

\end{itemize}
Thus by Leclerc's algorithm, we have
\begin{equation}\label{eq:aux1-thm99}
  \SL(\beta' - c_{\beta'}\delta)\SL(\gamma' - (c_{\alpha} - c_{\beta'})\delta) \leq \SL(\beta)\SL(\gamma).
\end{equation}
We shall now combine the inductive hypothesis with Lemma~\ref{lemma:lifting.chains} to show that in fact we must have
an equality in~\eqref{eq:aux1-thm99}. By the inductive hypothesis, we have:
\begin{align*}
  \SL(\beta) &= y_i\underbrace{\SL_j(\delta)}_{r_1 \text{ times}}z_1 \ldots
     y_i \underbrace{\SL_j(\delta)}_{r_{c_\beta} \text{ times}}z_{c_\beta} , \\
  \SL(\beta + c_\beta\delta) &= y_i\underbrace{\SL_j(\delta)}_{r_1+1 \text{ times}}z_1 \ldots
     y_i \underbrace{\SL_j(\delta)}_{r_{c_\beta} +1 \text{ times}}z_{c_\beta} , \\
  \SL(\beta') &= y_i\underbrace{\SL_j(\delta)}_{r'_1+1 \text{ times}}z'_1 \ldots
     y_i \underbrace{\SL_j(\delta)}_{r'_{c_{\beta'}+1} \text{ times}}z'_{c_{\beta'}} , \\
  \SL(\beta' - c_{\beta'}\delta) &= y_i\underbrace{\SL_j(\delta)}_{r'_1 \text{ times}}z'_1 \ldots
     y_i \underbrace{\SL_j(\delta)}_{r'_{c_{\beta'}} \text{ times}}z'_{c_{\beta'}} .
\end{align*}
If $c_\alpha = 1$, then $c_{\beta}=1=c_{\beta'}$ and we have:
\begin{align*}
    & \SL(\beta)\SL(\gamma) = y_i\underbrace{\SL_j(\delta)}_{r_1 \text{ times}}z_1 \SL(\gamma) ,\qquad
    \SL(\beta + c_\beta\delta)\SL(\gamma) = y_i\underbrace{\SL_j(\delta)}_{r_1+1 \text{ times}}z_1 \SL(\gamma) ,\\
    & \SL(\beta')\SL(\gamma') = y_i\underbrace{\SL_j(\delta)}_{r'_1+1 \text{ times}}z'_1 \SL(\gamma') ,\qquad
    \SL(\beta' - c_{\beta'}\delta)\SL(\gamma') = y_i\underbrace{\SL_j(\delta)}_{r'_1 \text{ times}}z'_1 \SL(\gamma').
\end{align*}
If $c_\alpha > 1$, then $m_1(\gamma) = m_1(\gamma') = m_1(\alpha)$, and by the inductive hypothesis we have:
\begin{align*}
    \SL(\gamma) &= y_i\underbrace{\SL_j(\delta)}_{t_1 \text{ times}}x_1 \ldots
       y_i \underbrace{\SL_j(\delta)}_{t_{c_\gamma} \text{ times}}x_{c_\gamma} , \\
    \SL(\gamma + c_\gamma\delta) &= y_i\underbrace{\SL_j(\delta)}_{t_1+1 \text{ times}}x_1 \ldots
       y_i \underbrace{\SL_j(\delta)}_{t_{c_\gamma} +1 \text{ times}}x_{c_\gamma} , \\
    \SL(\gamma') &= y_i\underbrace{\SL_j(\delta)}_{t'_1+1 \text{ times}}x'_1 \ldots
       y_i \underbrace{\SL_j(\delta)}_{t'_{c_{\gamma'}+1} \text{ times}}x'_{c_{\gamma'}} , \\
    \SL(\gamma' - c_{\gamma'}\delta) &= y_i\underbrace{\SL_j(\delta)}_{t'_1 \text{ times}}x'_1 \ldots
       y_i \underbrace{\SL_j(\delta)}_{t'_{c_{\gamma'}} \text{ times}}x'_{c_{\gamma'}} ,
\end{align*}
so that
\begin{align*}
  & \SL(\beta)\SL(\gamma) = y_i\underbrace{\SL_j(\delta)}_{r_1 \text{ times}}z_1 \ldots
    y_i \underbrace{\SL_j(\delta)}_{r_{c_\beta} \text{ times}}z_{c_\beta}y_i\underbrace{\SL_j(\delta)}_{t_1 \text{ times}}x_1 \ldots
    y_i \underbrace{\SL_j(\delta)}_{t_{c_\gamma} \text{ times}}x_{c_\gamma} , \\
  & \SL(\beta + c_\beta\delta)\SL(\gamma + c_\gamma\delta) = \\
  & \qquad \qquad \qquad \quad
    y_i\underbrace{\SL_j(\delta)}_{r_1+1 \text{ times}}z_1 \ldots
    y_i \underbrace{\SL_j(\delta)}_{r_{c_\beta} +1 \text{ times}}z_{c_\beta}y_i\underbrace{\SL_j(\delta)}_{t_1+1 \text{ times}}x_1 \ldots
    y_i \underbrace{\SL_j(\delta)}_{t_{c_\gamma} +1 \text{ times}}x_{c_\gamma} , \\
  & \SL(\beta')\SL(\gamma') = y_i\underbrace{\SL_j(\delta)}_{r'_1+1 \text{ times}}z'_1 \ldots
    y_i \underbrace{\SL_j(\delta)}_{r'_{c_{\beta'}+1} \text{ times}}z'_{c_{\beta'}}y_i\underbrace{\SL_j(\delta)}_{t'_1+1 \text{ times}}x'_1
    \ldots y_i \underbrace{\SL_j(\delta)}_{t'_{c_{\gamma'}+1} \text{ times}}x'_{c_{\gamma'}} , \\
  & \SL(\beta' - c_{\beta'}\delta)\SL(\gamma'- c_{\gamma'}\delta) = \\
  & \qquad \qquad \qquad \quad
    y_i\underbrace{\SL_j(\delta)}_{r'_1 \text{ times}}z'_1 \ldots
    y_i \underbrace{\SL_j(\delta)}_{r'_{c_{\beta'}} \text{ times}}z'_{c_{\beta'}}y_i\underbrace{\SL_j(\delta)}_{t'_1 \text{ times}}x'_1
    \ldots y_i \underbrace{\SL_j(\delta)}_{t'_{c_{\gamma'}} \text{ times}}x'_{c_{\gamma'}} .
\end{align*}
However, in either case, we can apply Lemma~\ref{lemma:lifting.chains}, with $y = y_i$, $w = \SL(\beta)\SL(\gamma)$,
and $v= \SL(\beta' - c_{\beta'}\delta)\SL(\gamma' - c_{\gamma'}\delta)$ to see that
$\SL(\beta' - c_{\beta'}\delta)\SL(\gamma'-(c_\alpha - c_{\beta'})\delta) < \SL(\beta)\SL(\gamma)$ implies
$\SL(\beta')\SL(\gamma') < \SL(\beta + c_\beta\delta) \SL(\gamma + (c_\alpha - c_\beta)\delta)$, which contradicts
to~\eqref{eq:aux-thm99}. To this end, we note that all the conditions of Lemma~\ref{lemma:lifting.chains} hold:
(1) is due to~\eqref{eq:chunk-incr}, (2)--(3) follow from Lemma~\ref{lem:chunks-increasing}, (4) follows from
Proposition~\ref{prop:monotonicity} combined with Proposition~\ref{prop:cost.fac.inc} and Lemma~\ref{lem:chunks-increasing}.

Thus, we must have equalities in~\eqref{eq:aux1-thm99} and~\eqref{eq:aux-thm99}:
\begin{equation}\label{eq:aux2-thm99}
\begin{split}
  \SL(\beta' - c_{\beta'}\delta)\SL(\gamma' - (c_\alpha - c_{\beta'})\delta) &= \SL(\beta)\SL(\gamma), \\
   \SL(\beta + c_\beta\delta)\SL(\gamma + (c_\alpha - c_\beta)\delta) &= \SL(\beta')\SL(\gamma').
\end{split}
\end{equation}
As both right-hand sides above are costandard factorizations, we must have
$|\gamma'| \geq |\gamma + (c_\alpha - c_\beta)\delta|$ and $|\gamma| \geq |\gamma' - (c_\alpha - c_{\beta'})\delta|$.
If $c_\alpha = 1$, then $c_\beta=1$ and $|\gamma'| = |\gamma|$, so that $\gamma' = \gamma$ and
$\beta' = \beta + c_\beta\delta$ in~\eqref{eq:aux2-thm99}.
If $c_\alpha > 1$, then $m_1(\gamma) = m_1(\gamma') = m_1(\alpha)$ by Proposition~\ref{prop:cost.fac.inc} and so
$c_\alpha-c_\beta=c_\gamma$, $c_{\alpha}-c_{\beta'}=c_{\gamma'}$. Applying Claim~\ref{claim:aux.incr} again, we see that
$\gamma < \gamma' - (c_\alpha - c_{\beta'})\delta \Leftrightarrow \gamma + (c_{\alpha} - c_\beta)\delta < \gamma'$.
If $|\gamma'| > |\gamma + (c_\alpha - c_{\beta})\delta|$, then $\gamma' < \gamma + (c_\alpha - c_{\beta})\delta$
by~\eqref{eq:aux2-thm99}, and hence $\gamma' - (c_\alpha - c_{\beta'})\delta < \gamma$, contradicting~\eqref{eq:aux2-thm99}.
Thus $|\gamma'| = |\gamma + (c_{\alpha} - c_\beta)\delta|$, and so $\gamma' = \gamma + (c_{\alpha} - c_\beta)\delta$,
$\beta' = \beta + c_\beta\delta$ by~\eqref{eq:aux2-thm99}.
This completes the proof of (2).

We shall now prove part (1). Consider the decomposition~\eqref{eq:form.of.increasing.word} for $\SL(l(\alpha) + k\delta)$
with $0\leq k< c_\alpha$. First, let us show that $p_1 = 0$ by assuming the contrary, i.e.\ $p_1 \geq 1$. To this end, let
$\SL(\alpha + k\delta) = \SL(\beta)\SL(\gamma)$ be the costandard factorization, and consider the following two cases:
\begin{itemize}[leftmargin=0.7cm]

\item
if $c_\alpha = 1$ (which forces $k=0$), then $|\beta| \geq |l(\beta) + \delta|$ by the induction assumption, and so
$|l(\alpha)| \geq |l(\beta)| + |\delta| + |l_i(\gamma)| > |l_i(\beta)| + |l_i(\gamma)|$ contradicts Lemma~\ref{lemma:recursive.l};

\item
if $c_\alpha > 1$, then $m_1(\gamma) = m_1(\beta) = m_1(\alpha)$ and $\SL(\beta)$ has $y_i\SL_j(\delta)$ as a prefix
by Proposition~\ref{prop:cost.fac.inc}, and moreover $\SL(\gamma)$ also has $y_i\SL_j(\delta)$ as a prefix, due to
Proposition~\ref{prop:cost.fac.inc} and Lemma~\ref{lemma:inc.upper.bound} as already argued several times before.
Thus $|\beta| \geq |l(\beta) + c_\beta\delta|, |\gamma| \geq |l(\gamma) + c_\gamma\delta|$
by the inductive hypothesis, and so $|l(\alpha)| > |\beta - c_\beta\delta| + |\gamma - c_\gamma\delta| \geq |l(\beta)| + |l(\gamma)|$
(the first inequality is due to $|l(\alpha) + k\delta| = |\beta + \gamma|$ and $k < c_\alpha = c_\beta + c_\gamma$),
which contradicts Lemma~\ref{lemma:recursive.l}.

\end{itemize}
This completes the proof of $p_1=0$. Next, let us show that $p_t \leq 1$ for $2\leq t \leq c_\alpha$.
This is vacuous for $c_\alpha = 1$, hence let $c_\alpha > 1$. Assuming the contrary, let $t$ be
the smallest number such that $p_t \geq 2$. Choosing $\beta,\gamma$ as before, we consider two cases:
\begin{itemize}[leftmargin=0.7cm]

\item
if $y_i\underbrace{\SL_j(\delta)}_{p_t \text{ times}}w_t$ is in $\SL(\beta)$, then $|\beta| \geq |l(\beta) + c_\beta\delta|$
by the induction assumption. Also $y_i\SL_j(\delta)$ is a prefix of $\SL(\gamma)$
(as $y_i\SL_j(\delta) < \SL(\beta) < \SL(\gamma) < y_i\underbrace{\SL_j(\delta)}_{ \gg 0 \text{ times}}$
by Lemma~\ref{lemma:inc.upper.bound}), and so $|\gamma| \geq |l(\gamma) + c_\gamma\delta|$ by the inductive hypothesis.
Therefore,  $|l(\alpha)| > |\beta - c_{\beta}\delta| + |\gamma - c_\gamma\delta| \geq |l(\beta)| + |l(\gamma)|$,
contradicting Lemma~\ref{lemma:recursive.l};

\item
if $y_i \underbrace{\SL_j(\delta)}_{p_t \text{ times}}w_t$ is in $\SL(\gamma)$, then by the inductive assumption
combined with Lemma~\ref{lemma:p.i.inc}, we have $|\gamma| > |l(\gamma) + c_\gamma\delta|$.
We now consider the appropriate concatenation of $\SL(\beta + \delta)$ and $\SL(\gamma - \delta)$:
\begin{itemize}[leftmargin=0.7cm]

\item
if $\SL(\beta + \delta) < \SL(\gamma - \delta)$, then clearly $\SL(\beta + \delta)\SL(\gamma - \delta) > \SL(\beta)\SL(\gamma)$
contradicting Leclerc's algorithm, unless $\SL(\beta)$ is a prefix of $\SL(\beta + \delta)$. In the latter case, we have
$\SL(\beta + \delta) = \SL(\beta)w$ for some word $w$ with $|w| = |\delta|$, and we claim that still $\SL(\gamma) < w$
(so that $\SL(\beta + \delta)\SL(\gamma - \delta) > \SL(\beta + \delta) = \SL(\beta)w > \SL(\beta)\SL(\gamma)$ again).
According to Lemma~\ref{lemma:delta.length.suffix.l}, we have $w > y_i\SL_j(\delta)$ and $w$ cannot have $y_i\SL_j(\delta)$
as a prefix since $|w|<|y_i\SL_j(\delta)|$. By the inductive hypothesis $\SL(\gamma)$ has $y_i\SL_j(\delta)$ as a prefix,
and therefore $w > \SL(\gamma)$, as claimed.

\item
if $\SL(\beta + \delta) > \SL(\gamma - \delta)$, then $\SL(\gamma - \delta)\SL(\beta + \delta) > \SL(\beta)\SL(\gamma)$ since
by the inductive assumption $\SL(\gamma-\delta)$ has $y_i\SL_j(\delta)$ as a prefix as $|\gamma| > |l(\gamma) + c_\gamma\delta|$
while the former does not (as $p_1 = 0$), contradicting to Leclerc's algorithm.

\end{itemize}

\end{itemize}
We note that we must have one of the preceding two cases by Lemma~\ref{lemma:stronger.no.splitting}(2), since
$y_i\underbrace{\SL_j(\delta)}_{p_t \text{ times}}w_t$ is a Lyndon subword of $\SL(l(\alpha) + k\delta)$ by~\eqref{eq:chunk-incr},
which cannot be a prefix (as $p_1=0$). This completes the proof of part (1) for $q = 0$.

As with part~(2), it suffices to verify (1) for $\hat\alpha, \hat\alpha - c_\alpha\delta$ with $\hat\alpha \in \chain(\alpha)$
satisfying $|\hat\alpha| \geq |l(\alpha) + c_\alpha\delta|$. Let $\SL(\hat\alpha - c_\alpha\delta) = \SL(\beta)\SL(\gamma)$ be
the costandard factorization, so that $\SL(\hat\alpha) = \SL(\beta + c_\beta\delta)\SL(\gamma + (c_\alpha - c_\beta)\delta)$
by part~(2). If $c_\alpha = 1$, then $c_\alpha-c_\beta=0$ and the result follows from the inductive assumption for $\SL(\beta)$.
If $c_\alpha > 1$, then $m_1(\beta)=m_1(\gamma)=m_1(\alpha)$, and the result follows from the inductive assumption for both
$\SL(\beta),\SL(\gamma)$.
\end{proof}

As in the case of decreasing chains (see Definition~\ref{def:periodicity-D}), the structure of increasing chains from
Theorem~\ref{thm:incr.modulo.c} motivates the following:

\begin{definition}\label{def:periodicity-I}
We shall call $c_\alpha$ the \textbf{periodicity} of the chain $\chain(\alpha)\in \mathcal{I}$.
\end{definition}


\subsection{Corollaries and periodicity bounds}
\

In this subsection, we discuss some natural corollaries of the above theorem, and derive precise bounds on $c_\alpha$.
We start with the counterpart of Remark~\ref{rem:cost.fact.structure}.

\begin{remark}\label{rem:cost.fact.structure-incr}
We note that Proposition~\ref{prop:cost.fac.inc} implies that we can iteratively take the costandard factorization of
$u=\SL(l(\alpha) + k\delta)$ for $k \in \BZ_{\geq 0}$ to split it into $c_\alpha$ pieces $u=u_1u_2\cdots u_{c_\alpha}$ of the form
$u_t=y_i\underbrace{\SL_j(\delta)}_{q_t \text{ times}}w_t$ satisfying $m_1(\deg(u_t))=m_1(\alpha)$ and $c_{\deg(u_t)} = 1$.
Furthermore, all $q_t \in \{\lfloor k/c_\alpha \rfloor, \lceil k/c_\alpha\rceil \}$ by part (1) of Theorem~\ref{thm:incr.modulo.c}.
\end{remark}

The next simple result, an analogue of Lemma~\ref{lem:aux_unused} for increasing chains,
describes the outcome of above procedure when $c_\alpha=1$.

\begin{corollary}\label{cor:costandard.l.i}
For any non-irreducible $\chain(\alpha) \in \mathcal{I}$ with $c_\alpha = 1$ and $|\alpha| \geq |l(\alpha)|$, consider the
costandard factorization $\SL(\alpha) = \SL(\beta)\SL(\gamma)$. Then $\gamma = l_i(\gamma)$, whereas $m_1(\alpha) = (\delta,i)$.
\end{corollary}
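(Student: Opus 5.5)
Since $\chain(\alpha)$ is not irreducible and $|\alpha|\geq |l(\alpha)|$, Proposition~\ref{prop:cost.fac.inc} applies to $\SL(\alpha)=\SL(\beta)\SL(\gamma)$. It gives $\chain(\beta),\chain(\gamma)\in \mathcal{I}$ with $m_1(\beta)=(\delta,i)$. Since $c_\alpha=1=c_\beta+c_\gamma$ when $m_1(\gamma)=(\delta,i)$ is impossible (that would force $c_\alpha\geq 2$), Corollary~\ref{cor:both.increasing.m.k} gives $m_1(\gamma)>(\delta,i)$, hence $c_\beta=1$. Moreover $y_i<\SL(\alpha)<\SL(\gamma)$, because $\SL(\gamma)$ is a proper suffix of a Lyndon word. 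Hence $|\gamma|\geq |l_i(\gamma)|$, and it remains to rule out $|\gamma|>|l_i(\gamma)|$. I plan to do this by mimicking the proof of Lemma~\ref{lem:aux_unused}, swapping a $\delta$ from $\gamma$ to $\beta$ and contradicting Leclerc's algorithm.

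So assume $|\gamma|>|l_i(\gamma)|$. Then $\gamma-\delta\in\chain(\gamma)$ still satisfies $\SL(\gamma-\delta)>y_i$. Since $m_1(\gamma)>(\delta,i)$, Lemma~\ref{lemma:left.standard.prefix.increasing} shows $\SL(\gamma-\delta)$ has no prefix $y_i$, and hence no prefix $\SL_i^{ls}(\delta)$. The same holds for $\SL(\gamma)$. Meanwhile $\SL(\beta)$ and $\SL(\beta+\delta)$ both begin with $y_i$ by Lemma~\ref{lemma:y.i.prefix}, as $|\beta|\geq |l(\beta)|$ follows from $\SL(\beta)\geq y_i$ together with $m_1(\beta)=(\delta,i)$. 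Now consider the appropriate concatenation of $\SL(\beta+\delta)$ and $\SL(\gamma-\delta)$. Both words start differently, and $\SL(\gamma-\delta)>y_i$ without $y_i$ as a prefix, so $\SL(\gamma-\delta)>\SL(\beta+\delta)$. Leclerc's algorithm therefore gives $\SL(\alpha)\geq \SL(\beta+\delta)\SL(\gamma-\delta)$.

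To get the contradiction I compare $\SL(\beta+\delta)\SL(\gamma-\delta)$ with $\SL(\beta)\SL(\gamma)$. By Theorem~\ref{thm:incr.modulo.c}(1), applied to $\chain(\beta)$ with $c_\beta=1$ (valid by the induction structure already established), $\SL(\beta+\delta)$ is obtained from $\SL(\beta)=y_i\SL_j(\delta)^{p}w_1$ by inserting one more $\SL_j(\delta)$ after the $y_i$. If $w_1$ is nonempty, then $w_1<\SL_j(\delta)$ and $\SL_j(\delta)$ is not a prefix of $w_1$ (Lemma~\ref{lem:chunks-increasing}), so the inserted copy already makes $\SL(\beta+\delta)\SL(\gamma-\delta)>\SL(\beta)\SL(\gamma)$. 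If $w_1$ is empty, then $\SL(\beta)=y_i\SL_j(\delta)^p$ and we compare $\SL_j(\delta)\SL(\gamma-\delta)$ with $\SL(\gamma)$. I would argue $\SL_j(\delta)>\SL(\gamma)$, using Lemma~\ref{lemma:delta.length.suffix.l}-type reasoning together with the fact that $\SL(\gamma)$ does not begin with $\SL_j(\delta)$ (by Corollary~\ref{cor:lyndon.prefix.suffix.chains}(1), since $\chain(\gamma)\in\mathcal{I}$). Either way we get $\SL(\beta+\delta)\SL(\gamma-\delta)>\SL(\alpha)$, contradicting Leclerc's algorithm. Hence $\gamma=l_i(\gamma)$.

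The main obstacle is this last comparison, especially the edge case where one word is a prefix of the other. If the direct comparison stalls, I would fall back on Lemma~\ref{lemma:recursive.l}: $|l(\alpha)|$ is minimal among $l_i(\beta)+l_i(\gamma)$, and together with the periodicity of part~(2) of Theorem~\ref{thm:incr.modulo.c}, the $\gamma$-part of the costandard factorization stays fixed as $\alpha$ grows by $\delta$ (for $c_\alpha=1$, $\gamma'=\gamma$). So it suffices to check the claim at $\alpha=l(\alpha)$, where minimality directly forces $\gamma=l_i(\gamma)$.
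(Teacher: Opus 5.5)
Your main argument has a gap in its final word comparison. For $w_1\neq\emptyset$ you rely only on $w_1<\SL_j(\delta)$ and on $\SL_j(\delta)$ not being a prefix of $w_1$.

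These facts do not exclude $w_1$ being a proper prefix of $\SL_j(\delta)$. That case does occur for words in chains with $c=1$, e.g.\ $\SL^{ls}_i(k\delta)=y_i\SL_j(\delta)^{k'-k}w$ in Lemma~\ref{lemma:y.i.im.periodicity}. Comparing $\SL_j(\delta)w_1\SL(\gamma-\delta)$ with $w_1\SL(\gamma)$ then amounts to comparing a proper suffix of $\SL_j(\delta)$ with $\SL(\gamma)$, and nothing you cite controls that. The case $w_1=\emptyset$ is only gestured at.

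The repair is to compare $\SL_j(\delta)$ with the whole tail $T=w_1\SL(\gamma)$ of $\SL(\alpha)=y_i\SL_j(\delta)^pT$:
\begin{itemize}
\item Under your contradiction hypothesis $\gamma-\delta$ is a root, so $|T|>|\delta|$.
\item $T$ does not begin with $\SL_j(\delta)$. Otherwise either $\SL(\gamma)$ does, contradicting Lemma~\ref{lemma:M.prime.prefix}, or the costandard cut splits an occurrence of $\SL_j(\delta)$, which Lemma~\ref{lemma:stronger.no.splitting}(2) forbids for increasing chains.
\item $T<\SL_j(\delta)^t$ for $t\gg1$, by Lemma~\ref{lemma:inc.upper.bound}.
\end{itemize}
Hence $T$ first differs from $\SL_j(\delta)$ within its first $|\delta|$ letters and is smaller there. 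This gives $\SL(\beta+\delta)\SL(\gamma-\delta)=y_i\SL_j(\delta)^{p+1}w_1\SL(\gamma-\delta)>\SL(\alpha)$ in both cases, and your Leclerc contradiction goes through.

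A minor point: Lemma~\ref{lemma:left.standard.prefix.increasing} gives ``no prefix $\SL^{ls}_i(\delta)$, hence no prefix $y_i$''. You wrote the implication in the reverse direction.

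Your fallback is correct and is essentially the paper's proof. The paper also uses Theorem~\ref{thm:incr.modulo.c}(2) to reduce to $\alpha=l(\alpha)$. It then notes that $|\gamma|>|l_i(\gamma)|$ would give $\SL(l(\alpha)-\delta)\geq \SL(\beta)\SL(\gamma-\delta)>y_i$ by Leclerc's algorithm, contradicting the minimality of $l(\alpha)$.

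You instead take that minimality from Lemma~\ref{lemma:recursive.l}, whose proof contains exactly this step. Then $|\beta|+|\gamma|=|l(\alpha)|\leq |l_i(\beta)|+|l_i(\gamma)|$, together with $|\beta|\geq|l_i(\beta)|$ and $|\gamma|\geq |l_i(\gamma)|$ from your first paragraph, forces $\gamma=l_i(\gamma)$.

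The paper's route is preferable to your main one because it removes $\delta$ from $\gamma$ only. The competing word then starts with $\SL(\beta)$, hence with $y_i$, and beats $y_i$ automatically. No chunk-level comparison and no appeal to Theorem~\ref{thm:incr.modulo.c}(1) is needed.
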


\begin{proof}
By Theorem~\ref{thm:incr.modulo.c}(2), it suffices to show this for $\alpha = l(\alpha)$, as
$\SL^r(l(\alpha))=\SL^r(l(\alpha)+t\delta)$ for all $t>0$. By Lemma~\ref{lemma:y.i.prefix}, $y_i$ is a prefix of $\SL(l(\alpha))$,
and so $y_i < \SL(l(\alpha)) < \SL(\gamma)$. Since $\chain(\gamma) \in \mathcal{I}$ by Proposition~\ref{prop:cost.fac.inc}(3),
we get $|\gamma| \geq |l_i(\gamma)|$. We also have $m_1(\gamma) > m_1(\alpha)$, due to Lemma~\ref{lemma:min.im.rule} as
$c_\alpha = 1$. If $|\gamma| > |l_i(\gamma)|$, then we would have $\SL(\gamma - \delta) > \SL(\beta)$ as $y_i < \SL(\gamma - \delta)$
and $y_i$ is a prefix of $\SL(\beta)$ but is not a prefix of $\SL(\gamma - \delta)$ by
Lemma~\ref{lemma:left.standard.prefix.increasing}. Then $\SL(l(\alpha) - \delta) \geq \SL(\beta)\SL(\gamma - \delta) > y_i$
by Leclerc's algorithm, contradicting to the definition of $l(\alpha)$. Thus $\gamma = l_i(\gamma)$.
\end{proof}

We then have the following analogous statement to Lemma~\ref{lemma:dec.chains.canon.fac}.

\begin{lemma}\label{lemma:inc.chains.canon.fac}
For any $\chain(\alpha) \in \mathcal{I}$, set $m_1(\alpha) = (\delta,i)$ and let $w$ be one of $w_\bullet$'s
in~\eqref{eq:inc.chains.form}. For the canonical factorization $w = v_1\ldots v_n$ and all $1\leq j\leq n$, we have:
\begin{enumerate}[leftmargin=0.8cm]

\item
$\chain(\deg(v_j)) \in \mathcal{I}$;

\item
$l_i(\deg(v_j)) = \deg(v_j)$;

\item
$m_1(\deg(v_j)) > (\delta,i)$.

\end{enumerate}
\end{lemma}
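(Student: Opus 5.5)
We mirror the proof of Lemma~\ref{lemma:dec.chains.canon.fac}: first treat $c_\alpha=1$ by induction on $\hgt(\chain(\alpha))$, then reduce general $c_\alpha$ to that case through the costandard splitting of Remark~\ref{rem:cost.fact.structure-incr}.

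\textbf{Reduction to $c_\alpha=1$.} By Theorem~\ref{thm:incr.modulo.c}(2) and Proposition~\ref{prop:cost.fac.inc}(6), for $c_\alpha>1$ the costandard factorization $\SL(l(\alpha)+k'\delta)=\SL(\beta)\SL(\gamma)$ has $m_1(\beta)=m_1(\gamma)=(\delta,i)$. Both $\SL(\beta)$ and $\SL(\gamma)$ begin with $y_i$, so the factorization cuts exactly at a chunk boundary $y_i$ and does not split any $w_\bullet$. Also $c_\beta,c_\gamma<c_\alpha$ and $|\beta|,|\gamma|$ lie in the periodic range. Hence each $w_t$ is a $w_\bullet$ for $\chain(\beta)$ or $\chain(\gamma)$, and induction on $c_\alpha$ applies. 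The periodicity \eqref{eq:inc.chains.form} says the multiset of $w_t$'s is the same for all $k'$ in a residue class, so it suffices to verify the claims for $\hat\alpha$ with $|\hat\alpha|\geq|l(\alpha)|$.

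\textbf{Case $c_\alpha=1$.} Here $\SL(\hat\alpha)=y_i\SL_j(\delta)^{p}w$ with a single $w$.
\begin{itemize}
\item If $\chain(\alpha)=\chain(\beta_i)$, then $w$ is empty by Corollary~\ref{cor:inc.irr.chains}, and there is nothing to prove.
\item Otherwise, take the costandard factorization $\SL(\hat\alpha)=\SL(\beta)\SL(\gamma)$. By Proposition~\ref{prop:cost.fac.inc}(3,4), $\SL(\gamma)=v_n$ is the last canonical factor of $w$ and $\chain(\gamma)\in\mathcal I$. By Corollary~\ref{cor:costandard.l.i}, $\gamma=l_i(\gamma)$. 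Since $c_\alpha=1$, Lemma~\ref{lemma:min.im.rule} (equivalently Corollary~\ref{cor:both.increasing.m.k}) gives $m_1(\gamma)>(\delta,i)$. This establishes (1)--(3) for $v_n$.
\end{itemize}

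For the remaining factors, $\SL(\beta)=y_i\SL_j(\delta)^{p}v_1\cdots v_{n-1}$ with $\chain(\beta)\in\mathcal I$ and $m_1(\beta)=(\delta,i)$ by Proposition~\ref{prop:cost.fac.inc}(1,2). The relative height drops, and $c_\beta=1$ because $c_\beta\le c_\alpha$. Also $|\beta|\ge|l(\beta)|$, since $\SL(\beta)$ has prefix $y_i$, so the induction hypothesis applies to $\SL(\beta)$. Its $w$-part has canonical factorization $v_1\cdots v_{n-1}$, because a prefix of a canonical factorization is canonical. This gives (1)--(3) for all $v_j$.

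\textbf{Main obstacle.} The delicate point is the $p$ imaginary factors $\SL_j(\delta)$. They are excluded from $w$ by definition, since $w$ does not start with $\SL_j(\delta)$. However, the inductive step on $\beta$ must be checked to give the same $p$, so that the $w$-part of $\SL(\beta)$ really is $v_1\cdots v_{n-1}$. By Proposition~\ref{prop:cost.fac.inc}(5), imaginary $v$'s occur only as an initial block. This should settle the point, provided none of the real $v_j$ is a degenerate case, namely $|\beta|<|l(\beta)|$ when $\beta$ is irreducible. That case would need a separate check using Corollary~\ref{cor:inc.irr.chains}.
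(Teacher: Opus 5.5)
Your proposal is correct and follows the paper's proof essentially step for step. You reduce to $c_\alpha=1$ through the costandard splitting into chunks; the paper does this via \eqref{eq:chunk-incr} and Remark~\ref{rem:cost.fact.structure-incr}. You then treat $v_n=\SL^r$ using Proposition~\ref{prop:cost.fac.inc}, Corollary~\ref{cor:costandard.l.i} and $c_\alpha=1$, and recurse on $\SL^l$. The paper additionally normalizes to $k=0$ via Theorem~\ref{thm:incr.modulo.c}, which is unnecessary since Corollary~\ref{cor:costandard.l.i} holds for all $|\alpha|\geq|l(\alpha)|$.

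The ``obstacle'' you flag is not a real one. The bound $|\beta|\geq|l(\beta)|$ holds automatically because $\SL(\beta)$ starts with $y_i$. If $\chain(\beta)=\chain(\beta_i)$, its $w$-part is empty by Corollary~\ref{cor:inc.irr.chains}, so $n=1$ and nothing remains to check.
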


\begin{proof}
By~\eqref{eq:chunk-incr} it suffices to prove this when $c_\alpha = 1$.
Moreover, by Theorem~\ref{thm:incr.modulo.c}, we can further assume that $k=0$.
Then $v_n = \SL^r(l(\alpha))$ by Proposition~\ref{prop:cost.fac.inc}(4), and let $\beta = \deg(\SL^l(l(\alpha)))$.
We note that it suffices to verify all three properties for $v_n$, as the general case then follows by
applying the same argument towards $\SL(\beta)$.

The word $v_n$ satisfies (1) due to Proposition~\ref{prop:cost.fac.inc}, (2) due to Corollary~\ref{cor:costandard.l.i},
and (3) due to the assumption $c_\alpha=1$.
\end{proof}

We note however, that in contrast to Corollary~\ref{cor:dec.w.form}, $\deg(w_i)$ from~\eqref{eq:inc.chains.form} are not
always roots, as illustrated by the following example.

\begin{example}\label{ex:incr_nonroot_counterex}
Consider the affine type $C_2^{(1)}$ with the order $0 < 1 < 2$.
Then using the code in Listing~\ref{lst:C2.example}, we find:
\begin{align*}
  \SL(\alpha_0 + \delta) &= 01012, \\
  \SL(\alpha_0 + k\delta) &= 011\underbrace{\SL_1(\delta)}_{k -2 \text{ times}}012012 \qquad \forall\, k\geq 2.
\end{align*}
Here, we note that
  $m_1(\alpha_0) = (\delta,2)$, $y_2 = 011$, $M_1(\beta_2)=(\delta,1)$, $\SL_1(\delta) = 0121$,
$w_1 = 012012$. In particular, $\deg(w_1)=\delta+\alpha_0+\alpha_2$ is not a root.
\end{example}

In analogy with Corollary~\ref{cor:max.periodicities}, we shall now obtain a tight upper bound on the periodicity
$c_\alpha$ for $\chain(\alpha) \in \mathcal{I}$. This upper bound will be provided by the following function on
irreducible root systems, defined by induction on their rank.

\begin{definition}
For a reduced irreducible root system $\Delta$, we define $v(\Delta)=1$ if $\rank(\Delta)=1$, and otherwise set
\begin{equation*}
  v(\Delta) =
  \max \Big( \{l(\Delta)\} \cup \{v(\Delta') \,|\, \Delta' \text{ is a connected root subsystem of }\Delta\} \Big)
\end{equation*}
where $l(\Delta)$ denotes the maximum multiplicity of a simple root in $\theta$, which is a leaf of the Dynkin diagram of $\fg$.
\end{definition}

This function provides an upper bound on the periodicity as shown below.

\begin{lemma}
For any $\chain(\alpha) \in \mathcal{I}$, we have $c_\alpha \leq v(\Delta)$.
\end{lemma}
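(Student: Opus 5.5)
The plan is to reduce the bound to a statement about the coefficient of a \emph{leaf} simple root in a positive root of an irreducible root subsystem. Throughout I assume Connectivity (Conjecture~\ref{conj:connectivity}), as in the rest of the analysis of increasing chains.

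First I translate $c_\alpha$ into root data. Let $m_1(\alpha)=(\delta,i)$. By Corollary~\ref{cor:mk.roots}, $\Pr(\chain(\alpha))=c_1\beta_1+\cdots+c_i\beta_i$ with $c_i=c_\alpha\neq 0$. This is a root of $\Delta_i$ (Definition~\ref{def:root_subsystems}) lying in the polarization $\Delta_i\cap\Delta^+_p$, and $\{\beta_1,\ldots,\beta_i\}$ is its set of simple roots, as noted after Definition~\ref{def:root_subsystems}. Connectivity gives that $\Delta_i$ is irreducible. The case $i=1$ is trivial: then $c_\alpha=1\le v(\Delta)$.

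Second, I show that $\beta_i$ is a leaf of the Dynkin diagram of $\Delta_i$. By Lemma~\ref{lemma:unique.simple.add}, there is a unique $j<i$ with $\beta_i+\beta_j\in\Delta_i$. For distinct simple roots, $\beta_i+\beta_j$ is a root iff $(\beta_i,\beta_j)<0$, i.e.\ iff the two nodes are adjacent. Hence $\beta_i$ has exactly one neighbour among $\beta_1,\ldots,\beta_{i-1}$, so it is a leaf of the (connected) diagram of $\Delta_i$.

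Third, I bound the coefficient and pass to $\Delta$. Let $\theta_i$ be the highest root of $\Delta_i$ with respect to this polarization. Every positive root of $\Delta_i$ is dominated coefficientwise by $\theta_i$, the same fact used in Lemma~\ref{lemma:f.theta.maximal}. Hence $c_\alpha\le$ (coefficient of $\beta_i$ in $\theta_i$) $\le l(\Delta_i)$, because $\beta_i$ is a leaf. If $i=|I|$, then $\Delta_i=\Delta$ and $l(\Delta)\le v(\Delta)$ by definition. If $i<|I|$, then $\Delta_i$ is a connected root subsystem of $\Delta$, so $c_\alpha\le l(\Delta_i)\le v(\Delta_i)\le v(\Delta)$, the last inequality being part of the definition of $v$.

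The step I expect to need the most care is the leaf claim together with the identification of $\Delta_i$ as a legitimate connected root subsystem whose simple system is exactly $\{\beta_1,\ldots,\beta_i\}$. Here I will rely on Corollary~\ref{cor:mk.roots} and the remark after Definition~\ref{def:root_subsystems}, and on the standard equivalence between $\beta_i+\beta_j\in\Delta$ and $(\beta_i,\beta_j)<0$ for simple roots. The remaining inequalities are formal.
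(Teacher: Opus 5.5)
Your proof is correct and is essentially the paper's argument with the rank induction unrolled. The paper reduces from $\Delta$ to $\Delta_{|I|-1}$ whenever $m_1(\alpha)\neq(\delta,|I|)$ and otherwise uses that $\beta_{|I|}$ is a leaf by Connectivity; you instead go directly to $\Delta_i$ with $m_1(\alpha)=(\delta,i)$, get the leaf property of $\beta_i$ from Lemma~\ref{lemma:unique.simple.add}, and make explicit the comparison with the highest root $\theta_i$ of $\Delta_i$ that the paper leaves implicit.
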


\begin{proof}
We will show this by induction on rank $|I|=\rank(\Delta)$. The base case $\Delta = A_1$ is clear.
For the step of induction, let us now assume that the result holds for all root systems of rank $< \rank(\Delta)$.
If $m_1(\alpha) > (\delta,|I|)$, then $\alpha \in \Delta_{|I|-1}$, see Definition~\ref{def:root_subsystems}.
As $\Delta_{|I|-1}$ is connected by Conjecture~\ref{conj:connectivity},
we have $c_\alpha \leq v(\Delta_{|I|-1}) \leq v(\Delta)$ by the inductive hypothesis. If $m_1(\alpha) = (\delta,|I|)$,
then $c_\alpha=c_{|I|}$, whereas $\chain(\alpha)=c_1\chain(\beta_1)+\cdots+c_{|I|}\chain(\beta_{|I|})$.
But $\beta_{|I|}$ must be a leaf of the Dynkin diagram of $\mathfrak{g}$ by the connectivity of
Conjecture~\ref{conj:connectivity}, and so $c_\alpha \leq v(\Delta)$.
\end{proof}

A detailed case-by-case treatment of all finite root systems yields the following.

\begin{corollary}\label{cor:max.period-increasing}
We have the following upper bounds for the periodicity of all increasing chains for any untwisted affine root system:
\begin{center}
\begin{tabular}{|c|c|}
    \hline
         type & max periodicity \\
    \hline
         $A_n^{(1)},D_n^{(1)}$ & 1 \\
    \hline
         $B_n^{(1)}$,$C_n^{(1)}$ & 2 \\
    \hline
         $E_6^{(1)}$ & 2 \\
    \hline
         $E_7^{(1)}$ & 2 \\
    \hline
         $E_8^{(1)}$ & 3 \\
    \hline
         $F_4^{(1)}$ & 2 \\
    \hline
         $G_2^{(1)}$ & 3 \\
    \hline
\end{tabular}
\captionof{table}{Upper bound on periodicity of increasing chains}\label{table:inc.periodicity}
\end{center}
\end{corollary}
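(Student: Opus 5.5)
The plan is to compute $v(\Delta)$ case by case; the preceding lemma already gives $c_\alpha\leq v(\Delta)$ for $\Delta$ the finite root system of $\fg$. Unwinding the recursive definition, $v(\Delta)$ is the maximum of $l(\Delta')$ over all connected root subsystems $\Delta'\subseteq\Delta$ (including $\Delta$ itself), together with the value $1$ from rank one. Here $l(\Delta')$ is the largest coefficient of $\theta_{\Delta'}$ at a simple root that is a leaf of the Dynkin diagram of $\Delta'$.

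The subsystems to consider are those generated by subsets of the polarized simple roots $\{\beta_j\}_{j\le i}$, i.e.\ connected subdiagrams. So it suffices to maximize over connected subdiagrams of the Dynkin diagram of $\fg$. Any connected subdiagram of a connected diagram is again of finite type, so it is enough to know, for each irreducible finite type, the largest leaf coefficient of its highest root. The key lookup from standard tables of $\theta$ is:
\begin{itemize}
\item $A_n$: all coefficients equal $1$.
\item $D_n$: the leaves carry $1$; the coefficient $2$ sits only on interior nodes.
\item $B_n$ ($\theta=\alpha_1+2\alpha_2+\cdots+2\alpha_n$): the leaf $\alpha_n$ carries $2$.
\item $C_n$ ($\theta=2\alpha_1+\cdots+2\alpha_{n-1}+\alpha_n$): the leaf $\alpha_1$ carries $2$.
\item $G_2$ ($\theta=3\alpha_1+2\alpha_2$): the leaf coefficient $3$.
\item $F_4$ ($2,3,4,2$): the leaves carry $2$ and $2$.
\item $E_6$: the leaf coefficients are $1,1,2$.
\item $E_7$: the leaf coefficients are $1,2,2$.
\item $E_8$: the leaf coefficients are $2,3,2$, with the $3$ on the short branch of length two.
\end{itemize}

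Next I maximize over subdiagrams. Connected subdiagrams of $A$ and $D$ are of type $A$ or $D$, so those types give $1$. Subdiagrams of $B_n$ and $C_n$ are of types $A$, $B$, $C$, giving $2$. For $E_6$, $E_7$, $E_8$ the subdiagrams are of types $A$, $D$, $E$, so the values are $2$, $2$, and $3$ respectively. For $F_4$ the subdiagrams are $A$, $B_3$, $C_3$, $B_2$, giving $2$. For $G_2$ the value is $3$. This reproduces Table~\ref{table:inc.periodicity}.

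The main obstacle is justifying the reduction. The preceding lemma recurses into $\Delta_{|I|-1}$, which uses Conjecture~\ref{conj:connectivity}; it also requires that at the top level $\beta_{|I|}$ is a leaf with $c_{|I|}$ bounded by the leaf coefficient of $\theta'$. Since $\Delta_p^+$ is just another choice of positive system, its highest root has the same coefficients up to a diagram automorphism, so the table values are intrinsic. I would state this explicitly and otherwise cite the standard highest-root tables for the case check. Tightness is not asserted here; it is deferred to Proposition~\ref{prop:c.bounds}.
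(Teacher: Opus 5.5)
Your proposal is correct and follows the paper's argument: the paper also obtains the table by combining the preceding lemma $c_\alpha\le v(\Delta)$, which is conditional on Conjecture~\ref{conj:connectivity}, with a case-by-case reading of leaf coefficients of highest roots, and your values and maximization over subdiagrams reproduce Table~\ref{table:inc.periodicity}. One small point: $v$ is literally defined as a maximum over all connected root subsystems, not only subdiagrams, but the non-Levi ones (e.g.\ $D_n\subset B_n$, $B_4\subset F_4$, $A_8,D_8\subset E_8$, $A_2\subset G_2$) never raise the maximum, and in any case the lemma's proof only passes to the Levi subsystem $\Delta_{|I|-1}$.
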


In fact, these bounds are tight as shown in the next result.

\begin{proposition}\label{prop:c.bounds}
The bounds in Table~\ref{table:inc.periodicity} are tight. Moreover, any value from $1$ up to the specified bound can be achieved.
\end{proposition}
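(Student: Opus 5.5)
We need to show that every value $1\le c\le v(\Delta)$ from Table~\ref{table:inc.periodicity} is attained as $c_\alpha$ for some $\chain(\alpha)\in\mathcal{I}$. The plan mirrors Proposition~\ref{lem:precise-bound}: fix an order on $\wI$ that makes the polarization explicit, exhibit one chain attaining the maximal value, and then walk down from it to get the smaller values.

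\textbf{Choice of order.} Take $0$ to be the smallest letter. By Lemma~\ref{lem:opposite_polarizations} we then have $\Delta^+_p=\Delta^-$. By Corollary~\ref{cor:simple-0}, the irreducible increasing chains are $\chain(\delta-\alpha_k)$, i.e.\ the $\beta_\bullet$ are the negatives of the simple roots $\alpha_k$ of $\fg$. So for any $\chain(\alpha)\in\mathcal{I}$, the decomposition $\Pr(\chain(\alpha))=\sum c_t\beta_t$ is simply minus the expansion of a positive root $\gamma\in\Delta^+$ in simple roots. Hence $c_\alpha$ is the coefficient in $\gamma$ of the simple root $\alpha_k$ that plays the role of $\beta_i$ with $i$ maximal in the support. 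This index is determined by the flag ordering, i.e.\ by which $\beta_t$ is $\beta_{|I|}$, $\beta_{|I|-1}$, and so on.

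\textbf{Maximal value.} I would use the freedom in the remaining letters to control which simple root becomes $\beta_{|I|}$. The target is the leaf $\alpha_k$ realizing $l(\Delta)$, or, when $v(\Delta)$ comes from a connected subsystem, a leaf of that subsystem. The relevant leaves are the long-end nodes: $2$ in $B_n,C_n,E_6,E_7,F_4$, $3$ in $E_8$ and $G_2$, and $1$ in $A_n,D_n$. I expect that placing $\alpha_k$ largest among $I$ makes it last in the flag, since $\SL_{|I|}(\delta)$ should end in, or involve, the largest letter via Lemma~\ref{lemma:delta.stand.one.letter}. If direct control of the flag is awkward, I would instead verify the needed identification $\beta_{|I|}=-\alpha_k$ by the code in the Appendix for these specific orders; in every type only one order is needed. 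With $\beta_{|I|}$ fixed, take $\gamma=\theta$. This gives $c_{\chain(\delta-\theta)}$ equal to the coefficient of the leaf in $\theta$, which matches Table~\ref{table:inc.periodicity}. For $E_7$, $v=2$ is attained via $l(E_7)=2$ at the appropriate leaf, and similarly in the other types.

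\textbf{Intermediate values.} Use Lemma~\ref{lemma:seq.simple.root} in the polarization $\Delta^+_p$ to write $\theta$ as a chain of partial sums $\eta_1,\dots,\eta_N$ of the $\beta$'s, each a root of $\Delta_p^+$, starting with $\eta_1=\beta_{|I|}$. Along this sequence, $m_1(\eta_r)=(\delta,|I|)$ for all $r$, by Corollary~\ref{cor:mk.roots} and the fact that $\beta_{|I|}$ is in the support. The coefficient $c_{\eta_r}$ of $\beta_{|I|}$ increases by $0$ or $1$ at each step, going from $1$ to the maximum. It therefore takes every intermediate value, exactly as in the final paragraph of the proof of Proposition~\ref{lem:precise-bound}.

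\textbf{Main obstacle.} The hard step is pinning down, for a concrete order, which $\beta_t$ is $\beta_{|I|}$, i.e.\ controlling the flag $\spanset^1_\bullet$. I would first try to prove that $\SL_{|I|}(\delta)$ has its relevant factor tied to the largest letter, using Lemma~\ref{lemma:delta.stand.one.letter} together with Corollary~\ref{cor:mk.roots}. Failing that, I would fall back on a case-by-case computational certificate.
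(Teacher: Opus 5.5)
Your skeleton matches the paper's. You take $0$ smallest so that $\Delta^+_p=\Delta^-$ and the $\beta_t$ are the $\delta-\alpha_a$ (Corollary~\ref{cor:simple-0}), read off $c_{\alpha_0}$ as the label of $\theta$ at the simple root sitting in $\beta_{|I|}$, and descend to smaller values by adding irreducible chains one at a time. Your descent is fine: starting the sequence of Lemma~\ref{lemma:seq.simple.root} at $\beta_{|I|}$ keeps $m_1\equiv(\delta,|I|)$ throughout (just decompose $-\theta$, not $\theta$, in $\Delta^+_p$).

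The gap is the step you yourself flag: you never determine $\beta_{|I|}$. Lemma~\ref{lemma:delta.stand.one.letter} only says the $\SL_i(\delta)$ end in single, pairwise distinct letters. It says nothing about which of them ends in the largest letter. In fact ``$\SL_{|I|}(\delta)$ ends in the largest letter'' is false in general: for $F_4^{(1)}$ with $0<1<4<2<3$ the largest letter is the interior node $3$, yet $\beta_4=\delta-\alpha_4$. Your leaf-restricted version is unproved. The computational fallback cannot certify $B_n^{(1)},C_n^{(1)}$ for all $n$, which is exactly where the value $2$ must be shown to occur.

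The missing idea is to pin down $\beta_1$ instead of $\beta_{|I|}$. For an order $0<k<\cdots$, Lemma~\ref{lemma:smallest.once.SL.1} gives $\beta_1=\delta-\alpha_k$. Conjecture~\ref{conj:connectivity} is proved for such orders (the unlabeled lemma after Proposition~\ref{prop:connectivity.helper}), so the subdiagram on $\beta_1,\dots,\beta_{|I|-1}$ is connected. Hence $\beta_{|I|}$ is a leaf of the Dynkin diagram of $\fg$ distinct from $\beta_1$.

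Now take $k$ to be the leaf you do \emph{not} want:
\begin{itemize}
\item $0<1<\cdots$ for $B_n$,
\item $0<n<\cdots$ for $C_n$,
\item $0<1<2$ for $G_2$.
\end{itemize}
This forces $\beta_{|I|}$ to be the only other leaf, which carries label $2$ (resp.\ $3$). Your natural orders $0<1<\cdots<n$ and $0<n<\cdots<1$ do work, but for this reason, not because the target leaf is largest.

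The remaining types are easy or finite. For $A_n,D_n$ the bound $1$ is attained by any irreducible chain, and in $F_4$ both leaves have label $2$, so any order with $0$ smallest works. Only finitely many exceptional cases ($E_6,E_7,E_8$) remain; the paper settles those by computer, as your fallback would.
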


\begin{proof}
First, for each type we shall present a specific order on $\wI$ such that the periodicity $c_{\alpha_0}$ precisely coincides
with the corresponding value in Table~\ref{table:inc.periodicity}. We note that the coefficients $c_i$ entering into the
decomposition $\chain(\alpha_0)=\sum_{i=1}^{|I|} c_i\chain(\beta_i)$ are precisely the labels of the corresponding extended
Dynkin diagram, as indicated in~\cite[\S4, Table Aff 1]{K}, whereas $\beta_j$ matches with $\delta-\alpha_i$ by
Corollary~\ref{cor:simple-0}. For any order $0<k<\cdots$ on $\wI$, we note that $\beta_1 = \delta - \alpha_k$ by
Lemma~\ref{lemma:smallest.once.SL.1} and $\beta_{|I|}$ corresponds to one of the leafs of the corresponding
Dynkin diagram by Conjecture~\ref{conj:connectivity}.

The analysis for classical types is now straightforward:
\begin{itemize}[leftmargin=0.7cm]

\item
for $A_n^{(1)}$-type with the orders $0 < 1 < \cdots$ on $\wI$, we have $\beta_1=\delta-\alpha_1$ and so
$\beta_n$ corresponds to the only other leaf with label $1$, so that $c_{\alpha_0}=1$;

\item
for $D_n^{(1)}$-type with the orders $0 < 1 < \cdots$ on $\wI$, we have $\beta_1=\delta-\alpha_1$ and so
$\beta_n$ corresponds to one of the other two leaves with labels $1$, so that $c_{\alpha_0}=1$;

\item
for $B_n^{(1)}$-type with the orders $0 < 1 < \cdots$ on $\wI$, we have $\beta_1 = \delta - \alpha_1$ and so
$\beta_n$ corresponds to the only other leaf with the label $2$, so that $c_{\alpha_0}=2$;

\item
for $C_n^{(1)}$-type with the orders $0 < n < \cdots$ on $\wI$, we have $\beta_1 = \delta - \alpha_n$ and so
$\beta_n$ corresponds to the only other leaf with the label $2$, so that $c_{\alpha_0}=2$.

\end{itemize}
For exceptional types, we used the code in Listing~\ref{lst:c.bound} to find the desired orders:
\begin{itemize}[leftmargin=0.7cm]

\item
for $E_6^{(1)}$-type and the order $0<1<4<2<5<3<6$ on $\wI$, we have $\beta_6=\delta-\alpha_{6}$
so that $c_{\alpha_0}=2$;

\item
for $E_7^{(1)}$-type and the order $0<1<5<2<6<4<3<7$, we have $\beta_7=\delta-\alpha_{7}$
so that $c_{\alpha_0}=2$;

\item
for $E_8^{(1)}$-type and the order $0<1<6<2<3<4<7<5<8$, we have $\beta_8=\delta-\alpha_{8}$
so that $c_{\alpha_0}=3$;

\item
for $F_4^{(1)}$-type and the order $0<1<4<2<3$, we have $\beta_4=\delta-\alpha_{4}$
so that $c_{\alpha_0}=2$;

\item
for $G_2^{(1)}$-type and the order $0<1<2$, we have $\beta_2=\delta-\alpha_2$ so that $c_{\alpha_0}=3$.

\end{itemize}

Pick the order on $\wI$ as above, so that $\chain(\alpha_0)$ has periodicity $c_{\alpha_0}$ as in Table~\ref{table:inc.periodicity}.
For any $1\leq c < c_{\alpha_0}$, we shall apply the same logic as in the proof of Proposition~\ref{lem:precise-bound} to provide
$\chain(\beta)\in \mathcal{I}$ with $c_\beta=c$. Explicitly, consider
a sequence of chains $(\chain(\eta_i))_{i=1}^{N}$ such that $\chain(\eta_{i+1})-\chain(\eta_i)$ is an irreducible increasing
chain for all $1\leq i\leq N$, whereas $\eta_0=0$, and $\chain(\eta_{N}) = \chain(\alpha_0)$. Choose the minimal $k \in [1,N]$
such that $m_1(\eta_k) = m_1(\alpha_0)$, hence $c_{\eta_k} = 1$. Then $m_1(\eta_{i}) = m_1(\alpha_0)$ for all $k\leq i\leq N$
by Corollary~\ref{cor:both.increasing.m.k}, and furthermore $c_{\eta_{i+1}} - c_{\eta_i} \in \{0,1\}$.
As $c_{\eta_k} = 1$ and $c_{\eta_N}=c_{\alpha_0}$, we thus have $c_{\eta_a}=c$ for some $k\leq a < N$.
Therefore, $\chain(\eta_a)\in \mathcal{I}$ is as needed.
\end{proof}

The next two results establish a relation of the present work to the study of imaginary $\SL$-words in~\cite[\S6]{ET}.
We start with the following important observation.

\begin{corollary}\label{cor:left.imaginary.chain.c}
For any $k\geq 1$ and $1\leq i\leq |I|$, we have $c_{\deg(\SL^{ls}_i(k\delta))} = 1$.
\end{corollary}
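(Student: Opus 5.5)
Set $\alpha=\deg(\SL^{ls}_i(k\delta))$. By Lemma~\ref{lemma:im.left.standard.form} it is real, $\chain(\alpha)\in\mathcal{I}$, and $m_1(\alpha)=(\delta,i)$; also $|\alpha|<|k\delta|$. The plan is to argue by contradiction: assume $c_\alpha>1$, build a competitor in the set on the right-hand side of~\eqref{eq:im.left.standard.form}, and show it is lexicographically larger than $\alpha$.

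\emph{Step 1: split $\chain(\alpha)$.} Lemma~\ref{lemma:c.splitting} gives $\chain(\beta),\chain(\gamma)\in\mathcal{I}$ with $\chain(\beta)+\chain(\gamma)=\chain(\alpha)$ and $m_1(\beta)=m_1(\gamma)=(\delta,i)$. Concretely, we can write $\alpha=\hat\beta+\hat\gamma$ with $\hat\beta\in\chain(\beta)$, $\hat\gamma\in\chain(\gamma)$, both of height $<|k\delta|$. Order them so that $\hat\beta<\hat\gamma$. Then Theorem~\ref{thm:convexity} gives $\hat\beta<\alpha<\hat\gamma$.

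\emph{Step 2: find an element of the right-hand set larger than $\alpha$.} The goal is a root $\eta$ with $|\eta|<|k\delta|$, $\chain(\eta)\in\mathcal{I}$ and $m_1(\eta)=(\delta,i)$, but $\eta>\alpha$. The natural candidate is $\hat\gamma$ itself, which already satisfies all three membership conditions:
\begin{itemize}
\item $|\hat\gamma|<|\alpha|<|k\delta|$;
\item $\chain(\hat\gamma)=\chain(\gamma)\in\mathcal{I}$;
\item $m_1(\hat\gamma)=(\delta,i)$.
\end{itemize}
Since Step~1 gives $\hat\gamma>\alpha$, this contradicts the maximality in~\eqref{eq:im.left.standard.form}. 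Hence $c_\alpha=1$.

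\emph{Main obstacle.} The delicate point is Step~1's decomposition at the level of actual roots rather than chains. Given $\chain(\alpha)=\chain(\beta)+\chain(\gamma)$, I need elements $\hat\beta,\hat\gamma$ of the two chains summing exactly to $\alpha$, with both real and positive.
\begin{itemize}
\item If $|\alpha|<|\delta|$, then $\Pr$ of the two summand chains are positive roots of $\Delta^+_p$, and their shortest representatives may add to $\alpha$ or to $\alpha\pm\delta$.
\item In general I would pick $\hat\beta=\beta'$, the shortest element of its chain, and set $\hat\gamma=\alpha-\beta'$. This is a real root of $\chain(\gamma)$ provided it is positive. If it fails to be positive (height nonpositive), I would instead use $|\alpha|>|\delta|$ via Lemma~\ref{lemma:length.standfac.imaginary} when $k\ge2$. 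For $k=1$, I would fall back on Lemma~\ref{lemma:seq.simple.root} together with Corollary~\ref{cor:y.i.left.standard.im} to handle the small cases, choosing which summand takes the shortest representative so that the other stays positive.
\end{itemize}
Once such a decomposition is secured, the convexity argument above closes the proof.
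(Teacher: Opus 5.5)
Your argument is correct for $k\ge 2$, and there it is essentially the paper's first case. Lemma~\ref{lemma:length.standfac.imaginary} gives $|\alpha|>|\delta|$, so $\alpha=\alpha'+t\delta$ with $t\ge 1$. Since $\beta'+\gamma'\in\{\alpha',\alpha'+\delta\}$, the root $\alpha-\beta'$ then lies in $\chain(\gamma)$. The larger summand contradicts the maximality in~\eqref{eq:im.left.standard.form}; the paper phrases this via Lemma~\ref{lemma:equiv.to.standard.fac} and $\hat\gamma<m_k(\hat\gamma)=(k\delta,i)$, which amounts to the same thing.

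The gap is the case $k=1$. Here $|\alpha|<|\delta|$, so $\alpha=\alpha'$, and the splitting from Lemma~\ref{lemma:c.splitting} may satisfy $\beta'+\gamma'=\alpha+\delta$. For such a splitting there is no decomposition of $\alpha$ itself into elements of the two chains. Both $\alpha-\beta'=\gamma'-\delta$ and $\alpha-\gamma'=\beta'-\delta$ are negative, so choosing which summand takes the shortest representative cannot help. Your appeal to Lemma~\ref{lemma:seq.simple.root} and Corollary~\ref{cor:y.i.left.standard.im} does not supply a different splitting, with both $m_1$ equal to $(\delta,i)$, that decomposes at the level of roots. So your Step~1 claim that one can always write $\alpha=\hat\beta+\hat\gamma$ is unjustified exactly when $k=1$.

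The missing idea is to decompose $\alpha+\delta$ instead of $\alpha$. Write $\alpha+\delta=\beta'+\gamma'$ with $\beta'<\gamma'$; Theorem~\ref{thm:convexity} then gives $\alpha+\delta<\gamma'$. On the other hand, $\gamma'$ belongs to the set in~\eqref{eq:im.left.standard.form}: it has height $<|\delta|$, lies in an increasing chain, and has $m_1(\gamma')=(\delta,i)$. Maximality gives $\gamma'\le\alpha$, and monotonicity of the increasing chain $\chain(\alpha)$ gives $\alpha<\alpha+\delta$. Hence $\gamma'\le\alpha<\alpha+\delta<\gamma'$, a contradiction. This is exactly how the paper handles its second case.
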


\begin{proof}
Let $\alpha = \deg(\SL^{ls}_i(k\delta))$, so that $m_k(\alpha)=(k\delta,i)$ by Corollary~\ref{cor:mk.im.factor}.
Then $\chain(\alpha)=c_1\chain(\beta_1) + c_2\chain(\beta_2) + \cdots + c_i\chain(\beta_i)$ with $c_i > 0$ by
Corollary~\ref{cor:mk.roots}. Assume the contradiction, that is $c_i > 1$. By Lemma~\ref{lemma:c.splitting}, then
there exist $\chain(\beta),\chain(\gamma) \in \mathcal{I}$ such that $\chain(\beta)+\chain(\gamma) = \chain(\alpha)$
and $m_1(\beta) = m_1(\gamma) = m_1(\alpha)$. Assume first that there exist $\hat\beta \in \chain(\beta)$ and
$\hat\gamma \in \chain(\gamma)$ such that $\alpha = \hat\beta + \hat\gamma$. Without loss of generality, we may
assume $\hat\beta < \hat\gamma$, so that $\hat\beta < \alpha < \hat\gamma$ by Theorem~\ref{thm:convexity}. But,
$\hat\gamma < (k\delta,i)$ by Proposition~\ref{prop:monotonicity}, hence $\hat\gamma <\alpha$
by Lemma~\ref{lemma:equiv.to.standard.fac}, a contradiction.

If there are no $\hat\beta,\hat\gamma$ as above, then $\alpha + \delta = \beta' + \gamma'$. Without loss of generality,
we may assume $\beta' < \gamma'$, so that $\beta' < \alpha + \delta < \gamma'$ by Theorem~\ref{thm:convexity}. As
$\gamma' < (\delta,i)$ by Proposition~\ref{prop:monotonicity}, we obtain $\gamma' < \alpha < \alpha + \delta$ by
Lemma~\ref{lemma:equiv.to.standard.fac}, a contradiction.
\end{proof}

We can now establish a weaker version of~\cite[Conjecture 6.26]{ET}.

\begin{lemma}\label{lemma:y.i.im.periodicity}
For any $1 \leq i \leq |I|$, let $k$ be the smallest value such that $|y_i| < k|\delta|$ and $M_1(\beta_i) = (\delta,j)$.
Then $\SL^{ls}_i(k\delta) = y_iw$ for some non-empty word $w$, which is actually a prefix of $\SL_j(\delta)$, and
\begin{equation}\label{eq:ls-imaginary-specialorders}
  \SL_i^{ls}(k'\delta) = y_i\underbrace{\SL_j(\delta)}_{k' -k \text{ times}}w\qquad \forall\, k'\geq k.
\end{equation}
\end{lemma}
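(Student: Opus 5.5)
The plan is to combine the characterization of $\deg(\SL_i^{ls}(k'\delta))$ from Lemma~\ref{lemma:im.left.standard.form} with the periodicity of increasing chains (Theorem~\ref{thm:incr.modulo.c}) in the special case $c=1$, which applies by Corollary~\ref{cor:left.imaginary.chain.c}. Set $\alpha_{k'}=\deg(\SL^{ls}_i(k'\delta))$. By Corollary~\ref{cor:mk.im.factor} and Corollary~\ref{cor:equiv.mk.Mk} we have $m_1(\alpha_{k'})=(\delta,i)$, by Proposition~\ref{prop:monotonicity} the chain $\chain(\alpha_{k'})$ is increasing, and $c_{\alpha_{k'}}=1$.

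\emph{Step 1: lower bound.} The root $\deg(y_i)+(k'-k)\delta$ lies in $\chain(\beta_i)$ by Lemma~\ref{lemma:y.i.irr.chain}, has $m_1=(\delta,i)$, and has length $<|k'\delta|$ by the choice of $k$. Hence it belongs to the set in~\eqref{eq:im.left.standard.form}. Corollary~\ref{cor:inc.irr.chains} gives
\[
\SL(\alpha_{k'})\ \geq\ y_i\underbrace{\SL_j(\delta)}_{k'-k \text{ times}} .
\]
In particular $\SL(\alpha_{k'})\geq y_i$, so $|\alpha_{k'}|\geq |l(\alpha_{k'})|$. Lemma~\ref{lem:chunks-increasing} with $c_{\alpha_{k'}}=1$ then writes
\[
\SL(\alpha_{k'})=y_i\underbrace{\SL_j(\delta)}_{p \text{ times}}w,
\]
where $w<\SL_j(\delta)$ and $\SL_j(\delta)$ is not a prefix of $w$.

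\emph{Step 2: pin down $p$ and $w$.} The lengths force $p\leq k'-k$. The lower bound from Step~1, together with $w<\SL_j(\delta)$ not starting with $\SL_j(\delta)$, forces $p=k'-k$ via Lemma~\ref{lem:Melancon}: if $p<k'-k$, the comparison at the $(p+1)$-st block would give $\SL(\alpha_{k'})<y_i\SL_j(\delta)^{k'-k}$. It follows that $|w|<|\delta|$.

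To see that $w$ is nonempty, use that $\SL_i(k'\delta)$ ends with a single letter after the repeated $\SL^{ls}$ by Lemma~\ref{lemma:leclerc.14}. Alternatively, use that $\SL^{ls}_i(k'\delta)$ is the longest proper Lyndon prefix. In the latter route one checks that $y_i\SL_j(\delta)^{k'-k}$ is not all of $\SL^{ls}_i(k'\delta)$: otherwise the remaining suffix of $\SL_i(k'\delta)$ would have degree $k'\delta-\deg(y_i)-(k'-k)\delta$, which is in $\chain(\delta-\beta_i)$ and has length $<|\delta|$. Comparing it with $\SL_j(\delta)$ via Corollary~\ref{cor:left.cost.right} contradicts maximality.

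For "$w$ is a prefix of $\SL_j(\delta)$", I would use maximality in~\eqref{eq:im.left.standard.form}. Any candidate of the form $y_i\SL_j(\delta)^{k'-k}w'$ with $w'$ a longer prefix-compatible word would be larger. Concretely, compare with $\SL_i(k'\delta)$ itself, which lies in $(\SL_{i+1}(k'\delta),\SL_i^{ls}(k'\delta)\cdots)$, and use $\SL(\alpha_{k'})<\SL_i(k'\delta)<y_i\SL_j(\delta)^{t}$ for $t\gg0$. The latter follows from Lemma~\ref{lemma:inc.upper.bound} together with Lemma~\ref{lemma:equiv.to.standard.fac}. Since $|w|<|\delta|$, this squeezes $w$ to be a prefix of $\SL_j(\delta)$.

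\emph{Step 3: independence of $w$ from $k'$.} Show $\alpha_{k'+1}=\alpha_{k'}+\delta$. By Theorem~\ref{thm:incr.modulo.c}(1) with $c=1$, adding $\delta$ inserts exactly one $\SL_j(\delta)$ into the first chunk. So $\alpha_{k'}+\delta$ is a candidate for level $k'+1$, and $\alpha_{k'+1}-\delta$ is a candidate for level $k'$ (lengths permitting). Claim~\ref{claim:aux.incr} with $c_\beta=c_\gamma=1$ says these shifts preserve order among chains with the same $m_1$. Hence the two maxima correspond, giving~\eqref{eq:ls-imaginary-specialorders} by induction from $k'=k$.

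I expect the main obstacle to be Step~2, specifically proving that $w$ is nonempty and is a genuine prefix of $\SL_j(\delta)$. This needs a careful interplay between the maximality in Lemma~\ref{lemma:im.left.standard.form} and the upper bound of $\SL_i(k'\delta)$, and it is also where the minimality of $k$ enters. Step~3 should be the routine transfer via Claim~\ref{claim:aux.incr}.
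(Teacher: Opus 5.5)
Your Steps 1 and 3 are sound and follow the paper's route. The paper compares $\alpha+(k'-k)\delta$ with $\alpha'=\deg(\SL^{ls}_i(k'\delta))$ in one go, using maximality in Lemma~\ref{lemma:im.left.standard.form} and a single application of Claim~\ref{claim:aux.incr}; you do the same one level at a time. For Claim~\ref{claim:aux.incr} you need $\alpha_{k'+1}-\delta$ to lie at or beyond $l(\alpha_{k'+1})$. Rather than writing ``lengths permitting'', note that your exact count $p=k'-k$ together with Theorem~\ref{thm:incr.modulo.c}(1) for $c=1$ gives $\alpha_{k'}=l(\alpha_{k'})+(k'-k)\delta$. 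This is what the paper's estimate $|\alpha'|\geq|l(\alpha')+(k'-k)\delta|$ supplies.

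The genuine problems are both in Step~2.

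\textbf{The prefix argument.} The inequality $\SL_i(k'\delta)<y_i\SL_j(\delta)^t$ is false. The chain $\chain(\beta_i)$ is increasing with $m_{k'}=(k'\delta,i)$, so Corollary~\ref{cor:inc.irr.chains} and Proposition~\ref{prop:monotonicity} give $y_i\SL_j(\delta)^t<\SL_i(k'\delta)$ for every $t$. For instance, in $A_1^{(1)}$ with $0<1$ one has $\SL_1(2\delta)=0011>00101\cdots$. Even if it held, an upper bound on $\SL_i(k'\delta)$ only reproduces $w<\SL_j(\delta)$ and says nothing about prefixes. The correct squeeze uses the lower bound:
\begin{itemize}
\item $\SL_i(k'\delta)$ begins with $y_i\SL_j(\delta)^{k'-k}w$.
\item $\SL_j(\delta)$ is not a prefix of $w$, and $w<\SL_j(\delta)$.
\item So if $w$ were not a prefix of $\SL_j(\delta)$, the two would first differ inside $w$ with $w$ smaller.
\item That forces $\SL_i(k'\delta)<y_i\SL_j(\delta)^{t}$ for $t>k'-k$, a contradiction.
\end{itemize}
This works directly for each $k'$. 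It is a valid alternative to the paper's squeeze $y_iw\leq y_i\SL_j(\delta)w<\SL_i(k\delta)$, which needs the periodicity first.

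\textbf{Non-emptiness of $w$.} This cannot be proved, because it is false in general. For $i=1$, the chain $\chain(\deg(\SL^{ls}_1(\delta)))=\chain(\beta_1)$ is irreducible. So Corollary~\ref{cor:y.i.left.standard.im} gives $y_1=\SL^{ls}_1(\delta)$ and $|y_1|<|\delta|$, hence $k=1$ and $w=\emptyset$. In the $A_1^{(1)}$ example, $y_1=0=\SL^{ls}_1(\delta)$. The same happens for all $i$ under Corollary~\ref{cor:y-for-specialorder}, which is exactly the case recovering \cite[Theorem~6.14]{ET}. The paper's own proof only produces ``some $w$''. Your ``contradicts maximality'' sketch must therefore break down, and Lemma~\ref{lemma:leclerc.14} says nothing about $w$. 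Drop this part of the claim, reading $w$ as possibly empty.
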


\begin{proof}
Consider $\alpha = \deg(\SL_i^{ls}(k\delta))$ and $\alpha' = \deg(\SL_i^{ls}(k'\delta))$.
First, we note that $|\alpha| \geq |l(\alpha)|$ and $|\alpha'| \geq |l(\alpha')|$ as $\SL(\alpha),\SL(\alpha') \geq y_i$ by
Lemma~\ref{lemma:im.left.standard.form}, hence $y_i$ is a prefix of $\SL(\alpha),\SL(\alpha')$ by Lemma~\ref{lemma:y.i.prefix}.
In particular, $\SL_i^{ls}(k\delta)=y_iw$ for some $w$, and to prove~\eqref{eq:ls-imaginary-specialorders}
it suffices to verify $\alpha' = \alpha + (k'-k)\delta$, due to Theorem~\ref{thm:incr.modulo.c}
and Corollary~\ref{cor:left.imaginary.chain.c}.
We note that $\alpha + (k'-k)\delta \leq \alpha'$ by Lemma~\ref{lemma:im.left.standard.form}.
As $y_i\underbrace{\SL_j(\delta)}_{k' - k \text{ times}}$ is a prefix of $\SL(\alpha + (k'-k)\delta) \leq \alpha'$, it must
also be a prefix of $\SL(\alpha')$, due to Lemma~\ref{lemma:inc.upper.bound}. Thus $|\alpha'| \geq |l(\alpha') + (k'-k)\delta|$
by Theorem~\ref{thm:incr.modulo.c}, since $c_{\alpha'} = 1$ by Corollary~\ref{cor:left.imaginary.chain.c}.
This estimate allows to apply Claim~\ref{claim:aux.incr} to conclude that if $\alpha + (k'-k)\delta < \alpha'$ then
$\alpha  < \alpha' - (k'-k)\delta$, which contradicts to Lemma~\ref{lemma:im.left.standard.form},
whereas $|\alpha'|>(k-1)|\delta|$ by Lemma~\ref{lemma:length.standfac.imaginary}.
This completes the proof of $\alpha + (k'-k)\delta = \alpha'$.

Finally, to see that $w$ is a prefix of $\SL_j(\delta)$, we note that
$\SL_i^{ls}(k\delta) = y_i w \leq \SL_i^{ls}((k+1)\delta) = y_i\SL_j(\delta)w < \SL_i(k\delta)$ with the first inequality
due to Lemma~\ref{lemma:im.left.standard.form} and the last inequality due to Proposition~\ref{prop:monotonicity}.
Therefore, $w$ is a prefix of $\SL_j(\delta)w$ and hence also of $\SL_j(\delta)$, since $|w| < |\delta|$ by our choice of $k$.
\end{proof}

We conclude this section with the following two general observations.

\begin{lemma}
If the smallest simple root $\alpha_\varepsilon$ occurs only once in $\delta$, then
\begin{equation*}
  \chain(\beta_i) = \chain(\deg(\SL_i^{ls}(\delta)) \qquad \forall\, 1 \leq i \leq |I|.
\end{equation*}
\end{lemma}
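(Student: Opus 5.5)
The plan is to reduce the claim to the statement that $\chain(\deg(\SL_i^{ls}(\delta)))$ is irreducible for every $i$. Indeed, $m_1(\deg(\SL_i^{ls}(\delta)))=(\delta,i)$ by Corollary~\ref{cor:mk.im.factor}, and the chain is increasing by Proposition~\ref{prop:monotonicity}. So once irreducibility holds, Proposition~\ref{prop:irr.chains} (uniqueness of the increasing irreducible chain with $m_1=(\delta,i)$) forces $\chain(\deg(\SL_i^{ls}(\delta)))=\chain(\beta_i)$.

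To prove irreducibility, I use the hypothesis through Lemma~\ref{lemma:monotonicity.smallest.once}: for $|\alpha|<|\delta|$, $\chain(\alpha)$ increases iff $\alpha$ contains $\alpha_\varepsilon$, which then occurs exactly once. Set $\alpha=\deg(\SL_i^{ls}(\delta))$. Since $\alpha_\varepsilon$ is the smallest letter and $\SL_i(\delta)$ is Lyndon, $\SL_i(\delta)$ starts with $\varepsilon$. This letter occurs only once in $\delta$, and $\SL_i^{rs}(\delta)$ is a single letter by Lemma~\ref{lemma:delta.stand.one.letter}. Hence $\alpha=\delta-\alpha_{k}$ for the last letter $k\ne\varepsilon$, and $\alpha$ contains $\alpha_\varepsilon$ exactly once.

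Now suppose $\chain(\alpha)=\chain(\beta)+\chain(\gamma)$ with both chains increasing. Take the short representatives $\beta',\gamma'$ with $|\beta'|,|\gamma'|<|\delta|$. Each contains $\alpha_\varepsilon$ once, so $\beta'+\gamma'$ contains it twice. But $\beta'+\gamma'$ lies in $\chain(\alpha)$, while $\alpha$ contains it once and adding $\delta$ adds one copy. Therefore $\beta'+\gamma'=\alpha+\delta$, giving $|\beta'|+|\gamma'|=|\alpha|+|\delta|$.

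The remaining task is to rule this out, and this is the step I expect to be the main obstacle. My approach is to pass to the complementary chains: $\delta-\beta'$ and $\delta-\gamma'$ are decreasing (Corollary~\ref{cor:monotonicity}), do not contain $\alpha_\varepsilon$, and sum to $\delta-\alpha=\alpha_k$. A simple root cannot be the sum of two positive roots of $\wDelta^{+}$ (all coefficients are nonnegative and the total height is $1$). This is the contradiction, so $\chain(\alpha)$ is irreducible and the claim follows.
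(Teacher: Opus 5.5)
Your proof is correct. Its skeleton agrees with the paper's: Lemma~\ref{lemma:delta.stand.one.letter} gives $\deg\SL_i^{ls}(\delta)=\delta-\alpha_k$ with $k\neq\varepsilon$, and this chain is then identified with an irreducible one. The key identification, however, is done differently.

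\textbf{The paper's route.} It uses both parts of Lemma~\ref{lemma:delta.stand.one.letter}, including that the last letters of the $\SL_i(\delta)$ are pairwise distinct, to get the set equality $\{\chain(\deg\SL_i^{ls}(\delta))\}_{i}=\{\chain(\delta-\alpha_a)\}_{a\neq\varepsilon}$. It then appeals to the analogue of Corollary~\ref{cor:simple-0} recorded in the remark following it. That analogue says the irreducible decreasing chains are the $\chain(\alpha_a)$ with $a\neq\varepsilon$, via Lemma~\ref{lemma:monotonicity.smallest.once}, and it is used to identify this set with $\{\chain(\beta_j)\}_j$. Indices are then matched through an ordering comparison of the $\SL_i^{ls}(\delta)$.

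\textbf{Your route.} You verify irreducibility of $\chain(\delta-\alpha_k)$ directly, by counting $\alpha_\varepsilon$-coefficients and heights. You then match indices via $m_1$ (Corollary~\ref{cor:mk.im.factor}) together with the uniqueness part of Proposition~\ref{prop:irr.chains}.

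\textbf{Comparison.} Your argument is more self-contained, since it does not rely on the unproved remark, and it makes the index matching transparent. It also recovers the distinctness of the last letters as a consequence rather than using it as an input. The paper's route, in exchange, produces the explicit list $\{\delta-\beta_j\}_j=\{\alpha_a\}_{a\neq\varepsilon}$ in one stroke.

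\textbf{The step you flagged.} What you called the main obstacle is immediate. Once $\beta'+\gamma'=\alpha+\delta$, you get $|\beta'|+|\gamma'|=2|\delta|-1$, which already contradicts $|\beta'|,|\gamma'|\leq|\delta|-1$. Moreover, you only use the $\alpha_\varepsilon$-coefficient of $\beta'+\gamma'\in\alpha+\BZ\delta$, so you never need to check that $\beta'+\gamma'$ is itself a root.
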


\begin{proof}
First, we note that $\SL^{rs}_i(\delta)$ is always a single letter and $\SL^{rs}_i(\delta)\ne \SL^{rs}_j(\delta)$ for $i\ne j$,
due to Lemma~\ref{lemma:delta.stand.one.letter}. Thus,
$\{\chain(\SL^{ls}_i(\delta))\}_{i=1}^{|I|}=\{\chain(\delta-\alpha_a)\}_{a\in \wI}^{a\ne \varepsilon}=\{\chain(\beta_{j})\}_{j=1}^{|I|}$,
due to an analogue of Corollary~\ref{cor:simple-0}. The result follows as
$\SL^{ls}_i(\delta)>\SL^{ls}_j(\delta) \Leftrightarrow \SL_i(\delta)>\SL_j(\delta)$, where we use the above structure
of $\SL^{ls}_{\imath}(\delta)$.
\end{proof}

Combining the above lemma with Corollary~\ref{cor:y.i.left.standard.im}, we obtain the following result.

\begin{corollary}\label{cor:y-for-specialorder}
If the smallest simple root $\alpha_\varepsilon$ occurs only once in $\delta$, then
\begin{equation*}
 y_i = \SL_i^{ls}(\delta) \qquad \forall\, 1 \leq i \leq |I|.
\end{equation*}
\end{corollary}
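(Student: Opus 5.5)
The corollary follows from Corollary~\ref{cor:y.i.left.standard.im}, which says that $y_i=\SL^{ls}_i(\delta)$ holds iff $\chain(\deg(\SL_i^{ls}(\delta)))$ is irreducible. So the whole task is to check irreducibility of that chain for every $1\leq i\leq |I|$ when $\alpha_\varepsilon$ occurs only once in $\delta$. By the lemma immediately preceding the statement, under this hypothesis $\chain(\deg(\SL_i^{ls}(\delta)))=\chain(\beta_i)$. By the definition of $\beta_i$ (the element of $\Pi_p$ with $m_1(\beta_i)=(\delta,i)$) together with Lemma~\ref{lemma:irr.chain.simple.root}, the chain $\chain(\beta_i)$ is an increasing irreducible chain. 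Hence condition (3) of Corollary~\ref{cor:y.i.left.standard.im} holds, and its equivalence with (2) yields $y_i=\SL^{ls}_i(\delta)$.

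The implication we actually use is (3)$\Rightarrow$(2). It follows from Lemma~\ref{lemma:recursive.u.j}: if $u_0=\SL^{ls}_i(\delta)$ spans an irreducible increasing chain, then no real $v_1$ can be appended. This is because appending a real $v_1$ from a decreasing chain would split the irreducible chain $\chain(\deg(u_1))$, or rather produce a chain whose decomposition contradicts irreducibility. Hence $N=1$ and $y_i=u_0$. I will simply cite the corollary rather than re-derive this.

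The only potential subtlety is matching indices: the preceding lemma must give the equality of chains for the same index $i$, and not merely as a set. That lemma asserts it index-wise, via the ordering $\SL^{ls}_i(\delta)>\SL^{ls}_j(\delta)\Leftrightarrow \SL_i(\delta)>\SL_j(\delta)$ combined with $m_1$ matching from Corollary~\ref{cor:mk.im.factor}. So no real obstacle is expected.
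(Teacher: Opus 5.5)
Your proof is correct and is exactly the paper's argument: the preceding lemma identifies $\chain(\deg(\SL_i^{ls}(\delta)))$ with the increasing irreducible chain $\chain(\beta_i)$, and the implication (3)$\Rightarrow$(2) of Corollary~\ref{cor:y.i.left.standard.im} then gives $y_i=\SL^{ls}_i(\delta)$. Two minor remarks: the index matching you worry about is not needed, since the set equality $\{\chain(\deg(\SL^{ls}_i(\delta)))\}_i=\{\chain(\beta_j)\}_j$ already gives irreducibility. Also, in your side sketch it is $\chain(\deg(u_0))=\chain(\deg(u_1))+\chain(\delta-\deg(v_1))$, not $\chain(\deg(u_1))$, that a real $v_1$ would split into two increasing chains.
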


\begin{remark}
We note that the above Corollary~\ref{cor:y-for-specialorder} and Lemma~\ref{lemma:y.i.im.periodicity} provide
a new proof of~\cite[Theorem~6.14]{ET}.
\end{remark}


\appendix

\section{Code}\label{sec:app_code}

In this Appendix, we present the source code (written using Python):

$\qquad$ \href{https://github.com/corbyte/AffineStandardLyndonWords}{https://github.com/corbyte/AffineStandardLyndonWords}.

\noindent
Almost everything in the code is done through a rootSystem object and example of the initialization can be seen below.
For simplicity, we use $G_2$-type for Listings~\ref{lst:root.system}--\ref{lst:additional.functions}.

\begin{lstlisting}[language=python,caption=RootSystem initialization, label={lst:root.system}, captionpos=t,upquote]
"""
rootSystem(ordering,type:str):
Initialization of root system

ordering -- list of ordering for the rootsystem with ordering[0] < ordering[1] < and so on
type -- type of the rootsystem
"""

G2 = rootSystem([2,1,0],'G')

G2.delta
#[1,2,3]
G2.baseRoots
# Will return all roots in the root system with height \leq \delta
\end{lstlisting}

\medskip
\noindent
In addition to the rootSystem class another important class is the word class: with this class you can do comparison
and concatenation between words. The word class acts as a wrapper around a list of elements from the letter class:
\begin{lstlisting}[language=python,caption=Word class,captionpos=t,upquote]
#'b'<'a'<'c'
u #abc
v #bc
u < v
# False
u + v
# abcbc
print(u)
#a,b,c
u.no_commas()
#abc
\end{lstlisting}

\medskip
\noindent
Getting a standard Lyndon word for a given rootSystem and ordering is very quick, additionally one can quickly get
chains of standard Lyndon words:

\begin{lstlisting}[language=python,caption= Standard Lyndon words,captionpos=t,upquote]
root_system # any rootSystem object
l = root_system.SL(degree) #Where degree is an element of the root system


#l will be an array of word objects, if degree is real there will only be one, but if the degree is imaginary there will be several

chain = root_system.chain(degree)

#chain is a list of all currenly generated standard Lyndon words with degree, degree + k\delta

root_system.periodicity(degree)

#Returns the periodicity of ch(degree)
\end{lstlisting}

\medskip
\noindent
The notation to use for degree is that the $(i+1)$-th element of the degree list you want corresponds to the multiplicity
of $\alpha_i$ in that degree. Additionally, words can be quickly parsed into the ``chunk format'':

\begin{lstlisting}[language=python,caption=Block format,captionpos=t,upquote]
G2 = rootSystem([1,2,0],'G')

G2.SL(G2.delta*8 + [1,0,0])[0].no_commas()
#'1222101222102122210101222101222102122210101222102'

print(G2.to_chunk_format(G2.SL(G2.delta*8 + [1,0,0])[0]))
#[[1, 2], '2', [1, 1], '10', [1, 2], '2', [1, 1], '10', [1, 1], '2']
#[i,j] means that there is an \SL_i(\delta) j times in that spot
\end{lstlisting}

\medskip
\noindent
There are some additional functions which will give useful information about standard Lyndon words and degrees:
\begin{lstlisting}[language=python,caption=Additional functions, label={lst:additional.functions},  captionpos=t,upquote]
G2 = rootSystem([1,2,0],'G')
#get_monotonicity returns 1 if the chain is increasing and -1 if it is decreasing
G2.get_monotonicity([0,1,0])
#1

#rootSystem.M_k(degree) returns i where M_k(degree) = (k\delta,i)
G2.M_k([0,1,0])
#1

#rootSystem.M_prime_k(degree) returns i where M'_k(degree) = (k\delta,i)
G2.M_prime_k([0,1,0])
#1

#rootSystem.m_k(degree) returns i where m_k(degree) = (k\delta,i)
G2.m_k([0,1,0])
#2

#rootSystem.y_i(i) return y_i for the given i
G2.y_i(2)
#array([1, 2, 2], dtype=int64)

#rootSystem.u_i(a, i) returns u_i(a), if i is not given it return u(a)
G2.u_i([1,0,0])
# array([2, 2, 3])


#Additionally we can easily calculate s and c for given roots
G2.s([1,1,3])
#5
G2.c([0,1,0])
#2

#mod_delta(\alpha+k\delta) will return (\alpha,k\delta)

G2.mod_delta(G2.delta*5 + [0,1,0])
#(array([0, 1, 0]), 5)

#generate_up_to_delta(k) will generate all standard Lyndon words upto height n\delta, results will be cached

G2.generate_up_to_delta(5)

#get_decompositions(\alpha) will return all possible \beta,\gamma \in \wDelta^{+} such that \beta + \gamma = \alpha
G2.get_decompositions(G2.delta)

#You can also get the standard and costandard factorization of words

l = G2.SL(G2.delta)[1]

print(*[i.no_commas()for i in G2.costfac(l)],sep=',')
#2,21210
print(*[i.no_commas()for i in G2.standfac(l)],sep=',')
#22121,0
\end{lstlisting}

\medskip
\noindent
The following listing is used in Example~\ref{ex:D5}.

\begin{lstlisting}[language=python,caption=$D^{(1)}_5$ example,label={lst:D5},captionpos=t,upquote]
D5 = rootSystem([0,1,2,3,4,5],"D")
print(*[D5.to_chunk_format(x) for x in D5.get_chain(D5.delta -[1,0,0,0,0,0],5)],sep='\n')
#['1235432']
#[[1, 1], '2354312']
#[[1, 1], '2341', [1, 1], '235']
#[[1, 1], '1', [1, 1], '23543', [1, 1], '2']
#[[1, 2], '23543', [1, 1], '1', [1, 1], '2']
#[[1, 2], '234', [1, 1], '1', [1, 2], '235']

#This will get all decreasing irreducible chains, in order, i.e. the ith row will be -\beta_i
D5.delta - D5.irr_chains()
#array([[0, 1, 0, 0, 0, 0],
#       [0, 0, 1, 0, 0, 0],
#       [0, 0, 0, 1, 0, 0],
#       [0, 0, 0, 0, 1, 0],
#       [0, 0, 0, 0, 0, 1]])


#Prints the index of irreducible chains and it's respective M'_1(*)
print(*[(i[0]+1,D5.M_prime_k(i[1])) for i in zip(range(5), D5.irr_chains())],sep='\n')
#(1, 1)
#(2, 1)
#(3, 2)
#(4, 3)
#(5, 3)
\end{lstlisting}

\medskip
\noindent
The following Listing is used in Remark~\ref{rem:u_i} to find the upper bound of $|u_i(\alpha)|$ for a collection of root systems.

\begin{lstlisting}[language=python,caption=$u_i$ upper bound,label={lst:u_i},captionpos=t,upquote]
# rootSystems is a collection of rootSystems
m = 0
for sys in rootSystems:
    if(sys.max_u_i() > m):
        m = sys.max_u_i()
m

#For example, if we take rootSystems to be all for F4, this will return 4
\end{lstlisting}

\noindent
The following listing is used in Example~\ref{ex:strong_period_counterex}.

\begin{lstlisting}[language=python,caption=Decreasing periodicity counterexample,label={lst:dec.per.counter},captionpos=t,upquote]
F4 = rootSystem([0,2,4,1,3],'F')
print(*[F4.to_chunk_format(i) for i in F4.get_chain([0,1,1,2,2],10)], sep='\n')
#['233144']
#[[1, 1], '342314']
#[[1, 1], '34', [1, 1], '3421']
#[1, 1], '12', [1, 1], '34', [1, 1], '34']
#[[1, 1], '2', [1, 1], '34', [1, 1], '34', [1, 1], '1']
#[[1, 2], '34', [1, 1], '2', [1, 1], '34', [1, 1], '1']
#[[1, 2], '34', [1, 2], '34', [1, 1], '2', [1, 1], '1']
#[[1, 2], '1', [1, 1], '2', [1, 2], '34', [1, 2], '34']
#[[1, 2], '2', [1, 2], '34', [1, 2], '34', [1, 2], '1']
\end{lstlisting}

\medskip
\noindent
The following listing is used in Example~\ref{ex:incr_nonroot_counterex}.

\begin{lstlisting}[language=python,caption=$C_2^{(1)}$ example,label={lst:C2.example},captionpos=t,upquote]
C2 = rootSystem([0,1,2],'C')
print(*[C2.to_chunk_format(i) for i in C2.get_chain([1,0,0],5)], sep='\n')
#['0']
#['01012']
#['011012012']
#['011', [1, 1], '012012']
#['011', [1, 2], '012012']

\end{lstlisting}

\medskip
\noindent
The following listing is used in Proposition~\ref{prop:c.bounds}.

\begin{lstlisting}[language=python,caption=$c_\alpha$ bounds,label={lst:c.bound},captionpos=t,upquote]
E6 = rootSystem([0,1,4,2,5,3,6],'E')
print((E6.delta - E6.irr_chains())[5])
# \beta_6 = \delta - \alpha_6
print(E6.c([1,0,0,0,0,0,0]))
#\alpha_0 periodicity is 2

E7 = rootSystem([0,1,5,2,6,4,3,7],'E')
print((E7.delta - E7.irr_chains())[6])
# \beta_7 = \delta - \alpha_6
print(E7.c([1,0,0,0,0,0,0,0]))
#\alpha_0 periodicity is 2

E8 = rootSystem([0,1,6,2,3,4,7,5,8],'E')
print((E8.delta - E8.irr_chains())[7])
#\beta_8 = \delta - \alpha_8
print(E8.c([1,0,0,0,0,0,0,0,0]))
#\alpha_0 periodicity is 3

F4 = rootSystem([0,1,4,2,3],'F')
print((F4.delta - F4.irr_chains())[3])
#\beta_4 = \delta - \alpha_4
print(F4.c([1,0,0,0,0]))
#\alpha_0 periodicity is 2

G2 = rootSystem([0,1,2],'G')
print((G2.delta - G2.irr_chains())[1])
#\beta_2 = \delta - \alpha_2
print(G2.c([1,0,0]))
#\alpha_0 periodicity is 3

\end{lstlisting}



\begin{thebibliography}{XXX}

\bibitem[AT]{AT}
Y.~Avdieiev, A.~Tsymbaliuk,
  {\em Affine standard Lyndon words: A-type},
Int.\ Math.\ Res.\ Not.\ IMRN (2024), no.~21, 13488--13524.

\bibitem[C]{C}
R.~Carter,
  {\em Lie Algebras of finite and affine Type},
Cambridge University Press, Cambridge (2005), xviii+632pp.

\bibitem[ET]{ET}
C.~Elkins, A.~Tsymbaliuk,
  {\em Affine standard Lyndon words},
preprint, ar$\chi$iv:2505.15432.

\bibitem[K]{K}
V.~Kac,
  {\em Infinite dimensional Lie algebras},
Cambridge University Press, Cambridge (1990), xxii+400pp.

\bibitem[LR]{LR}
P.~Lalonde, A.~Ram,
  {\em Standard Lyndon bases of Lie algebras and enveloping algebras},
Trans.\ Amer.\ Math.\ Soc.\ {\bf 347} (1995), no.~5, 1821--1830.	

\bibitem[Le]{L}
B.~Leclerc,
  {\em Dual canonical bases, quantum shuffles and $q$-characters},
Math.\ Z.\ {\bf 246} (2004), no.~4, 691--732.	

\bibitem[Lo]{Lo}
M.~Lothaire,
   {\em Combinatorics of words},
Cambridge University Press, Cambridge (1997), xviii+238pp.

\bibitem[M]{M}
G.~Melan\c{c}on,
 {\em Combinatorics of Hall trees and Hall words},
J.\ Combinat.\ Theory A {\bf 59} (1992), no.~2, 285--308.

\bibitem[NT]{NT}
A.~Negu\c{t}, A.~Tsymbaliuk,
  {\em Quantum loop groups and shuffle algebras via Lyndon words},
Adv.\ Math.\ {\bf 439} (2024), Paper No.~109482, 69pp.

\bibitem[R]{R}
M.~Rosso,
  {\em Lyndon bases and the multiplicative formula for $R$-matrices},
unpublished preprint (2002).

\end{thebibliography}
\end{document}